\let\orgdescriptionlabel\descriptionlabel
\renewcommand*{\descriptionlabel}[1]{%
  \let\orglabel\label
  \let\label\@gobble
  \phantomsection
  \protected@edef\@currentlabel{#1}%
  \let\label\orglabel
  \orgdescriptionlabel{#1}%
}
\def\paragraph{
	\@startsection{paragraph}{4}
	\z@{.5\linespacing\@plus.7\linespacing}{-.5em}%
	{\normalfont\itshape}}
\DeclareFontFamily{U}{mathx}{\hyphenchar\font45}
\DeclareFontShape{U}{mathx}{m}{n}{
      <5> <6> <7> <8> <9> <10>
      <10.95> <12> <14.4> <17.28> <20.74> <24.88>
      mathx10
      }{}
\DeclareSymbolFont{mathx}{U}{mathx}{m}{n}
\DeclareMathAccent{\widecheck}{0}{mathx}{"71}
\renewcommand{\restriction}{\mathord{\upharpoonright}}
\newcommand{\dif}{\,\mathrm{d}}
\renewcommand{\sc}{\mathrm{sc}}
\DeclareMathOperator{\ent}{h}
\DeclareMathOperator{\vol}{vol}
\DeclareMathOperator{\disc}{disc}
\DeclareMathOperator{\Nr}{Nr}
\DeclareMathOperator{\Nrd}{Nrd}
\DeclareMathOperator{\Trd}{Trd}
\DeclareMathOperator{\Tr}{Tr}
\DeclareMathOperator{\Proj}{Proj}
\DeclareMathOperator{\Spec}{Spec}
\DeclareMathOperator{\sign}{sign}
\DeclareMathOperator{\Gal}{Gal}
\DeclareMathOperator{\End}{End}
\DeclareMathOperator{\Span}{Span}
\DeclareMathOperator{\WR}{Res^E_{\mathbb{Q}}}
\DeclareMathOperator{\Img}{Im}
\DeclareMathOperator{\Li}{Li}
\DeclareMathOperator{\ctr}{ctr}
\DeclareMathOperator{\inv}{inv}
\DeclareMathOperator{\Denom}{\mathfrak{d}}
\DeclareMathOperator{\Ideals}{\mathcal{J}}
\DeclareMathOperator{\Princp}{\mathscr{P}}
\DeclareMathOperator{\idl}{idl}
\DeclareMathOperator{\Pic}{Pic}
\DeclareMathOperator{\red}{red}
\DeclareMathOperator{\cbd}{cbd}
\DeclareMathOperator{\ch}{\upchi}
\newcommand{\Nrml}{\operatorname{N}}
\newcommand{\Cent}{\operatorname{Z}}
\newcommand{\Cor}{\operatorname{Cor}}
\newcommand{\Stab}{\operatorname{Stab}}
\newcommand{\RO}{\operatorname{RO}}
\newcommand{\ord}{\operatorname{ord}}
\newcommand{\Ad}{\operatorname{Ad}}
\newcommand{\Cl}{\mathrm{Cl}}
\newcommand{\meas}{\mathrm{m}}
\renewcommand{\det}{\operatorname{det}} 
\newcommand{\relmiddle}[1]{\mathrel{}\middle#1\mathrel{}}
\DeclareMathOperator{\Gm}{\mathbb{G}_m}
\newlength{\faktorheight}
\newcommand*{\dfaktor}[3]{
 \settototalheight{\faktorheight}{\ensuremath{#1}}%
  \raisebox{-0.5\faktorheight}{\ensuremath{#1}}
   \backslash
   \settototalheight{\faktorheight}{\ensuremath{#2}}%
  \raisebox{0.5\faktorheight}{\ensuremath{#2}}
  \slash
   \settototalheight{\faktorheight}{\ensuremath{#3}}%
  \raisebox{-0.5\faktorheight}{\ensuremath{#3}}
}
\newcommand*{\lfaktor}[2]{
 \settototalheight{\faktorheight}{\ensuremath{#1}}%
  \raisebox{-0.5\faktorheight}{\ensuremath{#1}}
	\backslash
   \settototalheight{\faktorheight}{\ensuremath{#2}}%
  \raisebox{0.5\faktorheight}{\ensuremath{#2}}
}
\newcommand*{\faktor}[2]{
 \settototalheight{\faktorheight}{\ensuremath{#1}}%
  \raisebox{0.5\faktorheight}{\ensuremath{#1}}
  \slash
   \settototalheight{\faktorheight}{\ensuremath{#2}}%
  \raisebox{-0.5\faktorheight}{\ensuremath{#2}}
}
\newcommand*{\cyclic}[1]{
	\faktor{\mathbb{Z}}{#1\mathbb{Z}}
}
\newtheorem{thm}{Theorem}
\numberwithin{thm}{section} 
\newtheorem*{thm*}{Statement of Theorem}
\newtheorem*{thm-quote}{Theorem}
\newtheorem{lem}[thm]{Lemma}
\newtheorem{prop}[thm]{Proposition}
\newtheorem{cor}[thm]{Corollary}
\newtheorem{corprf}[thm]{Corollary of Proof}
\newtheorem{conj}[thm]{Conjecture}
\theoremstyle{definition}
\newtheorem{defi}[thm]{Definition}
\theoremstyle{remark}
\newtheorem{remark}[thm]{Remark}
\title{Joint Equidistribution of CM Points}
\author[I. Khayutin]{Ilya Khayutin}
\address{Department of Mathematics, Princeton University, Princeton, NJ 08544, USA}
\address{School of Mathematics, Institute for Advanced Study, Princeton, NJ 08540, USA}
\begin{document}

\begin{abstract}
We prove the mixing conjecture of Michel and Venkatesh for toral packets with negative fundamental discriminants and split at two fixed primes; assuming all splitting fields have no exceptional Landau-Siegel zero.
As a consequence we establish for arbitrary products of indefinite Shimura curves the equidistribution of Galois orbits of generic sequences of CM points all whose components have the same fundamental discriminant; assuming the CM fields are split at two fixed primes and have no exceptional zero.

The joinings theorem of Einsiedler and Lindenstrauss applies to the toral orbits arising in these results. Yet it falls short of demonstrating equidistribution due to the possibility of intermediate algebraic measures supported on Hecke correspondences and their translates.
 
The main novel contribution is a method to exclude intermediate measures for toral periods.
The crux is a geometric expansion of the cross-correlation between the periodic measure on a torus orbit and a Hecke correspondence, expressing it as a short shifted convolution sum. The latter is bounded from above generalizing the method of Shiu and Nair to polynomials in two variables on smooth domains.
\end{abstract}
\maketitle
\bgroup
\hypersetup{linkcolor = blue}
\tableofcontents
\egroup

\section{Introduction}
\subsection{The Mixing Conjecture of Michel and Venkatesh}
Let $Y$ be a complex modular curve. Each CM point\footnote{In the setting of modular curves CM points are classically called Heegner points. We follow the terminology of \textit{CM point} to differentiate between the points on the modular curve -- which are the subject of this manuscript -- and the corresponding point on a modular elliptic curve, which we do not discuss.} on $Y$ is an element of a finite \emph{packet} of CM points all of which have CM by the same quadratic order $\Lambda$ and form a single orbit under $\Pic(\Lambda)$.  For each integer $i$ let $\mathcal{P}_i\subset Y$ be a packet of CM points with discriminant $D_i<0$. Denote by $\mu_i$ the normalized counting measure on $\mathcal{P}_i$.  By a theorem of Duke \cite{Duke} and Iwaniec \cite{Iwaniec} and its generalizations inter alios by \cite{Chelluri,Michel04,Zhang}
we know that if $|D_i|\to_{i\to\infty}\infty$ then  $\mu_i\xrightarrow[i\to\infty]{\mathrm{weak}-*}\meas_Y$ where $\meas_Y$ is the normalized Haar measure on $Y$. 

Michel and Venkatesh \cite{MichelVenkateshRev} have conjectured a variant of the following.
\begin{conj}[Mixing Conjecture]\label{conj:mixing}
Let $\mathcal{P}_i\subset Y$ be a sequence of packets of CM points as above. Each $\mathcal{P}_i$ is a principal homogeneous space of $\Pic(\Lambda_i)$ where $\Lambda_i$ is the CM order of the points in $\mathcal{P}_i$.
For each $i\in\mathbb{N}$ fix $\sigma_i\in\Pic(\Lambda_i)$ and define
\begin{equation*}
\mathcal{P}_i^\mathrm{joint}\coloneqq
\left\{\left(z,\sigma_i.z\right) \mid z\in\mathcal{P}_i \right\}\subset Y\times Y
\end{equation*}
Denote by $\mu_i^\mathrm{joint}$ the normalized counting measure supported on $\mathcal{P}_i^\mathrm{joint}$. 

Set 
\begin{equation*}
\mathfrak{N}_i=\min_{\substack{\mathfrak{a} \subseteq \Lambda_i \textrm{ invertible ideal}\\ \mathfrak{a}\in \sigma_i}} \Nr \mathfrak{a}
\end{equation*}
If $\mathfrak{N}_i\to_{i\to\infty}\infty$ then $\mu_i^\mathrm{joint}$ converge weak-$*$ to $\meas_Y\times\meas_Y$.
\end{conj}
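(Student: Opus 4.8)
The plan is to transport the problem into homogeneous dynamics and combine the Einsiedler--Lindenstrauss joinings theorem with a new arithmetic input that kills the ``intermediate'' measures the dynamical classification leaves open. I will describe the argument under the hypotheses of the paper's main theorem — the $D_i$ fundamental and negative, split at two fixed rational primes, and with no Landau--Siegel zero for the associated imaginary quadratic fields — since these are what the method requires. Write $G=\mathrm{PGL}_2$ and fix a compact open $K\subset G(\mathbb{A}_f)$ so that $Y\cong G(\mathbb{Q})\backslash G(\mathbb{A})/KK_\infty$. Then each packet $\mathcal{P}_i$ is the image of a single orbit of $T_i(\mathbb{Q})\backslash T_i(\mathbb{A})$, where $T_i$ is the torus attached to $\Lambda_i$, and $\sigma_i$ is realized by an idele. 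Consequently $\mathcal{P}_i^{\mathrm{joint}}$ is the image of a single periodic orbit of $T_i$ diagonally embedded in $(G\times G)(\mathbb{Q})\backslash(G\times G)(\mathbb{A})$, twisted on the second factor by $\sigma_i$, and $\mu_i^{\mathrm{joint}}$ is the corresponding homogeneous probability measure.

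First I would record the soft features. Both marginals of $\mu_i^{\mathrm{joint}}$ equal the Duke-equidistributing measure $\mu_i$ (the second is $\sigma_i.\mu_i=\mu_i$), so by the theorem of Duke and Iwaniec each marginal tends to $\meas_Y$; hence no mass escapes and every weak-$*$ limit $\mu_\infty$ of $\mu_i^{\mathrm{joint}}$ is a probability measure on $Y\times Y$ with both marginals equal to $\meas_Y$. Next I would invoke the joinings classification: the two fixed split primes equip the orbits with the local structure (split local tori at two places, giving the requisite entropy) needed for the Einsiedler--Lindenstrauss joinings theorem, so every ergodic component of $\mu_\infty$ is homogeneous, and is therefore either all of $\meas_Y\times\meas_Y$ or a ``diagonal'' algebraic measure supported on the graph of a Hecke correspondence, i.e.\ on a finite union of translates of $\{(z,g.z):z\in Y\}$ for a Hecke operator $g$ of some degree $n$. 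Equidistribution of the marginals forces the first alternative \emph{unless} $\mu_\infty$ charges such a Hecke graph $C$. So the whole problem reduces to one statement: for each fixed $n$, the mass $\mu_i^{\mathrm{joint}}(C_\varepsilon)$ that the joint packet places on an $\varepsilon$-neighbourhood of the degree-$n$ Hecke correspondence $C$ tends to $0$ as $i\to\infty$, with enough uniformity in $n$ (over a range of $n$ as large as a small power of $|D_i|$) to cover all ergodic components produced by the joinings theorem.

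The crux is to bound this mass by a shifted convolution sum. Up to normalization, $\mu_i^{\mathrm{joint}}(C_\varepsilon)$ counts, and divides by $h(\Lambda_i)=\#\mathcal{P}_i$, the number of $z\in\mathcal{P}_i$ for which $z$ and $\sigma_i.z$ are $n$-isogenous. Expanding this incidence count through the Gross--Kohnen--Zagier calculus of CM points turns the numerator into a weighted count of representations of a quantity governed by $n$ and $\Nr\mathfrak{a}$ (for an ideal $\mathfrak{a}\in\sigma_i$) by a binary quadratic form of discriminant $D_i$, with side conditions that confine the solutions to a smooth region whose diameter is controlled by $|D_i|/\mathfrak{N}_i$ — genuinely \emph{short} once $\mathfrak{N}_i$ is large. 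One is thus left with a sum of the shape $\sum r_{D_i}(m_1)r_{D_i}(m_2)$ over the values of a polynomial in two variables on a smooth domain that is short relative to the modulus $|D_i|$; proving the expected upper bound for such a sum of divisor-type multiplicative functions is a two-variable, smoothed extension of the estimates of Shiu and of Nair, and it yields a numerator of size $o(h(\Lambda_i))$, hence $\mu_i^{\mathrm{joint}}(C_\varepsilon)\to 0$. The no-Siegel-zero hypothesis enters precisely here, providing the lower bound $h(\Lambda_i)\gg|D_i|^{1/2-o(1)}$ that makes the ``short sum versus class number'' comparison decisive (and making the equidistribution inputs quantitative enough to mesh with it).

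The main obstacle I anticipate is this last step in both halves: (i) producing the geometric expansion of the cross-correlation in a form clean enough to see the short domain and the exact multiplicative weight, while honestly tracking congruence conditions, the contribution of $\sigma_i$ and $\mathfrak{N}_i$, the ramified and archimedean places, and the smooth cutoff near $C$; and (ii) establishing the corresponding upper bound for a two-dimensional shifted convolution sum of the multiplicative function attached to the quadratic order, uniformly in the discriminant and in $n$ — i.e.\ pushing the Shiu--Nair method from one variable and intervals to two variables and smooth domains. By comparison the adelic dictionary, the tightness from Duke's theorem, and the application of the joinings theorem are a matter of assembling existing machinery.
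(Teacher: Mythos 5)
Your outline follows the paper's strategy faithfully — single periodic torus orbit in $\left[\left(\mathbf{G}\times\mathbf{G}\right)(\mathbb{A})\right]$, Duke for the marginals, the Einsiedler--Lindenstrauss joinings theorem, and a geometric expansion of the cross-correlation into a short shifted convolution sum bounded by a two-variable Shiu--Nair sieve. But there is one genuine gap in your reduction. You claim the problem ``reduces to one statement: for each fixed $n$, the mass $\mu_i^{\mathrm{joint}}(C_\varepsilon)$ near the degree-$n$ Hecke correspondence tends to $0$.'' This reduction is invalid. The joinings theorem only classifies the \emph{ergodic components} of the limit for the diagonal split-torus action, and these are supported on translates $\left[\mathbf{G}^\Delta(\mathbb{A})^+\xi\right]$ with $\xi$ ranging over an \emph{uncountable} family (the archimedean and unrestricted finite components of $\ctr(\xi)$ vary continuously). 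An uncountable convex combination of such graph measures can give zero mass to a neighbourhood of every single fixed correspondence — it can even be absolutely continuous with respect to Haar — so showing non-accumulation on each fixed $C$, uniformly in the degree $n$, does not rule out a positive-mass ``Hecke part.'' The paper circumvents this by testing the cross-correlation against $p$-adic Bowen balls $K_{p_1}^{(-n,n)}$ at a split prime rather than archimedean $\varepsilon$-neighbourhoods: the resulting bound $\ll p_1^{-2(1+\rho)n}$ translates into a metric entropy lower bound $(1+\rho)\log p_1$ for the averaged measure over any compact positive-measure set of $\xi$'s, contradicting the exact value $\log p_1$ forced by $A_{p_1}^\Delta$-invariance of each component. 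Your proposal needs this (or an equivalent) mechanism to pass from ``singular to each $\nu_\xi$'' to ``singular to every convex combination.''

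Two smaller points. First, the sieve bound requires the ellipse $\mathscr{E}$ not to be too eccentric, i.e.\ $R_{\max}^{\theta_l}\leq A(\mathscr{E})^{1-3\eta}$, which fails when $\mathfrak{N}_i$ is small relative to a power of $|D_i|$ (e.g.\ $\mathfrak{N}_i\sim\log|D_i|\to\infty$ is allowed by the hypothesis but breaks the lattice-count input); in that complementary range one must fall back on the Ellenberg--Michel--Venkatesh/Linnik argument, and one has to check the two ranges overlap. Second, the no-Siegel-zero hypothesis is not used for the lower bound $h(\Lambda_i)\gg|D_i|^{1/2-o(1)}$ (Siegel already gives that, ineffectively); it is used to control $L'(1,\ch_E)/L(1,\ch_E)\ll\log|D_E|$, which is what gives the \emph{upper} bound $\sum_{a\leq X}f(a)/a\ll L(1,\ch_E)\log X$ for the logarithmic mean appearing in the sieve estimate, so that the shifted convolution sum is dominated by the class number after dividing by the toral volume.
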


Using the reciprocity map of class field theory this conjecture implies a special case of the following well-known conjecture about equidistribution of Galois orbits of special points on products of modular curves.

\begin{conj}
Let $X$ be a finite product of complex modular curves. Let $\{x_i\}_i$ be a sequence of special points on $X$, i.e.\ each coordinate of $x_i$ is a CM point. Denote by $\mu_i$ the normalized counting measure on the finite Galois orbit of $x_i$.

If the sequence $\{x_i\}_i$ has finite intersection with any proper special subvariety\footnote{This condition implies that the size of the Galois orbit of $x_i$ tends to infinity as $i\to\infty$ because of Brauer-Siegel and the fact that a special point is by itself a special subvariety.} of $X$ then then $\{\mu_i\}_i$ converges weak-$*$ to the uniform probability measure on $X$.
\end{conj}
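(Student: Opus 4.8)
The plan is to prove the statement in the regime reachable by the present methods --- when all CM orders occurring in a fixed coordinate share one negative fundamental discriminant, are split at two fixed rational primes, and the associated quadratic fields carry no exceptional Landau-Siegel zero; the conjecture in full generality lies beyond this approach. The argument translates Galois orbits of special points into periodic torus orbits, applies the joinings theorem of Einsiedler and Lindenstrauss, and then excludes by hand the intermediate algebraic measures that the joinings theorem alone permits. \emph{Step 1 (adelization).} Write $X=Y_1\times\cdots\times Y_n$ with each $Y_j\cong\mathrm{PGL}_2(\mathbb{Q})\backslash\mathrm{PGL}_2(\mathbb{A})/K_{j,\infty}K_{j,f}$. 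A special point $x_i$ all of whose coordinates have fundamental discriminant $D_i$ is the image of a closed orbit of the adelic torus $T_i(\mathbb{A})$ inside $\prod_{j=1}^n\mathrm{PGL}_2(\mathbb{Q})\backslash\mathrm{PGL}_2(\mathbb{A})$, where $T_i$ is the image in $\mathrm{PGL}_2$ of $\Res^{E_i}_{\mathbb{Q}}\Gm$ for $E_i=\mathbb{Q}(\sqrt{D_i})$, mapped into the $n$ factors by the Galois twists recorded in the coordinates of $x_i$; by the reciprocity map of class field theory the Galois orbit of $x_i$ together with $x_i$ pushes forward to the periodic probability measure $\nu_i$ of this torus, so $\mu_i$ is the image of $\nu_i$ on $X$. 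The genericity hypothesis becomes the no-collapse condition that the toral orbits escape every fixed proper special subvariety of $X$ --- equivalently $\mathfrak{N}_i\to\infty$, so no pair of coordinates is forced onto a Hecke correspondence of bounded degree --- and in particular $|D_i|\to\infty$ by Brauer-Siegel.

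\emph{Step 2 (rigidity, and reduction to a cross-correlation bound).} Pass to a weak-$*$ limit $\mu_\infty$ along a subsequence. Duke's theorem and its refinements show that each one-coordinate marginal of $\mu_\infty$ is $\meas_{Y_j}$. Since the torus orbits have volume tending to infinity and are split at two fixed primes --- the two split places supplying a fixed higher-rank split torus under which $\mu_\infty$ is invariant --- the joinings theorem of Einsiedler and Lindenstrauss applies and forces $\mu_\infty$ to be an algebraic measure, hence a joining of the $\meas_{Y_j}$. For the diagonal torus on a product of $\mathrm{PGL}_2$-homogeneous spaces these joinings are classified: up to the diagonal $\mathrm{PGL}_2$-action, such a joining is the full product $\meas_{Y_1}\times\cdots\times\meas_{Y_n}$ unless some pair of coordinates $Y_j,Y_k$ is coupled along a translate of a Hecke correspondence $C_g=\{(z,g.z)\}$. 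Hence it suffices to show that no translate of any Hecke correspondence between two of the coordinates receives positive $\mu_\infty$-mass; and for this it is enough to bound the cross-correlation $\langle\nu_i,C_g\rangle$ --- roughly the normalized number of times the torus orbit enters a fixed small neighborhood of $C_g$ --- by a quantity that tends to $0$ with $i$ and with the neighborhood, uniformly over $g$ of bounded complexity. In particular one may as well take $n=2$.

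\emph{Step 3 (the crux, and the main obstacle).} Everything now rests on this cross-correlation bound. The idea is a geometric expansion: unfolding the toral period against $C_g$ rewrites $\langle\nu_i,C_g\rangle$ as a short, weighted, shifted convolution sum of the shape $\sum r_{D_i}\!\bigl(Q_1(a,b)\bigr)\,r_{D_i}\!\bigl(Q_2(a,b)\bigr)$, where $(a,b)$ ranges over lattice points in a smooth planar domain of controlled size, $Q_1$ and $Q_2$ are integral binary quadratic forms built from the two embeddings and the Hecke datum, and $r_{D_i}$ counts representations by the principal form of discriminant $D_i$ --- a divisor-type multiplicative function. Bounding this sum from above calls for extending the Shiu-Nair upper-bound sieve from a single polynomial on an interval to a pair of polynomials in two variables on smooth domains; comparing with the trivial lower bound for the orbit volume then forces $\langle\nu_i,C_g\rangle\to0$. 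I expect this to be the hard part: the Shiu-Nair-type estimate must be uniform both in the growing discriminant $D_i$ and in the size and coefficients of $Q_1,Q_2$, and the mean of $r_{D_i}$ --- governed by $L(1,\chi_{D_i})$ --- must be kept from being anomalously small, which is exactly where the absence of an exceptional Landau-Siegel zero enters and why the theorem is conditional. Granting the bound, Step 2 forces every weak-$*$ limit of $\{\mu_i\}$ to equal $\meas_{Y_1}\times\cdots\times\meas_{Y_n}=\meas_X$, and compactness of $X$ gives $\mu_i\to\meas_X$ weak-$*$; unwinding the class-field-theory dictionary yields the asserted equidistribution of Galois orbits in this regime.
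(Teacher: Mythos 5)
The statement you are addressing is a \emph{conjecture}, and the paper does not prove it; it only establishes a conditional special case (all coordinates sharing one fundamental discriminant, two fixed split primes, no exceptional Landau--Siegel zero), which is exactly the regime you restrict to at the outset. So as a proof of the stated conjecture your proposal has a gap by design, which you acknowledge. Within that regime your architecture does track the paper's: adelization via the reciprocity map, the Einsiedler--Lindenstrauss joinings theorem plus Duke's theorem to reduce to excluding Hecke translates, a geometric expansion of the toral period against the correspondence producing a short shifted convolution sum, a two-variable Shiu--Nair sieve, and the no-exceptional-zero hypothesis controlling $L(1,\chi_{D_i})$. Note, though, that even in this regime the correct conclusion is only that limit measures are convex combinations of the uniform measures on the \emph{connected components} of $X$ (the residual spectrum obstruction), and that the sieve bound degenerates when the ellipse attached to the twist is too eccentric, i.e.\ when $\mathfrak{N}_i$ is small relative to $|D_i|$; there the paper must fall back on the Ellenberg--Michel--Venkatesh/Linnik argument, a case your sketch omits.

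The one genuine mathematical gap in your Step 2 is the reduction ``it suffices to bound the cross-correlation $\langle\nu_i,C_g\rangle$ in a fixed small neighborhood of $C_g$, uniformly over $g$ of bounded complexity.'' This does not suffice with an archimedean test neighborhood. The normalizer of the diagonally embedded rank-one torus $\mathbf{T}^\Delta$ inside $\mathbf{G}\times\mathbf{G}$ contains $\mathbf{T}\times\mathbf{T}$, so the ergodic decomposition of a limit measure ranges over an \emph{uncountable} family of translates $[\delta\,\mathbf{G}^\Delta(\mathbb{R})^+\xi]$, and an uncountable convex combination of such periodic measures need not satisfy $\nu\left((B\times B).x\right)\asymp \meas(B)$ --- it can even be absolutely continuous with respect to Haar, in which case no pointwise cross-correlation bound against individual correspondences yields a contradiction. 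The paper resolves this by running the cross-correlation with a non-archimedean Bowen-ball neighborhood $K_{p_1}^{(-n,n)}$ at the split prime $p_1$: invariance of every $\nu_i$ under the fixed split torus $A_{p_1}^\Delta$ forces $\ctr(\xi)_{p_1}\in A_{p_1}$ for the surviving translates, and an entropy/leafwise-measure argument converts the decay $\Cor[\mu_i,\nu_\xi](B^{(-n,n)})\ll p_1^{-2(1+\rho)n}$ into a lower bound $\ent_{a^\Delta}\geq(1+\rho)\log p_1$ exceeding the maximal entropy of the Hecke components, which is the actual contradiction. Without this $p$-adic entropy step your argument does not close.
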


The latter conjecture implies the Andr\'e-Oort conjecture for products of modular curves which has been settled by Pila \cite{Pila}, see also \cite{Andre,Edixhoven2,EdixhovenArbitrary}. The Andr\'e-Oort conjecture in this setting states that the sequence  $\{x_i\}_i$  above must be Zariski dense in $X$. The Pila-Zannier strategy which is behind the recent breakthroughs on the Anrdr\'e-Oort conjecture \cite{Pila,PilaTsimerman, Tsimerman} does not seem to shed light on the question of equidistribution of 
Galois orbits.

\subsection{Summary of Results}\addpenalty{-1000}
\subsubsection{Results for Torus Orbits}
We present a proof of the mixing conjecture of Michel and Venkatesh conditional on several significant assumptions. Most importantly, we assume that all the CM fields $E_i$ are split at two fixed primes $p_1$, $p_2$ and have no exceptional Landau-Siegel zero.

Einsiedler, Lindenstrauss, Michel and Venkatesh \cite{ELMVCubic} have defined the notion of a toral packet.   To each CM point or a closed geodesic on a modular curve corresponds an order $\Lambda$ in a quadratic field $E/\mathbb{Q}$. A toral packet is a generalization of the notion of a single $\Pic(\Lambda)$-orbit of a CM point or a closed geodesic.

Let $\mathbf{G}$ be a form of $\mathbf{PGL}_2$ defined over $\mathbb{Q}$. Fix a compact-open subgroup $K_f<\mathbf{G}(\mathbb{A}_f)$ and consider the double quotient
\begin{equation*}
\widetilde{Y}\coloneqq\dfaktor{\mathbf{G}(\mathbb{Q})}{\mathbf{G}(\mathbb{A})}{K_f}
\end{equation*}
The action of the real group $\mathbf{G}(\mathbb{R})$ on $\widetilde{Y}$ induces an isomorphism between $\widetilde{Y}$ and a disjoint union of finitely many locally homogeneous spaces
\begin{equation*}
\widetilde{Y}\simeq \bigsqcup_{\delta\in \mathbf{G}(\mathbb{Q})\backslash \mathbf{G}(\mathbb{A}_f) \slash K_f} \lfaktor{\Gamma_\delta}{\mathbf{G}(\mathbb{R})}
\end{equation*}
where $\Gamma_\delta\coloneqq\mathbf{G}(\mathbb{Q})\cap \delta K_f \delta^{-1}$ is a congruence lattice in $\mathbf{G}(\mathbb{R})$. In the simplest case when $\mathbf{G}=\mathbf{PGL}_2$ and $K_f$ is a maximal compact subgroup the space $\widetilde{Y}$ has a single component and can be identified with $\lfaktor{\mathbf{PGL}_2(\mathbb{Z})}{\mathbf{PGL}_2(\mathbb{R})}$.

A toral packet $\mathcal{P}$ is a finite collection of orbits in $\widetilde{Y}$ of a real torus $H<\mathbf{G}(\mathbb{R})$ which is a projection of a single adelic torus orbit; see \S\ref{sec:homogeneous-sets} for an exact definition. The set of $H$-orbits in $\mathcal{P}$ has a natural structure as a principal homogeneous space 
for a finite abelian group $C$ which is a quotient of the id\`ele class group of an associated quadratic field $E/\mathbb{Q}$. Moreover, there is an order $\Lambda<E$ attached to $\mathcal{P}$ and a canonical surjective homomorphism $C\to\Pic(\Lambda)$. If $K_f$ is maximal then this homomorphism is an isomorphism.
To each packet one can attach a discriminant $D\in\mathbb{R}$ that measures the arithmetic complexity of the packet, cf. \S\ref{sec:discriminant-local}. This discriminant is a product of $\disc(\Lambda)$ and an archimedean contribution.

The following is a special case of Theorem \ref{thm:joinings-adelic}.
\begin{thm}\label{thm:main-intro}
Let $\mathbf{G}$ be a form of $\mathbf{PGL}_2$ defined over $\mathbb{Q}$ and let $K_f<\mathbf{G}(\mathbb{A}_f)$ be a compact-open subgroup. Fix a \emph{compact} real torus $K_\infty<\mathbf{G}(\mathbb{R})$ and denote
\begin{equation*}
Y\coloneqq \faktor{\widetilde{Y}}{K_\infty}= \dfaktor{\mathbf{G}(\mathbb{Q})}{\mathbf{G}(\mathbb{A})}{K_\infty\times K_f}\simeq \bigsqcup_{\delta\in \mathbf{G}(\mathbb{Q})\backslash \mathbf{G}(\mathbb{A}_f) \slash K_f} \dfaktor{\Gamma_\delta}{\mathbf{G}(\mathbb{R})}{K_\infty}
\end{equation*}

Let $\mathcal{P}_i\subset\widetilde{Y}$ be a sequence of toral packets invariant under $K_\infty$ with a fundamental discriminant $D_i<0$. By abuse of notation denote by $\mathcal{P}_i$ the projection of the packet to a finite
set of points in $Y$. The set $\mathcal{P}_i\subset Y$ is a principal homogeneous space for an abelian group $C_i$.

Let $\Lambda_i<E_i$ be the order in an imaginary quadratic field $E_i/\mathbb{Q}$ attached to $\mathcal{P}_i$ and denote by $\sigma\mapsto [\sigma]$ the homomorphism $C_i\to\Pic(\Lambda_i)$. For every $i\in\mathbb{N}$ choose some $\sigma_i\in C_i$ and let $\mu_i^\mathrm{joint}$ be the Borel probability measure on $Y\times Y$ defined as the normalized counting measure on the set
\begin{equation*}
\mathcal{P}_i^\mathrm{joint}\coloneqq \left\{(z,\sigma_i.z) \mid z\in \mathcal{P}_i \right\}\subset Y\times Y
\end{equation*}

Fix two primes $p_1$, $p_2$ and assume for all $i\in\mathbb{N}$
\begin{enumerate}
\item the primes $p_1$, $p_2$ are split in $E_i$;
\item the Dedekind $\zeta$-function of $E_i$ has no exceptional Landau-Siegel zero.
\end{enumerate}

Set 
\begin{equation*}
\mathfrak{N}_i=\min_{\substack{\mathfrak{a} \subseteq \Lambda_i \textrm{ invertible ideal}\\ \mathfrak{a}\in [\sigma_i]}} \Nr \mathfrak{a}
\end{equation*}
and assume $\mathfrak{N}_i\to_{i\to\infty}\infty$.
Then any weak-$*$ limit point of $\left\{\mu_i^\mathrm{joint}\right\}_i$ is a convex combination of the uniform probability measures on the connected components of $Y\times Y$.
\end{thm}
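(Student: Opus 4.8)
The plan is to combine the measure-classification (``joinings'') theorem of Einsiedler and Lindenstrauss, which applies to the torus orbits at hand, with a new arithmetic input that rules out the intermediate algebraic measures the classification leaves open.

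\emph{Adelization and the joinings theorem.} First I would realize $\mathcal{P}_i^{\mathrm{joint}}$ as the projection to $Y\times Y$ of a single periodic orbit of the quadratic torus $\mathbf{T}$ attached to $E_i$, embedded diagonally $t\mapsto(t,t)$ in $(\mathbf{G}\times\mathbf{G})(\mathbb{A})$, through a point whose two coordinates differ by a finite id\`ele $t_{\sigma_i}$ representing $\sigma_i$. Since $E_i$ is imaginary quadratic, $\mathbf{T}(\mathbb{R})$ is compact and absorbed into $K_\infty$, so the non-compact directions of the orbit all sit at finite places; because $p_1$ and $p_2$ split in $E_i$, the measure $\mu_i^{\mathrm{joint}}$ is invariant under the diagonal action of the rank-two split torus $\mathbf{T}(\mathbb{Q}_{p_1})\times\mathbf{T}(\mathbb{Q}_{p_2})$. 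By Duke's theorem and its generalizations cited above, the two marginals of $\mu_i^{\mathrm{joint}}$, namely $\mu_i$ and its $\sigma_i$-translate (both packets of fundamental discriminant $D_i$ with $|D_i|\to\infty$ since $\mathfrak{N}_i\ll|D_i|^{1/2}$), each converge weak-$*$ to $\meas_Y$; together with non-escape of mass — which, like the volume lower bound $\vol(\mathcal{P}_i)\gg_\varepsilon|D_i|^{1/2-\varepsilon}$ on which it rests, uses Brauer--Siegel and the hypothesis of no exceptional zero — this shows that every weak-$*$ limit $\mu$ is a genuine self-joining of $\meas_Y$, invariant under a higher-rank diagonalizable action with positive entropy along the torus directions. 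The joinings theorem then forces every ergodic component of $\mu$ to be either the uniform measure on a connected component of $Y\times Y$ or an algebraic measure supported on a translate of a Hecke correspondence of bounded degree.

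\emph{Excluding the Hecke-correspondence components: geometric expansion.} It remains to show $\mu$ charges no Hecke correspondence. Suppose otherwise: passing to a finite subcollection, ergodic components of $\mu$ of total mass $\geq c>0$ are then supported on a finite union $\mathcal{C}$ of translated Hecke correspondences of degree at most some $N$. By portmanteau, for each $\varepsilon>0$ infinitely many $i$ satisfy $\mu_i^{\mathrm{joint}}\big(B_\varepsilon(\mathcal{C})\big)\geq c/2$, i.e.\ the pair $(z,\sigma_i.z)$ lies within $\varepsilon$ of $\mathcal{C}$ for at least $(c/2)|\mathcal{P}_i|$ points $z\in\mathcal{P}_i$. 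Parametrizing $\mathcal{P}_i$ by invertible ideal classes, $z$ is encoded by an ideal $\mathfrak{a}$ and $\sigma_i.z$ by $\mathfrak{a}\mathfrak{b}$ for a fixed integral $\mathfrak{b}$ with $\Nr\mathfrak{b}=\mathfrak{N}_i$, and unwinding the proximity condition — this is the geometric expansion of the cross-correlation between the periodic measure $\mu_i$ and the Hecke correspondence — turns it into the requirement that a binary quadratic form built from $\mathfrak{b}$, evaluated over a region whose size is controlled by $\varepsilon$ and $N$ and shifted by a quantity tied to the position of $z$, represents an integer of size $\asymp\mathfrak{N}_i$. Summing over $z\in\mathcal{P}_i$ turns the count into a short \emph{shifted convolution sum}, schematically $\sum_m r_{Q_1}(m)\,r_{Q_2}(m+h)$ with $Q_1,Q_2$ the relevant binary forms, $m$ in a short range, and shift $h\asymp\mathfrak{N}_i\to\infty$.

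\emph{Bounding the shifted convolution sum.} Finally I would bound this sum from above by generalizing the divisor-sum method of Shiu and of Nair to a polynomial in two variables restricted to a smooth archimedean domain. Since $\mathfrak{N}_i\to\infty$ the sum is genuinely short relative to the moduli involved, so the bound is $o(|\mathcal{P}_i|)$ as $i\to\infty$ for each fixed $\varepsilon$; taking $\varepsilon$ small and $i$ large then contradicts the lower bound $(c/2)|\mathcal{P}_i|$. Hence no ergodic component of $\mu$ is supported on a Hecke correspondence, and $\mu$ is a convex combination of the uniform measures on the connected components of $Y\times Y$, as asserted. I expect the main obstacles to be concentrated in the last two steps: carrying out the geometric expansion in a form where the $\varepsilon$-neighbourhood genuinely localizes the sum to a short range (rather than one of length growing with $\mathfrak{N}_i$), and pushing the Shiu--Nair upper bound to the two-variable, smooth-domain regime with uniformity in all parameters; by contrast the invocation of the joinings theorem and the verification of its hypotheses (higher rank from the two split primes, positive entropy and the volume bound via Brauer--Siegel and the no-Siegel-zero hypothesis) should be comparatively routine.
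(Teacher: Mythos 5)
Your outline reproduces the paper's architecture (adelization, Einsiedler--Lindenstrauss, geometric expansion of a cross-correlation, Shiu--Nair sieve), but the step where you exclude the intermediate measures has a genuine gap. You write: ``passing to a finite subcollection, ergodic components of $\mu$ of total mass $\geq c>0$ are then supported on a finite union $\mathcal{C}$ of translated Hecke correspondences of degree at most some $N$,'' and then argue by portmanteau with an archimedean $\varepsilon$-neighbourhood. This reduction is not available. The joinings theorem gives an ergodic decomposition $\mu=\int\lambda\,d\mathcal{P}(\lambda)$ in which the components supported on $[\mathbf{G}^\Delta(\mathbb{A})^+\xi]$ range over an \emph{uncountable} family of translates $\xi$ (the normalizer of $\mathbf{T}^\Delta$ contains $\mathbf{T}\times\mathbf{T}$, which is much larger than $\mathbf{T}^\Delta$, so there is no countability of ergodic components), and an uncountable convex combination of such measures can be absolutely continuous with respect to Haar; in that case no single correspondence carries positive mass, no finite union $\mathcal{C}$ exists, and the estimate $\nu((B\times B).x)\asymp\meas(B)$ that your contradiction rests on fails. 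The paper circumvents this by testing the cross-correlation with non-archimedean Bowen balls $K_{p_1}^{(-n,n)}$ rather than archimedean $\varepsilon$-balls, exploiting that every limit component satisfies $\ctr(\xi)_{p_1}\in A_{p_1}$, and converting the resulting decay $\Cor[\mu,\nu_\xi](B^{(-n,n)})\ll F(\ctr(\xi))p_1^{-2(1+\rho)n}$ --- integrated over a compact set of $\xi$ --- into a lower bound $(1+\rho)\log p_1$ on the metric entropy of $a^\Delta$, contradicting the fact that the entropy of each diagonal component equals $\log p_1$. Without this (or an equivalent device) your contradiction does not go through.

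A second, independent gap: the sieve bound requires the ellipse attached to $\sigma_i^{-1}$ to be not too distorted, which amounts to a lower bound $\mathfrak{N}_i\gg|D_i|^{(2-\theta_l^{-1})/3+\eta_0}$; the hypothesis $\mathfrak{N}_i\to\infty$ alone does not give this, and in the complementary range the paper falls back on the Ellenberg--Michel--Venkatesh/Linnik argument (restricting the joint packet to an ambient Hecke correspondence of volume $\approx\mathfrak{N}_i$ and using the spectral gap at $p_1$ or $p_2$). Your proposal omits this case split entirely. Relatedly, the shift in the shifted convolution sum is $\asymp|D_i|$, not $\asymp\mathfrak{N}_i$; the role of $\mathfrak{N}_i\to\infty$ in the main range is to kill the non-compact stabilizer terms in the geometric expansion, while the no-exceptional-zero hypothesis enters through the bound $L'(1,\ch_{E_i})/L(1,\ch_{E_i})\ll\log|D_{E_i}|$ needed to control the logarithmic mean $\sum_{a\le X}f(a)/a$ after sieving, not merely through Brauer--Siegel volume bounds.
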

The theorem above is a special case of Theorem \ref{thm:joinings-adelic} which together with Proposition \ref{prop:residual} describes completely under the assumptions above the analogues weak-$*$ limit points in the adelic quotient
\begin{equation*}
\left[\left(\mathbf{G}\times\mathbf{G}\right)(\mathbb{A})\right]\coloneqq
\lfaktor{\left(\mathbf{G}\times\mathbf{G}\right)(\mathbb{Q})}{\left(\mathbf{G}\times\mathbf{G}\right)(\mathbb{A})}
\end{equation*}
The conclusion of Theorem \ref{thm:main-intro} is the best possible in this setting. In particular, one cannot expect equidistribution on all of $Y\times Y$ because the joint packets $\mathcal{P}_i^\mathrm{joint}$ can avoid completely some connected components of $Y\times Y$. This phenomena can appear already for $\mathbf{G}=\mathbf{PGL}_2$ whenever $K_f$ is non-maximal. This behavior is intimately related to the limit of the averages of the residual spectrum over $\mathcal{P}_i^\mathrm{joint}$ and it is easy to compute exactly which limit measures exactly occur using Proposition \ref{prop:residual}. 

We establish also a generalization of Theorem \ref{thm:main-intro} to $n$-fold products $Y^{\times n}$ -- Theorem \ref{thm:joinings-adelic-n}. This generalization follows from the $2$-fold result and an auxiliary application of the Einsiedler-Lindenstrauss joining theorem \cite[Corollary 1.5]{ELJoinings}.
\subsubsection{Results for Galois Orbits}
Theorem \ref{thm:main-intro} and its $n$-fold generalization Theorem \ref{thm:joinings-adelic-n} imply through the reciprocity map of class field theory a theorem about equidistribution of Galois orbits of special point in products of indefinite Shimura curves.
\begin{thm}
Let $\mathbf{G}$ be a form of $\mathbf{PGL}_2$ defined over $\mathbb{Q}$ and split over $\mathbb{R}$. Let $X$ be a finite product of indefinite Shimura curves relative to $\mathbf{G}$. 
Assume $\{x_i\}_i$ is a sequence of special points on $X$ such that all coordinates have CM by the same \emph{maximal} order. Denote by $\mu_i$ the normalized counting measure on the finite Galois orbit of $x_i$.

Fix two primes $p_1$, $p_2$ and denote by $E_i$ the CM field of $x_i$. Assume that for all
$i\in\mathbb{N}$
\begin{enumerate}
\item the primes $p_1$, $p_2$ split in $E_i$;
\item the Dedekind $\zeta$-function of $E_i$ has no exceptional Landau-Siegel zero.
\end{enumerate}
If the sequence $\{x_i\}_i$ has a finite intersection with any proper special subvariety then any weak-$*$ limit point of $\{\mu_i\}_i$ is a convex combination of the uniform probability measures on the connected components of $X$.
\end{thm}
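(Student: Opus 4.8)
The plan is to deduce the theorem from the $n$-fold joinings result (Theorem~\ref{thm:joinings-adelic-n}, which for $n=2$ specializes to Theorem~\ref{thm:main-intro}) by translating the statement about Galois orbits of special points into one about toral packets on the adelic quotient, via the reciprocity law for CM points. For $n=1$ the assertion is already the Duke--Iwaniec equidistribution theorem recalled in the introduction, so one may assume $n\ge 2$.

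First I would record the set-up. For each $i$ let $E_i$ be the imaginary quadratic field of $x_i$ and $\mathcal{O}_{E_i}$ the common maximal CM order of its coordinates, and write $X = Y_1\times\cdots\times Y_n$ with $Y_j = \dfaktor{\mathbf{G}(\mathbb{Q})}{\mathbf{G}(\mathbb{A})}{K_\infty\times K_{f,j}}$. The hypothesis that every coordinate of $x_i$ has CM by $\mathcal{O}_{E_i}$ furnishes an embedding $E_i\hookrightarrow B$ into the quaternion algebra of $\mathbf{G}$, hence an embedding of the associated one-dimensional $\mathbb{Q}$-torus $\mathbf{T}_i$ (the image in $\mathbf{G}$ of the Weil restriction $\WR\Gm$) diagonally into $\mathbf{G}\times\cdots\times\mathbf{G}$; the finite data distinguishing the coordinates is absorbed into a point $g_i\in(\mathbf{G}\times\cdots\times\mathbf{G})(\mathbb{A})$ so that the $\mathbf{T}_i(\mathbb{A})$-orbit of $g_i$ projects to a toral packet $\mathcal{P}_i$ inside $\widetilde{Y}_1\times\cdots\times\widetilde{Y}_n$. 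By Shimura's reciprocity law for CM points the projection of $\mathcal{P}_i$ to $X$ is precisely the Galois orbit of $x_i$, and the normalized counting measure $\mu_i$ is the pushforward of the homogeneous toral measure; one must check here that the stabilizer and multiplicity bookkeeping matches, which is exactly what the surjection $C_i\to\Pic(\mathcal{O}_{E_i})$ and the definition of toral packet from \S\ref{sec:homogeneous-sets} are for.

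The crux is verifying the genericity hypothesis of Theorem~\ref{thm:joinings-adelic-n} --- that for every pair of coordinates the relevant shift norm $\mathfrak{N}_i$ tends to infinity. For this I would invoke the classification of proper special subvarieties of a product of commensurable Shimura curves: up to irreducible components, each is either a fibre over a special point of some factor, or a translate of the graph of a Hecke correspondence between two factors $Y_j$, $Y_{j'}$. Unwinding reciprocity, the pair $(x_i^{(j)},x_i^{(j')})$ lies on a Hecke correspondence of bounded complexity exactly when the image in $\Pic(\mathcal{O}_{E_i})$ of the relative position of the two coordinates inside the packet is represented by an invertible ideal of bounded norm, i.e.\ exactly when the shift norm $\mathfrak{N}_i$ for that pair stays bounded along a subsequence. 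Hence the assumption that $\{x_i\}_i$ meets every proper special subvariety in a finite set forces $\mathfrak{N}_i\to\infty$ for each pair; applying the same assumption to the $0$-dimensional special subvarieties and invoking Brauer--Siegel gives in addition that $|\disc E_i|\to\infty$ and that the Galois orbits grow. The two conditional hypotheses --- $p_1,p_2$ split in $E_i$ and no exceptional zero for $\zeta_{E_i}$ --- are inherited verbatim from the $E_i$.

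Theorem~\ref{thm:joinings-adelic-n} then shows that every weak-$*$ limit of the lifted measures is a convex combination of uniform probability measures on the connected components of $[(\mathbf{G}\times\cdots\times\mathbf{G})(\mathbb{A})]$, and pushing forward along the quotient by the compact group $\prod_{j}(K_\infty\times K_{f,j})$ carries these to uniform measures on the connected components of $X$, which is the assertion. Granting the deep input Theorem~\ref{thm:joinings-adelic-n}, I expect the main obstacle to be the third step: making the dictionary between ``lies on a translate of a Hecke correspondence'' and ``the relative position is represented by a small ideal'' completely precise, and checking that the single geometric finiteness hypothesis simultaneously rules out all intermediate special subvarieties --- diagonals, Hecke translates, and the lower-dimensional ones --- rather than handling them one correspondence at a time.
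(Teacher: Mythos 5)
Your proposal follows the same route as the paper: translate the Galois orbit into a (diagonal) homogeneous toral set via the reciprocity law, verify the genericity hypothesis pair-by-pair from the finite-intersection assumption (using the dictionary of Proposition~\ref{prop:equidist-conditions-equivalence} between bounded shift norms and containment in a bounded Hecke correspondence), apply Theorem~\ref{thm:joinings-adelic-n}, and push forward along $\Pi$. One remark on the genericity step: you do not actually need a classification of \emph{all} proper special subvarieties; for the implication you want (finite intersection $\Rightarrow$ $\mathfrak{N}_i\to\infty$ for each pair) it suffices to know that the image in $X$ of a fixed Hecke correspondence between two factors is a proper special subvariety, and then argue by contrapositive and pigeonhole. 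The converse direction is where a classification would be needed, and it is not used.

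There is one concrete gap in your reduction. You absorb ``the finite data distinguishing the coordinates'' into a single adelic point $g_i\in\mathbf{G}^{\times n}(\mathbb{A})$ and consider $[\mathbf{T}_i^\Delta(\mathbb{A})g_i]$, but Theorem~\ref{thm:joinings-adelic-n} applies only to sets of the special shape $[\mathbf{T}^\Delta(\mathbb{A})(g,s_1g,\dots,s_{n-1}g)]$ with $g\in\mathbf{G}(\mathbb{A})$ and $s_j\in\mathbf{T}(\mathbb{A})$, and moreover its genericity hypothesis \eqref{eq:n-fold-condition} is phrased in terms of the $s_j$, so you must actually produce them. A general special point with all coordinates having CM by the same maximal order yields a set of the form $[\mathbf{T}^\Delta(g,s_1gk_1^i,\dots,s_{n-1}gk_{n-1}^i)]$, where the $k^i$ are Atkin--Lehner-type elements lying in a fixed maximal compact subgroup; the correction $k^i$ is not in the torus and cannot simply be absorbed into $g$. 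The paper's fix is short but necessary: apply Theorem~\ref{thm:joinings-adelic-n} to the translated sets $\mathcal{H}_i(k^i)^{-1}$, pass to a subsequence along which $k^i\to k^0$ in the compact group, and use that $\mathbf{G}^{\times n}(\mathbb{A})^+$ is normal to conclude that the limit $\mu k_0$ of the original measures is still $\mathbf{G}^{\times n}(\mathbb{A})^+$-invariant. Without this step the theorem you are quoting does not apply to the objects you have constructed.
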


\subsection{Previous Results}
Conjecture \ref{conj:mixing} has been proved by Ellenberg, Michel and Venkatesh \cite{EMV} under the assumption of a single fixed split prime $p_1$ and if the following holds
\begin{equation}\label{eq:emv-condition}
\exists\eta>0\; \forall i\gg 1\colon \mathfrak{N}_i \ll |D|^{1/2-\eta}
\end{equation}
The proof in \cite{EMV} used minor assumptions and applied verbatim only when $\mathbf{G}$ was ramified at infinity and $\gcd\left(\mathfrak{N}_i,p_1\right)=1$. The  assumption on $\mathbf{G}(\mathbb{R})$ can be removed using \cite{Kh17} and the condition $\gcd\left(\mathfrak{N}_i,p_1\right)=1$ can be relaxed by restricting the range of $\eta$ in \eqref{eq:emv-condition} depending on the best available bounds towards the Ramanujan Conjecture for\footnote{A. Venkatesh has described to me an alternative proof of the mixing conjecture assuming \eqref{eq:emv-condition} by directly deducing from an appropriate version of Linnik's Basic Lemma that any limit measure must have maximal entropy for the diagonal toral flow at $p_1$ on $\left[(\mathbf{G}\times\mathbf{G})(\mathbb{A})\right]$. This does not rely on a spectral gap and circumvents completely the difficulties arising when $p_1\mid \mathfrak{N}_i$ in the original argument of \cite{EMV}.} $\mathbf{SL}_2$. 

Condition \eqref{eq:emv-condition} is essential to the method of \cite{EMV} and fails for the majority of possible twists $[\sigma_i]\in\Pic(\Lambda_i)$. The proof strategy of Ellenberg, Michel and Venkatesh is to use \eqref{eq:emv-condition} to find for each $i$ a Hecke correspondence containing the packet $\mathcal{P}_i$ and whose volume is small compared to the volume of $\mathcal{P}_i$. In this favorable situation they  use an effective version of Linnik's method using an explicit spectral gap for the Hecke operator at the split prime $p_1$ to deduce that the counting measure on $\mathcal{P}_i$ is approximately equidistributed in the ambient Hecke correspondence. The proof then concludes using the equidistribution of Hecke correspondences in $Y\times Y$.

The analogues questions for function fields in finite characteristic has been studied by Shende and Tsimerman \cite{ShendeTsimerman}. In the finite characteristic setting additional tools are available. Shende and Tsimerman translate the analogues of Duke's theorem and the mixing conjecture to questions about point counting on (singular) varieties. These can be addressed using the Grothendieck-Lefschetz  trace formula. They present a proof of Duke's theorem in finite characteristic using this method and a partial result towards the mixing conjecture. For the latter question they succeed in equating the pertinent higher cohomology groups but the necessary bound on the dimension of the lower cohomology groups is conjectural.

\subsection{Measure Rigidity}
Linnik has proved Duke's theorem about equidistribution of a sequence packets of CM points on the complex modular curve assuming that there is a fixed prime $p$ which splits in all the CM fields in the sequence \cite{LinnikBook}. In this proof Linnik used his ``ergodic method'' to bootstrap a weak bound on the self-correlation of the periodic measure on a toral packet in intermediate scales to full equidistribution using a dynamical argument. It is this dynamical argument where the assumption of a fixed split prime is used.

Einsiedler, Lindenstrauss, Michel and Venkatesh \cite{ELMVPeriodic,ELMVCubic,ELMVPGL2} have introduced a variant of Linnik's ``ergodic method'' which fits into the framework of homogeneous dynamics. The assumption of a fixed split prime $p$ implies that the adelic, or $S$-arithmetic, periodic measures corresponding to the packets in the sequence are all invariant under a \emph{split} $p$-adic torus. Moreover, the self-correlation bound in the form of Linnik's basic lemma implies that any weak-$*$ limit must have \emph{maximal} entropy with respect to the action of any element in the split $p$-adic torus. Linnik's theorem now follows from the classification of measures of maximal entropy with respect to the action of a semi-simple $p$-adic group element which generates an unbounded subgroup. The latter one is straightforward if one uses the relation between entropy and leafwise measures \cite{MargulisTomanov,Pisa}.

The approach of Einsiedler, Lindenstrauss, Michel and Venkatesh has significant ramifications when combined with the modern methods of measure rigidity for toral actions \cite{QuantumLindenstrauss,EKL,EntropySArithmetic,ELJoinings}. Although measure rigidity requires further splitting assumptions it can imply strong equidistribution results based on weaker arithmetic input compared to methods of harmonic analysis. A main example is the analogue of Linnik's theorem for maximal tori in $\mathbf{PGL}_3$ \cite{ELMVCubic} where equidistribution is deduced by verifying Weyl's equidistribution criterion only for a \emph{small} part of the spectrum.

This strategy is also the starting point for our proof of Theorem \ref{thm:main-intro} and its generalizations. The assumption that two primes $p_1$,$p_2$ are split in all the CM fields in the sequence is required for the joinings theorem of Einsiedler and Lindenstrauss \cite{ELJoinings} to apply. This measure rigidity result concurrently with Linnik's or Duke's theorem for equidistribution of packets in rank $1$ implies that any possible weak-$*$ limit measure of periodic measures on joint toral packet must be algebraic, i.e.\ it is a convex combination of uniform measures and some translates of Hecke correspondences. It is these translates of Hecke correspondence that we need to discard using the genericity assumption $\mathfrak{N}_i\to\infty$ in the conjecture of Michel and Venkatesh \ref{conj:mixing}.

\subsection{Cross-Correlation}\label{intro:cross-correlation}
The main novelty is our method to demonstrate that each limit point of the sequence of measures $\left\{\mu_i^\mathrm{joint}\right\}$ in Theorem \ref{thm:main-intro} is singular to any convex combination of intermediate measures allowed by the joinings theorem of Einsiedler and Lindenstrauss. 

The rudiments of our approach can be described in a general setting. Consider a locally compact $G$-space $X$ where $G$ is a second countable locally compact topological group.
Suppose $\mu$ and $\nu$ are Borel measures on $X\times X$ and denote by $\meas_G$ some fixed Haar measure on $G$. We are interested in the case when $\nu$ is a periodic measure for 
the diagonal subgroup $G^\Delta<G\times G$, i.e.\ there is some $x_0\in X\times X$ such that $\Stab_{G^\Delta}(x_0)$ is a lattice in $G^\Delta$ and
$\nu$ is the $G^\Delta$-invariant probability measure supported on the closed orbit $G^\Delta.x_0$. For any compact subset $C\subset X$ if we take a small enough symmetric identity neighborhood $B\subset G$ then $\nu$-almost every $x\in C\times C$ satisfies
\begin{equation}\label{eq:B-meas-demo}
\nu\left((B\times B).x\right) \asymp \meas_G(B)
\end{equation}

In order to show that the measures $\mu$ and $\nu$ are singular we can consider for each compact $C\subset X\times X$ and for a compact symmetric identity neighborhood $B\subset G$ the cross-correlation quantity
\begin{equation*}
\widetilde{\Cor}_C[\mu,\nu](B)\coloneqq \mu\times \nu \left(\left\{(x,y)\in C\times C \mid y\in (B\times B).x \right\}\right)
\end{equation*}
We call $B$ the \emph{test neighborhood} of the cross-correlation.
Assume we are able to establish that
\begin{equation}\label{eq:cor-demo-bound}
\widetilde{\Cor}[\mu,\nu](B)\ll \meas_G(B)^{1+\rho}
\end{equation}
for some $\rho>0$, for a family of compact subsets $C$ exhausting $X$ and for a family of identity neighborhoods $B\subset G$ with arbitrary small Haar measure. Then  the estimates \eqref{eq:B-meas-demo} and \eqref{eq:cor-demo-bound} imply $\nu\perp \mu$. 

The first observation when studying the cross-correlation between two algebraic measures on an adelic quotient is that it is bounded above by a relative trace of the automorphic kernel with test function $\mathbb{1}_{B\times B}$.
In our setting the relative trace that arises is for the double quotient 
\begin{equation*}
\dfaktor{\mathbf{G}^\Delta}{\mathbf{G}\times\mathbf{G}}{\mathbf{T}^\Delta}
\end{equation*}
where $\mathbf{T}<\mathbf{G}$ is a maximal torus defined over $\mathbb{Q}$ and anisotropic over $\mathbb{R}$ embedded diagonally $\mathbf{T}^\Delta<\mathbf{G}\times\mathbf{G}$. This relative trace has a geometric expansion and the main difficulty is bounding the sum of the relative orbital integrals. We require an upper bound which is optimal up to a uniform multiplicative constant. 

\subsection{Invariants and Integral Ideals}\label{intro:arithmetic-invariants}
Denote by $\Lambda<E$ the order attached to a fixed toral packet. Proposition \ref{prop:intersection-to-invariants} is a fundamental result where we show that a relative orbital integral is bounded in terms of the number of pairs of integral ideals in $\Lambda$ whose norms satisfy an additive relation. A. Venkatesh has pointed out that this bears a similarity to the calculation of heights in the proof of the Gross-Zagier Theorem \cite[\S3]{GrossZagier}.

The construction of these integral ideals can be described in an elementary fashion. Assume we are in the setting of the modular curve $Y_0(1)$. Let $\Lambda=\mathcal{O}_E$ be the maximal order in an imaginary quadratic field $E/\mathbb{Q}$ with discriminant $D<0$. Fix a twist $[\sigma] \in \Cl(E)$. The joint packet in $Y_0(1)\times Y_0(1)$ is the set $\left\{\left(H_{[I]},H_{[I\sigma]}\right) \mid [I]\in \Cl(E)\right\}$, where $H_{[I]}\in Y_0(1)$ is the CM point attached to the ideal class $[I]$. For simplicity we only discuss how to show non-accumulation on the diagonal $Y_0(1)^\Delta\hookrightarrow Y_0(1)\times Y_0(1)$.

Let $B_\delta\subset \mathbf{PGL}_2(\mathbb{R})$ be the identity neighborhood of radius $\delta>0$. The cross-correlation between the joint packet and the diagonal with test neighborhood $B_\delta$ is a weighted count of the number of points $\left(H_{[I]},H_{[I\sigma]}\right)$ such that the hyperbolic distance $d(H_{[I]},H_{[I\sigma]})$ is less then $2\delta$. The weight is a continuous decreasing function of $d(H_{[I]},H_{[I\sigma]})$ that vanishes when $d(H_{[I]},H_{[I\sigma]})=2\delta$. For simplicity, consider the unweighted quantity 
\begin{equation*}
\left|\left\{\left(H_{[I]},H_{[I\sigma]}\right) \mid d(H_{[I]},H_{[I\sigma]})\leq 2\delta,\; [I]\in \Cl(E) \right\}\right|
\end{equation*}
For each element in the set above we write an explicit expression for the CM points in the standard fundamental domain in $\mathbb{H}\subset\mathbb{C}\setminus\mathbb{R}$
\begin{equation*}
H_{[I]}=\frac{-b+i\sqrt{|D|}}{2\mathfrak{N}},\; H_{[I\sigma]}=\frac{-b'+i\sqrt{|D|}}{2\mathfrak{N}'}
\end{equation*}
where $I=\left\langle\mathfrak{N}, \frac{-b+i\sqrt{|D|}}{2} \right\rangle, I'=\left\langle\mathfrak{N}', \frac{-b'+i\sqrt{|D|}}{2} \right\rangle \subset E \subset \mathbb{C}$ are the primitive fractional ideals in the classes $[I]$ and $[I\sigma]$ respectively. Consider the elements
\begin{align*}
x&=\mathfrak{N}\frac{-b'+i\sqrt{|D|}}{2}-\mathfrak{N}'\frac{-b+i\sqrt{|D|}}{2}\in I\cdot I'\\
y&=\mathfrak{N}\frac{-b'-i\sqrt{|D|}}{2}-\mathfrak{N}'\frac{-b+i\sqrt{|D|}}{2}\in I\cdot \tensor[^\sigma]{I}{}'\\
\end{align*}
and define
\begin{align*}
\mathcal{O}_E\supset \mathfrak{a}&=y/(I\cdot \tensor[^\sigma]{I}{}')\in [\sigma] \mod \Cl(E)\\
\mathcal{O}_E\supset \mathfrak{b}&=x/(I\cdot I')\in [I^{-2}\sigma^{-1}] \mod \Cl(E)
\end{align*}
A simple calculation shows that
\begin{align*}
\Nr(\mathfrak{a})&=\frac{(\mathfrak{N}b'-\mathfrak{N}'b)^2+|D|(\mathfrak{N}+\mathfrak{N}')^2}
{4\mathfrak{N}\mathfrak{N}'}\\
\Nr(\mathfrak{b})&=\frac{(\mathfrak{N}b'-\mathfrak{N}'b)^2+|D|(\mathfrak{N}-\mathfrak{N}')^2}
{4\mathfrak{N}\mathfrak{N}'}=\frac{|D|}{4}\left(\cosh(d(H_{[I]},H_{[I\sigma]}))-1\right)\ll |D|\meas(B_{2\delta})\\
\Nr(\mathfrak{a})&-\Nr(\mathfrak{b})=\frac{\Nr(y)-\Nr(x)}{\mathfrak{N}\mathfrak{N}'}=|D|
\end{align*}
This construction demonstrates the relation between the mass of the joint packet in a neighborhood of the diagonal and counting pairs of \emph{integral} ideals satisfying and additive norm relations and whose norms are bounded by a multiple of $|D|$. To establish this relation formally we need to check how close is the map $\inv\left(H_{[I]},H_{[I\sigma]}\right)=\left(\mathfrak{a},\mathfrak{b}\right)$ to being injective. The most serious problem with injectivity arises if $\mathfrak{b}=0$. In our special case it is easy to see that this happens only if $[I]=[I\sigma]\Leftrightarrow [\sigma]=e$. This situation is excluded by the assumption $\mathfrak{N}_i\to\infty$ in Conjecture \ref{conj:mixing}. The full strength of this assumption is needed to establish non-concentration on any translate of a Hecke correspondence.

If $\mathfrak{b}\neq 0$ it turns out that injectivity can fail, in a mild way, only at the ramified primes $p\mid D$. This is the essence of Proposition \ref{prop:fiber-of-invariants}. This lack of injectivity is compensated by the fact that $\mathfrak{b}$ is restricted to a fixed class in $\Cl(E)/\Cl(E)^2$. The analysis of the fibers of the map $\inv$ and taking into account the restriction modulo $\Cl(E)^2$ produces significant technical complications. These can be avoided if one assumes that $|D|$ is prime.

The full expression for the cross-correlation is a weighted sum over elements in the joint packet that are contained in a $\delta$-neighborhood of the diagonal. The weight is easily seen to be bounded by $\ll \meas(B_{2\delta})$.

The author has not arrived at the construction of the invariants $\left(\mathfrak{a},\mathfrak{b}\right)$ through this calculation. Rather the ideals $\mathfrak{a}$,$\mathfrak{b}$ arose naturally in a geometric expansion of a relative trace. The classical interpretation above is due to the referee.
\subsection{Shifted-Convolution Sums}
We now describe how to bound the cross-correlation with test neighborhood $B_\delta\subset\mathbf{G}(\mathbb{R})$ between a joint toral packet and a fixed translate of a Hecke correspondence, e.g.\ the diagonal.

The final outcome of the arithmetic analysis of the relative orbital integrals in term of pairs of integral invertible $\Lambda$-ideals satisfying an additive norm relation is that the cross-correlation is bounded by an expression proportional to a shifted convolution sum which is roughly of the form
\begin{equation}\label{eq:shifted-conv-intro}
\mathscr{S}\coloneqq \sum_{0<x-|D|\leq \kappa \meas(B_{2\delta})|D|} g(x)f(x-|D|)
\end{equation} 
where $D=\disc(\Lambda)$, $f(x)$ is the multiplicative function which counts the number of invertible integral $\Lambda$-ideals of norm $x$; and $g$ counts the same ideals as $f$ but with the additional restriction that they belong to the \emph{fixed} Picard class $[\sigma]$. The class $[\sigma]\in\Pic(\Lambda)$ is the Picard class of a single twist in Theorem \ref{thm:main-intro}. For the sake of simplicity we consider for now the real number  $\kappa>0$ as a universal constant. We have neglected the non-injectivity of the invariant map as discussed above and the restrictions modulo $\Pic(\Lambda)^2$.

It is not difficult to show that if we extend the range of summation in $\mathscr{S}$ then the asymptotic mean value is $\asymp \frac{\rho}{\sqrt{|D|}}$ where $\rho$ is the residue at $1$ of the Dedekind $\zeta$-function of $E$. In order to complete the proof we need to show a comparable, up to a fixed constant, upper bound in the extremely short range $\kappa \meas(B_{2\delta})|D|$. Unfortunately, the various methods from harmonic analysis to estimate shifted convolution sums are of no use in this short range of summation.

We proceed instead using a sieve. Let $q(x,y)$ be the \emph{reduced} primitive integral binary quadratic form corresponding to the class $[\sigma]^{-1}\in\Pic(\Lambda)$. Denote by $\mathscr{E}\subset \mathbb{R}^2$ the elliptical annulus of area $2\pi\meas(B_{2\delta})\kappa\sqrt{|D|}$ defined by
\begin{equation*}
\mathscr{E}\coloneqq \left\{(x,y)\in \mathbb{Z}^2 \mid |D|<q(x,y)\leq (1+\meas(B_{2\delta}) \kappa) |D| \right\}
\end{equation*}
The sum $\mathscr{S}$ is tautologically equal to
\begin{equation*}
\sum_{(x,y)\in \mathscr{E}\cap\mathbb{Z}^2} f(q(x,y)-|D|)
\end{equation*}
The latter is a sum of a multiplicative function over the values of a $2$-variable polynomial. We generalize the method of Shiu \cite{Shiu} and Nair \cite{Nair} to polynomials in $2$-variables on smooth domains in order to deduce a bound of the form
\begin{equation*}
S\ll A(\mathscr{E})(\log |D|)^{-1}\sum_{a \ll |D|} \frac{f(a)}{a}
\end{equation*}
where $A(\mathscr{E})$ is the area of the ellipse $\mathscr{E}$. In order to derive an upper bound of the correct order of magnitude for the logarithmic sum above we need to assume the lack of an exceptional zero. 

It is important to mention that the sieve method fails when the ellipse $\mathscr{E}$ is distorted too much. Fortunately, this is exactly the case when the proof method of Ellenberg, Michel and Venkatesh \cite{EMV} applies.

The approach to bounding $\mathscr{S}$ using a sieve is inspired by the work of Bourgain, Sarnak and Rudnick \cite{BSR}. Sieve methods have been fruitfully applied to shifted convolution sums in other contexts as well. Holowinsky has used a related argument in his work on holomorphic QUE \cite{Holowinsky,HolowinnskySound}. P. Michel has pointed out to the author that in the scenario considered by Holowinsky the shifted convolutions arise from the $L$-functions of symmetric squares of holomorphic forms which are known not to have an exceptional zero; in contrast to the case of $\mathscr{S}$ above.

\subsection{Further Discussion}
\subsubsection{Archimedean versus \texorpdfstring{$p$}{p}-adic Cross-correlation}
In the exposition above we have presented a method to show that joint packets of CM points do not accumulate on the diagonal diagonal using a cross-correlation quantity that uses an archimedean test  neighborhood $B_\delta\subset \mathbf{G}(\mathbb{R})$. In the actual proof we shall use a non-archimedean neighborhood at one of the primes, say $p_1$, where all the tori in the sequence were assumed to be split. 

This modification is necessary because the measure rigidity argument does \emph{not} imply that any weak-$*$ limit of $\mu_i^\mathrm{joint}$ in Theorem \ref{thm:main-intro} is a \emph{countable} convex combination of algebraic measures. We may not reduce to a countable collection of possible ergodic components because the normalizer of a diagonally embedded rank $1$ torus $\mathbf{T}^\Delta<\mathbf{G}\times \mathbf{G}$ contains the subgroup $\mathbf{T}\times\mathbf{T}$ which is much bigger then $\mathbf{T}^\Delta$.

Assume for simplicity that $Y$ in Theorem \ref{thm:main-intro} is connected. If we restrict to the archimedean setting, then the possible obstructions to equidistribution are all periodic orbits of the form $$[\delta \mathbf{G}^\Delta(\mathbb{R})^+\xi_{\mathbb{R}})]\subset \lfaktor{\Gamma}{\mathbf{G}(\mathbb{R})}\times \lfaktor{\Gamma}{\mathbf{G}(\mathbb{R})}$$ where $\mathbf{G}(\mathbb{R})^+$ is the real image of the isogeny from the simply connected cover $\mathbf{G}^{\sc}\to\mathbf{G}$, $\delta\in\left(\mathbf{G}\times\mathbf{G}\right)(\mathbb{Q})$ and $\xi_{\mathbb{R}}\in \left(\mathbf{G}\times\mathbf{G}\right)(\mathbb{R})$ is \emph{any} element.

To see how this creates a problem in the argument, notice that the contradiction in \S\ref{intro:cross-correlation} has used the fact \eqref{eq:B-meas-demo} that $\nu\left((B\times B).x\right) \asymp \meas(B)$ for the Haar measure $\meas$ on $\mathbf{G}(\mathbb{R})$. While this is true if $\nu$ is a countable combination of algebraic measures supported on translates of $\mathbf{G}^\Delta(\mathbb{R})^+$, it can be wrong for uncountable families. In particular, such an uncountable convex combination can even be absolutely continuous with respect to the Haar measure $\meas\times \meas$ (even if we fix $\delta$ above). This phenomenon is analogous to the statement that an uncountable combination of Lebesgue measures on $1$-dimensional lines in $\mathbb{R}^2$ can have an arbitrary dimension in the interval $[1,2]$.

To overcome this difficulty we use instead the cross-correlation for a non-archimedean neighborhood $B\subset\mathbf{G}(\mathbb{Q}_{p_1})$. Let $S=\{\infty,p_1\}$. There is a canonical lift of the each measure $\mu_i^\mathrm{joint}$ to a probability measure on a fixed $S$-arithmetic homogeneous space $\lfaktor{\Gamma_S}{\mathbf{G}(\mathbb{Q}_S)}$. Each lift is a finite combination of periodic measures for the  diagonal embedding of the torus $K_\infty\times A_{p_1}<\mathbf{G}(\mathbb{Q}_S)$, where $A_{p_1}<\mathbf{G}(\mathbb{Q}_{p_1})$ is a split torus independent of the index $i$. The measure rigidity argument now implies that the obstruction to equidistribution is a convex combination of algebraic measures supported on $\left[\delta\mathbf{G}^\Delta (\mathbb{Q}_S)^+(\xi_{\mathbb{R}},\xi_{p_1})\right]$ with $\xi_{\mathbb{R}}\in \left(\mathbf{G}\times\mathbf{G}\right)(\mathbb{R})$, $\xi_{p_1}\in \left(\mathbf{G}\times\mathbf{G}\right)(\mathbb{Q}_{p_1})$ and $\delta$ is a rational element. In this setting there is an additional restriction\footnote{A shadow of this condition appears in the archimedean setting as well, the element $\delta$ in the archimedean case cannot be an arbitrary rational point and is restricted at the primes $p=p_1,p_2$. It is not clear how to put this information to good use.} on $\xi_{p_1}=(\xi_{p_1}^1,\xi_{p_1}^2)$ that $(\xi_{p_1}^1)^{-1}\xi_{p_1}^2\in A_{p_1}$. This restriction appears in the $S$-arithmetic setting because each periodic measure in the sequence of packets was invariant under a fixed \emph{split} torus $A_{p_1}^\Delta$. This additional piece of information allows us to rule out accumulation on convex combinations of an uncountable family of algebraic measures. 

Let $\nu$ be any convex combination of algebraic measures supported on closed orbits of the form $\left[\delta\mathbf{G}^\Delta (\mathbb{Q}_S)^+(\xi_{\mathbb{R}},\xi_{p_1})\right]$ , which are all invariant under $A_{p_1}^\Delta$. The gist of the argument is that for every $a\in A_{p_1}^\Delta$ the metric entropy $\ent_\nu(a)$ is a convex combination of the metric entropies on individual periodic measures. There is a relation between metric entropy and self-correlations which implies for a suitable $p_1$-adic identity neighborhood $B$ that $\nu\left((B\times B).x\right)$ cannot \emph{on average} decay as $\meas(B)^{1+\rho}$ for any $\rho>0$ . This is enough to conclude the necessary contradiction.

\subsubsection{The Assumption on the Conductor}
In Theorem \ref{thm:main-intro} we have assumed the discriminants $D_i$ are all fundamental. The slightly more general version in Theorem \ref{thm:joinings-adelic} allows non-trivial conductors $\mathcal{f}_i$ but they should be uniformly bounded.
The difficulty with removing this assumption is that if we allow a non-trivial conductor $\mathcal{f}$  then the shifted convolution sum in \eqref{eq:shifted-conv-intro} is $\asymp  \kappa \meas(B_{2\delta}) \rho \sqrt{|D|}\mathcal{f}$. The extra factor of $\mathcal{f}$ appears because the shift $|D|$ is divisible by $\mathcal{f}$ and at primes dividing the shift there is no decoupling between the arithmetic functions $f$ and $g$ in $\mathscr{S}$. The best bound we can expect for $\mathscr{S}/|\Pic(\Lambda)|$ is proportional to $\mathcal{f}$ and tends to $\infty$ if $\mathcal{f}$ is unbounded. Such a bound is useless for our purposes.

In an upcoming work the author will explain how to overcome this problem by refining the map attaching a pair of integral ideals to orbital integrals. The new map will not be valued just in pairs of integral ideals but will carry additional information.
\subsection{Organization of the Paper}
In \S\ref{sec:preliminaries} we define the basic notions we work with in the rest of the paper. 

In \S\ref{sec:results} we present the main theorems in adelic terms and prove some auxiliary propositions. 

In \S\ref{sec:rigidity} we apply the joinings theorem of Einsiedler and Lindenstrauss to the problem at hand. 

In \S\ref{sec:B-coordinates} we review and prove basic facts about explicit representations of quaternion algebras in coordinate form. We also describe representatives `in lowest terms' for elements of the projective group of units of a quaternion algebra over local fields. 

In \S\ref{sec:GIT} we construct the double quotient $\dfaktor{\mathbf{G}^\Delta}{\mathbf{G}\times\mathbf{G}}{\mathbf{T}^\Delta}$ using GIT and study its properties over $\mathbb{Q}$. This variety is essential for the geometric expansion of the relative trace appearing later on.

In \S\ref{sec:Hecke} we study basic properties of the intermediate measures arising as obstructions to equidistribution.

Section \S\ref{sec:cross-correlation} is a key part of this paper where we study the cross-correlation between a periodic toral measure and a translated Hecke correspondence using a relative trace. Most importantly, we demonstrate the relation between this relative trace and shifted-convolution sums. This requires interpretation of the non-archimedean relative orbital integrals as intersection numbers and explicit parametrization of the relevant intersections using arithmetic invariants.

In \S\ref{sec:sieving} we generalizes the results of \cite{Shiu,Nair} to sums of multiplicative functions along values of polynomials in $2$-variables on smooth domains. This section may be of independent interest.

In \S\ref{sec:proof} we combine all of the previously developed tools to a proof of the main theorem.

In Appendix \ref{appndx:principal-genus} we review the classical principal genus theory for quadratic orders and provide complete proofs in a form useful to us. These results are necessary in translating the shifted-convolution sums that arise from the relative trace into sums of multiplicative functions over values of polynomials.

In Appendix \ref{appndx:conics} we do routine calculations of the number of points on some singular conics over $\cyclic{N}$. These are necessary to translate the upper-bound on the cross-correlation we have after applying the sieve method into a sum treatable using analytic number theory.

\subsection{Acknowledgments}
It is a pleasure to thank Peter Sarnak for numerous fruitful and enlightening discussions on this project and for the observation that the results for toral orbits are relevant to the equidistribution of Galois orbits on products of modular curves. I am indebted to Elon Lindenstrauss for his continuous encouragement and his interest in this project. I am grateful to Akshay Venkatesh for valuable conversations and his assistance in clarifying the results of \cite{EMV}.
I thank Manjul Bhargava, Fabian Gundlach and Shou-Wu Zhang for helpful discussions.
 
I am grateful to Philippe Michel and Wei Zhang for very useful and illuminative comments on a previous version of this manuscript. I am deeply indebted to the referee for pointing out that Lemma 10.11 in the original manuscript was wrong and for bringing forth the description in \S \ref{intro:arithmetic-invariants} of the invariant ideals in a classical language. 

The author was supported by a Schmidt Fellowship at the Institute for Advanced Study during 2017-2018.

\section{Preliminaries}\label{sec:preliminaries}
\subsection{Notations and Conventions}
\begin{enumerate}[leftmargin=*]
\item We denote by the letter $v$ a place of $\mathbb{Q}$.
For a non-archimedean place $v$ define $q_v$ to be the size of the residue field of $\mathbb{Q}_v$.

\item For a linear algebraic group $\mathbf{M}$ defined over $\mathbb{Q}$ we denote
\begin{equation*}
[\mathbf{M}(\mathbb{A})]\coloneqq \lfaktor{\mathbf{M}(\mathbb{Q})}{\mathbf{M}(\mathbb{A})}
\end{equation*}
More generally for any subset $U\subseteq\mathbf{M}(\mathbb{A})$ we denote by $[U]$ its projection to $[\mathbf{M}(\mathbb{A})]$. We also use the notation $[g]$ for the coset of $g\in\mathbf{M}(\mathbb{A})$ in $[\mathbf{M}(\mathbb{A})]$.

\item
If $\mathbf{M}$ is anisotropic over $\mathbb{Q}$, i.e.\ there are no characters $\mathbf{M}\to\Gm$ defined over $\mathbb{Q}$, then  
the locally compact space $[\mathbf{M}(\mathbb{A})]$ carries a unique $\mathbf{M}(\mathbb{A})$-invariant probability measure which we call the Haar measure on $[\mathbf{M}(\mathbb{A})]$ and denote by $\meas_\mathbf{M}$. We use the notation $\meas_\mathbf{M}$ also for the covolume $1$ Haar measure on $\mathbf{M}(\mathbb{A})$.

\item For $S$, a finite set of places of $\mathbb{Q}$, we denote
\begin{align*}
\mathbf{M}(\mathbb{A}^S)&\coloneqq{\prod}'_{v\not\in S}\mathbf{M}(\mathbb{Q}_v)\\
\mathbf{M}(\mathbb{Q}_S)&\coloneqq\prod_{v\in S}\mathbf{M}(\mathbb{Q}_v)\\
\end{align*}

\item If $\mathbf{L}<\mathbf{M}$ is a closed algebraic subgroup denote the diagonal embedding of algebraic groups by $\mathbf{L}^\Delta<\mathbf{M}\times\mathbf{M}$. We use the similar notation $L^\Delta<M\times M$ for the diagonal embedding of a closed subgroup $L<M$ in a locally compact group $M$.

\item For any algebraic group $\mathbf{M}$ the morphism $\ctr\colon\mathbf{M}\times\mathbf{M}\to\mathbf{M}$ is defined by $(g_1,g_2)\mapsto g_1^{-1}g_2$.

\item For a reductive linear algebraic group $\mathbf{M}$ we denote by $\mathbf{M}^\sc$ its simply-connected cover. We fix an isogeny $\mathbf{M}^\sc\to\mathbf{M}$ and denote for any ring $R$ the image of $\mathbf{M}^\sc(R)$ in $\mathbf{M}(R)$ by $\mathbf{M}(R)^+$.

\item If $L<M$ is a unimodular closed subgroup of a unimodular locally compact group $M$ with  fixed Haar measures $\meas_L$ and $\meas_M$ respectively then we always normalize the $M$-invariant Haar measure on $\lfaktor{L}{M}$ so that
\begin{equation*}
\int_M f \dif \meas_M =\int_{L\backslash M} 
{\left(\int_L f(lg) \dif \meas_L(l)\right)}
\dif \meas_{L\backslash M}(Lg)
\end{equation*}
for any $f\in\mathscr{L}^1(M)$.

\item For $F$ a global field or a finite product of non-archimedean local fields we denote by $\mathcal{O}_F$ the ring of integers --- the unique maximal order, $F^{(1)}$ the multiplicative subgroup of $F^\times$ of norm $1$ elements and $\mathcal{O}_F^{(1)}$ the multiplicative group of norm $1$ integral elements.

\item When $F$ as above is a quadratic extension of either $\mathbb{Q}$ or $\mathbb{Q}_v$ it is equipped with an action of the Galois group $\mathfrak{G}\simeq \cyclic{2}$. We define the coboundary map
\begin{align*}
\cbd\colon F^\times&\to F^{(1)}\\
x&\mapsto \frac{x}{\tensor[^\sigma]{x}{}}
\end{align*}
Hilbert's Satz 90 implies that this map surjective.
\end{enumerate}

\subsection{Forms of \texorpdfstring{$\mathbf{PGL}_2$}{PGL} and Locally Homogeneous Spaces}
Let $\mathbf{B}$ be a quaternion algebra defined over $\mathbb{Q}$. Denote $\mathbf{Z}\coloneqq \Cent\mathbf{B}^\times$ --- the center of $\mathbf{B}^\times$ --- and define $\mathbf{G}\coloneqq\lfaktor{\mathbf{Z}}{\mathbf{B}^\times}$ to be the projective group of units. The linear group $\mathbf{G}$ is a form of $\mathbf{PGL}_2$ defined over $\mathbb{Q}$ and all $\mathbb{Q}$-forms of $\mathbf{PGL}_2$ arise this way.
A central object in our discussion is the finite volume adelic locally homogeneous space
\begin{equation}\label{eq:adelic-quotients-iso}
[\mathbf{G}(\mathbb{A})]=\lfaktor{\mathbf{G}(\mathbb{Q})}{\mathbf{G}(\mathbb{A})}
\simeq
\lfaktor{\mathbf{Z}(\mathbb{A})\mathbf{B}^\times(\mathbb{Q})}{\mathbf{B}^\times(\mathbb{A})}
\end{equation}

\subsubsection{Maximal Order in \texorpdfstring{$\mathbf{B}(\mathbb{Q})$}{B(Q)}}\label{sec:maximal-order-B}
Fix a maximal $\mathbb{Z}$-order $\mathbb{O}\subset\mathbf{B}(\mathbb{Q})$. For any non-archimedean place $v$ denote the $v$-adic closure of $\mathbb{O}$ by $\mathbb{O}_v\subset\mathbf{B}(\mathbb{Q}_v)$. The $\mathbb{Z}_v$-order $\mathbb{O}_v$ is maximal in the quaternion algebra $\mathbf{B}(\mathbb{Q}_v)$, c.f.\ \cite[(11.2)]{Reiner}.
For non-archimedean $v$ define the compact subgroup $\mathbb{O}_v^\times<\mathbf{B}^\times(\mathbb{Q}_v)$ and let the compact-open subgroup $K_v<\mathbf{G}(\mathbb{Q}_v)$ be its image under the quotient map $\mathbf{B}^\times\to\mathbf{G}$.

We define the adelic points of $\mathbf{B}^\times$ and $\mathbf{G}$ as a restricted product with respect to the compact subgroups $\mathbb{O}_v$ and $K_v$ respectively.
Moreover, for any finite set $S$ of places containing $\infty$ we denote
\begin{align*}
\mathbb{O}^{\times,S}&\coloneqq \prod_{v\not\in S} \mathbb{O}^\times_v\\
K^S&\coloneqq \prod_{v\not\in S} K_v
\end{align*}
and
\begin{align*}
\mathbb{O}^\times_f&\coloneqq\mathbb{O}^{\times,\{\infty\}}= \prod_{v\neq \infty} \mathbb{O}^\times_v\\
K_f&\coloneqq K^{\{\infty\}}=\prod_{v\neq\infty} K_v
\end{align*}
We need to review some elementary properties of $K_v$ and $\mathbb{O}_v$ for different places $v$.

\paragraph{Split Non-archimedean Places}
If $\mathbf{B}$ is split over a non-archimedean $v$, i.e.\ $\mathbf{B}(\mathbb{Q}_v)\simeq \mathbf{M}_2(\mathbb{Q}_v)$, then the maximal orders of $\mathbf{B}(\mathbb{Q}_v)$ are in bijection with the vertices of the reduced Bruhat-Tits tree of $\mathbf{B}^\times(\mathbb{Q}_v)$. Explicitly, fix an isomorphism $\mathbf{B}^\times(\mathbb{Q}_v)\simeq \End_{\mathbb{Q}_v}(\mathbb{Q}_v^2)$ then vertices of the Bruhat-Tits tree correspond to homothety classes of full-rank $\mathbb{Z}_v$-lattices $L\subset \mathbb{Q}_v^2$ and all the maximal orders are of the form $\End_{\mathbb{Z}_v}(L)$. In particular, $\mathbb{O}_v^\times$ is a stabilizer of a vertex in the tree and $K_v$ is a special maximal compact-open subgroup.

\paragraph{Ramified Non-archimedean Places}
If $\mathbf{B}$ is ramified over a non-archimedean $v$ then $\mathbf{B}(\mathbb{Q}_v)$ is a division algebra and $\mathbb{O}_v$ is the unique maximal order --- the integral closure of $\mathbb{Z}_v$ in $\mathbf{B}(\mathbb{Q}_v)$, c.f.\ \cite[(12.8)]{Reiner}.  Because of the uniqueness property of $\mathbb{O}_v$ it is conjugation invariant and
$\mathbb{O}_v^\times$ is a \emph{normal} subgroup of $\mathbf{B}^\times(\mathbb{Q}_v)$.

A quaternion division algebra over $\mathbb{Q}_v$  has ramification index $2$, c.f.\ \cite[(14.3)]{Reiner}, hence $K_v$ is a normal subgroup of index $2$ in the compact group $\mathbf{G}(\mathbb{Q}_v)$.

\paragraph{The Archimedean Analogue of a Maximal Order}
We will also need an archimedean analogue of a local maximal order. Fix once and for all a maximal compact torus $K_\infty<\mathbf{G}(\mathbb{R})$.
We define an isomorphism between $(\mathbf{B}(\mathbb{R}),K_\infty)$ and $(\mathbf{M}_2(\mathbb{R}),\mathbf{PSO}_2(\mathbb{R}))$ to be an isomorphism of algebras $\mathbf{B}(\mathbb{R})\simeq\mathbf{M}_2(\mathbb{R})$ which induces an isomorphism $\mathbf{G}(\mathbb{R})\simeq \mathbf{PGL}_2(\mathbb{R})$ mapping $K_\infty$ to $\mathbf{PSO}_2(\mathbb{R})$. Due to the Skolem-Noether theorem and the fact the normalizer of $\mathbf{PSO}_2(\mathbb{R})$ is $\mathbf{PGO}_2(\mathbb{R})$ such an isomorphism is unique up to composition with $\Ad\mathbf{PGO}_2(\mathbb{R})$. Assume $\mathbf{B}$ is split over $\mathbb{R}$ and fix such an 
isomorphism.

Inner-product norms on $\mathbb{R}^2$ are often used analogously to full rank lattices in the non-archimedean settings. 
Two inner-product norms on $\mathbb{R}^2$ are said to be homothetic if they differ by a positive multiplicative constant.
The action of $\mathbf{GL}_2(\mathbb{R})$ on $\mathbb{R}^2$ induces a transitive action on the space of inner-product norms on $\mathbb{R}^2$. This action descends to a transitive action of $\mathbf{PGL}_2(\mathbb{R})$ on homothety classes of inner-product norms.
Any inner-product norm $|\bullet|\colon\mathbb{R}^2\to\mathbb{R}_{>0}$ induces a sub-multiplicative operator norm on $\mathbf{M}_2(\mathbb{R})$ in the standard way
\begin{equation*}
\|g\|_{|\bullet|}=\sup_{0\neq v\in\mathbb{R}^2} \frac{|gv|}{|v|}
\end{equation*}
This operator norm depends only on the homothety class $\mathbb{R}_{>0} \cdot |\bullet|$. Let $\Stab_{|\bullet|}<\mathbf{PGL}_2(\mathbb{R})$ be the stabilizer of the homothety class of $|\bullet|$.
The closed unit-ball in $\mathbf{M}_2(\mathbb{R})$ with respect to $\|\bullet\|_{|\bullet|}$ is an $\Ad \Stab_{|\bullet|}$-invariant compact identity neighborhood. Unlike the endomorphism ring of a full-rank lattice this closed unit-ball is not a ring but only a multiplicative monoid.

Let $|\bullet|_\infty\colon\mathbb{R}^2\to\mathbb{R}_{>0}$ be the standard Euclidean norm. This is the unique inner-product norm on $\mathbb{R}^2$ stabilized by $\mathbf{O}_2(\mathbb{R})$ and its homothety class $\mathbb{R}^\times |\bullet|_\infty$ is the unique homothety class of inner-product norms stabilized by $\mathbf{PO}_2(\mathbb{R})$. Denote $\|\bullet\|_\infty\coloneqq \|\bullet\|_{|\bullet|_\infty}$ --- the operator norm on $\mathbf{B}(\mathbb{R})$ induced by $|\bullet|_\infty$ and the isomorphism above. This norm does not depend on the choice of isomorphism as it is $\Ad\mathbf{PGO}_2(\mathbb{R})$-invariant.

If $\mathbf{B}(\mathbb{R})$ is ramified we fix an isomorphism of $\mathbf{B}(\mathbb{R})$ and the Hamilton quaternions and define $\|\bullet\|_\infty$ to be the the quaternion norm. This definition does not depend on the choice of isomorphism as the quaternion norm is multiplicative and conjugation invariant. Equivalently, in this case $\|\bullet\|_\infty=\sqrt{\Nrd}$.

In both the ramified and unramified cases the norm $\|\bullet\|_\infty$ satisfies the following useful identity
\begin{equation}\label{eq:inverse-operator-norm}
\forall g\in\mathbf{B}^\times(\mathbb{R})\colon  \|g^{-1}\|_\infty=\frac{\|g\|_\infty}{|\Nrd g|}
\end{equation}

We need the following definitions
\begin{align*}\mathbb{O}_\infty&\coloneqq 
\left\{g\in\mathbf{B}^\times(\mathbb{R}) \mid \|g\|_\infty\leq 1 \right\}\\
\mathbb{O}_\infty^\times&\coloneqq 
\left\{g\in\mathbf{B}^\times(\mathbb{R}) \mid \|g\|_\infty=1, \Nrd g>0 \right\}\simeq \mathbf{SO}_2(\mathbb{R})\\
\widetilde{\Omega}_\infty&\coloneqq
\left\{g\in\mathbf{B}^\times(\mathbb{R}) \mid \|g^{\pm 1}\|_\infty\leq 2, \Nrd g >0 \right\}
\end{align*}
The set $\mathbb{O}_\infty$ is the closed unit-ball of $\|\bullet\|_\infty$, 
$\mathbb{O}_\infty^\times$ is the orientation-preserving isotropy group of the Euclidean norm $|\bullet|_\infty$ and $\widetilde{\Omega}_\infty$ is a connected, symmetric and compact  identity neighborhood. Moreover, $\mathbb{O}_\infty^\times\widetilde{\Omega}_\infty=\widetilde{\Omega}_\infty\mathbb{O}_\infty^\times=\widetilde{\Omega}_\infty$. Elements of $\widetilde{\Omega}_\infty$ satisfy the following inequalities which follow from \eqref{eq:inverse-operator-norm} and submultiplicativity of the operator norm
\begin{align}\label{eq:Omega-tilde-Nrd}
\Nrd g& = \frac{\|g\|_\infty \|g^{-1}\|_\infty}{\|g^{-1}\|_\infty^2}\geq \frac{1}{\|g^{-1}\|_\infty^2}\geq 1/4\\
\Nrd g&=\left(\Nrd g^{-1}\right)^{-1}\leq 4 \nonumber
\end{align}

\subsection{Simply Connected Cover}\label{sec:simply-connected-cover}
Let $\mathbf{G}^\sc\coloneqq \mathbf{B}^{(1)}$ be the group of unit quaternions in $\mathbf{B}$. The group $\mathbf{G}^\sc$ is the simply connected cover of $\mathbf{G}$. For an algebra $R/\mathbb{Q}$ we denote by $\mathbf{G}(R)^+$ the image of $\mathbf{G}^\sc(R)$ in $\mathbf{G}(R)$ under the isogeny map. The subgroup $\mathbf{G}(\mathbb{A})^+<\mathbf{G}(\mathbb{A})$ is normal and the reduced norm map $\Nrd\colon \mathbf{B}^\times\to\ \Gm$ induces a monomorphism of compact abelian groups
\begin{equation*}
\Nrd\colon\faktor{\mathbf{G}(\mathbb{A})}{\mathbf{G}(\mathbb{A})^+}\to
\faktor{\mathbb{A}^\times}{{\mathbb{A}^\times}^2}
\end{equation*}

To determine the image of $\Nrd(\mathbf{B}^\times(F))$ for a field $F/\mathbb{Q}$ notice that all the elements with a fixed reduced norm form a torsor of $\mathbf{G}^\sc$ defined over $F$. As such it has an $F$-point only if it is the trivial torsor. This can be checked using the Galois cohomology of $\mathbf{G}^\sc$. The cohomology group is trivial for each $p$-adic field, cf.\ \cite{Kneser-p-adic}, hence each element in $\mathbb{Q}_p^\times$ is a reduced norm; this can also be simply deduced from checking the two possible quaternion algebras over $\mathbb{Q}_p$. For the archimedean field $\mathbb{R}$ there are two possible quaternion algebras. In the split case every element of $\mathbb{R}^\times$ is a reduced norm and for the Hamilton quaternions only the positive elements $\mathbb{R}_{>0}$ are reduced norms.

For the global field $\mathbb{Q}$ this question is answered by the Hasse-Schilling-Maass theorem, cf.\ \cite[Theorem 33.15]{Reiner}. The following global-to-local map is injective as $\mathbf{G}^\sc$ is simple and simply connected
\begin{equation*}
H^1(\mathbb{Q},\mathbf{G}^\sc)\hookrightarrow H^1(\mathbb{R},\mathbf{G}^\sc)
\end{equation*}
Hence if $\mathbf{B}$ is split at $\infty$ then every element of $\mathbb{Q}^\times$ is a reduced norm, otherwise only elements of $\mathbb{Q}_{>0}$ are reduced norms.

The reduced norm defines 
a monomorphism of double coset spaces
\begin{equation*}\dfaktor{\mathbf{G}(\mathbb{Q})}{\mathbf{G}(\mathbb{A})}{\mathbf{G}(\mathbb{A})^+}\xrightarrow{\Nrd}\dfaktor{\mathbb{Q}^\times}{\mathbb{A}^\times}{{\mathbb{A}^\times}^2}
\end{equation*}
Following the discussion above we know that this morphism has full image if $\mathbf{B}$ is split at $\infty$ and the image is the index-$2$ subgroup $\dfaktor{\mathbb{Q}_{>0}}{\mathbb{R}_{>0}\times\mathbb{A}_f^\times}{{\mathbb{A}^\times}^2}$ otherwise.

\subsection{Toral Periods}\label{sec:toral-periods}
Periodic orbits of tori on $Y$ can be collected into natural arithmetic packets \cite{ELMVPeriodic,ELMVCubic} which generalize the packets of CM points and closed geodesics on the modular curve. 

These are easiest to define in adelic terms. Let $\mathbf{T}<\mathbf{G}$ be a maximal torus defined and anisotropic over $\mathbb{Q}$. We require the torus to be anisotropic so that the space $\lfaktor{\mathbf{T}(\mathbb{Q})}{\mathbf{T}(\mathbb{A})}$ has finite volume.

\subsubsection{Homogeneous Sets and Periodic Measures.}\label{sec:homogeneous-sets}
Einsiedler, Lindenstrauss, Michel and Venkatesh  have defined  in \cite{ELMVCubic}  the notion of a homogeneous toral set. For any $g=(g_v)_v\in\mathbf{G}(\mathbb{A})$ the set
\begin{equation*}
[\mathbf{T}(\mathbb{A})g]\subset[{\mathbf{G}(\mathbb{A})}]
\end{equation*}
is a homogeneous toral set. This set is a right  translate of $[\mathbf{T}(\mathbb{A})]\simeq\lfaktor{\mathbf{T}(\mathbb{Q})}{\mathbf{T}(\mathbb{A})}$ and hence carries a unique probability measure invariant under the locally compact abelian group $H_{\mathbb{A}}\coloneqq g^{-1}\mathbf{T}(\mathbb{A})g$. Denote this measure by $\mu$ and call it the \emph{periodic toral measure}.

\paragraph{Special Places}
Because the measure rigidity arguments we use require an action by a split torus at two different places we fix once and for all two finite rational primes $p_1,p_2$ such that $\mathbf{G}$ is split at $p_1$ and $p_2$. We fix two maximal split tori $A_{p_1}<\mathbf{G}(\mathbb{Q}_{p_1})$ and $A_{p_2}<\mathbf{G}(\mathbb{Q}_{p_2})$ and require that the intersection of $A_{p_1}$ and $K_{p_1}$ is maximal compact in $A_{p_1}$, equivalently, the apartment of $A_{p_1}$ in the Bruhat-Tits buildings contains the vertex stabilized by $K_{p_1}$.
In \S\ref{sec:maximal-order-B} we have already fixed a maximal compact torus $K_\infty<\mathbf{G}(\mathbb{R})$.

We restrict to the case when $\mathbf{T}$ is split at $p_1$ and $p_2$ and anisotropic at $\infty$. Unless stated otherwise we shall always assume that
\begin{equation}\label{eq:g_adel_restrictions}\tag{$\spadesuit$}
g_\infty^{-1}\mathbf{T}(\mathbb{R})g_\infty=K_\infty,\,g_{p_1}^{-1}\mathbf{T}(\mathbb{Q}_{p_1})g_{p_1}=A_{p_1},\,g_{p_2}^{-1}\mathbf{T}(\mathbb{Q}_{p_2})g_{p_2}=A_{p_2}
\end{equation}

\subsubsection{Packets}\label{sec:packets}
Let $S$ be a finite set of rational places containing at least $\infty,p_1,p_2$ and such that the following class number $1$ assumption holds
\begin{equation}\label{eq:S-class-number-1}
\#\dfaktor{\mathbf{G}(\mathbb{Q})}{\mathbf{G}(\mathbb{A})}{\mathbf{G}(\mathbb{Q}_S)\cdot K^S}=1
\end{equation}
The $\mathbf{G}(\mathbb{Q}_S)$-equivariant open embedding 
\begin{align*}
Y&\coloneqq\lfaktor{\Gamma}{\mathbf{G}(\mathbb{Q}_S)} \hookrightarrow \dfaktor{\mathbf{G}(\mathbb{Q})}{\mathbf{G}(\mathbb{A})}{K^S}
\\
\Gamma&\coloneqq \mathbf{G}(\mathbb{Q})\cap K^S
\end{align*}
is an isomorphism due to \eqref{eq:S-class-number-1}.

Denote the projection of $[\mathbf{T}(\mathbb{A})g]$ to $Y$ by $\mathcal{P}$. The set $\mathcal{P}$ is called a packet of periodic torus orbits. It is a union of periodic orbits\footnote{Following \cite{ELMVPeriodic} we say that an orbit of a locally compact group $H$ is periodic if it supports a \emph{finite} $H$-invariant Borel measure.} for the torus $H=\prod_{v\in S} H_v$ where $H_v=g_v^{-1}\mathbf{T}(\mathbb{Q}_v)g_v$ and our choices \eqref{eq:g_adel_restrictions} imply $H_\infty=K_\infty$, $H_{p_1}=A_{p_1}$ and $H_{p_2}=A_{p_2}$. 

\paragraph{Action on Torus Orbits}
Denote 
\begin{align*}
K_\mathbf{T}^S&\coloneqq gK^Sg^{-1}\cap\mathbf{T}(\mathbb{A}^S)<\mathbf{T}(\mathbb{A}^S)\\
K_{\mathbf{T},f}&\coloneqq K_\mathbf{T}^{\{\infty\}}=g K_f g^{-1}\cap \mathbf{T}(\mathbb{A}_f)
<\mathbf{T}(\mathbb{A}_f)
\end{align*}
These are compact-open subgroups of the ambient torus groups.
The following finite abelian group acts simply transitively on the set of $H$-orbits in $\mathscr{P}$ 
\begin{equation*}
C_S\coloneqq\dfaktor{\mathbf{T}(\mathbb{Q})}{\mathbf{T}(\mathbb{A})}{\mathbf{T}(\mathbb{Q}_S)\cdot K_\mathbf{T}^S}
\end{equation*}
The finiteness of $C_S$ implies that $\mathscr{P}$ is a \emph{finite} collection of periodic $H$-orbits.

We can actually incorporate the pointwise action of $H\simeq \mathbf{T}(\mathbb{Q}_S)$ on $\mathcal{P}$ and the action of $C_S$ on the set of $H$-orbits into a \emph{pointwise} action of the single group $\faktor{\mathbf{T}(\mathbb{A})}{K_{\mathbf{T}}^S}$

\paragraph{Periodic Measure on the Packet}
The measure $\mu$ defines a push-forward measure $\overline{\mu}$ on $Y$ supported on $\mathcal{P}$ and invariant under the action of $H$. The measure $\overline{\mu}$ is a finite average of periodic $H$-measures. All the periodic $H$-measures contribute to $\overline{\mu}$ with the same weight as can be seen using the action of $C_S$.

\subsubsection{Homogeneous Toral Sets in $\mathbf{B}^\times$}
Any maximal torus $\mathbf{T}<\mathbf{G}$ defined over $\mathbb{Q}$ is the image of a unique maximal torus $\widetilde{\mathbf{T}}<\mathbf{B}^\times$ defined over $\mathbb{Q}$. 

All maximal rational tori $\widetilde{\mathbf{T}}<\mathbf{B}^\times$ are of the form
\begin{equation*}
\widetilde{\mathbf{T}}\simeq \WR\Gm
\end{equation*}
where $E/\mathbb{Q}$ is a quadratic \'{e}tale-algebra embeddable into $\mathbf{B}(\mathbb{Q})$. More specifically, let $\iota\colon E\hookrightarrow\mathbf{B}(\mathbb{Q})$ be a ring embedding then the image of $\iota$ is the $\mathbb{Q}$-points of a maximal commutative algebra subvariety $\mathbf{E}$ with $\mathbf{E}(\mathbb{Q})=\iota(E)$. The corresponding torus $\widetilde{\mathbf{T}}$ is equal to $\mathbf{E}^\times$. Notice that an \'{e}tale-algebra $E$ does not define the subalgebra $\mathbf{E}<\mathbf{B}$ uniquely as there are many inequivalent ways to embed  $E$ in $\mathbf{B}(\mathbb{Q})$. The subalgebra $\mathbf{E}$ is defined by a specific embedding $\iota$, up to an automorphism.

Our requirement that $\mathbf{T}=\lfaktor{\mathbf{Z}}{\widetilde{\mathbf{T}}}\simeq\lfaktor{\Gm}{\WR\Gm}$ is anisotropic over $\mathbb{Q}$ is equivalent to $E$ being a quadratic field. The conditions \eqref{eq:g_adel_restrictions} imply that $E$ is imaginary and split at $p_1$ and $p_2$.

Choose any representative of $g$ in $\mathbf{B}^\times(\mathbb{A})$ and by abuse of notations denote it by $g$ as well. The isomorphism of adelic quotients \eqref{eq:adelic-quotients-iso} induces an identification of homogeneous toral sets
\begin{equation*}
[\mathbf{T}(\mathbb{A})g]=[\widetilde{\mathbf{T}}(\mathbb{A})g]
\subset
\lfaktor{\mathbf{Z}(\mathbb{A})\mathbf{B}^\times(\mathbb{Q})}{\mathbf{B}^\times(\mathbb{A})}
\end{equation*}

\paragraph{Class Group Action}
Let $S$ be a finite set of rational places as in \S\ref{sec:packets}. Define as before
\begin{align*}
K_{\widetilde{\mathbf{T}}}^S&\coloneqq g\mathbb{O}^{\times,S}g^{-1}\cap\widetilde{\mathbf{T}}(\mathbb{A}^S)<\widetilde{\mathbf{T}}(\mathbb{A}^S)\\
K_{\widetilde{\mathbf{T}},f}&\coloneqq K_{\widetilde{\mathbf{T}}}^{\{\infty\}}=g \mathbb{O}^\times_f g^{-1}\cap \mathbf{T}(\mathbb{A}_f)
<\mathbf{T}(\mathbb{A}_f)
\end{align*}

Because of our choice of $K_v$ to be the projection of $\mathbb{O}_v^\times$ 
there is also an surjective homomorphism of finite abelian groups
\begin{align}\label{eq:Cl_S_S-isomorphism}
\dfaktor{E^\times}{\mathbb{A}_E}{E_S\cdot K_{\widetilde{\mathbf{T}}}^S}
&=
\dfaktor{\widetilde{\mathbf{T}}(\mathbb{Q})}{\widetilde{\mathbf{T}}(\mathbb{A})}{\widetilde{\mathbf{T}}(\mathbb{Q}_S)\cdot K_{\widetilde{\mathbf{T}}}^S}\\
&\twoheadrightarrow \nonumber
\dfaktor{\mathbf{T}(\mathbb{Q})}{\mathbf{T}(\mathbb{A})}{\mathbf{T}(\mathbb{Q}_S)\cdot K_\mathbf{T}^S}=C_S
\end{align}
where $K_{\widetilde{\mathbf{T}}}^S\coloneqq g\left(\prod_{v\not\in S} \mathbb{O}_v^{\times}\right)g^{-1}\cap\widetilde{\mathbf{T}}(\mathbb{A}^S)$ is a compact-open subgroup in $\widetilde{\mathbf{T}}(\mathbb{A}^S)$.

The kernel of this map is the following quotient 
\begin{equation*}
\dfaktor{\Gm(\mathbb{Q})}{\Gm(\mathbb{A})}{\Gm(\mathbb{Q}_S)\cdot \prod_{v\not\in S}\mathbb{Z}_v^\times}
\end{equation*}
which is trivial because $\mathbb{Q}$ has a trivial class group. We see that \eqref{eq:Cl_S_S-isomorphism} is actually an isomorphism. We have thus expressed $C_S$ in a natural way as a quotient of the id\'{e}le class group of $E$. It is natural to consider $C_S$ as a generalized $S$-class group of the field $E$.

\subsubsection{Quadratic Orders and Discriminants}
\paragraph{The Local Order and Local Discriminant}\label{sec:discriminant-local}
\begin{defi} \hfill
\begin{enumerate}
\item Recall that $\widetilde{\mathbf{T}}=\mathbf{E}^\times$ where $\mathbf{E}<\mathbf{B}$ is a maximal commutative algebra. For each place $v$ we define 
\begin{equation*}
\Lambda_v\coloneqq \mathbf{E}(\mathbb{Q}_v)\cap g_v \mathbb{O}_v g_v^{-1}
\end{equation*}
For $v$ non-archimedean $\Lambda_v$ is a commutative ring and an order in the \'{e}tale-algebra $\mathbf{E}(\mathbb{Q}_v)\simeq E_v$. 

\item For $v$ non-archimedean denote the maximal order of the  \'{e}tale-algebra $E_v$ by $\mathcal{O}_{E_v}$, i.e.\ $\mathcal{O}_{E_v}=\prod_{w|v} \mathcal{O}_{E_w}$.
\end{enumerate}
\end{defi}

\begin{prop}\label{prop:Lambda-ae-maximal}
For almost all $v$ non-archimedean $\Lambda_v=\mathcal{O}_{E_v}$.
\end{prop}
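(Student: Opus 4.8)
The plan is to show that the discrepancy between $\Lambda_v$ and $\mathcal{O}_{E_v}$ is controlled by two local phenomena, each of which occurs at only finitely many places. First I would fix a global maximal order $\mathcal{O}_E \subset E$; then for almost all $v$ the completion $\mathcal{O}_E \otimes \mathbb{Z}_v$ equals $\mathcal{O}_{E_v}$, so it suffices to compare $\Lambda_v$ with $\mathcal{O}_E \otimes \mathbb{Z}_v$ for almost all $v$. The point is that $\Lambda_v = \mathbf{E}(\mathbb{Q}_v) \cap g_v \mathbb{O}_v g_v^{-1}$, and this depends on $v$ through three inputs: the embedding $\iota\colon E \hookrightarrow \mathbf{B}(\mathbb{Q})$ (equivalently the algebra $\mathbf{E}$), the fixed maximal order $\mathbb{O}$, and the adelic element $g = (g_v)_v$.

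The key steps, in order, are as follows. (1) Since $g \in \mathbf{B}^\times(\mathbb{A})$, by definition of the restricted product $g_v \in \mathbb{O}_v^\times$ for all but finitely many $v$; excluding those finitely many places, $g_v \mathbb{O}_v g_v^{-1} = \mathbb{O}_v$, so $\Lambda_v = \mathbf{E}(\mathbb{Q}_v) \cap \mathbb{O}_v$. (2) Choose a $\mathbb{Q}$-basis of $\mathbf{E}(\mathbb{Q})$ consisting of elements of $\mathbb{O}$, say $1$ and some $\alpha \in \mathbb{O}$ with $\mathbf{E}(\mathbb{Q}) = \mathbb{Q} + \mathbb{Q}\alpha$; this is possible because $\mathbb{O}$ spans $\mathbf{B}(\mathbb{Q})$ over $\mathbb{Q}$ and one can clear denominators. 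Then $\mathbb{Z}_v + \mathbb{Z}_v \alpha \subseteq \Lambda_v$ for every non-archimedean $v$, and $\mathbb{Z}_v + \mathbb{Z}_v\alpha$ is an order in $E_v$ whose discriminant is the fixed integer $\mathrm{disc}(\mathbb{Z}[\alpha])$, independent of $v$. (3) Excluding the finitely many $v$ dividing $\mathrm{disc}(\mathbb{Z}[\alpha])$, the order $\mathbb{Z}_v + \mathbb{Z}_v\alpha$ already equals $\mathcal{O}_{E_v}$ (a $\mathbb{Z}_v$-order in an étale quadratic algebra with unit discriminant is maximal), hence $\mathcal{O}_{E_v} \subseteq \Lambda_v$. (4) For the reverse inclusion, note $\Lambda_v$ is by construction an order in $E_v$ (a subring, finitely generated over $\mathbb{Z}_v$, spanning $E_v$), and $\mathcal{O}_{E_v}$ is the unique maximal order, so $\Lambda_v \subseteq \mathcal{O}_{E_v}$ for all non-archimedean $v$. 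Combining (3) and (4) gives $\Lambda_v = \mathcal{O}_{E_v}$ for all $v$ outside a finite set (those with $g_v \notin \mathbb{O}_v^\times$, those dividing $\mathrm{disc}(\mathbb{Z}[\alpha])$, and those where $\mathcal{O}_E \otimes \mathbb{Z}_v \neq \mathcal{O}_{E_v}$).

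The main obstacle, such as it is, is bookkeeping rather than depth: one must be careful that $\Lambda_v$ as defined is genuinely a ring (so that "order" makes sense and the uniqueness of the maximal order applies) — this is clear for $v$ non-archimedean since $\mathbb{O}_v$ is a ring and intersecting with the subalgebra $\mathbf{E}(\mathbb{Q}_v)$ preserves the ring structure — and that the three exceptional sets are each finite, which is immediate from the definition of the adeles, the integrality of $\mathrm{disc}(\mathbb{Z}[\alpha])$, and the standard fact comparing a global maximal order with its completions. No step requires more than elementary commutative algebra over $\mathbb{Z}_v$ together with the structure of maximal orders in quaternion algebras recalled in \S\ref{sec:maximal-order-B}.
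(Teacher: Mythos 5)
Your proof is correct, and the overall strategy is the same as the paper's: exclude the finitely many $v$ with $g_v\notin K_v$, and then compare $\mathbf{E}(\mathbb{Q}_v)\cap\mathbb{O}_v$ with the completion of a fixed global order. The one genuine difference is in how that comparison is made. The paper identifies $\mathbf{E}(\mathbb{Q}_v)\cap\mathbb{O}_v$ \emph{exactly} with the $v$-adic closure of $\Lambda^{\mathrm{naive}}=\mathbb{O}\cap\mathbf{E}(\mathbb{Q})$; to justify that completion commutes with this intersection it extends a $\mathbb{Z}$-basis of $\Lambda^{\mathrm{naive}}$ to a $\mathbb{Z}$-basis of $\mathbb{O}$ and argues coordinatewise. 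You instead sandwich: $\mathbb{Z}_v+\mathbb{Z}_v\alpha\subseteq\Lambda_v\subseteq\mathcal{O}_{E_v}$, with the lower inclusion immediate from $1,\alpha\in\mathbb{O}$ and the upper one from the fact that $\Lambda_v$ is an order in $E_v$, so that equality at all $v\nmid\disc(\mathbb{Z}[\alpha])$ (and with $g_v\in K_v$) follows without any basis-extension or weak-approximation step. This buys a slightly shorter argument at the cost of producing only the asymptotic statement, whereas the paper's exact identification of $\Lambda_v$ with $\Lambda_v^{\mathrm{naive}}$ at every good place is the sharper local fact. For the proposition as stated, your route is complete; just make sure to note (as you do) that $\alpha$ is integral over $\mathbb{Z}$ because it lies in the order $\mathbb{O}$, so that $\mathbb{Z}+\mathbb{Z}\alpha$ is genuinely a ring.
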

\begin{proof}
As $\mathbb{Q}\cdot\mathbb{O}=\mathbf{B}(\mathbb{Q})$ we see that $\Lambda^\mathrm{naive}\coloneqq\mathbb{O}\cap\mathbf{E}(\mathbb{Q})$ is a $\mathbb{Z}$-lattice of full rank in the $2$-dimensional $\mathbb{Q}$-vector space $\mathbf{E}(\mathbb{Q})$. We can extend any $\mathbb{Z}$-basis $b$ of $\Lambda^\mathrm{naive}$ to a $\mathbb{Z}$-basis $b\cup c$ of\footnote{This can be seen from the fact that $\faktor{\mathbb{O}}{\Lambda^\mathrm{naive}}$ is a finitely generated torsion-free $\mathbb{Z}$-module and each such module is free.} $\mathbb{O}$.

Fix $v$ non-archimedean and denote $\Lambda^\mathrm{naive}_v\subset \mathbf{E}(\mathbb{Q}_v)$ the $v$-adic closure of $\Lambda^\mathrm{naive}$. We can use the basis above and weak approximation to write explicitly $\mathbb{O}_v=\Span_{\mathbb{Z}_v}b\cup c $, $\Lambda^\mathrm{naive}_v=\Span_{\mathbb{Z}_v} b$  and $\mathbf{E}(\mathbb{Q}_v)=\Span_{\mathbb{Q}_v} b$.  In particular $\mathbf{E}(\mathbb{Q}_v)\cap \mathbb{O}_v=\Lambda^\mathrm{naive}_v$.

For any $v$ such that $g_v\in K_v$ we see that
\begin{equation*}
\Lambda_v=\mathbf{E}(\mathbb{Q}_v)\cap \mathbb{O}_v=\Lambda^\mathrm{naive}_v
\end{equation*}

The lattice $\Lambda^\mathrm{naive}$ is an order in the number field $\mathbf{E}(\mathbb{Q})\simeq E$. The $p$-adic completion of $\Lambda^\mathrm{naive}$ is equal to the maximal order for any $p$ relatively prime to the conductor of $\Lambda^\mathrm{naive}$. Hence $\Lambda_v$ is maximal if $g_v\in K_v$ and $q_v$ is relatively prime to the conductor --- which happens for almost all $v$.
\end{proof}

\begin{lem}\label{lem:Lambda-Galois}
For any $v$ non-archmiedean there exists $\mathcal{f}_v\in\mathbb{Z}_v$ such that $\Lambda_v=\mathbb{Z}_v+\mathcal{f}_v\mathcal{O}_{E_v}$. The conductor of $\Lambda_v$ is $\mathcal{f}_v\mathcal{O}_{E_v}$ and $\Lambda_v$ is stable under the Galois action of $\Gal(E_v/\mathbb{Q}_v)$.
\end{lem}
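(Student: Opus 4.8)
The plan is to recognize $\Lambda_v$ as a $\mathbb{Z}_v$-order in the quadratic \'{e}tale $\mathbb{Q}_v$-algebra $E_v\coloneqq\mathbf{E}(\mathbb{Q}_v)$ and then apply the structure theory of orders over the discrete valuation ring $\mathbb{Z}_v$; the Galois statement then falls out essentially for free.

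First I would establish the chain $\mathbb{Z}_v\subseteq\Lambda_v\subseteq\mathcal{O}_{E_v}$ together with the fact that $\Lambda_v$ is a full $\mathbb{Z}_v$-lattice in $E_v$. Conjugation by $g_v\in\mathbf{B}^\times(\mathbb{Q}_v)$ is a ring automorphism of $\mathbf{B}(\mathbb{Q}_v)$ over $\mathbb{Q}_v$ carrying lattices to lattices, so $g_v\mathbb{O}_vg_v^{-1}$ is again a $\mathbb{Z}_v$-order of $\mathbf{B}(\mathbb{Q}_v)$: a subring, a free $\mathbb{Z}_v$-module of rank $4$, and it contains $\mathbb{Z}_v\cdot 1$. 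Intersecting with the subalgebra $E_v$, the set $\Lambda_v$ is a subring containing $\mathbb{Z}_v\cdot 1$; and the intersection of a full $\mathbb{Z}_v$-lattice with a $\mathbb{Q}_v$-subspace is a full lattice in that subspace (it is finitely generated and torsion-free, hence free, and for each $w\in E_v$ some $p^nw$ lies in $g_v\mathbb{O}_vg_v^{-1}$), so $\Lambda_v$ has $\mathbb{Z}_v$-rank $2$. Thus $\Lambda_v$ is an order in $E_v$; in particular each of its elements is integral over $\mathbb{Z}_v$, so $\Lambda_v\subseteq\mathcal{O}_{E_v}$.

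Next comes the explicit form. Since $\mathbb{Z}_v$ is integrally closed in $\mathbb{Q}_v$, an element of $\mathcal{O}_{E_v}$ lying in $\mathbb{Q}_v$ is already in $\mathbb{Z}_v$; hence $\mathcal{O}_{E_v}/\mathbb{Z}_v$ is torsion-free, so free of rank one over $\mathbb{Z}_v$, and we may fix $\omega$ with $\mathcal{O}_{E_v}=\mathbb{Z}_v\oplus\mathbb{Z}_v\omega$. Then $\Lambda_v/\mathbb{Z}_v$ is a $\mathbb{Z}_v$-submodule of $\mathcal{O}_{E_v}/\mathbb{Z}_v\cong\mathbb{Z}_v$, hence equals $\mathcal{f}_v\mathbb{Z}_v$ for some $\mathcal{f}_v\in\mathbb{Z}_v$, nonzero by the rank count; pulling back gives $\Lambda_v=\mathbb{Z}_v+\mathcal{f}_v\mathbb{Z}_v\omega=\mathbb{Z}_v+\mathcal{f}_v\mathcal{O}_{E_v}$. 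To identify the conductor $\mathfrak{c}\coloneqq\{x\in\mathcal{O}_{E_v}\mid x\mathcal{O}_{E_v}\subseteq\Lambda_v\}$: the inclusion $\mathcal{f}_v\mathcal{O}_{E_v}\subseteq\mathfrak{c}$ is clear because $\mathcal{f}_v\mathcal{O}_{E_v}$ is an $\mathcal{O}_{E_v}$-ideal contained in $\Lambda_v$; conversely, writing $x=a+b\omega$ and $\omega^2=s+t\omega$ with $a,b,s,t\in\mathbb{Z}_v$, the requirements $x\in\Lambda_v$ and $x\omega=bs+(a+bt)\omega\in\Lambda_v$ force $b\in\mathcal{f}_v\mathbb{Z}_v$ and then $a+bt\in\mathcal{f}_v\mathbb{Z}_v$, hence $a\in\mathcal{f}_v\mathbb{Z}_v$, so $x\in\mathcal{f}_v\mathcal{O}_{E_v}$. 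Therefore $\mathfrak{c}=\mathcal{f}_v\mathcal{O}_{E_v}$.

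Finally, Galois stability uses only $\mathbb{Z}_v\subseteq\Lambda_v$: for $x\in\Lambda_v\subseteq\mathcal{O}_{E_v}$ the element $x$ is integral over $\mathbb{Z}_v$, so $\Tr_{E_v/\mathbb{Q}_v}(x)\in\mathbb{Q}_v$ is integral over $\mathbb{Z}_v$ and hence lies in $\mathbb{Z}_v\subseteq\Lambda_v$; letting $\sigma$ be the nontrivial element of $\Gal(E_v/\mathbb{Q}_v)$ we get $\tensor[^\sigma]{x}{}=\Tr_{E_v/\mathbb{Q}_v}(x)-x\in\Lambda_v$. (Alternatively, $\mathcal{f}_v\in\mathbb{Z}_v$ is Galois-fixed and $\mathcal{O}_{E_v}$ is Galois-stable, so the description $\Lambda_v=\mathbb{Z}_v+\mathcal{f}_v\mathcal{O}_{E_v}$ makes the stability manifest.) I do not expect a genuine obstacle in this lemma; the only steps demanding any care are the verification that $\Lambda_v$ has full $\mathbb{Z}_v$-rank and the short explicit computation pinning down the conductor.
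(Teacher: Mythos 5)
Your proof is correct and is precisely the standard argument for orders in quadratic fields (adapted harmlessly to the \'etale-algebra case), which is exactly what the paper invokes — its proof of this lemma consists of the single remark that the argument is the same as for orders in quadratic number fields. Your write-up supplies the details of that argument, including the cyclic-quotient step giving $\Lambda_v=\mathbb{Z}_v+\mathcal{f}_v\mathcal{O}_{E_v}$, the conductor computation, and Galois stability via $\tensor[^\sigma]{x}{}=\Tr(x)-x$.
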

\begin{proof}
The argument is the same as for orders in quadratic number fields.
\end{proof}

\begin{defi}
We define the local discriminant $D_v$ of the homogeneous toral set $[\mathbf{T}(\mathbb{A})g]$ in an equivalent way to \cite[\S 6.1]{ELMVCubic}. 
\begin{enumerate}
\item 
For $v$ non-archimedean $D_v$ is the discriminant of the order $\Lambda_v$. In particular, for all places $v$ where $E$ is unramified and $\Lambda_v$ is maximal we have $D_v$=1.

\item
For $v$ archimedean there is a natural topological ring isomorphism of $\mathbf{E}(\mathbb{R})$ either to $\mathbb{R}\times\mathbb{R}$ or to $\mathbb{C}$ unique up to an automorphism. Consider the standard volume form on $\mathbb{R}\times\mathbb{R}$ or $\mathbb{C}$ induced by the inner-product norm $\alpha\mapsto |\alpha|$ or $(\alpha,\beta)\mapsto \sqrt{|\alpha|^2+|\beta|^2}$ and pull it back to $\mathbf{E}(\mathbb{R})$.

Let $\Lambda_\infty\subset\mathbf{E}(\mathbb{R})$ be the intersection of the closed unit ball of $\|\bullet\|_\infty$ with $\mathbf{E}(\mathbb{R})$.
Define $D_\infty$ to be the square of the volume of $\Lambda_\infty$ with respect to the latter volume-form.

\item
Finally, the global discriminant is defined to be $D\coloneqq\prod_v D_v$.
\end{enumerate}
\end{defi}
\begin{remark}
Conjugating by $g_v$ we have  $\Lambda_v\simeq g_v^{-1} \mathbf{E}(\mathbb{Q}_v) g_v \cap \mathbb{O}_v$. Thus the local discriminant $D_v$ for $v=\infty, p_1, p_2$ is the same for all homogeneous toral sets for which \eqref{eq:g_adel_restrictions} holds.

Moreover, our choice of  $\|\bullet\|_\infty$ to be $K_\infty$-invariant in \S\ref{sec:maximal-order-B} and the requirement that $g_\infty^{-1}\mathbf{T}(\mathbb{R}) g_\infty=K_\infty$ in \eqref{eq:g_adel_restrictions} imply $D_\infty=1$.
\end{remark}

\paragraph{The Global Order}

\begin{defi} 
We define a global order $\Lambda<\mathbf{E}(\mathbb{Q})\simeq E$ by 
\begin{equation*}
\Lambda\coloneqq \bigcap_{v\neq\infty} \Lambda_v
\end{equation*}
where the intersection is taken in the $2$-dimensional $\mathbb{Q}$-vector space $\mathbf{E}(\mathbb{Q})$.
\end{defi}
Recall that by Proposition \ref{prop:Lambda-ae-maximal} $\Lambda_v$ is equal to the $v$-adic closure of $\mathcal{O}_E$ for almost all $v$, hence the intersection $\Lambda$ is a finite index $\mathbb{Z}$-sublattice in $\mathcal{O}_E$. Moreover, it is closed under multiplication, so it is an order in $\mathbf{E}(\mathbb{Q})$. The discriminant of $\Lambda$ is exactly $\prod_{v\neq \infty} D_v$.
Notice that in general $\Lambda\neq \mathbf{E}(\mathbb{Q})\cap\mathbb{O}$. 

\begin{remark}
A consequence of the discussion above is that for all $v\neq\infty$ the compact-open subgroup $\mathbf{K}_{\widetilde{\mathbf{T}},v}\coloneqq g_v \mathbb{O}_v g_v^{-1}\cap \widetilde{\mathbf{T}}(\mathbb{Q}_v)<\widetilde{\mathbf{T}}(\mathbb{Q}_v)$ is the unit group of the order $\Lambda_v$. 

In particular, if $K_{\widetilde{\mathbf{T}},f}\coloneqq \prod_{v\neq\infty} K_{\widetilde{\mathbf{T}},v}<\widetilde{\mathbf{T}}(\mathbb{A}_f)$ then
\begin{equation*}
C_{\{\infty\}}\simeq
\dfaktor{\widetilde{\mathbf{T}}(\mathbb{Q})}{\widetilde{\mathbf{T}}(\mathbb{A})}{\widetilde{\mathbf{T}}(\mathbb{Q}_S)\cdot K_{\widetilde{\mathbf{T}}}^S}
\simeq \Pic(\Lambda)
\end{equation*}
\end{remark}

\paragraph{Id\'eles and Ideals}
\begin{defi}\label{def:ideles-to-ideals}
Let $\left[\mathbf{T}(\mathbb{A})g\right]$ be a homogeneous toral set with splitting field $E/\mathbb{Q}$ and global order $\Lambda\coloneqq \cap_{v\neq\infty}\Lambda_v \subseteq \mathcal{O}_E$. 

\begin{enumerate}
\item Denote by $\Ideals(\Lambda)$ the abelian group of invertible proper $\Lambda$-fractional ideals. These are exactly the locally principle fractional ideals and there is a canonical group isomorphism
\begin{equation*}
\widetilde{\idl}\colon \faktor{\widetilde{\mathbf{T}}(\mathbb{A}_f)}{K_{\widetilde{\mathbf{T}},f}}=\faktor{\prod_{v\neq\infty} E_v^\times}{\prod_{v\neq\infty} \Lambda_v^\times}\to \Ideals(\Lambda)
\end{equation*}
defined by $(\alpha_v\Lambda_v^\times)_{v\neq\infty}\mapsto \bigcap_{v\neq\infty} \alpha_v \Lambda_v \subset E$.

\item
Define $\Ideals(\Lambda)_0\coloneqq \Ideals(\Lambda)\cup \{0\cdot\Lambda\}$. This set of ideals does not carry a group structure any more but there is a natural action of $\Ideals(\Lambda)$ on it, and hence also an action of the finite $E$-id\`eles. The map above extends naturally to a surjective equivariant map
\begin{equation*}
\widetilde{\idl}\colon \faktor{\mathbf{E}(\mathbb{A}_f)}{K_{\widetilde{\mathbf{T}},f}}=\faktor{\prod_{v\neq\infty} E_v}{\prod_{v\neq\infty} \Lambda_v^\times}\to \Ideals(\Lambda)_0
\end{equation*}
which is no longer a bijection. The preimage of the zero ideal contains any non-invertible ad\`ele. The preimage of any invertible fractional ideal still contains only one element.

\item The map $\idl$ above descend to the following function
\begin{align*}
\idl\colon \faktor{\mathbf{T}(\mathbb{A}_f)}{K_{\mathbf{T},f}}&=\dfaktor{\mathbb{A}_f^\times}{\prod_{v\neq\infty} E_v^\times}{\prod_{v\neq\infty} \Lambda_v^\times}\\
&=\dfaktor{\mathbb{Q}^\times}{\prod_{v\neq\infty} E_v^\times}{\prod_{v\neq\infty} \Lambda_v^\times}
\xrightarrow{\widetilde{\idl}} \lfaktor{\mathbb{Q}^\times}{\Ideals(\Lambda)}
\end{align*}
The second equality above holds because $\mathbb{Q}$ has trivial class group.
\end{enumerate}
\end{defi}

\subsubsection{Volume}\label{sec:vol}
The volume of a homogeneous toral set has been defined in \cite{ELMVCubic}. To motivate the definition consider a normalization in which the measure of the group under which the homogeneous set is invariant -- $H_{\mathbb{A}}$  -- is kept fixed while the homogeneous toral set varies in a family. In the adelic setting it is impossible to keep $H_{\mathbb{A}}$ independent of the homogeneous set in the family, yet we can normalize the measures in a uniform way.  

To do that fix a compact identity neighborhood $\Omega=\prod_v \Omega_v \subset \mathbf{G}(\mathbb{A})$. 
Normalize the Haar measure $\meas_{H_{\mathbb{A}}}$ on $H_{\mathbb{A}}$ so that $\meas_{H_{\mathbb{A}}}(\Omega)=1$. The measure $\meas_{H_{\mathbb{A}}}$ also induces an $H_{\mathbb{A}}$-invariant measure on $[\mathbf{T}(\mathbb{A})g]$ which differs from $\mu$ by a constant. The volume of the homogeneous set is defined as the volume of  $[\mathbf{T}(\mathbb{A})g]$ with respect to the measure induced by $\meas_{H_{\mathbb{A}}}$.

A formula for the volume can be written in terms of the covolume $1$ Haar measure  $\meas_\mathbf{T}$ on $\mathbf{T}(\mathbb{A})$
\begin{equation*}
\vol\left(\left[\mathbf{T}(\mathbb{A})g\right]\right)\coloneqq \meas_\mathbf{T}\left( g\Omega g^{-1}  \right)^{-1}
\end{equation*}

The definition of the volume depends on the choice of a compact identity neighborhood $\Omega$ but in an inessential way. Specifically for any compact identity neighborhoods $\Omega$ and $\Omega'$
\begin{equation}\label{eq:vol-Omega-Omega'}
{\vol}_\Omega \left(\left[\mathbf{T}(\mathbb{A})g\right]\right)\ll_{\Omega,\Omega'} {\vol}_{\Omega' }\left(\left[\mathbf{T}(\mathbb{A})g\right]\right)
\ll_{\Omega,\Omega'} {\vol}_{\Omega}\left(\left[\mathbf{T}(\mathbb{A})g\right]\right)
\end{equation}
Most importantly, the constants do not depend on the homogeneous toral set.

We fix once and for all $\Omega_v=K_v$ for all non-archimedean $v$ and $\Omega_\infty=\mathbf{Z}(\mathbb{R})\widetilde{\Omega}_\infty$, where $\widetilde{\Omega}_\infty$ is as in \S\ref{sec:maximal-order-B}.
The set $\Omega_\infty$ is a connected, compact, symmetric and $\Ad K_\infty$-invariant identity neighborhood in $\mathbf{G}(\mathbb{R})$. In the ramified case this neighborhood coincides with $\mathbf{G}(\mathbb{R})$. These choices simplify computations later.

\subsection{Joinings of Periodic Toral Measures}
Let $[\mathbf{T}(\mathbb{A})g]\subset [\mathbf{G}(\mathbb{A})]$ be a homogeneous toral set with periodic measure $\mu$ as in the previous section. Denote by $\mathbf{T}^\Delta<\mathbf{G}\times \mathbf{G}$ the diagonal embedding.

Fix $s\in\mathbf{T}(\mathbb{A})$ and consider  the following subset of the cartesian square of $[\mathbf{G}(\mathbb{A})]$
\begin{equation*}
[\mathbf{T}^\Delta(\mathbb{A})(g,sg)]\subset  \left[\left(\mathbf{G}\times \mathbf{G}\right)(\mathbb{A})\right]
\end{equation*}
This is a homogeneous set for the \emph{non-maximal} rank $1$ anisotropic torus $\mathbf{T}^\Delta$ in the rank 2 group $\mathbf{G}\times \mathbf{G}$. 

By the same arguments as in the previous sections this set carries a probability measure $\mu^{\mathrm{joint}}$ invariant under the action of\footnote{Notice that $s$ commutes with $\mathbf{T}(\mathbb{A})$.} $H_{\mathbb{A}}^\Delta$.

The measure $\mu^\mathrm{joint}$ projects in each coordinate to the regular periodic toral measure $\mu$ supported on $[\mathbf{T}(\mathbb{A})g]$. It is a self-joining of $\mu$ which is non-trivial because of the shift by $s\in\mathbf{T}(\mathbb{A})$. 

We call $s$ the \emph{twist} of the self-joining. Notice the that whole class of $s$ in $\lfaktor{\mathbf{T}(\mathbb{Q})}{\mathbf{T}(\mathbb{A})}$ defines exactly the same self-joining.

\subsubsection{Joining of Packets}
Let $S$ and $Y$ be as in \S\ref{sec:packets}.
Denote by $H^\Delta$ the diagonal embedding of $H$ into $\mathbf{G}(\mathbb{Q}_S)\times\mathbf{G}(\mathbb{Q}_S)$.
The set $[\mathbf{T}^\Delta(\mathbb{A})(g,sg)]$ projects to a finite collection of $H^\Delta$ orbits on $Y\times Y$ denoted by $\mathcal{P}^\mathrm{joint}$. The measure $\mu^\mathrm{joint}$ can be pushed forward to an $H^\Delta$-invariant probability measure on $\mathcal{P}^\mathrm{joint}$ which we  denote by $\overline{\mu^\mathrm{joint}}$. The measure $\overline{\mu^\mathrm{joint}}$ is a self-joining of the $H$-invariant measure $\overline{\mu}$ on $Y$.

\subsubsection{Volume and Discriminant}
The definitions of volume and discriminant extend trivially from homogeneous set of $\mathbb{Q}$-anisotropic rank $1$ tori in $\mathbf{G}$ to anisotropic rank $1$ tori in $\mathbf{G}\times\mathbf{G}$.
By choosing $\Omega\times \Omega$ as the reference identity neighborhood on $\left(\mathbf{G}\times \mathbf{G}\right)(\mathbb{A})$ and setting $\mathbb{O}\times\mathbb{O}$ as the reference maximal order in $\left(\mathbf{B}\times\mathbf{B}\right)(\mathbb{Q})$ we have
\begin{align*}
\vol\left(\left[\mathbf{T}^\Delta(\mathbb{A})(g,sg)\right]\right)&=\vol\left(\left[\mathbf{T}(\mathbb{A})g\right]\right)\\
\disc\left(\left[\mathbf{T}^\Delta(\mathbb{A})(g,sg)\right]\right)&=\disc\left(\left[\mathbf{T}(\mathbb{A})g\right]\right)
\end{align*}

\section{Principal Results}\label{sec:results}
In this section we present our main theorem and prove key corollaries, a few reduction steps and complementary propositions. The proof of the main theorem is presented in \S\ref{sec:proof} and builds upon the tools developed in the rest of the manuscript.

We will use the following shorthand to simplify our notation.
\begin{defi}
Denote $G_\mathrm{res}\coloneqq \dfaktor{\mathbf{G}(\mathbb{Q})}{\mathbf{G}(\mathbb{A})}{\mathbf{G}(\mathbb{A})^+}$ and let $\pi^+\colon\left[\mathbf{G}(\mathbb{A})\right]\to G_\mathrm{res}$ be the quotient map.

The topological space $G_\mathrm{res}$ is a compact abelian group such that the composition of quotient maps $\mathbf{G}(\mathbb{A})\to[\mathbf{G}(\mathbb{A})]\xrightarrow{\pi^+}G_\mathrm{res}$ is a continuous surjective group homomorphism, cf. \S\ref{sec:simply-connected-cover}. This implies that the push-forward of the probability Haar measure on $[\mathbf{G}(\mathbb{A})]$ to $G_\mathrm{res}$ is the probability Haar measure of $G_\mathrm{res}$.
\end{defi}

\subsection{Equidistribution of Toral Orbits}
The following is the key theorem of this work.
\begin{thm}\label{thm:joinings-adelic}
Let $\mathbf{G}$ be a form of $\mathbf{PGL}_2$ over $\mathbb{Q}$. Fix a maximal compact torus $K_\infty<\mathbf{G}(\mathbb{R})$ and two finite primes $p_1,p_2$. Let $\left\{\mathcal{H}_i\right\}_i$ be a sequence of joint homogeneous toral sets. For each $i$ write $\mathcal{H}_i=\left[\mathbf{T}^\Delta(\mathbb{A})(g,sg)\right]$ where $\mathbf{T},s,g$ depend on $i$. Recall that $\mathbf{T}<\mathbf{G}$ is a maximal torus defined and anisotropic over $\mathbb{Q}$, $g\in\mathbf{G}(\mathbb{A})$ and $s\in\mathbf{T}(\mathbb{A})$. 

Let $E_i/\mathbb{Q}$ be the quadratic field splitting $\mathbf{T}$ and let $D_i$ be the discriminant of $\mathcal{H}_i$. Denote by $\mathcal{f}_i$ the conductor of $D_i$, i.e.\ $\mathcal{f}_i^2 \mid D_i$ is the largest square divisor of $D_i$.

 Denote by $\mu_i$ the algebraic probability measure on $[\left(\mathbf{G}\times\mathbf{G}\right)(\mathbb{A})]$ supported on $\mathcal{H}_i$.

Assume the following for all $i\in\mathbb{N}$
\begin{enumerate}
\item $g_\infty^{-1} \mathbf{T}(\mathbb{R}) g_\infty=K_\infty$,
\item $p_1,p_2$ split in $E_i$,
\item The Dedekind $\zeta$ function of $E_i$ has no exceptional Landau-Siegel zero,
\item $\mathcal{f_i}\ll 1$.
\end{enumerate}

If $|D_i|\to\infty$ and the following holds for any compact subset $B\subset\mathbf{G}(\mathbb{A})$ 
\begin{equation*}
\forall i\gg_{B} 1\colon g^{-1}\mathbf{T}(\mathbb{Q})s g\cap B=\emptyset
\end{equation*} 
then any weak-$*$ limit point of $\left\{\mu_i\right\}_i$ is a $\left(\mathbf{G}\times\mathbf{G}\right)(\mathbb{A})^+$-invariant probability measure.
\end{thm}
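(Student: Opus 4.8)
The plan is to combine measure rigidity for the toral action with a cross-correlation estimate whose arithmetic heart is an upper bound for an extremely short shifted convolution sum. First I would pass to the $S$-arithmetic setting with $S=\{\infty,p_1,p_2\}$ and lift each $\mu_i$ to $[\mathbf{G}(\mathbb{Q}_S)]\times[\mathbf{G}(\mathbb{Q}_S)]$ as in \S\ref{sec:rigidity}. By the standing normalization of $g$ together with hypotheses (1)--(2), each lift is a finite average of periodic measures for the diagonal of $K_\infty\times A_{p_1}\times A_{p_2}$; in particular it is invariant, in both coordinates simultaneously, under the diagonally embedded rank-two split torus. Let $\mu$ be a weak-$*$ limit point of $\{\mu_i\}_i$. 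Since $|D_i|\to\infty$, Duke's theorem and its generalizations \cite{Duke} (recalled in the introduction) show that each coordinate marginal of $\mu$ is the Haar probability measure on $[\mathbf{G}(\mathbb{A})]$; in particular there is no escape of mass. The joinings theorem of Einsiedler and Lindenstrauss \cite{ELJoinings} then applies and forces $\mu$ to be algebraic: a convex combination whose ergodic components are either $(\mathbf{G}\times\mathbf{G})(\mathbb{A})^+$-invariant or \emph{intermediate}, i.e.\ supported on a translate of a Hecke correspondence, that is, of the diagonal subgroup $\mathbf{G}^\Delta$. Because every $\mu_i$ was invariant under the \emph{split} torus $A_{p_1}^\Delta$, these intermediate translates inherit the extra restriction that their relative position at $p_1$ lies in $A_{p_1}$ (\S\ref{sec:Hecke}). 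It therefore suffices to show that the intermediate part of $\mu$ vanishes.

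To exclude the intermediate pieces I would run the cross-correlation argument of \S\ref{intro:cross-correlation} with a non-archimedean test neighbourhood $B\subset\mathbf{G}(\mathbb{Q}_{p_1})$. For a fixed intermediate algebraic measure $\nu$ and such a $B$, the cross-correlation $\widetilde{\Cor}_C[\mu_i,\nu](B)$ over a fixed compact $C$ is, being a pairing of two algebraic measures against the automorphic kernel of $\mathbb{1}_{B\times B}$, bounded above by a relative trace for the double quotient $\dfaktor{\mathbf{G}^\Delta}{\mathbf{G}\times\mathbf{G}}{\mathbf{T}^\Delta}$, whose geometric expansion (\S\ref{sec:cross-correlation}) is a sum of relative orbital integrals indexed by the rational points of the GIT quotient of \S\ref{sec:GIT}. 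By Proposition \ref{prop:intersection-to-invariants}, each non-archimedean relative orbital integral is controlled by the number of pairs $(\mathfrak{a},\mathfrak{b})$ of invertible integral $\Lambda$-ideals satisfying an additive norm relation of the shape $\Nr\mathfrak{a}-\Nr\mathfrak{b}\asymp|D_i|$, with $\Nr\mathfrak{a}\ll|D_i|$ and $\Nr\mathfrak{b}\ll\meas(B)|D_i|$, and with $\mathfrak{a}$ and $\mathfrak{b}$ confined to fixed classes modulo $\Pic(\Lambda)$ and $\Pic(\Lambda)^2$ respectively. Here the genericity hypothesis enters through Proposition \ref{prop:fiber-of-invariants}: the assumption that $g^{-1}\mathbf{T}(\mathbb{Q})sg$ eventually avoids every compact set guarantees the non-degenerate case $\mathfrak{b}\neq 0$ (and its analogue for a general Hecke translate), so that the invariant map has uniformly bounded fibres away from the ramified primes. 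The outcome is that $\widetilde{\Cor}_C[\mu_i,\nu](B)$ is bounded by a constant multiple of a short shifted convolution sum $\mathscr{S}=\sum_{0<x-|D_i|\leq\kappa\meas(B)|D_i|}g(x)f(x-|D_i|)$, where $f$ counts invertible integral $\Lambda$-ideals of a given norm and $g$ is the same count restricted to the fixed ideal class attached to the twist $s$.

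To bound $\mathscr{S}$ I would, using the principal genus theory of Appendix \ref{appndx:principal-genus}, rewrite it as a sum of the multiplicative function $f$ over the values of the reduced integral binary quadratic form attached to that class on an elliptic annulus $\mathscr{E}$ of area $\asymp\meas(B)\sqrt{|D_i|}$. The generalization of the Shiu--Nair method to two-variable polynomials on smooth domains developed in \S\ref{sec:sieving} (with the local conic point counts of Appendix \ref{appndx:conics}) then yields $\mathscr{S}\ll A(\mathscr{E})(\log|D_i|)^{-1}\sum_{a\ll|D_i|}f(a)/a$. Hypothesis (3), the absence of an exceptional Landau--Siegel zero for $\zeta_{E_i}$, is precisely what pins the logarithmic sum to the correct size, and hypothesis (4), the boundedness of the conductor, ensures $f$ and $g$ decouple away from the primes dividing the shift; combining these, $\mathscr{S}$ has the order of $\meas(B)$ times the cardinality of the packet, which translates into $\widetilde{\Cor}_C[\mu_i,\nu](B)\ll\meas(B)^{1+\rho}$ for a fixed $\rho>0$, with implied constant uniform in $\nu$ and, once $i$ is large relative to $B$, in $i$.

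Finally I would assemble the contradiction. After replacing $\mathbb{1}_{B\times B}$ by smooth approximants and letting $i\to\infty$ along the subsequence realizing $\mu$, integrating the uniform bound over the ergodic decomposition of the intermediate part $\mu_{\mathrm{inter}}$ of $\mu$ gives $\widetilde{\Cor}_C[\mu_{\mathrm{inter}},\mu_{\mathrm{inter}}](B)\ll\meas(B)^{1+\rho}$ for arbitrarily small $B$ and $C$ exhausting the space. On the other hand, $\mu_{\mathrm{inter}}$ is $A_{p_1}^\Delta$-invariant and its ergodic components are supported on translates of $\mathbf{G}^\Delta(\mathbb{Q}_S)^+$ subject to the $p_1$-adic restriction of the first step, so the metric entropy of a generator of $A_{p_1}^\Delta$ on $\mu_{\mathrm{inter}}$ is bounded above by a fixed amount strictly below the maximal entropy; by the relation between metric entropy and the measure of small dynamical balls, the self-correlation of $\mu_{\mathrm{inter}}$ cannot decay as fast as $\meas(B)^{1+\rho}$ for any $\rho>0$ unless $\mu_{\mathrm{inter}}=0$. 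It is exactly here that using a $p_1$-adic rather than archimedean test neighbourhood is essential: it converts the joinings-theoretic obstruction into a family of $A_{p_1}^\Delta$-invariant measures amenable to the entropy dichotomy, whereas an uncountable combination of archimedean translates of $\mathbf{G}^\Delta(\mathbb{R})^+$ could be absolutely continuous. Hence $\mu_{\mathrm{inter}}=0$ and $\mu$ is $(\mathbf{G}\times\mathbf{G})(\mathbb{A})^+$-invariant. The main obstacle throughout is the short-range bound for $\mathscr{S}$: no harmonic-analytic treatment of shifted convolution sums reaches the range $\kappa\meas(B)|D_i|$, which is why the argument must go through a sieve; this in turn forces the extension of Shiu--Nair to distorted two-dimensional domains, which degenerates precisely in the eccentric regime already handled by \cite{EMV}, the appeal to the non-existence of a Siegel zero, and a careful analysis of the fibres of the invariant map at the ramified primes together with the constraints modulo $\Pic(\Lambda)^2$.
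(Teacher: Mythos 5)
Your outline follows the paper's own route essentially step for step: measure rigidity plus Duke to reduce to excluding intermediate Hecke translates, a $p_1$-adic cross-correlation bounded by a relative trace, the invariant-ideal parametrization of the orbital integrals, the two-variable Shiu--Nair sieve on the elliptic annulus, the no-Siegel-zero input for the logarithmic mean, and the entropy contradiction for the $A_{p_1}^\Delta$-action. All of that is the intended argument.

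There is, however, one genuine gap: the sieve branch alone does not prove the theorem, and you treat the regime where it fails as a parenthetical remark rather than as a case that must actually be closed. The sieve requires the ellipse $\mathscr{E}$ attached to the class $[\mathfrak{s}]$ to satisfy $R_{\max}^{\theta_l}\leq A(\mathscr{E})^{1-4\eta}$, which by the curvature computation translates into a \emph{lower} bound $\mathfrak{N}_i\geq |D_i|^{(2-\theta_l^{-1})/3+\eta_0}$. The hypotheses of the theorem impose no such lower bound, so sequences with small $\mathfrak{N}_i$ must be handled by a separate argument --- the Linnik/Ellenberg--Michel--Venkatesh effective equidistribution inside the ambient Hecke correspondence, which works only in the complementary range $\mathfrak{N}_i\leq |D_i|^{\frac{1}{2+2/(1-\theta)}-\epsilon_1}$ where $\theta$ is the best exponent toward Ramanujan for $\mathbf{SL}_2$. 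Your phrase that the sieve ``degenerates precisely in the eccentric regime already handled by [EMV]'' conceals a nontrivial numerical verification: with the default inputs (van der Corput's $\theta_l>2/3$ and Gelbart--Jacquet's $\theta=1/2$) both thresholds equal $|D_i|^{1/6}$ and the two ranges fail to overlap. One needs either Huxley's improvement in the lattice-point count or any bound $\theta<1/2$ toward Ramanujan to make the ranges meet, and this overlap check, together with the adaptation of the EMV argument to the case $p_1\mid\mathfrak{N}_i$ (choosing whichever of $p_1,p_2$ divides $\mathfrak{N}_i$ to a smaller power, balancing the Hecke norm gap against the validity range of Linnik's Basic Lemma), is an indispensable component of the proof that your proposal omits. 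A secondary, more minor point: your claimed cross-correlation bound is not uniform in the intermediate component $\nu_\xi$ but depends continuously on $\ctr(\xi)$; before integrating over the ergodic decomposition you must restrict to a compact positive-measure set of components, which is what makes the final entropy average finite.
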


\begin{cor}
Denote by $$L^2_{00}\left(\left[\left(\mathbf{G}\times\mathbf{G}\right)(\mathbb{A})\right],\meas_{\mathbf{G}\times \mathbf{G}}\}\right)<L^2\left(\left[\left(\mathbf{G}\times\mathbf{G}\right)(\mathbb{A})\right],\meas_{\mathbf{G}\times \mathbf{G}}\right)$$ the subspace orthogonal to the residual spectrum. Then in the setting of Theorem \ref{thm:joinings-adelic} above for any \emph{continuous compactly supported} function $f\in L^2_{00}\left(\left[\left(\mathbf{G}\times\mathbf{G}\right)(\mathbb{A})\right],\meas_{\mathbf{G}\times \mathbf{G}}\right)$
\begin{equation*}
\int f \dif\mu_i\to_{i\to\infty} 0
\end{equation*}
\end{cor}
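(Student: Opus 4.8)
The plan is to deduce the corollary from Theorem \ref{thm:joinings-adelic} by a standard weak-$*$ compactness argument, exploiting the fact that the $\mu_i$ are probability measures on a space with finite total Haar volume. First I would observe that, by Theorem \ref{thm:joinings-adelic}, every weak-$*$ limit point $\mu_\infty$ of the sequence $\{\mu_i\}_i$ is a $(\mathbf{G}\times\mathbf{G})(\mathbb{A})^+$-invariant probability measure on $[(\mathbf{G}\times\mathbf{G})(\mathbb{A})]$. The key structural input is that the quotient map $\pi^+\colon[(\mathbf{G}\times\mathbf{G})(\mathbb{A})]\to (G_\mathrm{res}\times G_\mathrm{res})$ (in the notation just introduced before the theorem, applied to $\mathbf{G}\times\mathbf{G}$) has fibers that are exactly the $(\mathbf{G}\times\mathbf{G})(\mathbb{A})^+$-orbits, and on each fiber the group acts transitively; hence any $(\mathbf{G}\times\mathbf{G})(\mathbb{A})^+$-invariant probability measure disintegrates over its pushforward to $G_\mathrm{res}\times G_\mathrm{res}$ with each conditional measure being the uniform (Haar) probability measure on the corresponding fiber. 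In particular, such a $\mu_\infty$ is absolutely continuous with respect to $\meas_{\mathbf{G}\times\mathbf{G}}$ and its Radon–Nikodym derivative is a pullback via $\pi^+$ of a function on the compact abelian group $G_\mathrm{res}\times G_\mathrm{res}$.

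Next I would record the spectral consequence: a $(\mathbf{G}\times\mathbf{G})(\mathbb{A})^+$-invariant probability density of this form lies entirely in the span of the residual (and trivial) spectrum. Indeed, the functions on $[(\mathbf{G}\times\mathbf{G})(\mathbb{A})]$ that are pulled back from $G_\mathrm{res}\times G_\mathrm{res}$ are precisely the $(\mathbf{G}\times\mathbf{G})(\mathbb{A})^+$-invariant functions, and since $(\mathbf{G}\times\mathbf{G})^\sc$ has no nontrivial automorphic characters, every such function is built out of one-dimensional automorphic representations of $\mathbf{G}\times\mathbf{G}$, which are exactly the characters factoring through $\Nrd$ — these constitute (a subspace of) the residual spectrum together with the constants. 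Consequently, for any $f\in L^2_{00}$, which by definition is orthogonal to the residual spectrum, we have $\int f\,\mathrm{d}\mu_\infty = \langle f, \mathrm{d}\mu_\infty/\mathrm{d}\meas_{\mathbf{G}\times\mathbf{G}}\rangle_{L^2} = 0$.

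Finally I would upgrade this from limit points to the full sequence. Suppose for contradiction that $\int f\,\mathrm{d}\mu_i \not\to 0$ for some continuous compactly supported $f\in L^2_{00}$; then there is $\varepsilon>0$ and a subsequence along which $|\int f\,\mathrm{d}\mu_i|\ge\varepsilon$. Since $[(\mathbf{G}\times\mathbf{G})(\mathbb{A})]$ has finite volume and the $\mu_i$ are probability measures — and one should note they are supported in a fixed compact set (or, if not literally, use that the space is a finite union of quotients of finite volume and apply a tightness/no-escape-of-mass input, which here follows because the homogeneous toral sets have controlled volume growth under condition (4)) — we may pass to a further weak-$*$ convergent subsequence $\mu_{i_k}\to\mu_\infty$. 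By continuity and compact support of $f$, $\int f\,\mathrm{d}\mu_{i_k}\to\int f\,\mathrm{d}\mu_\infty = 0$, contradicting $|\int f\,\mathrm{d}\mu_{i_k}|\ge\varepsilon$. Hence $\int f\,\mathrm{d}\mu_i\to 0$.

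The main obstacle is the no-escape-of-mass issue underlying the last step: a priori a weak-$*$ limit of probability measures on a non-compact finite-volume quotient may lose mass at the cusps, in which case the limit $\mu_\infty$ could fail to be a probability measure and Theorem \ref{thm:joinings-adelic} would not directly apply. Resolving this requires showing the sequence $\{\mu_i\}_i$ is tight, which is where the uniform conductor bound $\mathcal{f}_i\ll 1$ and the structure of toral packets (e.g.\ via Linnik-type non-escape estimates, or the fact already implicit in the setup that these periodic toral measures are non-escaping) enter; once tightness is granted, the argument above is routine.
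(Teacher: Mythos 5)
Your overall route---disintegrate any $\left(\mathbf{G}\times\mathbf{G}\right)(\mathbb{A})^+$-invariant limit measure over the fibers of $\pi^+\times\pi^+$, observe that the conditional measures are the invariant probability measures on those fibers, and use that a function orthogonal to the residual spectrum integrates to zero on fibers---is essentially the paper's argument, and the tightness worry you raise at the end is already absorbed into the conclusion of Theorem \ref{thm:joinings-adelic}, which asserts that the limit points \emph{are} probability measures. However, one step of your write-up is genuinely wrong: the claim that $\mu_\infty$ is absolutely continuous with respect to $\meas_{\mathbf{G}\times\mathbf{G}}$ with Radon--Nikodym derivative pulled back from $G_\mathrm{res}\times G_\mathrm{res}$, and the resulting identity $\int f\dif\mu_\infty=\langle f,\dif\mu_\infty/\dif\meas_{\mathbf{G}\times\mathbf{G}}\rangle_{L^2}$. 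The group $G_\mathrm{res}$ is an infinite compact group, so every single fiber of $\pi^+\times\pi^+$ is $\meas_{\mathbf{G}\times\mathbf{G}}$-null, and the push-forward of $\mu_\infty$ to $G_\mathrm{res}\times G_\mathrm{res}$ need not be absolutely continuous with respect to Haar measure there; indeed, by Proposition \ref{prop:residual} it is supported on a single coset of the closed subgroup $H^\Delta$, which is Haar-null. So $\mu_\infty$ is typically \emph{singular} with respect to $\meas_{\mathbf{G}\times\mathbf{G}}$ and the $L^2$ pairing you write down does not exist.

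The missing idea that repairs this is exactly why the corollary is stated for \emph{continuous compactly supported} $f$ rather than arbitrary $f\in L^2_{00}$. Orthogonality to the residual spectrum only says that the conditional expectation of $f$ with respect to the $\sigma$-algebra pulled back under $\pi^+\times\pi^+$ vanishes, i.e.\ that the integral of $f$ against the invariant probability measure on the fiber vanishes for $\meas_{\mathbf{G}\times\mathbf{G}}$-\emph{almost every} fiber. Since the push-forward of $\mu_\infty$ may (and here does) charge a Haar-null set of fibers, ``almost every'' is not enough. For continuous compactly supported $f$ the fiber integral is a continuous function on $G_\mathrm{res}\times G_\mathrm{res}$, so vanishing almost everywhere upgrades to vanishing on \emph{every} fiber; one then concludes directly from the disintegration that $\int f\dif\mu_\infty$ equals the integral of the (identically zero) fiber integrals against the push-forward of $\mu_\infty$, with no absolute continuity required. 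With that substitution for your $L^2$-pairing step, the argument matches the paper's proof.
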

\begin{proof}
Each fiber of $\pi^+$ admits a transitive $\mathbf{G}(\mathbb{A})^+$ action inducing an isomorphism of the fiber with $[\mathbf{G}(\mathbb{A})^+]$. This isomorphism depends on the choice of a base point. The probability Haar measure on $[\mathbf{G}(\mathbb{A})^+]$ defines a probability measure on the fiber which is independent of the choice of base point due to the invariance property of the Haar measure. The conditional measures of $\meas_\mathbf{G}$ on the fibers of $[\mathbf{G}(\mathbb{A})]\to G_\mathrm{res}$ are $\mathbf{G}(\mathbb{A})^+$-invariant probability measures hence they can be taken to coincide with the previously described measures on the fibers.
	
The residual spectrum is by definition the space of function factoring through $\pi^+\times \pi^+$ and a function is orthogonal to the residual spectrum if its conditional expectation with respect to the pull-back of the Borel $\sigma$-algebra under $\pi^+\times\pi^+$ vanishes. In terms of conditional measures this is equivalent to the function having integral $0$ over the conditional measure of $\meas_\mathbf{G}\times\meas_\mathbf{G}$ for almost each fiber. For a compactly supported continuous function $f$ orthogonal to the residual spectrum we deduce that it has integral $0$ over each fiber with respect to the $\left(\mathbf{G}\times \mathbf{G}\right)(\mathbb{A})^+$-invariant measure.

Because each $\left(\mathbf{G}\times \mathbf{G}\right)(\mathbb{A})^+$-invariant probability measure on $\left[\left(\mathbf{G}\times\mathbf{G}\right)(\mathbb{A})\right]$ is a convex combination of the measures on the fibers of $\pi^+\times\pi^+$ we deduce that all the limit points of $\int f\dif\mu_i$ are $0$.
\end{proof}

\subsection{Reduction to a Fixed Invariance Group at \texorpdfstring{$p_1,p_2$}{p1,p2}}
In the rest of the manuscript we work with homogeneous toral sets satisfying the conditions of \eqref{eq:g_adel_restrictions} which are more restrictive then the conditions in Theorem \ref{thm:joinings-adelic}. In particular, we require for all homogeneous toral sets $\left[\mathbf{T}(\mathbb{A})g\right]$ that $g_{p_j}^{-1} \mathbf{T}(\mathbb{Q}_{p_j}) g_{p_j}=A_{p_j}$ for $j\in\{1,2\}$ and some fixed split tori $A_{p_j}<\mathbf{G}(\mathbb{Q}_{p_j})$. In this section we show that Theorem \ref{thm:joinings-adelic} can be reduced to the case of joint homogeneous toral sets satisfying these additional conditions.

\begin{prop}\label{prop:shift_p1_p_2}
Let $\left\{\mathcal{H}_i\right\}_i$ and $\{\mu_i\}_i$ be as in Theorem \ref{thm:joinings-adelic}. Then there is a \emph{bounded} sequence $h_i\in\mathbf{G}(\mathbb{A})$ such that $\mathcal{H}_{i}(h_i,h_i)\subseteq\left[\left(\mathbf{G}\times\mathbf{G}\right)(\mathbb{A})\right]$ satisfies \eqref{eq:g_adel_restrictions} for all $i\in\mathbb{N}$. 
\end{prop}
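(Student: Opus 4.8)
The plan is to adjust the adelic element $g$ defining each $\mathcal{H}_i$ at the three distinguished places $\infty, p_1, p_2$ so that the normalization \eqref{eq:g_adel_restrictions} holds, while keeping the adjustment bounded (so that weak-$*$ limit points are unaffected). Write $\mathcal{H}_i = [\mathbf{T}^\Delta(\mathbb{A})(g,sg)]$ with $\mathbf{T} = \mathbf{T}_i$, $g = g_i$, $s = s_i$. By hypothesis (1) of Theorem \ref{thm:joinings-adelic} we already have $g_\infty^{-1}\mathbf{T}(\mathbb{R})g_\infty = K_\infty$, so no correction is needed at $\infty$; set the archimedean component of $h_i$ to be trivial. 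The work is entirely at $p_1$ and $p_2$, and the two places are handled identically, so I describe $p_1$.

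At $p_1$ the torus $\mathbf{T}$ is split (hypothesis (2)), so $g_{p_1}^{-1}\mathbf{T}(\mathbb{Q}_{p_1})g_{p_1}$ is \emph{some} maximal split torus of $\mathbf{G}(\mathbb{Q}_{p_1}) \cong \mathbf{PGL}_2(\mathbb{Q}_{p_1})$; call it $A'$. We must produce $h_{p_1} \in \mathbf{G}(\mathbb{Q}_{p_1})$, lying in a fixed compact set independent of $i$, with $h_{p_1}^{-1} A' h_{p_1} = A_{p_1}$, because then $(g_{p_1}h_{p_1})^{-1}\mathbf{T}(\mathbb{Q}_{p_1})(g_{p_1}h_{p_1}) = h_{p_1}^{-1}A' h_{p_1} = A_{p_1}$ as required by \eqref{eq:g_adel_restrictions}. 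Here is where the local discriminant hypothesis is used: by the remark following Definition in \S\ref{sec:discriminant-local} (and since $\mathcal{f}_i \ll 1$, hypothesis (4)), the order $\Lambda_{p_1} = \mathbf{E}(\mathbb{Q}_{p_1}) \cap g_{p_1}\mathbb{O}_{p_1}g_{p_1}^{-1}$ has $p_1$-adic conductor bounded uniformly in $i$; equivalently the local discriminant $D_{p_1}$ takes only finitely many values. All maximal split tori of $\mathbf{PGL}_2(\mathbb{Q}_{p_1})$ are conjugate, and I would parametrize conjugators by which apartment of the Bruhat--Tits tree is swept out, or equivalently by the pair of fixed ends on the boundary $\mathbb{P}^1(\mathbb{Q}_{p_1})$; the conjugating element can be taken bounded precisely when the apartment of $A'$ passes within bounded distance of the vertex stabilized by $K_{p_1}$, and this bounded-distance condition is exactly controlled by the valuation of the conductor of $\Lambda_{p_1}$ — that is, by $D_{p_1}$, which we have just observed is bounded. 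Concretely I would pick $h_{p_1}$ carrying the apartment of $A'$ onto the apartment of $A_{p_1}$ by a minimal-length path in the tree; its word length, hence its operator-norm, is $O(v_{p_1}(D_{p_1})) = O(1)$.

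Assembling: set $h_i = (1, h_{i,p_1}, h_{i,p_2}, 1, 1, \dots) \in \mathbf{G}(\mathbb{A})$, trivial at all places other than $p_1, p_2$, with $h_{i,p_j}$ as constructed. Then $h_i$ ranges in the fixed compact set $\{1\}_\infty \times (\text{bounded set in } \mathbf{G}(\mathbb{Q}_{p_1})) \times (\text{bounded set in } \mathbf{G}(\mathbb{Q}_{p_2})) \times \prod_{v \neq \infty, p_1, p_2}\{1\}$, and $\mathcal{H}_i(h_i, h_i) = [\mathbf{T}^\Delta(\mathbb{A})(gh_i, s\,gh_i)]$ — note $s$ commutes with $\mathbf{T}(\mathbb{A})$ so the twist is unchanged — is a joint homogeneous toral set for the \emph{same} torus $\mathbf{T}$, with new adelic element $gh_i$ satisfying all three conditions of \eqref{eq:g_adel_restrictions}. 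The main obstacle is the uniform boundedness of $h_{i,p_j}$: one must verify that the distance in the tree from the $K_{p_j}$-vertex to the apartment of $g_{p_j}^{-1}\mathbf{T}(\mathbb{Q}_{p_j})g_{p_j}$ is controlled by the $p_j$-adic valuation of the conductor $\mathcal{f}_i$, which is where hypothesis (4) enters; everything else is routine bookkeeping with restricted products and the observation that conjugating the defining element of a homogeneous toral set by a bounded adelic element does not change the set of weak-$*$ limit points of the associated measures.
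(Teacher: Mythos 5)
Your proposal is correct and follows essentially the same route as the paper: the key point in both is that the bounded conductor forces the split torus $g_{p_j}^{-1}\mathbf{T}(\mathbb{Q}_{p_j})g_{p_j}$ to lie in a bounded region of the space of split tori $\faktor{\mathbf{G}(\mathbb{Q}_{p_j})}{\Nrml_{\mathbf{G}(\mathbb{Q}_{p_j})}A_{p_j}}$, so a conjugator onto $A_{p_j}$ can be chosen from a fixed compact set. The only difference is cosmetic: where the paper cites the properness and continuity of the local discriminant map on the variety of tori from \cite{ELMVCubic}, you unwind that claim explicitly via the distance from the base vertex to the apartment in the Bruhat--Tits tree, which is exactly what that properness statement encodes.
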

\begin{proof}
The main observation is that the local discriminant is a proper continuous map on the variety of tori. 

Let $p\in\{p_1,p_2\}$. Because all $\mathbb{Q}_p$-split tori in $\mathbf{G}(\mathbb{Q}_p)$ are conjugate we identify the space of $\mathbb{Q}_p$-split tori with $\faktor{\mathbf{G}(\mathbb{Q}_p)}{\Nrml_{\mathbf{G}(\mathbb{Q}_p)}A_p}$. To each split torus we can associate a discriminant in the manner of \S\ref{sec:discriminant-local}. Specifically, let $\overline{A}<\mathbf{B}(\mathbb{Q}_p)$ be the split quadratic \'etale-algebra associated to $A_p$. If $T=hAh^{-1}$ for some $h\in\mathbf{G}(\mathbb{Q}_p)$ then $\disc(T)$ is the discriminant of the order $h \overline{A}h^{-1}\cap \mathbb{O}$. This function is continuous and proper as follows from \cite[\S4.2 and \S6.1]{ELMVCubic}.

If $\mathcal{H}_i=[\mathbf{T}(\mathbb{A})(g_i,s_ig_i)]$ then assumption (4) in Theorem \ref{thm:joinings-adelic} and properness of the local discriminant map implies that $g_{i,p}^{-1}\mathbf{T}_i(\mathbb{Q}_p) g_{i,p}$ is a bounded sequence in the space of tori $\faktor{\mathbf{G}(\mathbb{Q}_p)}{\Nrml_{\mathbf{G}(\mathbb{Q}_p)}A_p}$ for $p\in\{p_1,p_2\}$. Thus we can choose a \emph{bounded} sequence $h_{i,p}\in\mathbf{G}(\mathbb{Q}_p)$ such that $g_{i,p}^{-1}\mathbf{T}_i(\mathbb{Q}_p) g_{i,p} =h_{i,p}A_p h_{i,p}^{-1}$ for all $i\in\mathbb{N}$.

Define $h_i\in\mathbf{G}(\mathbb{A})$ to have coordinate $h_{i,p}$ for $p\in\{p_1,p_2\}$ and have trivial coordinates at all other places. This sequence obviously satisfies the claimed properties.
\end{proof}

\begin{cor}
Theorem \ref{thm:joinings-adelic} for joint homogeneous toral sets satisfying \eqref{eq:g_adel_restrictions} implies the general case of
Theorem \ref{thm:joinings-adelic}.
\end{cor}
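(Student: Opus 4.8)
The plan is to show that translating each joint homogeneous toral set by a \emph{bounded} sequence $(h_i,h_i)$ on the right does not change the hypotheses of Theorem \ref{thm:joinings-adelic} and only modifies the conclusion in an inessential way, so that knowing the theorem for sets satisfying \eqref{eq:g_adel_restrictions} gives it in general. First I would invoke Proposition \ref{prop:shift_p1_p_2} to fix the bounded sequence $h_i\in\mathbf{G}(\mathbb{A})$ with $h_{i,v}=e$ for $v\notin\{p_1,p_2\}$ and $\mathcal{H}_i(h_i,h_i)$ satisfying \eqref{eq:g_adel_restrictions}. Write $\mathcal{H}_i=[\mathbf{T}^\Delta(\mathbb{A})(g_i,s_ig_i)]$; then $\mathcal{H}_i(h_i,h_i)=[\mathbf{T}^\Delta(\mathbb{A})(g_ih_i,s_ig_ih_i)]$, which is again a joint homogeneous toral set with the \emph{same} torus $\mathbf{T}$, the \emph{same} twist $s_i$, and hence the same splitting field $E_i$ and conductor $\mathcal{f}_i$; the only thing that changes is the adelic point $g_i\rightsquigarrow g_ih_i$ at $p_1,p_2$.

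Next I would check that all four standing hypotheses are preserved. Conditions (2) and (3) depend only on $E_i$, which is unchanged. For condition (1), $h_i$ has trivial archimedean component, so $(g_ih_i)_\infty^{-1}\mathbf{T}(\mathbb{R})(g_ih_i)_\infty=g_{i,\infty}^{-1}\mathbf{T}(\mathbb{R})g_{i,\infty}=K_\infty$. For condition (4), the discriminant of $\mathcal{H}_i(h_i,h_i)$ differs from that of $\mathcal{H}_i$ only in the local discriminants $D_{i,p_1},D_{i,p_2}$; since $h_i$ is bounded and the local discriminant is continuous and proper on the space of tori (as used in the proof of Proposition \ref{prop:shift_p1_p_2}, via \cite[\S4.2 and \S6.1]{ELMVCubic}), these local factors are bounded above and below by absolute constants. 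Hence $|D_i|\to\infty$ is equivalent to $|\disc(\mathcal{H}_i(h_i,h_i))|\to\infty$, and $\mathcal{f}_i\ll 1$ is unaffected since the conductor is a global arithmetic invariant of $\Lambda\subseteq\mathcal{O}_{E_i}$ and changing $g$ at $p_1,p_2$ within the split locus by a bounded amount changes $\Lambda_{p_j}$ only among orders of bounded index. I would also verify the escape-of-mass hypothesis: $g^{-1}\mathbf{T}(\mathbb{Q})sg\cap B=\emptyset$ for $i\gg_B 1$ transforms under $g\rightsquigarrow gh_i$ into $(gh_i)^{-1}\mathbf{T}(\mathbb{Q})s(gh_i)\cap B=h_i^{-1}(g^{-1}\mathbf{T}(\mathbb{Q})sg)h_i\cap B$, and since $h_i$ ranges over a fixed compact set, $h_i^{-1}(\cdot)h_i\cap B\ne\emptyset$ forces $g^{-1}\mathbf{T}(\mathbb{Q})sg\cap B'\ne\emptyset$ for a slightly larger compact $B'$ independent of $i$; so the hypothesis for the translated family follows from the hypothesis for the original.

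Finally I would transfer the conclusion back. Let $\mu_i'$ be the algebraic measure on $\mathcal{H}_i(h_i,h_i)$; then $\mu_i'=R_{(h_i,h_i)}{}_*\mu_i$ where $R$ denotes the right translation action. Applying the special case of Theorem \ref{thm:joinings-adelic} to $\{\mu_i'\}_i$, any weak-$*$ limit point $\nu'$ of a subsequence is $(\mathbf{G}\times\mathbf{G})(\mathbb{A})^+$-invariant. Passing to a further subsequence along which $h_i\to h_\infty$ (possible by compactness of the closure of $\{h_i\}$) and along which $\mu_i\to\nu$, continuity of the right-translation action on the space of probability measures (with the weak-$*$ topology, using that all measures are supported in a fixed compact set or, more carefully, a standard uniform-continuity argument for translations by a convergent sequence) gives $\nu'=R_{(h_\infty,h_\infty)}{}_*\nu$. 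Since $(\mathbf{G}\times\mathbf{G})(\mathbb{A})^+$ is normal in $(\mathbf{G}\times\mathbf{G})(\mathbb{A})$, the measure $\nu=R_{(h_\infty,h_\infty)^{-1}}{}_*\nu'$ is again $(\mathbf{G}\times\mathbf{G})(\mathbb{A})^+$-invariant, which is the desired conclusion for the original sequence. The main obstacle is the last step: one must make sure the weak-$*$ limit commutes with the translation by the convergent sequence $h_i\to h_\infty$ rather than just by the fixed limit $h_\infty$; this is where one uses that $\{h_i\}$ is bounded together with equicontinuity of translations on compactly supported test functions (or, equivalently, an Arzel\`a--Ascoli/tightness argument), and it is purely formal once the boundedness from Proposition \ref{prop:shift_p1_p_2} is in hand.
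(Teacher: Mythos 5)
Your proposal is correct and follows essentially the same route as the paper: translate by the bounded sequence $(h_i,h_i)$ from Proposition \ref{prop:shift_p1_p_2}, pass to subsequences along which $h_i$ and $\mu_i$ converge, use continuity of right translation to identify the limit of the translated measures as the translate of the limit, apply the special case, and undo the translation using normality of $\left(\mathbf{G}\times\mathbf{G}\right)(\mathbb{A})^+$. Your explicit verification that hypotheses (1)--(4) and the genericity condition are preserved under the translation is left implicit in the paper but is a correct and worthwhile addition.
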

\begin{proof}
Let $\{\mathcal{H}_i\}_i$ and $\{\mu_i\}_i$ be as in Theorem \ref{thm:joinings-adelic}. Because this sequence of measures is tight by Duke's theorem, we can pass without loss of generality to a convergent subsequence with limit $\mu$. Let $h_i\in\mathbf{G}(\mathbb{A})$ be the bounded sequence from Proposition \ref{prop:shift_p1_p_2} above. Without loss of generality we pass to a further subsequence such that $h_i\to_{i\to\infty} h\in\mathbf{G}(\mathbb{A})$.

For any $g\in\mathbf{G}(\mathbb{A})$ denote by $R_g\colon[\mathbf{G}(\mathbb{A})]\to[\mathbf{G}(\mathbb{A})]$ the transformation of multiplying by $g^{-1}$ on the right. For each $i$ the measure $\left(R_{h_i}\times R_{h_i}\right)_*.\mu_i$ is the algebraic measure supported on $\mathcal{H}_i(h_i,h_i)$ and we have 
\begin{equation*}
\left(R_{h_i}\times R_{h_i}\right)_*.\mu_i\to_{i\to\infty} 
\left(R_{h}\times R_{h}\right)_*.\mu
\end{equation*}
Our assumption implies that the measure on the right hand side is a $\left(\mathbf{G}\times\mathbf{G}\right)(\mathbb{A})^+$ invariant measure. The same statement then holds for $\mu$ because $\left(\mathbf{G}\times\mathbf{G}\right)(\mathbb{A})^+$ is a normal subgroup.
\end{proof}

\subsection{Limit Behavior of Residual Spectrum}
The following, significantly easier, proposition supplements the main theorem as it can be used to understand the asymptotic behavior for the residual spectrum.
\begin{prop}\label{prop:residual}
Let $\left\{\mathcal{\mu}_i\right\}_i$ and $E_i/\mathbb{Q}$ be as in Theorem \ref{thm:joinings-adelic}, although we do not require that conditions (1)-(4) from the theorem are satisfied.  

Assume one of the following two options holds: either all the fields $E_i$ are distinct or they are all equal to a fixed quadratic field $E_0/\mathbb{Q}$. In the former case define $H\coloneqq G_\mathrm{res}$ and in the latter case set $H\coloneqq \ker\left(\ch_{E_0} \circ \Nrd \right)$ where $\ch_{E_0}\colon \dfaktor{\mathbb{Q}^\times}{\mathbb{A}^\times}{{\mathbb{A}^\times}^2} \to \{\pm 1\}$ is the real adelic character attached to $E_0/\mathbb{Q}$ by global class field theory.

Then any limit point of $\left(\pi^+\times\pi^+\right)_*.\mu_i$ is an $H^\Delta$-invariant probability measure supported on a single coset of $H^\Delta$.
\end{prop}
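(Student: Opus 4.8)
The plan is to reduce the question about $\left(\pi^+\times\pi^+\right)_*.\mu_i$ to a statement purely about the image of the diagonal torus in the compact abelian group $G_\mathrm{res}\times G_\mathrm{res}$, where weak-$*$ limits can be identified explicitly. First I would note that $\mu_i$ is the algebraic measure on the joint homogeneous toral set $\mathcal{H}_i=[\mathbf{T}^\Delta(\mathbb{A})(g,sg)]$, invariant under $H_\mathbb{A}^\Delta = (g,sg)^{-1}\mathbf{T}^\Delta(\mathbb{A})(g,sg)$. Since $\pi^+\times\pi^+\colon [(\mathbf{G}\times\mathbf{G})(\mathbb{A})]\to G_\mathrm{res}\times G_\mathrm{res}$ is a continuous homomorphism of the ambient group (composed with a quotient by $(\mathbf{G}\times\mathbf{G})(\mathbb{Q})$, which lies in the kernel on the relevant side by \S\ref{sec:simply-connected-cover}), the pushforward $\left(\pi^+\times\pi^+\right)_*.\mu_i$ is the Haar measure on the coset $\left(\pi^+\times\pi^+\right)\!\big(\mathbf{T}^\Delta(\mathbb{A})(g,sg)\big)$ of the closed subgroup $\left(\pi^+\times\pi^+\right)\!\big(\mathbf{T}^\Delta(\mathbb{A})\big)$ of $G_\mathrm{res}\times G_\mathrm{res}$. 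Using the reduced-norm description $\Nrd\colon \faktor{\mathbf{G}(\mathbb{A})}{\mathbf{G}(\mathbb{A})^+}\hookrightarrow \faktor{\mathbb{A}^\times}{{\mathbb{A}^\times}^2}$, the image $\left(\pi^+\times\pi^+\right)\!\big(\mathbf{T}^\Delta(\mathbb{A})\big)$ is the diagonal image of $\Nrd(\widetilde{\mathbf{T}}(\mathbb{A}))\bmod {\mathbb{A}^\times}^2$ inside $\dfaktor{\mathbb{Q}^\times}{\mathbb{A}^\times}{{\mathbb{A}^\times}^2}^{\times 2}$.

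Next I would identify this image-subgroup uniformly in $i$. Since $\widetilde{\mathbf{T}}\simeq\WR\Gm$ with $E=E_i$, the reduced norm on $\widetilde{\mathbf{T}}$ is the field norm $\Nr_{E/\mathbb{Q}}$, and by class field theory the cokernel of $\Nr_{E/\mathbb{Q}}\colon \mathbb{A}_E^\times\to\mathbb{A}^\times$ (modulo $\mathbb{Q}^\times$ and squares) is detected exactly by the quadratic Hecke character $\ch_{E/\mathbb{Q}}$; concretely, $\Nrd(\widetilde{\mathbf{T}}(\mathbb{A}))\cdot{\mathbb{A}^\times}^2 = \ker\!\big(\ch_{E/\mathbb{Q}}\big)$ as subgroups of $\dfaktor{\mathbb{Q}^\times}{\mathbb{A}^\times}{{\mathbb{A}^\times}^2}$. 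Hence $\left(\pi^+\times\pi^+\right)_*.\mu_i$ is the Haar measure on a coset of the diagonally embedded subgroup $D_i\coloneqq \big\{(x,x) : x\in\ker\ch_{E_i}\big\}\subset G_\mathrm{res}\times G_\mathrm{res}$. In the case where all $E_i$ equal a fixed $E_0$, every $D_i$ equals the fixed compact group $H^\Delta$ with $H=\ker(\ch_{E_0}\circ\Nrd)$, so every limit point is a Haar measure on a coset of $H^\Delta$, as claimed. In the case where the $E_i$ are all distinct, I would use that a quadratic character $\ch_{E_i}$ of the compact group $G_\mathrm{res}$ (a finite group, being $\faktor{\mathbf{G}(\mathbb{A})}{\mathbf{G}(\mathbb{Q})\mathbf{G}(\mathbb{A})^+}$, a finite $2$-group) can take any fixed nontrivial value only for finitely many $i$: since $G_\mathrm{res}$ is finite, there are only finitely many possible characters $\ch$, and the map $E\mapsto (\ch_E\restriction G_\mathrm{res})$ has finite fibers on the set of quadratic fields with a given set of ramified/split primes controlled by $G_\mathrm{res}$ — more precisely, $\ch_E\restriction G_\mathrm{res}$ is determined by finitely many local conditions, and distinct $E_i$ eventually exhaust these, forcing $\ch_{E_i}\restriction G_\mathrm{res}$ to be trivial for $i\gg 1$, i.e. $\ker\ch_{E_i}\supseteq G_\mathrm{res}$ and so $D_i = G_\mathrm{res}^\Delta$ for $i\gg 1$. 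Thus every limit point is the Haar measure on a coset of $G_\mathrm{res}^\Delta = H^\Delta$.

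Finally, I would record the coset-convergence: since $G_\mathrm{res}\times G_\mathrm{res}$ is compact, after passing to a subsequence the cosets $D_i\cdot\big(\pi^+(g),\pi^+(sg)\big)$ converge (in the Hausdorff/Chabauty sense, which for subgroups eventually equal to a fixed closed subgroup $H^\Delta$ just means convergence of the translating coset representative), and the Haar measures on them converge weak-$*$ to the Haar measure on the limiting coset of $H^\Delta$; this coset is a \emph{single} coset because $H^\Delta$ is already closed. The step I expect to require the most care is the dichotomy argument in the distinct-fields case: one must check that $\ch_{E_i}\restriction G_\mathrm{res}$ is genuinely trivial for all large $i$, which amounts to tracking exactly which places' local components of the reduced-norm cokernel survive in $G_\mathrm{res}$ and observing that these are finitely many, so that the quadratic characters attached to infinitely many distinct fields cannot all restrict nontrivially — here one uses finiteness of $G_\mathrm{res}$ together with the explicit local description of $\Nrd$ from \S\ref{sec:simply-connected-cover}. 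Everything else is a formal consequence of pushing an algebraic (homogeneous) measure forward under a continuous group homomorphism onto a compact abelian group.
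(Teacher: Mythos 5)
The fixed-field case of your argument is fine and matches the paper, as does the initial reduction to Haar measures on cosets of the diagonal image of $\ker\ch_{E_i}$ in $G_\mathrm{res}\times G_\mathrm{res}$. But the distinct-fields case contains a genuine error: $G_\mathrm{res}$ is \emph{not} a finite group. Via $\Nrd$ it embeds into $\dfaktor{\mathbb{Q}^\times}{\mathbb{A}^\times}{{\mathbb{A}^\times}^2}$, which is an infinite compact group (its Pontryagin dual is the infinite discrete group of quadratic Hecke characters of $\mathbb{Q}$ — one for each quadratic field). Consequently your claim that "distinct $E_i$ eventually exhaust the finitely many characters, forcing $\ch_{E_i}\restriction G_\mathrm{res}$ to be trivial for $i\gg 1$" is false: when $\mathbf{B}$ is split at $\infty$, $\Nrd$ is onto and each $\ch_{E_i}$ restricts to a genuinely nontrivial character, so each $\ker\ch_{E_i}$ is a proper index-$2$ subgroup for every $i$, never equal to $G_\mathrm{res}$. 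Your final coset-convergence paragraph, which assumes the subgroups are "eventually equal to a fixed closed subgroup $H^\Delta$", therefore also breaks down in this case.

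What is actually needed is a limiting argument: because the dual of the compact group is discrete and the characters $\ch_{E_i}$ are pairwise distinct, the sequence $\{\ch_{E_i}\}$ diverges, so the subgroups $\ker\ch_{E_i}$ converge in the Chabauty topology to the full group (this is how the paper argues, via Pontryagin--Chabauty duality), and the Haar measures on their cosets converge weak-$*$ to the Haar measure on a coset of $G_\mathrm{res}^\Delta$. Equivalently, you could test against a fixed nontrivial character $\psi\otimes\psi'$ of $G_\mathrm{res}\times G_\mathrm{res}$: the integral over a coset of $\ker\ch_{E_i}^\Delta$ vanishes unless $\psi\psi'\in\{1,\ch_{E_i}\circ\Nrd\}$, and for fixed $\psi,\psi'$ the latter can occur for at most one $i$, so all nontrivial Fourier coefficients not pulled back through the diagonal quotient tend to zero. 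Either repair works, but some such limiting step is indispensable; the subgroups themselves never stabilize. Separately, to pin down that the limit lives on a \emph{single} coset of $H^\Delta$ (rather than merely being $H^\Delta$-invariant) you should, as the paper does, push forward under the contraction map $\ctr$ and observe that $\ctr_*$ of each $\nu_i$ is a point mass, hence so is its limit.
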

\begin{remark}
It will be evident from the proof that in general $\left\{\left(\pi^+\times\pi^+\right)_*.\mu_i\right\}_i$ need not converge, even under the assumptions of the proposition above.
\end{remark}
\begin{proof}
Recall from \S\ref{sec:simply-connected-cover} that the reduced norm map induces a monomorphism
\begin{equation*}
\Nrd\colon G_\mathrm{res}\to \dfaktor{\mathbb{Q}^\times}{\mathbb{A}^\times}{{\mathbb{A}^\times}^2}
\end{equation*}
This map is onto if $\mathbf{B}$ is split at $\infty$ and it is the index $2$ subgroup $\dfaktor{\mathbb{Q}_{>0}}{\mathbb{R}_{>0}\times\mathbb{A}_f^\times}{{\mathbb{A}^\times}^2}$ otherwise.

Assume $\{\mu_i\}_i$ converges weak-$*$ and 
let $\left[\mathbf{T}_i(\mathbb{A})^\Delta (g_i,s_ig_i)\right]$ be the homogeneous toral set of $\mu_i$. By restricting to a subsequence we can assume without loss of generality that $\pi^+(g_i)$ and $\pi^+(s_i)$ converge in $G_\mathrm{res}$ to some $\gamma,\sigma\in G_\mathrm{res}$.

Fix an index $i\in\mathbb{N}$ and let $\mathbf{T}\coloneqq \mathbf{T}_i$ and $E\coloneqq E_i$. Because $\mathbf{T}(\mathbb{A})$ is abelian and $\mathbf{T}$ is isotropic over $\mathbb{Q}$ the homogeneous set $\left[\mathbf{T}(\mathbb{A})\right]$ is a compact abelian group. In particular, $\pi^+\left(\left[\mathbf{T}(\mathbb{A})\right]\right)$ is a closed subgroup of $G_\mathrm{res}$. To describe this subgroup explicitly recall that the isomorphism $\mathbf{T}\simeq \lfaktor{\Gm}{\WR\Gm}$ intertwines the reduced norm map with the regular field norm map. Thus $$\Nrd\circ \pi^+\left(\left[\mathbf{T}(\mathbb{A})\right]\right)=\ker \ch_E$$  where $\ch_E\colon \dfaktor{\mathbb{Q}^\times}{\mathbb{A}^\times}{{\mathbb{A}^\times}^2}\to\{\pm 1\}$ is the real adelic character attached to $E/\mathbb{Q}$ by global class field theory. We shall denote henceforth this character by $\ch_i$. 

If $\ch_i=\ch_{E_0}$ for all $i$ where $E_0/\mathbb{Q}$ is a fixed imaginary quadratic field then define $H \coloneqq \ker\left(\ch_{E_0} \circ \Nrd\right)< G_\mathrm{res}$. Otherwise, our assumption implies that all the characters $\ch_i$ are mutually distinct.
Because $\dfaktor{\mathbb{Q}^\times}{\mathbb{A}^\times}{{\mathbb{A}^\times}^2}$ is compact its Pontryagin dual is discrete. Hence if the character $\ch_i$ are distinct the sequence $\left\{\ch_i\right\}_i$ diverges. If $\left\{\ch_i\right\}_i$ diverges then the sequence of subgroups $\{\langle \ch_i\rangle \}_i$ converge in the Chabauty topology to the trivial group $1<G_\mathrm{res}$. Pontryagin-Chabauty duality \cite{Cornulier} then implies that $\ker \ch_i$ converges in the Chabauty topology to the full subgroup $\dfaktor{\mathbb{Q}^\times}{\mathbb{A}^\times}{{\mathbb{A}^\times}^2}$. In this case set $H\coloneqq G_\mathrm{res}$.

For all $i$ denote $\nu_i\coloneqq \left(\pi^+\times\pi^+\right)_*.\mu_i$ and let $\nu$ be the limit measure. From the discussion above it follows that $\Nrd_*.\nu_i$ is the $\ker \ch_i^\Delta$-invariant probability measure on $\ker \ch_i^\Delta(\pi^+(g),\pi^+(gs_i))$. The limit measure $\Nrd_*.\nu$ is invariant under the action of the Chabauty limit of the invariance subgroups $\ker \ch_i^\Delta$ which is $\Nrd(H)^\Delta$. We deduce that $\nu$ is invariant under $H^\Delta$.

We are left only with proving that $\nu$ is supported on a single coset of $H$. Using the continuous contraction map $\ctr\colon G_\mathrm{res}\times G_\mathrm{res}\to G_\mathrm{res}$ define the push-forward probability measures $\ctr_*.\nu_i$ on $G_\mathrm{res}$. The characterization of $\Nrd_*.\nu_i$ above implies that $$\Nrd_*.\ctr_*.\nu_i=\delta_{\Nrd(\pi^+(s_i))}\to_{i\to\infty} \delta_{\Nrd(\sigma)}$$ hence $\ctr_*.\nu=\delta_{\sigma}$. This implies that $\nu$ is supported on $G_\mathrm{res}^\Delta(e,\sigma)$ and the proof is concluded in the case that $H=G_\mathrm{res}$.

If $H=\ker \ch_E$ then $\nu_i(\pi^+(g_i),\pi^+(s_i g_i))^{-1}$ is independent of $i$ and is equal to the Haar measure on $H^\Delta$. The claim follows because
\begin{equation*}
\nu_i(\pi^+(g_i),\pi^+(s_i g_i))^{-1}\to_{i\to\infty} \nu (\gamma, \sigma \gamma)^{-1}
\end{equation*}
\end{proof}

\subsection{Many-Fold Toral Joinings}
A pleasant consequence of the joining theorem of Einsiedler and Lindenstrauss is that we can understand $n$-joinings of periodic toral measures using the theorem for $2$-joinings. The main observation is that if a reductive subgroup $\mathbf{L}<\underbrace{\mathbf{G}\times\cdots\times\mathbf{G}}_n$
projects onto $\mathbf{G}\times\mathbf{G}$ in any of the $\begin{pmatrix}
n \\ 2
\end{pmatrix}$ pairs of coordinates then it must be equal to the full $n$-product.

\begin{defi}
Fix $n\in\mathbb{N}$.
Let $\mathbf{T}<\mathbf{G}$ be a maximal torus defined and anisotropic $/\mathbb{Q}$. Denote by $\mathbf{T}^\Delta<\mathbf{G}^{\times n}$ the diagonal embedding.

Fix $s_1,\ldots,s_{n-1}\in\mathbf{T}(\mathbb{A})$ and $g\in\mathbf{G}(\mathbb{A})$. The set
\begin{equation*}
\left[\mathbf{T}^\Delta(\mathbb{A})(g,s_1 g,\ldots,s_{n-1}g)\right]
\subset
\left[\mathbf{G}^{\times n}(\mathbb{A})\right]
\end{equation*}
is an $n$-joint homogeneous toral set. This set supports a unique $\left(g^{-1}\mathbf{T}(\mathbb{A})g\right)^\Delta$-invariant probability measure.
\end{defi}

\begin{thm}\label{thm:joinings-adelic-n}
Let $\mathbf{G}$ be a form of $\mathbf{PGL}_2$ over $\mathbb{Q}$. Fix a maximal compact torus $K_\infty<\mathbf{G}(\mathbb{R})$ and two finite primes $p_1,p_2$. 

Let $\left\{\mathcal{H}_i\right\}_i$ be a sequence of $n$-joint homogeneous toral sets. For each $i$ write $\mathcal{H}_i=\left[\mathbf{T}^\Delta(\mathbb{A})(g,s_1 g,\ldots,s_{n-1}g)\right]$ where $\mathbf{T},\{s_j\}_{1\leq j <n},g$ depend on $i$. Recall that $\mathbf{T}<\mathbf{G}$ is a maximal torus defined and anisotropic over $\mathbb{Q}$, $g\in\mathbf{G}(\mathbb{A})$ and $s_1,\ldots,s_{n-1}\in\mathbf{T}(\mathbb{A})$. 

Let $E_i/\mathbb{Q}$ be the quadratic field splitting $\mathbf{T}$ and let $D_i$ be the discriminant of $\mathcal{H}_i$. Denote by $\mathcal{f}_i$ the conductor of $D_i$, i.e.\ $\mathcal{f}_i^2 \mid D_i$ is the largest square divisor of $D_i$.

Denote by $\mu_i$ the algebraic probability measure on $\left[\mathbf{G}^{\times n}(\mathbb{A})\right]$ supported on $\mathcal{H}_i$.

Assume the following for all $i\in\mathbb{N}$
\begin{enumerate}
\item $g_\infty^{-1} \mathbf{T}(\mathbb{R}) g_\infty=K_\infty$,
\item $p_1,p_2$ split in $E_i$,
\item The Dedekind $\zeta$ function of $E_i$ has no exceptional Landau-Siegel zero,
\item $\mathcal{f_i}\ll 1$.
\end{enumerate}

If $|D_i|\to\infty$ and the following holds for any compact subset $B\subset\mathbf{G}(\mathbb{A})$ and for any pair of distinct elements $s,s'\in\{1,s_1,\ldots,s_{n-1}\}$
\begin{equation}\label{eq:n-fold-condition}
\forall i\gg_{B} 1\colon g^{-1}\mathbf{T}(\mathbb{Q})s^{-1}s' g\cap B=\emptyset
\end{equation} 
then any weak-$*$ limit point of $\left\{\mu_i\right\}_i$ is a $\mathbf{G}^{\times n}(\mathbb{A})^+$-invariant probability measure.
\end{thm}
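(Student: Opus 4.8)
The plan is to reduce Theorem~\ref{thm:joinings-adelic-n} to the two-fold statement Theorem~\ref{thm:joinings-adelic} by one application of the joining classification of Einsiedler and Lindenstrauss \cite[Corollary 1.5]{ELJoinings}, combined with the elementary observation recorded at the start of this section: a $\mathbb{Q}$-subgroup of $\mathbf{G}^{\times n}$ that surjects onto $\mathbf{G}\times\mathbf{G}$ in each of the $\binom{n}{2}$ pairs of coordinates must equal $\mathbf{G}^{\times n}$ (a Goursat-type lemma, using that $\mathbf{G}$ is a simple algebraic group). First I would carry out the normalisation of Proposition~\ref{prop:shift_p1_p_2}: its proof uses only properness of the local discriminant at $p_1,p_2$ and goes through verbatim with $\mathbf{G}\times\mathbf{G}$ replaced by $\mathbf{G}^{\times n}$, so there is a bounded sequence $h_i\in\mathbf{G}(\mathbb{A})$ such that after translating $\mathcal{H}_i$ on the right by $(h_i,\dots,h_i)$ all data satisfy \eqref{eq:g_adel_restrictions}; since $\mathbf{G}^{\times n}(\mathbb{A})^+$ is normal and $\{h_i\}$ is bounded, it suffices to prove the theorem under \eqref{eq:g_adel_restrictions}. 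Now pass to a weak-$*$ convergent subsequence of $\{\mu_i\}$ — the family is tight because each of its $n$ coordinate marginals is a periodic toral measure, and such measures are tight and even equidistribute by Duke's theorem \cite{Duke} — and let $\mu$ be its limit; it suffices to treat an arbitrary such limit point.

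By \eqref{eq:g_adel_restrictions} every $\mu_i$ is invariant under the fixed rank-two group $A_{p_1}^\Delta\times A_{p_2}^\Delta$, hence so is $\mu$, and $\mu$ is a self-joining of $n$ copies of the Haar measure on $[\mathbf{G}(\mathbb{A})]$. Applying \cite[Corollary 1.5]{ELJoinings} to this higher-rank joining, almost every ergodic component $\mu_\omega$ of $\mu$ for the $A_{p_1}^\Delta\times A_{p_2}^\Delta$-action is a homogeneous probability measure, supported on a closed orbit $[x_\omega\,\mathbf{L}_\omega(\mathbb{A})^+]$ for some $\mathbb{Q}$-subgroup $\mathbf{L}_\omega\le\mathbf{G}^{\times n}$. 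I would then feed in the two-fold theorem pair by pair. Let $\mathrm{p}_{jk}\colon[\mathbf{G}^{\times n}(\mathbb{A})]\to[(\mathbf{G}\times\mathbf{G})(\mathbb{A})]$ denote projection onto coordinates $j,k$. Writing $s_0:=1$ and reparametrising the torus orbit by $s_j\in\mathbf{T}(\mathbb{A})$ identifies $(\mathrm{p}_{jk})_*\mu_i$ with the periodic measure of the two-joint homogeneous toral set of twist $s_j^{-1}s_k$, for which all hypotheses (1)--(4) are inherited unchanged and condition \eqref{eq:n-fold-condition} for the pair $s=s_j$, $s'=s_k$ is exactly the escape hypothesis of Theorem~\ref{thm:joinings-adelic}. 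Hence Theorem~\ref{thm:joinings-adelic} applies and $(\mathrm{p}_{jk})_*\mu=\lim_i(\mathrm{p}_{jk})_*\mu_i$ is $(\mathbf{G}\times\mathbf{G})(\mathbb{A})^+$-invariant; since $[(\mathbf{G}\times\mathbf{G})(\mathbb{A})]$ is a finite union of $(\mathbf{G}\times\mathbf{G})(\mathbb{A})^+$-orbits, each carrying a unique invariant probability measure, $(\mathrm{p}_{jk})_*\mu=\sum_c\lambda_c\,\mathrm{m}_c$, a convex combination of the Haar measures $\mathrm{m}_c$ on those cosets.

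Next I would compare the two decompositions $(\mathrm{p}_{jk})_*\mu=\int(\mathrm{p}_{jk})_*\mu_\omega\,d\omega=\sum_c\lambda_c\,\mathrm{m}_c$. Each $(\mathrm{p}_{jk})_*\mu_\omega$ is $A_{p_1}^\Delta$-ergodic (the image of an ergodic measure under the $A_{p_1}^\Delta$-equivariant map $\mathrm{p}_{jk}$), and each $\mathrm{m}_c$ is $A_{p_1}^\Delta$-ergodic as well: Howe--Moore mixing at the split prime $p_1$ together with strong approximation force the only $A_{p_1}$-invariant function in $L^2_0([\mathbf{G}(\mathbb{A})^+])$ to be constant, whence $A_{p_1}^\Delta$ has no non-constant invariants on the product of two such spaces. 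Thus both expressions are ergodic decompositions of $(\mathrm{p}_{jk})_*\mu$, and by uniqueness of the ergodic decomposition $(\mathrm{p}_{jk})_*\mu_\omega=\mathrm{m}_{c(\omega)}$ for a.e.\ $\omega$. Since $(\mathrm{p}_{jk})_*\mu_\omega$ is the homogeneous measure on the orbit of $\mathrm{p}_{jk}(\mathbf{L}_\omega)(\mathbb{A})^+$, its being a full Haar measure $\mathrm{m}_{c(\omega)}$ forces $\mathrm{p}_{jk}(\mathbf{L}_\omega)(\mathbb{A})^+\supseteq(\mathbf{G}\times\mathbf{G})(\mathbb{A})^+$, and as $(\mathbf{G}\times\mathbf{G})(\mathbb{A})^+$ is Zariski-dense in $\mathbf{G}\times\mathbf{G}$ this yields $\mathrm{p}_{jk}(\mathbf{L}_\omega)=\mathbf{G}\times\mathbf{G}$ for a.e.\ $\omega$.

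Discarding, for each of the finitely many pairs, the corresponding null set of $\omega$, we obtain for a.e.\ $\omega$ a $\mathbb{Q}$-subgroup $\mathbf{L}_\omega\le\mathbf{G}^{\times n}$ surjecting onto $\mathbf{G}\times\mathbf{G}$ in every pair of coordinates; by the Goursat observation $\mathbf{L}_\omega=\mathbf{G}^{\times n}$, so $\mu_\omega$ is the Haar measure on a $\mathbf{G}^{\times n}(\mathbb{A})^+$-coset, in particular $\mathbf{G}^{\times n}(\mathbb{A})^+$-invariant. Integrating over $\omega$ shows $\mu$ is $\mathbf{G}^{\times n}(\mathbb{A})^+$-invariant, as claimed. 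I expect the main obstacle to be the clean invocation of \cite[Corollary 1.5]{ELJoinings} in this $n$-fold $S$-arithmetic situation: one must verify that the two split primes genuinely furnish a higher-rank action to which the joining classification applies, and extract from it the homogeneous description of the ergodic components in a form precise enough for the projection argument; the matching of ergodic decompositions in the third paragraph, which rests on mixing at $p_1$, is the other point requiring care, though it is standard.
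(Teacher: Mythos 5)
Your proposal is correct and follows essentially the same route as the paper, whose proof of Theorem \ref{thm:joinings-adelic-n} is stated in one line (the two-fold theorem plus \cite[Corollary 1.5]{ELJoinings}, together with the Goursat-type observation that a reductive $\mathbf{L}<\mathbf{G}^{\times n}$ surjecting onto $\mathbf{G}\times\mathbf{G}$ in every pair of coordinates equals $\mathbf{G}^{\times n}$). You have simply supplied the details the paper omits — the normalisation via Proposition \ref{prop:shift_p1_p_2}, the matching of ergodic decompositions using mixing at $p_1$, and the pairwise projection argument — all consistent with how the paper handles the analogous steps in the two-fold case.
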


\begin{proof}
This follows from Theorem \ref{thm:2d-sieve} and \cite[Corollary 1.5]{ELJoinings}.
\end{proof}

\subsection{Galois Orbits of Special Points}
\begin{thm}
Let $\mathbf{G}$ be a form of $\mathbf{PGL}_2$ defined over $\mathbb{Q}$ and split over $\mathbb{R}$. Let $X$ be a product of $n$ quaternionic Shimura varieties relative to $\mathbf{G}$.

Let $\{x_i\}_i$ be a sequence of special points in $X$ all whose coordinates have CM by the same quadratic order $\Lambda_i<E_i$ of discriminant $D_i<0$ and conductor $\mathcal{f}_i$. Fix two primes $p_1$, $p_2$ and assume the following for all $i\in\mathbb{N}$
\begin{enumerate}
\item $p_1$, $p_2$ split in $E_i$,
\item The Dedekind $\zeta$-function of $E_i$ has no exceptional Landau-Siegel zero,
\item $\mathcal{f}_i\ll 1$.
\end{enumerate} 
Denote by $\nu_i$ the normalized counting measure on the finite Galois orbit of $x_i$. If the sequence $\{x_i\}_i$ has finite intersection with any proper special subvariety then any weak-$*$ limit of $\{\nu_i\}_i$ is a convex combination of the uniform probability measures on the connected components of $X$.  
\end{thm}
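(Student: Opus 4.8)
The plan is to translate the statement, via the reciprocity law of Shimura (the main theorem of complex multiplication), into an instance of Theorem~\ref{thm:joinings-adelic-n}, and then to identify the resulting $\mathbf{G}^{\times n}(\mathbb{A})^+$-invariant limit measures with convex combinations of uniform measures on connected components. First I would fix the adelic picture: write $X=\prod_{j=1}^n X_j$ where $X_j(\mathbb{C})\simeq\mathbf{G}(\mathbb{Q})\backslash(\mathcal{H}^{\pm}\times\mathbf{G}(\mathbb{A}_f)/K_f^{(j)})$ is an indefinite quaternionic Shimura curve of some level $K_f^{(j)}$, so that $X$ is the Shimura variety attached to $\mathbf{G}^{\times n}$ with level $\prod_j K_f^{(j)}$. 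A special point of $X_j$ with CM by $\Lambda_i\subset E_i$ is the image of some $g_i^{(j)}\in\mathbf{G}(\mathbb{A})$ with $(g_{i,\infty}^{(j)})^{-1}\mathbf{T}_i(\mathbb{R})g_{i,\infty}^{(j)}=K_\infty$, where $\mathbf{T}_i<\mathbf{G}$ is the anisotropic maximal torus attached to an embedding $E_i\hookrightarrow\mathbf{B}$; by Skolem--Noether (two embeddings differ by $\operatorname{Ad}$ of a rational point, which acts trivially on $X_j$) we may use the \emph{same} $\mathbf{T}_i$ for all $n$ coordinates of $x_i$, and the assumption that all coordinates have CM by the same $\Lambda_i$ fixes the local orders $\Lambda_{i,v}$. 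Note also that the hypotheses force $\mathbf{B}$, hence $\mathbf{G}$, to be split at $p_1$ and $p_2$: an imaginary quadratic field in which $p$ splits cannot embed into a quaternion algebra ramified at $p$, so otherwise no $x_i$ would exist.

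Next I would invoke the main theorem of complex multiplication: the action of $\Gal(\overline{\mathbb{Q}}/E_i)$ on $x_i^{(j)}$ factors through the reciprocity map of $E_i$ and is given by the translation action of $\mathbf{T}_i(\mathbb{A}_f)$ on the relevant coset space, while a lift of complex conjugation acts through the nontrivial element of $\Gal(E_i/\mathbb{Q})$ on $\mathbf{T}_i$. Hence the full $\Gal(\overline{\mathbb{Q}}/\mathbb{Q})$-orbit of $x_i=(x_i^{(1)},\dots,x_i^{(n)})$, equipped with its counting measure $\nu_i$, is identified --- after absorbing a fixed Hecke translate in each coordinate, which does not affect the conclusion --- with the projection to $X$ of an $n$-joint homogeneous toral set $\mathcal{H}_i=[\mathbf{T}_i^\Delta(\mathbb{A})(g_i,s_1^{(i)}g_i,\dots,s_{n-1}^{(i)}g_i)]$ and its algebraic measure $\mu_i$, where $s_j^{(i)}\in\mathbf{T}_i(\mathbb{A})$ records the relative position of the $j$-th and first coordinates. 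Hypotheses (1)--(4) of Theorem~\ref{thm:joinings-adelic-n} then hold by the normalization just described together with the hypotheses of the present theorem, and after conjugating $\mathcal{H}_i$ by a bounded element exactly as in Proposition~\ref{prop:shift_p1_p_2} we may in addition assume the normalization \eqref{eq:g_adel_restrictions} at $p_1,p_2$.

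The remaining point --- and the step I expect to be the main obstacle --- is to extract from the hypothesis ``$\{x_i\}$ has finite intersection with every proper special subvariety'' the two genericity inputs of Theorem~\ref{thm:joinings-adelic-n}, namely $|D_i|\to\infty$ and condition \eqref{eq:n-fold-condition}. The first is immediate, since a single special point is a $0$-dimensional special subvariety and there are only finitely many special points of bounded discriminant (finitely many orders of bounded discriminant, each with a finite class group). For the second, suppose \eqref{eq:n-fold-condition} fails: there are a compact $B\subset\mathbf{G}(\mathbb{A})$, a pair of coordinates (say $j$ and $k$), distinct $s^{(i)},s'^{(i)}\in\{1,s_1^{(i)},\dots,s_{n-1}^{(i)}\}$, and infinitely many $i$ with $g_i^{-1}\mathbf{T}_i(\mathbb{Q})(s^{(i)})^{-1}s'^{(i)}g_i\cap B\neq\emptyset$. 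Choosing $\gamma_i\in\mathbf{T}_i(\mathbb{Q})$ realizing this, the projection of $x_i$ to $X_j\times X_k$ lies on a translate of the diagonally embedded $\mathbf{G}$ whose Hecke parameter lies in the compact set determined by $B$ and the two levels, hence ranges over a \emph{finite} set of double cosets; passing to a subsequence on which this parameter is constant, all these $x_i$ lie on a single \emph{fixed} proper special subvariety of $X$ --- such translates of diagonals are special subvarieties of products of quaternionic Shimura curves, and the one obtained is a curve inside the surface $X_j\times X_k$, hence proper --- contradicting the hypothesis. Therefore \eqref{eq:n-fold-condition} holds.

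Finally, Theorem~\ref{thm:joinings-adelic-n} applies and shows that every weak-$*$ limit point of $\{\mu_i\}$ is a $\mathbf{G}^{\times n}(\mathbb{A})^+$-invariant probability measure; by the argument in the proof of the Corollary following Theorem~\ref{thm:joinings-adelic}, any such measure is a convex combination of the $\mathbf{G}^{\times n}(\mathbb{A})^+$-invariant probability measures on the fibres of $\pi^+\times\cdots\times\pi^+$. Pushing these down to $X$ and using that $\mathbf{G}$ is split at $\infty$, so that $\mathbf{G}^{\times n}(\mathbb{A})^+$ acts with open orbits whose images are unions of connected components of $X$ and the Haar measure descends to the hyperbolic-area measure, each fibrewise measure pushes to a convex combination of uniform probability measures on connected components of $X$. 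Since $\nu_i$ corresponds to $\mu_i$ under a continuous map commuting with the projection to $X$, every weak-$*$ limit of $\{\nu_i\}$ is a convex combination of the uniform probability measures on the connected components of $X$, which completes the proof.
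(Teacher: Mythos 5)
Your proposal follows essentially the same route as the paper: translate the Galois orbit into an $n$-joint homogeneous toral set via the reciprocity map, derive $|D_i|\to\infty$ and the genericity condition \eqref{eq:n-fold-condition} from the hypothesis on proper special subvarieties (via Hecke translates of diagonals of bounded degree), apply Theorem \ref{thm:joinings-adelic-n}, and push the resulting $\mathbf{G}^{\times n}(\mathbb{A})^+$-invariant limit down to $X$. The one point where the paper is more careful is the correction by Atkin--Lehner elements $k^i$ relating the several coordinates of $x_i$: these depend on $i$, so one passes to a convergent subsequence in the compact group and uses normality of $\mathbf{G}^{\times n}(\mathbb{A})^+$ to transfer invariance to the translated limit, whereas you treat this translate as fixed --- harmless, but it should be said.
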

\begin{proof}
We will show how this theorem follows from Theorem \ref{thm:joinings-adelic-n} above.
The definition of a Shimura variety relative to $\mathbf{G}$, cf. \cite[\S 5]{Milne}, implies that there is a surjective projection map
\begin{equation*}
\Pi\colon\left[\mathbf{G}^{\times n}(\mathbb{A})\right]\to X
\end{equation*}
defined by dividing the adelic quotient by the compact group $\prod_{j=1}^n (K_\infty\times U_j)$ where $K_\infty<\mathbf{G}(\mathbb{R})$ is a compact torus and $U_j<\mathbf{G}(\mathbb{A}_f)$ is a compact-open subgroup for all $1\leq j\leq n$.

The reciprocity map of class field theory supplies in this case, cf.\ \cite[\S12]{Milne}, an identification between the Galois orbit of $x_i$ and the image under $\Pi$ of a homogeneous toral set
\begin{equation*}
\mathcal{H}_i=\left[\mathbf{T}^\Delta(g_1,\ldots,g_n)\right]\subset \left[\mathbf{G}^{\times n}(\mathbb{A})\right]
\end{equation*}
where $\mathbf{T}<\mathbf{G}$ satisfies the conditions of Theorem \ref{thm:joinings-adelic-n}.
Moreover, the counting measure on the Galois orbit is the push-forward of the period measure $\mu_i$ on $\mathcal{H}_i$.  

The homogeneous toral set $\mathcal{H}_i$ is of the form treated in Theorem \ref{thm:joinings-adelic-n} if all the $n$ coordinates of $x_i$ are Galois conjugate. In general, there can be more then one Galois orbit with the same CM order $\Lambda_i$, yet they all differ by an element of a maximal compact subgroup in $\mathbf{G}^{\times n}(\mathbb{A})$, i.e.\ by Atkin-Lehner involutions. Specifically, let $K_{f,j}<\mathbf{G}(\mathbb{A}_f)$ be a maximal compact subgroup containing $U_j$ then the homogeneous toral set $\mathcal{H}_i$ can be taken to be
\begin{equation*}
\mathcal{H}_i=\left[\mathbf{T}^\Delta(g,s_1 g k^i_1,\ldots,s_{n-1} g_n k^i_{n-1})\right]\subset \left[\mathbf{G}^{\times n}(\mathbb{A})\right]
\end{equation*}
where $g\in\mathbf{G}(\mathbb{A})$, $s_1,\ldots,s_{n-1}\in\mathbf{T}(\mathbb{A})$ and $k^i\coloneqq (1,k^i_1,\ldots,k^i_{n-1})\in \times K_{1,f}\times\cdots\times K_{n,f}$. Denote by $\mu_i$ the period measure supported on $\mathcal{H}_i$ and whose push-forward to $X$ is $\nu_i$.

If the sequence $\{x_i\}_i$ has a finite intersection with any proper special subvariety then the same property holds for any fixed pair of coordinates of $\{x_i\}_i$ when considered as a sequence of special points on a product of two varieties. This implies the genericity condition \eqref{eq:n-fold-condition} in Theorem \ref{thm:joinings-adelic-n} for the homogeneous toral sets $\mathcal{H}_i {k^i}^{-1}$. In particular, all the condition of this theorem hold for the sequence $\{\mathcal{H}_i {k^i}^{-1}\}$ and we deduce the any weak-$*$ limit of $\{\mu_i {k^i}^{-1} \}_i$ is a $\mathbf{G}^{\times n}(\mathbb{A})^+$-invariant probability measures.

Assume without loss of generality that $\mu_i {k^i}^{-1}\to_{i\to\infty} \mu$. By passing to a subsequence we can also assume $k^i\to_{i\to\infty} k^0\in K_{1,f}\times\cdots\times K_{n,f}$. Then we have that $\mu_i\to_{i\to\infty} \mu k_0$. Because $\mathbf{G}^{\times n}(\mathbb{A})^+$ is a normal subgroup we deduce that $\mu k_0$ is also $\mathbf{G}^{\times n}(\mathbb{A})^+$-invariant. The claim follows by pushing-forward $\mu k_0$ to $X$ using $\Pi$.
\end{proof}

\section{Measure Rigidity} \label{sec:rigidity}
We present a definition of an algebraic probability measure in the $S$-arithmetic setting and the adelic one. The $S$-arithmetic definition we use is from \cite{ELJoinings}.
\begin{defi}\label{def:alg-measure}
Let $\mathbf{M}$ be a linear algebraic group defined over $\mathbb{Q}$.
\begin{enumerate}
\item Fix a finite set of rational places $S$ containing $\infty$ and let $M<\mathbf{M}(\mathbb{Q}_S)$ be a closed finite index subgroup. Let
$\Gamma<M$ be a lattice. A probability measure $\nu$ on $\lfaktor{\Gamma}{M}$ is \emph{algebraic} if there is a closed unimodular algebraic subgroup $\mathbf{L}<\mathbf{M}$ defined and anisotropic over $\mathbb{Q}$, a finite index subgroup $L<\mathbf{L}(\mathbb{Q}_S)$ and some $g_S\in M$ such that $\nu$ is the probability $L$-Haar measure supported on $[Lg_S]\subseteq\lfaktor{\Gamma}{M}$.

\item A probability measure $\nu$ on $[\mathbf{M}(\mathbb{A})]$ is an \emph{algebraic measure} if there is a closed unimodular algebraic subgroup $\mathbf{L}<\mathbf{M}$ defined and anisotropic over $\mathbb{Q}$, an isogeny $\mathbf{L}'\to\mathbf{L}$ over $\mathbb{Q}$ and a closed subgroup of finite index $L<\Img\left[\mathbf{L}'(\mathbb{A})\to\mathbf{L}(\mathbb{A})\right]$ such that 
$\nu$ is the probability $L$-Haar measure on an orbit $[Lg]\subset [\mathbf{M}(\mathbb{A})]$ for some $g\in\mathbf{G}(\mathbb{A})$.

\end{enumerate}
\end{defi}

\begin{remark}
The datum defining a fixed adelic algebraic measure is the $\mathbf{G}(\mathbb{Q})$-orbit of a tuple $(\mathbf{L},\mathbf{L}'\to\mathbf{L},L,Lg)$ where $\gamma\in\mathbf{G}(\mathbb{Q})$ acts by
\begin{equation*}
\gamma.(\mathbf{L},\mathbf{L}'\to\mathbf{L},L,Lg)=
(\Ad_\gamma\mathbf{L},\mathbf{L}'\to\mathbf{L}\xrightarrow{\Ad_\gamma}\Ad_\gamma\mathbf{L},\Ad_\gamma L,(\Ad_\gamma L) (\gamma g))
\end{equation*}
\end{remark}

\begin{defi}
Write $$A_{p_i}^+\coloneqq A_{p_i}\cap\mathbf{G}(\mathbb{Q}_{p_i})^+$$ for $i\in\{1,2\}$. The subgroup $A_{p_i}^+$ is the image in $A_{p_i}$ of a maximal torus in $\mathbf{G}^\sc(\mathbb{Q}_{p_i})$ isogenic to $A_{p_i}$, hence it has finite index in $A_{p_i}$. 
\end{defi}

The essential ingredient in the proof of the following theorem is the joinings theorem of Einsiedler and Lindenstrauss \cite[Theorem 1.4]{ELJoinings}  and Duke's theorem for equidistribution in the absolute rank $1$ case. Notice that because we assume a fixed split prime the equidistribution in the absolute rank $1$ case that we use is already covered by Linnik's method \cite{LinnikBook}.

\begin{thm}\label{thm:regiditiy-adelic}
Let $\mu^\mathrm{joint}_i$ be a sequence of self-joinings of periodic toral measures on $\left[\left(\mathbf{G}\times \mathbf{G}\right)(\mathbb{A})\right]$ with discriminants $|D_i|\to_{i\to\infty}\infty$ and satisfying \eqref{eq:g_adel_restrictions}. Let the probability measure $\mu$ be any limit point of $\mu^\mathrm{joint}_i$ then $\mu$ is a convex combination of of $(A_{p_1}^+\times A_{p_2}^+)^\Delta$-invariant algebraic measures. Specifically, there is a Borel probability measure $\mathcal{P}$ on the space of probability measure $\mathcal{M}_1\left(\left[\left(\mathbf{G}\times \mathbf{G}\right)(\mathbb{A})\right]\right)$ supported on the subset of algebraic measures so that
\begin{equation*}
\mu=\int_{\mathcal{M}_1\left(\left[\left(\mathbf{G}\times \mathbf{G}\right)(\mathbb{A})\right]\right)} \lambda \dif\mathcal{P}(\lambda)
\end{equation*}

Moreover, for  almost all the algebraic measures $\lambda$ in the support of $\mathcal{P}$ the associated $\mathbb{Q}$-group $\mathbf{L}<\mathbf{G}\times\mathbf{G}$ can be taken either to be $\mathbf{G}^\Delta$ or $\mathbf{G}\times\mathbf{G}$ and $\lambda$ is the algebraic measure supported on $[\mathbf{L}(\mathbb{A})^+ \xi]$ for some $\xi\in\left(\mathbf{G}\times\mathbf{G}\right)(\mathbb{A})$.
\end{thm}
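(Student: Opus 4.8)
The plan is to deduce the result from the joinings theorem of Einsiedler and Lindenstrauss \cite[Theorem 1.4]{ELJoinings}, applied to the ergodic components of a weak-$*$ limit $\mu$, using Duke's (equivalently Linnik's) theorem to pin down the one-dimensional marginals. First I would isolate a fixed higher-rank invariance. Each $\mu_i^{\mathrm{joint}}$ is the $\bigl(g^{-1}\mathbf{T}(\mathbb{A})g\bigr)^\Delta$-invariant probability measure on $\mathcal{H}_i$, and at the place $p_j$ the normalization \eqref{eq:g_adel_restrictions} forces $g_{p_j}^{-1}\mathbf{T}(\mathbb{Q}_{p_j})g_{p_j}=A_{p_j}$; hence $\mu_i^{\mathrm{joint}}$ is invariant under the diagonally embedded split torus $A_{p_j}$ at $p_j$ for $j=1,2$, and a fortiori under $(A_{p_1}^+\times A_{p_2}^+)^\Delta$. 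Since this group is \emph{independent of $i$}, any weak-$*$ limit point $\mu$ of $\{\mu_i^{\mathrm{joint}}\}_i$ is again $(A_{p_1}^+\times A_{p_2}^+)^\Delta$-invariant, and as the two primes contribute rank-one split tori at distinct places this is a genuinely two-factor higher-rank action of the type \cite{ELJoinings} requires. Tightness of $\{\mu_i^{\mathrm{joint}}\}_i$ — whence that $\mu$ is a probability measure — follows from tightness of its marginals, which is Duke's theorem.

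Second, I would control the marginals. The push-forward of $\mu_i^{\mathrm{joint}}$ under either coordinate projection is the ordinary periodic toral measure $\mu_i$ on $[\mathbf{T}(\mathbb{A})g]$, and since $p_1$ is split in every $E_i$ and $|D_i|\to\infty$, Duke's theorem — already a consequence of Linnik's method here, thanks to the fixed split prime — shows that every weak-$*$ limit of $\{\mu_i\}_i$ is a $\mathbf{G}(\mathbb{A})^+$-invariant probability measure. Restricting to the subsequence along which $\mu_i^{\mathrm{joint}}\to\mu$, both coordinate marginals of $\mu$ are therefore $\mathbf{G}(\mathbb{A})^+$-invariant probability measures.

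Third, I would pass to ergodic components and invoke the joinings theorem. Write $\mu=\int\lambda\dif\mathcal{P}(\lambda)$ for the ergodic decomposition of $\mu$ under $(A_{p_1}^+\times A_{p_2}^+)^\Delta$. For $\mathcal{P}$-almost every $\lambda$ the coordinate marginals of $\lambda$ remain $\mathbf{G}(\mathbb{A})^+$-invariant: $\pi_{1*}\mu$ is $\mathbf{G}(\mathbb{A})^+$-invariant, hence a convex combination of Haar measures on $\mathbf{G}(\mathbb{A})^+$-cosets, and by Howe--Moore the $A_{p_1}^+$-action on each such coset is mixing, so these cosets are already $A_{p_1}^+$-ergodic; consequently the $A_{p_1}^+$-ergodic decomposition of $\pi_{1*}\mu$ coincides with this one, which forces $\pi_{1*}\lambda$ — and symmetrically $\pi_{2*}\lambda$ — to be $\mathbf{G}(\mathbb{A})^+$-invariant for a.e.\ $\lambda$. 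Such a $\lambda$ is supported on an orbit of $\mathbf{G}(\mathbb{A})^+\times\mathbf{G}(\mathbb{A})^+$; using strong approximation (equivalently, the class-number-one statement \eqref{eq:S-class-number-1}) this orbit is an $S$-arithmetic quotient of the type in \S\ref{sec:packets} with $S=\{\infty,p_1,p_2\}$, on which $\lambda$ is an ergodic joining of two homogeneous measures invariant under a diagonal rank-two split torus. Applying \cite[Theorem 1.4]{ELJoinings} identifies $\lambda$ as an algebraic measure in the sense of Definition \ref{def:alg-measure}, supported on $[\mathbf{L}(\mathbb{A})^+\xi]$ with $\mathbf{L}<\mathbf{G}\times\mathbf{G}$ a $\mathbb{Q}$-subgroup surjecting onto each factor. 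Goursat's lemma exhibits such $\mathbf{L}$ as the graph of an isomorphism between common quotients of $\mathbf{G}$; since $\mathbf{G}$ is $\mathbb{Q}$-simple and adjoint of type $A_1$, with no outer automorphisms, either $\mathbf{L}=\mathbf{G}\times\mathbf{G}$ or $\mathbf{L}=\Ad_\gamma\mathbf{G}^\Delta$ for some $\gamma\in(\mathbf{G}\times\mathbf{G})(\mathbb{Q})$, and replacing the defining datum by its $\gamma$-translate (which leaves $\lambda$ unchanged) we may take $\mathbf{L}\in\{\mathbf{G}^\Delta,\mathbf{G}\times\mathbf{G}\}$ — this is the ``moreover'' clause. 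Then $\mu=\int\lambda\dif\mathcal{P}(\lambda)$ is the asserted convex combination of $(A_{p_1}^+\times A_{p_2}^+)^\Delta$-invariant algebraic measures.

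I expect the main obstacle to be the matching carried out in the third step: reconciling the adelic picture with the precise $S$-arithmetic hypotheses of \cite[Theorem 1.4]{ELJoinings} — confirming that the two fixed split primes supply exactly the two-factor higher-rank input the theorem consumes, that the homogeneity of the marginals produced by Duke's theorem has the form the theorem demands, and that its homogeneous conclusion transfers back to an adelic algebraic measure with the isogeny bookkeeping of Definition \ref{def:alg-measure}. The ancillary fact that $(A_{p_1}^+\times A_{p_2}^+)^\Delta$-ergodic decomposition commutes with taking coordinate marginals, justified above via Howe--Moore mixing, also needs care but is routine.
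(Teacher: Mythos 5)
Your overall strategy coincides with the paper's: fix the $(A_{p_1}^+\times A_{p_2}^+)^\Delta$-invariance, use Duke/Linnik plus Howe--Moore mixing to show that the coordinate marginals of almost every ergodic component are $\mathbf{G}(\mathbb{A})^+$-Haar (this is exactly Lemma \ref{lem:A-ergodic-decomp}), then feed the components into \cite[Theorem 1.4]{ELJoinings} and identify $\mathbf{L}$ as $\mathbf{G}^\Delta$ or $\mathbf{G}\times\mathbf{G}$ via the adjointness of $\mathbf{G}$ (Lemmata \ref{lem:L-isomorphism-type}, \ref{lem:L-exact-subgroup}).

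There is, however, a genuine gap at the step you yourself flag as ``the main obstacle'' and then do not carry out: applying \cite[Theorem 1.4]{ELJoinings} for the single set $S=\{\infty,p_1,p_2\}$ only identifies the push-forward $\jmath^S_*\lambda$ on $Y_S\times Y_S$ as an algebraic measure; it does \emph{not} determine $\lambda$ on the adelic quotient, since infinitely many distinct adelic measures share that projection. The paper closes this by running Lemma \ref{lem:S-arithm-joinings} over a countable exhausting direct system of finite sets $S$ (each chosen so that \eqref{eq:S-class-number-1} holds and $\omega_1,\omega_2\in\mathbf{G}(\mathbb{Q}_S)\times K^S$ — note that $\{\infty,p_1,p_2\}$ alone need not satisfy either condition), then checking that the resulting data $(\mathbf{L}_S,g_S)$ are compatible under the factor maps $Y_{S'}\times Y_{S'}\to Y_S\times Y_S$ up to a rational conjugation $\gamma$ and compact corrections $k_S\in\prod_{v\in S'\setminus S}K_v$, and finally using density of the $K^S\times K^S$-smooth compactly supported functions to conclude that $\lambda$ equals the adelic algebraic measure on $[\mathbf{L}(\mathbb{A})^+g_{S_0}k]$. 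A second, smaller omission: \cite[Theorem 1.4]{ELJoinings} only gives a \emph{finite-index} subgroup $L_S<\mathbf{L}(\mathbb{Q}_S)\cap(\mathbf{G}\times\mathbf{G})(\mathbb{Q}_S)^+$, and one must upgrade this to $L_S=\mathbf{L}(\mathbb{Q}_S)^+$ (using that $\mathbf{PSL}_2(\mathbb{Q}_{p_1})$ is abstractly simple, hence has no proper finite-index subgroups, together with strong approximation) before the support can be written in the clean form $[\mathbf{L}(\mathbb{A})^+\xi]$ asserted in the theorem.
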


\begin{cor}\label{cor:hecke-limit-split-restrictions}
Let $\lambda$ be an algerbraic measure in the support of $\mathcal{P}$ in Theorem \ref{thm:regiditiy-adelic} above. If $\lambda$ is supported on $[\mathbf{G}^\Delta(\mathbb{A})^+\xi]$ then $\ctr(\xi)_{p_i}\in A_{p_i}$ for $i\in\{1,2\}$.
\end{cor}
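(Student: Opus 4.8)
The plan is to exploit the single feature of the sequence that survives the measure‑rigidity step: each $\mu^{\mathrm{joint}}_i$, being invariant under $H_{\mathbb A}^\Delta$ and satisfying \eqref{eq:g_adel_restrictions}, is invariant under the diagonally embedded split tori at $p_1$ and $p_2$, so by Theorem \ref{thm:regiditiy-adelic} the same is true for $\lambda$. From this one reads off the claim using that a split maximal torus in a form of $\mathbf{PGL}_2$ is its own centralizer. Concretely, Theorem \ref{thm:regiditiy-adelic} gives that $\lambda$ is invariant under the right action of $(A_{p_1}^+\times A_{p_2}^+)^\Delta$, hence in particular under $A_{p_1}^{+,\Delta}$ and under $A_{p_2}^{+,\Delta}$. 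I treat $p_1$; the argument at $p_2$ is identical. Write $\xi=(\xi^1,\xi^2)$ with $\xi^1,\xi^2\in\mathbf{G}(\mathbb A)$, so $\ctr(\xi)=(\xi^1)^{-1}\xi^2$, and let $\mathcal O=[\mathbf{G}^\Delta(\mathbb A)^+\xi]=\{[(g_0,g_0)\xi]:g_0\in\mathbf{G}(\mathbb A)^+\}$ be the closed orbit carrying $\lambda$; it is the support of $\lambda$.

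Fix $a_0\in A_{p_1}^+$, viewed as an element of $\mathbf{G}(\mathbb A)$ trivial at every place other than $p_1$, and put $a=(a_0,a_0)\in A_{p_1}^{+,\Delta}$. Since $\lambda$ is $a$‑invariant, $\mathcal O\cdot a=\mathcal O$, that is $[\mathbf{G}^\Delta(\mathbb A)^+\xi a]=[\mathbf{G}^\Delta(\mathbb A)^+\xi]$; as $[\xi]$ lies in the right‑hand set, it lies in the left‑hand one, so there exist $g_0\in\mathbf{G}(\mathbb A)^+$ and $(\gamma_1,\gamma_2)\in\mathbf{G}(\mathbb Q)^2$ with $(\gamma_1,\gamma_2)(g_0,g_0)\xi a=\xi$. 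Coordinatewise this reads $\gamma_j g_0=\xi^j a_0^{-1}(\xi^j)^{-1}$ for $j=1,2$.

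The crux — the one non‑formal point — is to deduce $\gamma_1=\gamma_2$ from the fact that the twist $a$ is concentrated at the single place $p_1$. Indeed
\[
g_0^{-1}(\gamma_1^{-1}\gamma_2)g_0=(\gamma_1 g_0)^{-1}(\gamma_2 g_0)=\xi^1 a_0(\xi^1)^{-1}\,\xi^2 a_0^{-1}(\xi^2)^{-1},
\]
and the right‑hand side is trivial at every place $v\neq p_1$ because $a_0$ is; conjugating back by $g_0$, the rational element $\gamma_1^{-1}\gamma_2$ is trivial at every place $v\neq p_1$, hence trivial by injectivity of $\mathbf{G}(\mathbb Q)\hookrightarrow\mathbf{G}(\mathbb R)$. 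With $\gamma_1=\gamma_2$ the two coordinate identities give $\xi^1 a_0^{-1}(\xi^1)^{-1}=\xi^2 a_0^{-1}(\xi^2)^{-1}$, i.e.\ $(\xi^2)^{-1}\xi^1$ commutes with $a_0$; at the place $p_1$ this says $(\xi^2_{p_1})^{-1}\xi^1_{p_1}$ commutes with $a_0$.

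Finally, letting $a_0$ range over all of $A_{p_1}^+$, the element $(\xi^2_{p_1})^{-1}\xi^1_{p_1}$ centralizes $A_{p_1}^+$. Since $\mathbf{G}$ is split at $p_1$, $A_{p_1}=\mathbf A(\mathbb Q_{p_1})$ for a maximal torus $\mathbf A<\mathbf{G}_{/\mathbb Q_{p_1}}$, and the finite‑index subgroup $A_{p_1}^+$ is Zariski dense in $\mathbf A$; as a maximal torus in a form of $\mathbf{PGL}_2$ equals its own centralizer, the centralizer of $A_{p_1}^+$ in $\mathbf{G}(\mathbb Q_{p_1})$ is exactly $A_{p_1}$. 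Hence $(\xi^2_{p_1})^{-1}\xi^1_{p_1}\in A_{p_1}$, and since $A_{p_1}$ is a group, $\ctr(\xi)_{p_1}=(\xi^1_{p_1})^{-1}\xi^2_{p_1}\in A_{p_1}$. Running the same argument at $p_2$ gives $\ctr(\xi)_{p_2}\in A_{p_2}$, which is the assertion.
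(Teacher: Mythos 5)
Your proof is correct and is essentially the paper's argument: the paper notes that the stabilizer of $\lambda$ in $\left(\mathbf{G}\times\mathbf{G}\right)(\mathbb{Q}_{p_i})$ is contained in $(e,\ctr(\xi)_{p_i})\mathbf{G}^\Delta(\mathbb{Q}_{p_i})(e,\ctr(\xi)_{p_i})^{-1}$, so that $\ctr(\xi)_{p_i}$ must centralize $A_{p_i}^+$, whose centralizer is $A_{p_i}$. You simply spell out the containment of the local stabilizer in the conjugated diagonal by hand, using that the rational correction $\gamma_1^{-1}\gamma_2$ is trivial away from $p_i$ and hence trivial.
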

\begin{proof}
Fix $i\in\{1,2\}$.
The measure $\lambda$ is $\left(A_{p_i}^+\right)^\Delta$-invariant and its stabilizer subgroup in $ \left(\mathbf{G}\times\mathbf{G}\right)(\mathbb{Q}_{p_i})$ is contained in $ (e,\ctr(\xi)_{p_i})\mathbf{G}^\Delta(\mathbb{Q}_{p_i})(e,\ctr(\xi)_{p_i})^{-1}$. Thus $\ctr(\xi)_{p_i}$ centralizes $A_{p_i}^+$ in $\mathbf{G}(\mathbb{Q}_{p_i})$. This centralizer is $A_{p_i}$.
\end{proof}

We will use the following standard result.
\begin{lem}\label{lem:G+-mixing}
For every rational place $v$ that splits $\mathbf{B}$ the action of $\mathbf{G}(\mathbb{Q}_v)^+$ is mixing for the Haar measure on $[\mathbf{G}(\mathbb{A})^+ \omega_0]$ for any $\omega_0\in\mathbf{G}(\mathbb{A})$.
\end{lem}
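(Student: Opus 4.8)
The plan is to deduce mixing from decay of matrix coefficients for unitary representations of the non-compact almost-simple local group $\mathbf{G}(\mathbb{Q}_v)^+$ via the Howe--Moore theorem, after using strong approximation to control the $v$-adic component of the automorphic representations that occur.

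First I would record the bookkeeping. Because $\mathbf{G}(\mathbb{A})^+$ is normal in $\mathbf{G}(\mathbb{A})$ (cf.\ \S\ref{sec:simply-connected-cover}), the orbit $[\mathbf{G}(\mathbb{A})^+\omega_0]=[\omega_0\mathbf{G}(\mathbb{A})^+]$ is a closed $\mathbf{G}(\mathbb{A})^+$-invariant subset of $[\mathbf{G}(\mathbb{A})]$, and setting $\Lambda\coloneqq\omega_0^{-1}\bigl(\mathbf{G}(\mathbb{Q})\cap\mathbf{G}(\mathbb{A})^+\bigr)\omega_0$ it is $\mathbf{G}(\mathbb{A})^+$-equivariantly identified with $\lfaktor{\Lambda}{\mathbf{G}(\mathbb{A})^+}$, with $\mathbf{G}(\mathbb{A})^+$ acting by right translation. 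Write $X$ for this space and $m_X$ for its invariant probability measure. Mixing of the $\mathbf{G}(\mathbb{Q}_v)^+$-action is the statement that $\int_X\varphi_1(xg)\,\overline{\varphi_2(x)}\dif m_X(x)$ converges to the product of the two means as $g$ leaves every compact subset of $\mathbf{G}(\mathbb{Q}_v)^+$, and subtracting off the constant functions this is equivalent to $\int_X\varphi_1(xg)\,\overline{\varphi_2(x)}\dif m_X(x)\to 0$ for all $\varphi_1,\varphi_2$ in the orthogonal complement $L^2_0(X)$ of the constants.

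Next I would decompose the $\mathbf{G}(\mathbb{A})^+$-representation on $L^2_0(X)$ as a direct integral of irreducibles $\pi$, each a restricted tensor product $\pi\cong\bigotimes'_w\pi_w$ of irreducible unitary representations of the local groups $\mathbf{G}(\mathbb{Q}_w)^+$; by a routine approximation argument it suffices to prove decay of the $\mathbf{G}(\mathbb{Q}_v)^+$-matrix coefficients inside each such $\pi$. Restricted to $\mathbf{G}(\mathbb{Q}_v)^+$ the representation $\pi$ is an amplification of $\pi_v$, so its matrix coefficients on $\mathbf{G}(\mathbb{Q}_v)^+$ are uniform limits of finite sums of matrix coefficients of $\pi_v$. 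Since $v$ splits $\mathbf{B}$, the group $\mathbf{G}(\mathbb{Q}_v)^+$ is the image of $\mathbf{G}^{\sc}(\mathbb{Q}_v)=\mathbf{B}^{(1)}(\mathbb{Q}_v)\cong\mathbf{SL}_2(\mathbb{Q}_v)$ in $\mathbf{G}(\mathbb{Q}_v)$, hence a non-compact almost-simple group with no compact factors, and the Howe--Moore theorem guarantees that every \emph{non-trivial} irreducible unitary representation of $\mathbf{G}(\mathbb{Q}_v)^+$ has matrix coefficients vanishing at infinity. Thus the whole matter reduces to showing that $\pi_v\neq\mathbf{1}$ for every $\pi$ occurring in $L^2_0(X)$, i.e.\ that the only $\mathbf{G}(\mathbb{Q}_v)^+$-invariant vectors in $L^2(X)$ are the constants.

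This last point is the step I expect to be the real obstacle, and it is where strong approximation enters. Since $v$ splits $\mathbf{B}$, $\mathbf{G}^{\sc}(\mathbb{Q}_v)\cong\mathbf{SL}_2(\mathbb{Q}_v)$ is non-compact, so strong approximation for the simply connected group $\mathbf{G}^{\sc}$ gives that $\mathbf{G}^{\sc}(\mathbb{Q})$ is dense in $\mathbf{G}^{\sc}(\mathbb{A}^v)$, where $\mathbb{A}^v$ denotes the adeles away from $v$; likewise, as $\mathbf{G}^{\sc}$ is non-compact at some place $w\neq v$ (for instance $p_1$ or $p_2$), $\mathbf{G}^{\sc}(\mathbb{Q})$ is dense in $\mathbf{G}^{\sc}(\mathbb{A}^w)$. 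Pushing these density statements through the isogeny to $\mathbf{G}(\cdot)^+$, one concludes that $\Lambda$ is an \emph{irreducible} lattice in the product $\mathbf{G}(\mathbb{Q}_v)^+\times\mathbf{G}(\mathbb{A}^v)^+$, i.e.\ it projects densely to each of the two factors. Moore's ergodicity theorem (itself a consequence of Howe--Moore) then yields that the $\mathbf{G}(\mathbb{Q}_v)^+$-action on $\lfaktor{\Lambda}{\mathbf{G}(\mathbb{A})^+}$ is ergodic; hence the only $\mathbf{G}(\mathbb{Q}_v)^+$-invariant vectors in $L^2(X)$ are the constants, so $\pi_v\neq\mathbf{1}$ for all $\pi$ in $L^2_0(X)$, and combining this with the Howe--Moore input of the previous paragraph finishes the proof. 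The parts that will require genuine care are the strong-approximation/irreducibility step and the measure-theoretic passage --- implicit in Moore's theorem --- from invariance under a dense subgroup to almost-everywhere constancy.
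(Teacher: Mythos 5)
Your argument is correct and follows essentially the same route as the paper: both reduce mixing to the Howe--Moore theorem and use strong approximation for $\mathbf{G}^{\sc}$ to rule out non-constant invariant vectors. The only difference is in packaging --- the paper establishes the no-invariant-vectors hypothesis via minimality of the $\mathbf{G}^{\sc}(\mathbb{Q}_v)$-orbits on $[\mathbf{G}^{\sc}(\mathbb{A})]$, density of smooth vectors, and the absence of residual spectrum for the simply connected group, whereas you route it through the direct-integral decomposition and Moore's ergodicity theorem; both packagings are standard and equivalent in substance.
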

\begin{proof}
The Haar measure on $[\mathbf{G}(\mathbb{A})^+ \omega_0]$ is invariant under $\omega_0^{-1} \mathbf{G}(\mathbb{A})^+ \omega_0=\mathbf{G}(\mathbb{A})^+$.
Considering the $\mathbf{G}(\mathbb{A})^+$-equivariant isomorphism of measure spaces 
\begin{equation*}
[\mathbf{G}(\mathbb{A})^+\omega_0]=\lfaktor{\mathbf{G}(\mathbb{Q})}{\mathbf{G}(\mathbb{A})^+\omega_0}\simeq \lfaktor{\Cent\mathbf{G}^\sc(\mathbb{A})\cdot\mathbf{G}^\sc(\mathbb{Q})}{\mathbf{G}^\sc(\mathbb{A})}
\end{equation*}
it is enough to show that the action of $\mathbf{G}^\sc(\mathbb{Q}_v)$ on $[\mathbf{G}^\sc(\mathbb{A})]\coloneqq\lfaktor{\mathbf{G}^\sc(\mathbb{Q})}{\mathbf{G}^\sc(\mathbb{A})}$ is mixing. This result will follow from Howe-Moore \cite[Theorem 5.2]{HoweMoore} if we show that the only finite dimensional  $\mathbf{G}^\sc(\mathbb{Q}_v)$-sub-represenation in $L^2\left([\mathbf{G}^\sc(\mathbb{A})], \meas_{\mathbf{G}^\sc}\right)$ is the space of constant functions.

By strong approximation for simply-connected absolutely almost simple groups the group  $\mathbf{G}^\sc(\mathbb{Q}_v)$ acts minimally on $[\mathbf{G}^\sc(\mathbb{A})]$, i.e.\ all the $\mathbf{G}^\sc(\mathbb{Q}_v)$-orbits are topologically dense.
Let $$V<L^2\left([\mathbf{G}^\sc(\mathbb{A})], \meas_{\mathbf{G}^\sc}\right)$$ be a (closed) finite-dimensional sub-$\mathbf{G}^\sc(\mathbb{Q}_v)$-representation. The minimality of the $\mathbf{G}^\sc(\mathbb{Q}_v)$ action implies that the whole $\mathbf{G}^\sc(\mathbb{A})$-orbit of any $\mathbf{G}^\sc(\mathbb{Q}_v)$-smooth vector in $V$ is contained in $V$. The smooth vectors are dense in any closed sub-representation $V<L^2\left([\mathbf{G}^\sc(\mathbb{A})], \meas_{\mathbf{G}^\sc}\right)$, hence $V$ must be $\mathbf{G}^\sc(\mathbb{A})$-invariant. Because $\mathbf{G}^\sc$ is simply-connected it has no non-trivial residual spectrum and the only finite dimensional $\mathbf{G}^\sc(\mathbb{A})$-sub-representation is $\mathbb{C}\cdot 1$.
\end{proof}

To apply \cite[Theorem 1.4]{ELJoinings} to $\mu$ we need first to decompose it to ergodic measures on locally homogeneous spaces saturated by unipotents in the sense of \cite[Definition 1.1]{ELJoinings}.

The measure $\mu$ is 
$\left(A_{p_1}^+\times A_{p_2}^+\right)^\Delta$-invariant and we write
\begin{equation}\label{eq:mu-erg-decomp}
\mu=\int_{\mathcal{M}_1\left(\left[\left(\mathbf{G}\times \mathbf{G}\right)(\mathbb{A})\right]\right)} \lambda \dif\mathcal{P}(\lambda)
\end{equation}
for the ergodic decomposition of $\mu$ with respect to $\left(A_{p_1}^+\times A_{p_2}^+\right)^\Delta$.
\begin{lem}\label{lem:A-ergodic-decomp}
For $\mathcal{P}$-almost every $\lambda$ there is $\omega=(\omega_1,\omega_2)\in\mathbf{G}(\mathbb{A})\times\mathbf{G}(\mathbb{A})$ such that $\lambda$ is an  $\left(A_{p_1}^+\times A_{p_2}^+\right)^\Delta$-invariant measure  supported on $[\left(\mathbf{G}\times \mathbf{G}\right)(\mathbb{A})^+ \omega]$. Moreover, its projection to each coordinate is the $\mathbf{G}(\mathbb{A})^+$-Haar measure on $[\mathbf{G}(\mathbb{A})^+ \omega_i]$.
\end{lem}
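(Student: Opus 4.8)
\emph{Proof idea.} The plan is to extract from the ergodic decomposition \eqref{eq:mu-erg-decomp} two features of a typical component $\lambda$: that its support lies in a single coset of $(\mathbf{G}\times\mathbf{G})(\mathbb{A})^+$, and that each of its two coordinate projections is the \emph{full} Haar measure on the corresponding coset of $\mathbf{G}(\mathbb{A})^+$. For the first I would note that $A_{p_j}^+=A_{p_j}\cap\mathbf{G}(\mathbb{Q}_{p_j})^+\subseteq\mathbf{G}(\mathbb{A})^+$ for $j\in\{1,2\}$, so $(A_{p_1}^+\times A_{p_2}^+)^\Delta\subseteq(\mathbf{G}\times\mathbf{G})(\mathbb{A})^+=\mathbf{G}(\mathbb{A})^+\times\mathbf{G}(\mathbb{A})^+$; hence $\pi^+\times\pi^+$ is invariant under the acting group, its fibres being precisely the cosets $[(\mathbf{G}\times\mathbf{G})(\mathbb{A})^+(\omega_1,\omega_2)]$, and since $G_\mathrm{res}\times G_\mathrm{res}$ is standard Borel, the $\sigma$-algebra it generates is contained in the $(A_{p_1}^+\times A_{p_2}^+)^\Delta$-invariant $\sigma$-algebra. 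Thus $\mathcal{P}$-almost every $\lambda$ is supported on one such coset, which pins down the pair $\omega=(\omega_1,\omega_2)$ modulo $(\mathbf{G}\times\mathbf{G})(\mathbb{A})^+$.

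For the second feature let $p$ denote the projection of $[(\mathbf{G}\times\mathbf{G})(\mathbb{A})]$ onto its first coordinate (the second is identical). The $p$-preimage of an $(A_{p_1}^+\times A_{p_2}^+)$-invariant set is $(A_{p_1}^+\times A_{p_2}^+)^\Delta$-invariant, so $p_*\lambda$ is an $(A_{p_1}^+\times A_{p_2}^+)$-ergodic probability measure for $\mathcal{P}$-a.e.\ $\lambda$, supported by the previous paragraph on $[\mathbf{G}(\mathbb{A})^+\omega_1]$. At the same time $p_*\mu=\lim_i p_*\mu^\mathrm{joint}_i$, where $p_*\mu^\mathrm{joint}_i$ is the periodic toral measure on $[\mathbf{G}(\mathbb{A})]$ of a homogeneous toral set of discriminant $D_i$; since $|D_i|\to\infty$, the rank-one equidistribution theorem of Duke --- already Linnik's theorem here, as $p_1$ splits in every $E_i$ \cite{Duke,LinnikBook} --- forces every weak-$*$ limit of such periodic toral measures to be $\mathbf{G}(\mathbb{A})^+$-invariant. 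Hence $p_*\mu$ is a $\mathbf{G}(\mathbb{A})^+$-invariant probability measure on $[\mathbf{G}(\mathbb{A})]$, so it disintegrates over $\pi^+$ with fibre measures the $\mathbf{G}(\mathbb{A})^+$-Haar measures $\mathrm{m}_c$ on the cosets; write $p_*\mu=\int_{G_\mathrm{res}}\mathrm{m}_c\,d\tau(c)$ with $\tau=\pi^+_*p_*\mu$.

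The last step is to match the two descriptions of $p_*\mu$. Each $\mathrm{m}_c$ is $(A_{p_1}^+\times A_{p_2}^+)$-ergodic: by Lemma \ref{lem:G+-mixing} the $\mathbf{G}(\mathbb{Q}_{p_1})^+$-action on $\mathrm{m}_c$ is mixing, and $A_{p_1}^+$ is an unbounded subgroup of $\mathbf{G}(\mathbb{Q}_{p_1})^+$, so mixing along a sequence in $A_{p_1}^+$ escaping every compact set already yields ergodicity for $A_{p_1}^+$, hence for $A_{p_1}^+\times A_{p_2}^+$. Therefore both $\int_{G_\mathrm{res}}\mathrm{m}_c\,d\tau(c)$ and $\int p_*\lambda\,d\mathcal{P}(\lambda)$ are ergodic decompositions of $p_*\mu$ with respect to $A_{p_1}^+\times A_{p_2}^+$, and by uniqueness of the ergodic decomposition (cf.\ \cite{Pisa}) the law of $p_*\lambda$ under $\mathcal{P}$ equals the law of $\mathrm{m}_c$ under $\tau$. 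In particular $p_*\lambda=\mathrm{m}_c$ for $\mathcal{P}$-a.e.\ $\lambda$, i.e.\ $p_*\lambda$ is the $\mathbf{G}(\mathbb{A})^+$-Haar measure on its supporting coset $[\mathbf{G}(\mathbb{A})^+\omega_1]$. The same argument on the second coordinate produces $\omega_2$ and finishes the proof.

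The main obstacle is exactly the assertion that the coordinate projections are the \emph{full} Haar measures, and not merely some other $(A_{p_1}^+\times A_{p_2}^+)$-ergodic measures on the cosets: Duke's (Linnik's) theorem delivers this only for $p_*\mu$, and promoting it to $\mathcal{P}$-almost every ergodic component is the step that genuinely requires the $\mathbf{G}(\mathbb{A})^+$-mixing of Lemma \ref{lem:G+-mixing} together with uniqueness of the ergodic decomposition. A minor technical point to check along the way is that one may first pass to a subsequence along which the union of $\mathbf{G}(\mathbb{A})^+$-cosets carrying $p_*\mu^\mathrm{joint}_i$ converges in the Chabauty topology of the compact group $G_\mathrm{res}$, so that the limit is honestly $\mathbf{G}(\mathbb{A})^+$-invariant; this is harmless since $\mu$ is a fixed limit measure.
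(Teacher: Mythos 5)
Your proposal is correct and follows essentially the same route as the paper: the invariant $\sigma$-algebra contains $\mathcal{B}^+\times\mathcal{B}^+$ (giving the supporting coset), Duke--Linnik gives $\mathbf{G}(\mathbb{A})^+$-invariance of the coordinate projections of $\mu$, Lemma \ref{lem:G+-mixing} shows the fibre Haar measures are $A_{p_1}^+\times A_{p_2}^+$-ergodic, and uniqueness of the ergodic decomposition identifies $\mathcal{P}$-a.e.\ projection with a Haar measure. Your explicit check that $p_*\lambda$ is ergodic for the factor action is a point the paper leaves implicit, and the closing remark about Chabauty convergence is unnecessary but harmless.
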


\begin{proof} 
The $\left(A_{p_1}^+\times A_{p_2}^+\right)^\Delta$-invariance of $\lambda$ is built into the definition of an ergodic decomposition.
The measures $\lambda$ in the support of $\mathcal{P}$  are conditional measures of $\mu$ on the $\sigma$-algebra of $\left(A_{p_1}^+\times A_{p_2}^+\right)^\Delta$-invariant Borel sets. Denote by $\mathcal{B}^+$ the $\sigma$-algebra of Borel $\mathbf{G}(\mathbb{A})^+$-invariant sets in $[\mathbf{G}(\mathbb{A})]$. The $\sigma$-algebra of $\left(A_{p_1}^+\times A_{p_2}^+\right)^\Delta$-invariant sets in $[\left(\mathbf{G}\times\mathbf{G}\right)(\mathbb{A})]$ contains $\mathcal{B}^+\times\mathcal{B}^+$ --- the $\sigma$-algebra of $\mathbf{G}(\mathbb{A})^+\times\mathbf{G}(\mathbb{A})^+$-invariant Borel sets. Hence $\mathcal{P}$-almost every $\lambda$ is supported on an atom of $\mathcal{B}^+\times\mathcal{B}^+$ .

The $\sigma$-algebra $\mathcal{B}^+$ corresponds to the factor map
\begin{equation*}
\lfaktor{\mathbf{G}(\mathbb{Q})}{\mathbf{G}(\mathbb{A})}\to\dfaktor{\mathbf{G}(\mathbb{Q})}{\mathbf{G}(\mathbb{A})}{\mathbf{G}(\mathbb{A})^+}
\simeq\dfaktor{\Nrd(\mathbf{B}^\times(\mathbb{Q}))}{\Nrd(\mathbf{B}^\times(\mathbb{A}))}{{\mathbb{A}^\times}^2}
\end{equation*}
and its atoms are the fibers of this map, which are of the form $[\omega_0\mathbf{G}(\mathbb{A})^+]=[\mathbf{G}(\mathbb{A})^+\omega_0]$ for some $\omega_0\in\mathbf{G}(\mathbb{A})$. The atoms of $\mathcal{B}^+\times\mathcal{B}^+$ are then of the form $[\left(\mathbf{G}\times \mathbf{G}\right)(\mathbb{A})^+ \omega]$ for $\omega=(\omega_1,\omega_2)\in \mathbf{G}(\mathbb{A})\times\mathbf{G}(\mathbb{A})$. This proves the first part of the lemma.

Because of Duke's theorem, proved by Linnik under the assumption of a fixed split prime, $\mu$ projects in each coordinate to a $\mathbf{G}(\mathbb{A})^+$-invariant measure on $[\mathbf{G}(\mathbb{A})]$. We deduce that
\begin{equation*}
{\pi_i}_*\mu=\int {\pi_i}_*\lambda \dif\mathcal{P}(\lambda)
\end{equation*}
is $\mathbf{G}(\mathbb{A})^+$-invariant for $i=1,2$. All the  $\mathbf{G}(\mathbb{A})^+$-invariant and ergodic measures on $[\mathbf{G}(\mathbb{A})]$ are $\mathbf{G}(\mathbb{A})^+$-Haar measures on $\mathcal{B}^+$ atoms of the form $[\mathbf{G}(\mathbb{A})^+\omega_0]$. By Lemma \ref{lem:G+-mixing} these $\mathbf{G}(\mathbb{A})^+$-Haar measures are $A_{p_1}^+\times A_{p_2}^+$-ergodic. Hence a $\mathbf{G}(\mathbb{A})^+$-ergodic decomposition of ${\pi_i}_*\mu$ is also an $A_{p_1}^+\times A_{p_2}^+$-ergodic decomposition. By uniqueness of the ergodic decomposition it follows that for $\mathcal{P}$-almost every $\lambda$ the projections ${\pi_i}_*\lambda$ are $\mathbf{G}(\mathbb{A})^+$-Haar measures on a $\mathcal{B}^+$-atom.
\end{proof}

We now fix a measure $\lambda$ satisfying the conclusions of Lemma \ref{lem:A-ergodic-decomp} and apply \cite[Theorem 1.4]{ELJoinings} to it. To do that we need first to pass to an $S$-arithmetic setting.

\begin{lem}\label{lem:S-arithm-joinings}
Let $\lambda$ satisfy the conclusions of Lemma \ref{lem:A-ergodic-decomp}. In particular, $\lambda$ is supported on $[\mathbf{G}(\mathbb{A})^+\omega_1\times \mathbf{G}(\mathbb{A})^+ \omega_2]$.  Fix $S$ a finite set of rational places such that
\begin{enumerate}
\item $\infty,p_1,p_2\in S$,
\item $\mathbf{G}$ has class number $1$ with respect to $K^S$.
\item $\omega_1,\omega_2\in \mathbf{G}(\mathbb{Q}_S)\times K^S$.
\end{enumerate}
Denote by $\jmath^S$ the canonical projection
\begin{equation*}
\jmath^S\colon \left[\left(\mathbf{G}\times\mathbf{G}\right)(\mathbb{A})\right] \to Y_S\times Y_S
\coloneqq \lfaktor{\Gamma_S}{\mathbf{G}(\mathbb{Q}_S)}\times  \lfaktor{\Gamma_S}{\mathbf{G}(\mathbb{Q}_S)}
\end{equation*}
where $\Gamma_S\coloneqq \mathbf{G}(\mathbb{Q}) \cap K^S$.

Then the measure $\jmath^S_*\lambda$ is an algebraic measure on $Y_S\times Y_S$ supported on $[L_S g_S]$ for some $g_S\in\left(\mathbf{G}\times\mathbf{G} \right)(\mathbb{Q}_S)$ where $L_S<\mathbf{L}(\mathbb{Q}_S)\cap\left(\mathbf{G}\times\mathbf{G} \right)(\mathbb{Q}_S)^+$ is a finite index subgroup and $\mathbf{L}<\mathbf{G}$ a closed algebraic subgroup. The group $\mathbf{L}$ is isogenous either to $\mathbf{G}$ or to $\mathbf{G}\times\mathbf{G}$ and projects onto $\mathbf{G}$ in both coordinates.
\end{lem}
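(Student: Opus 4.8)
The plan is to transport $\lambda$ from the adelic quotient to the $S$-arithmetic quotient $Y_S\times Y_S$, recognise $\jmath^S_*\lambda$ as an \emph{ergodic} joining of two homogeneous measures with respect to the diagonal action of the rank-two group $(A_{p_1}^+\times A_{p_2}^+)^\Delta$, invoke the joinings theorem \cite[Theorem 1.4]{ELJoinings}, and finally identify the possible intermediate $\mathbb{Q}$-groups $\mathbf{L}$ by a Goursat-type argument together with the triviality of the outer automorphism group of $\mathbf{PGL}_2$.

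First I would descend to $Y_S\times Y_S$. Hypothesis (2) and the discussion in \S\ref{sec:packets} identify $Y_S\times Y_S$ with $\dfaktor{(\mathbf{G}\times\mathbf{G})(\mathbb{Q})}{(\mathbf{G}\times\mathbf{G})(\mathbb{A})}{K^S\times K^S}$, so $\jmath^S$ is simply the quotient by right translation by the compact group $K^S\times K^S$; in particular $\jmath^S_*\lambda$ is a Borel probability measure. Since $p_1,p_2\in S$, the torus $A_{p_1}^+\times A_{p_2}^+$ sits in $(\mathbf{G}\times\mathbf{G})(\mathbb{Q}_S)$ and, inside the restricted product $(\mathbf{G}\times\mathbf{G})(\mathbb{A})$, acts on coordinates disjoint from those of $K^S\times K^S$, hence commutes with them; therefore $\jmath^S$ is a factor map for the $(A_{p_1}^+\times A_{p_2}^+)^\Delta$-action, and as $\lambda$ is an ergodic component coming from Lemma \ref{lem:A-ergodic-decomp}, the image $\jmath^S_*\lambda$ is again $(A_{p_1}^+\times A_{p_2}^+)^\Delta$-ergodic. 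Writing $\omega_i=(\overline{\omega_i},\kappa_i)$ with $\overline{\omega_i}\in\mathbf{G}(\mathbb{Q}_S)$ and $\kappa_i\in K^S$ (hypothesis (3)), Lemma \ref{lem:A-ergodic-decomp} also shows that the $i$-th coordinate projection of $\jmath^S_*\lambda$ is the $\mathbf{G}(\mathbb{Q}_S)^+$-invariant probability measure on the closed orbit $[\mathbf{G}(\mathbb{Q}_S)^+\overline{\omega_i}]\subseteq Y_S$, i.e.\ a homogeneous (algebraic) measure for the group $\mathbf{G}$ in the sense of Definition \ref{def:alg-measure}(1).

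Next I would apply the joinings theorem. Each coordinate measure is a homogeneous probability measure on $\lfaktor{\Gamma_S}{\mathbf{G}(\mathbb{Q}_S)}$ invariant under the one-dimensional $\mathbb{Q}_S$-split torus $A_{p_i}^+$; being a Haar measure on a homogeneous space it is trivially saturated by unipotents in the sense of \cite[Definition 1.1]{ELJoinings} and has positive entropy under every nontrivial element of $A_{p_i}^+$. Thus $\jmath^S_*\lambda$ is an ergodic joining, relative to the genuinely rank-two $\mathbb{Q}_S$-split diagonalizable group $(A_{p_1}^+\times A_{p_2}^+)^\Delta$ acting diagonally, of two measures meeting the standing hypotheses of \cite[Theorem 1.4]{ELJoinings}. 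That theorem yields that $\jmath^S_*\lambda$ is algebraic: it is the homogeneous probability measure on a single orbit $[L_S g_S]$ with $g_S\in(\mathbf{G}\times\mathbf{G})(\mathbb{Q}_S)$ and $L_S<\mathbf{L}(\mathbb{Q}_S)$ a finite-index subgroup, for some closed $\mathbb{Q}$-subgroup $\mathbf{L}<\mathbf{G}\times\mathbf{G}$. Since the support of $\jmath^S_*\lambda$ lies in the product of the two coordinate orbits, which is a single orbit of the normal finite-index subgroup $(\mathbf{G}\times\mathbf{G})(\mathbb{Q}_S)^+$, a routine adjustment of $g_S$ and $L_S$ lets one arrange $L_S\subseteq\mathbf{L}(\mathbb{Q}_S)\cap(\mathbf{G}\times\mathbf{G})(\mathbb{Q}_S)^+$.

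Finally I would pin down $\mathbf{L}$. The $i$-th projection of $\jmath^S_*\lambda$ is the homogeneous measure on the closed orbit carried by $\mathrm{pr}_i(\mathbf{L})$, and comparison with the descent step forces $\dim\mathrm{pr}_i(\mathbf{L})=\dim\mathbf{G}$, hence $\mathrm{pr}_i(\mathbf{L})=\mathbf{G}$ for $i=1,2$. Since $\mathbf{G}$ is $\mathbb{Q}$-simple, Goursat's lemma leaves only two options: either $\mathbf{L}=\mathbf{G}\times\mathbf{G}$, or $\mathbf{L}$ is the graph of a $\mathbb{Q}$-automorphism $\varphi$ of $\mathbf{G}$; in the latter case $\varphi$ is inner because forms of $\mathbf{PGL}_2$ have trivial outer automorphism group, say $\varphi=\Ad h$ with $h\in\mathbf{G}(\mathbb{Q})$, and then $\mathbf{L}=(e,h)\mathbf{G}^\Delta(e,h)^{-1}$ is isogenous to $\mathbf{G}$. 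In either case $\mathbf{L}$ projects onto $\mathbf{G}$ in both coordinates, which is the assertion. The only step that is not essentially bookkeeping about adelic-versus-$S$-arithmetic quotients and subgroups of $\mathbf{G}\times\mathbf{G}$ is the verification that the two coordinate measures and the acting torus satisfy the hypotheses of \cite[Theorem 1.4]{ELJoinings}; here this is immediate, since the coordinate measures are Haar measures on homogeneous spaces and $(A_{p_1}^+\times A_{p_2}^+)^\Delta$ is a higher-rank $\mathbb{Q}_S$-split diagonalizable group acting diagonally.
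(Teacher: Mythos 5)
Your proposal follows essentially the same route as the paper: push $\lambda$ down to $Y_S\times Y_S$, check the hypotheses of \cite[Theorem 1.4]{ELJoinings} for the diagonal $(A_{p_1}^+\times A_{p_2}^+)$-action, and then constrain $\mathbf{L}$. Two points are stated too quickly, though neither is fatal. First, the claim that the coordinate measures are ``trivially saturated by unipotents'' is not automatic: when $\mathbf{B}$ is ramified at a place of $S$ (e.g.\ at $\infty$), $\mathbf{G}(\mathbb{Q}_v)^+$ is compact and contains no unipotents, so the subgroup of $\mathbf{G}(\mathbb{Q}_S)^+$ generated by one-parameter unipotent subgroups is proper; its ergodicity on $Y_S^+$ is exactly what Lemma \ref{lem:G+-mixing} (mixing of $\mathbf{G}(\mathbb{Q}_{p_1})^+\simeq\mathbf{PSL}_2(\mathbb{Q}_{p_1})$) supplies, and this needs to be invoked. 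Relatedly, the assertion that $\jmath^S_*\lambda$ is carried by a single $\mathbf{G}(\mathbb{Q}_S)^+\times\mathbf{G}(\mathbb{Q}_S)^+$-orbit with Haar projections uses strong approximation for $\mathbf{G}^\sc$, not just hypothesis (3). Second, your Goursat/inner-automorphism analysis of $\mathbf{L}$ is correct but proves more than the lemma asserts; the paper gets the weaker statement directly from \cite[Lemma 7.4]{ELJoinings} and defers the precise identification of $\mathbf{L}$ to the subsequent lemmata, so your extra work duplicates Lemmas \ref{lem:L-isomorphism-type} and \ref{lem:L-exact-subgroup} rather than shortening anything.
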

\begin{proof}
Write $\omega_{i,S}\in\mathbf{G}(\mathbb{Q}_S)$ for the $S$-coordinates of $\omega_i$, $i=1,2$. Denote $\omega_S=(\omega_{1,S},\omega_{2,S})$. Set $\Gamma_S^+\coloneqq \Gamma_S \cap \mathbf{G}(\mathbb{Q}_S)^+$; this is a lattice in $\mathbf{G}(\mathbb{Q}_S)^+$ and denote $Y_S^+\coloneqq\lfaktor{\Gamma_S^+}{\mathbf{G}(\mathbb{Q}_S)^+}$.

Strong approximation for $\mathbf{G}^\sc$ implies that $\jmath^S_*\lambda$ is supported on a \emph{single} orbit $[\mathbf{G}(\mathbb{Q}_S)^+\omega_1\times \mathbf{G}(\mathbb{Q}_S)^+\omega_2]$ and its projection to each coordinate is a $\mathbf{G}(\mathbb{Q}_S)^+$-Haar measure.
By applying a right translation by $\omega_S^{-1}$ we consider the measure $\jmath^S_*\lambda$ as an $\omega_S^{-1}\left(A_{p_1}^+\times A_{p_2}^+\right)^\Delta \omega_S$-invariant and ergodic measure on $Y_S^+\times Y_S^+$ whose projection to each coordinate is the Haar measure on $Y_S^+$.

The space $Y_S^+$ is saturated by unipotents because the group $\mathbf{G}(\mathbb{Q}_{p_1})^+\simeq \mathbf{PSL}_2(\mathbb{Q}_{p_1})$ is generated by unipotents and acts ergodically on $Y_S^+$ by Lemma \ref{lem:G+-mixing}. The group $A_{p_1}^+\times A_{p_2}^+$ is a compact extension of a class-$\mathcal{A}'$ group in the sense of \cite[Definition 1.3]{ELJoinings}, so $\jmath^S_*\lambda$ is an ergodic invariant measure for a class-$\mathcal{A}'$ group of rank $2$. Theorem 1.4 of \cite{ELJoinings} now applies and $\jmath^S_*\lambda$ is an algebraic measure on $[L_S g_S]$ for some $g_S\in\mathbf{G}(\mathbb{Q}_S)$ and $L_S$ of finite index in $\mathbf{L}(\mathbb{Q}_S)\cap\mathbf{G}(\mathbb{Q}_S)^+$ for some reductive group $\mathbf{L}<\mathbf{G}$ projecting onto $\mathbf{G}$ in both coordinates. By \cite[Lemma 7.4]{ELJoinings} $\mathbf{L}$ is either isogenous to $\mathbf{G}$ or to $\mathbf{G}\times\mathbf{G}$.
\end{proof}

\begin{lem}\label{lem:L-isomorphism-type}
In the setting of Lemma \ref{lem:S-arithm-joinings} the group $\mathbf{L}$ is either isomorphic to $\mathbf{G}$ or to $\mathbf{G}\times\mathbf{G}$.
\end{lem}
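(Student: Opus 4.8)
The plan is to upgrade the conclusion of Lemma~\ref{lem:S-arithm-joinings} — that $\mathbf{L}<\mathbf{G}\times\mathbf{G}$ is a connected reductive subgroup which surjects onto $\mathbf{G}$ under each of the two coordinate projections $p_1,p_2\colon\mathbf{G}\times\mathbf{G}\to\mathbf{G}$, and that $\mathbf{L}$ is isogenous either to $\mathbf{G}$ or to $\mathbf{G}\times\mathbf{G}$ — to an honest isomorphism. Since an isogeny preserves dimension and $\dim\mathbf{G}=3$, the first step is to separate the two cases according to whether $\dim\mathbf{L}=6$ or $\dim\mathbf{L}=3$.

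In the case $\dim\mathbf{L}=6$ there is nothing left to prove: a connected closed subgroup of full dimension in the connected group $\mathbf{G}\times\mathbf{G}$ is open, hence equals $\mathbf{G}\times\mathbf{G}$.

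In the case $\dim\mathbf{L}=3$ the main point is to analyse the restrictions $p_j|_{\mathbf{L}}\colon\mathbf{L}\to\mathbf{G}$ for $j=1,2$. Each is surjective by hypothesis, so its kernel has dimension $\dim\mathbf{L}-\dim\mathbf{G}=0$ and is therefore finite; since a finite normal subgroup of the connected group $\mathbf{L}$ is central, $\ker(p_j|_{\mathbf{L}})\subseteq\Cent(\mathbf{L})$. Being isogenous to $\mathbf{G}$, a form of $\mathbf{PGL}_2$, the group $\mathbf{L}$ is semisimple of type $A_1$, so $\Cent(\mathbf{L})$ is either trivial or the group scheme $\mu_2$. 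As $\mathbf{L}$ embeds into $\mathbf{G}\times\mathbf{G}$ we have $\ker(p_1|_{\mathbf{L}})\cap\ker(p_2|_{\mathbf{L}})=\{e\}$; hence if $\Cent(\mathbf{L})=\mu_2$ then at least one of these two kernels is trivial, making the corresponding $p_j|_{\mathbf{L}}$ an isomorphism $\mathbf{L}\xrightarrow{\sim}\mathbf{G}$ — impossible, since $\mathbf{G}$ is a form of the adjoint group $\mathbf{PGL}_2$ and hence has trivial center, whereas $\mathbf{L}$ would have center $\mu_2$. Therefore $\Cent(\mathbf{L})=\{e\}$, both kernels vanish, and $p_1|_{\mathbf{L}}$ gives the desired isomorphism $\mathbf{L}\cong\mathbf{G}$.

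I do not expect a genuine obstacle here. The only steps meriting care are the passage from ``finite kernel'' to ``central kernel'', which uses connectedness of $\mathbf{L}$ (automatic for a reductive group, and in any case forced by being isogenous to the connected group $\mathbf{G}$), and the appeal to the classification of groups of type $A_1$ to see that $\Cent(\mathbf{L})$ is trivial or $\mu_2$; everything else is formal bookkeeping with dimensions and kernels.
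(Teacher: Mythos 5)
Your proof is correct, and its core is the same as the paper's: everything hinges on $\mathbf{G}$ being adjoint. The paper's version is shorter and more abstract: since $\mathbf{L}$ surjects onto $\mathbf{G}$ in each coordinate, $\Cent\mathbf{L}$ projects into $\Cent\mathbf{G}=\{e\}$ in each coordinate, hence is trivial; thus $\mathbf{L}$ is adjoint, and since an isogeny class contains a unique adjoint group and both $\mathbf{G}$ and $\mathbf{G}\times\mathbf{G}$ are adjoint, the lemma follows at once (no dimension split needed). Your route reaches the same triviality of $\Cent\mathbf{L}$ in the rank-one case by the slightly longer detour through the kernels of the two projections and a contradiction with $\mu_2$, and then concludes by exhibiting $p_1|_{\mathbf{L}}$ as an explicit isomorphism. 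That last step is perfectly valid and in fact anticipates the argument of the paper's \emph{next} lemma (Lemma \ref{lem:L-exact-subgroup}), where the observation that both projections restricted to $\mathbf{L}$ are isomorphisms is used to identify $\mathbf{L}$ with a conjugate of $\mathbf{G}^\Delta$. So you lose nothing in correctness, only a small amount of economy: the "center maps into the center under a surjection" observation gives $\Cent\mathbf{L}=\{e\}$ in one line and uniformly in both cases.
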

\begin{proof}
Consider the center $\Cent\mathbf{L}$. It projects in both coordinates to the center of $\mathbf{G}$ which is trivial as $\mathbf{G}$ is of adjoint type. Hence $\Cent \mathbf{L}$ projects to the trivial group in each coordinate so it is trivial. The group $\mathbf{L}$ is adjoint and the claim follows as both $\mathbf{G}$ and $\mathbf{G}\times\mathbf{G}$ are adjoint.
\end{proof}
If $\mathbf{L}\simeq\mathbf{G}\times\mathbf{G}$ then the inclusion $\mathbf{L}< \mathbf{G}\times\mathbf{G}$ is an equality. The following treats the case that $\mathbf{L}\simeq\mathbf{G}$.

\begin{lem}\label{lem:L-exact-subgroup}
If  $\mathbf{L}<\mathbf{G}\times\mathbf{G}$ is isomorphic to $\mathbf{G}$ and projects onto $\mathbf{G}$ in both coordinates then $\mathbf{L}$ is conjugate to $\mathbf{G}^\Delta$ over $\mathbb{Q}$.
\end{lem}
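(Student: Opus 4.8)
The plan is to identify $\mathbf{L}$ with the graph of a $\mathbb{Q}$-automorphism of $\mathbf{G}$ and then to invoke the absence of outer automorphisms for forms of $\mathbf{PGL}_2$.

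First I would examine the two coordinate projections $\pi_1,\pi_2\colon\mathbf{L}\to\mathbf{G}$. By hypothesis each is a surjective morphism of algebraic groups defined over $\mathbb{Q}$, and its kernel is a normal $\mathbb{Q}$-subgroup of $\mathbf{L}\simeq\mathbf{G}$. Since a form of $\mathbf{PGL}_2$ is connected, absolutely almost simple and centre-free, it has no proper non-trivial normal algebraic subgroup; hence $\ker\pi_i$, being proper, is trivial and each $\pi_i$ is an isomorphism over $\mathbb{Q}$. Set $\phi\coloneqq\pi_2\circ\pi_1^{-1}\in\Aut_{\mathbb{Q}}(\mathbf{G})$. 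For every $\ell\in\mathbf{L}$ we then have $\ell=\bigl(\pi_1(\ell),\,\phi(\pi_1(\ell))\bigr)$, and because $\pi_1$ is onto this shows $\mathbf{L}=\{(g,\phi(g))\mid g\in\mathbf{G}\}$, the graph of $\phi$.

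Next I would show $\phi$ is conjugation by a \emph{$\mathbb{Q}$-rational} element. The automorphism group scheme of any form of $\mathbf{PGL}_2$ is the group itself: one has $1\to\mathbf{G}\to\underline{\Aut}(\mathbf{G})\to\underline{\mathrm{Out}}(\mathbf{G})\to 1$ with $\underline{\mathrm{Out}}(\mathbf{G})$ a $\mathbb{Q}$-form of the automorphism group of the Dynkin diagram of type $A_1$, which is trivial; hence $\underline{\Aut}(\mathbf{G})=\mathbf{G}$ and $\Aut_{\mathbb{Q}}(\mathbf{G})=\{\Ad h\mid h\in\mathbf{G}(\mathbb{Q})\}$. (Equivalently, over $\overline{\mathbb{Q}}$ write $\phi=\Ad\bar h$ with $\bar h\in\mathbf{G}(\overline{\mathbb{Q}})$, unique because $\mathbf{G}$ is centre-free; Galois-invariance of $\phi$ forces $\bar h$ to be $\Gal(\overline{\mathbb{Q}}/\mathbb{Q})$-fixed, hence rational.) So $\phi=\Ad h$ for some $h\in\mathbf{G}(\mathbb{Q})$, and therefore $\mathbf{L}=\{(g,hgh^{-1})\mid g\in\mathbf{G}\}=\Ad_{(e,h)}\mathbf{G}^\Delta$ with $(e,h)\in(\mathbf{G}\times\mathbf{G})(\mathbb{Q})$, which is the claimed $\mathbb{Q}$-conjugacy.

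The only non-formal ingredient is the triviality of the outer automorphism group of $\mathbf{PGL}_2$ (the isomorphism theorem for reductive groups, $\mathrm{Out}$ being the automorphism group of the based root datum, which is trivial in type $A_1$); everything else is bookkeeping, the sole point requiring care being that $\ker\pi_i$ is trivial — i.e.\ that $\pi_1,\pi_2$ are honest isomorphisms and not merely isogenies — which is where one uses that $\mathbf{G}$ has no proper non-trivial normal algebraic subgroup, not just that it is almost simple.
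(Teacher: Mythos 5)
Your proof is correct and follows essentially the same route as the paper: show both projections restricted to $\mathbf{L}$ are isomorphisms, compose them to obtain a $\mathbb{Q}$-automorphism of $\mathbf{G}$, and use that all such automorphisms are inner by a rational element (since $\mathbf{G}$ is adjoint of type $A_1$) to conjugate $\mathbf{L}$ onto $\mathbf{G}^\Delta$. You supply somewhat more detail than the paper on why the automorphism is inner by a $\mathbb{Q}$-point, but the argument is the same.
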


\begin{proof}
Consider the projections $\pi_1,\pi_2\colon \mathbf{G}\times\mathbf{G}\to\mathbf{G}$ restricted to $\mathbf{L}$.
Because $\mathbf{L}$ projects onto $\mathbf{G}$ in both coordinates and $\mathbf{L}$ is simple with trivial center the kernel of these projections is trivial. In particular, both projections are isomorphism of algebraic groups and $\pi_2\restriction_{\mathbf{L}}\circ {\pi_1\restriction_{\mathbf{L}}}^{-1}$ is an automorphism of $\mathbf{G}$. As all automorphisms of $\mathbf{G}$ are inner we see that $\pi_2\restriction_{\mathbf{L}}\circ {\pi_1\restriction_{\mathbf{L}}}^{-1}=\Ad_g$ for some $g\in\mathbf{G}(\mathbb{Q})$. Thus $(e,g^{-1})\mathbf{L}(e,g)<\mathbf{G}^\Delta$ and because $\mathbf{L}$ and $\mathbf{G}^\Delta$ have the same dimension and $\mathbf{G}^\Delta$ is connected we conclude $(e,g^{-1})\mathbf{L}(e,g)=\mathbf{G}^\Delta$
\end{proof}

\begin{lem}
In Lemma \ref{lem:S-arithm-joinings} we can take $L_S=\mathbf{L}(\mathbb{Q}_S)^+$.
\end{lem}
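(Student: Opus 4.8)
The plan is to collapse the ambient positivity constraint appearing in Lemma~\ref{lem:S-arithm-joinings} and then to observe that the group it leaves room for admits no proper finite-index subgroup. By Lemmas~\ref{lem:L-isomorphism-type} and~\ref{lem:L-exact-subgroup} I may assume that $\mathbf{L}$ is either $\mathbf{G}\times\mathbf{G}$ or a $\mathbb{Q}$-conjugate of $\mathbf{G}^\Delta$. In both cases the isogeny $\mathbf{L}^\sc\to\mathbf{L}$ is obtained by restricting the isogeny $(\mathbf{G}\times\mathbf{G})^\sc\to\mathbf{G}\times\mathbf{G}$ (for $\mathbf{G}^\Delta$, to the diagonally embedded $\mathbf{G}^\sc$), so at each place $v$ one has $\mathbf{L}(\mathbb{Q}_v)^+\subseteq\left(\mathbf{G}\times\mathbf{G}\right)(\mathbb{Q}_v)^+$, while conversely any element of $\mathbf{L}(\mathbb{Q}_v)$ lying in $\mathbf{G}(\mathbb{Q}_v)^+\times\mathbf{G}(\mathbb{Q}_v)^+$ automatically lies in $\mathbf{L}(\mathbb{Q}_v)^+$ (for $\mathbf{G}^\Delta$ a diagonal element $(x,x)$ with $x\in\mathbf{G}(\mathbb{Q}_v)^+$ is the image of an element of $\mathbf{G}^\sc(\mathbb{Q}_v)$). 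Taking the product over $v\in S$ I would obtain
\begin{equation*}
\mathbf{L}(\mathbb{Q}_S)\cap\left(\mathbf{G}\times\mathbf{G}\right)(\mathbb{Q}_S)^+=\mathbf{L}(\mathbb{Q}_S)^+ ,
\end{equation*}
so the subgroup $L_S$ furnished by Lemma~\ref{lem:S-arithm-joinings} is already of finite index in $\mathbf{L}(\mathbb{Q}_S)^+$.

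Next I would show that $\mathbf{L}(\mathbb{Q}_S)^+$ has no proper finite-index subgroup. Being the image under the isogeny of $\mathbf{L}^\sc(\mathbb{Q}_S)=\prod_{v\in S}\mathbf{L}^\sc(\mathbb{Q}_v)$, it suffices to prove this for each factor $\mathbf{L}^\sc(\mathbb{Q}_v)$: a finite-index subgroup $H$ of a finite product $\prod_{v}G_v$ has $G_v\cap H$ of finite index in $G_v$ for every $v$, hence $G_v\subseteq H$ once each $G_v$ has no proper finite-index subgroup, hence $H=\prod_v G_v$; and a quotient of a group without proper finite-index subgroups has none either. To treat the factors I would first arrange $S$ so that $\mathbf{G}$ is split at every finite place of $S$: enlarging $S$ by finitely many primes at which $\mathbf{G}$ splits does no harm, since strong approximation for $\mathbf{G}^\sc$ (isotropic at $p_1$) lets one still arrange the class-number-one condition~\eqref{eq:S-class-number-1}, while the representatives $\omega_1,\omega_2$ of Lemma~\ref{lem:A-ergodic-decomp} may be kept integral at the finite ramified places, which do not enter~\eqref{eq:g_adel_restrictions}. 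With $S$ so chosen, for finite $v$ the group $\mathbf{L}^\sc(\mathbb{Q}_v)$ is $\mathbf{SL}_2(\mathbb{Q}_v)$ or $\mathbf{SL}_2(\mathbb{Q}_v)\times\mathbf{SL}_2(\mathbb{Q}_v)$, generated by its root subgroups $\cong(\mathbb{Q}_v,+)$; for $v=\infty$ it is either a product of copies of $\mathbf{SL}_2(\mathbb{R})$, again generated by unipotent subgroups $\cong(\mathbb{R},+)$, or a compact connected semisimple Lie group, hence divisible because each of its elements lies in a maximal torus. In either case $\mathbf{L}^\sc(\mathbb{Q}_v)$ is generated by divisible subgroups, and a divisible subgroup lies in every finite-index subgroup; thus $\mathbf{L}^\sc(\mathbb{Q}_v)$ has no proper finite-index subgroup.

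Combining the two steps, the finite-index subgroup $L_S<\mathbf{L}(\mathbb{Q}_S)^+$ must equal $\mathbf{L}(\mathbb{Q}_S)^+$, which is the assertion. The point I expect to need the most care is the choice of $S$ in the second paragraph: the argument genuinely requires $S$ to contain no finite ramified place --- at such a place $\mathbf{L}^\sc(\mathbb{Q}_v)$ is compact and positive-dimensional and so does have proper finite-index (open) subgroups --- and one must check that $S$ can be enlarged to avoid all ramified finite places while still meeting~\eqref{eq:S-class-number-1} and condition~(3) of Lemma~\ref{lem:S-arithm-joinings}; it is exactly here that the hypothesis that $p_1$ and $p_2$ split enters.
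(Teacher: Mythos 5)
Your reduction to showing that $\mathbf{L}(\mathbb{Q}_S)^+$ has no proper finite-index subgroup cannot be carried out in the generality in which the lemma is needed, and the workaround you propose for this does not survive contact with how the lemma is used. As you yourself observe, at a finite place $v$ where $\mathbf{B}$ ramifies the group $\mathbf{L}^\sc(\mathbb{Q}_v)$ is a compact $p$-adic group with many proper finite-index open subgroups, so the assertion is simply false for such $S$. But the lemma must hold for such $S$: in the proof of Theorem \ref{thm:regiditiy-adelic} the conclusion is applied to every $S$ in a direct system \emph{exhausting all places of $\mathbb{Q}$}, so the finite ramified places inevitably enter; moreover even the minimal admissible $S$ may be forced to contain ramified places by condition (3) of Lemma \ref{lem:S-arithm-joinings}, since at a finite ramified $v$ one has $\mathbf{G}(\mathbb{Q}_v)^+\subseteq K_v$ and the class of $\omega_{i,v}$ in $\mathbf{G}(\mathbb{Q}_v)/K_v\cong\cyclic{2}$ is an obstruction that cannot be removed by right translation by $\mathbf{G}(\mathbb{A})^+$. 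So the group-theoretic equality $L_S=\mathbf{L}(\mathbb{Q}_S)^+$ that you aim for can genuinely fail, and restricting the class of $S$ changes the statement into one that no longer suffices downstream.

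The paper's proof avoids this entirely by proving a weaker group-theoretic fact and a stronger ergodic one. It only uses simplicity at the split prime $p_1$: since $\mathbf{L}(\mathbb{Q}_{p_1})^+$ is a product of at most two copies of the abstractly simple $\mathbf{PSL}_2(\mathbb{Q}_{p_1})$, it has no proper finite-index subgroups, whence $\mathbf{L}(\mathbb{Q}_{p_1})^+\subseteq L_S$. Strong approximation then makes the $\mathbf{L}(\mathbb{Q}_{p_1})^+$-action on $[\mathbf{L}(\mathbb{Q}_S)^+]$ minimal, so the closed set $[L_S]$ equals $[\mathbf{L}(\mathbb{Q}_S)^+]$, and uniqueness of the invariant probability measure on a homogeneous space identifies the $L_S$-Haar and $\mathbf{L}(\mathbb{Q}_S)^+$-Haar measures on this common orbit. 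Note that the conclusion ``we can take $L_S=\mathbf{L}(\mathbb{Q}_S)^+$'' is a statement about the algebraic \emph{measure} $\jmath^S_*\lambda$, not an assertion that the two groups coincide; this is exactly what lets the argument go through when $S$ contains ramified places. Your first step (the identification $\mathbf{L}(\mathbb{Q}_S)\cap(\mathbf{G}\times\mathbf{G})(\mathbb{Q}_S)^+=\mathbf{L}(\mathbb{Q}_S)^+$) is fine and matches the paper's use of the reduced norm, but the second step needs to be replaced by the orbit/measure argument at the single place $p_1$.
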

\begin{proof}
Lemmas \ref{lem:L-isomorphism-type} and \ref{lem:L-exact-subgroup} imply that the reduced norm map
\begin{equation*}
\Nrd\colon\left(\mathbf{G}\times\mathbf{G}\right)(\mathbb{Q}_S)\to\faktor{\mathbb{Q}_S^\times}{{\mathbb{Q}_S^\times}^2}
\end{equation*} 
restricts to the corresponding reduced norm on $\mathbf{L}(\mathbb{Q}_S)$. In particular, $\mathbf{L}(\mathbb{Q}_S)\cap\left(\mathbf{G}\times\mathbf{G}\right)(\mathbb{Q}_S)^+=\mathbf{L}(\mathbb{Q}_S)^+$. The group $\mathbf{L}(\mathbb{Q}_{p_1})^+$ is a product of at most $2$ copies of the abstractly simple \cite{DicksonPSLn} group $\mathbf{PSL}_2(\mathbb{Q}_{p_1})$ . In particular, $\mathbf{L}(\mathbb{Q}_{p_1})^+$ has no proper subgroups of finite index, hence $L_S\cap\mathbf{L}(\mathbb{Q}_{p_1})=\mathbf{L}(\mathbb{Q}_{p_1})^+$. 

By strong approximation $\mathbf{L}(\mathbb{Q}_{p_1})^+$ acts minimally on the closed set $[\mathbf{L}(\mathbb{Q}_S)^+]$. Because $[L_S]$ is contained in $[\mathbf{L}(\mathbb{Q}_S)^+]$ and it is $\mathbf{L}(\mathbb{Q}_{p_1})^+$-invariant we see that $[L_S]=[\mathbf{L}(\mathbb{Q}_S)^+]$. The $\mathbf{L}(\mathbb{Q}_S)^+$-Haar measure on $[\mathbf{L}(\mathbb{Q}_S)^+]$ is $L_S$-invariant. Uniqueness of the Haar measure on a homogeneous space implies that the $\mathbf{L}(\mathbb{Q}_S)^+$ and $L_S$ Haar measures on  $[L_S]=[\mathbf{L}(\mathbb{Q}_S)^+]$ coincide.

The conclusion of the Lemma follows by translating by $g_S$.
\end{proof}

\begin{proof}[Proof of Theorem \ref{thm:regiditiy-adelic}]
We patch the result of the previous lemmata into an adelic statement.

Fix a countable well-ordered direct system of finite sets of rational places $\left\{S\right\}$ exhausting all the places of $\mathbb{Q}$ and such that all $S$ satisfy the conditions in Lemma \ref{lem:S-arithm-joinings}. By excluding a countable union of $\mathcal{P}$-measure zero sets we see that $\mathcal{P}$-almost every $\lambda$ in \eqref{eq:mu-erg-decomp} projects onto an algebraic measure satisfying the conclusions of Lemma \ref{lem:S-arithm-joinings} for each $S$ in the direct system.

Let $S\subset S'$ be a pair of sets places in the direct system. The algebraic measure $\jmath^{S'}_*\lambda$ supported on $[\mathbf{L}_{S'}(\mathbb{Q}_{S'})g_{S'}]$ projects to the algebraic measure $\jmath^{S}_*\lambda$ supported on $[\mathbf{L}_{S}(\mathbb{Q}_{S})g_{S}]$. The factor map from $Y_{S'}\times Y_{S'}$ to $Y_S\times Y_S$ is the division by the compact subgroup $\prod_{v\in S'\setminus S} K_v$, thus $$\Gamma_{S'}\mathbf{L}_{S}(\mathbb{Q}_{S})^+g_{S}\prod_{v\in S'\setminus S} K_v=\Gamma_{S'}\mathbf{L}_{S'}(\mathbb{Q}_{S'})^+g_{S'}\prod_{v\in S'\setminus S} K_v$$
and $\gamma g_{S'}=l g_S k_S$ for some $\gamma\in \Gamma_{S'}=\left(\mathbf{G}\times\mathbf{G}\right)(\mathbb{Q})\cap K^{S'}\times K^{S'}$, $l\in\mathbf{L}_S(\mathbb{Q}_S)^+$ and $k_S\in\prod_{v\in S}1 \times \prod_{v\in S'\setminus S} K_v$.
Write $g_{S'}=(g_{S'}^0,g_{S'}^1)$ where $g_{S'}^0$ are the $\mathbb{Q}_S$ coordinates of $g_{S'}$ and  $g_{S'}^1$ are the coordinates in $S'\setminus S$ then $\gamma g_{S'}^0=l g_S$. 

The $g_{S'}^{-1} \mathbf{L}_{S'}(\mathbb{Q}_{S'})^+ g_{S'}$-periodic measure supported on $[\mathbf{L}_{S'}(\mathbb{Q}_{S'})^+ g_{S'}]$ projects to a finite collection of ${g_{S'}^0}^{-1} \mathbf{L}_{S'}(\mathbb{Q}_S)^+ g_{S'}^0$ periodic measures. The $\left(A_{p_1}^+\times A_{p_2}^+\right)^\Delta$-ergodicity of $\jmath^S_*\lambda$ implies that this collection is a single periodic measure.

The measure $\jmath^S_*\lambda$ is also the $g_S^{-1} \mathbf{L}_S(\mathbb{Q}_S)^+ g_S$-periodic measure with support $[\mathbf{L}_S(\mathbb{Q}_S)^+ g_S]$. The groups stabilizing the measure are equal and so are their normal subgroups of trivial reduced norm. Hence
${g_{S'}^0}^{-1} \mathbf{L}_{S'}(\mathbb{Q}_{S})^+ g_{S'}^0=g_S^{-1} \mathbf{L}_S(\mathbb{Q}_S)^+ g_S$. Because $\gamma g_{S'}^0=l g_S$ this implies that $\Ad_\gamma\mathbf{L}_{S'}(\mathbb{Q}_{S'})^+ =\mathbf{L}_S(\mathbb{Q}_S)^+$.

Because the image of the simply connected cover is Zariski dense over an infinite field we see that $\Ad_\gamma\mathbf{L}_{S'} =\mathbf{L}_S$. We are free to replace the datum $\left(\mathbf{L}_{S'}, \mathbf{L}_{S'}(\mathbb{Q}_{S'})^+, g_{S'}\right)$ by  the datum $\left(\Ad_\gamma\mathbf{L}_{S'}, \Ad_\gamma \mathbf{L}_{S'}(\mathbb{Q}_{S'})^+\gamma  g_{S'}\right)$ without changing the corresponding algebraic measure on $Y_{S'}\times Y_{S'}$. Using the new datum the algebraic measure $\jmath^{S'}_*\lambda$ is supported on $[\mathbf{L}_S(\mathbb{Q}_{S'})^+ \gamma g_{S'}]=[\mathbf{L}_S(\mathbb{Q}_{S'})^+ g_Sk_S]$ with $k_S\in\prod_{v\in S}1 \times \prod_{v\in S'\setminus S} K_v$.

Let $S_0$ be the minimal set of places in the well-ordered direct system. We make the choices of datum for the measures $\jmath^S_*\lambda$ consistently across the ordered system, i.e.\ for all $S$ the measure $\jmath^S_*\lambda$ it the algebraic measure supported on $[\mathbf{L}_{S_0}(\mathbb{Q}_S)g_{S_0}k_S]$  and $k_S$ has non-trivial entries only in coordinates not contained in sets of places preceding $S$. We can then extend $k_S$ trivially to an element of $K<\mathbf{G}(\mathbb{A}_f)$ and define $k=\prod_S k_S\in K$.

The adelic algebraic measure supported on $[\mathbf{L}(\mathbb{A})^+ g_{S_0}k]$ projects under $\jmath^S$ to the measure $\jmath^S_*\lambda$ for all $S$ in the direct system. As the set of compactly supported functions on $[\left(\mathbf{G}\times\mathbf{G}\right)(\mathbb{A})]$ which are $K^S\times K^S$-smooth for some $S$ is dense in the space of compactly supported continuous functions we conclude that $\lambda$ coincides with the algebraic measure supported on $[\mathbf{L}(\mathbb{A})^+ g_{S_0} k]$.

Lemma \ref{lem:L-exact-subgroup} now implies that we can take $\mathbf{L}$ either to be $\mathbf{G}\times\mathbf{G}$ or $\mathbf{G}^\Delta$.
\end{proof}

\section{Coordinates for Quaternion Algebras}\label{sec:B-coordinates} 
The usual representation in coordinates of a split quaternion algebra $\mathbf{B}(\mathbb{Q}_v)$ over a local field $\mathbb{Q}_v$ is the matrix algebra $\mathbf{M}_{2\times 2}(\mathbb{Q}_v)$. When $v\neq\infty$ and we have a fixed maximal order we can choose our coordinates so that this order is $\mathbf{M}_{2\times 2}(\mathbb{Z}_v)$. The downside of this "fixed coordinates" representation is that it is difficult in the general case to write down the intersection of a varying torus $\widetilde{\mathbf{T}}(\mathbb{Q}_v)$ with the maximal order or to describe coordinatewise the action of the torus by conjugation.

Another commonly used coordinate representation of a quaternion algebra, split or not, over $\mathbb{Q}_v$ is a coordinate system adjusted to the varying torus $\widetilde{\mathbf{T}}(\mathbb{Q}_v)$. In this description $\mathbf{B}(\mathbb{Q}_v)$ is identified with the subset of fixed point of a twisted Galois action on $\mathbf{M}_{2\times 2}(E_v)$; where $E_v/\mathbb{Q}_v$ is a quadratic \'etale-algebra splitting $\widetilde{\mathbf{T}}$. In this description $\widetilde{\mathbf{T}}(\mathbb{Q}_v)$ corresponds simply to the diagonal torus. The price we pay is that the coordinatewise expression for a fixed maximal order is varying. 

In this section, we present the expression for the maximal order in a coordinate system varying with the torus. The results of this section are well-known yet because they are of utilitarian nature it is difficult to point to an exhaustive reference.  
\begin{defi}
Define $\mathbf{M}_{2\times 2}=\Spec \mathbb{Q}\left[x_{i,j} \mid 1\leq i,j \leq 2\right]$ to  be the 4-dimensional affine space of $2\times 2$ matrices.
We define $\mathbf{GL}_2$ as a space of invertible $2\times 2$ matrices using to the closed immersion $\mathbf{GL}_2\hookrightarrow \mathbf{M}_{2\times 2}\times \mathbb{A}_1$ 
\begin{equation*}
\mathbb{Q}[\mathbf{GL}_2]
\left.=\mathbb{Q}\left[x_{i,j}, {\det}^{-1} \mid 1\leq i,j \leq 2\right]
\middle\slash \left<\left(x_{1,1}x_{2,2}-x_{1,2}x_{2,1}\right) {\det}^{-1}=1\right> \right.
\end{equation*}
\end{defi}

The torus $\widetilde{\mathbf{T}}\simeq \WR \Gm$ is split over $E$, hence $\mathbf{B}_E\simeq \mathbf{M}_{2\times 2,E}$. We now describe the Galois action on $\mathbf{M}_{2\times 2,E}$ corresponding to the $\mathbb{Q}$-form $\mathbf{B}$. 
\begin{defi}\label{def:BE-ME-iso}\addpenalty{-1000}
\hfill
\begin{enumerate}
\item Let $\widetilde{\mathbf{A}}$ be the torus of diagonal matrices in $\mathbf{GL}_2$.
We fix an isomorphism of algebras defined over $E$
\begin{equation*}
\mathbf{B}_E\to \mathbf{M}_{2\times 2,E}
\end{equation*}
which sends $\widetilde{\mathbf{T}}_E$ to $\widetilde{\mathbf{A}}_E$. Using this isomorphism we identity henceforth 
\begin{align*}
\mathbf{B}_E&=\mathbf{M}_{2\times 2, E}\\
\mathbf{B}^\times_E&=\mathbf{GL}_{2,E},\; \widetilde{\mathbf{T}}_E=\widetilde{\mathbf{A}}_E\\
\mathbf{G}_E&=\mathbf{PGL}_{2,E},\; \mathbf{T}_E=\mathbf{A}_E
\end{align*}

\item Denote $\mathfrak{G}\coloneq \Gal(E/\mathbb{Q})$ and let $\sigma$ be the non-trivial element of $\mathfrak{G}$. We consider two actions of $\mathfrak{G}$ on $\mathbf{M}_{2\times 2,E}$ which restrict to actions on $\mathbf{GL}_{2,E}$. The naive action is the one induced by considering $\mathbf{M}_{2\times 2,E}$ as base change of $\mathbf{M}_{2\times 2}$. This action acts on the coordinates by
\begin{align*}
x_{i,j}&\mapsto \tensor[^\sigma]{x}{_{i,j}}  & 1\leq i,j\leq 2\\
\end{align*}

\item
The twisted action corresponds to viewing $\mathbf{M}_{2\times 2,E}$ as base change of $\mathbf{B}$. As $\mathbf{B}$ is an inner-form of $\mathbf{M}_{2\times2}$ this actions differs from the naive one by conjugation by some $\theta\in\mathbf{PGL}_2(E)$, i.e.\
\begin{align*}
x_{i,j}&\mapsto \theta \tensor[^\sigma]{x}{_{i,j}} \theta^{-1}  & 1\leq i,j\leq 2\\
\end{align*}
\end{enumerate}
\end{defi}

The following is very well known.
\begin{prop}
The element $\theta$ has a representative of the form
\begin{equation*}
\theta=\begin{pmatrix}
0 & \epsilon \\
1 & 0
\end{pmatrix}
\end{equation*}
where $\epsilon\in\mathbb{Q}^\times$. 

Moreover, any $\epsilon\in\mathbb{Q}^\times$ defines in this way an inner-form of $\mathbf{M}_{2\times 2}$ which is split over $E$. The inner-forms corresponding to $\epsilon_1, \epsilon_2$ are isomorphic over $\mathbb{Q}$ if and only if $\epsilon_2\in \epsilon_1 \Nrd E^\times$.
This establishes a bijection between (inner-)forms of $\mathbf{GL}_2$ split over $E$ and $\faktor{\mathbb{Q}^\times}{\Nr E^\times}$.
\end{prop}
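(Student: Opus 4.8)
The plan is to pin $\theta$ down from two structural constraints and then read off the classification from a single cohomological invariant, namely the element $\theta\cdot{}^\sigma\theta$ viewed in $\faktor{\mathbb{Q}^\times}{\Nr E^\times}$.

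First I would establish the normal form. The twisted action $x\mapsto\theta\,{}^\sigma x\,\theta^{-1}$ is required to be an action of $\mathfrak{G}\simeq\cyclic{2}$ and to carry the $\mathbb{Q}$-torus $\widetilde{\mathbf{T}}$ to itself; since the naive action already preserves $\widetilde{\mathbf{A}}_E=\widetilde{\mathbf{T}}_E$, this forces $\Ad_\theta$ to normalize $\widetilde{\mathbf{A}}_E$, so $\theta\in\Nrml_{\mathbf{PGL}_2}(\widetilde{\mathbf{A}})(E)$. I would then argue $\theta\notin\widetilde{\mathbf{A}}(E)$: otherwise $\Ad_\theta$ acts trivially on $\widetilde{\mathbf{A}}_E$, the twisted and naive actions agree on the torus, and the descent $\widetilde{\mathbf{T}}$ would be the split torus $\Gm^2$ rather than $\WR\Gm$ --- impossible because $E/\mathbb{Q}$ is a field. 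Hence $\theta$ lies in the non-trivial Weyl coset and has a representative $\left(\begin{smallmatrix}0&\epsilon\\1&0\end{smallmatrix}\right)$ with $\epsilon\in E^\times$ after scaling the lower-left entry to $1$; the order-two condition says $\theta\cdot{}^\sigma\theta$ is scalar, and the one-line product $\theta\cdot{}^\sigma\theta=\operatorname{diag}(\epsilon,{}^\sigma\epsilon)$ is scalar iff $\epsilon={}^\sigma\epsilon$, i.e.\ $\epsilon\in\mathbb{Q}^\times$. This proves the first assertion and records that $\theta\cdot{}^\sigma\theta=\epsilon I$.

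Next I would check that every $\epsilon\in\mathbb{Q}^\times$ genuinely defines an $E$-split form. Writing $\theta_\epsilon\coloneqq\left(\begin{smallmatrix}0&\epsilon\\1&0\end{smallmatrix}\right)$, the relation $\theta_\epsilon\,{}^\sigma\theta_\epsilon=\epsilon I$ shows $\Ad_{\theta_\epsilon}\circ{}^\sigma(\,\cdot\,)$ is an order-two semilinear ring automorphism of $\mathbf{M}_{2\times 2,E}$; its fixed ring $\mathbf{B}_\epsilon$ is a four-dimensional central simple $\mathbb{Q}$-algebra by Galois descent, is split by $E$ tautologically, and the diagonal torus descends (the twisted action transposes the two diagonal entries, then applies $\sigma$) to a $\mathbb{Q}$-torus $\simeq\WR\Gm$ sitting inside $\mathbf{B}_\epsilon^\times$. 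Combined with the first step --- which shows any $E$-split form becomes $\mathbf{B}_\epsilon$ once one fixes an $E$-isomorphism $\mathbf{B}_E\simeq\mathbf{M}_{2\times 2,E}$ matching $\widetilde{\mathbf{T}}_E$ with $\widetilde{\mathbf{A}}_E$ --- this gives the surjectivity of $\epsilon\mapsto\mathbf{B}_\epsilon$.

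Finally, for the isomorphism criterion I would invoke Galois descent together with Skolem--Noether ($\Aut\mathbf{M}_{2\times 2}=\mathbf{PGL}_2$): $\mathbf{B}_{\epsilon_1}\simeq\mathbf{B}_{\epsilon_2}$ over $\mathbb{Q}$ iff $\theta_{\epsilon_1}$ and $\theta_{\epsilon_2}$ are cohomologous, i.e.\ $\theta_{\epsilon_2}=\bar g\,\theta_{\epsilon_1}\,{}^\sigma\bar g^{-1}$ in $\mathbf{PGL}_2(E)$ for some $\bar g$. Lifting $\bar g$ to $g\in\mathbf{GL}_2(E)$ turns this into $\theta_{\epsilon_2}=\lambda\,g\,\theta_{\epsilon_1}\,{}^\sigma g^{-1}$ for some $\lambda\in E^\times$, and applying the operation $z\mapsto z\,{}^\sigma z$ to both sides yields $\epsilon_2 I=\Nr_{E/\mathbb{Q}}(\lambda)\,\epsilon_1 I$, so $\epsilon_2/\epsilon_1\in\Nr E^\times$; conversely, taking $\bar g$ the image of $\operatorname{diag}(u,1)$ conjugates $\theta_{\epsilon_1}$ to $\theta_{\epsilon_1\Nr(u)}$ in $\mathbf{PGL}_2(E)$, giving the reverse implication. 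Hence $\epsilon\mapsto\mathbf{B}_\epsilon$ descends to the asserted bijection between forms of $\mathbf{GL}_2$ split over $E$ and $\faktor{\mathbb{Q}^\times}{\Nr E^\times}$. (As a cross-check, $\mathbf{B}_\epsilon$ is the quaternion algebra $(D,\epsilon)_\mathbb{Q}$ with $E=\mathbb{Q}(\sqrt D)$, and the norm criterion is the classical one for Hilbert symbols.) There is no deep obstacle here; the only care needed is in keeping the cocycle conventions straight and in the lift-to-$\mathbf{GL}_2$ step, the whole content being that $\theta\cdot{}^\sigma\theta$ modulo $\Nr E^\times$ is a complete invariant of the class.
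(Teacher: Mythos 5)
Your proof is correct, and the first half (normalizer of $\widetilde{\mathbf{A}}_E$, non-split torus forces the non-trivial Weyl coset, the involution condition $\theta\cdot{}^\sigma\theta$ scalar forcing $\epsilon={}^\sigma\epsilon$) is exactly the paper's argument. Where you diverge is the isomorphism criterion. The paper only considers coboundaries $\theta\mapsto{}^\sigma\upsilon^{-1}\theta\upsilon$ with $\upsilon\in\Nrml_{\mathbf{PGL}_2}(\widetilde{\mathbf{A}})(E)$, i.e.\ it implicitly classifies pairs (form, embedded torus) and computes that such coboundaries multiply $\epsilon$ by a norm; strictly speaking this leaves open whether an abstract $\mathbb{Q}$-isomorphism of the algebras that does \emph{not} match the chosen tori could identify $\mathbf{B}_{\epsilon_1}$ with $\mathbf{B}_{\epsilon_2}$ for $\epsilon_2\notin\epsilon_1\Nr E^\times$ (one closes that gap by conjugacy of the embeddings of $E$, or by noting the classes in $\mathbb{Q}^\times/\Nr E^\times$ exhaust $H^1(\mathfrak{G},\mathbf{PGL}_2(E))$ and hence the map is injective by counting against the Brauer group). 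Your argument instead allows an arbitrary coboundary $\bar g\in\mathbf{PGL}_2(E)$, lifts it to $\mathbf{GL}_2(E)$, and observes that $z\mapsto z\cdot{}^\sigma z$ sends $\lambda g\theta_{\epsilon_1}{}^\sigma g^{-1}$ to $\Nr(\lambda)\epsilon_1 I$, so that $\theta\cdot{}^\sigma\theta\bmod\Nr E^\times$ is a complete invariant of the cohomology class. This is cleaner and handles the "only if" direction without any appeal to the position of the torus; the paper's version buys a shorter computation at the cost of that unstated reduction. Both converse directions ($\operatorname{diag}(u,1)$ conjugating $\theta_{\epsilon}$ to $\theta_{\epsilon\Nr(u)}$) agree.
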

\begin{proof}
The torus $\widetilde{\mathbf{A}}_E\simeq\widetilde{\mathbf{T}}_E$ is stable under the twisted Galois action because $\widetilde{\mathbf{T}}$ is defined over $\mathbb{Q}$; thus $\theta\in \Nrml_{\mathbf{PGL}_2}(\widetilde{\mathbf{A}})(E)$. Because $\widetilde{\mathbf{T}}$ is not split, the twisted Galois action is non-trivial on $\widetilde{\mathbf{A}}_E$ and we can write a representative for $\theta$ of the required form with $\epsilon\in E^\times$.

Because $\sigma$ is an involution we see that $\tensor[^\sigma]{\theta}{}=\theta^{-1}$ which is equivalent to $\epsilon\in\mathbb{Q}$. 
Isomorphic forms correspond to
coboundarous Galois actions. A coboundary which stabilizes $\widetilde{\mathbf{A}}_E\simeq\widetilde{\mathbf{T}}_E$ is of the form $\theta\mapsto \tensor[^\sigma]{\upsilon}{^{-1}} \theta \upsilon$ where $\upsilon\in \Nrml_{\mathbf{PGL}_2}(\widetilde{\mathbf{A}})(E)$. This amounts to multiplying $\epsilon$ by a norm.
\end{proof}

\begin{remark}
Even for the case $\mathbf{B}=\mathbf{M}_{2\times 2}$ the twisted Galois action differs from the naive one. In this case $\sigma$ acts by conjugating the matrix elements, interchanging the two diagonal entries and interchanging the two anti-diagonal ones. This differs from the naive one also because it identifies the diagonal torus with $\widetilde{\mathbf{T}}_E$. In particular, the Galois fixed points in $\widetilde{\mathbf{A}}(E)$ are $\widetilde{\mathbf{T}}(\mathbb{Q})$ and \emph{not} $\widetilde{\mathbf{A}}(\mathbb{Q})$.
\end{remark}

\begin{prop}\label{prop:rational-points-global}
The subset $\mathbf{B}(\mathbb{Q})\subset \mathbf{M}_{2\times 2}(E)$ can be written as
\begin{equation*}
\mathbf{B}(\mathbb{Q})=\left\{\begin{pmatrix}
a & \epsilon b  \\
\tensor[^\sigma]{b}{} & \tensor[^\sigma]{a}{}
\end{pmatrix}  \relmiddle| a,b\in E \right\} 
\end{equation*}
\end{prop}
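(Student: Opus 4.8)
The plan is to compute the fixed points of the twisted Galois action directly in coordinates. By Definition~\ref{def:BE-ME-iso} we have $\mathbf{B}(\mathbb{Q}) = \mathbf{M}_{2\times 2}(E)^{\mathfrak{G}}$, where $\mathfrak{G}=\langle\sigma\rangle$ acts by $x \mapsto \theta\,\tensor[^\sigma]{x}{}\,\theta^{-1}$, and by the previous proposition we may take $\theta = \left(\begin{smallmatrix}0 & \epsilon\\ 1 & 0\end{smallmatrix}\right)$ with $\epsilon\in\mathbb{Q}^\times$. So the task is to solve $x = \theta\,\tensor[^\sigma]{x}{}\,\theta^{-1}$ for $x=\left(\begin{smallmatrix}x_{1,1} & x_{1,2}\\ x_{2,1} & x_{2,2}\end{smallmatrix}\right)\in\mathbf{M}_{2\times 2}(E)$. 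First I would record that $\theta^{-1}$ has representative $\left(\begin{smallmatrix}0 & 1\\ \epsilon^{-1} & 0\end{smallmatrix}\right)$ (as an element of $\mathbf{GL}_2(E)$ it is $\epsilon^{-1}\theta$, but it is cleaner to conjugate by the matrix $\theta$ itself and note the scalar cancels), and then simply multiply out $\theta\,\tensor[^\sigma]{x}{}\,\theta^{-1}$.

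The computation $\theta \left(\begin{smallmatrix}\tensor[^\sigma]{x}{_{1,1}} & \tensor[^\sigma]{x}{_{1,2}}\\ \tensor[^\sigma]{x}{_{2,1}} & \tensor[^\sigma]{x}{_{2,2}}\end{smallmatrix}\right)\theta^{-1}$ yields, after the two matrix multiplications, the matrix $\left(\begin{smallmatrix}\tensor[^\sigma]{x}{_{2,2}} & \epsilon\,\tensor[^\sigma]{x}{_{2,1}}\\ \epsilon^{-1}\,\tensor[^\sigma]{x}{_{1,2}} & \tensor[^\sigma]{x}{_{1,1}}\end{smallmatrix}\right)$. Setting this equal to $x$ gives the system $x_{1,1} = \tensor[^\sigma]{x}{_{2,2}}$, $x_{2,2} = \tensor[^\sigma]{x}{_{1,1}}$ (these two are equivalent), $x_{1,2} = \epsilon\,\tensor[^\sigma]{x}{_{2,1}}$, and $x_{2,1} = \epsilon^{-1}\,\tensor[^\sigma]{x}{_{1,2}}$ (again equivalent to each other, using $\sigma^2=\mathrm{id}$ and $\epsilon\in\mathbb{Q}$). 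Writing $a := x_{1,1}$ and $b := x_{2,1}$, both free to range over all of $E$, we get $x_{2,2} = \tensor[^\sigma]{a}{}$ and $x_{1,2} = \epsilon\,\tensor[^\sigma]{b}{}$, which is exactly the claimed parametrization
\begin{equation*}
\mathbf{B}(\mathbb{Q}) = \left\{ \begin{pmatrix} a & \epsilon b\\ \tensor[^\sigma]{b}{} & \tensor[^\sigma]{a}{} \end{pmatrix} \;\middle|\; a,b\in E \right\}.
\end{equation*}

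The argument is essentially a bookkeeping exercise, so there is no serious obstacle; the only point requiring minor care is the bookkeeping with $\theta^{-1}$ — specifically, that in $\mathbf{PGL}_2(E)$ the element $\theta$ is its own conjugating operator, so one should either work with $\theta \cdot \tensor[^\sigma]{x}{} \cdot \theta^{-1}$ using the genuine inverse $\left(\begin{smallmatrix}0 & 1\\ \epsilon^{-1}& 0\end{smallmatrix}\right)$ in $\mathbf{GL}_2(E)$, or observe that any scalar ambiguity in $\theta$ cancels in the conjugation action on $\mathbf{M}_{2\times 2}$. I would also remark at the end that consistency of the system forces $\epsilon\in\mathbb{Q}^\times$ (i.e. $\tensor[^\sigma]{\epsilon}{}=\epsilon$), which is already guaranteed by the preceding proposition, and that the map $(a,b)\mapsto \left(\begin{smallmatrix}a & \epsilon b\\ \tensor[^\sigma]{b}{} & \tensor[^\sigma]{a}{}\end{smallmatrix}\right)$ is visibly $\mathbb{Q}$-linear and injective, so this description also exhibits the expected $\mathbb{Q}$-dimension $4$.
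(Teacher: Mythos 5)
Your proposal is correct and follows the same route as the paper: identify $\mathbf{B}(\mathbb{Q})$ with the fixed points of the twisted Galois action (Galois descent) and compute them explicitly; your matrix computation with $\theta^{-1}=\left(\begin{smallmatrix}0&1\\ \epsilon^{-1}&0\end{smallmatrix}\right)$ is accurate, and the only cosmetic difference is that your parametrization uses $b=x_{2,1}$ rather than $b={}^\sigma x_{2,1}$, which describes the same set.
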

\begin{proof}
By Galois descent for quasi-projective varieties over perfect fields the fixed points of the Galois actions are exactly the points defined over the base field.

The proposition now follows directly by examining the fixed points of the twisted Galois action.
\end{proof}

\subsection{Coordinates over Local Fields}
For any place $v$ of $\mathbb{Q}$ let $E_v=\prod_{w|v} E_w$. The group $\mathfrak{G}$ acts on the \'{e}tale-algebra $E_v$ either as a Galois group of a field extension if $v$ is not split in $E$ or by switching the coordinates if $v$ splits. In both cases the fixed points are $\mathbb{Q}_v$ where in the split case $\mathbb{Q}_v$ is embedded diagonally in $E_v$. The base change of the isomorphism $\mathbf{B} \to \mathbf{M}_{2\times 2,E}$ to $E_v$ is an isomorphism
\begin{equation}\label{eq:local-B-M-iso}
\mathbf{B}_{E_v} \to \mathbf{M}_{2\times 2,{E_v}}
\end{equation}

The twisted action of $\mathfrak{G}$ extends by the base-change construction to an action on $\mathbf{M}_{2\times 2,{E_v}}$. This action coincides with the action of $\mathfrak{G}$ on $\mathbf{M}_{2\times 2,{E_v}}$ induced by the action of the Galois group $\Gal(E_v/\mathbb{Q}_v)$.
\begin{prop}\label{prop:rational-points-local}
The subset $\mathbf{B}(\mathbb{Q}_v)$ can be written as the following set in $\mathbf{M}_{2\times 2}(E_v)$.
\begin{equation*}
\mathbf{B}(\mathbb{Q}_v)=\left\{\begin{pmatrix}
\alpha & \epsilon \beta \\
\tensor[^\sigma]{\beta}{} & \tensor[^\sigma]{\alpha}{}
\end{pmatrix} \relmiddle{|} \alpha,\beta\in E_v \right\} 
\end{equation*}

Moreover, the elements of $\mathbf{B}(\mathbb{Q})\subset \mathbf{B}(\mathbb{Q}_v)$ are exactly the matrices for which $\alpha,\beta\in E$.
\end{prop}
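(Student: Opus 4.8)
The plan is to transcribe the Galois-descent argument of Proposition~\ref{prop:rational-points-global} to the local base $\mathbb{Q}_v$, replacing the field $E$ by the \'{e}tale-algebra $E_v$.

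First I would assemble the structural input already in place. The base change \eqref{eq:local-B-M-iso} of the isomorphism of Definition~\ref{def:BE-ME-iso} identifies $\mathbf{B}_{E_v}$ with $\mathbf{M}_{2\times 2, E_v}$, and under this identification the descent datum cutting out the $\mathbb{Q}_v$-structure $\mathbf{B}(\mathbb{Q}_v)$ is the twisted $\mathfrak{G}$-action, which --- as recorded just before the statement --- coincides with the action of $\Gal(E_v/\mathbb{Q}_v)$ on $\mathbf{M}_{2\times 2}(E_v)$ followed by conjugation by the representative $\theta=\left(\begin{smallmatrix}0 & \epsilon\\ 1 & 0\end{smallmatrix}\right)$, $\epsilon\in\mathbb{Q}^\times$, found above. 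Since $\mathbf{M}_{2\times 2}$ is affine, hence quasi-projective over $\mathbb{Q}_v$, and $\mathbb{Q}_v\to E_v$ is a $\mathfrak{G}$-Galois \'{e}tale cover --- a quadratic field extension if $v$ is inert or ramified in $E$, and $\mathbb{Q}_v\times\mathbb{Q}_v$ with $\mathfrak{G}$ interchanging the factors if $v$ splits --- Galois descent yields
\begin{equation*}
\mathbf{B}(\mathbb{Q}_v)=\mathbf{M}_{2\times 2}(E_v)^{\mathfrak{G}},
\end{equation*}
the fixed locus of this twisted action. (When $v$ splits this merely records that the fixed points of the ``swap-then-conjugate'' action on $\mathbf{M}_{2\times 2}(\mathbb{Q}_v)\times\mathbf{M}_{2\times 2}(\mathbb{Q}_v)$ form the graph of $\Ad_\theta$, a copy of $\mathbf{M}_{2\times 2}(\mathbb{Q}_v)$.)

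Next I would compute the fixed locus explicitly. For $M=\left(\begin{smallmatrix}a & b\\ c& d\end{smallmatrix}\right)\in\mathbf{M}_{2\times 2}(E_v)$ a direct multiplication gives
\begin{equation*}
\theta\,\tensor[^\sigma]{M}{}\,\theta^{-1}=\begin{pmatrix}\tensor[^\sigma]{d}{} & \epsilon\,\tensor[^\sigma]{c}{}\\ \epsilon^{-1}\,\tensor[^\sigma]{b}{} & \tensor[^\sigma]{a}{}\end{pmatrix},
\end{equation*}
so $M$ is $\mathfrak{G}$-fixed precisely when $d=\tensor[^\sigma]{a}{}$ and $b=\epsilon\,\tensor[^\sigma]{c}{}$; applying $\sigma$ to these two relations and using $\epsilon\in\mathbb{Q}$ recovers the remaining two. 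Setting $\alpha\coloneqq a$ and $\beta\coloneqq\tensor[^\sigma]{c}{}$ --- a legitimate change of parameters since $\sigma$ permutes $E_v$ --- gives $M=\left(\begin{smallmatrix}\alpha & \epsilon\beta\\ \tensor[^\sigma]{\beta}{} & \tensor[^\sigma]{\alpha}{}\end{smallmatrix}\right)$ with $\alpha,\beta\in E_v$ unrestricted, which is the claimed description. For the last assertion, $\mathbf{B}(\mathbb{Q})\subseteq\mathbf{B}(\mathbb{Q}_v)$ consists of those matrices lying in $\mathbf{M}_{2\times 2}(E)$ under the global identification of Definition~\ref{def:BE-ME-iso}; comparing with the explicit form of Proposition~\ref{prop:rational-points-global} shows this is exactly the condition $\alpha,\beta\in E$ inside $E_v$.

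There is no genuine obstacle: the content is a line-by-line localization of the global computation. The only point meriting a moment's care is the split case, where one must observe that ``descent along $E_v/\mathbb{Q}_v$'' still makes sense for the non-field \'{e}tale algebra $E_v\simeq\mathbb{Q}_v\times\mathbb{Q}_v$ and correctly recovers $\mathbf{B}(\mathbb{Q}_v)$ as the diagonal copy of $\mathbf{M}_{2\times 2}(\mathbb{Q}_v)$ twisted by $\Ad_\theta$.
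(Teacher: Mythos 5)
Your proposal is correct and follows essentially the same route as the paper: identify the twisted $\mathfrak{G}$-action on $\mathbf{M}_{2\times 2}(E_v)$ with the base-change action (conjugation by $\theta$ composed with the naive action), compute its fixed points, and deduce the last assertion from Proposition~\ref{prop:rational-points-global} together with base change. The only difference is that you carry out the matrix computation of $\theta\,\tensor[^\sigma]{M}{}\,\theta^{-1}$ explicitly where the paper leaves it implicit, and your remark on descent for the non-field \'etale algebra in the split case matches the paper's observation that $\theta$ sits diagonally there.
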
 
\begin{proof}
The matrix $\theta$
is a $\mathbb{Q}$-point of $\mathbf{PGL}_2$ and hence also $\mathbb{Q}_v$-point and a $E_v$-point. In case $v$ splits in $E$ the matrix $\theta$ sits diagonally in $\mathbf{M}_{2\times 2,E_v}\simeq \mathbf{M}_{2\times 2,\mathbb{Q}_v}\times \mathbf{M}_{2\times 2,\mathbb{Q}_v}$. Because the Galois action of $\mathfrak{G}$ on $\mathbf{M}_{2\times 2,E_v}$ coincides with the base-change action it is also given by the naive action composed with conjugation by $\theta$.

The first part of the proposition follows once more by computing the fixed points of a Galois action on a quasi-projective varieties.

The statement about points in $\mathbf{B}(\mathbb{Q})$ follows from Proposition \ref{prop:rational-points-global} and the universal property of base change.
\end{proof}

\subsection{The Different Ideal}

We review some basic properties about the different ideal of a quadratic order.
\begin{defi}
For $v\neq\infty$
define the inverse different ideal of $\Lambda_v\subset \mathbf{E}(\mathbb{Q}_v)= E_v$ by 
\begin{equation*}
\widehat{\Lambda_v}\coloneqq \left\{a\in E_v \mid \Tr (a \Lambda_v)\subseteq \mathbb{Z}_v \right\}
\end{equation*}

Define the different ideal by 
\begin{equation*}
\mathfrak{D}(\Lambda_v)\coloneqq\left(\Lambda_v \colon \widehat{\Lambda_v}\right)
=\left\{a\in\Lambda_v \mid a \widehat{\Lambda_v} \subseteq \Lambda_v  \right\}
\end{equation*}
\end{defi}

\begin{lem}\label{lem:inverse-different}
Let $v\neq\infty$.
The different ideal $\mathfrak{D}(\Lambda_v)$ is principal invertible and its generator $\mathscr{D}_v\in\Lambda_v$ satisfies
\begin{equation*}
\Nr\mathscr{D}_v=D_v
\end{equation*}
\end{lem}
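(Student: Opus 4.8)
The plan is to work place-by-place, reducing to the known structure of quadratic orders over local fields, and then verify that the resulting different ideal is principal with the asserted norm. First I would recall that $\Lambda_v = \mathbb{Z}_v + \mathcal{f}_v\mathcal{O}_{E_v}$ by Lemma \ref{lem:Lambda-Galois}, and split into the two cases for the \'etale algebra $E_v$: either $E_v$ is a quadratic field extension of $\mathbb{Q}_v$, or $E_v \simeq \mathbb{Q}_v\times\mathbb{Q}_v$. In the split case $\Lambda_v$ is $\{(a,a) + \mathcal{f}_v(x,y)\} = \{(a+\mathcal{f}_v x, a+\mathcal{f}_v y)\}$, whose different one computes directly: the inverse different is $\frac{1}{\mathcal{f}_v}(\mathbb{Z}_v\times\mathbb{Z}_v)$ up to the obvious shift, so $\mathfrak{D}(\Lambda_v) = (\mathcal{f}_v,\mathcal{f}_v)\Lambda_v$ — wait, more carefully, $\mathfrak{D}(\Lambda_v) = \mathcal{f}_v\mathcal{O}_{E_v}\cap\Lambda_v$-type object — and its generator can be taken to be $\mathcal{f}_v(1,-1) + (\text{something in }\mathbb{Z}_v)$; in any case a generator $\mathscr{D}_v$ with $\Nr\mathscr{D}_v = \mathcal{f}_v^2 = D_v$ (recall $D_v = \disc\Lambda_v$, which for a split \'etale algebra is $\mathcal{f}_v^2$ times the fundamental discriminant $1$). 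In the field case, if $\mathcal{O}_{E_v} = \mathbb{Z}_v[\omega]$ with $\omega$ satisfying $\omega^2 - t\omega + n = 0$, then $\mathcal{O}_{E_v}$ has different generated by $\omega - \tensor[^\sigma]{\omega}{} = 2\omega - t$, and $\Lambda_v = \mathbb{Z}_v[\mathcal{f}_v\omega]$ has different generated by $\mathcal{f}_v(2\omega - t)$, whose norm is $\mathcal{f}_v^2(t^2-4n) = D_v$.

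The cleaner route, which I would actually carry out, is to give a uniform argument using the explicit generator. By Lemma \ref{lem:Lambda-Galois}, $\Lambda_v$ is Galois-stable, so the coboundary map $\cbd$ makes sense and for any $\xi\in\Lambda_v$ with $\Lambda_v = \mathbb{Z}_v + \mathbb{Z}_v\xi$ we have $\Lambda_v = \mathbb{Z}_v + \mathbb{Z}_v(\xi - \tensor[^\sigma]{\xi}{})$ after translating by a $\mathbb{Z}_v$-element, so WLOG $\Lambda_v = \mathbb{Z}_v + \mathbb{Z}_v\mathscr{D}_v$ with $\mathscr{D}_v = -\tensor[^\sigma]{\mathscr{D}_v}{}$ (i.e.\ $\mathscr{D}_v$ is purely "imaginary"/trace-zero up to a $\mathbb{Z}_v$-shift — in the field case this is $2\omega-t$ times $\mathcal{f}_v$, in the split case $(1,-1)$ times $\mathcal{f}_v$). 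A standard computation with the trace pairing on the basis $\{1,\mathscr{D}_v\}$ shows $\widehat{\Lambda_v} = \frac{1}{\Nr\mathscr{D}_v}(\mathbb{Z}_v\cdot 2 \cdot \tfrac{1}{2}\,? \ldots)$ — concretely, the dual basis to $\{1,\mathscr{D}_v\}$ with respect to $(x,y)\mapsto\Tr(xy)$ is $\{\tfrac12, \tfrac{\mathscr{D}_v}{2\Nr\mathscr{D}_v}\cdot(\ldots)\}$; after simplifying one finds $\widehat{\Lambda_v} = \mathscr{D}_v^{-1}\Lambda_v$, hence $\mathfrak{D}(\Lambda_v) = (\Lambda_v : \mathscr{D}_v^{-1}\Lambda_v) = \mathscr{D}_v\Lambda_v$ is principal invertible with generator $\mathscr{D}_v$. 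Finally $\Nr\mathscr{D}_v = \mathscr{D}_v\cdot\tensor[^\sigma]{\mathscr{D}_v}{} = -\mathscr{D}_v^2$, and the discriminant of $\Lambda_v$ in the basis $\{1,\mathscr{D}_v\}$ is $\det\begin{pmatrix}\Tr 1 & \Tr\mathscr{D}_v\\ \Tr\mathscr{D}_v & \Tr\mathscr{D}_v^2\end{pmatrix} = \det\begin{pmatrix}2 & 0\\ 0 & \Tr\mathscr{D}_v^2\end{pmatrix} = 2\Tr\mathscr{D}_v^2 = 2\cdot(2\mathscr{D}_v^2) = 4\mathscr{D}_v^2 = -4\Nr\mathscr{D}_v$. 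Hmm — this gives $D_v = -4\Nr\mathscr{D}_v$ rather than $D_v = \Nr\mathscr{D}_v$, so I must be more careful: either the discriminant convention in \S\ref{sec:discriminant-local} already divides by $4$, or the right generator $\mathscr{D}_v$ of the \emph{different} is not the trace-zero element $\xi - \tensor[^\sigma]{\xi}{}$ but rather $\xi - \tensor[^\sigma]{\xi}{}$ only when $2$ is a unit, with a correction at $v\mid 2$; I would reconcile this by recalling that $\mathfrak{D}(\Lambda_v) = (f'(\xi))$ for $f$ the minimal polynomial of a generator $\xi$, giving $\Nr\mathfrak{D}(\Lambda_v) = |\disc f| = |D_v|$ up to sign, and in the completed local ring $\Nr\mathscr{D}_v$ is determined up to units in $\mathbb{Z}_v^\times$ — so the precise normalization in the Definition of $D_v$ is chosen exactly so that equality (not just up to units) holds.

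Concretely the steps are: (1) invoke Lemma \ref{lem:Lambda-Galois} to write $\Lambda_v = \mathbb{Z}_v + \mathcal{f}_v\mathcal{O}_{E_v}$ and pick a Galois-compatible generator $\xi$ of $\mathcal{O}_{E_v}$ over $\mathbb{Z}_v$; (2) compute $\widehat{\mathcal{O}_{E_v}}$ via the trace-dual basis, obtaining $\widehat{\mathcal{O}_{E_v}} = (\xi-\tensor[^\sigma]{\xi}{})^{-1}\mathcal{O}_{E_v}$ (the classical "different of a monogenic order is $(f'(\xi))$"); (3) use $\Lambda_v = \mathbb{Z}_v[\mathcal{f}_v\xi]$ and the same computation with minimal polynomial $f(\mathcal{f}_v^{-1}(X - c))$-type substitution to get $\widehat{\Lambda_v} = (\mathcal{f}_v(\xi-\tensor[^\sigma]{\xi}{}))^{-1}\Lambda_v$, hence $\mathfrak{D}(\Lambda_v) = \mathscr{D}_v\Lambda_v$ with $\mathscr{D}_v := \mathcal{f}_v(\xi-\tensor[^\sigma]{\xi}{})$ — principal and invertible since $\mathscr{D}_v\in E_v^\times$ (it is nonzero: $\xi\notin\mathbb{Q}_v$); (4) compute $\Nr\mathscr{D}_v = \mathcal{f}_v^2\Nr(\xi-\tensor[^\sigma]{\xi}{}) = \mathcal{f}_v^2\cdot(\text{disc of }\mathcal{O}_{E_v}) = D_v$, matching the normalization in the definition of the local discriminant. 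The main obstacle I anticipate is purely bookkeeping at the prime $v=2$ (and in getting the sign/normalization to come out as an honest equality rather than an equality of ideals or up to units), since there the relation between the different, the conductor, and the discriminant is the most delicate; the split-\'etale case is entirely elementary and the odd-residue-characteristic field case is the textbook computation, so those I would dispatch quickly.
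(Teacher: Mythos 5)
Your proposal is correct and is essentially the argument the paper intends: its proof consists of the single remark that $\mathcal{O}_{E_v}$ is a product of DVRs (hence $\Lambda_v$ is monogenic and the codifferent is $(f'(\xi))^{-1}\Lambda_v$ exactly as for a quadratic number ring), and your steps (1)–(4) are precisely that computation carried out, including the split \'etale case. The sign/unit discrepancy you flag ($\Nr(f'(\xi))=-\disc$, and the extra $4$ at $v=2$ in your trace-zero detour) is absorbed by the paper's normalization of $D_v$, which is only fixed up to $\mathbb{Z}_v^\times$ (note it declares $D_v=1$ at unramified maximal places), so no further reconciliation is needed.
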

\begin{remark}
The generator $\mathscr{D}_v$ is well-defined only up to multiplication by a unit of $\Lambda_v$.
\end{remark}

\begin{proof}
Notice that the maximal order $\mathcal{O}_{E_v}$ is a product of DVR's hence a principal ideal ring.
The proof proceeds in the same manner as for an order in a quadratic number field.
\end{proof}

\subsection{Local Maximal Orders in Coordinates}
Fix $v$ a place of $\mathbb{Q}$.
In this section we describe in terms of matrices the elements of the maximal order $g_v \mathbb{O}_v g_v^{-1}<\mathbf{B}(\mathbb{Q}_v)$. The description depends upon whether $v$ splits $\mathbf{B}$ or not.

\subsubsection{Split Case}
If $\mathbf{B}(\mathbb{Q}_v)$ is split then $\mathbf{B}(\mathbb{Q}_v)$ is a matrix algebra and $\epsilon=\tensor[^\sigma]{f}{}f$ for some $f\in E_v^\times$. 

Because $\mathbf{B}(\mathbb{Q}_v)$ is split it is isomorphic to a rank-$2$ matrix algebra. This statement can be strengthened so that the action of the \'{e}tale-algebra $\mathbf{E}(\mathbb{Q}_v)\subset\mathbf{B}(\mathbb{Q}_v)$ on the vector space coincides with multiplication in the \'{e}tale-algebra.
\begin{lem}\label{lem:B(Qv)-acts-on-Ev}
Consider $E_v$ as a $2$-dimensional $\mathbb{Q}_v$-vector space.
If $\mathbf{B}(\mathbb{Q}_v)$ is split then 
there is an isomorphism of $\mathbb{Q}_v$-algebras $\mathbf{B}(\mathbb{Q}_v)\simeq \End_{\mathbb{Q}_v}(E_v)$ such that elements of  $\mathbf{E}(\mathbb{Q}_v)\simeq E_v$ act by multiplication on the \'{e}tale-algebra $E_v$. Moreover, there is an isomorphism of $\mathbb{Q}_v$ vector space $\mathbf{B}(\mathbb{Q}_v)\simeq E_v\oplus E_v$ so that the action of $\mathbf{B}(\mathbb{Q}_v)$ on $E_v$ satisfies  
\begin{equation*}
\forall a\in E_v\colon (\alpha,\beta).a=\alpha\cdot a +\beta \cdot \tensor[^\sigma]{a}{}
\end{equation*}
\end{lem}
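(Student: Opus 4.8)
The plan is to use the general structure of central simple algebras together with the given embedding of the étale algebra to pin down both the module and the action. First I would recall that since $\mathbf{B}(\mathbb{Q}_v)$ is split it is isomorphic to $\End_{\mathbb{Q}_v}(V)$ for some $2$-dimensional $\mathbb{Q}_v$-vector space $V$, and $V$ is the unique (up to isomorphism) simple left module over this matrix algebra. The commutative étale subalgebra $\mathbf{E}(\mathbb{Q}_v)\simeq E_v$ acts $\mathbb{Q}_v$-linearly on $V$, making $V$ an $E_v$-module which is $2$-dimensional over $\mathbb{Q}_v$; since $E_v$ is itself $2$-dimensional over $\mathbb{Q}_v$, a dimension count (and the fact that $E_v$, being étale, is a product of fields over which all modules are free) shows $V\simeq E_v$ as an $E_v$-module. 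Fixing such an identification gives the first claimed isomorphism $\mathbf{B}(\mathbb{Q}_v)\simeq \End_{\mathbb{Q}_v}(E_v)$ in which $E_v$ acts by multiplication.

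Next I would produce the $\mathbb{Q}_v$-vector space decomposition $\mathbf{B}(\mathbb{Q}_v)\simeq E_v\oplus E_v$ and identify the action. The key observation is that $\End_{\mathbb{Q}_v}(E_v)$, viewed as a module over $E_v\otimes_{\mathbb{Q}_v} E_v$ acting on the left and right, decomposes according to the Galois action: $E_v\otimes_{\mathbb{Q}_v}E_v\simeq E_v\times E_v$ via $(x\otimes y)\mapsto(xy,x\,\tensor[^\sigma]{y}{})$, so that $\End_{\mathbb{Q}_v}(E_v)$ splits into the $E_v$-linear endomorphisms and the $\sigma$-semilinear ones. Concretely, every $\mathbb{Q}_v$-linear endomorphism of $E_v$ is uniquely of the form $a\mapsto \alpha a+\beta\,\tensor[^\sigma]{a}{}$ with $\alpha,\beta\in E_v$ (this is exactly the statement that $\{1,\sigma\}$ is an $E_v$-basis of $\operatorname{Hom}_{\mathbb{Q}_v}(E_v,E_v)$ as a left $E_v$-module, which follows from linear independence of characters / the normal basis theorem in the field case and by a direct check in the split case $E_v\simeq\mathbb{Q}_v\times\mathbb{Q}_v$). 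Sending the endomorphism to the pair $(\alpha,\beta)$ gives the desired $\mathbb{Q}_v$-linear isomorphism $\mathbf{B}(\mathbb{Q}_v)\simeq E_v\oplus E_v$, and by construction the action on $E_v$ is $(\alpha,\beta).a=\alpha\cdot a+\beta\cdot\tensor[^\sigma]{a}{}$.

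Finally I would check compatibility with the coordinate description of Proposition \ref{prop:rational-points-local}: under the identification $\mathbf{B}_{E_v}\simeq\mathbf{M}_{2\times2,E_v}$ sending $\widetilde{\mathbf{T}}_{E_v}$ to the diagonal, the diagonal entries $(\alpha,\tensor[^\sigma]{\alpha}{})$ should correspond to the $E_v$-linear part (multiplication by $\alpha$ on $E_v$, which after diagonalizing $E_v\otimes E_v$ is the diagonal matrix $\operatorname{diag}(\alpha,\tensor[^\sigma]{\alpha}{})$), and the anti-diagonal entry $\beta$ should correspond to the semilinear part $a\mapsto\beta\,\tensor[^\sigma]{a}{}$, up to the scalar $\epsilon=\tensor[^\sigma]{f}{}f$ coming from the normalization of $\theta$. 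I expect the main obstacle to be bookkeeping: making the two isomorphisms (the abstract module-theoretic one and the explicit matrix one from Proposition \ref{prop:rational-points-local}) genuinely compatible, i.e.\ tracking how $f$ and $\epsilon$ enter when one passes between the "multiplication on $E_v$" picture and the matrix picture, and verifying that the resulting formula for the action is exactly $(\alpha,\beta).a=\alpha a+\beta\,\tensor[^\sigma]{a}{}$ with no stray twist. The underlying algebra — semisimplicity, uniqueness of the simple module, the $E_v$-basis $\{1,\sigma\}$ of $\operatorname{Hom}$ — is all standard and I would only sketch it.
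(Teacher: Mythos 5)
Your proposal is correct, but it takes a genuinely different and more abstract route than the paper. The paper's proof is a direct coordinate computation: starting from the matrix description of Proposition \ref{prop:rational-points-local}, it \emph{defines} the decomposition $\mathbf{B}(\mathbb{Q}_v)\simeq E_v\oplus E_v$ by splitting a matrix into its diagonal part $(\alpha,\tensor[^\sigma]{\alpha}{})$ and anti-diagonal part, embeds $E_v\hookrightarrow E_v\times E_v$ explicitly by $a\mapsto (f\cdot a,\tensor[^\sigma]{a}{})$ using $\epsilon=\tensor[^\sigma]{f}{}f$, and then verifies by hand that the standard left matrix action preserves this copy of $E_v$ and is given by $(\alpha,\beta).a=\alpha a+\beta\,\tensor[^\sigma]{a}{}$; the algebra isomorphism $\mathbf{B}(\mathbb{Q}_v)\simeq\End_{\mathbb{Q}_v}(E_v)$ then falls out from faithfulness and a dimension count. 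Your route (uniqueness of the simple module over a split quaternion algebra, identification $V\simeq E_v$, and the decomposition of $\End_{\mathbb{Q}_v}(E_v)$ into $E_v$-linear and $\sigma$-semilinear parts) proves the lemma as literally stated more conceptually. Two caveats. First, your parenthetical that all modules over a product of fields are free is not quite right; in the case $E_v\simeq\mathbb{Q}_v\times\mathbb{Q}_v$ you must additionally use that the $E_v$-action on $V$ is \emph{faithful} (automatic, since $E_v\hookrightarrow\End_{\mathbb{Q}_v}(V)$) to exclude $V\simeq\mathbb{Q}_v^2\times 0$ and conclude $V\simeq E_v$. Second, the ``bookkeeping'' you defer --- matching your abstract pair $(\alpha,\beta)$ with the actual matrix entries of Proposition \ref{prop:rational-points-local}, including the role of $f$ --- is not optional for the paper: later results (e.g.\ the computation of $\End_{\mathbb{Z}_v}(\Lambda_v)$ and the archimedean norm formula) use precisely that identification, and carrying it out is essentially the whole content of the paper's proof. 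So your approach buys conceptual clarity for the existence statement, while the paper's buys the explicit coordinates that the rest of Section 5 consumes.
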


\begin{proof}
Using Proposition \ref{prop:rational-points-local} we can write $\mathbf{B}(\mathbb{Q}_v)\simeq E_v\oplus E_v$ in the following way
\begin{equation}\label{eq:B(Qv)=Ev+Ev}
(\alpha,\beta)\mapsto 
\begin{pmatrix}\alpha & 0 \\ 0&\tensor[^\sigma]{\alpha}{}\end{pmatrix}
+\begin{pmatrix}0 & \epsilon \cdot \beta/\tensor[^\sigma]{f}{} \\ \tensor[^\sigma]{\beta}{}/f&0\end{pmatrix}
=
\begin{pmatrix}\alpha & 0 \\ 0&\tensor[^\sigma]{\alpha}{}\end{pmatrix}
+\begin{pmatrix}0 & \beta \cdot f \\ \tensor[^\sigma]{\beta}{}/f&0\end{pmatrix}
\end{equation}

Let $\mathbf{B}(E_v)=\mathbf{M}_{2\times 2}(E_v)$ act on $E_v\times E_v$ in the usual way on the left. We embed $E_v\hookrightarrow E_v\times E_v$ by
\begin{equation*}
a\mapsto \begin{pmatrix}
f \cdot a \\ \tensor[^\sigma]{a}{}
\end{pmatrix}
\end{equation*}
and consider the action of $\mathbf{B}(\mathbb{Q}_v)\subset \mathbf{M}_{2\times 2}(E_v)$ on $\Img\left(E_v \hookrightarrow E_v\times E_v \right)$.

The subspace $E_v$ in $\mathbf{B}(\mathbb{Q}_v)$ corresponding to the first coordinate in \eqref{eq:B(Qv)=Ev+Ev} acts by multiplication $\alpha. a=\alpha a$ and the subspace corresponding to the second coordinate in $\eqref{eq:B(Qv)=Ev+Ev}$ acts by $\beta.a= \beta\cdot \tensor[^\sigma]{a}{}$. 

Thus $\mathbf{B}(\mathbb{Q}_v)$ stabilizes $\Img\left(E_v \hookrightarrow E_v\times E_v \right)$ and acts faithfully on it. By comparing dimensions over $\mathbb{Q}_v$ we see that this actions is an isomorphism of algebras $\mathbf{B}(\mathbb{Q}_v)\simeq \End_{\mathbb{Q}_v}(E_v)$. Because the subalgebra $\mathbf{E}(\mathbb{Q}_v)$ is equal to the first coordinate in \eqref{eq:B(Qv)=Ev+Ev} it acts by ring multiplication as required.
\end{proof}

\begin{lem}\label{lem:local-order-split-0}
Let $v\neq\infty$.  If $\mathbf{B}(\mathbb{Q}_v)$ is split then in terms of the representation in Proposition \ref{prop:rational-points-local}

\begin{equation}\label{eq:End(Lambda_v)}
{\End}_{\mathbb{Z}_v}(\Lambda_v)\simeq \left\{\begin{pmatrix}
\alpha & \beta f \\ 
\tensor[^\sigma]{\beta}{}/f & \tensor[^\sigma]{\alpha}{}
\end{pmatrix} \relmiddle{|} \alpha\in\widehat{\Lambda_v}, \beta-\tensor[^\sigma]{\alpha}{}\in\Lambda_v \right\}
\end{equation}
\end{lem}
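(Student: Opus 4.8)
The strategy is to identify $\Lambda_v$ with its image under the isomorphism $\mathbf{B}(\mathbb{Q}_v)\simeq\End_{\mathbb{Q}_v}(E_v)$ of Lemma \ref{lem:B(Qv)-acts-on-Ev}, compute the endomorphism ring of the $\mathbb{Z}_v$-lattice $\Lambda_v\subset E_v$ directly, and then translate the answer back into the matrix coordinates of Proposition \ref{prop:rational-points-local}. First I would recall from Lemma \ref{lem:Lambda-Galois} that $\Lambda_v=\mathbb{Z}_v+\mathcal{f}_v\mathcal{O}_{E_v}$ is Galois-stable, so the $\mathbb{Q}_v$-linear endomorphisms of $E_v$ preserving $\Lambda_v$ form the order $g_v\mathbb{O}_vg_v^{-1}$ we want (by the very definition $\Lambda_v=\mathbf{E}(\mathbb{Q}_v)\cap g_v\mathbb{O}_vg_v^{-1}$ together with the fact that the maximal order of the split algebra at $v$ is exactly the endomorphism ring of a lattice, cf.\ the discussion of split non-archimedean places in \S\ref{sec:maximal-order-B}).

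The core computation is elementary linear algebra over $\mathbb{Z}_v$. Writing an element of $\mathbf{B}(\mathbb{Q}_v)$ as $(\alpha,\beta)$ acting by $a\mapsto \alpha a+\beta\,\tensor[^\sigma]{a}{}$, I would determine precisely which pairs $(\alpha,\beta)\in E_v\times E_v$ send $\Lambda_v$ into itself. For $a\in\Lambda_v$ both $a$ and $\tensor[^\sigma]{a}{}$ lie in $\Lambda_v$ (Galois stability), so the condition is that $\alpha a+\beta\,\tensor[^\sigma]{a}{}\in\Lambda_v$ for all $a\in\Lambda_v$. Testing on $a=1$ gives $\alpha+\beta\in\Lambda_v$; testing on $a=\mathscr{D}_v$ a generator of the different (Lemma \ref{lem:inverse-different}) and using that $\widehat{\Lambda_v}=\mathscr{D}_v^{-1}\Lambda_v$ pins down $\alpha$ to lie in $\widehat{\Lambda_v}$. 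More carefully: $\Lambda_v=\mathbb{Z}_v\oplus\mathbb{Z}_v\omega$ for a suitable $\omega$ with $\tensor[^\sigma]{\omega}{}=t-\omega$ ($t=\Tr\omega\in\mathbb{Z}_v$), and one checks that $(\alpha,\beta)$ stabilizes $\Lambda_v$ iff $\alpha+\beta\in\Lambda_v$ and $\alpha\omega+\beta\,\tensor[^\sigma]{\omega}{}\in\Lambda_v$, which after substituting $\tensor[^\sigma]{\omega}{}=t-\omega$ becomes $(\alpha-\beta)\omega\in\Lambda_v$ modulo $\Lambda_v$-translation, i.e.\ $\alpha-\beta\in(\Lambda_v:\omega\Lambda_v)$. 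Combining and using $\mathfrak{D}(\Lambda_v)=(\omega-\tensor[^\sigma]{\omega}{})\Lambda_v$ up to units, one extracts $\alpha\in\widehat{\Lambda_v}$ and $\beta-\tensor[^\sigma]{\alpha}{}\in\Lambda_v$ (the appearance of $\tensor[^\sigma]{\alpha}{}$ rather than $\alpha$ is forced by the shape of the matrix in Proposition \ref{prop:rational-points-local}, where the bottom-left and top-right entries involve $\tensor[^\sigma]{\beta}{}$ and $\beta$ while the diagonal has $\alpha,\tensor[^\sigma]{\alpha}{}$). Finally I would pass to matrix form using \eqref{eq:B(Qv)=Ev+Ev}: the pair $(\alpha,\beta)$ corresponds to $\left(\begin{smallmatrix}\alpha & \beta f\\ \tensor[^\sigma]{\beta}{}/f & \tensor[^\sigma]{\alpha}{}\end{smallmatrix}\right)$, giving exactly the right-hand side of \eqref{eq:End(Lambda_v)}.

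The main obstacle is bookkeeping the exact form of the off-diagonal constraint — getting $\beta-\tensor[^\sigma]{\alpha}{}\in\Lambda_v$ rather than some other near-miss such as $\beta-\alpha\in\Lambda_v$ or $\alpha-\tensor[^\sigma]{\beta}{}\in\Lambda_v$. This hinges on a careful choice of generator $\omega$ of $\Lambda_v$ over $\mathbb{Z}_v$, tracking how $\sigma$ acts, and correctly identifying $\widehat{\Lambda_v}$ with $\mathscr{D}_v^{-1}\Lambda_v$ via Lemma \ref{lem:inverse-different}; the factor $f$ with $\epsilon=\tensor[^\sigma]{f}{}f$ must be carried through the identification \eqref{eq:B(Qv)=Ev+Ev} consistently. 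Once the membership conditions on $(\alpha,\beta)$ are established, the equality of the two sides of \eqref{eq:End(Lambda_v)} is immediate by comparing the faithful actions on $E_v$, since an endomorphism is determined by its values on a $\mathbb{Z}_v$-basis of $\Lambda_v$ and both sides describe the same stabilizer.
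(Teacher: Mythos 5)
Your overall route is the paper's: identify $\mathbf{B}(\mathbb{Q}_v)$ with $\End_{\mathbb{Q}_v}(E_v)$ via Lemma \ref{lem:B(Qv)-acts-on-Ev}, characterize the pairs $(\alpha,\beta)$ whose action $a\mapsto\alpha a+\beta\,\tensor[^\sigma]{a}{}$ preserves $\Lambda_v$, and convert back to matrices via \eqref{eq:B(Qv)=Ev+Ev}. The closing step is also the same: once $\alpha\in\widehat{\Lambda_v}$ is known, the identity $\alpha l+\beta\,\tensor[^\sigma]{l}{}=\Tr(\alpha l)+(\beta-\tensor[^\sigma]{\alpha}{})\,\tensor[^\sigma]{l}{}$ together with Galois-stability of $\Lambda_v$ reduces the stabilizer condition to $\beta-\tensor[^\sigma]{\alpha}{}\in\Lambda_v$, and the reverse inclusion is a direct check.

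Where you differ is in how you obtain $\alpha\in\widehat{\Lambda_v}$, and the explicit manipulation you write down there is incorrect. The paper never chooses a basis: it notes that $x\cdot l\in\End_{\mathbb{Z}_v}(\Lambda_v)$ for every $l\in\Lambda_v$, that a maximal order consists of integral elements, and hence that $\Trd(x\cdot l)=\Tr(\alpha l)\in\mathbb{Z}_v$ for all $l\in\Lambda_v$ --- which is verbatim the definition of $\alpha\in\widehat{\Lambda_v}$. Your basis computation can be repaired, but not through the identity you state: substituting $\tensor[^\sigma]{\omega}{}=t-\omega$ into $\alpha\omega+\beta\,\tensor[^\sigma]{\omega}{}$ gives $(\alpha-\beta)\omega+t\beta$, and the $t\beta$ term does not vanish ``modulo $\Lambda_v$-translation.'' Reducing instead by $(\alpha+\beta)\omega\in\Lambda_v$ yields $-\beta(\omega-\tensor[^\sigma]{\omega}{})\in\Lambda_v$, i.e.\ $\beta\in\mathscr{D}_v^{-1}\Lambda_v=\widehat{\Lambda_v}$ (and then $\alpha\in\widehat{\Lambda_v}$ since $\alpha+\beta\in\Lambda_v\subseteq\widehat{\Lambda_v}$); it does not yield $(\alpha-\beta)\omega\in\Lambda_v$ nor $\alpha-\beta\in(\Lambda_v:\omega\Lambda_v)$. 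Moreover, any attempt to recover $\alpha$ by combining $\alpha-\beta$ with $\alpha+\beta$ costs a factor of $2$ and fails at $v=2$, so the precise shape of this reduction is not mere bookkeeping. With that one step corrected your plan goes through; the paper's reduced-trace argument is shorter and basis-free.
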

\begin{proof}
Because $\mathbf{E}(\mathbb{Q}_v)$ acts on $E_v$ by ring multiplication any $l\in\Lambda_v\subset\mathbf{E}(\mathbb{Q}_v)$ belongs to $\End_{\mathbb{Z}_v}(\Lambda_v)$. Thus $x\cdot l\in\End_{\mathbb{Z}_v}(\Lambda_v)$ for  any $x\in\End_{\mathbb{Z}_v}(\Lambda_v)$ and $l\in\Lambda_v\subset \mathbf{E}(\mathbb{Q}_v)$.

Because the ring $\End_{\mathbb{Z}_v}(\Lambda_v)$ is a maximal order in $\mathbf{B}(\mathbb{Q}_v)$ each element in it is integral. Thus for any $x\in\End_{\mathbb{Z}_v}(\Lambda_v)$
\begin{equation}\label{eq:Trd-integral}
\forall l\in\Lambda_v\subset \mathbf{E}(\mathbb{Q}_v)\colon 
\Trd(x\cdot l)\in\mathbb{Z}_v
\end{equation}
Writing $x$ above as $x=(\alpha,\beta)$ using \eqref{eq:B(Qv)=Ev+Ev} equation \eqref{eq:Trd-integral} amounts to the statement  that $\alpha\in\widehat{\Lambda_v}$.

An element $x=(\alpha,\beta)$ belongs to $\End_{\mathbb{Z}_v}(\Lambda_v)$ if and only if
\begin{equation*}
\forall l\in\Lambda_v\colon \Lambda_v \ni \alpha l+ \beta \tensor[^\sigma]{l}{}=
\Tr(\alpha l)+(\beta-\tensor[^\sigma]{\alpha}{})\tensor[^\sigma]{l}{}
\end{equation*}
which can be seen by Lemma \ref{lem:Lambda-Galois} to be equivalent to $\beta-\tensor[^\sigma]{\alpha}{}\in\Lambda_v$. This proves that the endomorphism ring is contained in the right hand side of \eqref{eq:End(Lambda_v)}. The reverse inclusion follows by checking directly that each matrix in the right hand side of \eqref{eq:End(Lambda_v)} preserves $\Lambda_v$.
\end{proof}

\begin{prop}\label{prop:local-order-split}
If $v\neq\infty$ then there is some $\tau_v\in E_v^\times$ such that
\begin{equation*}
g_v \mathbb{O}_v g_v^{-1} = \left\{\begin{pmatrix} 
\alpha & \beta\tau_v \\ 
\tensor[^\sigma]{\beta}{}/\tau_v & \tensor[^\sigma]{\alpha}{}
\end{pmatrix} \relmiddle{|} \alpha\in \widehat{\Lambda_v}, \beta-\tensor[^\sigma]{\alpha}{}\in\Lambda_v \right\}
\end{equation*}
\end{prop}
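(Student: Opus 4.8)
The plan is to reduce the statement for the general maximal order $g_v\mathbb{O}_v g_v^{-1}$ to the explicit description of $\End_{\mathbb{Z}_v}(\Lambda_v)$ already obtained in Lemma \ref{lem:local-order-split-0}, by observing that the two orders differ by conjugation by an element of $\widetilde{\mathbf{T}}(\mathbb{Q}_v)$. Since we are in the split case, Lemma \ref{lem:B(Qv)-acts-on-Ev} lets me identify $\mathbf{B}(\mathbb{Q}_v)$ with $\End_{\mathbb{Q}_v}(E_v)$ in such a way that $\mathbf{E}(\mathbb{Q}_v)=E_v$ acts by multiplication; the torus $\widetilde{\mathbf{T}}(\mathbb{Q}_v)=E_v^\times$ then acts by the multiplication operators $m_t$, and in the coordinates of Proposition \ref{prop:rational-points-local} one has $m_t=\begin{pmatrix} t & 0 \\ 0 & \tensor[^\sigma]{t}{}\end{pmatrix}$. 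Under this identification every maximal $\mathbb{Z}_v$-order of $\mathbf{B}(\mathbb{Q}_v)$ is of the form $\End_{\mathbb{Z}_v}(L)$ for a full $\mathbb{Z}_v$-lattice $L\subset E_v$, so I would write $g_v\mathbb{O}_v g_v^{-1}=\End_{\mathbb{Z}_v}(L)$.

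Next I would translate the defining property $g_v\mathbb{O}_v g_v^{-1}\cap\mathbf{E}(\mathbb{Q}_v)=\Lambda_v$ into a statement about $L$. Because $\mathbf{E}(\mathbb{Q}_v)$ is maximal commutative, it equals its own centralizer in $\mathbf{B}(\mathbb{Q}_v)$, so $\End_{\mathbb{Z}_v}(L)\cap\mathbf{E}(\mathbb{Q}_v)=\{m_e \mid e\in E_v,\ eL\subseteq L\}$; hence the multiplier ring $\{e\in E_v \mid eL\subseteq L\}$ equals $\Lambda_v$, i.e.\ $L$ is a \emph{proper} fractional $\Lambda_v$-lattice. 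Here I would invoke that a quadratic $\mathbb{Z}_v$-order is monogenic (a hypersurface), hence Gorenstein, so a fractional ideal whose multiplier ring is exactly $\Lambda_v$ is invertible; and that $\Lambda_v$ is semilocal, so invertible fractional ideals are principal. Therefore $L=t\Lambda_v$ for some $t\in E_v^\times$, whence $g_v\mathbb{O}_v g_v^{-1}=\End_{\mathbb{Z}_v}(t\Lambda_v)=m_t\,\End_{\mathbb{Z}_v}(\Lambda_v)\,m_t^{-1}$ inside $\mathbf{B}(\mathbb{Q}_v)$.

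Finally I would feed in Lemma \ref{lem:local-order-split-0}, which already presents $\End_{\mathbb{Z}_v}(\Lambda_v)$ in the required shape with $\tau_v$ replaced by $f$ (where $\epsilon=\tensor[^\sigma]{f}{}f$), and then conjugate by $m_t=\begin{pmatrix} t & 0\\ 0 & \tensor[^\sigma]{t}{}\end{pmatrix}$: conjugation fixes the diagonal entries $\alpha,\tensor[^\sigma]{\alpha}{}$, multiplies the $(1,2)$-entry by $t/\tensor[^\sigma]{t}{}$ and the $(2,1)$-entry by $\tensor[^\sigma]{t}{}/t$, so $g_v\mathbb{O}_v g_v^{-1}$ takes exactly the asserted form with $\tau_v:=f\,t/\tensor[^\sigma]{t}{}\in E_v^\times$ (note $\Nr(\tau_v)=\Nr(f)=\epsilon$, as it must be for the set to lie in $\mathbf{B}(\mathbb{Q}_v)$). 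The only step that is not purely formal is the claim that the multiplier-ring condition forces $L$ to be $E_v^\times$-homothetic to $\Lambda_v$; I expect this — invertibility of proper ideals of Gorenstein quadratic orders plus triviality of $\Pic$ over a semilocal ring — to be the main, though standard, point, and one could replace the appeal to Gorenstein-ness by a direct computation using the basis $\Lambda_v=\mathbb{Z}_v+\mathcal{f}_v\mathcal{O}_{E_v}$ if a self-contained argument is wanted.
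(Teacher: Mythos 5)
Your proposal is correct and follows essentially the same route as the paper: identify $g_v\mathbb{O}_v g_v^{-1}$ with $\End_{\mathbb{Z}_v}(\mathfrak{L})$ for a full lattice $\mathfrak{L}\subset E_v$ whose multiplier ring is $\Lambda_v$, use monogenicity of $\Lambda_v$ (the paper cites the proof of Proposition 2.1 of \cite{ELMVPeriodic}, which is exactly your Gorenstein-plus-semilocal argument) to conclude $\mathfrak{L}=l\Lambda_v$ is principal, and then conjugate the description of $\End_{\mathbb{Z}_v}(\Lambda_v)$ from Lemma \ref{lem:local-order-split-0} by $l$, yielding $\tau_v=\frac{l}{\tensor[^\sigma]{l}{}}f$.
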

\begin{remark}
The condition $\beta-\tensor[^\sigma]{\alpha}{}\in\Lambda_v$ can be rewritten in the equivalent more symmetric form $\alpha+\beta\in \Lambda_v$.
\end{remark}
\begin{proof}
Maximal $\mathbb{Z}_v$-orders in matrix algebras are endomorphism rings of $\mathbb{Z}_v$-lattices, cf.\ \cite{Reiner}. Because of the isomorphism from Lemma \ref{lem:B(Qv)-acts-on-Ev} we know that there is a $\mathbb{Z}_v$-lattice $\mathfrak{L}\subset E_v$ of full rank such that $g_v \mathbb{O}_v g_v^{-1}=\End_{\mathbb{Z}_v}(\mathfrak{L})$ and 
\begin{equation*}
\Lambda_v=\left\{a\in E_v \mid a\mathfrak{L}\subset\mathfrak{L} \right\}
\end{equation*}
In other words, $\mathfrak{L}$ is a proper fractional ideal of $\Lambda_v$. 

The ring $\Lambda_v$ is monogenic by the same argument as for orders in quadratic number rings so \cite[Proof of Proposition 2.1]{ELMVPeriodic} applies and $\mathfrak{L}=l\cdot\Lambda_v$ is an invertible principle fractional ideal with some $l\in E_v^\times$. The element $l\in\mathbf{E}(\mathbb{Q}_v)\subset\mathbf{B}(\mathbb{Q}_v)$ sends $\Lambda_v$ to $\mathfrak{L}$ hence 
\begin{equation*}
g_v \mathbb{O}_v g_v^{-1}={\End}_{\mathbb{Z}_v}(\mathfrak{L})
=l\cdot {\End}_{\mathbb{Z}_v}(\Lambda_v)\cdot l^{-1}
\end{equation*}

The proposition follows from Lemma \ref{lem:local-order-split-0} by setting $\tau_v=\frac{l}{\tensor[^\sigma]{l}{}}f$.
\end{proof}

\begin{prop}\label{prop:tau-a.e.-trivial}
The element $\tau_v\in E_v^\times$ from Proposition \ref{prop:local-order-split} above belongs to $\Lambda_v^\times$ for almost all $v$.
\end{prop}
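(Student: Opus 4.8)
The plan is to trace through the construction of $\tau_v$ in the proof of Proposition \ref{prop:local-order-split} and check that each ingredient is trivial (a unit of $\Lambda_v$, or congruent to $1$) for all but finitely many $v$. Recall that $\tau_v = \frac{l}{\tensor[^\sigma]{l}{}} f$, where $f \in E_v^\times$ satisfies $\epsilon = \tensor[^\sigma]{f}{} f$ (so $f$ controls how the fixed global element $\epsilon \in \mathbb{Q}^\times$ is written as a norm locally), and $l \in E_v^\times$ is a generator of the proper fractional $\Lambda_v$-ideal $\mathfrak{L}$ with $g_v \mathbb{O}_v g_v^{-1} = \End_{\mathbb{Z}_v}(\mathfrak{L})$. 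So there are two sources of possible nontriviality to control: the norm-datum $f$, and the ideal generator $l$.

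First I would handle $f$. The element $\epsilon$ is a fixed nonzero rational number, so $\epsilon \in \mathbb{Z}_v^\times$ for all $v$ outside the finite set of places dividing the numerator or denominator of $\epsilon$. At such a place, if moreover $v$ is split in $E$ (which holds at all but finitely many places where $E$ is inert, but in any case the split-case hypothesis of Proposition \ref{prop:local-order-split} is what we are in) and $\Lambda_v = \mathcal{O}_{E_v}$, we may take $f \in \mathcal{O}_{E_v}^\times \subseteq \Lambda_v^\times$: indeed a unit of $\mathbb{Z}_v$ that is a local norm from the unramified (or split) étale algebra $E_v$ is a norm of a unit, by the standard surjectivity of the norm on units for unramified extensions of local fields (and trivially in the split case). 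So $f \in \Lambda_v^\times$ for almost all $v$.

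Next I would handle $l$. By Proposition \ref{prop:Lambda-ae-maximal}, $\Lambda_v = \mathcal{O}_{E_v}$ for almost all $v$, which is a principal ideal ring (a finite product of DVRs). For such $v$ with $g_v \in K_v$ — which again holds for almost all $v$ since $g = (g_v)_v$ lies in the restricted product with respect to the $K_v$ — we have $g_v \mathbb{O}_v g_v^{-1} = \mathbb{O}_v$, so $\mathfrak{L}$ is a fractional ideal fixed by $\End_{\mathbb{Z}_v}(\mathfrak{L}) = \mathbb{O}_v$; unwinding Lemma \ref{lem:B(Qv)-acts-on-Ev}, when $g_v \in K_v$ the lattice $\mathfrak{L}$ may be taken to be $\Lambda_v$ itself (up to homothety, and homothety does not change $\tau_v$ modulo $\Lambda_v^\times$ since a scalar $c \in \mathbb{Q}_v^\times$ contributes $c/\tensor[^\sigma]{c}{} = 1$). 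Hence we may take $l \in \Lambda_v^\times$, so $\frac{l}{\tensor[^\sigma]{l}{}} = \cbd(l) \in \Lambda_v^\times$. Combining the two paragraphs: for all $v$ outside the finite bad set (places dividing $\disc E$ or the conductor of $\Lambda$, places dividing $\epsilon$, and places where $g_v \notin K_v$), both $f$ and $\frac{l}{\tensor[^\sigma]{l}{}}$ lie in $\Lambda_v^\times$, hence so does $\tau_v$.

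I expect the main subtlety to be the bookkeeping around homothety and the precise normalization: $\mathfrak{L}$ is only well-defined up to homothety, $l$ only up to $\Lambda_v^\times$, and $f$ only up to the kernel of the norm on units — one must check these ambiguities all collapse to a $\Lambda_v^\times$-ambiguity in $\tau_v$ (which they do, precisely because $\cbd$ kills $\mathbb{Q}_v^\times$ and the statement is asserted only up to units, as the remark after Lemma \ref{lem:inverse-different} and the indeterminacy of $\tau_v$ in Proposition \ref{prop:local-order-split} already indicate). The rest is a routine ``almost all places'' argument citing Proposition \ref{prop:Lambda-ae-maximal} and the definition of the restricted product.
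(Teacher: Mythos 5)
Your treatment of $f$ is fine: for almost all $v$ one has $\epsilon\in\mathbb{Z}_v^\times$ and $\Lambda_v=\mathcal{O}_{E_v}$ unramified or split, so a unit that is a norm is a norm of a unit and $f$ may be taken in $\Lambda_v^\times$. The gap is in your treatment of $l$. You assert that ``unwinding Lemma \ref{lem:B(Qv)-acts-on-Ev}, when $g_v\in K_v$ the lattice $\mathfrak{L}$ may be taken to be $\Lambda_v$ itself up to homothety,'' but nothing in that lemma or in the hypothesis $g_v\in K_v$ implies this. The conditions you actually have at your disposal are $\End_{\mathbb{Z}_v}(\mathfrak{L})=\mathbb{O}_v$ and $\{a\in E_v\mid a\mathfrak{L}\subseteq\mathfrak{L}\}=\Lambda_v=\mathcal{O}_{E_v}$; these say only that $\mathfrak{L}=l\,\mathcal{O}_{E_v}$ is a principal proper fractional ideal, and \emph{every} choice of $l\in E_v^\times$ satisfies them. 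In the split case $E_v\simeq\mathbb{Q}_v\times\mathbb{Q}_v$ the homothety classes $l\,\mathcal{O}_{E_v}\bmod \mathbb{Q}_v^\times\mathcal{O}_{E_v}^\times$ form a $\mathbb{Z}$-torsor (the vertices of the apartment of the torus in the Bruhat--Tits tree), and $\cbd(l)=l/\tensor[^\sigma]{l}{}$ --- which is exactly the non-unit part of $\tau_v$ --- records which vertex $\mathbb{O}_v$ occupies. No purely local information determines this vertex: the coordinates of Definition \ref{def:BE-ME-iso} were chosen to diagonalize the torus, independently of $\mathbb{O}$. So your argument assumes precisely the point at issue.

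The missing ingredient is global. The paper closes this gap by comparing two full-rank $\mathbb{Z}$-lattices in the $\mathbb{Q}$-vector space $\mathbf{B}(\mathbb{Q})$: the maximal order $\mathbb{O}$ and the explicit lattice $\left\{\begin{pmatrix} a & \epsilon b \\ \tensor[^\sigma]{b}{} & \tensor[^\sigma]{a}{} \end{pmatrix} \mid a,b\in\mathcal{O}_E\right\}$ furnished by Proposition \ref{prop:rational-points-global}. Any two such lattices have equal $v$-adic closures for almost all $v$; at those $v$ where in addition $g_v\in\mathbb{O}_v^\times$, $\Lambda_v=\widehat{\Lambda_v}=\mathcal{O}_{E_v}$ and $\epsilon\in\mathbb{Z}_v^\times$, comparing with the description in Proposition \ref{prop:local-order-split} forces $\tau_v\in\Lambda_v^\times$. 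You should replace your second paragraph with this lattice-comparison argument (or some equivalent global finiteness statement); the rest of your bookkeeping about homothety and the $\Lambda_v^\times$-ambiguity of $\tau_v$ is correct but does not substitute for it.
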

\begin{proof}
This follows from the fact that any two $\mathbb{Z}$-lattices of full rank in a $\mathbb{Q}$-vector space are equivalent at almost all $v$. 
The order $\mathbb{O}$ is a full rank $\mathbb{Z}$-lattice in the vector space $\mathbf{B}(\mathbb{Q})$. The following subset of $\mathbf{B}(\mathbb{Q})$
\begin{equation*}
\left\{\begin{pmatrix}
a & \epsilon b  \\
\tensor[^\sigma]{b}{} & \tensor[^\sigma]{a}{}
\end{pmatrix}  \relmiddle| a,b\in \mathcal{O}_E \right\} 
\end{equation*}
is also a $\mathbb{Z}$-lattice of full rank by Proposition \ref{prop:rational-points-global} and so it is locally equivalent to $\mathbb{O}$ for almost all $v$. The claim follows by observing that for almost all $v$ we have $g_v\in\mathbb{O}_v^\times$, $\widehat{\Lambda_v}=\Lambda_v=\mathcal{O}_{E_v}$ and $\epsilon\in\mathbb{Z}_v^\times$.
\end{proof}

\begin{prop}\label{prop:local-order-split-archimedean}
If $v=\infty$ then
\begin{equation*}
\left\| g_\infty^{-1}
\begin{pmatrix} 
\alpha & \beta f \\ 
\tensor[^\sigma]{\beta}{}/f & \tensor[^\sigma]{\alpha}{}
\end{pmatrix}
g_\infty \right\|_\infty = |\alpha|+|\beta|
\end{equation*}
In particular,
\begin{align*}
g_\infty \widetilde{\Omega}_\infty g_\infty^{-1}
&=  \left\{\begin{pmatrix} 
\alpha & \beta f \\ 
\tensor[^\sigma]{\beta}{}/f & \tensor[^\sigma]{\alpha}{}
\end{pmatrix} \relmiddle{|} \alpha,\beta\in\mathbb{C}, |\alpha|+|\beta|\leq 2, |\alpha|-|\beta|\geq 1/2 \right\}\\
g_\infty\mathbb{O}_\infty g_\infty^{-1}
&=  \left\{\begin{pmatrix} 
\alpha & \beta f \\ 
\tensor[^\sigma]{\beta}{}/f & \tensor[^\sigma]{\alpha}{}
\end{pmatrix} \relmiddle{|} \alpha,\beta\in\mathbb{C}, |\alpha|+|\beta|\leq 1\right\}
\end{align*}
\end{prop}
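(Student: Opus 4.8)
The plan is to pull the computation back, via Lemma \ref{lem:B(Qv)-acts-on-Ev}, to the explicit action of $\mathbf{B}(\mathbb{R})$ on the \'etale algebra $\mathbf{E}(\mathbb{R})$. Since here $v=\infty$ and $\mathbf{T}$ is anisotropic over $\mathbb{R}$, we have $E_\infty=\mathbf{E}(\mathbb{R})\cong\mathbb{C}$ with $\sigma$ acting as complex conjugation $\alpha\mapsto\bar\alpha=\tensor[^\sigma]{\alpha}{}$, and $\epsilon=\tensor[^\sigma]{f}{}f=|f|^{2}>0$. By Lemma \ref{lem:B(Qv)-acts-on-Ev} there is an algebra isomorphism $\phi\colon\mathbf{B}(\mathbb{R})\xrightarrow{\ \sim\ }\End_{\mathbb{R}}(\mathbb{C})$ under which $\mathbf{E}(\mathbb{R})=\mathbb{C}$ acts by multiplication and, by \eqref{eq:B(Qv)=Ev+Ev}, the matrix $\left(\begin{smallmatrix}\alpha & \beta f\\ \tensor[^\sigma]{\beta}{}/f & \tensor[^\sigma]{\alpha}{}\end{smallmatrix}\right)$ --- which is the vector $(\alpha,\beta)\in\mathbb{C}\oplus\mathbb{C}$ --- acts on $\mathbb{C}$ by $a\mapsto\alpha a+\beta\bar a$.

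First I would compute the operator norm of the real-linear map $a\mapsto\alpha a+\beta\bar a$ on $\mathbb{C}$ equipped with the modulus. Parametrising the unit circle by $a=e^{it}$ and writing $\alpha,\beta$ in polar form gives
\begin{equation*}
|\alpha a+\beta\bar a|^{2}=|\alpha|^{2}+|\beta|^{2}+2|\alpha|\,|\beta|\cos\!\bigl(\arg\alpha-\arg\beta+2t\bigr),
\end{equation*}
whose maximum over $t$ is $(|\alpha|+|\beta|)^{2}$ and whose minimum is $(|\alpha|-|\beta|)^{2}$. Hence the singular values of the map are $|\alpha|+|\beta|$ and $\bigl||\alpha|-|\beta|\bigr|$, so its operator norm equals $|\alpha|+|\beta|$; as a cross-check its determinant is $|\alpha|^{2}-|\beta|^{2}$, which by Proposition \ref{prop:rational-points-local} is $\Nrd\left(\begin{smallmatrix}\alpha & \beta f\\ \tensor[^\sigma]{\beta}{}/f & \tensor[^\sigma]{\alpha}{}\end{smallmatrix}\right)$.

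Next I would identify $\|g_\infty^{-1}(\cdot)g_\infty\|_\infty$ with this operator norm. Recall that $\|\bullet\|_\infty$ is, by its definition in \S\ref{sec:maximal-order-B}, the operator norm $\|\psi(\bullet)\|_{|\bullet|_\infty}$ attached to a fixed algebra isomorphism $\psi\colon\mathbf{B}(\mathbb{R})\xrightarrow{\sim}\mathbf{M}_2(\mathbb{R})$ carrying $K_\infty$ onto $\mathbf{PSO}_2(\mathbb{R})$, and that it is independent of $\psi$ because it is $\Ad\mathbf{PGO}_2(\mathbb{R})$-invariant. Under $\phi$ the subgroup $\mathbf{E}(\mathbb{R})^\times=\mathbb{C}^\times$ acts by multiplication, so the unit circle $S^{1}\subset\mathbb{C}^\times$ acts by rotations; identifying $(\mathbb{C},|\bullet|)$ with $(\mathbb{R}^{2},|\bullet|_\infty)$ through the orthonormal basis $\{1,i\}$ shows that $\phi$ carries $\mathbf{T}(\mathbb{R})$ onto $\mathbf{PSO}_2(\mathbb{R})$. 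Since $g_\infty^{-1}\mathbf{T}(\mathbb{R})g_\infty=K_\infty$, the composite $\phi\circ\Ad(g_\infty)$ is then an algebra isomorphism $\mathbf{B}(\mathbb{R})\to\mathbf{M}_2(\mathbb{R})$ carrying $K_\infty$ onto $\mathbf{PSO}_2(\mathbb{R})$, exactly like $\psi$; by Skolem--Noether the two differ by conjugation by some $h\in\mathbf{GL}_2(\mathbb{R})$ normalising $\mathbf{PSO}_2(\mathbb{R})$, i.e.\ $h\in\mathbf{GO}_2(\mathbb{R})$, and conjugation by $\mathbf{GO}_2(\mathbb{R})$ preserves the operator norm. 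Thus $\|g_\infty^{-1}Xg_\infty\|_\infty=\|\phi(X)\|_{|\bullet|}$ for every $X\in\mathbf{B}(\mathbb{R})$, which together with the first step yields the asserted identity $\bigl\|g_\infty^{-1}\left(\begin{smallmatrix}\alpha & \beta f\\ \tensor[^\sigma]{\beta}{}/f & \tensor[^\sigma]{\alpha}{}\end{smallmatrix}\right)g_\infty\bigr\|_\infty=|\alpha|+|\beta|$.

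The two displayed formulas then follow formally. Writing $Y=\left(\begin{smallmatrix}\alpha & \beta f\\ \tensor[^\sigma]{\beta}{}/f & \tensor[^\sigma]{\alpha}{}\end{smallmatrix}\right)$, membership in $g_\infty\mathbb{O}_\infty g_\infty^{-1}$ is equivalent to $\|g_\infty^{-1}Yg_\infty\|_\infty\le 1$, i.e.\ to $|\alpha|+|\beta|\le 1$. For $g_\infty\widetilde{\Omega}_\infty g_\infty^{-1}$ one additionally invokes $\Nrd Y=|\alpha|^{2}-|\beta|^{2}$ and the identity \eqref{eq:inverse-operator-norm}: because $(g_\infty^{-1}Yg_\infty)^{-1}=g_\infty^{-1}Y^{-1}g_\infty$ and $\Nrd$ is conjugation-invariant,
\begin{equation*}
\|g_\infty^{-1}Y^{-1}g_\infty\|_\infty=\frac{\|g_\infty^{-1}Yg_\infty\|_\infty}{|\Nrd Y|}=\frac{|\alpha|+|\beta|}{\bigl||\alpha|^{2}-|\beta|^{2}\bigr|},
\end{equation*}
which under the constraint $\Nrd Y>0$ (equivalently $|\alpha|>|\beta|$) equals $(|\alpha|-|\beta|)^{-1}$; hence the conditions $\|g_\infty^{-1}Y^{\pm1}g_\infty\|_\infty\le 2$ and $\Nrd Y>0$ translate into $|\alpha|+|\beta|\le 2$ and $|\alpha|-|\beta|\ge 1/2$ (the latter already forcing $\Nrd Y>0$), as claimed. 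The hard part is the middle step: one has to verify that the ad hoc isomorphism $\phi$ of Lemma \ref{lem:B(Qv)-acts-on-Ev}, after conjugating by $g_\infty$, agrees up to a norm-preserving automorphism with the isomorphism used to define $\|\bullet\|_\infty$ --- this is where the anisotropy of $\mathbf{T}$, the normalisation $g_\infty^{-1}\mathbf{T}(\mathbb{R})g_\infty=K_\infty$, and the $\Ad\mathbf{PGO}_2(\mathbb{R})$-invariance of $\|\bullet\|_\infty$ are all used; the trigonometric computation, the determinant check, and the extraction of the two formulas are routine.
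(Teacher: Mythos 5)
Your proof is correct and follows essentially the same route as the paper's: both reduce via Lemma \ref{lem:B(Qv)-acts-on-Ev} to the real-linear action $a\mapsto\alpha a+\beta\tensor[^\sigma]{a}{}$ on $E_\infty\simeq\mathbb{C}$, use the normalisation $g_\infty^{-1}\mathbf{T}(\mathbb{R})g_\infty=K_\infty$ together with the $\Ad\mathbf{PGO}_2(\mathbb{R})$-invariance of $\|\bullet\|_\infty$ to pin the relevant inner-product norm down to the standard modulus (the paper phrases this as uniqueness of the $\mathbf{T}(\mathbb{R})$-invariant homothety class of inner-product norms, you phrase it as a Skolem--Noether comparison of the two isomorphisms --- the same fact in different packaging), and then compute the operator norm to be $|\alpha|+|\beta|$. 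You merely spell out the trigonometric maximisation and the derivation of the two displayed sets from \eqref{eq:inverse-operator-norm}, which the paper leaves as ``a simple calculation.''
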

\begin{proof}
From the definition of $\|\bullet\|_\infty$ in \S\ref{sec:maximal-order-B} we know that $\|\Ad g_\infty^{-1} \bullet \|_\infty$ is an operator norm on $\mathbf{B}(\mathbb{R})$ induced from some inner-product norm on $E_\infty\simeq\mathbb{C}\simeq \mathbb{R}^2$ when we let  $\mathbf{B}(\mathbb{R})$ act on $E_\infty$ by $\mathbb{R}$-linear endomorphism. This action is explicitly described in Lemma \ref{lem:B(Qv)-acts-on-Ev}.

Fix one of the two possible isomorphism $E_\infty\simeq \mathbb{C}$ and identify the two fields.
Let $|\bullet|_\infty$ be an inner-product norm on $\mathbb{C}$ corresponding to $\|\bullet\|_\infty$. The inner-product norm corresponding to $\|\Ad g_\infty^{-1} \bullet \|_\infty$ is $g_\infty.|\bullet|_\infty\coloneqq v\mapsto |g_\infty^{-1}v|_\infty$. Because of the choices made in \S\ref{sec:maximal-order-B} and \eqref{eq:g_adel_restrictions} the homothety class $\mathbb{R}_{>0}|\bullet|_\infty$ is invariant under the action of $K_\infty= g_\infty^{-1} \mathbf{T}(\mathbb{R}) g_\infty$. Hence the homothety class of $g_\infty.|\bullet|_\infty$ is invariant under $\mathbf{T}(\mathbb{R})$.

We deduce that in the representation of Proposition \ref{prop:rational-points-local} and Lemma \ref{lem:B(Qv)-acts-on-Ev} the homothety class of  $g_\infty.|\bullet|_\infty$ is invariant under the action $\mathbf{E}^\times(\mathbb{R})<\mathbf{B}^\times(\mathbb{R})$ which acts on $\mathbb{C}$ by multiplication. This implies that $g_\infty.|\bullet|_\infty$ is in the homothety class of the standard norm on $\mathbb{C}$ defined by $|x|^2=x\cdot\tensor[^\sigma]{x}{}$.

Using this explicit description of $g_\infty.|\bullet|_\infty$ it is simple to compute the associated operator norm in the coordinates of Lemma \ref{lem:B(Qv)-acts-on-Ev} which turns out to be the norm
\begin{equation*}
\|(\alpha,\beta)\|=|\alpha|+|\beta|=\sqrt{(\Re\alpha)^2 +(\Im\alpha)^2}+\sqrt{(\Re\beta)^2+(\Im\beta)^2}
\end{equation*}
The description of $g_\infty \Omega_\infty g_\infty^{-1}$ is now a simple calculation using the definition in \S\ref{sec:vol}.
\end{proof}

\subsubsection{Ramified Case}\label{sec:ramified-local-order}
Assume now that $\mathbf{B}(\mathbb{Q}_v)$ is ramified. There is a unique maximal order which includes all integral elements. In particular, we have $\mathbb{O}_v=g_v^{-1}\mathbb{O}_v g_v$ and $\Lambda_v=\mathcal{O}_{E_v}$. Moreover, there is an easy criterion to check whether an element is integral using its norm, cf. \cite[Chapter 3]{Reiner}
\begin{align*}
\mathbb{O}_v&=\left\{x\in\mathbf{B}(\mathbb{Q}_v) \mid \Nrd(x)\in\mathbb{Z}_v \right\}\\
&=\left\{\begin{pmatrix}
\alpha & \epsilon \beta \\
\tensor[^\sigma]{\beta}{} & \tensor[^\sigma]{\alpha}{}
\end{pmatrix},\;  \alpha,\beta\in E_v \relmiddle{|} \left|\Nr(\alpha)-\epsilon\Nr(\beta)\right|_v\leq 1 \right\} 
\end{align*}
where the second equality uses Proposition \ref{prop:rational-points-local}. 
The following is a simple statement about $p$-adic numbers
\begin{lem}\label{lem:p-adic-sum}
Let $\pi$ be a uniformizer of the maximal ideal in $\mathbb{Z}_v$.
Two numbers $a,b\in\mathbb{Q}_v$ satisfy $|a-b|_v\leq 1$ if and only if one of the following two options happens
\begin{enumerate}
\item $a,b\in\mathbb{Z}_v$
\item $|a|_v=|b|_v=|\pi|_v^{-n}$ for some $n\in\mathbb{Z}$ and $a/b\equiv 1 \mod \pi^{n}\mathbb{Z}_v$.
\end{enumerate}
\end{lem}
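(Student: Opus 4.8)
The statement is an elementary fact about the valuation on $\mathbb{Q}_v$; I would prove the two directions separately, writing $n \coloneqq \ord(a)$ for the normalized valuation (and extending the cases below to the trivial situations where $a$ or $b$ is zero).

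First, the implication from (1) or (2) to $|a-b|_v \leq 1$. If $a,b\in\mathbb{Z}_v$ this is immediate from the ultrametric inequality $|a-b|_v\leq\max(|a|_v,|b|_v)\leq 1$. In case (2), write $a = \pi^{-n}u$ with $u\in\mathbb{Z}_v^\times$; the congruence $a/b\equiv 1\pmod{\pi^n\mathbb{Z}_v}$ says $a - b = b\cdot(a/b - 1)$ with $|b|_v = |\pi|_v^{-n}$ and $|a/b-1|_v\leq |\pi|_v^{n}$, so the product has valuation $\geq 0$, i.e.\ $|a-b|_v\leq 1$.

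Conversely, suppose $|a-b|_v\leq 1$. If both $a,b\in\mathbb{Z}_v$ we are in case (1), so assume at least one of them, say $a$, has $|a|_v>1$, i.e.\ $\ord(a) = -n < 0$. Then from $|a-b|_v\leq 1 < |a|_v$ the ultrametric equality forces $|b|_v = |a|_v = |\pi|_v^{-n}$ (strict inequality of the two summands would make the sum have the larger absolute value). Hence $a/b\in\mathbb{Z}_v^\times$ and $a - b = b(a/b - 1)$ has $|b|_v = |\pi|_v^{-n}$, so $|a/b - 1|_v = |a-b|_v/|b|_v \leq |\pi|_v^{n}$, which is exactly $a/b\equiv 1\pmod{\pi^n\mathbb{Z}_v}$; and since $n=-\ord(a)>0$ here, this places us in case (2) (with the sign of $n$ in the statement matching: the statement's $n$ is my $-\ord(a)$, which is positive precisely when $a\notin\mathbb{Z}_v$). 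Finally, note that the two cases are genuinely exhaustive and that if $a=0$ then $|a-b|_v\leq 1$ forces $b\in\mathbb{Z}_v$, landing in case (1). There is no real obstacle here; the only point requiring a moment's care is bookkeeping the sign/normalization of $n$ and confirming that $a/b\equiv 1 \pmod{\pi^n\mathbb{Z}_v}$ is symmetric in $a,b$ (it is, since $a/b$ and $b/a$ are congruent to $1$ modulo $\pi^n$ simultaneously once $a/b\in\mathbb{Z}_v^\times$).
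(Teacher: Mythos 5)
Your proof is correct, and the paper itself offers no argument beyond the remark that the lemma ``follows from elementary properties of $p$-adic fields,'' so you have simply written out the elementary verification the author had in mind. The two directions, the ultrametric equality forcing $|a|_v=|b|_v$ in the converse, and the edge cases are all handled properly.
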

\begin{proof}
Follows from elementary properties of $p$-adic fields.
\end{proof}

\begin{prop}\label{prop:local-order-ramified-inert}
Assume that $v$ is inert in $E$ and $\mathbf{B}(\mathbb{Q}_v)$ is ramified. 
Let $\pi$ be a uniformizer of the maximal ideal in $\mathbb{Z}_v$ and write $\ord_v \epsilon=2k+1$ for $k\in\mathbb{Z}$ then
\begin{equation*}
g_v \mathbb{O}_v g_v^{-1} = 
\left\{\begin{pmatrix}
\alpha & \pi^{k+1} \beta \\
\pi^{-k}\,\tensor[^\sigma]{\beta}{} & \tensor[^\sigma]{\alpha}{}
\end{pmatrix} \relmiddle{|} \alpha,\beta\in\Lambda_v,  \right\} 
\end{equation*}
\end{prop}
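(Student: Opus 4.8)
The plan is to reduce the description of $g_v\mathbb{O}_v g_v^{-1}$ to the now-familiar form of Proposition \ref{prop:local-order-split}, but keeping track of the fact that $\mathbf{B}(\mathbb{Q}_v)$ is a division algebra and $E_v$ is a field (since $v$ is inert). First I would use Proposition \ref{prop:rational-points-local} to write an arbitrary element of $\mathbf{B}(\mathbb{Q}_v)$ as $\left(\begin{smallmatrix}\alpha & \epsilon\beta\\ \tensor[^\sigma]{\beta}{} & \tensor[^\sigma]{\alpha}{}\end{smallmatrix}\right)$ with $\alpha,\beta\in E_v$, and use the integrality criterion recalled in \S\ref{sec:ramified-local-order}: this element lies in the \emph{unique} maximal order $\mathbb{O}_v$ if and only if $\Nrd = \Nr(\alpha) - \epsilon\Nr(\beta)$ lies in $\mathbb{Z}_v$. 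Since $\Lambda_v = \mathcal{O}_{E_v}$ and $v$ is inert, $\Nr$ on $E_v$ takes values in even powers of $\pi$, while $\ord_v\epsilon = 2k+1$ is odd, so $\ord_v(\epsilon\Nr(\beta))$ is always odd and $\ord_v\Nr(\alpha)$ is always even. Hence there can be no cancellation between the two terms: $\Nr(\alpha)-\epsilon\Nr(\beta)\in\mathbb{Z}_v$ forces $\Nr(\alpha)\in\mathbb{Z}_v$ and $\epsilon\Nr(\beta)\in\mathbb{Z}_v$ \emph{separately}. This is the analogue of option (1) in Lemma \ref{lem:p-adic-sum} always holding — the odd/even valuation parity makes option (2) impossible.

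Next I would translate the two conditions $\Nr(\alpha)\in\mathbb{Z}_v$ and $\epsilon\Nr(\beta)\in\mathbb{Z}_v$ into membership conditions for $\alpha,\beta$. Because $E_v/\mathbb{Q}_v$ is an unramified field extension, $\Nr(\alpha)\in\mathbb{Z}_v \iff \ord_v\Nr(\alpha)\geq 0 \iff \ord_{E_v}(\alpha)\geq 0 \iff \alpha\in\mathcal{O}_{E_v} = \Lambda_v$, using that $\ord_{E_v}$ restricted to $\mathbb{Q}_v$ is $\ord_v$ and that the valuation on $E_v$ is the unique extension. Similarly $\epsilon\Nr(\beta)\in\mathbb{Z}_v \iff \ord_v\epsilon + \ord_v\Nr(\beta)\geq 0 \iff 2k+1 + 2\ord_{E_v}(\beta)\geq 0 \iff \ord_{E_v}(\beta)\geq -k$ (the $-1/2$ rounds up because valuations are integers), i.e. $\beta\in\pi^{-k}\mathcal{O}_{E_v} = \pi^{-k}\Lambda_v$. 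So the integral elements of $\mathbf{B}(\mathbb{Q}_v)$ in these coordinates are exactly those with $\alpha\in\Lambda_v$ and $\beta\in\pi^{-k}\Lambda_v$.

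Then I would rewrite the matrix to make the stated shape appear: since $\epsilon\beta$ with $\beta\in\pi^{-k}\Lambda_v$ can be written as $\epsilon\beta = \pi^{2k+1}u\cdot\pi^{-k}\beta' = \pi^{k+1}(u\beta')$ for a unit $u = \epsilon\pi^{-(2k+1)}\in\mathbb{Z}_v^\times$ and $\beta' = \pi^k\beta\in\Lambda_v$, and absorbing the unit $u$ into the coordinate (renaming), one gets the upper-right entry $\pi^{k+1}\beta$ with $\beta\in\Lambda_v$; correspondingly the lower-left entry $\tensor[^\sigma]{\beta}{}$ of the original matrix becomes $\pi^{-k}\tensor[^\sigma]{\beta}{}$ after the substitution. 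One must check the substitution respects the Galois-twisted structure, i.e. that $u\in\mathbb{Q}_v^\times$ (so $\tensor[^\sigma]{u}{} = u$) and hence the bottom-left entry transforms compatibly — this is immediate because $u\in\mathbb{Z}_v^\times\subset\mathbb{Q}_v$. Finally, since $g_v\mathbb{O}_v g_v^{-1}$ and $\mathbb{O}_v$ are both the unique maximal order (conjugation-invariance in the ramified case, as noted in \S\ref{sec:maximal-order-B}), we have $g_v\mathbb{O}_v g_v^{-1} = \mathbb{O}_v$ and the description is exactly the set of integral elements just computed.

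The main obstacle, such as it is, is purely bookkeeping: making sure the parity argument that decouples $\Nr(\alpha)$ from $\epsilon\Nr(\beta)$ is airtight (it relies essentially on $v$ being inert, so that $\Nr(E_v^\times)\subseteq \pi^{2\mathbb{Z}}\mathbb{Z}_v^\times$, together with $\ord_v\epsilon$ odd — which is forced because $\mathbf{B}$ is ramified and split by $E_v$, so $\epsilon\notin\Nr E_v^\times$), and then carefully converting the resulting valuation inequalities into the clean matrix form without sign or rounding errors. There is no deep content beyond Lemma \ref{lem:inverse-different}-style manipulations and the integrality criterion; the result is the ramified-inert counterpart of Proposition \ref{prop:local-order-split}, with the role of $\widehat{\Lambda_v}$ and $\tau_v$ played concretely by $\Lambda_v$ and $\pi^{k+1}$ because the local different/conductor data is trivial when $\Lambda_v = \mathcal{O}_{E_v}$ is maximal and unramified.
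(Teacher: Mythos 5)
Your proposal is correct and follows essentially the same route as the paper: it invokes the norm-integrality criterion for the unique maximal order in a ramified quaternion algebra, uses the even/odd valuation parity (the paper phrases this via Lemma \ref{lem:p-adic-sum}) to decouple $\Nr(\alpha)\in\mathbb{Z}_v$ from $\epsilon\Nr(\beta)\in\mathbb{Z}_v$, and converts these to $\alpha\in\Lambda_v$, $\beta\in\pi^{-k}\Lambda_v$. Your extra care with the unit $u=\epsilon\pi^{-(2k+1)}$ is a harmless refinement of bookkeeping the paper leaves implicit (it is absorbed into $\upsilon_v$ in Proposition \ref{prop:local-order-general}).
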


Notice that in this case $\widehat{\Lambda_v}=\Lambda_v$ because $\Lambda_v$ is the maximal order and $E_v/\mathbb{Q}_v$ is unramified.
\begin{proof}
If $v$ is inert in $E$ then $E_v/\mathbb{Q}_v$ is an unramified quadratic extension. Hence $\ord_v \Nr(\alpha)$ is even for any $\alpha\in E_v^\times$. Moreover, as the norm map of an unramified extension of local fields is surjective when restricted to the unit groups we deduce that $\Nr(E_v^\times)=\pi^{2\mathbb{Z}}\cdot \mathbb{Z}_v^\times$. Because $B$ is ramified $\epsilon$ is not an $E_v^\times$-norm in $\mathbb{Q}_v^\times$ and $\ord_v \epsilon=2k+1$ for $k\in\mathbb{Z}$.

Let $\alpha,\beta'\in E_v$ such that $|\Nr(\alpha)-\epsilon\Nr(\beta')|_v\leq 1$. The second option in Lemma \ref{lem:p-adic-sum} can never happen for $a=\Nr(\alpha), b=\epsilon\Nr(\beta')$ because $\ord_v \Nr(\alpha)$ is even and $\ord_v\left(\epsilon\Nr(\beta')\right)$ is odd. 

We conclude that necessarily $\Nr(\alpha')\in\mathbb{Z}_v$ and $\epsilon \Nr(\beta')\in\mathbb{Z}_v$. This implies that $\alpha\in \mathcal{O}_{E_v}=\Lambda_v$ and $\beta'\in\pi^{-k}\Lambda_v$.
\end{proof}

\begin{prop}\label{prop:local-order-ramified-ramified}
Assume both $E_v/\mathbb{Q}_v$ and $\mathbf{B}(\mathbb{Q}_v)$ are ramified. Let $\Pi\in\mathcal{O}_{E_v}$ be a uniformizer then there exists $u\in\mathbb{Z}_v^{\times}$ and $k\in\mathbb{Z}$ such that 
\begin{equation*}
g_v \mathbb{O}_v g_v^{-1} \subseteq 
\left\{\begin{pmatrix}
\alpha & \Pi^k u\beta \\
\Pi^{-k} \tensor[^\sigma]{\beta}{} & \tensor[^\sigma]{\alpha}{}
\end{pmatrix} \relmiddle{|} \alpha,\beta \in \widehat{\Lambda_v} \right\} 
\end{equation*}
\end{prop}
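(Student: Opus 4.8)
The plan is to reduce to the split case handled in Proposition \ref{prop:local-order-split} by a base change and a descent computation along the ramified quadratic extension $E_v/\mathbb{Q}_v$. Since $\mathbf{B}(\mathbb{Q}_v)$ is ramified but $\mathbf{B}_{E_v}\simeq\mathbf{M}_{2\times 2,E_v}$ splits (every quaternion algebra over $\mathbb{Q}_v$ splits over any quadratic extension), we may pass to the split algebra over $E_v$ and identify $g_v\mathbb{O}_v g_v^{-1}$ with the intersection of a maximal $\mathbb{Z}_v$-order with the $\mathfrak{G}$-fixed points of the twisted action. First I would use the norm criterion for integrality recalled in \S\ref{sec:ramified-local-order}: an element of $\mathbf{B}(\mathbb{Q}_v)$, written in the form of Proposition \ref{prop:rational-points-local} as $\begin{pmatrix}\alpha & \epsilon\beta\\ \tensor[^\sigma]{\beta}{} & \tensor[^\sigma]{\alpha}{}\end{pmatrix}$, lies in $\mathbb{O}_v$ iff $\Nrd = \Nr(\alpha)-\epsilon\Nr(\beta)$ is a $v$-adic integer. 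Conjugating by $g_v$ amounts to replacing the fixed matrix $\theta$ (equivalently $\epsilon$) by a $\widetilde{\mathbf{A}}(E_v)$-twist, which alters the off-diagonal "scaling parameter" by a factor in $\cbd(E_v^\times)=E_v^{(1)}$; so the general conjugated order has the shape displayed in the proposition with some $\Pi^k u\in E_v^\times$ governing the anti-diagonal.

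**Key steps.** (1) Record that $\Lambda_v=\mathcal{O}_{E_v}$ is the maximal order since $\mathbf{B}(\mathbb{Q}_v)$ is ramified, and that since $E_v/\mathbb{Q}_v$ is ramified the inverse different $\widehat{\Lambda_v}=\mathfrak{D}(\Lambda_v)^{-1}$ is the fractional ideal $\Pi^{-d}\mathcal{O}_{E_v}$ where $\Pi^d\mathcal{O}_{E_v}=\mathfrak{D}(\Lambda_v)$, with $d=\ord_\Pi D_v\ge 1$; this is the source of the weaker conclusion $\alpha,\beta\in\widehat{\Lambda_v}$ rather than $\alpha,\beta\in\Lambda_v$. (2) Working over the split algebra $\mathbf{B}_{E_v}\simeq\mathbf{M}_{2\times 2,E_v}$, invoke Proposition \ref{prop:local-order-split} — whose proof is entirely local-field-theoretic over $E_v$ and does not use that the base field is $\mathbb{Q}_v$ — applied to the maximal order $\mathbb{O}_v\otimes\mathcal{O}_{E_v}$ inside the matrix algebra, to express $g_v\mathbb{O}_v g_v^{-1}\otimes\mathcal{O}_{E_v}$ as the set of matrices $\begin{pmatrix}\alpha & \beta\tau\\ \tensor[^\sigma]{\beta}{}/\tau & \tensor[^\sigma]{\alpha}{}\end{pmatrix}$ with $\alpha\in\widehat{\Lambda_v}$, $\alpha+\beta\in\Lambda_v$, for some $\tau\in E_v^\times$. (3) Intersect with the $\mathfrak{G}$-fixed points: the twisted Galois action must preserve the order, which forces $\tensor[^\sigma]{\tau}{}/\tau$ to match the factor $\tensor[^\sigma]{f}{}/f$ built into the form $\mathbf{B}$, hence $\tau/(\text{the split scaling})$ lies in $\mathbb{Q}_v^\times$ up to $\Lambda_v^\times$; writing $\tau=\Pi^k u$ with $u\in\mathcal{O}_{E_v}^\times$ and absorbing into $u$ its reduction mod $\Pi$, one may take $u\in\mathbb{Z}_v^\times$ after a unit change of variables in $\beta$. (4) Finally, descend from the condition "$\alpha+\beta\in\Lambda_v$" together with "$\alpha\in\widehat{\Lambda_v}$" — which over the ramified extension gives only the one-sided containment claimed — and note the inclusion (rather than equality) in the statement leaves room for the precise description of the $\beta$-lattice, which will be pinned down in a companion proposition; here it suffices to exhibit the ambient set.

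**Main obstacle.** The delicate point is Step (3): controlling the scaling parameter $\tau$ under the twisted Galois action when $E_v/\mathbb{Q}_v$ is itself ramified. Over an \emph{inert} place (Proposition \ref{prop:local-order-ramified-inert}) the parity argument on $\ord_v\Nr(\alpha)$ versus $\ord_v(\epsilon\Nr(\beta))$ cleanly separates the two cases of Lemma \ref{lem:p-adic-sum}; in the ramified case the norm map $\Nr\colon E_v^\times\to\mathbb{Q}_v^\times$ has image $\pi^{\mathbb{Z}}\cdot\Nr(\mathcal{O}_{E_v}^\times)$ which need not be all of $\mathbb{Z}_v^\times$, so the second alternative of Lemma \ref{lem:p-adic-sum} \emph{can} occur and the off-diagonal entry is no longer forced into $\Lambda_v$. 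Tracking this carefully — and verifying that the resulting lattice of admissible $(\alpha,\beta)$ is contained in (though possibly strictly smaller than) the set with $\alpha,\beta\in\widehat{\Lambda_v}$ — is where the real work lies; I would handle it by choosing $\Pi$ so that $\Pi^2$ is a uniformizer of $\mathbb{Z}_v$ times a unit, reducing the norm computation to tracking a single $\Pi$-adic valuation, and then reading off the containment directly from the integrality criterion for $\mathbb{O}_v$ together with the description of $\widehat{\Lambda_v}$ from Lemma \ref{lem:inverse-different}.
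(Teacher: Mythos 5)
Your main route does not go through. In step (2) you propose to apply Proposition \ref{prop:local-order-split} over $E_v$ to the order $\mathbb{O}_v\otimes_{\mathbb{Z}_v}\mathcal{O}_{E_v}$ inside $\mathbf{M}_{2\times 2}(E_v)$. But that proposition rests on the fact that maximal orders in a split matrix algebra are endomorphism rings of lattices, and $\mathbb{O}_v\otimes_{\mathbb{Z}_v}\mathcal{O}_{E_v}$ is \emph{not} maximal when $\mathbf{B}(\mathbb{Q}_v)$ is a division algebra: its (reduced) discriminant over $\mathcal{O}_{E_v}$ is the extension of that of $\mathbb{O}_v$, which is nontrivial precisely because $\mathbf{B}$ ramifies at $v$. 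One would have to embed it into a maximal order, control which one, and then relate the intersection of that order with the diagonal algebra $E_v\otimes_{\mathbb{Q}_v}E_v\simeq E_v\times E_v$ back to $\widehat{\Lambda_v}\subset E_v$ --- none of which your sketch supplies. Step (3) compounds this: you invoke a factorization $\epsilon=\tensor[^\sigma]{f}{}f$, but such an $f$ exists if and only if $\epsilon$ is a norm from $E_v^\times$, i.e.\ if and only if $\mathbf{B}(\mathbb{Q}_v)$ is split --- exactly the case excluded here. So the "split scaling" you want to compare $\tau$ against is not defined.

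The paper's proof stays entirely over $\mathbb{Q}_v$ and is much more direct, along the lines you only gesture at in your "main obstacle" paragraph. Since the maximal order is unique, $g_v\mathbb{O}_v g_v^{-1}=\mathbb{O}_v$ and integrality is detected by $\Nrd(x)=\Nr(\alpha)-\epsilon\Nr(\beta')\in\mathbb{Z}_v$. Local class field theory for the ramified extension gives $\Nr(E_v^\times)=\pi^{\mathbb{Z}}U_{E_v}$ with $U_{E_v}<\mathbb{Z}_v^\times$ of index $2$, so $\epsilon=\pi^k u$ with $u$ a non-norm unit, and the substitution $\beta'=\tensor[^\sigma]{\Pi}{^{-k}}\beta$ produces the displayed shape. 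The containment $\alpha\in\widehat{\Lambda_v}$ comes from the trace argument $\Trd(x\cdot l)=\Tr(\alpha l)\in\mathbb{Z}_v$ for all $l\in\Lambda_v$ (since $\Lambda_v\subset\mathbb{O}_v$ and $\mathbb{O}_v$ is a ring of integral elements) --- a step entirely absent from your write-up. Finally, in the second alternative of Lemma \ref{lem:p-adic-sum}, one has $|\Nr(\beta)|_v=|\Nr(\alpha)|_v$ with $\alpha\in\widehat{\Lambda_v}$, and the principality of $\widehat{\Lambda_v}$ (Lemma \ref{lem:inverse-different}) forces $\beta\in\widehat{\Lambda_v}$ as well. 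You correctly diagnosed that the inert-case parity argument fails here and that the second alternative of Lemma \ref{lem:p-adic-sum} must be confronted, but the argument you actually propose for doing so is not the one that closes the gap.
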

\begin{proof}
Let $\pi=\Nr\Pi$ a uniformizer for $\mathbb{Z}_v$.
Because $E_v/\mathbb{Q}_v$ is totally ramified by local class field theory there exists an index $2$ subgroup $U_{E_v}<\mathbb{Z}_v^\times$ such that $\Nr(E_v^\times)=\pi^{\mathbb{Z}} U_{E_v}$. Hence $\epsilon =\pi^k u$ for some $k\in\mathbb{Z}$ and $u\in\mathbb{Z}_v^\times \setminus U_{E_v}$.

Let $x\in g_v \mathbb{O}_v g_v^{-1}$ and write $x=\begin{pmatrix}
\alpha & \epsilon \beta' \\
\tensor[^\sigma]{\beta}{}' & \tensor[^\sigma]{\alpha}{}
\end{pmatrix}$ for some $\alpha,\beta'\in E_v$. Set also $\beta'=\tensor[^\sigma]{\Pi}{^{-k}} \beta$ where $\beta\in E_v$ then $\epsilon\beta'=\Pi^k u \beta$.

Any element $l\in\Lambda_v$ belongs to $g_v \mathbb{O}_v g_v^{-1}$ so $x\cdot l\in g_v \mathbb{O}_v g_v^{-1}$ and $x\cdot l$ is integral. This implies
\begin{equation*}
\forall l\in \Lambda_v\colon \Tr(\alpha\cdot l)=\Trd(x\cdot l)\in \mathbb{Z}_v
\end{equation*}
thus $\alpha\in\widehat{\Lambda_v}$.

If the first option in Lemma \ref{lem:p-adic-sum} holds then $\epsilon\Nr(\beta')=u\Nr(\beta)\in\mathbb{Z}_v$ and necessarily $\beta\in\mathcal{O}_{E_v}=\Lambda_v\subseteq \widehat{\Lambda_v}$. The second case is relevant only when $\alpha\in\widehat{\Lambda_v}\setminus\Lambda_v$ and then $|\epsilon\Nr(\beta')|_v=|\Nr(\beta)|_v=|\Nr(\alpha)|_v$. As $\widehat{\Lambda_v}$ is a principal ideal we deduce that $\beta$ must also belong to $\widehat{\Lambda_v}$.
\end{proof}

\subsubsection{General Case}
We summarize the results of this section in a form useful to us.
\begin{prop}\label{prop:local-order-general}
For any finite rational place $v$  there is some $\tau_v\in E_v^\times$ such that
\begin{equation*}
g_v \mathbb{O}_v g_v^{-1} \subseteq \left\{\begin{pmatrix} 
\alpha & \beta \upsilon_v \tau_v \\ 
\tensor[^\sigma]{\beta}{}/\tau_v & \tensor[^\sigma]{\alpha}{}
\end{pmatrix} \relmiddle{|} \alpha,\beta\in \widehat{\Lambda_v} \right\}
\end{equation*}

If $\mathbf{B}$ is split at $v$ then $\upsilon_v=1$, if $\mathbf{B}$ is ramified and $E$ is inert at $v$ then $\upsilon_v$ is a uniformizer in $\mathbb{Z}_v$ and if both $\mathbf{B}$ and $E$ are ramified at $v$ then $\upsilon_v$ is a unit which is not an $E_v^\times$ norm.

Moreover, $\tau_v\in\Lambda_v^\times$ for almost all $v$ and $\tau_v=1$ if $\mathrm{B}$ is ramified at $v$.
\end{prop}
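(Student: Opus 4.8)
The plan is to treat this as a uniform restatement of the three local computations already in hand --- Propositions \ref{prop:local-order-split}, \ref{prop:local-order-ramified-inert} and \ref{prop:local-order-ramified-ramified} --- so the argument will be a short case analysis on the behaviour of $\mathbf{B}$ and $E$ at $v$. Before the cases I would record two facts used throughout: $\Lambda_v\subseteq\widehat{\Lambda_v}$, because $\Trd$ of an integral element of $\mathbf{B}(\mathbb{Q}_v)$ lies in $\mathbb{Z}_v$; and $\widehat{\Lambda_v}$ is stable under $\Gal(E_v/\mathbb{Q}_v)$, since $\Lambda_v$ is so by Lemma \ref{lem:Lambda-Galois} and the trace pairing on $E_v$ is Galois invariant.

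In the \emph{split case} I would quote Proposition \ref{prop:local-order-split}, which already supplies $\tau_v\in E_v^\times$ together with the coordinate description of $g_v\mathbb{O}_v g_v^{-1}$ under the constraints $\alpha\in\widehat{\Lambda_v}$ and $\beta-\tensor[^\sigma]{\alpha}{}\in\Lambda_v$. For any admissible pair, $\beta=(\beta-\tensor[^\sigma]{\alpha}{})+\tensor[^\sigma]{\alpha}{}\in\Lambda_v+\widehat{\Lambda_v}=\widehat{\Lambda_v}$ by the two preliminary facts, so dropping the second constraint exhibits $g_v\mathbb{O}_v g_v^{-1}$ as a subset of the set in the statement with $\upsilon_v=1$; and $\tau_v\in\Lambda_v^\times$ for all but finitely many split $v$ by Proposition \ref{prop:tau-a.e.-trivial}.

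For the \emph{ramified cases} I would first note that $\mathbf{B}$ is ramified at only finitely many places, that at each of them $g_v\mathbb{O}_v g_v^{-1}=\mathbb{O}_v$ is the unique maximal order, and that $\widehat{\Lambda_v}=\Lambda_v=\mathcal{O}_{E_v}$ there. To reach $\tau_v=1$ I would normalize the global $\epsilon\in\mathbb{Q}^\times$ at the outset: since $\epsilon$ is only determined modulo $\Nr E^\times$ and a ramified place of $\mathbf{B}$ is never split in $E$, weak approximation in $E^\times$ lets me choose the representative with $\ord_v\epsilon=1$ when $v$ is inert in $E$ and $\ord_v\epsilon=0$ when $v$ ramifies in $E$, simultaneously at the finitely many ramified places; this alters only the chosen isomorphism $\mathbf{B}_E\simeq\mathbf{M}_{2\times 2,E}$, not the intrinsic $\mathbb{O}_v$ or $\Lambda_v$. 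With this choice the integer $k$ in Propositions \ref{prop:local-order-ramified-inert} and \ref{prop:local-order-ramified-ramified} vanishes, and those propositions give exactly the asserted inclusion with $\tau_v=1$, $\upsilon_v$ a uniformizer of $\mathbb{Z}_v$ in the inert case and $\upsilon_v$ a unit that is not an $E_v^\times$-norm in the ramified case. Combining the cases, $\mathbf{B}$ is split and $\tau_v\in\Lambda_v^\times$ at all but finitely many $v$ by Proposition \ref{prop:tau-a.e.-trivial}, and the finitely many ramified places add only finitely many exceptions, giving $\tau_v\in\Lambda_v^\times$ for almost all $v$.

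I expect the one genuinely delicate point to be this normalization of $\epsilon$ together with the verification that it does not disturb the earlier coordinate descriptions. If one wishes to avoid it, the fallback is to absorb the uniformizer power $\pi^k$ (respectively $\Pi^k$) appearing in Propositions \ref{prop:local-order-ramified-inert} and \ref{prop:local-order-ramified-ramified} directly into $\tau_v$; this touches only the finitely many ramified places and leaves the ``almost all $v$'' conclusion unchanged, though at those places one then no longer has $\tau_v=1$ literally. Everything else is transcription from the three cited propositions.
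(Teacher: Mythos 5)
Your proof is correct and follows the same route as the paper, whose entire proof is the one-line citation ``Immediate corollary of Propositions \ref{prop:local-order-split}, \ref{prop:local-order-ramified-inert}, \ref{prop:local-order-ramified-ramified} and \ref{prop:tau-a.e.-trivial}.'' You in fact supply details the paper suppresses --- the deduction $\beta\in\Lambda_v+\widehat{\Lambda_v}=\widehat{\Lambda_v}$ in the split case and the normalization of $\epsilon$ modulo $\Nr E^\times$ needed to force $k=0$ (hence $\tau_v=1$) at the finitely many ramified places --- and your identification of that normalization as the only delicate point is accurate.
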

\begin{proof}
Immediate corollary of Propositions \ref{prop:local-order-split}, \ref{prop:local-order-ramified-inert}, \ref{prop:local-order-ramified-ramified} and \ref{prop:tau-a.e.-trivial}.
\end{proof}

\subsection{Good Integral Representatives}
In this section we will discuss how to find good representatives in $\mathbb{O}_v\subset\mathbf{B}(\mathbb{Q}_v)$ of elements in $\mathbf{G}(\mathbb{Q}_v)$ using the Cartan decomposition. The notion of a good representative generalizes the idea of writing a rational number as an integer fraction in lowest terms.

\begin{defi}\addpenalty{-1000}
\hfill
\begin{enumerate}
\item For a finite rational place $v$ where $\mathbf{G}$ splits let $\mathscr{B}_v$ be the the Bruhat-Tits building of $\mathbf{G}(\mathbb{Q}_v)$. If $\mathbf{G}$ is ramified at $v\neq\infty$ let $\mathscr{B}_v$ the connected graph with two vertices corresponding to $\faktor{\mathbf{G}(\mathbb{Q}_v)}{K_v}$.

Denote by $d$ the geodesic distance function on the graph $\mathscr{B}_v$ normalized so that the length of each edge is $1$.

\item If $\mathbf{B}(\mathbb{R})$ is split set $\mathscr{B}_\infty=\faktor{\mathbf{G}(\mathbb{R})}{\Nrml_{\mathbf{G}(\mathbb{R})}(K_\infty)}$. This manifold is the upper half-plane which we equip with the standard hyperbolic distance function $d$. If $\mathbf{B(\mathbb{R})}$ is ramified let $\mathscr{B}_\infty$ be a single point with the trivial metric.

\item For each place $v$ let $x_0$ be the point in $\mathscr{B}_v$ stabilized by $K_v$. Let $q$ be the residue characteristic for $v\neq\infty$ and $q=e$ for $v=\infty$.
Define a continuous function $\Denom_v\colon\mathbf{G}(\mathbb{Q}_v)\to \mathbb{R}_{>0}$ by
\begin{equation*}
\Denom_v(x_v)\coloneqq q^{d(x_0,x_v.x_0)}
\end{equation*}

\item Define the continuous function $\Denom_f\colon\mathbf{G}(\mathbb{A}_f)\to\mathbb{N}$ by
\begin{equation*}
\Denom_f\big((x_v)_{v\neq\infty}\big)=\prod_{v\neq \infty} \Denom_v(x_v)
\end{equation*}
\end{enumerate}
\end{defi}

\begin{prop}\label{prop:Omega_xi_Omega-reduced-norm}
Let $v$ be a rational place and $x_v\in\mathbf{G}(\mathbb{Q}_v)$. For any $h\in\Omega_v x_v \Omega_v\subset \mathbf{G}(\mathbb{Q}_v)$ there is $r\in\mathbb{O}_v\subset \mathbf{B}(\mathbb{Q}_v)$ such that $h=\mathbf{Z}(\mathbb{Q}_v) r$ and
\begin{align*}
|\Nrd(r)|_v\Denom_v(x_v)=1 & \textrm{   if } v\neq\infty\\
2^{-8}\leq |\Nrd(r)|_\infty\Denom_v(x_v)\leq 1 & \textrm{   if } v=\infty
\end{align*}

Moreover, for $v\neq\infty$  this representative is optimal in the following way. If $h\in \Omega_v x_v\Omega_v$ then it has no representative in $\mathbb{O}_v$ whose reduced norm has smaller valuation then $r$ above.
\end{prop}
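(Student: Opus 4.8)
## Proof Plan for Proposition \ref{prop:Omega_xi_Omega-reduced-norm}

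The plan is to reduce everything to the Cartan decomposition and the explicit description of the local orders obtained in Propositions \ref{prop:local-order-general} and \ref{prop:local-order-split-archimedean}. First I would recall that the function $\Denom_v$ is, by construction, insensitive to multiplication on the left or right by $K_v$ (equivalently by $\Omega_v$ in the non-archimedean case, since $\Omega_v = K_v$ there): the point $x_v.x_0$ depends only on the double coset $K_v x_v K_v$, so $\Denom_v$ descends to a function on $K_v\backslash\mathbf{G}(\mathbb{Q}_v)/K_v$. In the split non-archimedean case the Cartan decomposition gives $x_v \in K_v \operatorname{diag}(\varpi^{n},1) K_v$ for a uniquely determined $n = d(x_0,x_v.x_0)\geq 0$, so $\Denom_v(x_v) = q_v^{n}$; in the ramified non-archimedean case $\mathscr{B}_v$ is the two-vertex graph and $\Denom_v(x_v)$ is $1$ or $q_v$ according to whether $x_v\in K_v$ or not, which matches the fact that $\mathbf{G}(\mathbb{Q}_v)/K_v$ has order $2$ with the nontrivial coset represented by an element whose any integral lift has $\operatorname{ord}_v\Nrd = 1$ (the ramification index is $2$). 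The archimedean case is handled separately.

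For the existence statement with $v\neq\infty$: given $h\in \Omega_v x_v\Omega_v = K_v x_v K_v$, lift $h$ to $\mathbf{B}^\times(\mathbb{Q}_v)$ arbitrarily, scale by an element of $\mathbf{Z}(\mathbb{Q}_v) = \mathbb{Q}_v^\times$ to clear denominators so that the lift lies in $\mathbb{O}_v$ but in no proper integral multiple $\varpi\mathbb{O}_v$ — i.e. choose $r$ primitive. I claim $|\Nrd r|_v \Denom_v(x_v) = 1$. Using $K_v$-biinvariance, replace $h$ by the Cartan representative $\operatorname{diag}(\varpi^{n},1)$ (split case) or by the appropriate generator (ramified case); under the isomorphism $\mathbf{B}(\mathbb{Q}_v)\simeq \mathbf{M}_2(\mathbb{Q}_v)$ with $\mathbb{O}_v = \mathbf{M}_2(\mathbb{Z}_v)$ the primitive integral lift of $\operatorname{diag}(\varpi^{n},1)$ is $\operatorname{diag}(\varpi^{n},1)$ itself (a $\mathbb{Z}_v$-matrix not divisible by $\varpi$), whose reduced norm is $\varpi^{n}$, so $|\Nrd r|_v = q_v^{-n} = \Denom_v(x_v)^{-1}$, as desired. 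The ramified case is the analogous one-line check using the integral structure recalled in \S\ref{sec:ramified-local-order}. Optimality then follows immediately: any integral representative $r'$ of $h$ is obtained from the primitive one $r$ by multiplication by some $\lambda\in\mathbb{Q}_v^\times$ with $\lambda r\in\mathbb{O}_v$, forcing $\operatorname{ord}_v\lambda\geq 0$ (since $r$ is primitive), hence $\operatorname{ord}_v\Nrd(r') = 2\operatorname{ord}_v\lambda + \operatorname{ord}_v\Nrd(r) \geq \operatorname{ord}_v\Nrd(r)$.

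For the archimedean bound: here $\Omega_\infty = \mathbf{Z}(\mathbb{R})\widetilde{\Omega}_\infty$, and by Proposition \ref{prop:local-order-split-archimedean} conjugation by $g_\infty$ identifies $\mathbb{O}_\infty$ with the unit ball $\{|\alpha|+|\beta|\le 1\}$ and $\widetilde{\Omega}_\infty$ with $\{1/2 \le |\alpha|-|\beta|,\ |\alpha|+|\beta|\le 2\}$; the inequalities \eqref{eq:Omega-tilde-Nrd} show $1/4\le \Nrd g\le 4$ for $g\in\widetilde{\Omega}_\infty$. Given $h\in\Omega_\infty x_\infty\Omega_\infty$, write $h = \omega_1 x_\infty \omega_2$ with $\omega_j\in\widetilde{\Omega}_\infty$ modulo center; using the $K_\infty$-biinvariance of $\Denom_\infty$ and the Cartan decomposition $x_\infty = k_1 a_t k_2$ with $a_t = \operatorname{diag}(e^{t/2},e^{-t/2})$ and $d(x_0,x_\infty.x_0) = t$, choose $r$ to be the representative of $h$ scaled so that $\|r\|_\infty \asymp 1$ (e.g. $\|r\|_\infty^2 = \Nrd r$ up to bounded factors, using \eqref{eq:inverse-operator-norm}). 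Then $\Nrd r$ is $\Nrd(\omega_1)\Nrd(\omega_2) e^{t}$ times a bounded constant from the normalization, and the two-sided bound $1/4 \le \Nrd\omega_j \le 4$ together with the normalization constant (bounded by $2^{\pm 2}$ from the shape of $\mathbb{O}_\infty$) yields $2^{-8}\le |\Nrd r|_\infty\, \Denom_\infty(x_\infty)\le 1$, where $\Denom_\infty(x_\infty) = e^{d(x_0,x_\infty.x_0)} = e^{t}$. The main obstacle is purely bookkeeping: tracking the precise power of $2$ in the archimedean estimate, since one must keep separate control of the four sources of distortion — the two factors $\omega_1,\omega_2\in\widetilde{\Omega}_\infty$, the normalization of $r$ to the unit ball of $\|\cdot\|_\infty$, and the relation \eqref{eq:inverse-operator-norm} between $\|r\|_\infty$, $\|r^{-1}\|_\infty$ and $\Nrd r$; the non-archimedean part is essentially immediate once one invokes $K_v$-biinvariance of $\Denom_v$ and the Cartan decomposition.
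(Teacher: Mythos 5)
Your proposal is correct and follows essentially the same route as the paper: the Cartan decomposition together with $K_v\Omega_v=\Omega_v K_v=\Omega_v$ handles the non-archimedean cases (your primitivity argument for optimality is just a rephrasing of the paper's appeal to disjointness of the Cartan double cosets), and the bound $1/4\le\Nrd g\le 4$ on $\widetilde{\Omega}_\infty$ from \eqref{eq:Omega-tilde-Nrd} gives the archimedean constant $2^{-8}=(4\cdot 4\cdot 16)^{-1}$ exactly as you outline. The only case you leave implicit is $\mathbf{B}(\mathbb{R})$ ramified, where $\Omega_\infty=\mathbf{G}(\mathbb{R})$, $\Denom_\infty\equiv 1$ and $\|\cdot\|_\infty=\sqrt{\Nrd}$ make the statement trivial.
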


\begin{proof}
In the split case this follows from the Cartan decomposition, the equality $K_v \Omega_v=\Omega_v K_v=\Omega_v$ and \eqref{eq:Omega-tilde-Nrd} for $v=\infty$. In the finite ramified case this is a consequence of the fact that the ramification index of $\mathbf{B}(\mathbb{Q}_v)$ is $2$, i.e.\ the value group of $\mathbb{Q}_v$ is an index $2$ subgroup of the value group of the division algebra $\mathbf{B}(\mathbb{Q}_v)$. In the infinite ramified case $\Omega_v=\mathbf{G}(\mathbb{Q}_v)$ and the statement is trivial.

The last statement about the optimality of the representative for $v\neq\infty$ follows from the mutual disjointness of the $K_v$ double cosets in the Cartan decomposition.
\end{proof}

\begin{defi}\label{def:Bowen-ball}
Let $B\subseteq G(\mathbb{Q}_{p_1})$ be an identity neighborhood and $n\in\mathbb{N}\cup\{0\}$. Let $\lambda\colon \mathbb{Q}_{p_1}^\times\to A_{p_1}$ be a cocharacter spanning $X_\bullet(A_{p_1})$. Set $a=\lambda(p_1)\in A_{p_1}$.
We use the following notation for the symmetric homogeneous $B$-Bowen ball for the $a$-action.
\begin{equation*}
B^{(-n,n)}\coloneqq \bigcap_{k=-n}^{n} a^k B a^{-k}
\end{equation*}
Notice that the definition of $B^{(-n,n)}$ does not depends on the choice of $\lambda$. 

We define similarly
\begin{equation*}
\mathbb{O}_{p_1}^{(-n,n)}\coloneqq \bigcap_{k=-n}^{n} a^k \mathbb{O}_{p_1} a^{-k} \subset \mathbf{B}(\mathbb{Q}_{p_1})
\end{equation*}
\end{defi}

\begin{prop}\label{prop:Omega_xi_Omega-reduced-norm-Bowen}
Fix $\xi_{p_1}\in A_{p_1}$ and $n\geq 0$. For any $h\in K_{p_1}^{(-n,n)}\xi_{p_1}K_{p_1}^{(-n,n)}$ there is $r\in\mathbb{O}_{p_1}^{(-n,n)}$ with $h=\mathbf{Z}(\mathbb{Q}_{p_1})r$ and $|\Nrd(r)|_{p_1}\Denom_{p_1}(\xi_{p_1})=1$.
\end{prop}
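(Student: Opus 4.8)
The plan is to bootstrap Proposition \ref{prop:Omega_xi_Omega-reduced-norm-Bowen} from Proposition \ref{prop:Omega_xi_Omega-reduced-norm} by conjugating by powers of $a$ and exploiting that $\xi_{p_1}$ and $a$ lie in the abelian group $A_{p_1}$, together with the optimality clause of Proposition \ref{prop:Omega_xi_Omega-reduced-norm}. First I would note that $K_{p_1}^{(-n,n)}\subseteq K_{p_1}$, since the index $k=0$ occurs in the intersection defining the Bowen ball; hence $h\in K_{p_1}\xi_{p_1}K_{p_1}$ and Proposition \ref{prop:Omega_xi_Omega-reduced-norm} (applied at $v=p_1$, where $\Omega_{p_1}=K_{p_1}$) yields $r\in\mathbb{O}_{p_1}$ with $h=\mathbf{Z}(\mathbb{Q}_{p_1})r$ and $|\Nrd(r)|_{p_1}\Denom_{p_1}(\xi_{p_1})=1$. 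It then remains to show that this same $r$ lies in $\mathbb{O}_{p_1}^{(-n,n)}$, i.e.\ that $a^k r a^{-k}\in\mathbb{O}_{p_1}$ for every $|k|\le n$; since the index set $\{-n,\dots,n\}$ is symmetric, this is exactly membership in the Bowen ball $\mathbb{O}_{p_1}^{(-n,n)}$.

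The core step treats a fixed $k$ with $|k|\le n$. Conjugating the inclusion $h\in K_{p_1}^{(-n,n)}\xi_{p_1}K_{p_1}^{(-n,n)}$ by $a^k$ and using $a^k\xi_{p_1}a^{-k}=\xi_{p_1}$ (both elements lie in the abelian group $A_{p_1}$), one obtains $a^k h a^{-k}\in\bigl(a^k K_{p_1}^{(-n,n)}a^{-k}\bigr)\,\xi_{p_1}\,\bigl(a^k K_{p_1}^{(-n,n)}a^{-k}\bigr)$. Now $a^k K_{p_1}^{(-n,n)}a^{-k}=\bigcap_{j=-n}^{n}a^{k+j}K_{p_1}a^{-(k+j)}$, and for $|k|\le n$ the value $j=-k$ contributes the factor $a^0 K_{p_1}a^0=K_{p_1}$, so $a^k K_{p_1}^{(-n,n)}a^{-k}\subseteq K_{p_1}$; therefore $a^k h a^{-k}\in K_{p_1}\xi_{p_1}K_{p_1}$. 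Applying Proposition \ref{prop:Omega_xi_Omega-reduced-norm} to $a^k h a^{-k}$ produces an integral representative $r'\in\mathbb{O}_{p_1}$ with $|\Nrd(r')|_{p_1}\Denom_{p_1}(\xi_{p_1})=1$. On the other hand $a^k r a^{-k}$ is itself a representative of $a^k h a^{-k}$ in $\mathbf{B}^\times(\mathbb{Q}_{p_1})$, so $a^k r a^{-k}=z\,r'$ for some central $z\in\mathbf{Z}(\mathbb{Q}_{p_1})=\mathbb{Q}_{p_1}^\times$; and since $\Nrd$ is conjugation invariant, $|\Nrd(a^k r a^{-k})|_{p_1}=|\Nrd(r)|_{p_1}=|\Nrd(r')|_{p_1}$. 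As the reduced norm restricted to the center is the squaring map, $|z|_{p_1}^2=1$, hence $z\in\mathbb{Z}_{p_1}^\times\subseteq\mathbb{O}_{p_1}^\times$, and so $a^k r a^{-k}=z r'\in\mathbb{O}_{p_1}$. This is the desired conclusion.

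The step I expect to require the most care is the passage just described: conjugation by $a^k$ does not preserve $\mathbb{O}_{p_1}$, so one cannot simply assert that $a^k r a^{-k}$ is integral. The resolution is precisely the optimality assertion of Proposition \ref{prop:Omega_xi_Omega-reduced-norm}, equivalently the mutual disjointness of the $K_{p_1}$-double cosets in the Cartan decomposition, which forces the minimal reduced-norm integral representative of a given element of $\mathbf{G}(\mathbb{Q}_{p_1})$ to be unique up to $\mathbb{Z}_{p_1}^\times$; since $a^k r a^{-k}$ already realizes the optimal reduced-norm valuation $|\Nrd(r)|_{p_1}=\Denom_{p_1}(\xi_{p_1})^{-1}$, it must differ from an integral representative only by a unit. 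No archimedean estimates enter here, so the $2^{-8}$-type loss in Proposition \ref{prop:Omega_xi_Omega-reduced-norm} is irrelevant, and the proof is a short formal argument once these observations are in place.
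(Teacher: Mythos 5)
Your proof is correct and is essentially the argument of the paper: both reduce to Proposition \ref{prop:Omega_xi_Omega-reduced-norm} applied to the conjugates $a^{-k}ha^{k}\in K_{p_1}\xi_{p_1}K_{p_1}$ (using that $a$ and $\xi_{p_1}$ commute in $A_{p_1}$), and both hinge on the observation that two integral representatives realizing the optimal reduced-norm valuation differ by an element of $\mathbb{Z}_{p_1}^\times\subseteq\mathbb{O}_{p_1}^\times$. The only difference is organizational — the paper produces a representative $r_k\in a^k\mathbb{O}_{p_1}a^{-k}$ for each $k$ and then normalizes them to coincide, whereas you fix $r=r_0$ and verify $a^k r a^{-k}\in\mathbb{O}_{p_1}$ directly — which is immaterial.
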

\begin{proof}
Because $\xi_{p_1}$ centralizes $A_{p_1}$
\begin{equation*}
h\in K_{p_1}^{(-n,n)}\xi_{p_1}K_{p_1}^{(-n,n)} \subseteq \bigcap_{k=-n}^n a^k K_{p_1}\xi_{p_1}K_{p_1} a^{-k}
\end{equation*}
Applying Lemma \ref{prop:Omega_xi_Omega-reduced-norm} for each set in the intersection above we conclude that for every $-n\leq k \leq n$ there is a representative $r_k\in a^k\mathbb{O}_{p_1} a^{-k}$ of $h$ such that $|\Nrd r_k|_v \Denom_{p_1}(\xi_{p_1})=1$.\

All the representatives $r_k$ for different values of $k$ are in the same $\mathbf{Z}(\mathbb{Q}_{p_1})$-orbit and their reduced norms have the same absolute value. Thus they are all in the same orbit of $\mathbb{Z}_{p_1}^\times<\mathbb{Q}_{p_1}^\times=\mathbf{Z}(\mathbb{Q}_{p_1})$. Multiplying each $r_k$ by an appropriate element 
of $\mathbb{Z}_{p_1}^\times<a^k\mathbb{O}_{p_1} a^{-k}$ for all $k$ we can make all the representatives $r_k$ equal to each other without affecting the valuation of their reduced norm and so that they still satisfy $r_k\in a^k\mathbb{O}_{p_1} a^{-k}$. This common representative satisfies the conditions of the claim.
\end{proof}

\section{The Double Quotient \texorpdfstring{${\mathbf{G}^\Delta}\backslash{\mathbf{G}\times\mathbf{G}}\slash{\mathbf{T}^\Delta}$}{G^Delta \textbackslash GxG / T^Delta}}\label{sec:GIT}
When studying the relative position of a homogeneous Hecke set and a joint homogeneous toral set we need to understand the double quotient $\dfaktor{\mathbf{G}^\Delta}{\mathbf{G}\times\mathbf{G}}{\mathbf{T}^\Delta}$. This can be achieved using GIT and Galois descent.

\subsection{The GIT Double Quotient}
\begin{defi}\hfill\\
\begin{enumerate}
\item 
Denote $\mathbf{M}\coloneqq\mathbf{G}\times\mathbf{T}$. The linear algebraic $\mathbf{M}$ group is defined over $\mathbb{Q}$. We consider an action of the group $\mathbf{M}$ 
on the affine variety $\mathbf{G}\times\mathbf{G}$ by letting the $\mathbf{G}$ coordinate act by diagonal multiplication on the left and by letting the $\mathbf{T}$ coordinate to act by diagonal multiplication by the inverse on the right. For geometric points the action is
\begin{equation*}
(l,t).(g_1,g_2)=(lg_1t^{-1},lg_2t^{-1})
\end{equation*}

\item
The universal categorical quotient for the action of the linear reductive group $\mathbf{M}$ on the affine variety $\mathbf{G}\times\mathbf{G}$ is representable by the following affine variety defined over $\mathbb{Q}$ \cite[Theorem 1.1]{GIT}
\begin{equation*}
\mathbf{W}\coloneqq\dfaktor{\mathbf{G}^\Delta}{\mathbf{G}\times\mathbf{G}}{\mathbf{T}^\Delta}
\coloneqq \Spec{\tensor[^{\mathbf{G}^\Delta}]{\mathbb{Q} \left[\mathbf{G}\times\mathbf{G}\right]}{^{\mathbf{T}^\Delta}}}
\end{equation*}
where $\tensor[^{\mathbf{G}^\Delta}]{\mathbb{Q} [\mathbf{G}\times\mathbf{G}]}{^{\mathbf{T}^\Delta}}$ is the ring of regular functions of $\mathbf{G}\times\mathbf{G}$ invariant under the $\mathbf{M}$ action.

\item
For any $\gamma\in\left(\mathbf{G}\times\mathbf{G}\right)(\mathbb{Q})$ define $\mathbf{M}_\gamma$ to be the stabilizer of $\gamma$. It is a linear algebraic group defined over $\mathbb{Q}$.

\item Denote by $\pi_\mathbf{W}\colon \mathbf{G}\times\mathbf{G}\to\mathbf{W}$ the $\mathbf{M}$-equivariant projection map.
\end{enumerate}
\end{defi}

\begin{defi}
Let $w_\mathbf{T}\in\Nrml_\mathbf{G}\mathbf{T}(\mathbb{Q})$ be a rational representative of the non-trivial class of the Weyl group of $\mathbf{T}$, i.e.\ $w_\mathbf{T}\not\in\mathbf{T}(\mathbb{Q})$. Such a representative always exists, for example because of Proposition \ref{prop:rational-points-global}. Although, the element $w_\mathbf{T}$ is not uniquely defined the variety $w_\mathbf{T}\mathbf{T}$ is a well-defined closed sub-variety of $\mathbf{G}$ defined over $\mathbb{Q}$.
\end{defi}

\begin{prop}\label{prop:M-orbits}
Let $\gamma=(\gamma_1,\gamma_2)\in\left(\mathbf{G}\times\mathbf{G}\right)(\mathbb{Q})$ be a rational point.
\begin{enumerate}
\item The $\mathbf{M}$-orbit of $\gamma$ is Zariski closed.
\item Recall that $\ctr(\gamma)=\gamma_1^{-1}\gamma_2$.
The stabilizer $\mathbf{M}_\gamma$ is trivial if $\ctr(\gamma)\not\in \Nrml_{\mathbf{G}}\mathbf{T}(\mathbb{Q})$.
If  $\ctr(\gamma)\in \Nrml_{\mathbf{G}}\mathbf{T}(\mathbb{Q})$ then
\begin{equation*}
\mathbf{M}_\gamma\simeq \Cent_{\mathbf{T}}(\ctr(\gamma))=\begin{cases}
\mathbf{T} & \ctr(\gamma)\equiv 1 \mod \mathbf{T}(\mathbb{Q})\\
\mathbf{T}[2] & \ctr(\gamma)\not\equiv 1 \mod \mathbf{T}(\mathbb{Q})
\end{cases}
\end{equation*}
The isomorphism above is $t\mapsto(\gamma_1t\gamma_1^{-1},t)$.
Moreover, the diagonalizable abelian affine group $\mathbf{T}[2]$ is isomorphic to $\mu_2$ over $\mathbb{Q}$.
\item If $\ctr(\gamma)\not\in \ w_\mathbf{T}\mathbf{T}(\mathbb{Q})$ then the following set is a singleton
\begin{equation*}
\ker\left[H^1\left(\mathbb{Q},\mathbf{M}_\gamma\right)
\to H^1\left(\mathbb{Q},\mathbf{M}\right) \right]
\end{equation*}
\item If $\ctr(\gamma)\not\in w_\mathbf{T}\mathbf{T}(\mathbb{Q})$ then $\pi_\mathbf{W}^{-1}(\gamma)(\mathbb{Q})$ is a single $\mathbf{M}(\mathbb{Q})$-orbit.
\end{enumerate}
\end{prop}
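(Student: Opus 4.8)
The plan is to prove the four parts in order, since each one feeds into the next. For part (1), I would argue that the $\mathbf{M}$-orbit of $\gamma$ is Zariski closed by a dimension/stabilizer argument combined with the structure of the $\mathbf{M}$-action. Concretely, the map $\mathbf{M}\to\mathbf{G}\times\mathbf{G}$, $m\mapsto m.\gamma$, has image the orbit; using the explicit action $(l,t).(g_1,g_2)=(lg_1t^{-1},lg_2t^{-1})$, one sees the orbit of $\gamma=(\gamma_1,\gamma_2)$ is determined by the $\mathbf{T}$-double coset behavior of $\ctr(\gamma)=\gamma_1^{-1}\gamma_2$: namely $(l,t).\gamma$ has contraction $t\,\ctr(\gamma)\,t^{-1}$, and conjugation orbits of semisimple-or-general elements of $\mathbf{G}$ under the torus $\mathbf{T}$ are closed (the relevant fibers of the Steinberg-type map $\mathbf{G}\to\mathbf{T}\sslash W$ restricted appropriately are closed). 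Alternatively, since $\mathbf{M}$ is reductive acting on the affine variety $\mathbf{G}\times\mathbf{G}$, by Kempf/the Hilbert--Mumford criterion an orbit is closed iff it is of minimal dimension in its closure; here I would show directly that no orbit degenerates because the $\mathbf{G}^\Delta$-action on the left is free and the right $\mathbf{T}^\Delta$-action is by a torus whose orbits on $\mathbf{G}$ are closed. I expect part (1) to be mostly bookkeeping once the action is written in terms of $\ctr(\gamma)$.

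For part (2), I would compute the stabilizer $\mathbf{M}_\gamma$ directly: $(l,t)$ fixes $\gamma$ iff $l\gamma_1 t^{-1}=\gamma_1$ and $l\gamma_2 t^{-1}=\gamma_2$, i.e.\ $l=\gamma_1 t\gamma_1^{-1}$ and simultaneously $l=\gamma_2 t\gamma_2^{-1}$. Substituting, the condition becomes $\gamma_1 t\gamma_1^{-1}=\gamma_2 t\gamma_2^{-1}$, equivalently $\ctr(\gamma)$ commutes with $t$, i.e.\ $t\in\Cent_{\mathbf{T}}(\ctr(\gamma))$. But for this to have a solution at all with $l\in\mathbf{G}$ we need $\gamma_1 t\gamma_1^{-1}\in\mathbf{G}$, which is automatic; the real constraint is that $t\in\mathbf{T}$ and $\ctr(\gamma)\in\Nrml_{\mathbf{G}}\mathbf{T}$ is forced once there exists a nontrivial $t$ (because $\Cent_{\mathbf{G}}(\mathbf{T})=\mathbf{T}$ and an element centralizing a nontrivial subtorus of $\mathbf{T}$ normalizes $\mathbf{T}$ — here I should be slightly careful, since $\mathbf{T}[2]$ is not a subtorus). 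The cleanest route: if $\ctr(\gamma)\notin\Nrml_{\mathbf{G}}\mathbf{T}(\mathbb{Q})$ then $\ctr(\gamma)\mathbf{T}\ctr(\gamma)^{-1}\ne\mathbf{T}$, and then $\Cent_{\mathbf{T}}(\ctr(\gamma))$ is trivial since any $t$ in it would lie in $\mathbf{T}\cap\ctr(\gamma)\mathbf{T}\ctr(\gamma)^{-1}$, an intersection of two distinct maximal tori of the rank-one group $\mathbf{G}$, hence finite, and moreover fixed pointwise by $\mathrm{Ad}(\ctr(\gamma))$ — in $\mathbf{PGL}_2$ a nontrivial element of a torus has centralizer exactly that torus, forcing triviality. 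When $\ctr(\gamma)\in\Nrml_{\mathbf{G}}\mathbf{T}(\mathbb{Q})$: if $\ctr(\gamma)\in\mathbf{T}(\mathbb{Q})$ it centralizes all of $\mathbf{T}$ so $\mathbf{M}_\gamma\simeq\mathbf{T}$; if $\ctr(\gamma)$ represents the nontrivial Weyl element, it acts on $\mathbf{T}\simeq\lfaktor{\Gm}{\WR\Gm}$ by the nontrivial involution (inversion), so $\Cent_{\mathbf{T}}(\ctr(\gamma))=\mathbf{T}[2]$. The identification $\mathbf{T}[2]\simeq\mu_2$ over $\mathbb{Q}$: the $2$-torsion of $\lfaktor{\Gm}{\WR\Gm}$ is $\lfaktor{\mu_2}{(\text{norm-}1\text{ part of }\WR\Gm)[2]}$; using $\WR\Gm[2]$ and the norm map one checks this quotient is $\mathbb{Q}$-isomorphic to $\mu_2$ (this uses $E/\mathbb{Q}$ quadratic so $\Gal$ acts by inversion on the norm-one torus and its $2$-torsion is $\{\pm1\}$ rational). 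The isomorphism $t\mapsto(\gamma_1 t\gamma_1^{-1},t)$ is then immediate from the computation of the stabilizer.

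For part (3), I want $\ker[H^1(\mathbb{Q},\mathbf{M}_\gamma)\to H^1(\mathbb{Q},\mathbf{M})]$ to be a singleton when $\ctr(\gamma)\notin w_{\mathbf{T}}\mathbf{T}(\mathbb{Q})$. By part (2), in this regime $\mathbf{M}_\gamma$ is either trivial (if $\ctr(\gamma)\notin\Nrml_{\mathbf{G}}\mathbf{T}(\mathbb{Q})$) — in which case $H^1(\mathbb{Q},\mathbf{M}_\gamma)$ is itself a singleton and there is nothing to prove — or $\mathbf{M}_\gamma\simeq\mathbf{T}$ (if $\ctr(\gamma)\in\mathbf{T}(\mathbb{Q})$; the Weyl case $\ctr(\gamma)\in w_{\mathbf{T}}\mathbf{T}(\mathbb{Q})$ is excluded by hypothesis). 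So the real content is the case $\mathbf{M}_\gamma\simeq\mathbf{T}$, where the isomorphism sends $t\mapsto(\gamma_1 t\gamma_1^{-1},t)$ and $\mathbf{M}=\mathbf{G}\times\mathbf{T}$. The inclusion $\mathbf{M}_\gamma\hookrightarrow\mathbf{M}$ composed with projection to the second factor $\mathbf{M}=\mathbf{G}\times\mathbf{T}\to\mathbf{T}$ is then the identity $\mathbf{T}\to\mathbf{T}$ (up to the isomorphism), hence the induced map $H^1(\mathbb{Q},\mathbf{M}_\gamma)\simeq H^1(\mathbb{Q},\mathbf{T})\to H^1(\mathbb{Q},\mathbf{M})=H^1(\mathbb{Q},\mathbf{G})\times H^1(\mathbb{Q},\mathbf{T})$ is a section of the second-coordinate projection, in particular injective. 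Therefore its kernel is trivial, i.e.\ a singleton. The main obstacle I anticipate here is making sure the embedding $\mathbf{M}_\gamma\to\mathbf{M}$ really does hit the $\mathbf{T}$-factor identically under the stated isomorphism — that is exactly what $t\mapsto(\gamma_1 t\gamma_1^{-1},t)$ encodes, so it should be clean, but one must track that $\gamma_1 t\gamma_1^{-1}\in\mathbf{G}$ is the genuine image in the $\mathbf{G}$-factor and not impose a constraint.

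Finally, part (4) follows formally from (1), (3), and the standard torsor/Galois-cohomology description of rational points in a fiber of a categorical quotient. Since the $\mathbf{M}$-orbit of $\gamma$ is Zariski closed (part 1), the fiber $\pi_{\mathbf{W}}^{-1}(\pi_{\mathbf{W}}(\gamma))$ over the image point, as a $\mathbf{Q}$-variety, is the single geometric orbit $\mathbf{M}\cdot\gamma\simeq\mathbf{M}/\mathbf{M}_\gamma$ (a categorical quotient separates closed orbits, so over $\overline{\mathbb{Q}}$ there is exactly one orbit in the fiber). The set of $\mathbb{Q}$-rational $\mathbf{M}(\mathbb{Q})$-orbits inside $(\mathbf{M}/\mathbf{M}_\gamma)(\mathbb{Q})$ is then classified by $\ker[H^1(\mathbb{Q},\mathbf{M}_\gamma)\to H^1(\mathbb{Q},\mathbf{M})]$ via the usual twisting/long-exact-sequence argument for the short exact sequence $1\to\mathbf{M}_\gamma\to\mathbf{M}\to\mathbf{M}/\mathbf{M}_\gamma\to1$. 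By part (3) this kernel is a singleton, hence $\pi_{\mathbf{W}}^{-1}(\gamma)(\mathbb{Q})$ is a single $\mathbf{M}(\mathbb{Q})$-orbit. The one subtlety to be careful about is matching notation: the statement writes $\pi_{\mathbf{W}}^{-1}(\gamma)$ where it presumably means the fiber over the image point $\pi_{\mathbf{W}}(\gamma)\in\mathbf{W}$; I would note this at the start. I expect part (1) (closedness of the orbit) to be the genuine technical heart; parts (3) and (4) are then formal homological algebra, and part (2) is a direct computation in $\mathbf{PGL}_2$.
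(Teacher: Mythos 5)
Parts (2), (3) and (4) of your proposal are essentially the paper's own argument: the stabilizer computation reducing to $\Cent_{\mathbf{T}}(\ctr(\gamma))$, the triviality of the cohomology kernel via composing with the projection $\mathbf{M}=\mathbf{G}\times\mathbf{T}\to\mathbf{T}$ (under which $\mathbf{M}_\gamma\hookrightarrow\mathbf{M}$ becomes the identity on $\mathbf{T}$), and the standard twisting argument for (4). One small imprecision in (4): the fiber of $\pi_{\mathbf{W}}$ is \emph{not} a single geometric orbit in general — it contains a unique \emph{closed} orbit — so you need part (1) applied to an arbitrary rational point of the fiber to conclude that all rational points of the fiber lie in the one closed orbit.

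The genuine gap is in part (1). Both of your proposed routes rest on the claim that $\Ad\mathbf{T}$-orbits on $\mathbf{G}$ are closed (equivalently, that the right $\mathbf{T}^\Delta$-action cannot cause degeneration). That claim is false over $\overline{\mathbb{Q}}$: after base change $\mathbf{T}_{\overline{\mathbb{Q}}}$ is conjugate to the diagonal torus $\mathbf{A}$, and any element $g$ whose matrix relative to $\mathbf{A}$ has exactly one vanishing off-diagonal entry has $\Ad\mathbf{A}$-orbit $\left\{\left(\begin{smallmatrix} a & s \\ 0 & d\end{smallmatrix}\right) : s\neq 0\right\}$, which is not closed — and this happens even for semisimple $g$ (take $a\neq d$), so restricting to "semisimple-or-general" elements does not save the argument. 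What makes the proposition true is precisely the pair of hypotheses your argument never uses: $\gamma$ is \emph{rational} and $\mathbf{T}$ is \emph{anisotropic over} $\mathbb{Q}$. The paper's proof runs as follows: if $\mathbf{M}.\gamma$ were not closed, Kempf's rational instability theorem would produce a destabilizing one-parameter subgroup $\lambda\colon\Gm\to\mathbf{M}=\mathbf{G}\times\mathbf{T}$ \emph{defined over} $\mathbb{Q}$ with $\lim_{s\to 0}\lambda(s).\gamma$ in the minimal closed invariant subset of the fiber; since $\mathbf{T}$ has no nontrivial $\mathbb{Q}$-rational cocharacters, $\lambda$ lands in $\mathbf{G}^\Delta$, whose orbits are the fibers of $\ctr$ and hence closed, giving a contradiction. (Concretely, by Proposition \ref{prop:rational-points-global} a rational element of $\mathbf{G}$ has off-diagonal entries $\epsilon b$ and $\tensor[^\sigma]{b}{}$ in coordinates adapted to $\mathbf{T}$, which vanish simultaneously or not at all, so the degenerate triangular position can never occur for $\ctr(\gamma)$ rational.) You must incorporate the rationality/anisotropy input in some such form; without it part (1) does not hold, and parts (3)--(4) of the proposition would then be unusable.
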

\begin{proof}
\textit{Part (1).}
Assume the orbit of $\gamma$ is not Zariski closed.
By \cite[Corollary 1.2]{GIT} the map $\pi_{\mathbf{W}}$ separates $\mathbf{M}$-invariant closed subsets. Because $\mathbf{G}\times\mathbf{G}$ is Noetherian we deduce that the fiber $\pi_{\mathbf{W}}^{-1}\left(\pi_{\mathbf{W}}(\gamma)\right)$ contains a  unique minimal non-empty Zariski closed $\mathbf{M}$-invariant subset, let $\mathbf{S}$ be the closed subvariety supported on this set.  Because the map $\pi_{\mathbf{W}}$ separates invariant closed subsets the support of $\mathbf{S}$ is contained in any non-empty invariant closed subset in the fiber.

The  orbit $\mathbf{M}.\gamma$ is open in its closure so $\overline{\mathbf{M}.\gamma}^\mathrm{Zar}\setminus\mathbf{M}.\gamma$ is an invariant  Zariski closed subset which by our assumption is non-empty, hence $\mathbf{S}$ is contained in it. In particular, $\gamma\not\in\mathbf{S}(\mathbb{Q})$.

The Zariski closure of the orbit of $\gamma$ contains $\mathbf{S}$ and hence by \cite[Corollary 4.3]{instability} there is a one-parameter subgroup $\lambda\colon\Gm\to\mathbf{M}=\mathbf{G}\times\mathbf{T}$ defined over $\mathbb{Q}$ such that $\delta=\lim_{s\to0}\lambda(s).\gamma\in\mathbf{S}(\mathbb{Q})$.

The torus $\mathbf{T}$ is anisotropic over $\mathbb{Q}$ so the image of $\lambda$ lies in $\mathbf{G}$ and $\delta\in\overline{\mathbf{G}.\gamma}^\mathrm{Zar}(\mathbb{Q})$.
But  $\mathbf{G}.\gamma\simeq\mathbf{G}.e=\mathbf{G}^\Delta$ which is Zariski closed, so $\delta\in\left(\mathbf{G}.\gamma\cap \mathbf{S}\right)(\mathbb{Q})$. As $\mathbf{S}$ is $\mathbf{M}$-invariant  we deduce a contradiction that $\gamma\in S(\mathbb{Q})$.

\textit{Part (2)}
Let $e\neq(g,t)\in\mathbf{M}_\gamma(\bar{\mathbb{Q}})<\mathbf{G}(\bar{\mathbb{Q}})\times \mathbf{T}(\bar{\mathbb{Q}})$ then
\begin{align*}
(g\gamma_1 t^{-1}, g\gamma_2 t^{-1})=(\gamma_1,\gamma_2) &\Longrightarrow t \ctr(\gamma) t^{-1}=\ctr(\gamma) \\
&\Longrightarrow \ctr(\gamma)\in \Nrml_{\mathbf{G}(\bar{\mathbb{Q}})}(t)=\mathbf{T}(\bar{\mathbb{Q}})
\end{align*}
Moreover, in this case $g=\gamma_1 t \gamma_1^{-1}$.
We deduce that the stabilizer is trivial unless $\ctr(\gamma)\in\Nrml_{\mathbf{G}}\mathbf{T}(\mathbb{Q})$ and it is isomorphic to $\Cent_{\mathbf{T}}(\ctr(\gamma))$ otherwise with the isomorphism exactly as stated in claim (2). 

If $\ctr(\gamma)\in\mathbf{T}(\mathbb{Q})$ then the entire torus $\mathbf{T}$ centralizers it. If $\ctr(\gamma)\in \Nrml_{\mathbf{G}}\mathbf{T}(\mathbb{Q})$ then only elements of order $2$ centralize it. This finishes the computation of the stabilizers.

To see that $\mathbf{T}[2]\simeq \mu_2$ we consider the dual group $\widehat{\mathbf{T}[2]}\simeq \faktor{\widehat{\mathbf{T}}}{\widehat{\mathbf{T}}^2}$ which has two geometric points. As the Galois group $\Gal(E/\mathbb{Q})$ acts by inversion on $\mathbf{T}$ its action on $\faktor{\widehat{\mathbf{T}}}{\widehat{\mathbf{T}}^2}$ is trivial. Hence this dual group is the constant $\cyclic{2}$-group scheme and its dual is $\mu_2$.

\textit{Part (3)}
If the stabilizer is trivial then the statement is obvious. Otherwise, we use the projection on the second coordinate $\mathbf{M}=\mathbf{G}\times\mathbf{T}\to \mathbf{T}$ to construct a sequence of maps
\begin{equation}\label{eq:H1(Mgamma)-to-H1(T)}
H^1(\mathbb{Q},\mathbf{M}_\gamma)\to H^1(\mathbb{Q},\mathbf{M})\to H^1(\mathbb{Q},\mathbf{T})
\end{equation}
In order to prove that $\ker\left[H^1\left(\mathbb{Q},\mathbf{M}_\gamma\right)
\to H^1\left(\mathbb{Q},\mathbf{M}\right) \right]=1$ it is enough to show that the kernel of the composite map of \eqref{eq:H1(Mgamma)-to-H1(T)} is trivial.

The isomorphism $\mathbf{M_\gamma}\simeq \Cent_{\mathbf{T}}(\ctr(\gamma))$ is given by the inclusion map in the second coordinate, hence the composite map of \eqref{eq:H1(Mgamma)-to-H1(T)} is exactly the map of cohomology sets $H^1(\mathbb{Q},\Cent_{\mathbf{T}}(\ctr(\gamma)))\to H^1(\mathbb{Q},\mathbf{T})$
induced by the inclusion $\Cent_{\mathbf{T}}(\ctr(\gamma))\hookrightarrow\mathbf{T}$. This map is the identity if $\Cent_{\mathbf{T}}(\ctr(\gamma))=\mathbf{T}$ and obviously has trivial kernel.

\textit{Part (4).}
Because the $\pi_{\mathbf{W}}$-fiber of any rational point contains a unique Zariski closed orbit of a rational point we conclude that $\pi_{\mathbf{W}}$ separates $\mathbf{M}(\bar{\mathbb{Q}})$-orbits of rational points. We are left with proving that for any $\gamma\not\in w_\mathbf{T}\mathbf{T}(\mathbb{Q})$ the orbit $\mathbf{M}(\bar{\mathbb{Q}})$ contains a unique $\mathbf{M}(\mathbb{Q})$-orbit. 

The collection of $\mathbf{M}(\mathbb{Q})$-orbits in $\mathbf{M}(\bar{\mathbb{Q}}).\gamma$ is in bijection with
\begin{equation}\label{eq:GxT-cohomology-kernel}
\ker\left[H^1\left(\mathbb{Q},\mathbf{M}_\gamma\right)
\to H^1\left(\mathbb{Q},\mathbf{M}\right) \right]
\end{equation}
which is trivial by part (3) if $\ctr(\gamma)\not\in w_\mathbf{T}\mathbf{T}(\mathbb{Q})$.
\end{proof}

The proposition above implies that the set theoretic double cosets $\dfaktor{\mathbf{G}^\Delta(\mathbb{Q})}{\left(\mathbf{G} \times \mathbf{G}\right)(\mathbb{Q})}{\mathbf{T}^\Delta(\mathbb{Q})}$ are almost parametrized by the associated points in $\mathbf{W}(\mathbb{Q})$. Not all the points in $\mathbf{W}(\mathbb{Q})$ actually correspond to set theoretic double cosets of rational points of $\mathbf{G}$.

\subsection{The Quotient of \texorpdfstring{$\mathbf{G}$}{G} by the Adjoint Action of \texorpdfstring{$\mathbf{T}$}{T}}
Because the left action of $\mathbf{G}^\Delta$ and right action of $\mathbf{T}^\Delta$ on $\mathbf{G}\times\mathbf{G}$ commute we have the following commutative diagram
\begin{center}
\begin{tikzcd}
 \mathbf{G}\times \mathbf{G}  \arrow[d,twoheadrightarrow] \arrow[r,twoheadrightarrow] & \mathbf{W} \arrow[dd]\\
\lfaktor{\mathbf{G}^\Delta}{\mathbf{G}\times\mathbf{G}}  \arrow[d,,"\scalebox{2}{\rotatebox{90}{\(\sim\)}}"]  & \\
\mathbf{G} \arrow[r,twoheadrightarrow]& \lfaktor{\Ad \mathbf{T}}{\mathbf{G}}
\end{tikzcd}
\end{center}
where $\lfaktor{\Ad \mathbf{T}}{\mathbf{G}}$ is the GIT quotient of the affine variety $\mathbf{G}$ by the adjoint action of the reductive group $\mathbf{T}$. The existence of the morphism $\mathbf{W}\to \lfaktor{\Ad \mathbf{T}}{\mathbf{G}}$ follows from the universal property of the categorical quotient $\mathbf{W}$. The composite $\mathbf{G}^\Delta$-invariant map $\mathbf{G}\times\mathbf{G}\to\mathbf{G}$ in the left column of the diagram is exactly the contraction map $\ctr$.

\begin{prop}\label{prop:AdTG-W-iso}
The morphism $\mathbf{W}\to \lfaktor{\Ad \mathbf{T}}{\mathbf{G}}$ is an isomorphism.
\end{prop}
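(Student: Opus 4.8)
The plan is to show that the morphism $\mathbf{W}\to\lfaktor{\Ad\mathbf{T}}{\mathbf{G}}$ induced by the universal property is an isomorphism of affine schemes over $\mathbb{Q}$, which amounts to an isomorphism of coordinate rings $\mathbb{Q}[\mathbf{G}]^{\Ad\mathbf{T}}\xrightarrow{\sim}\tensor[^{\mathbf{G}^\Delta}]{\mathbb{Q}[\mathbf{G}\times\mathbf{G}]}{^{\mathbf{T}^\Delta}}$. The cleanest route is to identify the map $\ctr\colon\mathbf{G}\times\mathbf{G}\to\mathbf{G}$, $(g_1,g_2)\mapsto g_1^{-1}g_2$, as a \emph{geometric quotient} by the left $\mathbf{G}^\Delta$-action: it is surjective, its fibers are exactly the $\mathbf{G}^\Delta$-orbits (the fiber over $h$ is $\{(g,gh)\mid g\in\mathbf{G}\}$), and it is in fact a trivial $\mathbf{G}$-torsor via the isomorphism $\mathbf{G}\times\mathbf{G}\xrightarrow{\sim}\mathbf{G}\times\mathbf{G}$, $(g_1,g_2)\mapsto(g_1,g_1^{-1}g_2)$, under which $\mathbf{G}^\Delta$ acts only on the first factor and $\ctr$ becomes the second projection. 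In particular $\ctr$ realizes $\mathbf{G}$ as $\lfaktor{\mathbf{G}^\Delta}{(\mathbf{G}\times\mathbf{G})}$ in the strongest possible sense (faithfully flat with a section), so $\mathbb{Q}[\mathbf{G}]=\tensor[^{\mathbf{G}^\Delta}]{\mathbb{Q}[\mathbf{G}\times\mathbf{G}]}{}$ — the ring of $\mathbf{G}^\Delta$-invariants — and this identification is already indicated by the left column of the commutative diagram in the excerpt.

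Granting this, the key point is that forming the $\mathbf{G}^\Delta$-quotient and the residual $\mathbf{T}^\Delta$-quotient commute. First I would note that under the torsor trivialization above, the right $\mathbf{T}^\Delta$-action $(g_1,g_2)\mapsto(g_1 t^{-1},g_2 t^{-1})$ transports to the action $(g_1,h)\mapsto(g_1 t^{-1},\,t h t^{-1})$ on the new coordinates: the first factor is translated, and on the $h=g_1^{-1}g_2$ factor $\mathbf{T}^\Delta$ acts by the adjoint action $\Ad t^{-1}$. Hence $\tensor[^{\mathbf{G}^\Delta}]{\mathbb{Q}[\mathbf{G}\times\mathbf{G}]}{^{\mathbf{T}^\Delta}}$, being the subring of $\mathbb{Q}[\mathbf{G}\times\mathbf{G}]^{\mathbf{G}^\Delta}=\mathbb{Q}[\mathbf{G}]$ (the $h$-coordinate ring) fixed by $\mathbf{T}^\Delta$, is precisely $\mathbb{Q}[\mathbf{G}]^{\Ad\mathbf{T}}$; and one checks the resulting identification is the very map $\mathbf{W}\to\lfaktor{\Ad\mathbf{T}}{\mathbf{G}}$ coming from the universal property, because both are characterized by compatibility with $\pi_{\mathbf{W}}$ and $\ctr$. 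Since $\mathbf{T}$ is reductive (a torus), taking $\mathbf{T}$-invariants is exact and commutes with the faithfully flat base change given by the $\mathbf{G}^\Delta$-torsor, so no subtlety about non-reduced structure or failure of the quotient to be universal arises — both $\mathbf{W}$ and $\lfaktor{\Ad\mathbf{T}}{\mathbf{G}}$ are universal categorical (indeed good) quotients by \cite[Theorem 1.1]{GIT}.

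I would write the argument as follows: (i) exhibit the isomorphism $\Phi\colon\mathbf{G}\times\mathbf{G}\xrightarrow{\sim}\mathbf{G}\times\mathbf{G}$, $(g_1,g_2)\mapsto(g_1,g_1^{-1}g_2)$, defined over $\mathbb{Q}$, and record how it intertwines the two $\mathbf{M}=\mathbf{G}\times\mathbf{T}$-actions — on the source the given action, on the target the action $(l,t).(g_1,h)=(lg_1 t^{-1},\,t h t^{-1})$; (ii) conclude $\mathbb{Q}[\mathbf{G}\times\mathbf{G}]^{\mathbf{M}}\cong(\mathbb{Q}[\mathbf{G}]\otimes\mathbb{Q}[\mathbf{G}])^{\mathbf{M}}$, and since the first $\mathbf{G}$-factor is a free transitive torsor under the $\mathbf{G}$-part of $\mathbf{M}$, this equals $(\mathbb{Q}[\mathbf{G}])^{\mathbf{T}}$ where $\mathbf{T}$ acts by $\Ad$; (iii) observe this is exactly $\mathbb{Q}[\lfaktor{\Ad\mathbf{T}}{\mathbf{G}}]$, and identify the induced scheme map with the canonical one from the diagram, completing the proof. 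The main obstacle — and it is mild — is step (ii): one must be slightly careful that taking invariants under the first $\mathbf{G}$-factor really does collapse $\mathbb{Q}[\mathbf{G}\times\mathbf{G}]$ onto $\mathbb{Q}[\mathbf{G}]$ \emph{as rings and compatibly with the residual $\mathbf{T}$-action}, i.e.\ that the evaluation-at-$e$ map $g_1\mapsto e$ gives a $\mathbf{T}$-equivariant section of the torsor; this is immediate from the explicit formulas but should be stated cleanly so that one does not accidentally need reductivity of $\mathbf{G}$ (which fails) rather than just the torsor structure. Everything else is formal manipulation of the commutative diagram and the universal property of categorical quotients.
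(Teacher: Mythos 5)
Your argument is correct and is essentially the paper's proof: both identify $\lfaktor{\mathbf{G}^\Delta}{\mathbf{G}\times\mathbf{G}}$ with $\mathbf{G}$ via the contraction map (under which the residual right $\mathbf{T}^\Delta$-action becomes $\Ad\mathbf{T}$) and then use that the commuting left $\mathbf{G}^\Delta$- and right $\mathbf{T}^\Delta$-actions allow taking invariants in stages, so $\tensor[^{\mathbf{G}^\Delta}]{\mathbb{Q}[\mathbf{G}\times\mathbf{G}]}{^{\mathbf{T}^\Delta}}=\mathbb{Q}[\mathbf{G}]^{\Ad\mathbf{T}}$. Your explicit torsor trivialization and the remark that reductivity of $\mathbf{G}$ is never needed are more detail than the paper supplies, but the route is the same.
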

\begin{proof}
Because the morphism $\lfaktor{\mathbf{G}^\Delta}{\mathbf{G}\times\mathbf{G}} \to \mathbf{G}$ is an isomorphism the ring of regular functions on $\lfaktor{\Ad \mathbf{T}}{\mathbf{G}}$ is identified with the regular functions on $\lfaktor{\mathbf{G}^\Delta}{\mathbf{G}\times\mathbf{G}}$ which are invariant under the right action of $\mathbf{T}^\Delta$. This is the same as the ring of $\mathbf{M}$-invariant regular functions on $\mathbf{G}\times\mathbf{G}$ because the left action of $\mathbf{G}^\Delta$ commutes with the right action of $\mathbf{T}^\Delta$.
\end{proof}

\begin{cor}\label{cor:AdTG(Q)-separation}
Let $\pi_{\mathbf{W}}^0\colon \mathbf{G}\to \lfaktor{\Ad \mathbf{T}}{\mathbf{G}}\simeq \mathbf{W}$ be the $\Ad\mathbf{T}$-equivariant projection map.
For any $\gamma_0\in\mathbf{G}(\mathbb{Q})$ if $\gamma_0\not\in w_\mathbf{T}\mathbf{T}(\mathbb{Q})$ 
then ${\pi_{\mathbf{W}}^0}^{-1}(\gamma_0)(\mathbb{Q})$ is a single
 $\Ad\mathbf{T}(\mathbb{Q})$-orbit.
\end{cor}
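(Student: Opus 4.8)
The plan is to deduce Corollary~\ref{cor:AdTG(Q)-separation} directly from Proposition~\ref{prop:M-orbits}(4) via the isomorphism $\mathbf{W}\xrightarrow{\sim}\lfaktor{\Ad\mathbf{T}}{\mathbf{G}}$ of Proposition~\ref{prop:AdTG-W-iso}, together with the identification $\lfaktor{\mathbf{G}^\Delta}{\mathbf{G}\times\mathbf{G}}\simeq\mathbf{G}$ under which the $\mathbf{G}^\Delta$-invariant map $\ctr$ becomes the canonical projection. The key point is that the diagram preceding Proposition~\ref{prop:AdTG-W-iso} is compatible with all the group actions, so the fiber of $\pi_{\mathbf{W}}^0$ over a rational point $\gamma_0\in\mathbf{G}(\mathbb{Q})$ corresponds, under $\ctr$, to the fiber of $\pi_{\mathbf{W}}$ over any $\gamma=(\gamma_1,\gamma_2)\in(\mathbf{G}\times\mathbf{G})(\mathbb{Q})$ with $\ctr(\gamma)=\gamma_1^{-1}\gamma_2=\gamma_0$; such $\gamma$ exists, e.g.\ $\gamma=(e,\gamma_0)$.

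First I would fix $\gamma_0\in\mathbf{G}(\mathbb{Q})$ with $\gamma_0\notin w_\mathbf{T}\mathbf{T}(\mathbb{Q})$ and set $\gamma=(e,\gamma_0)$, so $\ctr(\gamma)=\gamma_0\notin w_\mathbf{T}\mathbf{T}(\mathbb{Q})$. By Proposition~\ref{prop:M-orbits}(4), $\pi_{\mathbf{W}}^{-1}\big(\pi_{\mathbf{W}}(\gamma)\big)(\mathbb{Q})$ is a single $\mathbf{M}(\mathbb{Q})$-orbit, where $\mathbf{M}=\mathbf{G}\times\mathbf{T}$ acts by $(l,t).(g_1,g_2)=(lg_1t^{-1},lg_2t^{-1})$. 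Next I would translate this through the identification $\lfaktor{\mathbf{G}^\Delta}{\mathbf{G}\times\mathbf{G}}\simeq\mathbf{G}$, which sends the class of $(g_1,g_2)$ to $g_1^{-1}g_2=\ctr(g_1,g_2)$. Under this map the residual right $\mathbf{T}^\Delta$-action $(g_1,g_2).t=(g_1t,g_2t)$ on $\lfaktor{\mathbf{G}^\Delta}{\mathbf{G}\times\mathbf{G}}$ becomes $g\mapsto t^{-1}gt$, i.e.\ the adjoint action of $\mathbf{T}$ on $\mathbf{G}$; hence the $\mathbf{M}(\mathbb{Q})$-orbit of $\gamma$ maps onto the $\Ad\mathbf{T}(\mathbb{Q})$-orbit of $\gamma_0$, and more generally $\mathbf{M}(\mathbb{Q})$-orbits in a fiber of $\pi_{\mathbf{W}}$ correspond bijectively to $\Ad\mathbf{T}(\mathbb{Q})$-orbits in the corresponding fiber of $\pi_{\mathbf{W}}^0$ (the surjectivity of $\ctr\colon(\mathbf{G}\times\mathbf{G})(\mathbb{Q})\to\mathbf{G}(\mathbb{Q})$ on $\mathbb{Q}$-points is immediate since $\gamma_0=(e,\gamma_0)^{-1}\cdot$ works coordinatewise). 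Finally, under the isomorphism $\mathbf{W}\simeq\lfaktor{\Ad\mathbf{T}}{\mathbf{G}}$ the point $\pi_{\mathbf{W}}(\gamma)$ corresponds to $\pi_{\mathbf{W}}^0(\gamma_0)$, so ${\pi_{\mathbf{W}}^0}^{-1}(\pi_{\mathbf{W}}^0(\gamma_0))(\mathbb{Q})$ is a single $\Ad\mathbf{T}(\mathbb{Q})$-orbit, which is the assertion.

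I do not expect a serious obstacle here: the statement is essentially a bookkeeping corollary of Proposition~\ref{prop:M-orbits}(4). The one point requiring a little care is to verify that the bijection between $\mathbf{M}(\mathbb{Q})$-orbits in $\pi_{\mathbf{W}}^{-1}(\pi_{\mathbf{W}}(\gamma))(\mathbb{Q})$ and $\Ad\mathbf{T}(\mathbb{Q})$-orbits in ${\pi_{\mathbf{W}}^0}^{-1}(\pi_{\mathbf{W}}^0(\gamma_0))(\mathbb{Q})$ is genuinely a bijection and not merely a surjection — that is, that two points of $\mathbf{G}(\mathbb{Q})$ lying in the same $\pi_{\mathbf{W}}^0$-fiber and differing by $\Ad\mathbf{T}(\mathbb{Q})$ lift to points of $(\mathbf{G}\times\mathbf{G})(\mathbb{Q})$ in the same $\mathbf{M}(\mathbb{Q})$-orbit. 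This follows because the fiber of $\ctr$ over a $\mathbb{Q}$-point is a single $\mathbf{G}^\Delta(\mathbb{Q})$-coset (as $\ctr$ is the quotient map for the free left $\mathbf{G}^\Delta$-action), so the preimage in $(\mathbf{G}\times\mathbf{G})(\mathbb{Q})$ of an $\Ad\mathbf{T}(\mathbb{Q})$-orbit is exactly one $\mathbf{M}(\mathbb{Q})$-orbit. Everything else is formal from the commutative diagram preceding Proposition~\ref{prop:AdTG-W-iso}.
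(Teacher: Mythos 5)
Your proposal is correct and follows the same route as the paper, which simply cites Proposition \ref{prop:AdTG-W-iso} together with part (4) of Proposition \ref{prop:M-orbits}; your write-up just makes explicit the bookkeeping (lifting $\gamma_0$ to $(e,\gamma_0)$, matching $\mathbf{M}(\mathbb{Q})$-orbits with $\Ad\mathbf{T}(\mathbb{Q})$-orbits via $\ctr$) that the paper leaves implicit.
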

\begin{proof}
Follows immediately from Proposition \ref{prop:AdTG-W-iso} and part (4) of Proposition \ref{prop:M-orbits}. 
\end{proof}

\begin{defi}
Recall that $\mathbf{G}^\sc$ is identified with the group of unit quaternions in $\mathbf{B}$. Define $\mathbf{T}^{(1)}$ to be the maximal torus defined over $\mathbb{Q}$ in $\mathbf{G}^\sc$ which maps under the isogeny $\mathbf{G}^\sc\to\mathbf{G}$ to $\mathbf{T}$. 

The identity $\mathbf{G}^\sc=\mathbf{B}^{(1)}$ implies that the torus $\mathbf{T}^{(1)}$ is the subgroup of unit quaternions in $\widetilde{\mathbf{T}}$.
\end{defi}

Using the isogeny $\mathbf{T}^{(1)}\to\mathbf{T}$ we let $\mathbf{T}^{(1)}$ act on $\mathbf{G}$ through the adjoint action of $\mathbf{T}$. As $\mathbf{T}^{(1)}\to\mathbf{T}$ is surjective we have
\begin{equation*}
\lfaktor{\Ad \mathbf{T}}{\mathbf{G}}=\lfaktor{\Ad \mathbf{T}^{(1)}}{\mathbf{G}}
\end{equation*}

We let $\mathbf{T}^{(1)}$ act on $\mathbf{B}^\times$ by the adjoint action. Because the actions of $\Ad \mathbf{T}^{(1)}$ and $\mathbf{Z}$ on $\mathbf{B}^\times$ commute the reductive group $\mathbf{Z}$ acts on the affine variety $\lfaktor{\Ad\mathbf{T}^{(1)}}{\mathbf{B}^\times}$ and the GIT quotient for this action is canonically isomorphic to $\lfaktor{\Ad \mathbf{T}}{\mathbf{G}}$. In particular, the morphism of Noetherian schemes
\begin{equation*}
\lfaktor{\Ad\mathbf{T}^{(1)}}{\mathbf{B}^\times}\to\lfaktor{\Ad \mathbf{T}}{\mathbf{G}}\simeq \mathbf{W}
\end{equation*}
is universally submersive.

\subsection{The Quotient of \texorpdfstring{$\mathbf{GL}_2$}{GL2} by the Adjoint Action of the Diagonal Torus}
To describe the ring of regular function of $\lfaktor{\Ad\mathbf{T}^{(1)}}{\mathbf{B}^\times}$ we begin with a simpler case when $\mathbf{B}=\mathbf{M}_{2\times 2}$ is split over $\mathbb{Q}$ and $\mathbf{T}^{(1)}$ is replaced by the torus of diagonal matrices with determinant $1$. 

\begin{defi}
Let $\mathbf{A}^{(1)}<\mathbf{GL}_2$ be the rank-$1$ torus of diagonal matrices with determinant 1. Denote by $\mathbf{A}$ the maximal torus of split diagonal matrices in $\mathbf{PGL}_2$. The map $\mathbf{A}^{(1)}\to\mathbf{A}$ is surjective in terms of schemes.
\end{defi}

\begin{defi}\label{def:inv-functions}\hfil
\begin{enumerate}
\item We let $\mathbf{A}^{(1)}$ act on $\mathbf{M}_{2\times 2}\times \mathbb{A}_1$ by conjugating the $2\times 2$ matrix and leaving the $\det^{-1}$ coordinate invariant. We denote this action by $\Ad \mathbf{A}^{(1)}$. 
This action is clearly equivariant with respect to the map $\mathbf{GL}_2\hookrightarrow \mathbf{M}_{2\times 2}\times \mathbb{A}_1$.

\item Define $\vartheta_1,\vartheta_2,\psi \in \mathbb{Q}[\mathbf{M}_{2\times 2}]$ by 
\begin{align*}
\vartheta_1&\coloneqq x_{1,1} &
\vartheta_2&\coloneqq x_{2,2} &
\psi&\coloneqq x_{1,2}x_{2,1} 
\end{align*}
\end{enumerate}
\end{defi}

\begin{lem}\label{lem:M2x2A1-invariants}
There is an equality of $\mathbb{Q}$-algebras
\begin{equation*} 
\mathbb{Q}[\mathbf{M}_{2\times 2}\times \mathbb{A}_1]^{\Ad\mathbf{A}^{(1)}}=\mathbb{Q}[\vartheta_1,\vartheta_2,\psi,{\det}^{-1}]
\end{equation*}
The left-hand side is the ring of regular functions of $\mathbf{M}_{2\times 2}\times \mathbb{A}_1$ invariant under $\Ad \mathbf{A}^{(1)}$ and the right-hand side is a polynomial algebra over $\mathbb{Q}$.
\end{lem}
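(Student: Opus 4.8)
The plan is to make the $\Ad\mathbf{A}^{(1)}$-action completely explicit on the polynomial generators and then read off the invariant ring as the weight-zero graded piece. Recall that $\mathbb{Q}[\mathbf{M}_{2\times 2}\times \mathbb{A}_1]=\mathbb{Q}[x_{1,1},x_{1,2},x_{2,1},x_{2,2},{\det}^{-1}]$ is a genuine polynomial ring in five variables, where ${\det}^{-1}$ is here the coordinate on the $\mathbb{A}_1$-factor and is \emph{not} yet inverted (the localization occurs only when one passes to the closed subscheme $\mathbf{GL}_2$). Identify $\mathbf{A}^{(1)}$ with $\Gm$ via $s\mapsto\operatorname{diag}(s,s^{-1})$. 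Conjugation by $\operatorname{diag}(s,s^{-1})$ fixes the two diagonal entries and multiplies the off-diagonal entries by $s^{2}$ and $s^{-2}$ respectively, and by hypothesis the ${\det}^{-1}$ coordinate is fixed. Hence the $\Gm$-coaction endows $\mathbb{Q}[\mathbf{M}_{2\times 2}\times \mathbb{A}_1]$ with a $\mathbb{Z}$-grading in which $x_{1,1}$, $x_{2,2}$, ${\det}^{-1}$ have weight $0$ while $x_{1,2}$ and $x_{2,1}$ have opposite nonzero weights $\pm 2$; the invariant subring is precisely the weight-$0$ homogeneous component.

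The second step is the elementary extraction of that component. Write an arbitrary element as $f=\sum_{a,b\ge 0} c_{a,b}\,x_{1,2}^{a}x_{2,1}^{b}$ with $c_{a,b}\in\mathbb{Q}[x_{1,1},x_{2,2},{\det}^{-1}]$, each of weight $0$. The monomial $x_{1,2}^{a}x_{2,1}^{b}$ has weight $2(a-b)$, so $f$ is invariant if and only if $c_{a,b}=0$ whenever $a\neq b$, i.e.\ if and only if $f=\sum_{a\ge 0} c_{a,a}\,(x_{1,2}x_{2,1})^{a}$. Since $x_{1,2}x_{2,1}=\psi$, $x_{1,1}=\vartheta_1$, $x_{2,2}=\vartheta_2$, this is exactly the assertion that the invariant ring equals $\mathbb{Q}[\vartheta_1,\vartheta_2,\psi,{\det}^{-1}]$. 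Finally one notes that $\vartheta_1,\vartheta_2,\psi,{\det}^{-1}$ are algebraically independent over $\mathbb{Q}$ — immediate, because the monomials $x_{1,1}^{i}x_{2,2}^{j}({\det}^{-1})^{k}(x_{1,2}x_{2,1})^{\ell}$ are pairwise distinct and hence linearly independent in the ambient polynomial ring — so that the right-hand side is indeed a polynomial algebra.

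I do not anticipate any real obstacle here: the entire content is the observation that, in these coordinates, $\Ad\mathbf{A}^{(1)}$ acts diagonally with weights $0,0,0,\pm 2$, after which the statement collapses to the triviality that the invariants of a graded ring under the associated $\Gm$-action are its degree-zero part. The only point deserving a word of care is the meaning of ``invariant'' for a torus action over a field of characteristic zero — pointwise invariance under $\mathbf{A}^{(1)}(\overline{\mathbb{Q}})$, being a weight-zero comodule element, and being annihilated by the infinitesimal action all coincide — but this is standard and will be invoked without further comment.
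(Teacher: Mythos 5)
Your proof is correct and follows essentially the same route as the paper: both arguments exploit that $\Ad\mathbf{A}^{(1)}$ acts diagonally on the monomial basis of the five-variable polynomial ring (weights $0,0,0,\pm2$), so the invariants are spanned by the monomials with $a_{1,2}=a_{2,1}$, and both verify algebraic independence by noting that distinct monomials in $\vartheta_1,\vartheta_2,\psi,{\det}^{-1}$ remain distinct monomials in the ambient polynomial ring.
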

\begin{proof}
It is easy the check that $\vartheta_1,\vartheta_2,\psi,{\det}^{-1}$ are $\Ad\mathbf{A}^{(1)}$-invariant. We need to show that these function generate the ring of invariants and that there are no non-trivial syzygies.

Because $\mathbb{Q}[\mathbf{M}_{2\times 2}\times \mathbb{A}_1]$ is a polynomial ring and the action of $\Ad\mathbf{A}^{(1)}$ preserves monomials, the invariant ring is generated by monomials.
Let $f\in \mathbb{Q}[\mathbf{M}_{2\times 2}\times \mathbb{A}_1]^{\Ad\mathbf{A}^{(1)}}$ be a monomial and write
\begin{equation*}
f=\left({\det}^{-1}\right)^d \prod_{1\leq i,j \leq 2} x_{i,j}^{a_{i,j}}
=\left({\det}^{-1}\right)^d \vartheta_1^{a_{1,1}} \vartheta_2^{a_{2,2}} x_{1,2}^{a_{1,2}} x_{2,1}^{a_{2,1}}
\end{equation*}
For $f$ to be $\Ad\mathbf{A}^{(1)}$-invariant we must have $a_{1,2}=a_{2,1}$ which implies that $f\in \mathbb{Q}[\vartheta_1,\vartheta_2,\psi,{\det}^{-1}]$.

A syzygy $Q$ is a formal polynomial in the variables $\vartheta_1,\vartheta_2,\psi,{\det}^{-1}$ with coefficients in $\mathbb{Q}$ which vanishes as an element of $\mathbb{Q}[\mathbf{M}_{2\times 2}\times \mathbb{A}_1]^{\Ad\mathbf{A}^{(1)}}$. Because each variable $x_{i,j},\det^{-1}$, $1\leq i,j \leq 2$ appears in only one of the monomials $\vartheta_1,\vartheta_2,\psi,{\det}^{-1}$ we conclude that if $Q$ vanishes as an element of $\mathbb{Q}[\vartheta_1,\vartheta_2,\psi,{\det}^{-1}]$ it also vanishes as an element of the free polynomial algebra $\mathbb{Q}[\mathbf{M}_{2\times 2}\times \mathbb{A}_1]$. Thus all the relations between $\vartheta_1,\vartheta_2,\psi,{\det}^{-1}$ are trivial.
\end{proof}

\begin{prop}
The ring of $\Ad \mathbf{A}^{(1)}$-invariant regular functions on $\mathbf{GL}_2$ is
\begin{equation*}
\mathbb{Q}[\mathbf{GL}_2]^{\Ad\mathbf{A}^{(1)}}=
\left.\mathbb{Q}\left[\vartheta_1,\vartheta_2,\psi,{\det}^{-1}\right] 
\middle\slash \left<\left(\vartheta_1\vartheta_2-\psi\right){\det}^{-1}=1 \right> \right.
\end{equation*}
\end{prop}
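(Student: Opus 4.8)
The plan is to deduce this from Lemma~\ref{lem:M2x2A1-invariants} by descending along the $\Ad\mathbf{A}^{(1)}$-equivariant closed immersion $\mathbf{GL}_2\hookrightarrow\mathbf{M}_{2\times 2}\times\mathbb{A}_1$, exploiting that $\mathbf{A}^{(1)}\simeq\Gm$ is a split torus and hence linearly reductive over $\mathbb{Q}$.

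First I would note that the closed immersion is equivariant (conjugation by a diagonal matrix fixes the determinant), so the corresponding surjection $R\coloneqq\mathbb{Q}[\mathbf{M}_{2\times 2}\times\mathbb{A}_1]\twoheadrightarrow\mathbb{Q}[\mathbf{GL}_2]$ is a surjection of $\Ad\mathbf{A}^{(1)}$-algebras whose kernel is, by the very definition of $\mathbb{Q}[\mathbf{GL}_2]$, the principal ideal $I=fR$ with $f\coloneqq (x_{1,1}x_{2,2}-x_{1,2}x_{2,1}){\det}^{-1}-1=(\vartheta_1\vartheta_2-\psi){\det}^{-1}-1$; the generator $f$ itself lies in $R^{\Ad\mathbf{A}^{(1)}}$ since $x_{1,1}x_{2,2}-x_{1,2}x_{2,1}$ and ${\det}^{-1}$ are $\Ad\mathbf{A}^{(1)}$-invariant.

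Next I would use linear reductivity: since $\mathbf{A}^{(1)}$ is split, $R$ is graded by the $\Ad\mathbf{A}^{(1)}$-weights, $R=\bigoplus_{n\in\mathbb{Z}}R_n$ with $R^{\Ad\mathbf{A}^{(1)}}=R_0$, and the projection $\rho\colon R\to R_0$ is an $R_0$-linear Reynolds retraction. Applying the invariants functor to the exact sequence $0\to I\to R\to\mathbb{Q}[\mathbf{GL}_2]\to 0$ of $\Ad\mathbf{A}^{(1)}$-modules gives that $R^{\Ad\mathbf{A}^{(1)}}\to\mathbb{Q}[\mathbf{GL}_2]^{\Ad\mathbf{A}^{(1)}}$ is surjective with kernel $I\cap R^{\Ad\mathbf{A}^{(1)}}$, and the standard argument (if $a=fr$ is invariant then $a=\rho(a)=f\rho(r)$, using $f\in R_0$) identifies this kernel with $f\,R^{\Ad\mathbf{A}^{(1)}}$. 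Feeding in Lemma~\ref{lem:M2x2A1-invariants}, which computes $R^{\Ad\mathbf{A}^{(1)}}=\mathbb{Q}[\vartheta_1,\vartheta_2,\psi,{\det}^{-1}]$, then yields the asserted presentation.

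I do not expect a genuine obstacle here; the only delicate point is that linear reductivity is invoked over the non-algebraically-closed field $\mathbb{Q}$, which is legitimate precisely because $\mathbf{A}^{(1)}$ is split, so the weight decomposition of $R$ and the Reynolds operator $\rho$ are defined already over $\mathbb{Q}$.
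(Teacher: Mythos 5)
Your proposal is correct and follows essentially the same route as the paper: the paper also deduces the presentation from Lemma \ref{lem:M2x2A1-invariants} by descending along the $\Ad\mathbf{A}^{(1)}$-equivariant closed immersion $\mathbf{GL}_2\hookrightarrow\mathbf{M}_{2\times 2}\times\mathbb{A}_1$, citing Nagata's lemma that for a linearly reductive group such an immersion induces a closed immersion of GIT quotients. Your explicit Reynolds-operator argument identifying the kernel with the principal ideal $f\,R^{\Ad\mathbf{A}^{(1)}}$ is exactly the content the paper delegates to that citation, so the two proofs coincide in substance.
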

\begin{proof}
If a \emph{linearly} reductive group $\mathbf{H}$ acts on two affine schemes $\mathbf{X},\mathbf{Y}$ of finite type over a field $k$ then a lemma of 
Nagata \cite[Lemma 5.1.A]{Nagata} implies that an $\mathbf{H}$-equivariant closed immersion $\mathbf{X}\hookrightarrow\mathbf{Y}$ over $k$ descends to a \emph{closed immersion} of GIT quotients $\lfaktor{\mathbf{H}}{\mathbf{X}}\hookrightarrow\lfaktor{\mathbf{H}}{\mathbf{Y}}$.

The proposition follows by applying this result to the closed immersion $\mathbf{GL}_2\hookrightarrow \mathbf{M}_{2\times 2} \times \mathbb{A}_1$ and using Lemma \ref{lem:M2x2A1-invariants}.
\end{proof}

\begin{defi}\hfill
\begin{enumerate}
\item 
Define the the degree of $\vartheta_1$ and $\vartheta_2$ to be $1$, the degree of $\psi$ to be $2$ and the degree of $\det^{-1}$ to be $-2$. Define the degree of a monomial in $\vartheta_1,\vartheta_2,\psi,{\det}^{-1}$ as the sum of the degrees of the individual variables appearing in the product. The degree of a constant is $0$.

\item 
A polynomial in $\mathbb{Q}\left[\vartheta_1,\vartheta_2,\psi,{\det}^{-1}\right]$ is of zero degree if it is the sum of zero degree monomials. Denote the $\mathbb{Q}$-algebra of zero-degree elements by $\mathbb{Q}\left[\vartheta_1,\vartheta_2,\psi,{\det}^{-1}\right]^0$
\end{enumerate}
\end{defi}

\begin{cor}\label{cor:PGL2-AdA-inv}
The ring  of $\Ad \mathbf{A}$-invariant regular functions on $\mathbf{PGL}_2$ is the ring 
\begin{equation*}
\mathbb{Q}[\mathbf{PGL}_2]^{\Ad\mathbf{A}}
\left.=\mathbb{Q}\left[\vartheta_1,\vartheta_2,\psi,{\det}^{-1}\right]^0 
\middle\slash \left<\left(\vartheta_1\vartheta_2-\psi\right){\det}^{-1}=1 \right> \right.
\end{equation*}

Moreover, this ring is generated by the functions
\begin{align*}
&\psi{\det}^{-1}, & &\vartheta_1^2 {\det}^{-1}, & &\vartheta_2^2 {\det}^{-1}
\end{align*}
\end{cor}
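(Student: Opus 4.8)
The plan is to deduce Corollary~\ref{cor:PGL2-AdA-inv} from the preceding proposition computing $\mathbb{Q}[\mathbf{GL}_2]^{\Ad\mathbf{A}^{(1)}}$ together with the relationship between $\mathbf{PGL}_2$ and $\mathbf{GL}_2$. The key point is that $\mathbf{PGL}_2$ is the quotient of $\mathbf{GL}_2$ by the central $\Gm$ of scalar matrices, and this central $\Gm$-action is compatible with the grading introduced: a scalar $t$ acts on $x_{i,j}$ by multiplication by $t$, hence on $\vartheta_1,\vartheta_2$ by $t$, on $\psi$ by $t^2$, and on $\det^{-1}$ by $t^{-2}$. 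Therefore the degree of a monomial in $\vartheta_1,\vartheta_2,\psi,\det^{-1}$ is precisely the weight under the central $\Gm$-action, and $\mathbb{Q}[\mathbf{PGL}_2]=\mathbb{Q}[\mathbf{GL}_2]^{\Gm}$ is the degree-zero part of $\mathbb{Q}[\mathbf{GL}_2]$. Since the adjoint $\mathbf{A}$-action on $\mathbf{PGL}_2$ is induced by the $\Ad\mathbf{A}^{(1)}$-action on $\mathbf{GL}_2$ and the two actions commute, taking invariants in either order gives
\begin{equation*}
\mathbb{Q}[\mathbf{PGL}_2]^{\Ad\mathbf{A}}=\left(\mathbb{Q}[\mathbf{GL}_2]^{\Ad\mathbf{A}^{(1)}}\right)^{\Gm}=\mathbb{Q}[\mathbf{GL}_2]^{\Ad\mathbf{A}^{(1)}}\cap \mathbb{Q}[\vartheta_1,\vartheta_2,\psi,\det^{-1}]^0,
\end{equation*}
which is exactly the displayed ring.

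First I would record precisely that the weight of the central $\Gm$ matches the degree function in the definition, checking it on each generator; this is the only genuine verification and it is a one-line computation. Next I would note that since $\mathbf{A}^{(1)}$ surjects onto $\mathbf{A}$ (stated in the definition of $\mathbf{A}^{(1)}$), the $\Ad\mathbf{A}$-invariants on $\mathbf{PGL}_2$ coincide with $\Ad\mathbf{A}^{(1)}$-invariants, so one may work upstairs on $\mathbf{GL}_2$ throughout. Then commutativity of the two actions gives that $\mathbb{Q}[\mathbf{PGL}_2]^{\Ad\mathbf{A}}$ is the degree-zero subring of $\mathbb{Q}[\mathbf{GL}_2]^{\Ad\mathbf{A}^{(1)}}$, and the preceding proposition supplies the presentation of the latter; intersecting with the degree-zero part simply replaces $\mathbb{Q}[\vartheta_1,\vartheta_2,\psi,\det^{-1}]$ by $\mathbb{Q}[\vartheta_1,\vartheta_2,\psi,\det^{-1}]^0$ in that presentation, which is the first displayed formula of the corollary.

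For the ``moreover'' clause I would argue that the graded $\mathbb{Q}$-algebra $\mathbb{Q}[\vartheta_1,\vartheta_2,\psi,\det^{-1}]$ has its degree-zero part generated, as a $\mathbb{Q}$-algebra, by the degree-zero monomials of minimal complexity, and using the relation $\psi\det^{-1}\cdot\text{(stuff)}$ one reduces any degree-zero monomial to a product of $\psi\det^{-1}$, $\vartheta_1^2\det^{-1}$, $\vartheta_2^2\det^{-1}$ (and their analogues with $\vartheta_1\vartheta_2\det^{-1}$, which is $1+\psi\det^{-1}$ modulo the relation $(\vartheta_1\vartheta_2-\psi)\det^{-1}=1$, hence not needed as a separate generator). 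Concretely, a monomial $\vartheta_1^{a}\vartheta_2^{b}\psi^{c}(\det^{-1})^{d}$ has degree $a+b+2c-2d$, so degree zero forces $a+b$ even; pairing up the $\vartheta$'s and $\psi$'s against copies of $\det^{-1}$ expresses it in the three stated generators after using the relation. The main obstacle — really the only place requiring care — is this last combinatorial reduction: making sure that every degree-zero monomial, after invoking the relation $(\vartheta_1\vartheta_2-\psi)\det^{-1}=1$, lies in the subalgebra generated by $\psi\det^{-1}$, $\vartheta_1^2\det^{-1}$, $\vartheta_2^2\det^{-1}$, including the mixed terms involving $\vartheta_1\vartheta_2$; this is routine but must be done carefully to confirm no fourth generator is needed.
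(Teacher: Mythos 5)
Your proposal is correct and follows essentially the same route as the paper: commute the central $\mathbf{Z}$-action with the $\Ad\mathbf{A}^{(1)}$-action, identify the $\mathbf{Z}$-invariants with the degree-zero elements, and verify generation by the three listed functions via the relation $\vartheta_1\vartheta_2\det^{-1}=1+\psi\det^{-1}$. The paper compresses the last two steps into ``a direct computation,'' which you have simply written out (correctly, including the observation that $\vartheta_1\vartheta_2\det^{-1}$ is not needed as a separate generator).
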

\begin{proof}
Because the actions of $\mathbf{Z}$ and $\Ad \mathbf{A}^{(1)}$ on $\mathbf{GL}_2$ commute there is an isomorphism
\begin{equation*}
\lfaktor{\mathbf{Z}}{\left(\lfaktor{\Ad\mathbf{A}^{(1)}}{\mathbf{GL}_2}\right)}\to\lfaktor{\Ad \mathbf{A}^{(1)}}{\mathbf{PGL}_2}=\lfaktor{\Ad \mathbf{A}}{\mathbf{PGL}_2}
\end{equation*}
The equality on the right follows from the surjectivity of $\mathbf{A}^{(1)}\to\mathbf{A}$.

This implies that the ring $\mathbb{Q}[\mathbf{PGL}_2]^{\Ad\mathbf{A}}$ is a the subring of elements in $\mathbb{Q}[\mathbf{GL}_2]^{\Ad\mathbf{A}^{(1)}}$ which are $\mathbf{Z}$-invariant. It is a direct computation to see that these are exactly the degree $0$ elements and that the given functions generate this ring. 
\end{proof}
\begin{remark}
A slightly more delicate analysis shows that 
\begin{equation*}
\mathbb{Q}[\mathbf{PGL}_2]^{\Ad\mathbf{A}}\simeq \mathbb{Q}[x,y,z]\slash \left<x^2=yz\right>
\end{equation*}
where $x=1+\psi \det^{-1}, y=\vartheta_1^2\det^{-1}, z=\vartheta_1^2\det^{-1}$. Geometrically, $\lfaktor{\Ad \mathbf{A}}{\mathbf{PGL}_2}$ is a circular conical surface. The singular point $x,y,z=0$ corresponds to the $\Ad \mathbf{A}$ orbit of $\begin{pmatrix*} 0 & 1 \\ 1 & 0\end{pmatrix*}$. 
\end{remark}

\subsection{Descent From \texorpdfstring{$\mathbf{GL}_{2,E}$}{GL2_E} to \texorpdfstring{$\mathbf{B}^\times$}{B*}}
\label{sec:PGL2-AdA-descend}
Recall from \S\ref{sec:B-coordinates} that we have fixed an isomorphism of algebraic groups over $E$
\begin{equation*}
\mathbf{B}^\times_E \simeq \mathbf{GL}_{2,E}
\end{equation*}
such that $\widetilde{\mathbf{T}}$ is identified with $\widetilde{\mathbf{A}}$. As this isomorphism identifies the reduced norm map with the determinant map, the torus $\mathbf{T}^{(1)}_E$ is identified with $\mathbf{A}^{(1)}_E$.

\begin{lem}\label{lem:generators-galois}
Let $g\in\mathbf{B}^\times(\mathbb{Q})\subset\mathbf{GL}_2(E)$ or $g\in\mathbf{B}^\times(\mathbb{Q}_v)\subset\mathbf{GL}_2(E_v)$ for some rational place $v$ then
\begin{align*}
\tensor[^\sigma]{\vartheta_1(g)}{}&=\vartheta_2(g) & \tensor[^\sigma]{\psi(g)}{}&=\psi(g) & \tensor[^\sigma]{{\det}^{-1}(g)}{}&={\det}^{-1}(g)
\end{align*}
\end{lem}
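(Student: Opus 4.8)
The plan is to reduce the statement to the explicit normal form for rational (resp.\ $\mathbb{Q}_v$-rational) points of $\mathbf{B}^\times$ already obtained in Propositions~\ref{prop:rational-points-global} and~\ref{prop:rational-points-local}, and then simply unwind the definitions of $\vartheta_1,\vartheta_2,\psi$ from Definition~\ref{def:inv-functions}. Concretely, for $g\in\mathbf{B}^\times(\mathbb{Q})\subset\mathbf{GL}_2(E)$ one writes $g=\begin{pmatrix} a & \epsilon b\\ \tensor[^\sigma]{b}{} & \tensor[^\sigma]{a}{}\end{pmatrix}$ with $a,b\in E$ (and $a,b\in E_v$ in the local case, where $\sigma$ denotes the nontrivial element of $\mathfrak{G}$ acting on the étale algebra $E_v$); then $\vartheta_1(g)=a$, $\vartheta_2(g)=\tensor[^\sigma]{a}{}$, $\psi(g)=\epsilon b\cdot\tensor[^\sigma]{b}{}=\epsilon\,\Nr(b)$, and $\det(g)=\Nr(a)-\epsilon\,\Nr(b)$. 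From $\vartheta_2(g)=\tensor[^\sigma]{a}{}$ we get $\tensor[^\sigma]{\vartheta_1(g)}{}=\tensor[^\sigma]{a}{}=\vartheta_2(g)$; since $\epsilon\in\mathbb{Q}^\times$ and $\Nr(a),\Nr(b)\in\mathbb{Q}$ (resp.\ $\mathbb{Q}_v$), both $\psi(g)$ and $\det(g)$ lie in the $\sigma$-fixed subfield, hence $\tensor[^\sigma]{\psi(g)}{}=\psi(g)$ and $\tensor[^\sigma]{{\det}^{-1}(g)}{}={\det}^{-1}(g)$.

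I would also record the slightly more structural version of the same computation, which avoids quoting the normal form: by Definition~\ref{def:BE-ME-iso} the condition that $g\in\mathbf{GL}_2(E)$ (resp.\ $\mathbf{GL}_2(E_v)$) lie in $\mathbf{B}^\times$ is exactly that $g$ be fixed by the twisted Galois action, i.e.\ $\theta^{-1}g\theta=\sigma(g)$, where $\sigma(g)$ means applying $\sigma$ to each matrix entry and $\theta=\begin{pmatrix}0 & \epsilon\\ 1 & 0\end{pmatrix}$. A one-line matrix multiplication gives $\theta^{-1}g\theta=\begin{pmatrix} x_{2,2}(g) & \epsilon\,x_{2,1}(g)\\ \epsilon^{-1}x_{1,2}(g) & x_{1,1}(g)\end{pmatrix}$, so comparing entries yields $\tensor[^\sigma]{x_{1,1}(g)}{}=x_{2,2}(g)$, $\tensor[^\sigma]{x_{2,2}(g)}{}=x_{1,1}(g)$, and $\tensor[^\sigma]{(x_{1,2}x_{2,1})(g)}{}=x_{2,1}(g)x_{1,2}(g)=(x_{1,2}x_{2,1})(g)$; the claim for $\det^{-1}$ is just the conjugation-invariance of the determinant applied to $\sigma(g)=\theta^{-1}g\theta$.

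There is essentially no obstacle here: the lemma is a bookkeeping statement whose mathematical content has already been established in the preceding propositions. The only point needing a word of care is that the ``$\sigma$'' in the statement denotes the Galois action of $\mathfrak{G}=\Gal(E/\mathbb{Q})$ (or its analogue on $E_v$) applied to the \emph{values} $\vartheta_i(g),\psi(g),\det^{-1}(g)\in E$ of the coordinate functions at a fixed point $g$, not an action on the functions $\vartheta_i,\psi$ themselves; once this is pinned down, the global and local cases are literally the same argument, the split case $E_v\simeq\mathbb{Q}_v\times\mathbb{Q}_v$ being covered automatically since there $\sigma$ swaps the two factors and $\Nr$ lands in the diagonal $\mathbb{Q}_v$.
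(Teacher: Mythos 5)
Your proof is correct and is essentially the paper's own argument: the paper's proof consists of exactly the citation of Definition~\ref{def:inv-functions} together with Propositions~\ref{prop:rational-points-global} and~\ref{prop:rational-points-local}, which is what your first paragraph spells out. The second, "structural" variant via $\theta^{-1}g\theta=\sigma(g)$ is a harmless reformulation of the same computation rather than a genuinely different route.
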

\begin{proof}
This follows from Definition \ref{def:inv-functions} and Propositions \ref{prop:rational-points-global} and \ref{prop:rational-points-local}.
\end{proof}
\begin{prop}\label{prop:GIT-descent}
The image of $g\in\mathbf{G}(\mathbb{Q})$ in $\mathbf{W}(\mathbb{Q})$ is determined by the values of
\begin{equation*}
\vartheta_1^2 \det^{-1}(g),\psi \det^{-1}(g)\in E
\end{equation*}
\end{prop}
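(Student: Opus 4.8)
The plan is to reduce to the explicit description of the ring of $\Ad\mathbf{A}$-invariants on $\mathbf{PGL}_2$ from Corollary \ref{cor:PGL2-AdA-inv} by base-changing from $\mathbb{Q}$ to $E$, and then to use the Galois relations of Lemma \ref{lem:generators-galois} to eliminate the redundant generator $\vartheta_2^2\det^{-1}$.

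First, by Proposition \ref{prop:AdTG-W-iso} the image of $g$ in $\mathbf{W}(\mathbb{Q})$ is the same datum as $\pi_{\mathbf{W}}^0(g)$, where $\pi_{\mathbf{W}}^0\colon\mathbf{G}\to\lfaktor{\Ad\mathbf{T}}{\mathbf{G}}\simeq\mathbf{W}$ is the equivariant projection; concretely $\pi_{\mathbf{W}}^0(g)$ is the $\mathbb{Q}$-algebra homomorphism ``evaluation at $g$'' on $\mathbb{Q}[\mathbf{G}]^{\Ad\mathbf{T}}=\mathbb{Q}[\mathbf{W}]$. Since $\mathbf{W}$ is affine and $\mathbb{Q}\hookrightarrow E$ is injective, the map $\mathbf{W}(\mathbb{Q})\to\mathbf{W}(E)$ is injective, so it suffices to pin down the image of $g$ in $\mathbf{W}(E)$. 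Because $\mathbf{W}$ is a \emph{universal} categorical quotient, base change along $\mathbb{Q}\hookrightarrow E$ gives $\mathbf{W}_E\simeq\lfaktor{\Ad\mathbf{T}_E}{\mathbf{G}_E}$, and under the fixed $E$-isomorphism $\mathbf{G}_E\simeq\mathbf{PGL}_{2,E}$ carrying $\mathbf{T}_E$ to $\mathbf{A}_E$ (see \S\ref{sec:PGL2-AdA-descend}) this becomes $\lfaktor{\Ad\mathbf{A}_E}{\mathbf{PGL}_{2,E}}$.

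Now Corollary \ref{cor:PGL2-AdA-inv} shows that $E[\mathbf{PGL}_{2,E}]^{\Ad\mathbf{A}_E}$ is generated by the three functions $\psi\det^{-1}$, $\vartheta_1^2\det^{-1}$, $\vartheta_2^2\det^{-1}$, so the $E$-point of $\mathbf{W}_E$ attached to $g$ is determined by the triple $\bigl(\psi\det^{-1}(g),\ \vartheta_1^2\det^{-1}(g),\ \vartheta_2^2\det^{-1}(g)\bigr)\in E^3$, the functions being evaluated on $g\in\mathbf{B}^\times(\mathbb{Q})\subset\mathbf{GL}_2(E)$ through the chosen embedding. Finally, Lemma \ref{lem:generators-galois} gives $\tensor[^\sigma]{\vartheta_1(g)}{}=\vartheta_2(g)$ and $\tensor[^\sigma]{{\det}^{-1}(g)}{}={\det}^{-1}(g)$, whence $\vartheta_2^2\det^{-1}(g)=\tensor[^\sigma]{\bigl(\vartheta_1^2\det^{-1}(g)\bigr)}{}$ is the Galois conjugate of the second value, hence a function of $\vartheta_1^2\det^{-1}(g)$ alone. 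Thus the whole triple, hence the image of $g$ in $\mathbf{W}(E)$, hence by the injectivity above the image of $g$ in $\mathbf{W}(\mathbb{Q})$, is determined by the pair $\bigl(\vartheta_1^2\det^{-1}(g),\psi\det^{-1}(g)\bigr)$. No serious obstacle arises; the only points needing care are the compatibility of the GIT quotient $\mathbf{W}$ with the base change $\mathbb{Q}\hookrightarrow E$ — which is precisely the universality recorded in the definition of $\mathbf{W}$ — and the bookkeeping that $\vartheta_i,\psi,\det^{-1}$ are applied to $g$ after passing to $\mathbf{GL}_2(E)$, so that Lemma \ref{lem:generators-galois} applies verbatim.
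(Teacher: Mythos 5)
Your proof is correct and follows essentially the same route as the paper: base change to $E$ via the universality of the GIT quotient, identify the invariant ring with that of $\lfaktor{\Ad\mathbf{A}_E}{\mathbf{PGL}_{2,E}}$ using Corollary \ref{cor:PGL2-AdA-inv}, and eliminate $\vartheta_2^2\det^{-1}(g)$ as the Galois conjugate of $\vartheta_1^2\det^{-1}(g)$ via Lemma \ref{lem:generators-galois}. The only difference is that you spell out the injectivity of $\mathbf{W}(\mathbb{Q})\to\mathbf{W}(E)$, which the paper leaves implicit.
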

\begin{proof}
The universality of the GIT quotients for affine schemes implies that 
\begin{equation}\label{eq:GIT-universal}
\left(\lfaktor{\Ad\mathbf{T}}{\mathbf{G}}\right)_E=\lfaktor{\Ad \mathbf{T}_E}{\mathbf{G}_E}
\simeq\lfaktor{\Ad\mathbf{A}_E}{\mathbf{PGL}_{2,E}}
=\left(\lfaktor{\Ad \mathbf{A}}{\mathbf{PGL}}\right)_E
\end{equation}
where we have the induced isomorphism $\mathbf{G}_E \simeq \mathbf{PGL}_{2,E}$
sending $\mathbf{T}_E$ to the diagonal torus $\mathbf{A}_E$.

The claim follows from \eqref{eq:GIT-universal}, Lemma \ref{lem:generators-galois} and Corollary \ref{cor:PGL2-AdA-inv}.  
\end{proof}

\section{Homogeneous Hecke Sets}\label{sec:Hecke}
In this section we study elementary properties of the possible counterexamples to equidistribution arising in \S\ref{sec:rigidity}. 

\begin{defi}
For any $\xi\in\left(\mathbf{G}\times\mathbf{G}\right)(\mathbb{A})$ we define
$\left[\mathbf{G}^\Delta(\mathbb{A})\xi\right]$
to be a homogeneous Hecke set.
This set carries a $\xi^{-1} \mathbf{G}^\Delta(\mathbb{A}) \xi$-invariant algebraic probability measure. 

We define also $\left[\mathbf{G}^\Delta(\mathbb{A})^+\xi\right]$ to be a simply connected homogeneous Hecke set.
This set carries a $\xi \mathbf{G}^\Delta(\mathbb{A})^+ \xi$-invariant algebraic probability measure.
\end{defi}

\begin{remark}
Fixing the subgroup $\mathbf{G}^\Delta<\mathbf{G}\times \mathbf{G}$ the datum defining a homogeneous Hecke set $\left[\mathbf{G}^\Delta(\mathbb{A})\xi\right]$  is $[\xi]\in \lfaktor{\mathbf{G}^\Delta(\mathbb{A})}{\left(\mathbf{G}\times\mathbf{G}\right)(\mathbb{A})}$. Using the contraction map this can be identified with $\ctr(\xi)\in\mathbf{G}(\mathbb{A})$.

The datum defining a simply connected homogeneous Hecke set $\left[\mathbf{G}^\Delta(\mathbb{A})^+\xi\right]$ for $\xi=(\xi_1,\xi_2)$ is $[\xi]\in \lfaktor{\mathbf{G}^\Delta(\mathbb{A})^+}{\left(\mathbf{G}\times\mathbf{G}\right)(\mathbb{A})}$. Using the contraction map this can be identified with
 $[\xi_1]\in\faktor{\mathbf{G}(\mathbb{A})}{\mathbf{G}(\mathbb{A})^+}$ and $\ctr(\xi)\in\mathbf{G}(\mathbb{A})$.
\end{remark}

Homogeneous Hecke sets generalize the notion of a classical Hecke correspondence. 
An obvious necessary condition for equidistribution is that the joint homogeneous toral sets are not trapped in a sequence of homogeneous Hecke sets with periodic measure converging to a periodic measure on some other fixed homogeneous Hecke set. The goal of this manuscript is to show that this condition is not only necessary but also sufficient, at least under the hypothesis described in the introduction. In this section we translate this condition to a condition on the twist $s\in\mathbf{T}(\mathbb{A})$.

Because these sets are somewhat more general then the classical Hecke correspondences we need to extend some well-known results about Hecke correspondences and present them in a language adapted to the applications dicussed in this manuscript.

In  this section we fix a joint homogeneous toral set $\left[\mathbf{T}^\Delta(g,sg)\right]$ satisfying \eqref{eq:g_adel_restrictions}.

\subsection{Homogeneous Hecke Sets Containing a Joint Homogeneous Toral Set}
\begin{lem}\label{lem:hecke-containing-toral}
All the homogeneous Hecke sets containing $\left[\mathbf{T}^\Delta(\mathbb{A})(g,sg)\right]$ are of the form $\left[\mathbf{G}^\Delta(\mathbb{A})(g,t_{\mathbb{Q}} s g)\right]$ for some $t_{\mathbb{Q}}\in\mathbf{T}(\mathbb{Q})$.
\end{lem}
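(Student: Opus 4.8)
\emph{Proposal.} The plan is to translate the containment into an identity on the contraction map and then use the rigidity of rational points. First I would record the elementary reduction: as a subset of $[(\mathbf{G}\times\mathbf{G})(\mathbb{A})]$ a homogeneous Hecke set depends only on $\ctr$ of its defining parameter. Indeed, writing $\xi=(\xi_1,\xi_2)$ and $\eta\coloneqq\ctr(\xi)=\xi_1^{-1}\xi_2$, we have $\mathbf{G}^\Delta(\mathbb{A})\xi=\mathbf{G}^\Delta(\mathbb{A})(1,\eta)$ since $(\xi_1,\xi_1)\in\mathbf{G}^\Delta(\mathbb{A})$. So every homogeneous Hecke set is $[\mathbf{G}^\Delta(\mathbb{A})(1,\eta)]$ for some $\eta\in\mathbf{G}(\mathbb{A})$, and it suffices to decide for which $\eta$ one has $[\mathbf{T}^\Delta(\mathbb{A})(g,sg)]\subseteq[\mathbf{G}^\Delta(\mathbb{A})(1,\eta)]$.

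Next I would unwind this pointwise. For $t\in\mathbf{T}(\mathbb{A})$ the point $[(tg,tsg)]$ lies in $[\mathbf{G}^\Delta(\mathbb{A})(1,\eta)]$ iff $(tg,tsg)\in(\mathbf{G}\times\mathbf{G})(\mathbb{Q})\,\mathbf{G}^\Delta(\mathbb{A})\,(1,\eta)$; solving $(tg,tsg)=(\gamma_1,\gamma_2)(\delta,\delta)(1,\eta)$ for $\delta\in\mathbf{G}(\mathbb{A})$ and $(\gamma_1,\gamma_2)\in(\mathbf{G}\times\mathbf{G})(\mathbb{Q})$ shows this is equivalent to $(tg)\,\eta\,(tsg)^{-1}\in\mathbf{G}(\mathbb{Q})$, because $\gamma_1\gamma_2^{-1}$ ranges over all of $\mathbf{G}(\mathbb{Q})$. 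Setting $c\coloneqq g\,\eta\, g^{-1}s^{-1}\in\mathbf{G}(\mathbb{A})$ and using that $s$ commutes with $t$, this reads $tct^{-1}\in\mathbf{G}(\mathbb{Q})$. Hence the containment holds iff $tct^{-1}\in\mathbf{G}(\mathbb{Q})$ for every $t\in\mathbf{T}(\mathbb{A})$.

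The crux is to show this forces $c\in\mathbf{T}(\mathbb{Q})$. Taking $t=e$ gives $c\in\mathbf{G}(\mathbb{Q})$. Now fix a place $v$ and take $t\in\mathbf{T}(\mathbb{A})$ supported at $v$ (trivial elsewhere); then $tct^{-1}$ agrees with the fixed rational element $c$ at every place $\neq v$, and since a rational element of $\mathbf{G}$ is determined by its component at a single place we get $tct^{-1}=c$, whence $t_v c_v t_v^{-1}=c_v$ in $\mathbf{G}(\mathbb{Q}_v)$ for all $t_v\in\mathbf{T}(\mathbb{Q}_v)$. As $\mathbf{T}(\mathbb{Q}_v)$ is Zariski dense in $\mathbf{T}_{\mathbb{Q}_v}$, the element $c$ centralizes $\mathbf{T}$, and since $\mathbf{T}$ is a maximal torus $\Cent_{\mathbf{G}}(\mathbf{T})=\mathbf{T}$, so $c\in\mathbf{T}(\mathbb{Q}_v)\cap\mathbf{G}(\mathbb{Q})=\mathbf{T}(\mathbb{Q})$. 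Putting $t_{\mathbb{Q}}\coloneqq c\in\mathbf{T}(\mathbb{Q})$ we get $\eta=g^{-1}t_{\mathbb{Q}}sg=\ctr(g,t_{\mathbb{Q}}sg)$, so $[\mathbf{G}^\Delta(\mathbb{A})(1,\eta)]=[\mathbf{G}^\Delta(\mathbb{A})(g,t_{\mathbb{Q}}sg)]$, the claimed form; conversely each such set does contain the toral set, since then $c=t_{\mathbb{Q}}$ is central and rational, so $tct^{-1}=t_{\mathbb{Q}}\in\mathbf{G}(\mathbb{Q})$ for all $t$.

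Everything is routine once the reduction is set up; the one step needing care is the final rigidity argument — deducing from ``$tct^{-1}\in\mathbf{G}(\mathbb{Q})$ for all adelic $t$'' that $c$ is rational and centralizes $\mathbf{T}$ — where it is essential to exploit that $c$ is a \emph{single} rational element, hence pinned down by its component at one place, rather than working completion by completion, and to use both Zariski density of $\mathbf{T}(\mathbb{Q}_v)$ and $\Cent_{\mathbf{G}}(\mathbf{T})=\mathbf{T}$.
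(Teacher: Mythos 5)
Your proof is correct, and the reduction is the same as the paper's: both arguments unwind the containment to the condition that $tct^{-1}\in\mathbf{G}(\mathbb{Q})$ for all $t\in\mathbf{T}(\mathbb{A})$, where $c$ is (in the paper's notation) $\xi_0 s^{-1}$. Where you diverge is in the crux, namely deducing $c\in\mathbf{T}(\mathbb{Q})$ from this condition. The paper first rules out $c\in w_\mathbf{T}\mathbf{T}(\mathbb{Q})$ by a separate squaring trick, and then invokes its GIT machinery (Corollary \ref{cor:AdTG(Q)-separation}, which rests on the Galois-cohomological computation in Proposition \ref{prop:M-orbits}) to conclude that $\mathbf{T}(\mathbb{A})=\mathbf{T}(\mathbb{Q})\cdot\Stab_{\Ad\mathbf{T}(\mathbb{A})}(c)$, and only then reads off from the list of possible stabilizers that the stabilizer must be all of $\mathbf{T}(\mathbb{A})$. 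Your argument replaces all of this with the elementary observation that $c$ is a \emph{single} rational element, so varying $t$ at one place while using injectivity of $\mathbf{G}(\mathbb{Q})\hookrightarrow\mathbf{G}(\mathbb{Q}_w)$ at the remaining places forces the exact identity $tct^{-1}=c$; Zariski density of $\mathbf{T}(\mathbb{Q}_v)$ and $\Cent_{\mathbf{G}}(\mathbf{T})=\mathbf{T}$ then finish. This is genuinely simpler and self-contained: it needs neither the case analysis for the nontrivial Weyl coset nor any cohomology, and it actually proves the stronger statement that $c$ is fixed (not merely moved within a rational $\Ad\mathbf{T}(\mathbb{Q})$-orbit) by conjugation by $\mathbf{T}(\mathbb{A})$. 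What the paper's route buys is uniformity with the rest of \S\ref{sec:GIT}: the separation results it cites are developed anyway for the geometric expansion of the relative trace, so the lemma comes for free there, whereas your argument is local to this lemma.
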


\begin{remark}
If $\left[\mathbf{T}^\Delta(\mathbb{A})(g,sg)\right]$ satisfies \eqref{eq:g_adel_restrictions} and $\left[\mathbf{T}^\Delta(\mathbb{A})(g,sg)\right]\subset \left[\mathbf{G}^\Delta(\mathbb{A})\xi\right]$ then 
\begin{equation*}
\ctr(\xi_{\infty})\in K_\infty,\,
\ctr(\xi_{p_1})\in A_{p_1},\,
\ctr(\xi_{p_2})\in A_{p_2}
\end{equation*}
\end{remark}

\begin{proof}
Because $\Cent_{\left(\mathbf{G}\times\mathbf{G}\right)(\mathbb{A})}\left(\mathbf{T}^\Delta(\mathbb{A})\right)=\left(\mathbf{T}\times\mathbf{T}\right)(\mathbb{A})$ we have for any $t_{\mathbb{Q}}\in\mathbf{T}(\mathbb{Q})$
\begin{equation*}
\left[\mathbf{T}^\Delta(\mathbb{A})(g,sg)\right]=\left[\mathbf{T}^\Delta(\mathbb{A})(g,t_{\mathbb{Q}}sg)\right]
\subset \left[\mathbf{G}^\Delta(\mathbb{A})(g,t_{\mathbb{Q}} s g)\right]
\end{equation*}

On the other hand
if $\left[\mathbf{T}^\Delta(\mathbb{A})(g,sg)\right]\subset \left[\mathbf{G}^\Delta(\mathbb{A})(g,\xi_0 g)\right]$ for some $\xi_0\in\mathbf{G}(\mathbb{A})$ then a simple calculation shows that
\begin{equation}\label{eq:toral-in-hecke-formula}
\forall t\in\mathbf{T}(\mathbb{A})\colon 
t \xi_0 s^{-1} t^{-1} \in \mathbf{G}(\mathbb{Q})
\end{equation}
In particular,  $\xi_0s^{-1}\in\mathbf{G}(\mathbb{Q})$. If $\xi_0 s^{-1}\in w_\mathbf{T} \mathbf{T}(\mathbb{Q})$, that is it belongs to the non-trivial class of the normalizer of $\mathbf{T}$, then we deduce that $t^2 \xi_0s^{-1}\in\mathbf{G}(\mathbb{Q})$ for all $t\in\mathbf{T}(\mathbb{A})$ which is a contradiction.
Otherwise, Corollary \ref{cor:AdTG(Q)-separation}  implies
\begin{equation*}
\mathbf{T}(\mathbb{A})= \mathbf{T}(\mathbb{Q}) \Stab_{\Ad\mathbf{T}(\mathbb{A})}(\xi_0 s^{-1}) 
\end{equation*}
Considering all the options for the stabilizer in Proposition \ref{prop:M-orbits} we deduce that $\Stab_{\Ad\mathbf{T}(\mathbb{A})}(\xi_0 s^{-1})=\mathbf{T}(\mathbb{A})$ and $\xi_0 s^{-1}\in\mathbf{T}(\mathbb{Q})$. 
\end{proof}

\subsection{Volume of a Homogeneous Hecke Set}
The volume of a homogeneous Hecke set is defined similarly to the volume of a homogeneous toral set
\begin{align*}
\vol\left(\left[\mathbf{G}^\Delta(\mathbb{A})\xi\right]\right)&\coloneqq
\meas_{\mathbf{G}^\Delta}\left(\xi\Omega\times\Omega \xi^{-1}\right)^{-1}\\
&=\meas_\mathbf{G}\left(\Omega\cap  \ctr(\xi)\Omega
\ctr(\xi)^{-1}\right)^{-1}
\end{align*}
where $\meas_\mathbf{G}=\meas_{\mathbf{G}^\Delta}$ is a covolume $1$ Haar measure. The volume of a simply connected Hecke correspondence is defined analogously.

The map $\xi\mapsto \vol\left(\left[\mathbf{G}^\Delta(\mathbb{A})\xi\right]\right)$ is a continuous map from $\left(\mathbf{G}\times\mathbf{G}\right)(\mathbb{A})$ to $\mathbb{R}_{>0}$ which factors through the map $\ctr\colon\left(\mathbf{G}\times\mathbf{G}\right)(\mathbb{A})\to\mathbf{G}(\mathbb{A})$.

\subsubsection{Volume Computation Using the Bruhat-Tits tree}
\begin{defi}\label{def:Denom_sf}
Define the proper continuous function $\Denom_{sf}\colon\mathbf{G}(\mathbb{A}_f)\to\mathbb{N}$ by
\begin{equation*}
\Denom_{\mathit{sf}}(h_f)=\prod_{\infty\neq v \text{ splits }\mathbf{B}} \Denom_v(h_v)
\end{equation*}

Because $1\leq\Denom_v(\xi_v)\leq q_v$ for any finite $v$ where $\mathbf{B}$ ramifies we see that for all $h_f\in\mathbf{G}(\mathbb{A}_f)$
\begin{equation*}
\Denom_{\mathit{sf}}(h_f) \asymp_{\mathbf{G}} \Denom_f(h_f)
\end{equation*}
\end{defi}

\begin{lem}\label{lem:volume-of-hecke}
Let $\xi\in\left(\mathbf{G}\times\mathbf{G}\right)(\mathbb{A})$ with $\ctr(\xi)_\infty\in K_\infty$ then
\begin{equation*}
\vol\left(\left[\mathbf{G}^\Delta(\mathbb{A})\xi\right]\right)\meas_\mathbf{G}\left(\Omega\right)
=\Denom_{\mathit{sf}}(\ctr(\xi)_f) \prod_{p \mid \Denom_{\mathit{sf}}(\ctr(\xi)_f)} \left(1+\frac{1}{p}\right)
\end{equation*}
\end{lem}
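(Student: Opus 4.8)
The plan is to reduce the volume computation to a purely local statement at each place and then evaluate the local factors. Recall from \S\ref{sec:vol} that $\Omega=\prod_v\Omega_v$ with $\Omega_v=K_v$ for $v\neq\infty$ and $\Omega_\infty=\mathbf{Z}(\mathbb{R})\widetilde{\Omega}_\infty$, and that $\meas_\mathbf{G}$ is a covolume $1$ Haar measure on $\mathbf{G}(\mathbb{A})$. Since the formula for $\vol\left(\left[\mathbf{G}^\Delta(\mathbb{A})\xi\right]\right)$ only involves $\meas_\mathbf{G}\left(\Omega\cap \ctr(\xi)\Omega\ctr(\xi)^{-1}\right)$, and this is a product over places, I would write
\begin{equation*}
\meas_\mathbf{G}\left(\Omega\cap \ctr(\xi)\Omega\ctr(\xi)^{-1}\right)=\prod_v \meas_{\mathbf{G},v}\left(\Omega_v\cap \ctr(\xi)_v\Omega_v\ctr(\xi)_v^{-1}\right)
\end{equation*}
for a compatible choice of local Haar measures. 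Since $\ctr(\xi)_\infty\in K_\infty$ normalizes $\widetilde{\Omega}_\infty$ (recall $\Omega_\infty$ is $\Ad K_\infty$-invariant by the choice in \S\ref{sec:vol}) and fixes $\mathbf{Z}(\mathbb{R})$, the archimedean factor is simply $\meas_{\mathbf{G},\infty}(\Omega_\infty)$, so it contributes nothing beyond the normalization constant already isolated as $\meas_\mathbf{G}(\Omega)$ on the left side of the claimed identity. For any finite $v$ not dividing $\Denom_{\mathit{sf}}(\ctr(\xi)_f)$ — in particular every ramified $v$, since there $1\le\Denom_v(\ctr(\xi)_v)\le q_v$ but $\ctr(\xi)_v$ normalizes the unique maximal order so $\Denom_v(\ctr(\xi)_v)=1$ whenever $\ctr(\xi)_v\in K_v$, and more to the point, at a ramified $v$ the index $2$ subgroup $K_v$ is normal so $\ctr(\xi)_v K_v\ctr(\xi)_v^{-1}=K_v$ — the local factor is $\meas_{\mathbf{G},v}(K_v)$. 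Thus only the split finite places $v$ with $\Denom_v(\ctr(\xi)_v)>1$ contribute a nontrivial ratio.

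The core of the argument is therefore the local computation at a split finite place $v$: for $h\in\mathbf{G}(\mathbb{Q}_v)\simeq\mathbf{PGL}_2(\mathbb{Q}_v)$ with $\Denom_v(h)=q_v^{d}$, where $d=d(x_0,h.x_0)$ is the distance in the Bruhat–Tits tree $\mathscr{B}_v$, one has
\begin{equation*}
\frac{\meas_{\mathbf{G},v}(K_v)}{\meas_{\mathbf{G},v}\left(K_v\cap h K_v h^{-1}\right)}=\left[K_v : K_v\cap h K_v h^{-1}\right]=q_v^{d}\left(1+\frac{1}{q_v}\right)\quad\text{if }d\ge 1,
\end{equation*}
and $=1$ if $d=0$. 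This index is the number of vertices of $\mathscr{B}_v$ at distance $\le d$ from $x_0$ that also lie at distance $\le d$ from $h.x_0$ — equivalently, the size of the $K_v$-orbit of $h$-translated vertex, which by the standard sphere-counting on a $(q_v+1)$-regular tree equals $(q_v+1)q_v^{d-1}=q_v^{d}(1+1/q_v)$. I would prove this by the Cartan decomposition: writing $h=k_1 a(d) k_2$ with $a(d)$ the diagonal element $\mathrm{diag}(p^{d},1)$ (mod center) and $k_1,k_2\in K_v$, the double coset $K_v h K_v$ has $K_v$-index $(q_v+1)q_v^{d-1}$ for $d\ge1$, and $K_v\cap hK_vh^{-1}$ is conjugate to $K_v\cap a(d)K_v a(d)^{-1}$, whose index in $K_v$ matches the size of $K_v/(K_v h K_v)$-fiber count.

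Assembling the local contributions, the product over split finite $v$ telescopes to
\begin{equation*}
\prod_{\substack{v\text{ split},\,\Denom_v(\ctr(\xi)_v)>1}}\Denom_v(\ctr(\xi)_v)\left(1+\frac{1}{q_v}\right)=\Denom_{\mathit{sf}}(\ctr(\xi)_f)\prod_{p\mid\Denom_{\mathit{sf}}(\ctr(\xi)_f)}\left(1+\frac{1}{p}\right),
\end{equation*}
using the multiplicativity of $\Denom_{\mathit{sf}}$ (Definition \ref{def:Denom_sf}) and the fact that the primes $p$ dividing $\Denom_{\mathit{sf}}(\ctr(\xi)_f)$ are exactly the split $v$ where $\Denom_v(\ctr(\xi)_v)>1$. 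Multiplying through by $\meas_{\mathbf{G}}(\Omega)$ and taking reciprocals gives the stated formula for $\vol\left(\left[\mathbf{G}^\Delta(\mathbb{A})\xi\right]\right)\meas_\mathbf{G}(\Omega)$. The main obstacle is the bookkeeping of local Haar measure normalizations — one must check that the product of the local $\meas_{\mathbf{G},v}$ with $\meas_{\mathbf{G},v}(K_v)=1$ for almost all $v$ reconstitutes $\meas_\mathbf{G}$, so that no stray global constant is introduced; this is routine given the covolume $1$ convention, but it is the step where sign/normalization errors creep in. Everything else is the standard tree combinatorics, which I would state cleanly as a lemma and cite the Cartan decomposition for.
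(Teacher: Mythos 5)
Your proposal is correct and follows essentially the same route as the paper: reduce to the ratio $\meas_\mathbf{G}(\Omega)/\meas_\mathbf{G}(\Omega\cap\ctr(\xi)\Omega\ctr(\xi)^{-1})$, kill the archimedean factor via $\Ad K_\infty$-invariance of $\Omega_\infty$ and the ramified factors via normality of $K_v$, and evaluate the remaining split-place indices $[K_v:K_v\cap\xi_vK_v\xi_v^{-1}]$ as the sphere count $(q_v+1)q_v^{d-1}$ on the $(q_v+1)$-regular tree (the paper establishes transitivity of $K_v$ on spheres via strong transitivity on the building, where you invoke the Cartan decomposition — these are equivalent). The only slip is your first description of the index as counting vertices in an intersection of two balls, which you immediately replace with the correct statement that it is the size of the $K_v$-orbit of $\xi_v.x_0$.
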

\begin{proof}
By definition
\begin{equation*}
\vol\left(\left[\mathbf{G}^\Delta(\mathbb{A})\xi\right]\right)\meas_\mathbf{G}\left(\Omega\right)
=\frac{\meas_\mathbf{G}\left(\Omega\right)}{\meas_\mathbf{G}\left(\Omega\cap \ctr(\xi)\Omega\ctr(\xi)^{-1}\right)}
\end{equation*}
Because $\Omega_\infty$ is $\Ad K_\infty$-invariant and $\ctr(\xi)_\infty\in K_\infty$
we can rewrite the quotient of measures as
\begin{equation}\label{eq:G-Omega-vol-quotient}
\frac{\meas_\mathbf{G}\left(\Omega\right)}{\meas_\mathbf{G}\left(\Omega\cap \ctr(\xi)\Omega\ctr(\xi)^{-1}\right)}
=\prod_{v\neq\infty}\left[K_v\colon K_v\cap \xi_v K_v {\xi_v}^{-1}\right]
\end{equation}

The group $K_v$ is for almost all $v$ the maximal compact subgroup in the restricted product definition of $\mathbf{G}(\mathbb{A})$, hence $\xi_v\in K_v$ for almost all $v$.  If $\mathbf{B}$ is ramified over $\mathbb{Q}_v$ then $K_v<\mathbf{G}(\mathbb{Q}_v)$ is a normal subgroup \S\ref{sec:maximal-order-B}. Hence $\left[K_v\colon K_v\cap \xi_v K_v {\xi_v}^{-1}\right]\neq1$ only if $\xi_v\not\in K_v$ and $\mathbf{B}(\mathbb{Q}_v)$ is split. In particular, the product \eqref{eq:G-Omega-vol-quotient} is finite.

When $\mathbf{G}(\mathbb{Q}_v)$ is split, i.e.\ $\mathbf{G}(\mathbb{Q}_v)\simeq \mathbf{PGL}_2(\mathbb{Q}_v)$, the index $\left[K_v\colon K_v\cap \xi_v K_v {\xi_v}^{-1}\right]$ can be calculated using the Bruhat-Tits building $\mathscr{B}_v$ of $\mathbf{G}(\mathbb{Q}_v)$.

Our conditions in \S\ref{sec:maximal-order-B} imply that $K_v$ is actually the whole stabilizer of $x_0$, in particular it preserves types of vertices.
The subgroup $\xi_v K_v {\xi_v}^{-1}$ is the stabilizer of the vertex $\xi_v.x_0$, thus $K_v \cap \xi_v K_v {\xi_v}^{-1}$ stabilizes the whole geodesic segment connecting $x_0$ to $\xi_v.x_0$. The cosets of $K_v \cap \xi_v K_v {\xi_v}^{-1}$ in $K_v$ are in bijection with the vertices in the $K_v$-orbit of $\xi_v.x_0$. We claim that this orbit is exactly the vertices $y$ such that $d(x_0,y)=d(x_0,\xi_v.x_0)$. It is clear that the orbit is contained in this set as the action of the group on the building is by isometries. 

Fix $y$ such that $d(x_0,y)=d(x_0,\xi_v.x_0)$.
Let $z_1$ be the vertex adjacent to $\xi_v.x_0$ on the geodesic segment connecting $x_0$ and $\xi_v.x_0$ and set $z_2$ to be the vertex adjacent to $y$ on the geodesic segment connecting $x_0$ and $y$.
The edges $(z_1,\xi_v.x_0)$ and $(z_2,y)$ define alcoves in the tree. Let $\mathcal{A}_1$, $\mathcal{A}_2$ be two apartments containing these alcoves and $x_0$.

Because the action of the type-preserving subgroup of $\mathbf{G}(\mathbb{Q}_v)$ on the building is strongly transitive\footnote{The action is transitive on pairs $(\mathscr{C},\mathscr{A})$ of an apartment $\mathscr{A}$ and an alcove $\mathscr{C}\subset\mathscr{A}$.} there is an element of $\mathbf{G}(\mathbb{Q}_v)$ sending   $\mathcal{A}_1$ to $\mathcal{A}_2$ and $(z_1,\xi_v.x_0)$ to $(z_2,y)$. Such an element must stabilize $x_0$ and send $\xi_v.x_0$ to $y$, hence $y$ is in the $K_v$-orbit of $\xi_v.x_0$ as required.

By counting vertices of distance $d(x_0,\xi_v.x_0)$ from $x_0$ in a $q_v+1$ regular tree we see that if $d(x_0,\xi_v.x_0)>0$
\begin{equation*}
\left[K_v\colon K_v\cap \xi_v K_v {\xi_v}^{-1}\right]=(q_v+1)q_v^{d(x_0,\xi_v.x_0)-1}=q_v^{d(x_0,\xi_v.x_0)}(1+\frac{1}{q_v})
\end{equation*} 
\end{proof}

\subsection{Equivalence of Necessary Conditions for Equidistribution}
\begin{lem}\label{lem:Denom-minimal-ideal-norm}
Let $\tau\in\mathbf{T}(\mathbb{A})$ then $\Denom_f(g_f^{-1}\tau_f g_f)$ is the minimal norm of an \emph{integral} fraction ideal in the homothety class $\idl(\tau)\in\lfaktor{\mathbb{Q}^\times}{\Ideals(\Lambda)}$.  
\end{lem}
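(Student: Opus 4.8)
The statement relates the value $\Denom_f(g_f^{-1}\tau_f g_f)$, which is built from the geometry of the Bruhat--Tits buildings (and the two-vertex graph in the ramified case), to a purely ideal-theoretic quantity attached to $\idl(\tau)$. The natural strategy is to work place by place, since both sides factor as products over finite places: the left side by the definition of $\Denom_f$ as $\prod_{v\neq\infty}\Denom_v$, and the right side because an integral ideal in a homothety class $\idl(\tau)\in\lfaktor{\mathbb{Q}^\times}{\Ideals(\Lambda)}$ has a well-defined norm only up to the class group of $\mathbb{Q}$ (which is trivial), and the minimal norm of an integral representative is multiplicative: it equals $\prod_{v\neq\infty}$ of the minimal $v$-adic denominator. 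So the claim reduces to the local statement that for each finite $v$, $\Denom_v(g_v^{-1}\tau_v g_v)$ equals the minimal power of $q_v$ dividing the norm of an integral $\Lambda_v$-ideal in the local homothety class of $\idl(\tau)_v$.

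\textbf{Local computation.} Fix $v\neq\infty$. First I would unwind $\Denom_v(g_v^{-1}\tau_v g_v)=q_v^{d(x_0,\,(g_v^{-1}\tau_v g_v).x_0)}$. Translating by $g_v$, which moves the base vertex $x_0$ to $g_v.x_0$ and whose stabilizer is $g_v\mathbb{O}_v^\times g_v^{-1}$ by the definition in \S\ref{sec:maximal-order-B}, we get $d(x_0,(g_v^{-1}\tau_v g_v).x_0)=d(g_v.x_0,\tau_v g_v.x_0)$. Now $g_v.x_0$ is the vertex whose stabilizer is the unit group $\mathbf{K}_{\widetilde{\mathbf{T}},v}=\Lambda_v^\times$ (as recorded in the Remark after the definition of the global order), i.e.\ it corresponds to the homothety class of $\Lambda_v$ as a lattice via Lemma~\ref{lem:B(Qv)-acts-on-Ev}. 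Applying $\tau_v\in\widetilde{\mathbf{T}}(\mathbb{Q}_v)=E_v^\times$ acting by multiplication sends this to the homothety class of the fractional $\Lambda_v$-ideal $\tau_v\Lambda_v$, which under $\widetilde{\idl}$ is precisely $\idl(\tau)_v$. So the distance in question is the combinatorial distance between the vertex $[\Lambda_v]$ and the vertex $[\tau_v\Lambda_v]$ in $\mathscr{B}_v$. It is then a standard fact about the tree (or the two-vertex graph): for two homothety classes of $\mathcal{O}_{E_v}$- or $\Lambda_v$-lattices $[L_1],[L_2]$, the distance $d([L_1],[L_2])$ equals $\ord_v$ of the index, normalized so that scaling $L_2$ to sit between $L_1$ and $q_v^{n}L_1$ with the ``tightest fit'' realizes the minimum; concretely $q_v^{d([L_1],[L_2])}=\min\{[\Lambda_v:c\tau_v\Lambda_v]: c\in\mathbb{Q}_v^\times,\ c\tau_v\Lambda_v\subseteq\Lambda_v\}$. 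But $[\Lambda_v:\mathfrak{a}]=\Nr\mathfrak{a}$ for an integral invertible $\Lambda_v$-ideal $\mathfrak{a}$, and the integral representatives of the class $\idl(\tau)_v\in\lfaktor{\mathbb{Q}_v^\times}{\Ideals(\Lambda_v)}$ are exactly the $c\tau_v\Lambda_v\subseteq\Lambda_v$. In the ramified case, where $\mathscr{B}_v$ is the two-vertex graph, the same identity holds trivially: $\Denom_v\in\{1,q_v\}$ and $\Lambda_v=\mathcal{O}_{E_v}$ is the maximal order, so the two vertices correspond to the two possible parities of $\ord_v$ of the ideal norm, and one checks $q_v^{d}=\min$-norm directly.

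\textbf{Assembling and the main obstacle.} Having the local equalities, I would multiply over all finite $v$: $\Denom_f(g_f^{-1}\tau_f g_f)=\prod_{v\neq\infty}q_v^{d_v}=\prod_{v\neq\infty}\min\{\Nr_v(\mathfrak{a}_v)\}$, and observe that an integral fractional ideal in the \emph{global} class $\idl(\tau)\in\lfaktor{\mathbb{Q}^\times}{\Ideals(\Lambda)}$ is obtained by choosing integral local representatives simultaneously (possible since $\mathbb{Q}$ has class number one, which is exactly why the double-coset description of $\idl$ in Definition~\ref{def:ideles-to-ideals}(3) collapses), and its norm is the product of the local norms; minimizing the global norm is achieved by minimizing each local factor independently. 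Hence the product of local minima is the global minimum, giving the claim. The step I expect to require the most care is the clean identification of the combinatorial distance on $\mathscr{B}_v$ with the minimal norm of an integral ideal in the homothety class --- in particular verifying that the minimization over $c\in\mathbb{Q}_v^\times$ matches the building distance (which is symmetric and accounts for the homothety) rather than, say, the colength of $\tau_v\Lambda_v$ in $\Lambda_v+\tau_v\Lambda_v$; and handling the non-maximal order $\Lambda_v$ (conductor $\mathcal{f}_v$) uniformly, using Lemma~\ref{lem:B(Qv)-acts-on-Ev} and Proposition~\ref{prop:local-order-split}/\ref{prop:local-order-general} to know that $g_v\mathbb{O}_v g_v^{-1}=\End_{\mathbb{Z}_v}(\tau_v\Lambda_v)$ for the appropriate $\tau_v$, so that the relevant vertex really is the homothety class of a proper invertible $\Lambda_v$-ideal. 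Once that dictionary is pinned down the rest is bookkeeping.
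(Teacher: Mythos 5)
Your proof is correct and follows essentially the same route as the paper's: both reduce to a place-by-place computation of $\Denom_v$ and assemble the local minima into a global integral representative using the triviality of the class group of $\mathbb{Q}$. The only cosmetic difference is that the paper runs the local step through Proposition \ref{prop:Omega_xi_Omega-reduced-norm} (the minimal valuation of the reduced norm of a representative of $\tau_v$ lying in $g_v\mathbb{O}_v g_v^{-1}\cap\mathbf{E}(\mathbb{Q}_v)$, i.e.\ of $q_v\alpha_v\in\Lambda_v$ with $\ord_v(q_v)$ minimal), whereas you phrase the same Cartan-decomposition fact as a tree-distance/elementary-divisor computation; your closing caveat about the vertex $g_v.x_0$ being $[\mathfrak{L}]$ for a principal proper ideal $\mathfrak{L}=l\Lambda_v$ rather than $[\Lambda_v]$ is harmless since multiplication by $l$ is an isometry of $\mathscr{B}_v$.
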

\begin{proof}
To show the equality between these two positive integers we show that their $p$-parts are equal for all primes $p$. The representatives of $\tau$ in $\mathbf{B}(\mathbb{A})$ can be written in coordinates as
\begin{equation*}
\tau=\left(\mathbb{Q}_v^\times
\begin{pmatrix}
\alpha_v & 0 \\
0 & \tensor[^\sigma]{\alpha}{_v}
\end{pmatrix}
\right)_v
\end{equation*}

Fix $v\neq\infty$.
A representative $r_v$ of $g_v^{-1}\tau_v g_v$ is contained in $\Omega_v$ if and only if $g_v r_v g_v^{-1}$ is contained in $g_v\mathbb{O}_v g_v^{-1}$. Hence by Proposition \ref{prop:Omega_xi_Omega-reduced-norm} $\Denom_v(g_v^{-1}\tau_v g_v)$ has the same valuation as the minimal reduced norm of a representative of $\tau_v$ contained in $g_v\mathbb{O}_v g_v^{-1}\cap \mathbf{E}(\mathbb{Q}_v)$. The latter set is by the definition of the local order equal to
\begin{equation*}
g_v\mathbb{O}_v g_v^{-1}\cap \mathbf{E}(\mathbb{Q}_v)=\left\{
\begin{pmatrix}
\lambda_v & 0 \\
0 & \tensor[^\sigma]{\lambda}{_v}
\end{pmatrix} \mid \lambda_v\in \Lambda_v
\right\}
\end{equation*}
We deduce that $\ord_v\Denom_v(g_v^{-1}\tau_v g_v)=\ord_v \Nr(q_v\alpha_v)$ where $q_v\in\mathbb{Q}_v^\times$ is an element of minimal valuation satisfying $q_v\alpha_v\in\Lambda_v$. 

Set $q\in\mathbb{Q}^\times$ so that $q \mathbb{Q}=\bigcap_{v\neq\infty} q_v \mathbb{Q}_v$ then by definition $q\widetilde{\idl}\left((\alpha_v)_{v\neq\infty}\right)$ is the minimal integral element in the homothety class $\idl(\tau)$ and its norm has the same valuation for all primes $p$ as $\Denom_f(g_f^{-1}\tau_f g_f)$.
\end{proof}

\begin{prop}\label{prop:equidist-conditions-equivalence}
Let $\left\{\left[\mathbf{T}_i^\Delta(\mathbb{A})(g_i,s_ig_i)\right]\right\}_i$ be a sequence of joint homogeneous toral set with associated global orders $\Lambda_i$. Denote $\mathfrak{s}_i=\idl(s_{i,f})\in \lfaktor{\mathbb{Q}^\times}{\Ideals(\Lambda_i)}$ and let $[\mathfrak{s}_i]$ be the class of $\mathfrak{s}_i$ in $\Pic(\Lambda_i)$.

The following are equivalent
\begin{enumerate}
\item 
\begin{equation*}
\min_{\left[\mathbf{T}_i^\Delta(\mathbb{A})(g_i,s_ig_i)\right]\subset \left[\mathbf{G}^\Delta(\mathbb{A})\xi\right]} \vol\left( \left[\mathbf{G}^\Delta(\mathbb{A})\xi\right]\right) \to_{i\to\infty} \infty
\end{equation*}

\item 
\begin{equation*}
\min_{\substack{\mathfrak{a} \subseteq \Lambda \\ [\mathfrak{a}]=[\mathfrak{s}_i]}} \Nr\mathfrak{a}\to_{i\to\infty} \infty
\end{equation*}

\item For every compact set $B\subset\mathbf{G}(\mathbb{A})$ there is $N\in\mathbb{N}$ such that for all $i> N$
\begin{equation*}
g_i^{-1} \mathbf{T}_i(\mathbb{Q}) s_i g_i \cap B =\emptyset
\end{equation*}
\end{enumerate}
\end{prop}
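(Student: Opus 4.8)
The three conditions are statements about the same sequence expressed in three languages: volumes of Hecke sets, norms of integral ideals, and escape of rational translates to infinity. The strategy is to prove $(1)\Leftrightarrow(2)$ by directly relating the quantity $\vol([\mathbf{G}^\Delta(\mathbb{A})\xi])$ to the ideal norm, using Lemmata \ref{lem:hecke-containing-toral}, \ref{lem:volume-of-hecke}, \ref{lem:Denom-minimal-ideal-norm}; and then to prove $(2)\Leftrightarrow(3)$ by a properness/compactness argument built on Proposition \ref{prop:Omega_xi_Omega-reduced-norm} and the structure of the map $\idl$.

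\textbf{Step 1: $(1)\Leftrightarrow(2)$.} By Lemma \ref{lem:hecke-containing-toral} every homogeneous Hecke set containing $[\mathbf{T}_i^\Delta(\mathbb{A})(g_i,s_ig_i)]$ is of the form $[\mathbf{G}^\Delta(\mathbb{A})(g_i,t_{\mathbb{Q}}s_ig_i)]$ with $t_{\mathbb{Q}}\in\mathbf{T}_i(\mathbb{Q})$, and its volume depends only on $\ctr(g_i, t_\mathbb{Q} s_i g_i) = g_i^{-1}t_\mathbb{Q} s_i g_i$; moreover \eqref{eq:g_adel_restrictions} forces the archimedean component of this element into $K_\infty$, so Lemma \ref{lem:volume-of-hecke} applies and gives
\begin{equation*}
\vol\left(\left[\mathbf{G}^\Delta(\mathbb{A})(g_i,t_{\mathbb{Q}}s_ig_i)\right]\right)\meas_\mathbf{G}(\Omega) = \Denom_{\mathit{sf}}\!\left(g_{i,f}^{-1}t_{\mathbb{Q}}s_{i,f}g_{i,f}\right)\prod_{p\mid \Denom_{\mathit{sf}}(\cdots)}\left(1+\tfrac1p\right).
\end{equation*}
The product over primes is bounded above by a constant times $\log\log$ of the argument, and $\Denom_{\mathit{sf}}\asymp_{\mathbf{G}}\Denom_f$ by Definition \ref{def:Denom_sf}; so minimizing the volume over all $t_{\mathbb{Q}}$ is, up to a slowly varying factor (hence equivalent for the purpose of "$\to\infty$"), the same as minimizing $\Denom_f(g_{i,f}^{-1}t_{\mathbb{Q}}s_{i,f}g_{i,f})$ over $t_{\mathbb{Q}}\in\mathbf{T}_i(\mathbb{Q})$. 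Now $g_{i,f}^{-1}t_{\mathbb{Q}}s_{i,f}g_{i,f}$ ranges, as $t_\mathbb{Q}$ varies over $\mathbf{T}_i(\mathbb{Q})$, over the $\mathbf{T}_i(\mathbb{Q})$-orbit of $g_{i,f}^{-1}s_{i,f}g_{i,f}$, which under $\idl$ maps onto the full homothety class $\mathfrak{s}_i\in\lfaktor{\mathbb{Q}^\times}{\Ideals(\Lambda_i)}$ — except that one must be a little careful, since $\idl$ is defined via $\mathbf{T}(\mathbb{A}_f)/K_{\mathbf{T},f}$ and the extra divisibility at ramified primes of $\mathbf{B}$ is absorbed into the $\asymp_{\mathbf{G}}$ in Definition \ref{def:Denom_sf}. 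By Lemma \ref{lem:Denom-minimal-ideal-norm}, $\Denom_f(g_{i,f}^{-1}\tau_f g_{i,f})$ equals the minimal norm of an integral ideal in $\idl(\tau)$, and the class of $\idl(\tau)$ in $\Pic(\Lambda_i)$ is fixed (equal to $[\mathfrak{s}_i]$) as $t_\mathbb{Q}$ varies, while the homothety class itself ranges over all of $\mathfrak{s}_i$. Hence $\min_{t_{\mathbb{Q}}}\Denom_f = \min_{\mathfrak{a}\subseteq\Lambda_i,\ [\mathfrak{a}]=[\mathfrak{s}_i]}\Nr\mathfrak{a}$, giving $(1)\Leftrightarrow(2)$.

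\textbf{Step 2: $(2)\Leftrightarrow(3)$.} For $(3)\Rightarrow(2)$: if $(2)$ fails there is a subsequence along which some integral ideal $\mathfrak{a}_i\subseteq\Lambda_i$ with $[\mathfrak{a}_i]=[\mathfrak{s}_i]$ has bounded norm; lifting $\mathfrak{a}_i$ to an adelic point of $\mathbf{T}_i(\mathbb{A}_f)$ and using that bounded norm means the corresponding local representatives lie in a fixed compact subset of $\mathbf{B}(\mathbb{A}_f)$ after conjugating (Proposition \ref{prop:Omega_xi_Omega-reduced-norm}, and the fact that $\Denom_f$ is proper), one produces $t_{\mathbb{Q}}\in\mathbf{T}_i(\mathbb{Q})$ with $g_i^{-1}t_{\mathbb{Q}}s_i g_i$ in a fixed compact $B$ — here one handles the archimedean place using \eqref{eq:g_adel_restrictions}, which pins the archimedean component into the compact set $K_\infty$, so $g_i^{-1}\mathbf{T}_i(\mathbb{Q})s_ig_i\cap B\neq\emptyset$ for infinitely many $i$, contradicting $(3)$. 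For $(2)\Rightarrow(3)$: conversely, if $(3)$ fails, pick $B$ compact and $t_{\mathbb{Q},i}\in\mathbf{T}_i(\mathbb{Q})$ with $g_i^{-1}t_{\mathbb{Q},i}s_i g_i\in B$ along a subsequence; then $\Denom_f$ evaluated there is bounded (properness of $\Denom_f$ on $B_f$, the finite part of $B$), so by Lemma \ref{lem:Denom-minimal-ideal-norm} there is an integral ideal of bounded norm in the class $[\mathfrak{s}_i]$, contradicting $(2)$. The only genuine subtlety is the bookkeeping at the ramified primes of $\mathbf{B}$, where $K_v\neq\mathbb{O}_v^\times$-normalizer behaviour makes $\Denom_{\mathit{sf}}$ and $\Denom_f$ differ by a bounded factor (Definition \ref{def:Denom_sf}); this is harmless since all three conditions are insensitive to bounded multiplicative perturbations. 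I expect this ramified-place bookkeeping, together with the careful passage between the homothety class $\mathfrak{s}_i$ and its image in $\Pic(\Lambda_i)$ in Step 1, to be the main point requiring care, whereas the compactness arguments in Step 2 are routine once properness of $\Denom_f$ (built into its definition as a proper map) is invoked.
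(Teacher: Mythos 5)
Your proof is correct and follows essentially the same route as the paper's own (very terse) proof: the paper establishes $(1)\Leftrightarrow(2)$ from Lemmata \ref{lem:hecke-containing-toral}, \ref{lem:volume-of-hecke}, \ref{lem:Denom-minimal-ideal-norm}, the remark in Definition \ref{def:Denom_sf} and the bound $\prod_{p\mid N}(1+\frac1p)\ll\log\log N$, and $(2)\Leftrightarrow(3)$ from Lemma \ref{lem:Denom-minimal-ideal-norm} together with the properness of $h\mapsto\Denom_f(h_f)$ on $K_\infty\times\mathbf{G}(\mathbb{A}_f)$ — exactly the ingredients and logic you spell out. Your write-up is simply a fuller unpacking of the same argument, with the correct attention to the archimedean component being pinned into $K_\infty$ by \eqref{eq:g_adel_restrictions} and to the bounded discrepancy between $\Denom_{\mathit{sf}}$ and $\Denom_f$ at ramified places.
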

\begin{proof}
The equivalence of (2) and (3) is a consequence of Lemma \ref{lem:Denom-minimal-ideal-norm} above and the fact that the function $h\mapsto \Denom_f(h_f)$ is a continuous \emph{proper} function from $K_\infty\times \mathbf{G}(\mathbb{A}_f)$ to $\mathbb{N}$.

The equivalence of (1) and (2) follows from Lemmata \ref{lem:hecke-containing-toral}, \ref{lem:volume-of-hecke}, \ref{lem:Denom-minimal-ideal-norm}, the remark in Definition \ref{def:Denom_sf} and the fact that for all $N\in\mathbb{N}$
\begin{equation*}
1\leq \prod_{p\mid N} \left(1+\frac{1}{p}\right)\ll \log\log N
\end{equation*}
which follows from the PNT.
\end{proof}

\section{Geometric Expansion of the Pair Cross-Correlation}\label{sec:cross-correlation}
Throughout this section we fix a joint homogeneous toral set $[\mathbf{T}^\Delta(\mathbb{A})(g,sg)]$ with periodic measure $\mu$ and a simply connected homogeneous Hecke set $[\mathbf{G}^\Delta(\mathbb{A})^+\xi]$ with periodic measure $\nu$. We also write $\xi=(\xi_1,\xi_2)$.

\subsection{Pair Cross-Correlation}
We define the pair cross-correlation between the periodic measure $\mu$ and the periodic measure $\nu$. 
\begin{defi}\label{def:automorphic-kernel}
Let $V\subseteq\left[\left(\mathbf{G}\times \mathbf{G}\right)(\mathbb{A})\right]$ be a compact identity neighborhood. Define the automorphic kernel $K_V\colon\left[\left(\mathbf{G}\times \mathbf{G}\right)(\mathbb{A})\right]^{\times 2}\to\mathbb{R}$
\begin{equation*}
K_V(x,y)=\sum_{\gamma\in\left(\mathbf{G}\times\mathbf{G}\right)(\mathbb{Q})} \mathbb{1}_V(x^{-1}\gamma y)
\end{equation*}
\end{defi}

As $V$ is compact the sum on the right is finite for every $x$ and $y$. Moreover, the number of non-trivial summands is uniformly bounded when $x$ and $y$ are restricted to fixed compact subsets. In particular the convergence is uniform on compact sets.

\begin{defi}\label{def:cross-correlation}
Let $B=\prod_v B_v\subseteq\mathbf{G}(\mathbb{A})$  be a compact identity neighborhood  with $B_v=K_v$ for almost all $v$ and $B_v$ a compact-open subgroup for all $v$ non-archimedean. 
Let $\lambda_1$, $\lambda_2$ be probability measures on $\left[\left(\mathbf{G}\times \mathbf{G}\right)(\mathbb{A})\right]$ and $K_{B\times B}$ as in Definition \ref{def:automorphic-kernel}.

For a fixed closed subset $C\subseteq[\mathbf{G}(\mathbb{A})]$ we define
\begin{equation*}
\Cor_{C}[\lambda_1,\lambda_2](B)\coloneqq\int_{C\times C} \int_{C\times C} K_{B\times B}(x,y)\dif\lambda_1(x)\dif\lambda_2(y)
\end{equation*}
We also write $\Cor[\lambda_1,\lambda_2](B)=\Cor_{Y_{\mathbb{A}}}[\lambda_1,\lambda_2](B)$. 

We say that $B$ is injective on $C$ if the quotient map $\mathbf{G}(\mathbb{A})\to [\mathbf{G}(\mathbb{A})]$ is injective when restricted to $gB$ for any $g\in\mathbf{G}(\mathbb{A})$ such that $[g]\in C$. When $C$ is compact there is alway an identity neighborhood injective on $C$.
\end{defi}

\begin{lem}
In the setting of Definition \ref{def:cross-correlation}
We always have
\begin{equation*}
\lambda_1\times\lambda_2\left(x,y\in C\times C \mid y\in xB \right) \leq 
\Cor_{C}[\lambda_1\times\lambda_2](B)
\end{equation*}
with equality if $B$ is injective on $C$.
\end{lem}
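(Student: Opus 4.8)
The statement is an elementary unfolding of the definitions, so the plan is to write the double integral over $C\times C$ explicitly, interchange the (finite) sum over $\gamma$ with the integrals, and compare with the measure of the set where $y\in xB$. First I would note that, by Definition~\ref{def:automorphic-kernel} applied to the group $\mathbf{G}\times\mathbf{G}$ with neighborhood $V=B\times B$, we have
\begin{equation*}
\Cor_{C}[\lambda_1,\lambda_2](B)=\int_{C\times C}\int_{C\times C}\sum_{\gamma\in(\mathbf{G}\times\mathbf{G})(\mathbb{Q})}\mathbb{1}_{B\times B}(x^{-1}\gamma y)\dif\lambda_1(x)\dif\lambda_2(y),
\end{equation*}
where the inner sum is finite with a uniformly bounded number of nonzero terms once $x,y$ range over a compact set (as remarked after Definition~\ref{def:automorphic-kernel}), so Tonelli applies and we may integrate term by term.

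Next I would isolate the $\gamma=e$ term. Since $\mathbb{1}_{B\times B}(x^{-1}y)=1$ exactly when $y\in x(B\times B)=xB\times xB$, and since for $(x,y)\in (C\times C)\times(C\times C)$ this is precisely the condition defining the set $\{(x,y)\in C\times C\mid y\in xB\}$ appearing in the lemma (here ``$C\times C$'' in the statement means a pair of points of $[(\mathbf{G}\times\mathbf{G})(\mathbb{A})]$, i.e.\ the ambient product quotient; $xB$ is shorthand for $x(B\times B)$), the $\gamma=e$ contribution equals $\lambda_1\times\lambda_2(\{x,y\in C\times C\mid y\in xB\})$. All other summands are integrals of nonnegative functions, hence contribute a nonnegative amount. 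This immediately gives the claimed inequality
\begin{equation*}
\lambda_1\times\lambda_2\left(x,y\in C\times C \mid y\in xB\right)\leq \Cor_{C}[\lambda_1,\lambda_2](B).
\end{equation*}

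For the equality statement under the hypothesis that $B$ is injective on $C$, I would argue that every $\gamma\neq e$ contributes zero. Suppose $x,y\in C\times C$ and $\gamma\in(\mathbf{G}\times\mathbf{G})(\mathbb{Q})$ with $x^{-1}\gamma y\in B\times B$. Pick representatives in $(\mathbf{G}\times\mathbf{G})(\mathbb{A})$; then both $y$ and $\gamma y$ lie in $x(B\times B)$, a set on which the quotient map is injective by the injectivity hypothesis on $C$ (applied coordinatewise, since $B\times B=\prod_v(B_v\times B_v)$ is an identity neighborhood of the required form in $\mathbf{G}\times\mathbf{G}$ and $C\times C$ is compact in each coordinate). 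Injectivity on $x(B\times B)$ forces $\gamma y$ and $y$ to represent the same point of the quotient, i.e.\ $\gamma\in(\mathbf{G}\times\mathbf{G})(\mathbb{Q})$ fixes $[y]$, which in $[(\mathbf{G}\times\mathbf{G})(\mathbb{A})]=\lfaktor{(\mathbf{G}\times\mathbf{G})(\mathbb{Q})}{(\mathbf{G}\times\mathbf{G})(\mathbb{A})}$ means $\gamma=e$. Hence only the $\gamma=e$ term survives and the inequality becomes an equality. The only mildly delicate point — the ``main obstacle'', though it is minor — is bookkeeping the fact that injectivity of the quotient map $\mathbf{G}(\mathbb{A})\to[\mathbf{G}(\mathbb{A})]$ on translates $gB$ for $[g]\in C$ upgrades to injectivity of $(\mathbf{G}\times\mathbf{G})(\mathbb{A})\to[(\mathbf{G}\times\mathbf{G})(\mathbb{A})]$ on translates of $B\times B$; this is immediate because the product quotient is the product of the quotients and $C\times C$ is a product of compact sets, but it should be stated cleanly.
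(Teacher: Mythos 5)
Your proof is correct and is exactly the unfolding the paper has in mind (the paper's own proof is just ``follows directly from the definitions''). One small imprecision: the ``$\gamma=e$ term'' $\mathbb{1}_{B\times B}(x^{-1}y)$ is not well defined on the quotient (only the full sum over $\gamma$ is), and for any fixed choice of representatives it is in general only $\leq \mathbb{1}_{\{y\in xB\}}(x,y)$, not equal to it; the clean statement is that $K_{B\times B}(x,y)\geq \mathbb{1}_{\{y\in xB\}}(x,y)$ pointwise, since $y\in xB$ means \emph{some} $\gamma$ satisfies $x^{-1}\gamma y\in B\times B$. Your injectivity argument then shows that at most one $\gamma$ can contribute for $x,y\in C\times C$, so this pointwise inequality becomes an equality, which is precisely the equality case of the lemma; the reduction of injectivity for $B\times B$ on $C\times C$ to coordinatewise injectivity is handled correctly.
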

\begin{proof}
Follows directly from the definitions.
\end{proof}

\subsection{Main Theorem about Cross-Correlation}
The main result in this section and the main structural result in this manuscript is Theorem \ref{thm:cross-correlation-shifted-convolution} below to be proved in \S\ref{sec:proof-of-geometric-expansion}. First we need a few definitions.
\begin{defi}\label{defi:cross-correlation-constants}
We introduce the following notations for a fixed joint homogeneous toral set $\left[\mathbf{T}^\Delta(\mathbb{A})(g,sg)\right]$.

\begin{enumerate}
\item The twist $s\in\mathbf{T}(\mathbb{A})$ defines a homothety class of invertible fractional $\Lambda$-ideals 
\begin{equation*}
\mathbb{Q}^\times\mathfrak{s}\coloneqq \idl(s)\in \lfaktor{\mathbb{Q}^\times}{\Ideals(\Lambda)}
\end{equation*}

\item Define the following invertible fractional $\Lambda$-ideal which encapsulates the splitting behaviour of $\mathbf{B}$ outside of $\infty$. 
\begin{equation*}
\mathfrak{e}\coloneqq \widetilde{\idl}\left((\upsilon_v \tau_v)_v\right)
\end{equation*}
where $\upsilon_v,\tau_v$ are as in Proposition \ref{prop:local-order-general}.

\item We also need the following integer which is also closely related to the splitting of $\mathbf{B}$
\begin{equation*}
\upsilon\coloneqq \sign(\epsilon) \prod_{\substack{\mathbf{B}\textrm{ is ramified and }\\ E \textrm{ is inert at } p}} p
\end{equation*}
\end{enumerate}
\end{defi}

\begin{remark}
In the simplest case when $\mathbf{G}\simeq\mathbf{PGL}_2$ is split we can choose $\epsilon=1$ and then $\upsilon=1$ and $\mathfrak{e}=\Lambda$. 
\end{remark}

\begin{defi}
For any $[\mathfrak{g}]\in \Pic(\Lambda)$
define the arithmetic functions $f_{[\mathfrak{g}]},g_{[\mathfrak{g}]}\colon\mathbb{Z} \to \mathbb{Z}$ as follows
\begin{align*}
g_{[\mathfrak{g}]}(x)&=\#\left\{\mathfrak{a}\in \Ideals(\Lambda)_0 \mid \Nr(\mathfrak{a})=x,\,
\mathfrak{a}\subseteq\Lambda,\,
[\mathfrak{a}]= [\mathfrak{g}] \textrm{ or } \mathfrak{a}=0
\right\}\\
f_{[\mathfrak{g}]}(x)&=\#\left\{\mathfrak{b}\in \Ideals(\Lambda) \mid \Nr(\mathfrak{b})=x,\,
\mathfrak{b}\subseteq\Lambda,\,
[\mathfrak{b}]\in [\mathfrak{g}]\Pic(\Lambda)^2
\right\}
\end{align*}

Define also the multiplicative function $r\colon\mathbb{N}\to\mathbb{N}$ by requiring that for any \emph{odd} prime $p\mid D$ if $\mathbf{B}$ splits at $p$ then
\begin{equation*}
r(p^k)=\begin{cases}
1 & k <\ord_p D\\
2 & k \geq \ord_p D
\end{cases}
\end{equation*}
If $2\mid D$ we set $r(2^k)=2^{\mu_{\mathrm{wild}}}$, where $\mu_\mathrm{wild}\in\{0,1,2,3\}$ is defined in Corollary \ref{cor:Pic(Lambda)[2]-size}. If $p\mid D$ and $\mathbf{B}$ ramifies at $p$ we define $r(p^k)=2$. For all primes $p\not\mid D$ we set $r(p^k)=1$.
\end{defi}

\begin{thm}\label{thm:cross-correlation-shifted-convolution}
Fix a joint homogeneous toral set $\left[\mathbf{T}^\Delta(\mathbb{A})(g,sg)\right]$ with splitting field $E/\mathbb{Q}$ and quadratic order $\Lambda\leq\mathcal{O}_E$ of discriminant $D$. Assume \eqref{eq:g_adel_restrictions} is satisfied.

Let $B=\prod_v B_v\subset \mathbf{G}(\mathbb{A})$ with $B_v=\Omega_v$ for all $v\neq p_1$ and $B_{p_1}=K_{p_1}^{(-n,n)}$ for some $n\in\mathbb{N}$.
Fix also a simply connected homogeneous Hecke set $\left[\mathbf{G}^\Delta(\mathbb{A})^+\xi\right]$ with $\ctr(\xi)_{p_1}\in A_{p_1}$ and assume 
\begin{equation*}
g^{-1}\mathbf{T}(\mathbb{Q})s g \cap B\ctr(\xi)B=\emptyset
\end{equation*}

Let $\mu$ be the algebraic probability measure supported on $\left[\mathbf{T}^\Delta(\mathbb{A})(g,sg)\right]$ and let $\nu$ be the algebraic probability measure supported on $\left[\mathbf{G}^\Delta(\mathbb{A})^+\xi\right]$. Denote $\kappa=2^8 \Denom_\infty(\ctr(\xi)_\infty)\Denom_f(\ctr(\xi)_f)$ and $\omega=-\sign(\Nrd(\ctr(\xi)_\infty))\Denom_f(\ctr(\xi)_f)$ then
\begin{align*}
\Cor[\mu,\nu](B)&\ll
\vol\left(\left[\mathbf{T}(\mathbb{A})g\right]\right)^{-1} 
\vol\left(\left[\mathbf{G}^\Delta(\mathbb{A})^+\xi\right]\right)^{-1} p_1^{-2n}\\
&\sum_{\substack{0\leq x\leq \kappa |D| \\ x\equiv \omega D \mod \upsilon p_1^{2n}}} g_{[\mathfrak{s}]}(x)
f_{[p_1^n\mathfrak{se}]^{-1}}\left(\frac{x-\omega D}{\upsilon p_1^{2n}}\right) r\left(\frac{x-\omega D}{\upsilon p_1^{2n}}\right)
\end{align*}
\end{thm}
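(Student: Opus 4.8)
The plan is to unfold all three objects --- the two periodic measures and the automorphic kernel $K_{B\times B}$ --- into an $\mathbf{M}(\mathbb{Q})$-indexed sum of relative orbital integrals over the GIT quotient $\mathbf{W}=\dfaktor{\mathbf{G}^\Delta}{\mathbf{G}\times\mathbf{G}}{\mathbf{T}^\Delta}$ from \S\ref{sec:GIT}, and then to parametrize the surviving orbits by pairs of integral ideals. First I would write $\Cor[\mu,\nu](B)$ as an integral of $K_{B\times B}(x,y)$ against $\mu\times\nu$; since $\mu$ is the $H_\mathbb{A}^\Delta$-invariant measure on $[\mathbf{T}^\Delta(\mathbb{A})(g,sg)]$ with $H_\mathbb{A}=g^{-1}\mathbf{T}(\mathbb{A})g$, and $\nu$ the $\xi^{-1}\mathbf{G}^\Delta(\mathbb{A})^+\xi$-invariant measure on $[\mathbf{G}^\Delta(\mathbb{A})^+\xi]$, the double integral collapses (after inserting the defining sum of $K_{B\times B}$ and using Fubini, justified by the local-finiteness remark after Definition \ref{def:automorphic-kernel}) into a sum over $\gamma\in\left(\mathbf{G}\times\mathbf{G}\right)(\mathbb{Q})$ modulo the relevant rational stabilizers of
\begin{equation*}
\meas_{\mathbf{T}}\big(\{t\in\mathbf{T}(\mathbb{A}) : (g,sg)^{-1}t^{-1}\gamma\xi\,(B\times B)\cap \mathbf{G}^\Delta(\mathbb{A})^+ \neq\emptyset\}\big),
\end{equation*}
weighted by the volume factors $\vol([\mathbf{T}(\mathbb{A})g])^{-1}\vol([\mathbf{G}^\Delta(\mathbb{A})^+\xi])^{-1}$. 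The diagonal $\mathbf{G}^\Delta$ on the left cancels against the Hecke invariance, leaving --- via the contraction map $\ctr$ --- a sum indexed essentially by $\mathbf{T}(\mathbb{Q})$-orbits of rational points $\gamma_0\in\mathbf{G}(\mathbb{Q})$ with $g^{-1}\mathbf{T}(\mathbb{Q})\gamma_0 g$ meeting $B\,\ctr(\xi)\,B$; the hypothesis $g^{-1}\mathbf{T}(\mathbb{Q})sg\cap B\ctr(\xi)B=\emptyset$ together with Lemma \ref{lem:hecke-containing-toral} guarantees no $\gamma_0$ lies in $s\mathbf{T}(\mathbb{Q})\cup w_{\mathbf{T}}s\mathbf{T}(\mathbb{Q})$, so by Corollary \ref{cor:AdTG(Q)-separation} each relevant $\mathbf{W}(\mathbb{Q})$-point is a single clean $\Ad\mathbf{T}(\mathbb{Q})$-orbit and the stabilizer bookkeeping of Proposition \ref{prop:M-orbits} is harmless.

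\textbf{From orbits to ideals.}
Next I would evaluate the local relative orbital integrals place by place. At $\infty$ and at all finite $v\neq p_1$ the test set is $\Omega_v$, which by Propositions \ref{prop:local-order-split}, \ref{prop:local-order-general}, and \ref{prop:local-order-split-archimedean} has an explicit coordinate description in the torus-adapted coordinates of \S\ref{sec:B-coordinates}; at $p_1$ the Bowen-ball $B_{p_1}=K_{p_1}^{(-n,n)}$ is handled by Proposition \ref{prop:Omega_xi_Omega-reduced-norm-Bowen}, which is precisely the source of the $p_1^{-2n}$ factor and of the congruence condition $x\equiv\omega D\bmod \upsilon p_1^{2n}$. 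The essential move --- this is Proposition \ref{prop:intersection-to-invariants} / \ref{prop:fiber-of-invariants} as advertised in \S\ref{intro:arithmetic-invariants} --- is that an element $\gamma\xi$ contributing to the kernel, written in coordinates as $\begin{pmatrix}\alpha & \epsilon\beta\\ \tensor[^\sigma]{\beta}{} & \tensor[^\sigma]{\alpha}{}\end{pmatrix}$, produces via $\widetilde{\idl}$ a pair of integral $\Lambda$-ideals $(\mathfrak{a},\mathfrak{b})$ in prescribed Picard classes: $\mathfrak{a}$ in the class $[\mathfrak{s}]$ (giving the function $g_{[\mathfrak{s}]}$) and $\mathfrak{b}$ in the coset $[p_1^n\mathfrak{s}\mathfrak{e}]^{-1}\Pic(\Lambda)^2$ (giving $f_{[p_1^n\mathfrak{s}\mathfrak{e}]^{-1}}$, with the restriction modulo squares encoded through Appendix \ref{appndx:principal-genus} on the principal genus), and whose norms satisfy the additive relation $\Nr(\mathfrak{a})-\omega D=\upsilon p_1^{2n}\Nr(\mathfrak{b})$. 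Running $x=\Nr(\mathfrak{a})$ over the allowed range then yields exactly the shifted-convolution sum displayed in the theorem; the range $0\le x\le\kappa|D|$ comes from the operator-norm inequalities \eqref{eq:Omega-tilde-Nrd} at $\infty$ (the constant $2^8$ and the factor $\Denom_\infty\Denom_f$) together with Proposition \ref{prop:local-order-split-archimedean}.

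\textbf{The multiplicity factor and the main obstacle.}
The factor $r\!\left(\frac{x-\omega D}{\upsilon p_1^{2n}}\right)$ accounts for the failure of injectivity of the invariant map $\gamma\mapsto(\mathfrak{a},\mathfrak{b})$: by Proposition \ref{prop:fiber-of-invariants} this failure is confined to ramified primes $p\mid D$, where the fiber can be double (or, at $p=2$, up to $2^{\mu_{\mathrm{wild}}}$), which is exactly the definition of the multiplicative function $r$; bounding the fiber size above by $r$ is a purely local computation using the different-ideal lemmas \ref{lem:inverse-different} and \ref{lem:p-adic-sum}, and the conic point-counts of Appendix \ref{appndx:conics}. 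One must also dispose of the degenerate contribution $\mathfrak{b}=0$: this forces $\gamma_0\in s\mathbf{T}(\mathbb{Q})$ (up to the Weyl element), which is excluded by the standing non-concentration hypothesis, so the sum genuinely runs only over $\mathfrak{b}\neq 0$, i.e.\ over $\Ideals(\Lambda)$ rather than $\Ideals(\Lambda)_0$, as written. The step I expect to be the main obstacle is making the passage ``relative orbital integral $\Longrightarrow$ pair of ideals in prescribed square-classes'' simultaneously \emph{surjective onto the right index set} and \emph{bounded-to-one with the correct multiplicity $r$}: the Picard-class constraints on $\mathfrak{a}$ and $\mathfrak{b}$ are genuinely coupled (they are dictated by the single class $[\mathfrak{s}]$ and by $\mathbf{e}$, $\upsilon$), so keeping track of them through the local analysis at every ramified prime, the prime $p_1$, and the place $\infty$ simultaneously --- without either losing a class or over-counting --- is where all the technical weight sits; everything downstream is then the sieve estimate of \S\ref{sec:sieving} applied to this sum.
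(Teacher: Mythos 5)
Your proposal follows essentially the same route as the paper: geometric expansion of the relative trace over $W_{\mathbb{Q}}$, elimination of non-compact stabilizers via the hypothesis $g^{-1}\mathbf{T}(\mathbb{Q})sg\cap B\ctr(\xi)B=\emptyset$, the archimedean/Bowen-ball volume computations producing $\kappa$, $\omega$ and $p_1^{-2n}$, and the invariant map to pairs of integral ideals with the fiber bound $\ll r(\Nr\mathfrak{b})$ — all matching Propositions \ref{prop:geometric-expansion}, \ref{prop:intersection-to-invariants}, \ref{prop:fiber-of-invariants} and \ref{prop:fiber-over-integral-invariants}. The only slight slip is attributing the Weyl-element case to $\mathfrak{b}=0$: in the paper $\ctr(\gamma)\in w_\mathbf{T}\mathbf{T}(\mathbb{Q})$ gives $\mathfrak{a}=0$ with \emph{compact} stabilizer $\mu_2$ and is retained (hence $g_{[\mathfrak{s}]}$ counts the zero ideal), while only $\ctr(\gamma)\in\mathbf{T}(\mathbb{Q})$, i.e.\ $\mathfrak{b}=0$, is excluded by the hypothesis.
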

\begin{remark}
Notice that $\upsilon$ is supported on primes that are inert in $E/\mathbb{Q}$ while $p_1$ splits; thus $\gcd(\upsilon,p_1^{2n})=1$.
\end{remark}

\subsection{Geometric Expansion}
\begin{defi}
Set
\begin{equation*}
W_{\mathbb{Q}}=\dfaktor{\mathbf{G}^\Delta(\mathbb{Q})}{\left(\mathbf{G} \times \mathbf{G}\right)(\mathbb{Q})}{\mathbf{T}^\Delta(\mathbb{Q})}
\end{equation*}
We denote by $[\gamma]\in W_{\mathbb{Q}})$ the double coset corresponding to $\gamma\in\left(\mathbf{G} \times \mathbf{G}\right)(\mathbb{Q})$. 

We have a natural map $W_{\mathbb{Q}}\to\mathbf{W}(\mathbb{Q})$ where $\mathbf{W}$ is the GIT quotient defined in \S \ref{sec:GIT}. Recall from Proposition \ref{prop:M-orbits} that this map is injective outside of $\left\{[(\gamma_0,\gamma_0 w_\mathbf{T} t_{\mathbb{Q}})] \mid \gamma_0\in \mathbf{G}(\mathbb{Q}),\; t_{\mathbb{Q}}\in \mathbf{T}(\mathbb{Q}) \right\}$.
\end{defi} 

\begin{defi}
For any closed subgroup $N<\mathbf{M}(\mathbb{A})$ denote 
\begin{equation*}
N^\dagger\coloneqq N\cap\left(\mathbf{G}(\mathbb{A})^+\times\mathbf{T}(\mathbb{A})\right)
\end{equation*} 
The subgroup $N^\dagger$ is always normal in $N$.
\end{defi}

The following proposition is the geometric expansion of the relative trace corresponding to the subgroups $\mathbf{G}^\Delta$ and $\mathbf{T}^\Delta$ of $\mathbf{G}\times\mathbf{G}$. The situation is relatively simple as the stabilizers have finite volume adelic quotients.
\begin{prop}\label{prop:geometric-expansion}
Let $\mu$ be the periodic measure on a joint homogeneous toral set $[\mathbf{T}^\Delta(\mathbb{A})(g,sg)]$ and $\nu$ the periodic measure on a simply-connected Hecke correspondence $[\mathbf{G}^\Delta(\mathbb{A})^+\xi]$, $\xi=(\xi_1, \xi_2)$. Set $B'=\xi_1 B g^{-1}\times \xi_2 B g^{-1} s^{-1}$  
 then
\begin{align*}
\Cor[\mu,\nu](B)&=\int_{[\mathbf{G}(\mathbb{A})^+]} \int_{[\mathbf{T}(\mathbb{A})]} K_{B'}(l,t)\dif l \dif t\\
&=\sum_{[\gamma]\in W_{\mathbb{Q}}} \sum_{\varkappa\in
{\pi_{\mathbf{G}}\left(\mathbf{M}_\gamma(\mathbb{Q})\right)} \backslash {\mathbf{G}(\mathbb{Q})} \slash {\mathbf{G}(\mathbb{Q})^+}} 
\vol(\mathbf{M}_\gamma)
\cdot
\RO_{\gamma,\varkappa}(B)\\
\RO_{\gamma,\varkappa}(B)&\coloneqq\int_{\mathbf{M}_\gamma(\mathbb{A})^\dagger \backslash \mathbf{M}(\mathbb{A})^\dagger}
\mathbb{1}_{B'}\left((\varkappa l)^{-1} \gamma t\right) 
\dif(l,t)\\
\vol(\mathbf{M}_\gamma)&\coloneqq
\meas_{\mathbf{M}_\gamma(\mathbb{A})^\dagger}\left(\lfaktor{\mathbf{M}_\gamma(\mathbb{Q})^\dagger}{\mathbf{M}_\gamma(\mathbb{A})^\dagger}\right)
\end{align*}
where the Haar measures on $\lfaktor{\mathbf{M}_\gamma(\mathbb{A})^\dagger}{\mathbf{M}(\mathbb{A})^\dagger}$ and $\mathbf{M}_\gamma(\mathbb{A})^\dagger$ are mutually normalized.

Following the relative trace formula terminology we call $\RO_{\gamma,\varkappa}(B)$ a relative orbital integral.
\end{prop}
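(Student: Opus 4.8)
The statement is a standard \emph{unfolding} of a relative trace, and I would carry it out in three stages: (1) rewrite $\Cor[\mu,\nu](B)$ as a double integral of the automorphic kernel over the two stabilizer orbits; (2) fold the $\left(\mathbf{G}\times\mathbf{G}\right)(\mathbb{Q})$-sum in $K_{B\times B}$ into a sum over the double coset space $W_{\mathbb{Q}}$; (3) for each $[\gamma]$, identify the resulting inner integral as an integral over $\mathbf{M}_\gamma(\mathbb{A})^\dagger\backslash\mathbf{M}(\mathbb{A})^\dagger$ times the volume factor $\vol(\mathbf{M}_\gamma)$, with the extra $\varkappa$-sum accounting for the passage from $\mathbf{G}(\mathbb{A})$ to $\mathbf{G}(\mathbb{A})^+$.

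\textbf{Stage 1.} Since $\mu$ is the $H_{\mathbb{A}}^\Delta$-invariant probability measure on $[\mathbf{T}^\Delta(\mathbb{A})(g,sg)]$ and $\nu$ is the $\mathbf{G}^\Delta(\mathbb{A})^+$-invariant probability measure on $[\mathbf{G}^\Delta(\mathbb{A})^+\xi]$, the definition of $\Cor[\mu,\nu](B)$ (with $C$ taken to be the whole space, so the test-injectivity issue is irrelevant to the identity being an equality of the relative-trace integral) gives
\begin{equation*}
\Cor[\mu,\nu](B)=\int_{[\mathbf{G}^\Delta(\mathbb{A})^+]}\int_{[\mathbf{T}^\Delta(\mathbb{A})]} K_{B\times B}\bigl(\xi^{-1}l^{-1},\ (g,sg)^{-1}t\bigr)\dif t\dif l,
\end{equation*}
after substituting $x=l\xi$, $y=t(g,sg)$ and using invariance of the Haar measures. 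Now I would push the test set $B\times B$ through: $x^{-1}\gamma y\in B\times B$ becomes, after absorbing $\xi$, $g$, $s$ into the test region, the condition $(\varkappa l)^{-1}\gamma t\in B'$ with $B'=\xi_1 B g^{-1}\times \xi_2 B g^{-1}s^{-1}$, and the two outer integrals collapse (using $\mathbf{T}^\Delta\simeq\mathbf{T}$, $\mathbf{G}^\Delta\simeq\mathbf{G}$, and the fact that the $\mathbf{G}^\Delta(\mathbb{Q})$-part of the diagonal $\mathbf{G}(\mathbb{Q})$ inside $\left(\mathbf{G}\times\mathbf{G}\right)(\mathbb{Q})$ is exactly what is being quotiented on the left) to the single double integral over $[\mathbf{G}(\mathbb{A})^+]\times[\mathbf{T}(\mathbb{A})]$ displayed in the proposition. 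This is where one must be careful with the $+$: only $\mathbf{G}(\mathbb{A})^+$ appears because $\nu$ lives on a simply-connected Hecke set, which is why the intermediate group $\mathbf{M}(\mathbb{A})^\dagger=\mathbf{M}(\mathbb{A})\cap(\mathbf{G}(\mathbb{A})^+\times\mathbf{T}(\mathbb{A}))$ is the right object.

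\textbf{Stage 2 and 3.} Expand $K_{B'}(l,t)=\sum_{\gamma\in(\mathbf{G}\times\mathbf{G})(\mathbb{Q})}\mathbb{1}_{B'}(l^{-1}\gamma t)$ and regroup the sum by the $\mathbf{M}(\mathbb{Q})=\mathbf{G}(\mathbb{Q})\times\mathbf{T}(\mathbb{Q})$-orbits on $\left(\mathbf{G}\times\mathbf{G}\right)(\mathbb{Q})$ under the action $(l_0,t_0).\gamma=l_0\gamma t_0^{-1}$ (left-diagonal $\mathbf{G}$, right-diagonal $\mathbf{T}^{-1}$); these orbits, by definition, are indexed by $W_{\mathbb{Q}}$, and I must separately handle the left quotient by $\mathbf{G}(\mathbb{Q})$ versus $\mathbf{G}(\mathbb{Q})^+$ — this produces the extra summation over $\varkappa\in\pi_{\mathbf{G}}(\mathbf{M}_\gamma(\mathbb{Q}))\backslash\mathbf{G}(\mathbb{Q})/\mathbf{G}(\mathbb{Q})^+$, since the stabilizer $\mathbf{M}_\gamma(\mathbb{Q})$ of a representative acts on the set of $\mathbf{G}(\mathbb{Q})^+$-cosets through its image $\pi_{\mathbf{G}}(\mathbf{M}_\gamma(\mathbb{Q}))$. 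Each orbit contributes $\int_{[\mathbf{G}(\mathbb{A})^+]}\int_{[\mathbf{T}(\mathbb{A})]}\sum_{(l_0,t_0)\in \mathbf{M}_\gamma(\mathbb{Q})^\dagger\backslash\mathbf{M}(\mathbb{Q})^\dagger}\mathbb{1}_{B'}((\varkappa l_0 l)^{-1}\gamma t_0 t)$; unfolding the $\mathbf{M}_\gamma(\mathbb{Q})^\dagger$-sum against the ambient quotient (the Fubini/unfolding step, valid because everything is absolutely convergent — the kernel is a finite sum locally uniformly) converts $[\mathbf{M}(\mathbb{A})^\dagger]$ modulo the lattice $\mathbf{M}_\gamma(\mathbb{Q})^\dagger$ into the product of $\vol(\mathbf{M}_\gamma)=\meas([\mathbf{M}_\gamma(\mathbb{A})^\dagger])$ and the non-compact integral $\int_{\mathbf{M}_\gamma(\mathbb{A})^\dagger\backslash\mathbf{M}(\mathbb{A})^\dagger}\mathbb{1}_{B'}((\varkappa l)^{-1}\gamma t)=\RO_{\gamma,\varkappa}(B)$, which is exactly the claimed formula.

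\textbf{Main obstacle.} The routine parts are the measure-theoretic bookkeeping; the genuinely delicate point is verifying that the orbit decomposition of $\left(\mathbf{G}\times\mathbf{G}\right)(\mathbb{Q})$ under $\mathbf{M}(\mathbb{Q})$ is faithfully indexed by $W_{\mathbb{Q}}$ together with the $\varkappa$-parameter, \emph{and} that $\vol(\mathbf{M}_\gamma)<\infty$ so the unfolding is legitimate. Finiteness of $\vol(\mathbf{M}_\gamma)$ follows from Proposition~\ref{prop:M-orbits}(2): $\mathbf{M}_\gamma$ is either trivial, or isomorphic to $\mathbf{T}$ (anisotropic over $\mathbb{Q}$, hence $[\mathbf{M}_\gamma(\mathbb{A})]$ compact), or to $\mathbf{T}[2]\simeq\mu_2$ (finite), in every case $\mathbf{M}_\gamma^\dagger$ has finite-covolume adelic quotient. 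The interplay between $\mathbf{M}_\gamma(\mathbb{Q})$ and $\mathbf{M}_\gamma(\mathbb{Q})^\dagger$ (finite index, since $\mathbf{M}_\gamma$ is either a torus or finite) has to be tracked so that the volume normalizations on $\lfaktor{\mathbf{M}_\gamma(\mathbb{A})^\dagger}{\mathbf{M}(\mathbb{A})^\dagger}$ and $\mathbf{M}_\gamma(\mathbb{A})^\dagger$ are mutually compatible with the ones implicit in $\mu$ and $\nu$; this is a finite index bookkeeping that I would record carefully but which introduces no essential difficulty.
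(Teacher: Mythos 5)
Your proposal is correct and follows essentially the same route as the paper's proof: unfold $\Cor[\mu,\nu](B)$ to an integral of the kernel over $[\mathbf{M}(\mathbb{A})^\dagger]$, group the $\gamma$-sum by $\mathbf{M}(\mathbb{Q})$-orbits indexed by $W_{\mathbb{Q}}$, split further along $\mathbf{M}_\gamma(\mathbb{Q})\backslash\mathbf{M}(\mathbb{Q})/\mathbf{M}(\mathbb{Q})^\dagger\simeq \pi_{\mathbf{G}}(\mathbf{M}_\gamma(\mathbb{Q}))\backslash\mathbf{G}(\mathbb{Q})/\mathbf{G}(\mathbb{Q})^+$, and unfold each term against a fundamental domain to produce $\vol(\mathbf{M}_\gamma)\cdot\RO_{\gamma,\varkappa}(B)$. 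The only ingredient you leave implicit is that conjugation by $\bar\varkappa$ fixes the Haar measure on the normal subgroup $\mathbf{M}(\mathbb{A})^\dagger$ (the paper's Lemma \ref{lem:Mdag-Haar-invariant}), which is needed to move $\varkappa$ across the quotient integral, but this falls within the "finite index bookkeeping" you flag.
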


We will use the following lemma in the proof of the proposition.
\begin{lem}\label{lem:Mdag-Haar-invariant}
For any $a\in\mathbf{M}(\mathbb{A})$  let $\Ad_a\colon\mathbf{M}(\mathbb{A})^\dagger\to\mathbf{M}(\mathbb{A})^\dagger$ be the conjugation automorphism of the normal subgroup $\mathbf{M}(\mathbb{A})^\dagger$. Then the map $\Ad_a$ fixes any Haar measure on $\mathbf{M}(\mathbb{A})^\dagger$.
\end{lem}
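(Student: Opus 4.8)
\textbf{Proof plan for Lemma \ref{lem:Mdag-Haar-invariant}.}

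The plan is to reduce the statement to the unimodularity of $\mathbf{M}(\mathbb{A})$ together with the fact that conjugation by $a$ fixes the subgroup $\mathbf{M}(\mathbb{A})^\dagger$ setwise, which is exactly the content of the superscript-$\dagger$ notation: since $\mathbf{G}(\mathbb{A})^+$ is normal in $\mathbf{G}(\mathbb{A})$ (cf. \S\ref{sec:simply-connected-cover}) and $\mathbf{T}(\mathbb{A})$ is normal in $\mathbf{T}(\mathbb{A})$ (it is abelian), the subgroup $\mathbf{M}(\mathbb{A})^\dagger = \mathbf{M}(\mathbb{A})\cap(\mathbf{G}(\mathbb{A})^+\times\mathbf{T}(\mathbb{A}))$ is normal in $\mathbf{M}(\mathbb{A})$; hence $\Ad_a$ is a well-defined automorphism of the locally compact group $\mathbf{M}(\mathbb{A})^\dagger$ for every $a\in\mathbf{M}(\mathbb{A})$, and the assertion is that its modular effect on any Haar measure is trivial.

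First I would recall the general fact that for a locally compact group $N$ which is normal in a locally compact group $P$, and for $a\in P$, the pushforward $(\Ad_a)_*\meas_N$ of a Haar measure on $N$ equals $\Delta_P(a)^{-1}\meas_N$ restricted appropriately — more precisely, the standard statement is that if $P$ is unimodular then conjugation by any $a\in P$ preserves a given Haar measure on a closed normal subgroup $N$ whenever $N$ itself is unimodular and $P/N$ is unimodular; but the cleanest route here is to observe directly that the modulus of the automorphism $\Ad_a|_N$ is controlled by the modular functions: for $a\in P$ one has $\delta_{\Ad_a|_N} = \Delta_P(a)/\Delta_{P}(a)=1$ when $P$ is unimodular, because $\Ad_a$ extends to an inner automorphism of $P$ and inner automorphisms of a unimodular group are measure-preserving, and the modulus of $\Ad_a|_N$ is the restriction of this to $N$. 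Concretely: $\mathbf{M}=\mathbf{G}\times\mathbf{T}$ with $\mathbf{G}$ a form of $\mathbf{PGL}_2$ (semisimple, hence $\mathbf{M}(\mathbb{A})$ unimodular) and $\mathbf{T}$ a torus (abelian, unimodular), so $\mathbf{M}(\mathbb{A})$ is unimodular; then $\Ad_a$ for $a\in\mathbf{M}(\mathbb{A})$ is an inner automorphism of $\mathbf{M}(\mathbb{A})$, therefore it preserves the Haar measure of $\mathbf{M}(\mathbb{A})$; and since $\mathbf{M}(\mathbb{A})^\dagger$ is a closed normal subgroup stable under $\Ad_a$, the restriction of the (trivial) modular character to $\mathbf{M}(\mathbb{A})^\dagger$ is trivial, so $\Ad_a$ fixes any Haar measure on $\mathbf{M}(\mathbb{A})^\dagger$.

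The only point requiring a word of care — and what I would treat as the single substantive step — is that $a$ ranges over all of $\mathbf{M}(\mathbb{A})$, not just over $\mathbf{M}(\mathbb{A})^\dagger$, so $\Ad_a$ need not be inner \emph{as an automorphism of $\mathbf{M}(\mathbb{A})^\dagger$}; nonetheless it is the restriction of an inner automorphism of the ambient unimodular group $\mathbf{M}(\mathbb{A})$. The clean formulation: for a unimodular locally compact group $P$ with closed normal subgroup $N$, and any $a\in P$, the modulus $\mathrm{mod}(\Ad_a|_N)$ equals $\Delta_P(a)=1$. I would include a two-line proof of this: pick a compact symmetric neighborhood $U\subseteq N$, then $\meas_N(aUa^{-1})$ is independent of $a$ because $aUa^{-1}\subseteq N$ and left-right translation invariance of $\meas_P$ on $P$ restricts to the claim after noting $N$ carries a unique-up-to-scalar Haar measure; alternatively cite that the modular function of $\Ad_a$ on $N$ is $\Delta_P(a)\cdot\Delta_N(e)^{-1}=1$. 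No genuine obstacle is expected; the lemma is a bookkeeping statement ensuring the normalizations in Proposition \ref{prop:geometric-expansion} are consistent.
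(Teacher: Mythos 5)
Your proposal correctly identifies the one substantive point (that $a$ ranges over all of $\mathbf{M}(\mathbb{A})$, so $\Ad_a$ need not be inner on $\mathbf{M}(\mathbb{A})^\dagger$), but the way you resolve it is wrong. The ``clean formulation'' you rely on --- that for a unimodular locally compact group $P$ with closed normal subgroup $N$ and any $a\in P$ the modulus of the restriction of $\Ad_a$ to $N$ equals $\Delta_P(a)=1$ --- is false. Take $P=\mathbb{R}^2\rtimes\mathbb{R}$ with $t\in\mathbb{R}$ acting by $\mathrm{diag}(e^t,e^{-t})$: then $P$ is unimodular, $N=\mathbb{R}\times\{0\}\times\{0\}$ is closed and normal, and conjugation by $t$ scales Lebesgue measure on $N$ by $e^{t}$. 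The flaw in your two-line justification is that a closed normal subgroup of positive codimension is a $\meas_P$-null set, so bi-invariance of $\meas_P$ gives no information whatsoever about $\meas_N(aUa^{-1})$ for $U\subseteq N$; an inner automorphism of $P$ preserving $\meas_P$ says nothing about its restriction to $N$ preserving $\meas_N$, and the formula $\mathrm{mod}(\Ad_a|_N)=\Delta_P(a)\cdot\Delta_N(e)^{-1}$ is not a theorem.

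The argument has to use the specific structure of the pair $\mathbf{M}(\mathbb{A})^\dagger<\mathbf{M}(\mathbb{A})$, namely that the quotient is torsion (equivalently here, compact). Define $\alpha(a)>0$ by $(\Ad_a)_*\meas_{\mathbf{M}(\mathbb{A})^\dagger}=\alpha(a)\,\meas_{\mathbf{M}(\mathbb{A})^\dagger}$; uniqueness of Haar measure makes $\alpha\colon\mathbf{M}(\mathbb{A})\to\mathbb{R}_{>0}$ a homomorphism. For $a\in\mathbf{M}(\mathbb{A})^\dagger$ the map $\Ad_a$ \emph{is} inner on the unimodular group $\mathbf{M}(\mathbb{A})^\dagger=\mathbf{G}(\mathbb{A})^+\times\mathbf{T}(\mathbb{A})$, so $\alpha$ is trivial there and factors through $\faktor{\mathbf{M}(\mathbb{A})}{\mathbf{M}(\mathbb{A})^\dagger}\simeq\faktor{\mathbf{G}(\mathbb{A})}{\mathbf{G}(\mathbb{A})^+}$, which is a $2$-torsion group because $\Nrd$ embeds it into $\faktor{\mathbb{A}^\times}{{\mathbb{A}^\times}^2}$ (cf.\ \S\ref{sec:simply-connected-cover}). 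Since $\mathbb{R}_{>0}$ has no nontrivial torsion, $\alpha\equiv 1$. This torsion step is the actual content of the lemma and is exactly what is missing from your proposal.
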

\begin{proof}
Let $\meas_{\mathbf{M}(\mathbb{A})^\dagger}$ be a Haar measure on $\mathbf{M}(\mathbb{A})^\dagger$, then $\left(\Ad_a\right)_* \meas_{\mathbf{M}(\mathbb{A})^\dagger}$ is a Haar measure as well and proportional to the original one $\left(\Ad_a\right)_* \meas_{\mathbf{M}(\mathbb{A})^\dagger}=\alpha(a) \meas_{\mathbf{M}(\mathbb{A})^\dagger}$. The map $\alpha\colon \mathbf{M}(\mathbb{A})\to \mathbb{R}_{>0}$ is a character which is trivial on $\mathbf{M}(\mathbb{A})^\dagger$, hence it factors through the $2$-torsion group $\lfaktor{\mathbf{M}(\mathbb{A})^\dagger}{\mathbf{M}(\mathbb{A})}$. Because $\mathbb{R}_{>0}$ has no non-trivial torsion elements $\alpha$ is trivial.
\end{proof}

\begin{proof}[Proof of Proposition \ref{prop:geometric-expansion}]
Let $[\gamma]\in W_{\mathbb{Q}}$ be a double coset with representative $\gamma\in\left(\mathbf{G}\times\mathbf{G}\right)(\mathbb{Q})$. Denote $f\coloneqq\mathbb{1}_{B'}$.
We unfold the definition of the cross-correlation and exchange summation and integration using the uniform convergence of the kernel on compact subsets
\begin{align}
\Cor[\mu,\nu](B)&=
\int_{[\mathbf{M}(\mathbb{A})^\dagger]} \sum_{\gamma \in \left(\mathbf{G} \times \mathbf{G}\right)(\mathbb{Q})} f(l^{-1}\gamma t) \dif(l,t) 
\nonumber\\
&=\sum_{[\gamma]\in W_{\mathbb{Q}}} \sum_{\gamma' \in [\gamma]} \int_{[\mathbf{M}(\mathbb{A})^\dagger]} f(l^{-1}\gamma' t) \dif(l,t)
\nonumber\\
&=\sum_{[\gamma]\in W_{\mathbb{Q}}} \sum_{\gamma' \in [\gamma]} \int_{[\mathbf{M}(\mathbb{A})^\dagger]} f(m^{-1}.\gamma' ) \dif m 
\label{eq:geometric-expansion-1}
\end{align}

We now deal individually with each internal sum for $[\gamma]$ fixed. Let $\mathcal{F}\subset\mathbf{M}(\mathbb{A})^\dagger$ be a fundamental domain for the left action of $\mathbf{M}(\mathbb{Q})^\dagger$ on $\mathbf{M}(\mathbb{A})^\dagger$. 
We write the internal sum in \eqref{eq:geometric-expansion-1} as a sum of integrals on $\mathcal{F}$. To do this we choose some fixed representatives for each set of cosets appearing in the following.
\begin{align}
&\sum_{\gamma' \in [\gamma]} \int_{[\mathbf{M}(\mathbb{A})^\dagger]} 
 f(m^{-1}.\gamma') \dif m
= \sum_{m_{\mathbb{Q}}\in\mathbf{M}_\gamma(\mathbb{Q}) \backslash \mathbf{M}(\mathbb{Q})}
 \int_{\mathcal{F}}  f\left(m^{-1}m_{\mathbb{Q}}^{-1}.\gamma \right) \dif m \nonumber\\
&=\sum_{\bar{\varkappa}\in\mathbf{M}_\gamma(\mathbb{Q}) \backslash \mathbf{M}(\mathbb{Q})  \slash \mathbf{M}(\mathbb{Q})^\dagger}
\sum_{m_{\mathbb{Q}} \in \bar{\varkappa}^{-1} \mathbf{M}_\gamma(\mathbb{Q})^\dagger \bar{\varkappa} \backslash  \mathbf{M}(\mathbb{Q})^\dagger}
\int_{\mathcal{F}}  f\left((m^{-1}m_{\mathbb{Q}}^{-1} \bar{\varkappa}^{-1})  .\gamma\right) \dif m
\nonumber \\
&=
\sum_{\bar{\varkappa}\in\mathbf{M}_\gamma(\mathbb{Q}) \backslash \mathbf{M}(\mathbb{Q})  \slash \mathbf{M}(\mathbb{Q})^\dagger}
\sum_{m_{\mathbb{Q}} \in \bar{\varkappa}^{-1} \mathbf{M}_\gamma(\mathbb{Q})^\dagger \bar{\varkappa} \backslash  \mathbf{M}(\mathbb{Q})^\dagger}
\int_{m_{\mathbb{Q}}\mathcal{F}}  
 f\left((m^{-1} \bar{\varkappa}^{-1}).\gamma\right) \dif m
\label{eq:geometric-expansion-2}
\end{align}

Fix now a representative $\bar{\varkappa}\in \dfaktor{\mathbf{M}_\gamma(\mathbb{Q})} {\mathbf{M}(\mathbb{Q})} {\mathbf{M}(\mathbb{Q})^\dagger}$.
The function $f\left((m^{-1} \bar{\varkappa}^{-1}).\gamma\right)$ is a well-defined compactly supported integrable function on  $\lfaktor{\bar{\varkappa}^{-1} \mathbf{M}_\gamma(\mathbb{A})^\dagger\bar{\varkappa}}{\mathbf{M}(\mathbb{A})^\dagger}$. 

Using the mutual normalization of Haar measures we can rewrite the inner sum in \eqref{eq:geometric-expansion-2} as
\begin{equation}
\label{eq:geometric-expansion-3}
\int_{\bar{\varkappa}^{-1} \mathbf{M}_\gamma(\mathbb{A})^\dagger\bar{\varkappa} \backslash \mathbf{M}(\mathbb{A})^\dagger}\  f\left((m^{-1} \bar{\varkappa}^{-1}).\gamma\right) \dif m 
\cdot 
\sum_{m_{\mathbb{Q}} \in \bar{\varkappa}^{-1} \mathbf{M}_\gamma(\mathbb{Q})^\dagger \bar{\varkappa} \backslash  \mathbf{M}(\mathbb{Q})^\dagger}
\meas_{\bar{\varkappa}^{-1} \mathbf{M}_\gamma(\mathbb{A})^\dagger\bar{\varkappa}}\left(m_{\mathbb{Q}}\mathcal{F}\right)
\end{equation}
For a fixed Haar measure $\meas_{\mathbf{M}_\gamma(\mathbb{A})^\dagger}$ on $\mathbf{M}_\gamma(\mathbb{A})^\dagger$ define a Haar measure on $\bar{\varkappa}^{-1} \mathbf{M}_\gamma(\mathbb{A})^\dagger\bar{\varkappa}$ by $\left(\Ad_{\bar{\varkappa}^{-1}}\right)_*\meas_{\mathbf{M}_\gamma(\mathbb{A})^\dagger}$. Using this normalization we have
\begin{equation}\label{eq:fund-domain-sum}
\sum_{m_{\mathbb{Q}} \in \bar{\varkappa}^{-1} \mathbf{M}_\gamma(\mathbb{Q})^\dagger \bar{\varkappa} \backslash  \mathbf{M}(\mathbb{Q})^\dagger}
\meas_{\bar{\varkappa}^{-1} \mathbf{M}_\gamma(\mathbb{A})^\dagger\bar{\varkappa}}\left(m_{\mathbb{Q}}\mathcal{F}\right)
=
\sum_{m_{\mathbb{Q}} \in \mathbf{M}_\gamma(\mathbb{Q})^\dagger \backslash  \mathbf{M}(\mathbb{Q})^\dagger}
\meas_{\mathbf{M}_\gamma(\mathbb{A})^\dagger}\left(m_{\mathbb{Q}}\mathcal{F}\right)
\end{equation}
The set $\bigsqcup_{m_{\mathbb{Q}} \in \mathbf{M}_\gamma(\mathbb{Q})^\dagger \backslash  \mathbf{M}(\mathbb{Q})^\dagger} m_{\mathbb{Q}}\mathcal{F}$ is a fundamental domain  for the left action of $\mathbf{M}_\gamma(\mathbb{Q})^\dagger$ on $\mathbf{M}(\mathbb{A})^\dagger$; hence the sum \eqref{eq:fund-domain-sum} is equal to
$\meas_{\mathbf{M}_\gamma(\mathbb{A})^\dagger} \left(\lfaktor{ \mathbf{M}_\gamma(\mathbb{Q})^\dagger}{\mathbf{M}_\gamma(\mathbb{A})^\dagger}\right)$.

Under the normalization of Haar measures as above there is an isomorphism of the following measure spaces equipped with their respective Haar measures
\begin{align*}
\lfaktor{\bar{\varkappa}^{-1} \mathbf{M}_\gamma(\mathbb{A})^\dagger\bar{\varkappa}}{\mathbf{M}(\mathbb{A})^\dagger} 
&\simeq\lfaktor{ \mathbf{M}_\gamma(\mathbb{A})^\dagger}{\mathbf{M}(\mathbb{A})^\dagger}\\
\left(\bar{\varkappa}^{-1} \mathbf{M}_\gamma(\mathbb{A})^\dagger\bar{\varkappa}\right) m
&\mapsto \left(\mathbf{M}_\gamma(\mathbb{A})^\dagger\right) \bar{\varkappa} m \bar{\varkappa} ^{-1}
\end{align*} 
This implies 
\begin{align*}
\int_{\bar{\varkappa}^{-1} \mathbf{M}_\gamma(\mathbb{A})^\dagger\bar{\varkappa} \backslash \mathbf{M}(\mathbb{A})^\dagger}\  f\left((m^{-1} \bar{\varkappa}^{-1}).\gamma\right) \dif m 
&=
\int_{\mathbf{M}_\gamma(\mathbb{A})^\dagger \backslash \mathbf{M}(\mathbb{A})^\dagger}\  f\left((\bar{\varkappa}^{-1}m^{-1}).\gamma\right) \dif m \\
&=
\int_{\mathbf{M}_\gamma(\mathbb{A})^\dagger \backslash \mathbf{M}(\mathbb{A})^\dagger}\  f\left((m^{-1} \bar{\varkappa}^{-1}).\gamma\right) \dif m 
\end{align*}
where the second and third lines are equal by Lemma \ref{lem:Mdag-Haar-invariant}.

Combining all of the above and using the following bijection induced by the projection map $\pi_{\mathbf{G}}\colon\mathbf{M}=\mathbf{G}\times\mathbf{T}\to\mathbf{G}$
\begin{equation*}
\dfaktor{\mathbf{M}_\gamma(\mathbb{Q})}{\mathbf{M}(\mathbb{Q})}{\mathbf{M}(\mathbb{Q})^\dagger}\simeq \dfaktor{\pi_{\mathbf{G}}\left(\mathbf{M}_\gamma(\mathbb{Q})\right)}{\mathbf{G}(\mathbb{Q})}{\mathbf{G}(\mathbb{Q})^+}
\end{equation*}  
we arrive to the required final form.
\end{proof}

\begin{lem}\label{lem:RO-compact}
Fix $\gamma\in\left(\mathbf{G}\times\mathbf{G}\right)(\mathbb{Q})$ with $\mathbf{M}_{\gamma}(\mathbb{A})$ compact. Then under a suitable normalization of measures
\begin{equation*}
\RO_{\gamma,\varkappa}(B)\coloneqq\int_{\mathbf{M}(\mathbb{A})^\dagger}
\mathbb{1}_{B'}\left((\varkappa l)^{-1} \gamma t\right) 
\dif(l,t)
\end{equation*}
and
\begin{equation*}
\sum_{\varkappa\in
{\pi_{\mathbf{G}}\left(\mathbf{M}_\gamma(\mathbb{Q})\right)} \backslash {\mathbf{G}(\mathbb{Q})} \slash {\mathbf{G}(\mathbb{Q})^+}} 
\vol(\mathbf{M}_\gamma)
\cdot
\RO_{\gamma,\varkappa}(B)=
\frac{1}{\#\mathbf{M_\gamma}(\mathbb{Q})}
\sum_{\varkappa\in {\mathbf{G}(\mathbb{Q})} \slash {\mathbf{G}(\mathbb{Q})^+}} 
\RO_{\gamma,\varkappa}(B)
\end{equation*}
\end{lem}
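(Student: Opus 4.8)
The plan is to prove Lemma \ref{lem:RO-compact} by unpacking the general relative orbital integral of Proposition \ref{prop:geometric-expansion} under the additional hypothesis that $\mathbf{M}_\gamma(\mathbb{A})$ is compact, and then bookkeeping the volume factors. First I would observe that when $\mathbf{M}_\gamma$ is $\mathbb{Q}$-anisotropic with $\mathbf{M}_\gamma(\mathbb{A})$ compact, the quotient $\mathbf{M}_\gamma(\mathbb{Q})\backslash \mathbf{M}_\gamma(\mathbb{A})$ is both compact and, since $\mathbf{M}_\gamma(\mathbb{Q})$ is discrete in the compact group $\mathbf{M}_\gamma(\mathbb{A})$, \emph{finite}; the same holds for the $\dagger$-version $\mathbf{M}_\gamma(\mathbb{A})^\dagger$ which is a closed finite-index subgroup. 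Concretely, by Proposition \ref{prop:M-orbits}(2) the options are $\mathbf{M}_\gamma$ trivial, $\mathbf{M}_\gamma\simeq\mathbf{T}[2]\simeq\mu_2$, or $\mathbf{M}_\gamma\simeq\mathbf{T}$; the first two are finite over $\mathbb{Q}$ hence have $\mathbf{M}_\gamma(\mathbb{A})$ compact automatically, and the case $\mathbf{M}_\gamma\simeq\mathbf{T}$ is exactly when $\ctr(\gamma)\in\mathbf{T}(\mathbb{Q})$ — but then $\mathbf{T}(\mathbb{A})$ is not compact, so this case is excluded by the hypothesis. Thus $\mathbf{M}_\gamma(\mathbb{Q})$ is a finite group and $\mathbf{M}_\gamma(\mathbb{A})^\dagger$ is a finite group (a subgroup of the finite group $\mathbf{M}_\gamma(\mathbb{A})$, which is finite whenever $\mathbf{M}_\gamma$ is finite as a group scheme).

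Next I would fix the normalization: give $\mathbf{M}_\gamma(\mathbb{A})^\dagger$ the counting measure scaled so that total mass is $1$, i.e. $\vol(\mathbf{M}_\gamma)=\meas_{\mathbf{M}_\gamma(\mathbb{A})^\dagger}(\mathbf{M}_\gamma(\mathbb{Q})^\dagger\backslash \mathbf{M}_\gamma(\mathbb{A})^\dagger)=1$ with $\meas_{\mathbf{M}_\gamma(\mathbb{A})^\dagger}$ assigning mass $\#\mathbf{M}_\gamma(\mathbb{Q})^\dagger / \#\mathbf{M}_\gamma(\mathbb{A})^\dagger$ to each point — but more simply, since everything is finite, I can just take the counting measure on $\mathbf{M}_\gamma(\mathbb{A})^\dagger$ and absorb the constant. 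With this choice the homogeneous-space integral over $\mathbf{M}_\gamma(\mathbb{A})^\dagger\backslash \mathbf{M}(\mathbb{A})^\dagger$ appearing in Proposition \ref{prop:geometric-expansion} becomes $\frac{1}{\#\mathbf{M}_\gamma(\mathbb{A})^\dagger}$ times the integral over the whole group $\mathbf{M}(\mathbb{A})^\dagger$ (the compact fibres of the quotient map each contribute $\#\mathbf{M}_\gamma(\mathbb{A})^\dagger$ copies). This is exactly the displayed identity $\RO_{\gamma,\varkappa}(B)=\int_{\mathbf{M}(\mathbb{A})^\dagger}\mathbb{1}_{B'}((\varkappa l)^{-1}\gamma t)\dif(l,t)$, where the $\frac{1}{\#\mathbf{M}_\gamma(\mathbb{A})^\dagger}$ has been folded into the "suitable normalization of measures" on $\mathbf{M}(\mathbb{A})^\dagger$.

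Then for the second displayed formula I would handle the double-coset sum over $\varkappa$. In Proposition \ref{prop:geometric-expansion} the sum runs over $\pi_{\mathbf{G}}(\mathbf{M}_\gamma(\mathbb{Q}))\backslash \mathbf{G}(\mathbb{Q})/\mathbf{G}(\mathbb{Q})^+$ with each term weighted by $\vol(\mathbf{M}_\gamma)$. Since $\mathbf{M}_\gamma(\mathbb{Q})$ is finite, $\pi_{\mathbf{G}}(\mathbf{M}_\gamma(\mathbb{Q}))$ is a finite subgroup of $\mathbf{G}(\mathbb{Q})$; I would pass from the double-coset sum $\pi_{\mathbf{G}}(\mathbf{M}_\gamma(\mathbb{Q}))\backslash \mathbf{G}(\mathbb{Q})/\mathbf{G}(\mathbb{Q})^+$ to the full left-coset sum $\mathbf{G}(\mathbb{Q})/\mathbf{G}(\mathbb{Q})^+$ by noting that the fibres of the projection $\mathbf{G}(\mathbb{Q})/\mathbf{G}(\mathbb{Q})^+ \to \pi_{\mathbf{G}}(\mathbf{M}_\gamma(\mathbb{Q}))\backslash \mathbf{G}(\mathbb{Q})/\mathbf{G}(\mathbb{Q})^+$ have size equal to the index of the stabilizer, and — crucially — $\RO_{\gamma,\varkappa}(B)$ depends only on the double coset of $\varkappa$ (because left-translating $\varkappa$ by an element of $\pi_{\mathbf{G}}(\mathbf{M}_\gamma(\mathbb{Q}))$ can be absorbed using the $\mathbf{M}(\mathbb{A})^\dagger$-invariance of the measure, as in the manipulations of the proof of Proposition \ref{prop:geometric-expansion}, and right-translating by $\mathbf{G}(\mathbb{Q})^+$ is already quotiented out in the integral over $\mathbf{M}(\mathbb{A})^\dagger$). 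Tracking the index: each double coset contributes $\#(\pi_{\mathbf{G}}(\mathbf{M}_\gamma(\mathbb{Q})) / (\text{stabilizer of }\varkappa\mathbf{G}(\mathbb{Q})^+))$ equal left cosets; since $\mathbf{G}$ is adjoint the kernel of $\pi_{\mathbf{G}}$ on $\mathbf{M}_\gamma$ is $\mathbf{M}_\gamma\cap(\{e\}\times\mathbf{T})$ — working this out with $\#\mathbf{M}_\gamma(\mathbb{Q})$ versus $\#\pi_{\mathbf{G}}(\mathbf{M}_\gamma(\mathbb{Q}))$ and $\mathbf{M}_\gamma(\mathbb{A})^\dagger$ gives precisely the prefactor $\frac{1}{\#\mathbf{M}_\gamma(\mathbb{Q})}$ in the claimed identity. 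The main obstacle I anticipate is the careful bookkeeping of the three finite groups $\mathbf{M}_\gamma(\mathbb{Q})$, $\pi_{\mathbf{G}}(\mathbf{M}_\gamma(\mathbb{Q}))$, and $\mathbf{M}_\gamma(\mathbb{A})^\dagger$ and their interaction with the $\dagger$-decoration, to confirm that all the index factors collapse exactly to $1/\#\mathbf{M}_\gamma(\mathbb{Q})$; this is purely combinatorial once the finiteness observations are in place, but it is easy to be off by a factor of $2$ coming from the $\mathbf{T}[2]$ case or from $\mathbf{G}(\mathbb{Q})^+$ versus $\mathbf{G}(\mathbb{Q})$. I would resolve it by checking the two relevant cases ($\mathbf{M}_\gamma$ trivial and $\mathbf{M}_\gamma\simeq\mu_2$) separately against the formula.
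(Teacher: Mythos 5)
Your overall strategy coincides with the paper's: normalize so that the quotient integral over $\mathbf{M}_\gamma(\mathbb{A})^\dagger\backslash\mathbf{M}(\mathbb{A})^\dagger$ unfolds to an integral over all of $\mathbf{M}(\mathbb{A})^\dagger$, and then count the fibres of the projection of coset spaces $\mathbf{G}(\mathbb{Q})/\mathbf{G}(\mathbb{Q})^+\to\pi_{\mathbf{G}}(\mathbf{M}_\gamma(\mathbb{Q}))\backslash\mathbf{G}(\mathbb{Q})/\mathbf{G}(\mathbb{Q})^+$. However, there is one genuine error in your justification of the first identity. You claim that $\mathbf{M}_\gamma(\mathbb{A})$ is \emph{finite} whenever $\mathbf{M}_\gamma$ is a finite group scheme, and you base your normalization on counting measure. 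This is false: in the case $\mathbf{M}_\gamma\simeq\mu_2$ the adelic points form (essentially) the product $\prod_v\{\pm1\}$ over all places, an uncountable compact group. Only the rational points $\mathbf{M}_\gamma(\mathbb{Q})$ are finite (discrete in a compact group). The correct normalization is the one the paper uses: take the probability Haar measure on the \emph{compact} group $\mathbf{M}_\gamma(\mathbb{A})^\dagger$. With the mutual-normalization convention for quotient measures this makes the integral over $\mathbf{M}_\gamma(\mathbb{A})^\dagger\backslash\mathbf{M}(\mathbb{A})^\dagger$ equal to the integral over $\mathbf{M}(\mathbb{A})^\dagger$, and it gives $\vol(\mathbf{M}_\gamma)=\meas\bigl(\mathbf{M}_\gamma(\mathbb{Q})^\dagger\backslash\mathbf{M}_\gamma(\mathbb{A})^\dagger\bigr)=\bigl(\#\mathbf{M}_\gamma(\mathbb{Q})^\dagger\bigr)^{-1}$, since the finite group $\mathbf{M}_\gamma(\mathbb{Q})^\dagger$ acts freely on a mass-one group. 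Your counting-measure shortcut simply does not apply here.

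For the second identity your orbit--stabilizer sketch is in the right direction but stops short of the clean computation. Since $\mathbf{G}(\mathbb{Q})^+$ is normal in $\mathbf{G}(\mathbb{Q})$, every fibre of the projection above is an orbit of the finite group $\pi_{\mathbf{G}}(\mathbf{M}_\gamma(\mathbb{Q}))$ acting on the group $\mathbf{G}(\mathbb{Q})/\mathbf{G}(\mathbb{Q})^+$ by left translation, so all fibres have the common size $\bigl[\pi_{\mathbf{G}}(\mathbf{M}_\gamma(\mathbb{Q})):\pi_{\mathbf{G}}(\mathbf{M}_\gamma(\mathbb{Q}))\cap\mathbf{G}(\mathbb{Q})^+\bigr]=\bigl[\mathbf{M}_\gamma(\mathbb{Q}):\mathbf{M}_\gamma(\mathbb{Q})^\dagger\bigr]$ (note $\pi_{\mathbf{G}}$ is injective on $\mathbf{M}_\gamma$, as $(e,t)\in\mathbf{M}_\gamma$ forces $t=e$). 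The prefactor is then
\begin{equation*}
\vol(\mathbf{M}_\gamma)\cdot\bigl[\mathbf{M}_\gamma(\mathbb{Q}):\mathbf{M}_\gamma(\mathbb{Q})^\dagger\bigr]^{-1}
=\frac{1}{\#\mathbf{M}_\gamma(\mathbb{Q})^\dagger}\cdot\frac{\#\mathbf{M}_\gamma(\mathbb{Q})^\dagger}{\#\mathbf{M}_\gamma(\mathbb{Q})}
=\frac{1}{\#\mathbf{M}_\gamma(\mathbb{Q})},
\end{equation*}
with no case analysis needed. Your proposed fallback of checking the trivial and $\mu_2$ cases separately would work, but once the normalization above is in place the uniform identity is immediate.
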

Notice that the group $\mathbf{M_\gamma}(\mathbb{Q})$ is a discrete subgroup of a compact group hence it is finite.

This proposition shows that the case of a compact stabilizer is very similar to that of a trivial one, the only difference being the easy to compute factor.
\begin{proof}
The group $\mathbf{M}_\gamma(\mathbb{A})^\dagger$ is a closed subgroup of a compact group, hence it is compact. We normalize the Haar measure on $\mathbf{M}_\gamma(\mathbb{A})^\dagger$ so that it is equal to $1$. This normalization results in $\RO_{\gamma,\varkappa}(B)$ being equal to the integral above over $\mathbf{M}(\mathbb{A})^\dagger$. In this normalization we also have $\vol(\mathbf{M}_\gamma)=\left(\#\mathbf{M}_\gamma(\mathbb{Q})^\dagger\right)^{-1}$. 

When summing over $\varkappa\in {\mathbf{G}(\mathbb{Q})} \slash {\mathbf{G}(\mathbb{Q})^+}$ instead of $\varkappa\in {\pi_{\mathbf{G}}\left(\mathbf{M}_\gamma(\mathbb{Q})\right)} \backslash {\mathbf{G}(\mathbb{Q})} \slash {\mathbf{G}(\mathbb{Q})^+}$ 
the same summand appear multiple times and needs to be accounted for. The multiplicity of a summand is the size of the corresponding fiber in ${\mathbf{G}(\mathbb{Q})} \slash {\mathbf{G}(\mathbb{Q})^+} \to {\pi_{\mathbf{G}}\left(\mathbf{M}_\gamma(\mathbb{Q})\right)} \backslash {\mathbf{G}(\mathbb{Q})} \slash {\mathbf{G}(\mathbb{Q})^+}$. As $\mathbf{M}(\mathbb{A})^\dagger<\mathbf{M}(\mathbb{A})$ is normal all the fibers have the same size which is
\begin{equation*}
\left[\mathbf{M}_\gamma(\mathbb{Q}) : \mathbf{M}_\gamma(\mathbb{Q})^\dagger\right]
\end{equation*}
Finally, the correct proportionality factor between the two sums in the claim is 
\begin{equation*}
\vol(\mathbf{M}_\gamma) 
\left[\mathbf{M}_\gamma(\mathbb{Q}) : \mathbf{M}_\gamma(\mathbb{Q})^\dagger\right]^{-1}
=\left(\#\mathbf{M_\gamma}(\mathbb{Q})\right)^{-1}
\end{equation*}
\end{proof}

\subsection{Reduction to Compact Stabilizers}
Recall from Proposition \ref{prop:equidist-conditions-equivalence} that the minimal volume of a homogeneous Hecke set containing a joint homogeneous toral set depends on the distance of the discrete orbit $g^{-1} \mathbf{T}(\mathbb{Q})s g$ from the identity. In particular, for a sequence of joint homogeneous toral sets we need to assume that for every compact subset $B_0\subset \mathbf{G}(\mathbb{A})$ the orbit $g^{-1} \mathbf{T}(\mathbb{Q})s g$ does not intersect $B_0$ for all joint homogeneous toral sets with discriminant large enough.

In this section we show that for a fixed simply connected homogeneous Hecke set the assumption above implies that the contribution to the cross correlation from terms with a non-compact stabilizer vanishes. This is the fundamental application of this assumption.

We will use once more the fact that the shift $g^{-1} \mathbf{T}(\mathbb{Q})s g$ is large when bounding the pertinent shifted convolution sum.

Whenever the shift has a small representative the cross-correlation with some Hecke correspondence will have terms with non-compact stabilizers and these will be the dominant contribution to the cross-correlation. The simplest bad case is the cross-correlation between a periodic joint toral measure and a Hecke correspondence containing its support.

\begin{lem}\label{lem:reduction-to-compact-stab}
Assume that
\begin{equation}\label{eq:big-shift-vs-hecke}
g^{-1}\mathbf{T}(\mathbb{Q})s g\cap B^{-1}\ctr(\xi) B=\emptyset
\end{equation}

Then for all $\gamma\in\left(\mathbf{G}\times\mathbf{G}\right)(\mathbb{Q})$  if $\mathbf{M}_\gamma(\mathbb{A})^\dagger$ is not compact then $\RO_{\gamma,\varkappa}(B)=0$ for all $$\varkappa\in\dfaktor{\pi_\mathbf{G}(\mathbf{M}_\gamma(\mathbb{Q}))}{\mathbf{G}(\mathbb{Q})}{\mathbf{G}(\mathbb{Q})^+}$$
\end{lem}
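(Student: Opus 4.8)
The statement to prove is Lemma \ref{lem:reduction-to-compact-stab}: under the separation hypothesis \eqref{eq:big-shift-vs-hecke}, every relative orbital integral $\RO_{\gamma,\varkappa}(B)$ attached to a $\gamma$ with non-compact stabilizer group $\mathbf{M}_\gamma(\mathbb{A})^\dagger$ vanishes. The strategy is to show that a non-vanishing orbital integral forces the existence of a point in the discrete orbit $g^{-1}\mathbf{T}(\mathbb{Q})sg$ lying inside the compact set $B^{-1}\ctr(\xi)B$, contradicting the hypothesis. The first step is to recall from Proposition \ref{prop:M-orbits}(2) exactly when the stabilizer $\mathbf{M}_\gamma$ is non-trivial: this happens precisely when $\ctr(\gamma)\in\Nrml_{\mathbf{G}}\mathbf{T}(\mathbb{Q})$. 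Among those, $\mathbf{M}_\gamma$ is the full torus $\mathbf{T}$ (up to the conjugation isomorphism $t\mapsto(\gamma_1 t\gamma_1^{-1},t)$) exactly when $\ctr(\gamma)\in\mathbf{T}(\mathbb{Q})$, and is the finite group $\mathbf{T}[2]\simeq\mu_2$ when $\ctr(\gamma)\in w_\mathbf{T}\mathbf{T}(\mathbb{Q})$. Since $\mathbf{T}$ is anisotropic over $\mathbb{Q}$, $\mathbf{T}(\mathbb{A})$ is compact and hence $\mathbf{M}_\gamma(\mathbb{A})^\dagger$ is compact in both of those cases; therefore the hypothesis ``$\mathbf{M}_\gamma(\mathbb{A})^\dagger$ is not compact'' can only mean that $\mathbf{M}_\gamma$ is trivial, so $\mathbf{M}_\gamma(\mathbb{A})^\dagger$ is trivial — wait, that is compact. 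So I must reconsider: in fact $\mathbf{M}_\gamma(\mathbb{A})^\dagger$ is always compact when the stabilizer is an algebraic group of the listed types. The resolution is that the indexing set in Proposition \ref{prop:geometric-expansion} runs over \emph{all} $[\gamma]\in W_{\mathbb{Q}}$ and for those $\gamma$ not covered by Proposition \ref{prop:M-orbits}, namely the ones with $\ctr(\gamma)\in w_\mathbf{T}\mathbf{T}(\mathbb{Q})$, the map $W_{\mathbb{Q}}\to\mathbf{W}(\mathbb{Q})$ is not injective and there may be $\gamma$ with $\ctr(\gamma)\in\Nrml_\mathbf{G}\mathbf{T}(\mathbb{Q})$ but where actually the orbit structure and stabilizer differ; more importantly there could be $\gamma$ for which $\mathbf{M}_\gamma$ is positive-dimensional but not $\mathbb{Q}$-anisotropic. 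I would check carefully: in fact the only positive-dimensional possibility from Proposition \ref{prop:M-orbits}(2) is $\mathbf{M}_\gamma\simeq\mathbf{T}$ which is anisotropic. So the genuinely non-compact case is impossible \emph{unless} $\gamma\notin\left(\mathbf{G}\times\mathbf{G}\right)(\mathbb{Q})$ has $\ctr(\gamma)$ outside the normalizer — but then the stabilizer is trivial.

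\textbf{Reconciliation and the real argument.} The point I was missing is that ``$\mathbf{M}_\gamma(\mathbb{A})^\dagger$ not compact'' should instead be read as the \emph{converse direction}: the Lemma is used to discard all $\gamma$ for which the orbital integral could be large, and in the geometric expansion (Proposition \ref{prop:geometric-expansion}) the terms with non-compact $\mathbf{M}_\gamma(\mathbb{A})^\dagger$ are precisely those where the quotient $\mathbf{M}_\gamma(\mathbb{A})^\dagger\backslash\mathbf{M}(\mathbb{A})^\dagger$ has infinite volume, which happens exactly when $\mathbf{M}_\gamma$ is positive-dimensional, i.e.\ $\mathbf{M}_\gamma\simeq\mathbf{T}$, i.e.\ $\ctr(\gamma)\in\mathbf{T}(\mathbb{Q})$. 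In that case one computes $\RO_{\gamma,\varkappa}(B)=\int_{\mathbf{M}_\gamma(\mathbb{A})^\dagger\backslash\mathbf{M}(\mathbb{A})^\dagger}\mathbb{1}_{B'}\big((\varkappa l)^{-1}\gamma t\big)\dif(l,t)$ where $B'=\xi_1 B g^{-1}\times\xi_2 B g^{-1}s^{-1}$. Writing $m=(l,t)\in\mathbf{M}(\mathbb{A})^\dagger=\mathbf{G}(\mathbb{A})^+\times\mathbf{T}(\mathbb{A})$ and $\gamma=(\gamma_1,\gamma_2)$, the indicator factors coordinatewise: the condition is $(\varkappa l)^{-1}\gamma_1 t\in\xi_1 B g^{-1}$ and $(\varkappa l)^{-1}\gamma_2 t\in\xi_2 B g^{-1}s^{-1}$. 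Dividing the second constraint by the first and using $\ctr(\gamma)=\gamma_1^{-1}\gamma_2\in\mathbf{T}(\mathbb{Q})$ commutes with $t$, one obtains, after elementary rearrangement, that the two constraints are compatible only if $g^{-1}\mathbf{T}(\mathbb{Q})\ctr(\gamma)\,sg$ — a specific translate of the discrete orbit — meets $B^{-1}\ctr(\xi)B$. Here $\ctr(\xi)=\xi_1^{-1}\xi_2$. This is exactly forbidden by hypothesis \eqref{eq:big-shift-vs-hecke} after absorbing the rational element $\ctr(\gamma)$ into $\mathbf{T}(\mathbb{Q})$. Hence the integrand is identically zero and $\RO_{\gamma,\varkappa}(B)=0$.

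\textbf{Execution steps.} Concretely I would: (i) invoke Proposition \ref{prop:M-orbits} to pin down that non-compactness of $\mathbf{M}_\gamma(\mathbb{A})^\dagger$ forces $\ctr(\gamma)\in\mathbf{T}(\mathbb{Q})$, so that $\gamma=(\gamma_1,\gamma_1 t_0)$ for some $t_0\in\mathbf{T}(\mathbb{Q})$ up to the left $\mathbf{G}^\Delta(\mathbb{Q})$-action and right $\mathbf{T}^\Delta(\mathbb{Q})$-action (so we may normalize $\gamma=(1,t_0)$ after adjusting the double coset representative); (ii) substitute this into the definition of $\RO_{\gamma,\varkappa}(B)$ with $B'=\xi_1 Bg^{-1}\times\xi_2 Bg^{-1}s^{-1}$, writing out the two coordinatewise membership conditions on $(l,t)$; (iii) eliminate $l$ by taking the ratio of the two conditions — since $\varkappa l$ appears on the left of both and $\ctr(\gamma)=t_0$ is central in $\mathbf{T}(\mathbb{A})$ — to derive that a non-empty integrand requires $b_1^{-1}\,t\,\gamma_1^{-1}\gamma_2\,t^{-1}\,g\,b_2\,g^{-1}\in\ldots$ for some $b_1,b_2\in B$, which simplifies to the statement $g^{-1}\,\mathbf{T}(\mathbb{Q})\,s\,g\ \cap\ B^{-1}\,\ctr(\xi)\,B\neq\emptyset$; (iv) conclude the contradiction with \eqref{eq:big-shift-vs-hecke}. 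The main obstacle — really a bookkeeping obstacle rather than a conceptual one — is carrying out step (iii) cleanly: keeping track of which factors sit on which side, correctly using that $t_0\in\mathbf{T}(\mathbb{Q})$ and elements of $\mathbf{T}(\mathbb{A})$ commute, and making sure the rational element $t_0$ (coming from $\ctr(\gamma)$) gets absorbed into the $\mathbf{T}(\mathbb{Q})$ in the orbit $g^{-1}\mathbf{T}(\mathbb{Q})sg$ so that the hypothesis applies verbatim. One should also double-check the trivial-stabilizer and finite-stabilizer cases are genuinely compact (immediate from anisotropy of $\mathbf{T}$ over $\mathbb{Q}$, hence compactness of $\mathbf{T}(\mathbb{A})$ and of any closed subgroup), so that those contribute finite volume and need not be discarded here.
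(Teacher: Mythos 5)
Your core argument --- steps (ii)--(iv), eliminating $l$ from the two coordinatewise membership conditions and using that $t_{\mathbb{Q}}=\ctr(\gamma)\in\mathbf{T}(\mathbb{Q})$ commutes with $t\in\mathbf{T}(\mathbb{A})$ to land on $g^{-1}t_{\mathbb{Q}}sg\in B^{-1}\ctr(\xi)B$ --- is exactly the paper's proof, and it is correct.

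However, the long detour in your ``Plan'' and the remark at the end of your ``Execution steps'' rest on a false claim that you state twice: that anisotropy of $\mathbf{T}$ over $\mathbb{Q}$ makes $\mathbf{T}(\mathbb{A})$ compact. Anisotropy over $\mathbb{Q}$ gives compactness of the \emph{quotient} $\lfaktor{\mathbf{T}(\mathbb{Q})}{\mathbf{T}(\mathbb{A})}$, not of $\mathbf{T}(\mathbb{A})$ itself; since $\mathbf{T}$ is split at $p_1$ (indeed at infinitely many places), $\mathbf{T}(\mathbb{Q}_{p_1})\simeq\mathbb{Q}_{p_1}^\times$ is non-compact, hence so is $\mathbf{T}(\mathbb{A})$ and so is $\mathbf{M}_\gamma(\mathbb{A})^\dagger$ when $\mathbf{M}_\gamma\simeq\mathbf{T}$. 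This is precisely why the lemma is not vacuous. Your initial conclusion that ``the genuinely non-compact case is impossible'' is therefore wrong, and the subsequent ``reconciliation'' via infinite volume of $\lfaktor{\mathbf{M}_\gamma(\mathbb{A})^\dagger}{\mathbf{M}(\mathbb{A})^\dagger}$ is not the right justification either (that quotient has finite volume in all cases). The correct dichotomy, directly from Proposition \ref{prop:M-orbits}(2), is: $\mathbf{M}_\gamma$ is trivial or $\simeq\mu_2$ (both with compact adelic points) unless $\ctr(\gamma)\in\mathbf{T}(\mathbb{Q})$, in which case $\mathbf{M}_\gamma\simeq\mathbf{T}$ and $\mathbf{M}_\gamma(\mathbb{A})^\dagger$ is non-compact for the reason just given. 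With that fixed, your computation finishes the proof as in the paper. (The normalization $\gamma=(1,t_0)$ is harmless but unnecessary; the paper carries a general $\gamma_1$ through the same cancellation.)
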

\begin{proof}
Write $\xi=(\xi_1,\xi_2)$.
Assume $\mathbf{M}_\gamma(\mathbb{A})^\dagger$ is not compact then according to Proposition \ref{prop:M-orbits} $\gamma=(\gamma_1,\gamma_1 t_{\mathbb{Q}})$ for some $t_{\mathbb{Q}}\in\mathbf{T}(\mathbb{Q})$. If $\RO_{\gamma,\varkappa}(B)\neq0$ for some $\varkappa$ then
\begin{align*}
\exists l\in\mathbf{G}(\mathbb{A})^+&, t\in\mathbf{T}(\mathbb{A})\colon\;
((\varkappa l)^{-1}\gamma_1 t, l^{-1} \varkappa^{-1}\gamma_1 t_{\mathbb{Q}} t)\in B'=\xi_1 Bg^{-1}\times \xi_2 B g^{-1} s^{-1}\\
&\Longrightarrow t_{\mathbb{Q}}\in g B^{-1} \xi_1^{-1}\xi_2 B g^{-1}s^{-1}
\Longrightarrow g^{-1}t_{\mathbb{Q}}s g\in B^{-1}\xi_1^{-1}\xi_2 B
\end{align*}
In contradiction to the assumption \eqref{eq:big-shift-vs-hecke}.
\end{proof}

\begin{remark}
Notice that if condition \eqref{eq:big-shift-vs-hecke} holds then $\Cor[\mu,\nu](B_0)$ will have no contribution from terms with a non-compact stabilizer for any 
$B_0\subset B$. This will be useful as in the endgame we would like to bound the cross-correlation between a limit measure and a simply connected Hecke correpondence for an arbitrarily small identity neighborhood. 
\end{remark}

\begin{cor}\label{cor:geometric-expansion-final-form}
Assume 
\begin{equation*}
g^{-1}\mathbf{T}(\mathbb{Q})s g\cap B^{-1}\ctr(\xi) B=\emptyset
\end{equation*}
Then 
\begin{equation*}
\Cor[\mu,\nu](B)
=\sum_{\substack{[\gamma]\in W_{\mathbb{Q}} \\ \psi\det^{-1}(\gamma)\neq 0 }} \frac{1}{\#\mathbf{M}_\gamma(\mathbb{Q})} \sum_{\varkappa\in
{\mathbf{G}(\mathbb{Q})} \slash {\mathbf{G}(\mathbb{Q})^+}} 
\RO_{\gamma,\varkappa}(B)
\end{equation*}
and
\begin{equation*}
\#\mathbf{M}_\gamma(\mathbb{Q})=\begin{cases}
1 & \ctr(\gamma)\not\in\Nrml_{\mathbf{G}}\mathbf{T}(\mathbb{Q})\\
2 & \textrm{otherwise}
\end{cases}
\end{equation*}
\end{cor}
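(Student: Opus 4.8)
The statement to prove is Corollary \ref{cor:geometric-expansion-final-form}: under the hypothesis $g^{-1}\mathbf{T}(\mathbb{Q})s g\cap B^{-1}\ctr(\xi) B=\emptyset$, the cross-correlation $\Cor[\mu,\nu](B)$ collapses to a sum over double cosets $[\gamma]\in W_{\mathbb{Q}}$ with $\psi\det^{-1}(\gamma)\neq 0$, with the simple prefactor $1/\#\mathbf{M}_\gamma(\mathbb{Q})$, and moreover $\#\mathbf{M}_\gamma(\mathbb{Q})\in\{1,2\}$ according to whether $\ctr(\gamma)$ lies in $\Nrml_\mathbf{G}\mathbf{T}(\mathbb{Q})$. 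The plan is to start from the geometric expansion of Proposition \ref{prop:geometric-expansion}, which already expresses $\Cor[\mu,\nu](B)$ as $\sum_{[\gamma]\in W_{\mathbb{Q}}}\sum_{\varkappa}\vol(\mathbf{M}_\gamma)\,\RO_{\gamma,\varkappa}(B)$, and then identify which double cosets actually contribute.

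\textbf{Step 1: discard non-compact stabilizers.} First I would invoke Lemma \ref{lem:reduction-to-compact-stab}: the hypothesis \eqref{eq:big-shift-vs-hecke} (which is exactly our assumption here, after noting $B=B^{-1}$ since each $B_v$ is symmetric — $\Omega_v$ is symmetric by \S\ref{sec:vol} and $K_{p_1}^{(-n,n)}$ is an intersection of conjugates of the symmetric group $K_{p_1}$, hence symmetric) forces $\RO_{\gamma,\varkappa}(B)=0$ whenever $\mathbf{M}_\gamma(\mathbb{A})^\dagger$ is not compact, for every $\varkappa$. By Proposition \ref{prop:M-orbits}(2), $\mathbf{M}_\gamma$ is either trivial (when $\ctr(\gamma)\notin\Nrml_\mathbf{G}\mathbf{T}(\mathbb{Q})$), isomorphic to $\mathbf{T}$ (when $\ctr(\gamma)\equiv 1\bmod\mathbf{T}(\mathbb{Q})$), or isomorphic to $\mathbf{T}[2]\simeq\mu_2$ (when $\ctr(\gamma)\in\Nrml_\mathbf{G}\mathbf{T}(\mathbb{Q})\setminus\mathbf{T}(\mathbb{Q})$). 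The torus $\mathbf{T}$ is anisotropic over $\mathbb{Q}$, so $\mathbf{T}(\mathbb{A})$ is \emph{not} compact, and hence $\mathbf{M}_\gamma(\mathbb{A})$ is non-compact precisely in the case $\mathbf{M}_\gamma\simeq\mathbf{T}$, i.e. $\ctr(\gamma)\in\mathbf{T}(\mathbb{Q})$; in the other two cases $\mathbf{M}_\gamma(\mathbb{A})$ is finite or $\mu_2(\mathbb{A})$, which is compact. So after Step 1 only double cosets $[\gamma]$ with $\mathbf{M}_\gamma(\mathbb{A})$ compact survive, and for those $\#\mathbf{M}_\gamma(\mathbb{Q})=1$ if $\ctr(\gamma)\notin\Nrml_\mathbf{G}\mathbf{T}(\mathbb{Q})$ and $\#\mathbf{M}_\gamma(\mathbb{Q})=\#\mu_2(\mathbb{Q})=2$ otherwise — this is exactly the second displayed formula in the corollary.

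\textbf{Step 2: rewrite the surviving terms via Lemma \ref{lem:RO-compact} and identify the index set.} For each surviving $[\gamma]$ the stabilizer $\mathbf{M}_\gamma(\mathbb{A})$ is compact, so Lemma \ref{lem:RO-compact} applies verbatim: the inner sum $\sum_{\varkappa\in\pi_\mathbf{G}(\mathbf{M}_\gamma(\mathbb{Q}))\backslash\mathbf{G}(\mathbb{Q})/\mathbf{G}(\mathbb{Q})^+}\vol(\mathbf{M}_\gamma)\RO_{\gamma,\varkappa}(B)$ equals $\tfrac{1}{\#\mathbf{M}_\gamma(\mathbb{Q})}\sum_{\varkappa\in\mathbf{G}(\mathbb{Q})/\mathbf{G}(\mathbb{Q})^+}\RO_{\gamma,\varkappa}(B)$, with $\RO_{\gamma,\varkappa}(B)$ now the integral over all of $\mathbf{M}(\mathbb{A})^\dagger$. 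It remains to match the index set of the outer sum: I claim $\{[\gamma]\in W_{\mathbb{Q}}: \psi\det^{-1}(\gamma)\neq 0\}$ is precisely the set of double cosets surviving Step 1 (equivalently, those with $\ctr(\gamma)\notin\mathbf{T}(\mathbb{Q})$, after also checking nothing is lost at the $w_\mathbf{T}\mathbf{T}(\mathbb{Q})$ locus). Here I would use the explicit coordinates of \S\ref{sec:B-coordinates}: writing $\ctr(\gamma)$ in the matrix form of Proposition \ref{prop:rational-points-local}, the function $\psi\det^{-1}=x_{1,2}x_{2,1}\det^{-1}$ vanishes exactly on the $\Ad\mathbf{T}$-orbits where the off-diagonal block degenerates, which (using that $\psi\det^{-1}(\ctr\gamma)$ together with $\vartheta_1^2\det^{-1}(\ctr\gamma)$ determine the image in $\mathbf{W}(\mathbb{Q})$, Proposition \ref{prop:GIT-descent}, and that $\ctr(\gamma)\in\mathbf{T}(\mathbb{Q})$ corresponds to a diagonal matrix, i.e. $x_{1,2}=x_{2,1}=0$) is exactly the condition $\ctr(\gamma)\in\mathbf{T}(\mathbb{Q})$ — note $\ctr(\gamma)\in w_\mathbf{T}\mathbf{T}(\mathbb{Q})$ gives an anti-diagonal matrix, where $\vartheta_1,\vartheta_2$ vanish but $\psi\det^{-1}\neq 0$, so that locus is \emph{included} in the surviving set, consistent with $\mathbf{M}_\gamma\simeq\mathbf{T}[2]$ being compact there. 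Assembling Steps 1 and 2 yields the first displayed identity.

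\textbf{Main obstacle.} The routine part is Step 1 (a direct citation of Lemma \ref{lem:reduction-to-compact-stab} plus the anisotropy of $\mathbf{T}$). The delicate point — and the one I would spend the most care on — is the bookkeeping in Step 2: verifying that the algebraic condition $\psi\det^{-1}(\gamma)\neq 0$ on $W_{\mathbb{Q}}$ matches, coset-by-coset, the dynamical condition ``$\mathbf{M}_\gamma(\mathbb{A})$ compact'' without over- or under-counting at the exceptional loci $\ctr(\gamma)\in\mathbf{T}(\mathbb{Q})$ and $\ctr(\gamma)\in w_\mathbf{T}\mathbf{T}(\mathbb{Q})$, where the map $W_{\mathbb{Q}}\to\mathbf{W}(\mathbb{Q})$ can fail to be injective (Proposition \ref{prop:M-orbits}). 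One must check that the only double cosets one is throwing away are genuinely those with $\ctr(\gamma)\in\mathbf{T}(\mathbb{Q})$, and that the $w_\mathbf{T}\mathbf{T}(\mathbb{Q})$-locus — where several double cosets may map to a single point of $\mathbf{W}(\mathbb{Q})$ — is retained with the correct multiplicity and the correct value $\#\mathbf{M}_\gamma(\mathbb{Q})=2$. This is a finite, explicit check using the coordinate descriptions of \S\ref{sec:B-coordinates}--\S\ref{sec:GIT}, but it is where an error would most easily creep in.
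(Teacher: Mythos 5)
Your proposal is correct and follows essentially the same route as the paper's proof: Lemma \ref{lem:reduction-to-compact-stab} eliminates the double cosets with $\ctr(\gamma)\in\mathbf{T}(\mathbb{Q})$, the coordinate description shows $\psi\det^{-1}(\gamma)\neq 0$ is exactly the condition $\ctr(\gamma)\notin\mathbf{T}(\mathbb{Q})$, and then Proposition \ref{prop:geometric-expansion}, Lemma \ref{lem:RO-compact} and Proposition \ref{prop:M-orbits} assemble the formula and the stabilizer count. Your extra care at the $w_\mathbf{T}\mathbf{T}(\mathbb{Q})$ locus and the symmetry of $B$ is sound but not needed beyond what the cited lemmata already provide.
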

\begin{proof}
Lemma \ref{lem:reduction-to-compact-stab} implies that the geometric expansion of $\Cor[\mu,\nu](B)$ has no contributions from $[\gamma]$ such that $\ctr(\gamma)\in \mathbf{T}(\mathbb{Q})$.
The condition $\psi\det^{-1}(\gamma)\neq0$ is exactly equivalent to $\ctr(\gamma)\not\in \mathbf{T}(\mathbb{Q})$.

The claimed expression for $\Cor[\mu,\nu](B)$ now holds due to Proposition \ref{prop:geometric-expansion} and Lemma \ref{lem:RO-compact}. To calculate $\mathbf{M}_\gamma(\mathbb{Q})$ in the relevant cases we use Proposition \ref{prop:M-orbits}. This Proposition implies that $\mathbf{M}_\gamma$ is trivial if $\ctr(\gamma)\not\in\Nrml_{\mathbf{G}}(\mathbf{T})(\mathbb{Q})$ and $\mathbf{M}_\gamma\simeq \mu_2$ otherwise. The final part of the claim holds because $\mu_2(\mathbb{Q})\simeq \cyclic{2}$.
\end{proof}

\subsection{Decomposition of the Relative Orbital Integral}
\begin{defi}
Fix $\gamma\in\left(\mathbf{G}\times\mathbf{G}\right)(\mathbb{Q})$ with $\mathbf{M}_\gamma(\mathbb{A})$ compact and let $\varkappa\in\mathbf{G}(\mathbb{Q})$. We  split the relative orbital integral into an archimedean and non-archimedean parts 
\begin{align*}
\RO_{\gamma,\varkappa}(B)&=\RO^\infty_\gamma(B)\cdot\RO^f_\gamma(B)\\
\RO^\infty_{\gamma,\varkappa}(B)&\coloneqq \int_{\mathbf{M}(\mathbb{R})^\dagger}\mathbb{1}_{B'_\infty}\left((\varkappa l)^{-1}\gamma t\right) \dif(l,t)\\
\RO^f_{\gamma,\varkappa}(B)&\coloneqq \int_{\mathbf{M}(\mathbb{A}_f)^\dagger}\mathbb{1}_{B'_f}\left((\varkappa l)^{-1}\gamma t\right) \dif(l,t)
\end{align*}
\end{defi}

The complicated expression to handle is the non-archimedean part. We will see that the archimedean part is rather simple due to the fact that we have restricted to the case $H_\infty=K_\infty$ in \eqref{eq:g_adel_restrictions}. 

In the next section we interpret the non-archimedean relative orbital integral as counting the number of intersections between the $\mathbf{M}(\mathbb{A}_f)$-orbit of $\gamma$ and $B'$ modulo a compact-open subgroup of $\mathbf{M}(\mathbb{A}_f)$. The main result is a finite-to-one map between intersections and pairs of integral $\Lambda$-ideals satisfying a list of arithmetic conditions. 

Unlike Linnik's argument for the equidistribution of CM points on a modular curve we do not calculate the relative orbital integrals at each place separately. Instead, we match them globally with a different global object. 
\subsection{Archimedean Relative Orbital Integral}
\begin{lem}\label{lem:archimedean-RO}
Let $\gamma=(\gamma_1,\gamma_2)\in\left(\mathbf{G}\times\mathbf{G}\right)(\mathbb{Q})$ and $\varkappa\in\mathbf{G}(\mathbb{Q})$. 
Assume $B_\infty=\Omega_\infty$ then $\RO^\infty_{\gamma,\varkappa}(B)=0$ if $\ctr(\gamma)\not\in g_\infty \Omega_\infty \ctr(\xi)_\infty \Omega_\infty g_\infty^{-1}$ and $$\RO^\infty_{\gamma,\varkappa}(B)\leq \meas_{\mathbf{T}(\mathbb{R})}\left(\mathbf{T}(\mathbb{R})\right)
\meas_{\mathbf{G}(\mathbb{R})^+}\left(\xi_{1,\infty}\Omega_\infty^2\xi_{1,\infty}^{-1} \cap \xi_{2,\infty}\Omega_\infty^2\xi_{2,\infty}^{-1}\right)$$ otherwise.
\end{lem}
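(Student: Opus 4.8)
The claim is a bound on the archimedean relative orbital integral
\[
\RO^\infty_{\gamma,\varkappa}(B)=\int_{\mathbf{M}(\mathbb{R})^\dagger}\mathbb{1}_{B'_\infty}\left((\varkappa l)^{-1}\gamma t\right)\dif(l,t),
\]
where $B'_\infty=\xi_{1,\infty}\Omega_\infty g_\infty^{-1}\times \xi_{2,\infty}\Omega_\infty g_\infty^{-1}s_\infty^{-1}$ and $\mathbf{M}(\mathbb{R})^\dagger=\mathbf{G}(\mathbb{R})^+\times\mathbf{T}(\mathbb{R})$ (recall $\mathbf{T}$ is anisotropic at $\infty$, so $\mathbf{T}(\mathbb{R})^\dagger=\mathbf{T}(\mathbb{R})$). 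First I would unwind the integrand: the pair $((\varkappa l)^{-1}\gamma_1 t,\ (\varkappa l)^{-1}\gamma_2 t)$ lies in $B'_\infty$ precisely when both $(\varkappa l)^{-1}\gamma_1 t\in\xi_{1,\infty}\Omega_\infty g_\infty^{-1}$ and $(\varkappa l)^{-1}\gamma_2 t\in\xi_{2,\infty}\Omega_\infty g_\infty^{-1}s_\infty^{-1}$. Forming the ratio of these two conditions eliminates $l$ (and $\varkappa$) and yields
\[
g_\infty^{-1}\ctr(\gamma)\, \mathrm{Ad}_{g_\infty}^{-1}(\ldots)\cdots \in \ldots,
\]
more precisely it forces $\ctr(\gamma)$ into a fixed double coset: writing out $\gamma_1^{-1}\gamma_2$ and using that $t,s_\infty$ commute with the anisotropic torus conjugated appropriately via $g_\infty^{-1}\mathbf{T}(\mathbb{R})g_\infty=K_\infty$ (condition \eqref{eq:g_adel_restrictions}), one gets that nonemptiness of the domain of integration requires $\ctr(\gamma)\in g_\infty\Omega_\infty\ctr(\xi)_\infty\Omega_\infty g_\infty^{-1}$. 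This gives the vanishing statement.

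For the upper bound when the domain is nonempty, the key structural point is that the $t$-variable and the $l$-variable can be decoupled. The plan is: fix $t\in\mathbf{T}(\mathbb{R})$; the set of $l\in\mathbf{G}(\mathbb{R})^+$ with $(\varkappa l)^{-1}\gamma_1 t\in\xi_{1,\infty}\Omega_\infty g_\infty^{-1}$ is a single left-translate (by an element depending on $t$) of $\varkappa^{-1}\gamma_1 t g_\infty \Omega_\infty^{-1}\xi_{1,\infty}^{-1}$ intersected with $\mathbf{G}(\mathbb{R})^+$, and similarly for the second coordinate. Since $\Omega_\infty$ is symmetric, the $l$-integral is bounded by the Haar measure of an intersection of two translates of $\Omega_\infty^2$-type sets; after translating everything to put both sets in standard position one is left with $\meas_{\mathbf{G}(\mathbb{R})^+}\!\big(\xi_{1,\infty}\Omega_\infty^2\xi_{1,\infty}^{-1}\cap \xi_{2,\infty}\Omega_\infty^2\xi_{2,\infty}^{-1}\big)$, which is independent of $t$. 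Here I would use $\mathrm{Ad}K_\infty$-invariance of $\Omega_\infty$ together with the fact that conjugating the inner part of $B'$ by elements of $\mathbf{T}(\mathbb{R})$ (via $g_\infty$) does not change $\Omega_\infty$, precisely the point of fixing $H_\infty=K_\infty$. Integrating the constant bound over $t\in\mathbf{T}(\mathbb{R})$ (a compact group since $\mathbf{T}$ is anisotropic at $\infty$) contributes the factor $\meas_{\mathbf{T}(\mathbb{R})}(\mathbf{T}(\mathbb{R}))$.

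Concretely the steps in order: (i) write the integrand as a product of two indicator conditions; (ii) show the combined conditions force $\ctr(\gamma)$ into the stated double coset, giving vanishing otherwise; (iii) for fixed $t$, parametrize the $l$-slice and bound its measure by $\meas_{\mathbf{G}(\mathbb{R})^+}$ of an intersection of translates, using symmetry and $\mathrm{Ad}K_\infty$-invariance of $\Omega_\infty$ to reduce to the $t$-independent expression $\xi_{1,\infty}\Omega_\infty^2\xi_{1,\infty}^{-1}\cap\xi_{2,\infty}\Omega_\infty^2\xi_{2,\infty}^{-1}$; (iv) integrate the uniform bound over the compact group $\mathbf{T}(\mathbb{R})$, picking up $\meas_{\mathbf{T}(\mathbb{R})}(\mathbf{T}(\mathbb{R}))$. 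The main obstacle is bookkeeping the role of the $\varkappa$-translate and making sure the reduction of the $l$-slice to a $t$-independent intersection is clean — one must check that the $t$-dependence really does drop out after using that $g_\infty$ conjugates $\mathbf{T}(\mathbb{R})$ to $K_\infty$ and that $\Omega_\infty$ is $\mathrm{Ad}K_\infty$-invariant; once that is pinned down the estimate is routine.
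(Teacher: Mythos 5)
Your proposal is correct and follows essentially the same route as the paper: both arguments use $\Omega_\infty K_\infty=\Omega_\infty$ together with $g_\infty^{-1}\mathbf{T}(\mathbb{R})g_\infty=K_\infty$ to make the $l$-condition independent of $t$ (the paper saturates $B'_\infty$ by $\mathbf{T}(\mathbb{R})^\Delta$ up front, you fix $t$ and observe the slice is $t$-independent — the same computation), derive the double-coset condition on $\ctr(\gamma)$ by eliminating $l$ and $\varkappa$, and bound the remaining $l$-integral by translating the intersection into standard position so each factor lands inside $\xi_{i,\infty}\Omega_\infty^2\xi_{i,\infty}^{-1}$. The only point to pin down in a full write-up is the left-translation step: pick a point $l_0$ in the nonempty intersection and translate by $l_0^{-1}$, which is where the square $\Omega_\infty^2$ appears.
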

\begin{proof}
From $\Omega_\infty K_\infty=\Omega_\infty$ and $g_\infty^{-1} \mathbf{T}(\mathbb{R}) g_\infty=K_\infty$ we deduce $B'_\infty \cdot \mathbf{T}(\mathbb{R})^\Delta=\xi_1\Omega_\infty g_\infty^{-1}\times \xi_2\Omega_\infty g_\infty^{-1}$. Hence $\mathbb{1}_{B'_\infty}\left((\varkappa l)^{-1}\gamma t\right)=1$ if and only if 
\begin{equation*}
(\varkappa l)^{-1}\in \xi_{1,\infty}\Omega_\infty g_\infty^{-1}\gamma_1^{-1} \cap \xi_{2,\infty}\Omega_\infty g_\infty^{-1}\gamma_2^{-1}
\end{equation*}
If the intersection on the right hand side is non-empty then there are some $\omega_1,\omega_2\in\Omega_\infty$ such that
\begin{equation}\label{eq:gama_infty-Omega-xi-Omega}
\ctr(\gamma)=g_\infty \omega_1 \ctr(\xi)_\infty \omega_2 g_\infty^{-1}\in g_\infty \Omega_\infty \ctr(\xi)_\infty \Omega_\infty g_\infty^{-1}
\end{equation}
This proves the first claim.

Moreover, the infinite part of the relative orbital integral is
\begin{align*}
\RO^\infty_{\gamma,\varkappa}(B)
&\coloneqq \int_{\mathbf{M}(\mathbb{R})^\dagger}\mathbb{1}_{B'_\infty}\left((\varkappa l)^{-1}\gamma t\right) \dif(l,t)\\
&= \meas_{\mathbf{T}(\mathbb{R})}\left(\mathbf{T}(\mathbb{R})\right)
\int_{\mathbf{G}(\mathbb{R})^+} \mathbb{1}_{\xi_{1,\infty}\Omega_\infty g_\infty^{-1}\gamma_1^{-1} \cap \xi_{2,\infty}\Omega_\infty g_\infty^{-1}\gamma_2^{-1}}\left((\varkappa l)^{-1}\right) \dif l\\
&=\meas_{\mathbf{T}(\mathbb{R})}\left(\mathbf{T}(\mathbb{R})\right)
\meas_{\mathbf{G}(\mathbb{R})^+} \left(\xi_{1,\infty}\Omega_\infty g_\infty^{-1}\gamma_1^{-1}\varkappa \cap \xi_{2,\infty}\Omega_\infty g_\infty^{-1}\gamma_2^{-1}\varkappa\right)
\end{align*}
The right hand side above is trivially zero unless $\Nrd\varkappa^{-1}\gamma_1 g_\infty \xi_{1,\infty}^{-1}>0$ so we may assume it is the case. Using the right invariance of a Haar measure on $\mathbf{G}(\mathbb{R})^+$ and \eqref{eq:gama_infty-Omega-xi-Omega} we have
\begin{align*}
&\meas_{\mathbf{G}(\mathbb{R})^+} \left(\xi_{1,\infty}\Omega_\infty g_\infty^{-1}\gamma_1^{-1}\varkappa \cap \xi_{2,\infty}\Omega_\infty g_\infty^{-1}\gamma_2^{-1}\varkappa\right)=\\
&\meas_{\mathbf{G}(\mathbb{R})^+} \left(\xi_{1,\infty}\Omega_\infty\xi_{1,\infty}^{-1} \cap \xi_{2,\infty}\Omega_\infty g_\infty^{-1}\gamma_2^{-1}\gamma_1 g_\infty \xi_{1,\infty}^{-1}\right)=\\
&\meas_{\mathbf{G}(\mathbb{R})^+} \left(\xi_{1,\infty}\Omega_\infty \omega_1^{-1} \xi_{1,\infty}^{-1} \cap \xi_{2,\infty}\Omega_\infty \omega_2^{-1} \xi_{2,\infty}^{-1}\right)\leq \meas_{\mathbf{G}(\mathbb{R})^+}\left(\xi_{1,\infty}\Omega_\infty^2\xi_{1,\infty}^{-1} \cap \xi_{2,\infty}\Omega_\infty^2\xi_{2,\infty}^{-1}\right)
\end{align*}
as claimed.
\end{proof}

\subsection{Non-Archimedean Relative Orbital Integrals}
\begin{defi}
Let $B_f<\mathbf{G}(\mathbb{A}_f)$ be a compact-open subgroup and fix a homogeneous Hecke set $\left[\mathbf{G}^{\Delta}(\mathbb{A})^+(\xi_1,\xi_2)\right]$ and a homogeneous toral set $\left[\mathbf{T}(\mathbb{A})g\right]$. We fix the following notations
\begin{align*}
B_{\mathbf{G},f}&\coloneqq \xi_{1,f} B_f \xi_{1,f}^{-1}\cap \xi_{2,f} B_f \xi_{2,f}^{-1}
<\mathbf{G}(\mathbb{A}_f)\\
B_{\mathbf{G},f}^+&\coloneqq B_{\mathbf{G},f} \cap \mathbf{G}(\mathbb{A}_f)^+
<\mathbf{G}(\mathbb{A}_f)^+\\
B_{\mathbf{T},f}&\coloneqq g_f B_f g_f^{-1}\cap \mathbf{T}(\mathbb{A}_f)<\mathbf{T}(\mathbb{A}_f)\\
B_{\mathbf{M},f}&\coloneqq B_{\mathbf{G},f}
\times
B_{\mathbf{T},f}
<\mathbf{M}(\mathbb{A}_f)\\
B_{\mathbf{M},f}^\dagger&\coloneqq B_{\mathbf{G},f}^+
\times
B_{\mathbf{T},f}
<\mathbf{M}(\mathbb{A}_f)^\dagger\\
\end{align*}
Each of these is a compact-open subgroup of the appropriate group.
\end{defi}

\begin{defi}
Let $\gamma\in\left(\mathbf{G}\times\mathbf{G}\right)(\mathbb{Q})$ and $\varkappa\in\mathbf{G}(\mathbb{Q})$.
Define the following functions
\begin{align*}
f_{\gamma,\varkappa}(l,t)&\coloneqq\mathbb{1}_{B'_f}\left((\varkappa l)^{-1} \gamma t\right) \\
f_{\gamma}(l,t)&\coloneqq\mathbb{1}_{B'_f}\left(l^{-1}\gamma t\right)
\end{align*}
The former is $B_{\mathbf{M},f}^\dagger$-invariant function on $\mathbf{M}(\mathbb{A}_f)^\dagger$ and the latter is a  $B_{\mathbf{M},f}$-invariant function on $\mathbf{M}(\mathbb{A}_f)$.
\end{defi}

\subsubsection{Intersection Numbers}
\begin{lem}\label{lem:cosets+-coset}
Let $B=B_\infty\times B_f \subset\mathbf{G}(\mathbb{A})$ be an identity neighborhood such that $B_f\subseteq \mathbf{G}(\mathbb{A}_f)$ is contained in the union of all compact-open subgroups. Then
each coset from $\faktor{\mathbf{M}(\mathbb{A}_f)}{B_{\mathbf{M},f}}$ contains at most two cosets from $\faktor{\mathbf{G}(\mathbb{Q}) \mathbf{M}(\mathbb{A}_f)^\dagger}{B_{\mathbf{M},f}^\dagger}$; where we consider $\mathbf{G}$ as a subgroup of $\mathbf{M}=\mathbf{G}\times\mathbf{T}$  in the usual way. 
\end{lem}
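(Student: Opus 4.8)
\textbf{Proof proposal for Lemma \ref{lem:cosets+-coset}.}

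The plan is to reduce the statement to a purely group-theoretic index computation: the obstruction is the failure of $\mathbf{G}(\mathbb{Q})\mathbf{M}(\mathbb{A}_f)^\dagger$ to be all of $\mathbf{M}(\mathbb{A}_f)$, and I claim the defect is always a group of order at most $2$. First I would observe that since $B_f$ is assumed to lie inside the union of all compact-open subgroups, the subgroup $B_{\mathbf{M},f}=B_{\mathbf{G},f}\times B_{\mathbf{T},f}$ is a compact-open subgroup of $\mathbf{M}(\mathbb{A}_f)=\mathbf{G}(\mathbb{A}_f)\times\mathbf{T}(\mathbb{A}_f)$, and $B_{\mathbf{M},f}^\dagger=B_{\mathbf{G},f}^+\times B_{\mathbf{T},f}$ is its intersection with $\mathbf{M}(\mathbb{A}_f)^\dagger=\mathbf{G}(\mathbb{A}_f)^+\times\mathbf{T}(\mathbb{A}_f)$. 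Consequently a coset $mB_{\mathbf{M},f}$ decomposes into cosets of $\mathbf{G}(\mathbb{Q})\mathbf{M}(\mathbb{A}_f)^\dagger$ intersected with $B_{\mathbf{M},f}$, and the number of pieces is bounded by
\begin{equation*}
\left[\,B_{\mathbf{M},f}\;:\;B_{\mathbf{M},f}\cap\bigl(\mathbf{G}(\mathbb{Q})\mathbf{M}(\mathbb{A}_f)^\dagger\bigr)\,\right].
\end{equation*}
Since $\mathbf{T}(\mathbb{A}_f)$ is the second coordinate of $\mathbf{M}(\mathbb{A}_f)^\dagger$ entirely, only the $\mathbf{G}$-coordinate contributes, so this index is bounded by $\bigl[\,B_{\mathbf{G},f}:B_{\mathbf{G},f}\cap\mathbf{G}(\mathbb{Q})\mathbf{G}(\mathbb{A}_f)^+\,\bigr]$, which in turn divides $\bigl[\mathbf{G}(\mathbb{A}_f):\mathbf{G}(\mathbb{Q})\mathbf{G}(\mathbb{A}_f)^+\bigr]$.

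The key step is then to show $\bigl[\mathbf{G}(\mathbb{A}_f):\mathbf{G}(\mathbb{Q})\mathbf{G}(\mathbb{A}_f)^+\bigr]\le 2$. For this I would invoke the reduced-norm description of the component groups developed in \S\ref{sec:simply-connected-cover}: the map $\Nrd$ induces a monomorphism $\faktor{\mathbf{G}(\mathbb{A})}{\mathbf{G}(\mathbb{A})^+}\hookrightarrow\faktor{\mathbb{A}^\times}{{\mathbb{A}^\times}^2}$, and strong approximation together with the Hasse--Schilling--Maass theorem identifies $\dfaktor{\mathbf{G}(\mathbb{Q})}{\mathbf{G}(\mathbb{A})}{\mathbf{G}(\mathbb{A})^+}$ with a subgroup of $\dfaktor{\mathbb{Q}^\times}{\mathbb{A}^\times}{{\mathbb{A}^\times}^2}$ of index $1$ or $2$ according to whether $\mathbf{B}$ is split or ramified at $\infty$. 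Pulling back to the finite-adelic picture: dividing the adelic double quotient by the archimedean factor, $\dfaktor{\mathbf{G}(\mathbb{Q})}{\mathbf{G}(\mathbb{A}_f)}{\mathbf{G}(\mathbb{A}_f)^+}$ embeds via $\Nrd$ into $\dfaktor{\mathbb{Q}^\times}{\mathbb{A}_f^\times}{{\mathbb{A}_f^\times}^2}$ modulo the image of $\mathbb{Q}^\times$, and the class-field-theoretic computation $\dfaktor{\mathbb{Q}^\times}{\mathbb{A}^\times}{{\mathbb{A}^\times}^2}\simeq\{\pm1\}$ (the global square-class group of $\mathbb{Q}$ is $\cyclic{2}$, generated by $-1$) forces the finite-adelic component group to have order at most $2$ as well. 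Hence the index above is $\le 2$ and each $B_{\mathbf{M},f}$-coset splits into at most two $\mathbf{G}(\mathbb{Q})\mathbf{M}(\mathbb{A}_f)^\dagger$-cosets.

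The main obstacle I anticipate is bookkeeping rather than conceptual: one must be careful that the embedding $\mathbf{G}\hookrightarrow\mathbf{M}=\mathbf{G}\times\mathbf{T}$ used here is the one sending $g$ to $(g,e)$ (as the statement specifies ``in the usual way''), so that $\mathbf{G}(\mathbb{Q})$ really does act on the first coordinate only, and that $B_{\mathbf{M},f}^\dagger$ genuinely equals $B_{\mathbf{M},f}\cap\mathbf{M}(\mathbb{A}_f)^\dagger$ rather than something smaller --- this is where the hypothesis that $B_f$ is covered by compact-open subgroups is used, ensuring $B_{\mathbf{G},f}^+=B_{\mathbf{G},f}\cap\mathbf{G}(\mathbb{A}_f)^+$ without any subtlety at the ramified places (where $K_v$ is normal of index $2$, cf.\ \S\ref{sec:maximal-order-B}). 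Once these identifications are in place, the proof is a one-line consequence of the index bound.
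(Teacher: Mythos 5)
There is a genuine gap, and it sits exactly at the step you call ``the key step.'' First, the index you reduce to is not the quantity the lemma asks for. Writing $H\coloneqq\mathbf{G}(\mathbb{Q})\mathbf{M}(\mathbb{A}_f)^\dagger$ (a subgroup, since $\mathbf{M}(\mathbb{A}_f)^\dagger$ is normal), the number of cosets $xB^\dagger_{\mathbf{M},f}$ with $x\in H$ contained in a fixed coset $mB_{\mathbf{M},f}$ equals, when nonzero, $\left[H\cap B_{\mathbf{M},f}:B^\dagger_{\mathbf{M},f}\right]$: all such cosets partition the single set $H\cap mB_{\mathbf{M},f}=x_0\left(H\cap B_{\mathbf{M},f}\right)$ for any $x_0$ in it. Your index $\left[B_{\mathbf{M},f}:B_{\mathbf{M},f}\cap H\right]$ instead counts how many $H$-cosets meet $B_{\mathbf{M},f}$, which is a different and here irrelevant quantity. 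Second, and more seriously, the bound you invoke is false: $\dfaktor{\mathbb{Q}^\times}{\mathbb{A}^\times}{{\mathbb{A}^\times}^2}$ is \emph{not} $\{\pm1\}$ --- the square-class group of $\mathbb{Q}$ is infinite --- and correspondingly $\left[\mathbf{G}(\mathbb{A}_f):\mathbf{G}(\mathbb{Q})\mathbf{G}(\mathbb{A}_f)^+\right]$ is infinite. For $\mathbf{G}=\mathbf{PGL}_2$ the reduced norm identifies this quotient with $\dfaktor{\mathbb{Q}^\times}{\mathbb{A}_f^\times}{{\mathbb{A}_f^\times}^2}\simeq\faktor{\widehat{\mathbb{Z}}^\times}{\{\pm 1\}{\widehat{\mathbb{Z}}^\times}^2}$, an infinite compact group. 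What \S\ref{sec:simply-connected-cover} gives is only that $\Nrd$ is injective on these component groups, not that the target is small.

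The correct mechanism --- and the one the paper uses --- plays the two constraints off against each other on a \emph{single} element rather than bounding the ambient quotient. If $\varkappa_1m_1^\dagger$ and $\varkappa_2m_2^\dagger$ lie in the same coset $mB_{\mathbf{M},f}$, then $\varkappa_1 m_1^\dagger=\varkappa_2 m_2^\dagger b$ with $b\in B_{\mathbf{M},f}$, so $\Nrd(\varkappa_2^{-1}\varkappa_1)=\Nrd(b)$ lies simultaneously in $\mathbb{Q}^\times{\mathbb{A}_f^\times}^2$ (rationality of the $\varkappa_i$) and in $\widehat{\mathbb{Z}}^\times{\mathbb{A}_f^\times}^2$ (this is where the hypothesis that $B_f$ is covered by compact-open subgroups enters). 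A rational number with even valuation at every finite place is $\pm$ a square, so $\Nrd(\varkappa_2^{-1}\varkappa_1)\in\pm{\mathbb{A}_f^\times}^2$, and injectivity of $\Nrd$ modulo $\mathbf{G}(\mathbb{A}_f)^+$ leaves at most two classes of $\varkappa$ modulo $\mathbf{G}(\mathbb{Q})^+$. Equivalently, in your language: the quotient $\faktor{H\cap B_{\mathbf{M},f}}{B^\dagger_{\mathbf{M},f}}$ injects under $\Nrd$ into $\faktor{\pm{\mathbb{A}_f^\times}^2}{{\mathbb{A}_f^\times}^2}$, which has order $2$. You correctly located where the hypothesis on $B_f$ is used, but the reduction to $\left[\mathbf{G}(\mathbb{A}_f):\mathbf{G}(\mathbb{Q})\mathbf{G}(\mathbb{A}_f)^+\right]$ discards the crucial interaction between rationality and integrality of the reduced norm.
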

\begin{proof}

Let $\varkappa_{-1}\in\mathbf{B}^\times(\mathbb{Q})$ be an element with reduced norm $-1$ if it exists, i.e.\ if $\mathbf{B}$ is split at $\infty$, see \S\ref{sec:simply-connected-cover}. By abuse of notation we use the notation $\varkappa_{-1}$ also for the corresponding element in $\mathbf{G}(\mathbb{Q})$.

Fix $m\in\mathbf{M}(\mathbb{A}_f)$. Assume $\varkappa_i m_i^\dagger B_{\mathbf{M},f}^\dagger\subseteq m B_{\mathbf{M},f}$ where $\varkappa_i\in\mathbf{G}(\mathbb{Q})$, $m_i^\dagger\in\mathbf{M}(\mathbb{A}_f)^\dagger$ for $i\in\left\{1,2\right\}$. We show that either $\varkappa_1\in\varkappa_2 \mathbf{G}(\mathbb{Q})^+$ or $\varkappa_1\in\varkappa_2 \varkappa_{-1} \mathbf{G}(\mathbb{Q})^+$ if $\varkappa_{-1}$ exists.

Our assumption implies that there is some $b\in B_{\mathbf{M},f}$ such that $\varkappa_1 m_1^\dagger=\varkappa_2 m_2^\dagger b$. We apply the injective map
\begin{equation*}
\Nrd\colon\mathbf{G}(\mathbb{A}_f)\to \faktor{\mathbb{A}_f^\times}{{\mathbb{A}_f^\times}^2}
\end{equation*}
And deduce $\Nrd (\varkappa_2^{-1}\varkappa_1)=\Nrd(b)$. The condition satisfied by $B_f$ implies that
$\Nrd b \in \widehat{\mathbb{Z}}^\times \mod \left(\mathbb{A}_f^\times\right)^2$.
Thus the valuation of $\Nrd (\varkappa_2^{-1}\varkappa_1)$ is even at each finite place. As $\Nrd (\varkappa_2^{-1}\varkappa_1)$ is rational it must belong to $\pm {\mathbb{Q}^\times}^2$, i.e.\ it is either trivial in $\faktor{\mathbb{A}_f^\times}{{\mathbb{A}_f^\times}^2}$ or has the same class as $\varkappa_{-1}$. Because $\Nrd$ has kernel $\mathbf{G}(\mathbb{A}_f)^+$ we conclude that $\varkappa_2^{-1}\varkappa_1$ either belongs to $\mathbf{G}(\mathbb{Q})^+$ or to $\varkappa_{-1}\mathbf{G}(\mathbb{Q})^+$. The claim follows immediately.
\end{proof}
\begin{remark}
It is not difficult to analyze for a specific $B$ exactly how many cosets from $\faktor{\mathbf{G}(\mathbb{Q}) \mathbf{M}(\mathbb{A}_f)^\dagger}{B_{\mathbf{M},f}^\dagger}$ are contained in a fixed coset from $\faktor{\mathbf{M}(\mathbb{A}_f)}{B_{\mathbf{M},f}}$. This would allow converting several of the inequalities in what follows to equalities. As this of no practical use to us we do not pursue it here.
\end{remark}

\begin{prop}\label{prop:nonarchimidean-RO-Ngamma}
Let $[\gamma]\in W_{\mathbb{Q}}$ then
\begin{equation*}
\sum_{\varkappa\in \mathbf{G}(\mathbb{Q})\slash\mathbf{G}(\mathbb{Q})^+}
\RO^f_{\gamma,\varkappa}(B)
\leq
2\meas_{\mathbf{G}(\mathbb{A}_f)}(B_{\mathbf{G},f}) \meas_{\mathbf{T}(\mathbb{A}_f)}(B_{\mathbf{T},f})N_{[\gamma]}
\end{equation*}
where $N_{[\gamma]}$ is the number of times the $\mathbf{M}(\mathbb{A}_f)$ orbit of $\gamma$ intersects $B_f'$ modulo $B_{\mathbf{M},f}$.
\end{prop}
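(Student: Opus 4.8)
The plan is to unfold the definition of the non-archimedean relative orbital integral as an honest integral over $\mathbf{M}(\mathbb{A}_f)^\dagger$ (resp.\ a sum of such integrals over $\varkappa\in\mathbf{G}(\mathbb{Q})/\mathbf{G}(\mathbb{Q})^+$), recognize the integrand as the indicator of a union of cosets of the compact-open subgroup $B_{\mathbf{M},f}^\dagger$, and count those cosets. Concretely, $\RO^f_{\gamma,\varkappa}(B)=\int_{\mathbf{M}(\mathbb{A}_f)^\dagger}\mathbb{1}_{B'_f}\big((\varkappa l)^{-1}\gamma t\big)\dif(l,t)$; since the integrand is $B_{\mathbf{M},f}^\dagger$-invariant on the left (because $B'_f=\xi_{1,f}B_f g_f^{-1}\times\xi_{2,f}B_f g_f^{-1}s^{-1}$ absorbs $B_{\mathbf{M},f}^\dagger$ on the appropriate side), the integral equals $\meas(B_{\mathbf{M},f}^\dagger)$ times the number of cosets $m^\dagger B_{\mathbf{M},f}^\dagger$ in $\varkappa^{-1}\mathbf{M}(\mathbb{A}_f)^\dagger\cap$(the translate of $B'_f$), i.e.\ the number of elements of the $\mathbf{M}(\mathbb{A}_f)^\dagger$-orbit of $\gamma$ landing in $B'_f$ counted modulo $B_{\mathbf{M},f}^\dagger$. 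Here $\meas(B_{\mathbf{M},f}^\dagger)=\meas_{\mathbf{G}(\mathbb{A}_f)}(B_{\mathbf{G},f}^+)\meas_{\mathbf{T}(\mathbb{A}_f)}(B_{\mathbf{T},f})\le\meas_{\mathbf{G}(\mathbb{A}_f)}(B_{\mathbf{G},f})\meas_{\mathbf{T}(\mathbb{A}_f)}(B_{\mathbf{T},f})$.

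First I would make precise the reduction identifying the integral over $B'_f$ with a translated orbit-intersection count: conjugating by $(\xi_{1,f},g_f)$ and using that the $\mathbf{M}$-action sends $(g_1,g_2)\mapsto(lg_1 t^{-1},lg_2t^{-1})$, the condition $(\varkappa l)^{-1}\gamma t\in B'_f$ becomes the statement that a translate of $\gamma$ by an element of the $\mathbf{M}(\mathbb{A}_f)^\dagger$-orbit lies in the fixed compact-open set $B_f\times B_f$ (after absorbing $\xi$, $g$, $s$), up to the left $B_{\mathbf{M},f}^\dagger$-action. Hence $\RO^f_{\gamma,\varkappa}(B)=\meas(B_{\mathbf{M},f}^\dagger)\cdot N^\dagger_{[\gamma],\varkappa}$ where $N^\dagger_{[\gamma],\varkappa}$ is the number of points of the orbit in $B'_f$ modulo $B_{\mathbf{M},f}^\dagger$ that are "visible" through the coset $\varkappa\mathbf{G}(\mathbb{Q})^+$. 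Summing over $\varkappa\in\mathbf{G}(\mathbb{Q})/\mathbf{G}(\mathbb{Q})^+$ gives $\sum_\varkappa\RO^f_{\gamma,\varkappa}(B)=\meas(B_{\mathbf{M},f}^\dagger)\cdot\sum_\varkappa N^\dagger_{[\gamma],\varkappa}$, and the task is to bound $\sum_\varkappa N^\dagger_{[\gamma],\varkappa}$ by $2N_{[\gamma]}$, the number of orbit points in $B'_f$ modulo the \emph{larger} group $B_{\mathbf{M},f}$ (and with the full group $\mathbf{M}(\mathbb{A}_f)$ rather than $\mathbf{M}(\mathbb{A}_f)^\dagger$).

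The comparison between the refined count $\sum_\varkappa N^\dagger_{[\gamma],\varkappa}$ and the coarse count $N_{[\gamma]}$ is exactly where Lemma~\ref{lem:cosets+-coset} enters: each coset of $B_{\mathbf{M},f}$ in $\mathbf{M}(\mathbb{A}_f)$ contains at most two cosets of $B_{\mathbf{M},f}^\dagger$ in $\mathbf{G}(\mathbb{Q})\mathbf{M}(\mathbb{A}_f)^\dagger$, which translates into each coarse orbit-intersection point contributing to at most two refined intersection points when we also range over $\varkappa\in\mathbf{G}(\mathbb{Q})/\mathbf{G}(\mathbb{Q})^+$. This yields the factor $2$ and the inequality $\sum_\varkappa N^\dagger_{[\gamma],\varkappa}\le 2N_{[\gamma]}$. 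Combining with the measure bound above gives the proposition.

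The main obstacle I anticipate is purely bookkeeping: one must carefully track how the left-invariance under $B_{\mathbf{M},f}^\dagger$ (rather than $B_{\mathbf{M},f}$) interacts with the $\varkappa$-sum, so that the hypothesis of Lemma~\ref{lem:cosets+-coset}—that $B_f$ lie in the union of all compact-open subgroups, used there to force $\Nrd b\in\widehat{\mathbb{Z}}^\times\bmod(\mathbb{A}_f^\times)^2$—is actually applicable in the orbit-counting reformulation, and to verify that a single orbit point modulo $B_{\mathbf{M},f}$ is not overcounted beyond the factor $2$ once the two sources of refinement ($B_{\mathbf{M},f}^\dagger$ versus $B_{\mathbf{M},f}$, and passing to $\mathbf{M}(\mathbb{A}_f)^\dagger$) are disentangled. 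Everything else is a direct unfolding of definitions and the standard "integral of an indicator of a union of cosets = measure of one coset times number of cosets" computation.
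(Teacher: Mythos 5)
Your proposal is correct and follows essentially the same route as the paper: unfold $\RO^f_{\gamma,\varkappa}(B)$ using the $B_{\mathbf{M},f}^\dagger$-invariance of the integrand to get $\meas(B_{\mathbf{M},f}^\dagger)$ times a refined intersection count for the $\mathbf{M}(\mathbb{A}_f)^\dagger$-orbit of $\varkappa^{-1}.\gamma$ modulo $B_{\mathbf{M},f}^\dagger$, bound the measure by $\meas_{\mathbf{G}(\mathbb{A}_f)}(B_{\mathbf{G},f})\meas_{\mathbf{T}(\mathbb{A}_f)}(B_{\mathbf{T},f})$, and then invoke Lemma~\ref{lem:cosets+-coset} to compare the sum over $\varkappa$ of the refined counts with $2N_{[\gamma]}$. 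The bookkeeping concern you flag at the end is precisely the content of Lemma~\ref{lem:cosets+-coset} as the paper uses it, so no further work is needed.
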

\begin{proof}
For any $\varkappa\in \faktor{\mathbf{G}(\mathbb{Q})}{\mathbf{G}(\mathbb{Q})^+}$ we can write
\begin{equation*}
\RO^f_{\gamma,\varkappa}(B)
=\meas_{\mathbf{G}(\mathbb{A}_f)}(B_{\mathbf{G},f}) \meas_{\mathbf{T}(\mathbb{A}_f)}(B_{\mathbf{T},f}) N_{\gamma,\varkappa}
\end{equation*}
where $N_{\gamma,\varkappa}$ is the number of times the $\mathbf{M}(\mathbb{A}_f)^\dagger$ orbit of $\varkappa^{-1}.\gamma$ intersect $B_f'$ modulo $B_{\mathbf{M},f}^\dagger$. This follows from the 
 $B_{\mathbf{M},f}^\dagger$-invariance of $f_{\gamma,\varkappa}$. The proof is finished by applying Lemma \ref{lem:cosets+-coset}.
\end{proof}

\begin{lem}\label{lem:contraction-of-Ngamma}
Consider the action of the algebraic group $\mathbf{M}=\mathbf{G}\times\mathbf{T}$ on the affine variety $\mathbf{G}$ where the $\mathbf{G}$-coordinate acts trivially and the $\mathbf{T}$-coordinate acts by conjugation.
The contraction morphism $\ctr\colon\mathbf{G}\times \mathbf{G}\to\mathbf{G}$ defined by $(g_1,g_2)\mapsto g_1^{-1}g_2$ is an $\mathbf{M}$-equivariant morphism of affine varieties.

The set $B_f'=\xi_{1,f}B_fg_f^{-1}\times\xi_{2,f}B_fg_f^{-1} s_f^{-1}\subset\left(\mathbf{G}\times\mathbf{G}\right)(\mathbb{A}_f)$ is $B_{\mathbf{M},f}$-invariant and the contraction map is a bijection between $\lfaktor{B_{\mathbf{M},f}}{B_f'}$ and its image $\lfaktor{\Ad B_{\mathbf{T},f}}{\ctr(B_f')}$. In particular for any $\gamma=(\gamma_1,\gamma_2)\in\left(\mathbf{G}\times\mathbf{G}\right)(\mathbb{Q})$
\begin{equation*}
N_{[\gamma]}=\# \left(\lfaktor{\Ad B_{\mathbf{T},f}}{\Ad\mathbf{T}(\mathbb{A}_f)\ctr(\gamma) \cap \ctr(B_f')} \right)
\end{equation*}
\end{lem}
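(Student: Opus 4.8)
The statement is a fairly soft bookkeeping lemma, so the proof strategy is to chase the definitions and reduce the count $N_{[\gamma]}$ to an orbit count on $\mathbf{G}$ via the contraction map $\ctr$. First I would record that $\ctr$ is $\mathbf{M}$-equivariant: one checks on geometric points that $\ctr(lg_1t^{-1},lg_2t^{-1}) = (lg_1t^{-1})^{-1}(lg_2t^{-1}) = t(g_1^{-1}g_2)t^{-1} = t\cdot\ctr(g_1,g_2)\cdot t^{-1}$, which is precisely the action of $\mathbf{M}=\mathbf{G}\times\mathbf{T}$ on $\mathbf{G}$ described in the statement (the $\mathbf{G}$-coordinate $l$ acts trivially, the $\mathbf{T}$-coordinate $t$ by conjugation). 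This is a morphism of affine varieties since it is built from multiplication and inversion.

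\textbf{$B_{\mathbf{M},f}$-invariance of $B_f'$ and the bijection.} Next I would verify that $B_f' = \xi_{1,f}B_f g_f^{-1}\times \xi_{2,f}B_f g_f^{-1}s_f^{-1}$ is invariant under the left action of $B_{\mathbf{M},f} = B_{\mathbf{G},f}\times B_{\mathbf{T},f}$. For the $\mathbf{T}$-part: an element $u\in B_{\mathbf{T},f} = g_fB_fg_f^{-1}\cap\mathbf{T}(\mathbb{A}_f)$ acts on the right coordinate by $b\mapsto bu^{-1}$; writing $u = g_f\beta g_f^{-1}$ with $\beta\in B_f$ we get $\xi_{2,f}B_fg_f^{-1}s_f^{-1}u^{-1} = \xi_{2,f}B_fg_f^{-1}\cdot u^{-1}s_f^{-1}$ since $u\in\mathbf{T}(\mathbb{A}_f)$ commutes with $s_f\in\mathbf{T}(\mathbb{A}_f)$, and $g_f^{-1}u^{-1} = \beta^{-1}g_f^{-1}$ gives $\xi_{2,f}B_f\beta^{-1}g_f^{-1}s_f^{-1} = \xi_{2,f}B_fg_f^{-1}s_f^{-1}$; similarly for the left coordinate and for the $\mathbf{G}$-part $l\in B_{\mathbf{G},f}$ acting on the left. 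Then $\ctr$ descends to a map $\lfaktor{B_{\mathbf{M},f}}{B_f'}\to\lfaktor{\Ad B_{\mathbf{T},f}}{\ctr(B_f')}$. Surjectivity is immediate from surjectivity of $\ctr$ onto its image together with $\mathbf{M}$-equivariance. For injectivity: if $\ctr(\gamma_1',\gamma_2')$ and $\ctr(\delta_1',\delta_2')$ lie in the same $\Ad B_{\mathbf{T},f}$-orbit with both pairs in $B_f'$, choose $u\in B_{\mathbf{T},f}$ with $u\,\ctr(\gamma_1',\gamma_2')\,u^{-1} = \ctr(\delta_1',\delta_2')$; then $(u\gamma_1'{}^{?},\dots)$ --- more carefully, the fiber of $\ctr$ through any point is a single $\mathbf{G}^\Delta(\mathbb{A}_f)$-coset acting on the left, i.e. $\ctr(g_1,g_2) = \ctr(h_1,h_2)$ iff $(h_1,h_2) = (g g_1, g g_2)$ for some $g$, and one checks this $g$ can be taken so that $(g g_1', g g_2')$ and $(\delta_1',\delta_2')$ differ by the left $B_{\mathbf{G},f}$ and right $B_{\mathbf{T},f}$ actions, using that $\xi_{1,f}B_f\xi_{1,f}^{-1}\cap\xi_{2,f}B_f\xi_{2,f}^{-1} = B_{\mathbf{G},f}$ exactly controls the overlap between the two coordinates. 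This is the one step requiring care, and it is the main (mild) obstacle: making precise that the left-$\mathbf{G}$-ambiguity in a $\ctr$-fiber, intersected with $B_f'$, is exactly absorbed by $B_{\mathbf{G},f}$ in the first coordinate while being compatible with the second.

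\textbf{Conclusion.} Finally, $N_{[\gamma]}$ was defined as the number of intersections of the $\mathbf{M}(\mathbb{A}_f)$-orbit of $\gamma = (\gamma_1,\gamma_2)$ with $B_f'$ modulo $B_{\mathbf{M},f}$, i.e. $N_{[\gamma]} = \#\bigl(\lfaktor{B_{\mathbf{M},f}}{(\mathbf{M}(\mathbb{A}_f).\gamma\cap B_f')}\bigr)$. Applying the bijection just established, together with $\mathbf{M}$-equivariance of $\ctr$ (so that $\ctr(\mathbf{M}(\mathbb{A}_f).\gamma) = \Ad\mathbf{T}(\mathbb{A}_f)\ctr(\gamma)$, the $\mathbf{G}$-coordinate of $\mathbf{M}$ acting trivially on $\mathbf{G}$), we get
\begin{equation*}
N_{[\gamma]} = \#\bigl(\lfaktor{\Ad B_{\mathbf{T},f}}{(\Ad\mathbf{T}(\mathbb{A}_f)\ctr(\gamma)\cap\ctr(B_f'))}\bigr),
\end{equation*}
which is the claimed formula. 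Well-definedness in terms of the class $[\gamma]\in W_{\mathbb{Q}}$ rather than $\gamma$ itself follows because $\ctr$ is constant on $\mathbf{G}^\Delta(\mathbb{Q})$-cosets on the left and $\Ad\mathbf{T}(\mathbb{A}_f)$ absorbs the right $\mathbf{T}^\Delta(\mathbb{Q})$-action.
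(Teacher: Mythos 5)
Your route is the same as the paper's: check $\mathbf{M}$-equivariance of $\ctr$ and $B_{\mathbf{M},f}$-invariance of $B_f'$, note surjectivity is free, and reduce everything to injectivity of the induced map on quotients. The equivariance and invariance verifications are fine. The problem is that you stop exactly at the one substantive step: your injectivity argument trails off into ``one checks this $g$ can be taken so that\dots'', and that check is the entire content of the lemma. The phrasing also suggests a choice where there is none. If $t\in B_{\mathbf{T},f}$ realizes $t\,h_1^{-1}h_2\,t^{-1}=h_1'^{-1}h_2'$ with $(h_1,h_2),(h_1',h_2')\in B_f'$, then the only candidate for the $\mathbf{G}$-part is the forced element $l=h_1'th_1^{-1}$, and the assumed relation automatically gives $lh_2t^{-1}=h_2'$ as well; so the whole point is to verify that this particular $l$ lies in $B_{\mathbf{G},f}$, which you do not do.

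To close the gap: write $h_1=\xi_{1,f}b_1g_f^{-1}$ and $h_2=\xi_{2,f}b_2g_f^{-1}s_f^{-1}$ with $b_i\in B_f$ (similarly with primes). Then $l=h_1'th_1^{-1}\in\xi_{1,f}B_f\,(g_f^{-1}tg_f)\,B_f^{-1}\xi_{1,f}^{-1}$, and $l=h_2'th_2^{-1}\in\xi_{2,f}B_f\,(g_f^{-1}s_f^{-1}ts_fg_f)\,B_f^{-1}\xi_{2,f}^{-1}$; since $t$ and $s_f$ both lie in $\mathbf{T}(\mathbb{A}_f)$ they commute, so $s_f^{-1}ts_f=t$, and since $t\in B_{\mathbf{T},f}\subseteq g_fB_fg_f^{-1}$ we get $g_f^{-1}tg_f\in B_f$. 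Hence $l\in\xi_{i,f}B_f\xi_{i,f}^{-1}$ for both $i$, i.e. $l\in B_{\mathbf{G},f}=\bigcap_{i\in\{1,2\}}\xi_{i,f}B_f\xi_{i,f}^{-1}$, which is exactly the injectivity statement. This computation is where the definition of $B_{\mathbf{T},f}$ and the commutation with the twist $s_f$ actually enter; without it the lemma is not proved.
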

\begin{proof}
The map $\ctr\colon \lfaktor{B_{\mathbf{M},f}}{B_f'}\to \lfaktor{\Ad B_{\mathbf{T},f}}{\ctr(B_f')}$ is obviously surjective and we need only prove injectivity. Assume 
\begin{equation}\label{eq:h-Tconj-h'}
t h_1^{-1} h_2 t^{-1}=h_1'^{-1} h_2'
\end{equation}
for some $(h_1,h_2),(h_1',h_2')\in\left(\mathbf{G}\times\mathbf{G}\right)(\mathbb{A}_f)$ and $t\in\Ad B_{\mathbf{T},f}$. We need to prove that there is some $l\in B_{\mathbf{G},f}$ so that $l h_i t^{-1}=h_i'$ for $i\in\{1,2\}$.

Set $l=h_1'th_1^{-1}\in\mathbf{G}(\mathbb{A}_f)$ then \eqref{eq:h-Tconj-h'} implies that $lh_it^{-1}=h_i'$ for $i\in\{1,2\}$. To see that $l\in B_{\mathbf{G},f}$ notice that for $i\in\{1,2\}$
\begin{equation*}
l=h_i't h_i^{-1}\in \xi_{i,f} B_f B_{\mathbf{T},f} B_f^{-1} \xi_{i,f}^{-1}=\xi_{i,f} B_f \xi_{i,f}^{-1}
\end{equation*}
and $B_{\mathbf{G},f}=\bigcap_{i\in\{1,2\}}\xi_{i,f} B_f \xi_{i,f}^{-1}$.
\end{proof}

\subsubsection{Matching Intersections to Pairs of Integral Ideals}
The last lemma indicates that $N_{[\gamma]}$ can be computed by understanding intersections of $\Ad B_{\mathbf{T},f}$-orbits on $\Ad\mathbf{T}(\mathbb{A}_f)\mathbf{G}(\mathbb{Q})\subset\mathbf{G}(\mathbb{A}_f)$ with $\ctr(B_f')$. 
We restrict the the case $B_{\mathbf{T},f}=K_{\mathbf{T},f}$ and develop arithmetic invariants, refining results of GIT, to detect these intersections.

\begin{defi}\label{def:inv-first}
Recall that $K_{\mathbf{T},f}\coloneqq \mathbf{T}(\mathbb{A}_f)\cap g_f K_f g_f^{-1}$.
We construct a function 
$$\inv_f\colon \lfaktor{\Ad K_{\mathbf{T},f}}{\mathbf{G}(\mathbb{A}_f)}\to \lfaktor{{\mathbb{Q}^\times}^\Delta}{\Ideals(\Lambda)_0\times \Ideals(\Lambda)_0}$$
in the following manner.

Let $[h_f]=[(h_v)_{v\neq \infty}]\in \lfaktor{\Ad K_{\mathbf{T},f}}{\mathbf{G}(\mathbb{A}_f)}$. For all $v\neq\infty$ choose a representative of $h_v$ in $\mathbf{B}^\times(\mathbb{Q}_v)$ and write it in coordinates using Proposition \ref{prop:local-order-general}.
\begin{equation*}
h_v= \mathbb{Q}_v^\times
\begin{pmatrix} 
\alpha_v & \beta_v \upsilon_v \tau_v \\ 
\tensor[^\sigma]{\beta}{_v}/\tau_v & \tensor[^\sigma]{\alpha}{_v}
\end{pmatrix}
\end{equation*}
where $\alpha_v,\beta_v\in E_v$ and $\upsilon_v,\tau_v$ are as in the proposition. Now define $\inv_f([h_f])$ as
\begin{equation*}
\inv_f\big([h_f]\big)=\left(\widetilde{\idl}(\alpha_v), \widetilde{\idl}(\beta_v)\right)=
\left(\bigcap_{v\neq\infty}\ \alpha_v \Lambda_v, \bigcap_{v\neq\infty}\ \beta_v \Lambda_v \right)
\end{equation*}
This pair of ideals is obviously well defined up to multiplication by a common ideal of the form $\bigcap_{v\neq\infty} q_v \Lambda_v$ where $q_v\in\mathbb{Q}_v^\times$ for all $v\neq\infty$. Because $\mathbb{Q}$ has class number one this is equivalent to multiplying the ideals by the same element of $\mathbb{Q}^\times$.

The map $\inv_f$ is also invariant under conjugation by $K_{\mathbf{T},f}$. Recall that $\cbd(x)=x/\tensor[^\sigma]{x}{}$ for all $x\in E_v^\times$.
Conjugating by an element of $K_{\mathbf{T},f}$ is equivalent to multiplying $\beta_v$ by an element of $\cbd(\Lambda_v^\times)\subset \Lambda_v^\times$ hence defines the same fractional ideals. 
\end{defi}

\begin{defi}\label{def:inv-second}
\hfill
\begin{enumerate}
\item 
Set $\mathbf{G}(\mathbb{A})_{\mathrm{accessible}}\coloneqq \Ad\mathbf{T}(\mathbb{A})\mathbf{G}(\mathbb{Q})$. This is an $\Ad\mathbf{T}(\mathbb{A})$-invariant subset of $\mathbf{G}(\mathbb{A})$.

\item
Define the map
\begin{equation*}
\inv\colon 
\lfaktor{\Ad \mathbf{T}(\mathbb{R})\Ad K_{\mathbf{T},f}}{\mathbf{G}(\mathbb{A})_{\mathrm{accessible}}}
\to
\lfaktor{\mathbb{Q}^\times}{\Ideals(\Lambda)_0\times \Ideals(\Lambda)_0}
\end{equation*}
by evaluating $\inv_f$ from Definition \ref{def:inv-first} on the finite part.

\item For each $v$ we have defined a map $\pi_{\mathbf{W}}\colon\lfaktor{\Ad\mathbf{T}(\mathbb{Q}_v)}{\mathbf{G}(\mathbb{Q}_v)}\to\mathbf{W}(\mathbb{Q}_v)$. For an element of $\mathbf{G}(\mathbb{A})_{\mathrm{accessible}}$ the map $\pi_{\mathbf{W}}$ maps each place to the \emph{same} value in $\mathbf{W}(\mathbb{Q})$. We thus have a well-defined map
\begin{equation*}
\pi_{\mathbf{W}}\colon \lfaktor{\Ad\mathbf{T}(\mathbb{A})}{\mathbf{G}(\mathbb{A})_{\mathrm{accessible}}} \to \mathbf{W}(\mathbb{Q})
\end{equation*}
which send an element to the common value of $\pi_{\mathbf{W}}$ evaluated at any place.
\end{enumerate}
\end{defi}

The next proposition is a central observation relating cross-correlation to shifted convolution sums of ideal counting functions.
\begin{prop}\label{prop:fiber-of-invariants}
Let $\left[\mathbf{T}(\mathbb{A})g\right]$ be a homogeneous toral set. Let $$[h]\in\lfaktor{\Ad \mathbf{T}(\mathbb{R})\Ad K_{\mathbf{T},f}}{\mathbf{G}(\mathbb{A})_{\mathrm{accessible}}}$$ such that $\inv(h)\in \lfaktor{\mathbb{Q}^\times}{\Ideals(\Lambda)_0\times\Ideals(\Lambda)}$, i.e.\ $h\not\in \mathbf{T}(\mathbb{Q})$. Then there is a faithful action of $\prod_{v\neq\infty} H^1(\mathfrak{G},\Lambda_v^\times)$ on $\inv^{-1}([h])$ and the number of orbits is $\leq 8$.
\end{prop}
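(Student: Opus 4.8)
The plan is to reduce the statement to a computation in Galois cohomology, using the GIT machinery of \S\ref{sec:GIT} to control the fibers of $\pi_\mathbf{W}$ and then the local structure of the stabilizers to produce the action and bound the orbit count. First I would fix $[h]$ and let $[\gamma]=\pi_\mathbf{W}(h)\in\mathbf{W}(\mathbb{Q})$ be its image; by hypothesis $\ctr(\gamma)\notin\mathbf{T}(\mathbb{Q})$, and by passing to a representative I may also assume (up to enlarging the orbit count by a bounded factor accounting for the Weyl element — i.e. whether $\ctr(\gamma)\in w_\mathbf{T}\mathbf{T}(\mathbb{Q})$ or not) that Corollary \ref{cor:AdTG(Q)-separation} applies, so that over $\mathbb{Q}$ the fiber ${\pi_\mathbf{W}^0}^{-1}(\gamma)(\mathbb{Q})$ is a single $\Ad\mathbf{T}(\mathbb{Q})$-orbit. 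The key point is that two elements of $\mathbf{G}(\mathbb{A})_{\mathrm{accessible}}$ with the same $\inv$ have, in particular, the same value of $\vartheta_1^2\det^{-1}$ and $\psi\det^{-1}$ at each finite place (these are built from $\alpha_v,\beta_v$ as in Proposition \ref{prop:GIT-descent} together with the fixed data $\upsilon_v,\tau_v$), hence the same image in $\mathbf{W}(\mathbb{Q})$; thus $\inv^{-1}([h])$ is contained in a single $\pi_\mathbf{W}$-fiber over some point $[\gamma]\in\mathbf{W}(\mathbb{Q})$, and that fiber is a single $\Ad\mathbf{T}(\mathbb{A})$-orbit modulo the adelic stabilizer.

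Next I would identify the relevant stabilizer. By Proposition \ref{prop:M-orbits}, for $\ctr(\gamma)\notin w_\mathbf{T}\mathbf{T}(\mathbb{Q})$ the stabilizer of $\gamma$ under $\Ad\mathbf{T}$ is either trivial or $\mathbf{T}[2]\simeq\mu_2$. The fiber $\inv^{-1}([h])$ is cut out inside the $\Ad\mathbf{T}(\mathbb{A})$-orbit by the extra rigidifying data: fixing the pair of fractional ideals $\bigl(\widetilde{\idl}(\alpha_v),\widetilde{\idl}(\beta_v)\bigr)$ up to a global scalar pins down each $\alpha_v,\beta_v$ up to $\Lambda_v^\times$, while conjugation by $K_{\mathbf{T},f}$ moves $\beta_v$ only by $\cbd(\Lambda_v^\times)$. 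The discrepancy between "$\alpha_v,\beta_v$ determined up to $\Lambda_v^\times$" and "determined up to $\cbd(\Lambda_v^\times)$" is precisely measured by the cokernel of $\cbd\colon\Lambda_v^\times\to\Lambda_v^{(1)}$, which is $H^1(\mathfrak{G},\Lambda_v^\times)$ by Hilbert 90 for the \'etale algebra and the definition of the coboundary map (note $\Lambda_v$ is $\mathfrak{G}$-stable by Lemma \ref{lem:Lambda-Galois}). Thus for each finite $v$ the group $H^1(\mathfrak{G},\Lambda_v^\times)$ acts on the local choices compatibly with $\inv$, and these assemble to a faithful action of $\prod_{v\neq\infty}H^1(\mathfrak{G},\Lambda_v^\times)$ on $\inv^{-1}([h])$ — faithful because changing a local component genuinely changes the $\Ad K_{\mathbf{T},v}$-orbit of $h_v$ while preserving $\inv_v$.

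Finally I would bound the number of orbits. The orbits of this action correspond to the "arithmetic" choices not already absorbed: namely the ambiguity in reconstructing $h\in\mathbf{G}(\mathbb{A})_{\mathrm{accessible}}$ from its image in $\mathbf{W}(\mathbb{Q})$ plus the ideal data, which is governed by the kernel of the global-to-adelic map on $H^1$ of the stabilizer $\mathbf{M}_\gamma$ — more precisely by $\ker[H^1(\mathbb{Q},\mathbf{M}_\gamma)\to H^1(\mathbb{A},\mathbf{M}_\gamma)]$ together with a $\mu_2$-worth of choice at the places where $\mathbf{B}$ or $E$ is ramified and at the Weyl ambiguity. Since $\mathbf{M}_\gamma$ is trivial or $\mu_2$, the relevant $H^1$ is a subgroup of $\prod_v H^1(\mathbb{Q}_v,\mu_2)$ constrained by finitely many (at most three) binary conditions — one from the Weyl-class dichotomy $w_\mathbf{T}\mathbf{T}$ vs.\ $\mathbf{T}$, one from the norm-class parity appearing in Lemma \ref{lem:cosets+-coset}-type considerations, and one from the residual ambiguity in lifting through $\widetilde{\idl}$ versus $\idl$ — giving at most $2^3=8$ orbits. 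The hard part will be bookkeeping these sources of $\mu_2$-ambiguity precisely and checking they are independent (so the bound is $8$ and not larger), in particular disentangling the contribution of the two coordinates $(\alpha,\beta)$ and verifying that the $H^1(\mathfrak{G},\Lambda_v^\times)$-action is both well-defined globally and exactly accounts for the $\cbd$-versus-$\Lambda_v^\times$ gap at every place simultaneously; I expect this local-to-global cohomological reconciliation, rather than any single local computation, to be the main obstacle.
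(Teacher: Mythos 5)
Your construction of the $\prod_{v\neq\infty}H^1(\mathfrak{G},\Lambda_v^\times)$-action out of the gap between $\Lambda_v^{(1)}$ and $\cbd(\Lambda_v^\times)$ is the right mechanism and matches the paper. The proof breaks, however, at the claim that two accessible elements with the same $\inv$ have the same image in $\mathbf{W}(\mathbb{Q})$. The invariant $\inv$ records only the fractional ideals $\widetilde{\idl}(\alpha_v)=\alpha_v\Lambda_v$ and $\widetilde{\idl}(\beta_v)=\beta_v\Lambda_v$, i.e.\ the coordinates up to $\Lambda_v^\times$ at every finite place (and up to a common $\mathbb{Q}^\times$-scalar). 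It therefore determines the valuations of $\vartheta_1^2\det^{-1}$, $\vartheta_1\vartheta_2\det^{-1}$, $\psi\det^{-1}$ at all finite places, but \emph{not} their values in $E$ (resp.\ $\mathbb{Q}$): these are pinned down only up to global units, $\mathcal{O}_E^\times$ for the first and $\mathbb{Z}^\times$ for the latter two, and that residual unit ambiguity is precisely where the bound comes from --- $\pi_{\mathbf{W}}\bigl(\inv^{-1}([h])\bigr)$ has at most $4\cdot 2=8$ elements, and over each such point the fiber of $\inv\times\pi_{\mathbf{W}}$ is a single torsor (transitivity requires solving $\cbd(y_v)=c_v$ via Hilbert 90, a step you also omit). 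Taken at face value your step would place $\inv^{-1}([h])$ inside a single $\pi_{\mathbf{W}}$-fiber and hence, by Corollary \ref{cor:AdTG(Q)-separation} together with your own torsor analysis, yield exactly \emph{one} orbit; the subsequent recovery of a factor of $8$ from $\ker[H^1(\mathbb{Q},\mathbf{M}_\gamma)\to H^1(\mathbb{A},\mathbf{M}_\gamma)]$, a ``Weyl ambiguity'' and a ``norm-class parity'' is unsubstantiated and does not correspond to any actual degree of freedom: the stabilizer cohomology governs rational orbits inside a geometric $\mathbf{M}$-orbit, which is a separate issue already settled by Proposition \ref{prop:M-orbits}.

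A secondary gap: the case $\ctr(\gamma)\in w_{\mathbf{T}}\mathbf{T}(\mathbb{Q})$ (equivalently $\mathfrak{a}=0$, which is allowed by the hypothesis since only $\mathfrak{b}\neq 0$ is assumed) cannot be absorbed by ``enlarging the orbit count by a bounded factor'', because Corollary \ref{cor:AdTG(Q)-separation} genuinely fails there. One must argue separately --- as the paper does --- that two rational representatives with the same invariants differ by an element of $\mathbf{T}(\mathbb{Q})\cap K_{\mathbf{T},f}=\mathbf{T}(\mathbb{Z})$, so the relevant fiber is still a single $\Ad\mathbf{T}(\mathbb{A})$-orbit and the same torsor analysis applies.
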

\begin{proof}
Our strategy is to show that $\left(\inv\times\,\pi_{\mathbf{W}}\right)^{-1}([h])$ is a principal homogeneous space for $\prod_{v\neq\infty} H^1(\mathfrak{G},\Lambda_v^\times)$
and to prove that $\pi_{\mathbf{W}}\left(\inv^{-1}(\inv([h]))\right)$ contains at most $8$ elements.

Denote $\mathbb{Q}^\times(\mathfrak{a},\mathfrak{b})\coloneqq \inv([h])$. We first show there are at most $8$-possible elements in $\pi_{\mathbf{W}}\left(\inv^{-1}\left(\mathbb{Q}^\times(\mathfrak{a},\mathfrak{b})\right)\right)$. Proposition \ref{prop:GIT-descent} implies that an element of $\mathbf{W}(\mathbb{Q})$ is uniquely determined by the values in $E$ of the regular functions $\vartheta_1^2 \det^{-1},\vartheta_1 \vartheta_2 \det^{-1},\psi \det^{-1}$. 
We define a map $\mathbf{W(\mathbb{Q})}\to \mathbb{P}^2(E)$ by
\begin{equation*}
w\mapsto [\vartheta_1^2 \det^{-1}(w): \vartheta_1 \vartheta_2 \det^{-1}(w):\psi \det^{-1}(w)]
\end{equation*}
We claim that this map is injective on $\mathbf{W}(\mathbb{Q})$. A priori it defines the values of the necessary functions only up to a common multiplicative constant. But the value of this constant is uniquely determined by the syzygy $\vartheta_1 \vartheta_2 \det^{-1}-\psi \det^{-1}=1$.

Let $t \gamma t^{-1}\in\mathbf{G}(\mathbb{A})_{\mathrm{accessible}}$, $\gamma\in\mathbf{G}(\mathbb{Q})$, $t\in\mathbf{T}(\mathbb{A})$  represent an element in $\inv^{-1}\left(\mathbb{Q}^\times(\mathfrak{a},\mathfrak{b})\right)$.
For any $v\neq\infty$ the pair of fractional ideal $(\mathfrak{a},\mathfrak{b})$ corresponds to some local ideals $(\alpha_v \Lambda_v, \beta_v\Lambda_v)$  such that 
\begin{equation*}
\begin{pmatrix} 
\alpha_v  & \beta_v \upsilon_v \tau_v \\ 
\tensor[^\sigma]{\beta}{_v} /\tau_v & \tensor[^\sigma]{\alpha}{_v} 
\end{pmatrix}
\end{equation*}
is in the $\Ad\mathbf{T}(\mathbb{Q}_v)$-orbit of $\gamma$. We can calculated the coordinate functions of $[\gamma]\in\mathbf{W}(\mathbb{Q})$ using this matrix but their values depend on the specific representative in $\alpha_v\Lambda_v^\times$, $\beta_v \Lambda_v^\times$. Nevertheless
 the local principle ideals $(\alpha_v \Lambda_v, \beta_v \Lambda_v)$ uniquely determine for any $w\mid v$ the $w$-part of 
\begin{equation*}
[\vartheta_1^2 \det^{-1}(\gamma): \vartheta_1 \vartheta_2 \det^{-1}(\gamma):\psi \det^{-1}(\gamma)]
\end{equation*}
and this hold for all finite $E$-places $w$.  
As all the entries are in $E$ they are uniquely defined up to an element of $\mathcal{O}_E^\times$. Moreover, the last two entries are in $\mathbb{Q}$ so they are defined up to an element in $\mathbb{Z}^\times$. In total we are left with $4\cdot 2=8$ possibilities at most for the homogeneous vector above. Hence there are at most $8$ possible corresponding points in $\mathbf{W}(\mathbb{Q})$.

Next we need to study the fiber of $\inv\times\,\pi_{\mathbf{W}}$. Let $\left(\mathbb{Q}^\times(\mathfrak{a},\mathfrak{b}),[\gamma]\right)\in \Img \inv\times\pi_{\mathbf{W}}$ where $\mathfrak{a}\in\Ideals_0(\Lambda)$, $\mathfrak{b}\in\Ideals(\Lambda)$ and $\gamma\in\mathbf{G}(\mathbb{Q})$. 

Assume first $\gamma\not\in w_\mathbf{T}\mathbf{T}(\mathbb{Q})$. Due to Proposition \ref{prop:M-orbits} the fiber in $\mathbf{G}(\mathbb{A})_{\mathrm{accessible}}$ of $\pi_{\mathbf{W}}$ over this point is $\Ad \mathbf{T}(\mathbb{A}) \gamma$. 
In coordinates we can write
\begin{equation}\label{eq:piW-fiber-accessible}
\Ad\mathbf{T}(\mathbb{A})\gamma=\left\{
\mathbb{Q}_v^\times
\begin{pmatrix}
a & \epsilon b \cbd(x_v)\\
\tensor[^\sigma]{b}{} \tensor[^\sigma]{\cbd(x_v)}{}& \tensor[^\sigma]{a}{}
\end{pmatrix}_v \relmiddle{|} x=(x_v)_v\in \prod_v E_v^\times
\right\}
\end{equation}
where $a\in E$ and $b\in E^\times$. The pertinent fiber of $\inv\times\,\pi_{\mathbf{W}}$ in $\mathbf{G}(\mathbb{A})_\mathrm{accessible}$ is $\Ad\mathbf{T}(\mathbb{A})\gamma \cap \inv^{-1}\left(\mathbb{Q}^\times(\mathfrak{a},\mathfrak{b})\right)$.

Otherwise, if 
$\gamma\in w_\mathbf{T}\mathbf{T}(\mathbb{Q})$ then $\mathfrak{a}=0$. For any $h'=t' \gamma' t'^{-1}\in\inv^{-1}\left(\mathbb{Q}^\times(0,\mathfrak{b})\right)$ we have
 $\gamma'=\gamma t_{\mathbb{Q}}$ with $t_{\mathbb{Q}}\in \mathbf{T}(\mathbb{Q})$. Moreover, because $\gamma$ and $\gamma'$ have the same invariants we see that $t_{\mathbb{Q}}\in \mathbf{T}(\mathbb{Q}) \cap K_{\mathbf{T},f}=\mathbf{T}(\mathbb{Z})\simeq \lfaktor{\mathbb{Z}^\times}{\Lambda^\times}$. In particular, using Hilbert's Satz 90 for $E$ we see that $\gamma'\in \Ad \mathbf{T}(\mathbb{Q}) \gamma$.

In both case we conclude that the fiber is equal to $\Ad\mathbf{T}(\mathbb{A})\gamma \cap \inv^{-1}\left(\mathbb{Q}^\times(\mathfrak{a},\mathfrak{b})\right)$. 
These are all the elements of \eqref{eq:piW-fiber-accessible} satisfying
\begin{equation}\label{eq:invXpiW-fiber-accessible}
\left(\widetilde{\idl}(a),\widetilde{\idl}\left(\frac{\epsilon}{\upsilon_v \tau_v} b \cbd(x)\right)\right)
\in \mathbb{Q}^\times (\mathfrak{a},\mathfrak{b})
\end{equation}
Denote $\mathfrak{b}'\coloneqq\widetilde{\idl}\left(\frac{\epsilon}{\upsilon_v \tau_v} b\right)\in\Ideals(\Lambda)$ then there is some $q\in\mathbb{Q}^\times$ such that
\begin{equation*}
\widetilde{\idl}(\cbd(x))=q\frac{\mathfrak{b}}{\mathfrak{b}'}
\end{equation*}
The left hand side is a fractional $\Lambda$-ideal of norm $1$, hence $q=\pm q_0$ where $q_0=\sqrt{\Nr\mathfrak{b}'/\Nr\mathfrak{b}}$. In particular, 
$\widetilde{\idl}(\cbd(x))=q_0\frac{\mathfrak{b}}{\mathfrak{b}'}$. 

Fix $c_v\in E_v^\times$ for all $v\neq\infty$ such that $\widetilde{\idl}((c_v)_v)=q_0\frac{\mathfrak{b}}{\mathfrak{b}'}$. For each $v\neq\infty$ the element $c_v$ has norm $1$ so by Hilbert's Satz 90 there is some $y_v\in E_v^\times$ satisfying $c_v=\cbd(y_v)$. The condition \eqref{eq:invXpiW-fiber-accessible} is equivalent to
\begin{equation*}
\forall v\neq\infty\colon \cbd(x_v)\in\cbd(y_v)\Lambda_v^\times \Leftrightarrow \cbd(x_v y_v^{-1})\in \Lambda_v^{(1)}
\end{equation*}
Thus the fiber in $\mathbf{G}(\mathbb{A})_\mathrm{accessible}$ is a principal homogeneous space for $\Ad\mathbf{T}(\mathbb{R})\times \prod_{v\neq\infty} \Lambda_v^{(1)}$. 
Writing these in coordinates we see that the fiber in $\mathbf{G}(\mathbb{A})_\mathrm{accessible}$ is a principal homogeneous space for $\Ad\mathbf{T}(\mathbb{R})\times \prod_{v\neq\infty} \Lambda_v^{(1)}$.

The quotient of the fiber by the action of $\Ad\mathbf{T}(\mathbb{R})\Ad K_{\mathbf{T},f}$ is a principle homogeneous space for $\prod_{v\neq\infty} \lfaktor{\cbd(\Lambda_v^\times)}{\Lambda_v^{(1)}}\simeq \prod_{v\neq\infty} H^1(\mathfrak{G},\Lambda_v^\times)$ as claimed.
\end{proof}
\begin{cor}
Let $[h]\in\lfaktor{\Ad \mathbf{T}(\mathbb{R})\Ad K_{\mathbf{T},f}}{\mathbf{G}(\mathbb{A})_{\mathrm{accessible}}}$ such that $\inv(h)\in \lfaktor{\mathbb{Q}^\times}{\Ideals(\Lambda)_0\times\Ideals(\Lambda)}$, i.e.\ $h\not\in \mathbf{T}(\mathbb{Q})$. Then
\begin{equation*}
\#\inv^{-1}([h])\ll \Pic(\Lambda)[2]
\end{equation*} 
\end{cor}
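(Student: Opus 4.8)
The plan is to combine the two numerical inputs from Proposition \ref{prop:fiber-of-invariants}. That proposition shows that the fiber $\inv^{-1}([h])$ carries a faithful action of the finite abelian group $\prod_{v\neq\infty} H^1(\mathfrak{G},\Lambda_v^\times)$ with at most $8$ orbits, so it suffices to bound the order of this cohomology group by $O(\#\Pic(\Lambda)[2])$.

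First I would reduce to a computation at the ramified places of $E/\mathbb{Q}$. For almost all $v$ we have $\Lambda_v=\mathcal{O}_{E_v}$ by Proposition \ref{prop:Lambda-ae-maximal}; more precisely $\Lambda_v=\mathcal{O}_{E_v}$ whenever $v$ is prime to the conductor $\mathcal{f}$ of $\Lambda$, and $E_v/\mathbb{Q}_v$ is unramified whenever $v$ is prime to $D$. For such $v$, $H^1(\mathfrak{G},\Lambda_v^\times)=H^1(\mathfrak{G},\mathcal{O}_{E_v}^\times)$, and this group is trivial: if $v$ splits then $\mathfrak{G}$ permutes the two factors and the cohomology vanishes by Shapiro's lemma, while if $v$ is inert then $E_v/\mathbb{Q}_v$ is unramified and the norm map $\mathcal{O}_{E_v}^\times\to\mathbb{Z}_v^\times$ is surjective, so $H^1(\mathfrak{G},\mathcal{O}_{E_v}^\times)\simeq\lfaktor{\Nr\mathcal{O}_{E_v}^\times}{\mathbb{Z}_v^\times}=1$ by the periodicity of cohomology of cyclic groups together with Hilbert 90. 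Hence only the finitely many places $v\mid D\mathcal{f}$ contribute, and since we are assuming $\mathcal{f}\ll 1$ (this is condition (4) in the ambient theorems), it suffices to bound $\prod_{v\mid D}\#H^1(\mathfrak{G},\Lambda_v^\times)$.

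Next I would identify each local cohomology group with a piece of $2$-torsion. By the standard periodicity for a cyclic group of order $2$, $H^1(\mathfrak{G},\Lambda_v^\times)\simeq \widehat{H}^{-1}(\mathfrak{G},\Lambda_v^\times)=\lfaktor{\cbd(\Lambda_v^\times)}{\Lambda_v^{(1)}}$, the quotient that already appeared at the end of the proof of Proposition \ref{prop:fiber-of-invariants}. On the other hand, the genus theory developed in Appendix \ref{appndx:principal-genus} — in particular the formula for $\#\Pic(\Lambda)[2]$ recorded in Corollary \ref{cor:Pic(Lambda)[2]-size} — expresses $\#\Pic(\Lambda)[2]$ as a product of local contributions, one for each prime dividing $D$, each of which is a power of $2$ bounded below by a fixed multiple of the corresponding $\#H^1(\mathfrak{G},\Lambda_v^\times)$; indeed both quantities count ambiguous classes, the former globally and the latter locally. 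The cleanest route is to use the exact sequence relating $\Pic(\Lambda)[2]$, the id\`ele-class description of $\Pic(\Lambda)$ from \eqref{eq:Cl_S_S-isomorphism}, and the local norm-one tori: the cokernel of $\cbd\colon\widetilde{\mathbf{T}}(\mathbb{A}_f)\to\widetilde{\mathbf{T}}(\mathbb{A}_f)^{(1)}$ modulo $K_{\widetilde{\mathbf{T}},f}$ surjects onto $\Pic(\Lambda)[2]$ with kernel controlled by $E^{(1)}$, and $\prod_{v\neq\infty}\lfaktor{\cbd(\Lambda_v^\times)}{\Lambda_v^{(1)}}$ is exactly that cokernel up to the global unit contribution, which is bounded. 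Multiplying the at most $8$ orbits by $\#\prod_{v\neq\infty}H^1(\mathfrak{G},\Lambda_v^\times)\ll\#\Pic(\Lambda)[2]$ gives the claim.

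The main obstacle I anticipate is bookkeeping at the wildly ramified prime $v=2$, where $\#H^1(\mathfrak{G},\Lambda_2^\times)$ can be as large as $8$ and its precise value depends on the $2$-adic behaviour of the order $\Lambda_2$; this is precisely the content of the parameter $\mu_{\mathrm{wild}}\in\{0,1,2,3\}$ appearing in Corollary \ref{cor:Pic(Lambda)[2]-size}. One must check that the local cohomology at $2$ is matched, up to a bounded factor, by the wild contribution to $\#\Pic(\Lambda)[2]$ rather than being an independent source of growth; this follows from the appendix but requires care to state cleanly. Away from $2$ the matching is immediate since each odd ramified prime contributes a factor of exactly $2$ to both sides.
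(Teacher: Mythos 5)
Your proposal is correct and follows the paper's own route: the bound is exactly Proposition \ref{prop:fiber-of-invariants} (at most $8$ orbits under a faithful action of $\prod_{v\neq\infty}H^1(\mathfrak{G},\Lambda_v^\times)$) combined with the identity $\prod_{v\neq\infty}\#H^1(\mathfrak{G},\Lambda_v^\times)=2\,\#\Pic(\Lambda)[2]$, which is Corollary \ref{cor:Pic(Lambda)[2]-H1(Lambda_v)-size}. The local computations and the matching with $2^{\mu_{\mathrm{tame}}+\mu_{\mathrm{wild}}-1}$ that you sketch (including the care at $p=2$) are precisely the content of Appendix \ref{appndx:principal-genus}, so no appeal to the assumption $\mathcal{f}\ll 1$ is actually needed.
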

\begin{proof}
Follows from Proposition \ref{prop:fiber-of-invariants} above and Corollary \ref{cor:Pic(Lambda)[2]-H1(Lambda_v)-size}.
\end{proof}

\begin{lem}\label{lem:Picard-classes-of-invariants}
Let $h=t\gamma t^{-1}\in\mathbf{G}(\mathbb{A})_\mathrm{accessible}$, $t\in\mathbf{T}(\mathbb{A})$, $\gamma\in\mathbf{G}(\mathbb{Q})$. If $\inv(h)=\mathbb{Q}^\times (\mathfrak{a},\mathfrak{b})$  then
\begin{enumerate}
\item $[\mathfrak{a}]=1$ in $\Pic(\Lambda)$ if $\mathfrak{a}\neq 0$,
\item $[\mathfrak{b}\mathfrak{e}]\in\Pic(\Lambda)^2$  if $\mathfrak{b}\neq 0$.
\end{enumerate}
\end{lem}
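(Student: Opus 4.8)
The plan is to track the Picard classes of the two ideals $\mathfrak{a}$, $\mathfrak{b}$ through the explicit coordinate description of the $\Ad\mathbf{T}(\mathbb{A})$-orbit of $\gamma$. Write $h = t\gamma t^{-1}$ and, using Proposition \ref{prop:rational-points-local} together with Proposition \ref{prop:local-order-general}, express the matrix entries at each finite place $v$ in the form $\alpha_v = a$ and $\beta_v \upsilon_v \tau_v = \epsilon b\,\cbd(x_v)$ with $a \in E$, $b \in E^\times$ fixed (coming from $\gamma$) and $x_v \in E_v^\times$ varying; this is exactly the parametrization \eqref{eq:piW-fiber-accessible} already used in the proof of Proposition \ref{prop:fiber-of-invariants}. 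By definition $\mathfrak{a} = \widetilde{\idl}((\alpha_v)_v)$ and $\mathfrak{b} = \widetilde{\idl}((\beta_v)_v)$ up to $\mathbb{Q}^\times$.

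For part (1): if $\mathfrak{a} \neq 0$ then $\alpha_v = a \in E^\times$ is the \emph{same} global element at every finite place, so $\mathfrak{a} = \bigcap_{v\neq\infty} a\Lambda_v = a\Lambda$ is a principal fractional $\Lambda$-ideal, hence $[\mathfrak{a}] = 1$ in $\Pic(\Lambda)$. For part (2): from the relation $\beta_v = \epsilon b\,\cbd(x_v)/(\upsilon_v\tau_v)$ we get, applying $\widetilde{\idl}$ and recalling $\mathfrak{e} = \widetilde{\idl}((\upsilon_v\tau_v)_v)$ from Definition \ref{defi:cross-correlation-constants}, that
\begin{equation*}
\mathfrak{b}\,\mathfrak{e} = \widetilde{\idl}\bigl((\epsilon b)_v\bigr)\cdot \widetilde{\idl}\bigl((\cbd(x_v))_v\bigr) \pmod{\mathbb{Q}^\times}.
\end{equation*}
The first factor is $\epsilon b\cdot\Lambda$ up to a rational scalar (again $\epsilon b$ is a fixed global element of $E^\times$), so it is principal. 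The second factor is the ideal $\widetilde{\idl}(\cbd(x))$, and the key point is that any ideal of the form $\widetilde{\idl}(\cbd(x))$ for $x \in \prod_{v\neq\infty} E_v^\times$ lies in $\Pic(\Lambda)^2$: indeed $\widetilde{\idl}(x)\cdot{}^\sigma\widetilde{\idl}(x) = \widetilde{\idl}(\Nr x) = \Nr(x)\Lambda$ is principal, so in $\Pic(\Lambda)$ we have $[\widetilde{\idl}(x)] = -[{}^\sigma\widetilde{\idl}(x)]$, whence $[\widetilde{\idl}(\cbd(x))] = [\widetilde{\idl}(x)] - [{}^\sigma\widetilde{\idl}(x)] = 2[\widetilde{\idl}(x)] \in \Pic(\Lambda)^2$. (Here I am using that $\Gal(E_v/\mathbb{Q}_v)$-conjugation on id\`eles induces the inverse-transpose action on ideal classes, which is the standard compatibility of $\widetilde{\idl}$ with the Galois action, already implicit in Lemma \ref{lem:Lambda-Galois} and Definition \ref{def:ideles-to-ideals}.) Combining, $[\mathfrak{b}\mathfrak{e}] = [\text{principal}] + 2[\widetilde{\idl}(x)] \in \Pic(\Lambda)^2$.

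The only subtlety I anticipate is bookkeeping the ambiguity modulo $\mathbb{Q}^\times$ consistently: $\mathfrak{a}$ and $\mathfrak{b}$ are only well-defined as a pair up to a common global rational scalar, but since multiplication by $\mathbb{Q}^\times$ does not change the class in $\Pic(\Lambda)$ (rational scalars give principal ideals), the statements $[\mathfrak{a}] = 1$ and $[\mathfrak{b}\mathfrak{e}] \in \Pic(\Lambda)^2$ are insensitive to this and no extra care is needed beyond noting it once. A second minor point is the case $\gamma \in w_{\mathbf{T}}\mathbf{T}(\mathbb{Q})$, where $\mathfrak{a} = 0$ and only part (2) applies; but the coordinate computation of the off-diagonal entry is identical in that case, so the argument for (2) goes through verbatim. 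I would state the Galois-compatibility of $\widetilde{\idl}$ as a one-line observation (or cite the relevant part of Appendix \ref{appndx:principal-genus}) rather than reprove it, since it is the genus-theory input that makes part (2) work.
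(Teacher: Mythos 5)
Your proof is correct and follows essentially the same route as the paper: write $h$ in coordinates via the $\Ad\mathbf{T}(\mathbb{A})$-orbit parametrization of $\gamma$, observe that $\mathfrak{a}$ and $\mathfrak{b}\mathfrak{e}$ are represented by the id\`eles $(a)_v$ and $(\epsilon b\,\cbd(\lambda_v))_v$ up to local rational scalars, and conclude that the first is principal and the second lies in $\Pic(\Lambda)^2$. The paper asserts the coboundary-implies-square step without comment, whereas you justify it by noting that Galois acts by inversion on $\Pic(\Lambda)$ (since $\mathfrak{x}\cdot{}^\sigma\mathfrak{x}=\Nr(\mathfrak{x})\Lambda$ is principal) --- a correct filling-in of a detail the paper uses elsewhere (e.g.\ in Appendix \ref{appndx:principal-genus}).
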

\begin{proof}
Fixing representatives for $\gamma$ and $h_v$ for all places $v\neq\infty$ we have
\begin{equation*}
\begin{pmatrix}
\alpha_v & \beta_v \upsilon_v \tau_v \\
\tensor[^\sigma]{\beta}{_v}/\tau_v & \tensor[^\sigma]{\alpha}{_v}
\end{pmatrix}
=
q_v
\begin{pmatrix}
a & \epsilon b \frac{\lambda_v}{\tensor[^\sigma]{\lambda}{_v}} \\
\tensor[^\sigma]{b}{}\frac{\tensor[^\sigma]{\lambda}{_v}}{\lambda_v}& \tensor[^\sigma]{a}{}
\end{pmatrix}
\end{equation*}
where $\alpha_v,\beta_v\in E_v$, $\lambda_v\in E_v^\times$, $q_v\in\mathbb{Q}_v^\times$ and $a,b\in E^\times$. 
The ideals $\mathfrak{a}$ and $\mathfrak{eb}$ either vanish or their classes in $\Pic(\Lambda)$ satisfy 
\begin{align*}
[\mathfrak{a}]&=(\alpha_v)_{v\neq\infty} \mod \mathbb{Q}^\times\prod_{v\neq \infty} \Lambda_v^\times
=(a)_{v\neq\infty} \mod \mathbb{Q}^\times\prod_{v\neq \infty} \Lambda_v^\times=1\\
[\mathfrak{be}]&=(\beta_v \upsilon_v \tau_v)_{v\neq\infty} \mod \mathbb{Q}^\times\prod_{v\neq \infty} \Lambda_v^\times
=(\epsilon b\frac{\lambda_v}{\tensor[^\sigma]{\lambda}{_v}})_{v\neq\infty} \mod \mathbb{Q}^\times\prod_{v\neq \infty}\Lambda_v^\times
\in\Pic(\Lambda)^2\\
\end{align*}
\end{proof}

\begin{prop}\label{prop:intersection-to-invariants}
Fix $(x_v)_v\in\mathbf{G}(\mathbb{A}_f)$.
Let $B_v=\Omega_v$ for $v\neq p_1$ and $B_{p_1}=K_{p_1}^{(-n,n)}\subset \Omega_{p_1}$ for some  $n\in\mathbb{N}$.
If $h\in\mathbf{G}(\mathbb{A})_{\mathrm{accessible}}$ is contained in $\prod_{v} g_v B_v^{-1} x_v B_v g_v^{-1} s_v^{-1}$  then $$\inv(h)=\mathbb{Q}^\times \widehat{\Lambda} (\mathfrak{a}\cdot \mathfrak{s}^{-1},\mathfrak{b}\cdot \tensor[^\sigma]{\mathfrak{s}}{^{-1}})$$ for some $\mathfrak{a},\mathfrak{b} \in \Ideals_0(\Lambda)$ satisfying
\begin{enumerate}
\item $\mathfrak{a},\mathfrak{b}\subseteq \Lambda$,
\item $p_1^n \mid \mathfrak{b}$,
\item $\Nr(\mathfrak{a})-\upsilon\Nr(\mathfrak{b})=\sign\left(\Nrd(x_\infty)\right) \Denom_f (x_f)|D|$,
\item $\Nr(\mathfrak{a})\leq 2^8\Denom_\infty(x_\infty)\Denom_f(x_f)|D|$,
\item $[\mathfrak{a}]=[\mathfrak{s}]$ in $\Pic(\Lambda)$ if $\mathfrak{a}\neq0$,
\item $[\mathfrak{b}]\in[\mathfrak{se}]^{-1}\Pic(\Lambda)^2$  if $\mathfrak{b}\neq0$.
\end{enumerate}
\end{prop}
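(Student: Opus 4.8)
\textbf{Proof proposal for Proposition \ref{prop:intersection-to-invariants}.}

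The plan is to take an element $h\in\mathbf{G}(\mathbb{A})_{\mathrm{accessible}}$ lying in $\prod_v g_v B_v^{-1} x_v B_v g_v^{-1} s_v^{-1}$, write it place by place in the coordinates of Proposition \ref{prop:local-order-general}, and extract the pair of fractional ideals $\inv(h)$ directly from the entries. First I would deal with the twist: since $h$ lies in a right translate by $s^{-1}$ of $g_f B_f x_f B_f g_f^{-1}$, conjugating and using $\idl(s)=\mathbb{Q}^\times\mathfrak{s}$ together with Lemma \ref{lem:Lambda-Galois} (so that $\tensor[^\sigma]{\mathfrak{s}}{}$ makes sense as an ideal, the two components of $\inv$ being twisted by $\mathfrak{s}$ and $\tensor[^\sigma]{\mathfrak{s}}{}$ respectively) accounts for the overall factor $\mathbb{Q}^\times\widehat{\Lambda}(\mathfrak{a}\mathfrak{s}^{-1},\mathfrak{b}\,\tensor[^\sigma]{\mathfrak{s}}{^{-1}})$. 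The factor $\widehat{\Lambda}$ appears because Proposition \ref{prop:local-order-general} places $\alpha_v,\beta_v$ in $\widehat{\Lambda_v}$ rather than $\Lambda_v$; I would set $\mathfrak{a},\mathfrak{b}$ to be the honest integral ideals obtained after clearing this inverse different, which is exactly what makes claims (1)--(2) hold.

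The arithmetic conditions come from the local analysis. For a finite place $v\neq p_1$ one has $B_v=\Omega_v$, and Proposition \ref{prop:Omega_xi_Omega-reduced-norm} says a representative $r_v\in\mathbb{O}_v$ of $h_v$ (in the coordinate form) has $|\Nrd(r_v)|_v\Denom_v(x_v)=1$; computing $\Nrd$ in the coordinates of Proposition \ref{prop:local-order-general} gives $\Nrd(r_v)=\Nr(\alpha_v)-\upsilon_v\tau_v\tensor[^\sigma]{\tau}{_v^{-1}}\Nr(\beta_v)\cdot(\text{unit})$, and collecting the $\upsilon_v$'s over the inert-ramified primes produces the integer $\upsilon$ of Definition \ref{defi:cross-correlation-constants}; taking norms globally yields the additive relation $\Nr(\mathfrak{a})-\upsilon\Nr(\mathfrak{b})=\sign(\Nrd(x_\infty))\Denom_f(x_f)|D|$ of claim (3) — here the $|D|=\Nr(\mathscr{D}_v)$ factors from Lemma \ref{lem:inverse-different} enter precisely because we cleared $\widehat{\Lambda}$. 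At $v=p_1$ the smaller neighborhood $B_{p_1}=K_{p_1}^{(-n,n)}$ and Proposition \ref{prop:Omega_xi_Omega-reduced-norm-Bowen} give a representative in $\mathbb{O}_{p_1}^{(-n,n)}=\bigcap_{k=-n}^n a^k\mathbb{O}_{p_1}a^{-k}$; unwinding what membership in the Bowen ball means for the off-diagonal entry $\beta_{p_1}$ (it must be divisible by $p_1^n$ in one of the two primes of $E$ above $p_1$, after the normalization by $\Denom$) forces $p_1^n\mid\mathfrak{b}$, claim (2), and also forces the $p_1$-part of $\Denom_f(x_f)$ into the norm relation. The archimedean place contributes the \emph{inequality} (4): by Lemma \ref{lem:archimedean-RO}'s analysis $\ctr$ of $h$ lies in $g_\infty\Omega_\infty\ctr(\xi)_\infty\Omega_\infty g_\infty^{-1}$, and Proposition \ref{prop:local-order-split-archimedean} expresses $\|\cdot\|_\infty$ as $|\alpha|+|\beta|$; combined with \eqref{eq:Omega-tilde-Nrd} giving reduced norms between $1/4$ and $4$, the inequality $\Nr(\mathfrak{a})\leq 2^8\Denom_\infty(x_\infty)\Denom_f(x_f)|D|$ drops out with the explicit constant $2^8$ tracking the four factors-of-$4$. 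Claims (5)--(6) are immediate from Lemma \ref{lem:Picard-classes-of-invariants} applied after the twist by $\mathfrak{s}$: that lemma says the first invariant is principal and the second lies in $\mathfrak{e}^{-1}\Pic(\Lambda)^2$, so twisting by $\mathfrak{s}$ and $\tensor[^\sigma]{\mathfrak{s}}{}$ (which have the same Picard class) gives $[\mathfrak{a}]=[\mathfrak{s}]$ and $[\mathfrak{b}]\in[\mathfrak{se}]^{-1}\Pic(\Lambda)^2$.

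The main obstacle I anticipate is the bookkeeping at the ramified places and at $p=p_1$ simultaneously: one must check that the local coordinate representatives from Propositions \ref{prop:local-order-split}, \ref{prop:local-order-ramified-inert}, \ref{prop:local-order-ramified-ramified} can be chosen \emph{compatibly} so that the global ideals $\mathfrak{a},\mathfrak{b}$ are well-defined (up to $\mathbb{Q}^\times$), and that the powers of $\upsilon_v$ and the uniformizers $\tau_v$ assemble correctly into $\upsilon$ and $\mathfrak{e}$ without introducing spurious factors in the norm relation (3). In particular the inert-ramified case, where $\upsilon_v$ is a non-norm unit and $\widehat{\Lambda_v}=\Lambda_v$, must be reconciled with the inert-split case where $\upsilon_v$ is a uniformizer; getting the odd-versus-even valuation argument of Lemma \ref{lem:p-adic-sum} to interact properly with the clearing of $\widehat{\Lambda}$ is the delicate point. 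Everything else is a direct, if lengthy, computation once the coordinates are fixed.
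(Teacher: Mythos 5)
Your proposal is correct and follows essentially the same route as the paper's proof: good integral representatives from Propositions \ref{prop:Omega_xi_Omega-reduced-norm} and \ref{prop:Omega_xi_Omega-reduced-norm-Bowen}, coordinates from Proposition \ref{prop:local-order-general} with the $\widehat{\Lambda}$-clearing producing integrality and the $|D|$ factor, the determinant identity (the paper phrases it as the syzygy $\vartheta_1\vartheta_2\det^{-1}-\psi\det^{-1}=1$) for claim (3), the archimedean norm bound for (4), the Bowen-ball intersection $\bigcap_k t^k g_{p_1}\mathbb{O}_{p_1}g_{p_1}^{-1}t^{-k}$ for (2), and Lemma \ref{lem:Picard-classes-of-invariants} for (5)--(6). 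The compatibility worry you raise at the end is already handled by the definition of $\inv$, which is well defined up to a common $\mathbb{Q}^\times$ factor because $\mathbb{Q}$ has class number one.
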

\begin{proof}
For each place $v$ choose a representative $r_v\in\mathbf{B}^\times(\mathbb{Q}_v)\cap\mathbb{O}_v$ of $g_v^{-1}h_v s_v g_v\in\Omega_v x_v \Omega_v$ satisfying the conclusion of Proposition \ref{prop:Omega_xi_Omega-reduced-norm}. For $v=p_1$ let $r_v$ satisfy the stronger conclusions of Proposition \ref{prop:Omega_xi_Omega-reduced-norm-Bowen}.
The element $g_v r_v g_v^{-1}$ belongs to $g_v\mathbb{O}_v g_v^{-1}$ and has same reduced norm as $r_v$, i.e.\ $|\Nrd g_v r_v g_v^{-1}|_v= \mathfrak{d}_v(x_v)^{-1}$ for $v\neq\infty$ and $|\Nrd g_\infty r_\infty g_\infty^{-1}|\geq 2^{-8} \mathfrak{d}_\infty(x_\infty)^{-1}$. We can use this to represent $h_v$ in $\mathbf{B}^\times(\mathbb{Q}_v)$ as
\begin{equation}\label{eq:h-good-representative}
h_v= \mathbf{Z}(\mathbb{Q}_v)
\begin{pmatrix}
\alpha_v s_v^{-1} & \beta_v\tensor[^\sigma]{s}{_v}^{-1} \upsilon_v \tau_v  \\
\tensor[^\sigma]{\beta}{_v}s_v^{-1}/\tau_v  & \tensor[^\sigma]{\alpha}{_v}\tensor[^\sigma]{s}{_v}^{-1}
\end{pmatrix}
\end{equation}
where $\alpha_v,\beta_v\in\widehat{\Lambda}_v$ due to Proposition \ref{prop:local-order-general}.
The definition of $\inv$ implies that $\inv(h)=\mathbb{Q}^\times(\mathfrak{a}_0,\mathfrak{b}_0)$ where $\mathfrak{a}_0=\bigcap_{v\neq\infty} \alpha_v s_v^{-1}\Lambda_v$ and $\mathfrak{b}_0=\bigcap_{v\neq\infty} \beta_v\tensor[^\sigma]{s}{_v}^{-1}\Lambda_v$. Define $\widehat{\mathfrak{a}}\coloneqq\bigcap_{v\neq\infty} \alpha_v\Lambda_v$ and $\widehat{\mathfrak{b}}\coloneqq\bigcap_{v\neq\infty} \beta_v\Lambda_v$ then $\widehat{\mathfrak{a}},\widehat{\mathfrak{b}}\subseteq \widehat{\Lambda}$. Finally, set $\mathfrak{a}\coloneqq \widehat{\Lambda}^{-1} \widehat{\mathfrak{a}}$ and $\mathfrak{b}\coloneqq \widehat{\Lambda}^{-1} \widehat{\mathfrak{b}}$ then $\mathfrak{a},\mathfrak{b}\subseteq\Lambda$ and $\mathfrak{a}_0=\widehat{\Lambda}\mathfrak{a}\mathfrak{s}^{-1}$, $\mathfrak{b}_0=\widehat{\Lambda}\mathfrak{b}\tensor[^\sigma]{\mathfrak{s}}{^{-1}}$.

We conclude that $\inv(h)=\mathbb{Q}^\times\widehat{\Lambda}(\mathfrak{a}\mathfrak{s}^{-1}, \mathfrak{b}\tensor[^\sigma]{\mathfrak{s}}{^{-1}})$. Obviously (1) is satisfied and (5), (6) are simply a restatement of Lemma \ref{lem:Picard-classes-of-invariants}.

We claim that
\begin{align}\label{eq:reg-functions<->norms}
\Nr(\widehat{a})&=\sign\left(\Nrd(x_\infty)\right)\Denom_f(x_f)\vartheta_1\vartheta_2\det^{-1}(h)\\
\upsilon \Nr(\widehat{b})&=\sign\left(\Nrd(x_\infty)\right)\Denom_f(x_f)\psi\det^{-1}(h)\nonumber
\end{align}
For any prime $p$ the $p$-part of \eqref{eq:reg-functions<->norms} follows by calculating $\vartheta_1\vartheta_2\det^{-1}$ and $\psi\det^{-1}$ using the representative in \eqref{eq:h-good-representative} for the corresponding non-archimedean $v$. To establish \eqref{eq:reg-functions<->norms} with the correct signs calculate $\vartheta_1\vartheta_2\det^{-1}$ and $\psi\det^{-1}$ using  the representative in \eqref{eq:h-good-representative} for $v=\infty$.

Recall that $\Nrd(\mathfrak{a})=\Nrd(\widehat{\Lambda}^{-1}\widehat{\mathfrak{a}})=|D|\Nrd(\widehat{\mathfrak{a}})$ and similarly for $\mathfrak{b}$.
Claim (3) now follows from \eqref{eq:reg-functions<->norms} and the syzygy $\vartheta_1\vartheta_2\det^{-1}-\psi\det^{-1}=1$. The archimedean bound $|\vartheta_1\vartheta_2\det^{-1}(h)|\leq 2^8 \Denom_\infty(x_\infty)$ follows from using \eqref{eq:h-good-representative} for $v=\infty$, Proposition \ref{prop:local-order-split-archimedean} and the inequality $|\Nrd g_v r_v g_v^{-1}|\geq 2^{-8}  \Denom_\infty(x_\infty)^{-1}$. Claim (4) follows from this bound and \eqref{eq:reg-functions<->norms}.

To prove (2) we use the conclusions of Proposition \ref{prop:Omega_xi_Omega-reduced-norm-Bowen}. Rewrite
\begin{equation*}
g_{p_1} \mathbb{O}_{p_1}^{(-n,n)} g_{p_1}^{-1}=\bigcap_{k=-n}^n t^k g_{p_1} \mathbb{O}_{p_1}^{(-n,n)} g_{p_1}^{-1} t^{-k}
\end{equation*}
where $t=g_{p_1}\lambda(p_1)g_{p_1}^{-1}\in\mathbf{T}(\mathbb{Q}_{p_1})$. Using the freedom in the choice of $\lambda$ we can assume
\begin{equation*}
t\in \mathbb{Q}_{p_1}^\times \begin{pmatrix}
\pi & 0 \\ 0 & \tensor[^\sigma]{\pi}{}
\end{pmatrix}
\end{equation*}
where $\pi\in E_{p_1}$ is a uniformizer for one of the two maximal ideals of $\mathcal{O}_{p_1}$ and $\tensor[^\sigma]{\pi}{}$ is a uniformizer for the second one. Because $r_{p_1}\in t^k g_{p_1} \mathbb{O}_{p_1}^{(-n,n)} g_{p_1}^{-1} t^{-k}$ for all $-n\leq k\leq n$ we conclude using Proposition \ref{prop:local-order-general} that 
\begin{equation}\label{eq:p_1^n-local}
\mathcal{D}_{p_1}\beta_{p_1}\in\bigcap_{k=-n}^n \frac{\tensor[^\sigma]{\pi}{^k}}{\pi^k}\Lambda_{p_1}=p_1^n \Lambda_{p_1}
\end{equation}
Because $\mathfrak{b}=\widehat{\Lambda}^{-1}\widehat{\mathfrak{b}}=\bigcap_v \mathcal{D}_{p_1} \beta_v \Lambda_v$ claim (1) follows from \eqref{eq:p_1^n-local}.      
\end{proof}

\begin{remark}
Elements $h$ with $\inv(h)=\mathbb{Q}^\times\widehat{\Lambda}(\mathfrak{a}\cdot \mathfrak{s}^{-1},0)$ in the Proposition above corresponds to $\mathbf{M}(\mathbb{A}_f)$-orbits of elements $\gamma\in\left(\mathbf{G}\times\mathbf{G}\right)(\mathbb{Q})$ with stabilizer $\mathbf{M}_\gamma\simeq\mathbf{T}$. 

We know from Lemma \ref{lem:reduction-to-compact-stab} and Proposition \ref{prop:equidist-conditions-equivalence} that these elements' contribution to the cross-correlation should vanish if the minimal norm of an integral ideal in the Picard class $[\mathfrak{s}]$ is $\geq C$ for some constant $C$ depending only on $\xi$. If $\sign(\Nrd(\ctr(\xi)_{\infty}))=1$ this seems to contradict the proposition above which states that such $\gamma$ correspond to integral ideals $\mathfrak{a}$ with $\Nr(\mathfrak{a})=\sign(\Nrd(\ctr(\xi)_\infty))\Denom_f(\ctr(\xi)_f)|D|$ which for $|D|$ large enough is bigger then $C$.

The contradiction is resolved by observing that in this case $\ctr(\gamma)\in\mathbf{T}(\mathbb{Q})$ and in the language of the proof above we know that not only $\widehat{\mathfrak{a}}\subseteq \widehat{\Lambda}$ but also $\widehat{\mathfrak{a}}\subseteq \Lambda$. This implies that $\mathfrak{a}\subset \widehat{\Lambda}^{-1}\subsetneq \Lambda$ and if $\Nr(\mathfrak{a})=\sign(\Nrd(\ctr(\xi)_\infty))\Denom_f(\ctr(\xi)_f)|D|$ then $\widehat{\Lambda}\mathfrak{a}$ is an integral ideal in the class $[\mathfrak{s}]$ of norm $\sign(\Nrd(\ctr(\xi)_\infty))\Denom_f(\ctr(\xi)_f)$ which does not grow to infinity with $|D|$.

This reasoning can also be used to exclude the situation $\inv(h)=\mathbb{Q}^\times(0,\mathfrak{b}\cdot \tensor[^\sigma]{\mathfrak{s}}{^{-1}})$, i.e.\ $\mathbf{M}_\gamma\simeq\mu_2$. These unnecessary terms will have negligible contribution to the shifted convolution sum and we do not take the extra effort to write them off. The essential part is that the contribution of a non-compact stabilizer to the geometric expansion is eliminated in Lemma \ref{lem:reduction-to-compact-stab}. 
\end{remark}

\begin{prop}\label{prop:fiber-over-integral-invariants}
Let $h\in\mathbf{G}(\mathbb{A})_\mathrm{accessible}$ and $\inv(h)=\mathbb{Q}^\times\widehat{\Lambda} (\mathfrak{a}\mathfrak{s}^{-1},\mathfrak{b}\tensor[^\sigma]{\mathfrak{s}}{^{-1}})$ be as in Proposition \ref{prop:intersection-to-invariants} above. Let $p\mid D$ be an \emph{odd} prime where $\mathbf{B}$ splits. Denote by $v$ the place associated to $p$ and recall from Proposition \ref{prop:fiber-of-invariants} that $H^1(\mathfrak{G},\Lambda_v^\times)$ acts faithfully on $\inv^{-1}\left(\inv(h)\right)$.

Let $-1\in H^1(\mathfrak{G},\Lambda_v^\times)$ be the unique non-trivial element, cf.\ Lemmata \ref{lem:units-cohomology-maximal} and \ref{lem:units-cohomology-non-maximal}. If 
$\ord_p \Nr(\mathfrak{b})<\ord_p D$ then $-1.h_v\not\in g_v B_v^{-1} x_v B_v g_v^{-1}s_v^{-1}$.
\end{prop}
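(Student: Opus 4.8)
The strategy is to track how the non-trivial cohomology class $-1\in H^1(\mathfrak{G},\Lambda_v^\times)$ moves the local component $h_v$ and to compute its effect on the reduced norm of a good integral representative, showing that this reduced norm changes by a factor whose valuation is incompatible with membership in the double coset $g_v B_v^{-1} x_v B_v g_v^{-1}s_v^{-1}$. First I would recall from the proof of Proposition \ref{prop:fiber-of-invariants} that the action of $\prod_{w\neq\infty}H^1(\mathfrak{G},\Lambda_w^\times)$ on $\inv^{-1}(\inv(h))$ is realized, at the place $v$, by multiplying the anti-diagonal entry $\beta_v$ (equivalently the coboundary parameter $\cbd(x_v)$) by a representative of the class, while leaving $\alpha_v$ unchanged. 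Concretely, writing $h_v$ as in \eqref{eq:h-good-representative}, the element $-1.h_v$ has the same $\alpha_v$ but $\beta_v$ replaced by $\beta_v'$ with $\widetilde{\idl}(\beta_v')=\mathfrak{D}(\Lambda_v)^{-1}\cdot\widetilde{\idl}(\beta_v)$ up to a unit --- here I would use Lemma \ref{lem:units-cohomology-maximal} (when $v$ is inert or split in $E$, so $\Lambda_v$ maximal) or Lemma \ref{lem:units-cohomology-non-maximal} (non-maximal case) together with Lemma \ref{lem:inverse-different}, which identifies the generator of the different with $\mathscr{D}_v$ of norm $D_v$. Thus the valuations satisfy $\ord_v(\beta_v') \ne \ord_v(\beta_v)$ by exactly $\ord_v\mathscr{D}_v$, whose norm-valuation is $\ord_p D_v = \ord_p D$ (the conductor-squared divides $D$, but at $v\mid D$ with $\mathbf{B}$ split and $p$ odd the local discriminant is $D_v$).

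The key computation is then the following. Membership $h_v\in g_v B_v^{-1} x_v B_v g_v^{-1}s_v^{-1}$ forces, via Proposition \ref{prop:Omega_xi_Omega-reduced-norm} (note $B_v=\Omega_v$ for the $v$ under consideration since $v\mid D$ and $p$ is odd so $v\neq p_1$), that $g_v r_v g_v^{-1}$ has a good representative $r_v\in\mathbb{O}_v$ with $|\Nrd r_v|_v = \Denom_v(x_v)^{-1}$, and this representative is \emph{optimal}: no representative of $h_v s_v$ in $\mathbb{O}_v$ has reduced norm of smaller valuation. Reading off \eqref{eq:reg-functions<->norms} locally, $|\Nrd(g_v r_v g_v^{-1})|_v$ has valuation equal to $\ord_v(\Nr(\widehat{\mathfrak{a}})) = \ord_v(\Nr(\mathfrak{a}\mathfrak{s}^{-1}))$ up to the contribution of $\widehat\Lambda$; the point is that the reduced norm valuation of the optimal representative is controlled by $\min$ of the valuations of the matrix entries appearing in \eqref{eq:h-good-representative}. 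For $-1.h_v$ the anti-diagonal entries have \emph{strictly larger} valuation than for $h_v$ (since we are \emph{dividing} by $\mathscr{D}_v$... more precisely the hypothesis $\ord_p\Nr(\mathfrak{b})<\ord_p D$ is exactly what guarantees that after the action the $\beta$-entry cannot absorb the change while staying integral in $g_v\mathbb{O}_v g_v^{-1}$). Hence the optimal integral representative of $-1.h_v s_v$ has reduced norm with valuation \emph{strictly larger} than $\Denom_v(x_v)^{-1}$, which by the optimality clause of Proposition \ref{prop:Omega_xi_Omega-reduced-norm} is impossible if $-1.h_v\in g_v\Omega_v x_v\Omega_v g_v^{-1}s_v^{-1}$. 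This contradiction gives $-1.h_v\not\in g_v B_v^{-1}x_v B_v g_v^{-1}s_v^{-1}$.

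I would organize the write-up as: (i) pin down the explicit coordinate description of the $H^1(\mathfrak{G},\Lambda_v^\times)$-action on the $v$-component, citing the relevant part of the proof of Proposition \ref{prop:fiber-of-invariants}; (ii) use Lemmata \ref{lem:units-cohomology-maximal}/\ref{lem:units-cohomology-non-maximal} and \ref{lem:inverse-different} to compute that the non-trivial class shifts $\ord_v$ of the anti-diagonal entry by $\ord_v\mathscr{D}_v$, with $\Nr\mathscr{D}_v$ of $p$-valuation $\ord_p D$; (iii) invoke Proposition \ref{prop:local-order-general} to see that after this shift the hypothesis $\ord_p\Nr(\mathfrak{b})<\ord_p D$ forces the new anti-diagonal entry out of $\mathfrak{D}_v\cdot\widehat\Lambda_v$ unless one renormalizes by a scalar of negative valuation, which in turn increases $\ord_v$ of $\Nrd$; (iv) conclude by the optimality statement in Proposition \ref{prop:Omega_xi_Omega-reduced-norm}. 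The main obstacle I anticipate is bookkeeping the interaction between the scalar $\mathbf{Z}(\mathbb{Q}_v)$-ambiguity in \eqref{eq:h-good-representative} and the optimality of the representative: one must be careful that the increase in $\ord_v\Nrd$ after the action is genuine and not an artifact of a bad scalar choice, i.e. that \emph{every} integral representative of $-1.h_v s_v$ has strictly larger reduced-norm valuation. This is where the precise inequality $\ord_p\Nr(\mathfrak{b})<\ord_p D$ enters --- it says the ``room'' in the $\beta$-entry ($\ord_v$ up to $\ord_v\mathscr{D}_v + $ the $\Lambda_v$-integrality bound) is already saturated below the different, so the shift by $\mathscr{D}_v^{-1}$ cannot be compensated within $g_v\mathbb{O}_v g_v^{-1}$ and must be paid for by a scalar, raising $\ord_v\Nrd$ past the optimum.
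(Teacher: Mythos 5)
Your proposal rests on a misreading of how the non-trivial class of $H^1(\mathfrak{G},\Lambda_v^\times)$ acts, and the argument cannot be repaired along the lines you describe. By Lemmata \ref{lem:units-cohomology-maximal} and \ref{lem:units-cohomology-non-maximal} the non-trivial class is represented by $-1\in\Lambda_v^{(1)}$, a \emph{unit}; the action on the fiber (visible in the proof of Proposition \ref{prop:fiber-of-invariants}) replaces $\beta_v$ by $-\beta_v$ and leaves $\alpha_v$ alone. In particular it preserves the fractional ideals $\alpha_v\Lambda_v$ and $\beta_v\Lambda_v$ --- it must, since by construction it acts on the fiber of $\inv$ over a \emph{fixed} pair of ideals --- and it also leaves the reduced norm of the representative in \eqref{eq:h-good-representative} unchanged, because $\Nr(-\beta_v)=\Nr(\beta_v)$. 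Your claimed identity $\widetilde{\idl}(\beta_v')=\mathfrak{D}(\Lambda_v)^{-1}\widetilde{\idl}(\beta_v)$ is therefore false, and with it the assertion that the valuation of the anti-diagonal entry shifts by $\ord_v\mathscr{D}_v$; steps (ii)--(iv) of your plan, which derive a contradiction from an increase of $\ord_v\Nrd$ against the optimality clause of Proposition \ref{prop:Omega_xi_Omega-reduced-norm}, have nothing to bite on.

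The actual obstruction is integrality in the maximal order, not reduced-norm valuation. One argues by contradiction: if both $h_v$ and $-1.h_v$ lie in $g_vB_v^{-1}x_vB_vg_v^{-1}s_v^{-1}$, then both matrices
$\begin{pmatrix}\alpha_v & \pm\beta_v\tau_v\\ \pm\tensor[^\sigma]{\beta}{_v}/\tau_v & \tensor[^\sigma]{\alpha}{_v}\end{pmatrix}$
admit representatives in $g_v\mathbb{O}_vg_v^{-1}$, and Proposition \ref{prop:local-order-split} then forces $\pm\beta_v-\tensor[^\sigma]{\alpha}{_v}\in\Lambda_v$ for \emph{both} signs. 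Subtracting, $2\beta_v\in\Lambda_v$, hence $\beta_v\in\Lambda_v$ since $p$ is odd --- strictly stronger than the generic condition $\beta_v\in\widehat{\Lambda_v}$. Since the $v$-component of $\mathfrak{b}=\widehat{\Lambda}^{-1}\widehat{\mathfrak{b}}$ is $\mathscr{D}_v\beta_v\Lambda_v\subseteq\mathscr{D}_v\Lambda_v$ and $\Nr\mathscr{D}_v=D_v$ (Lemma \ref{lem:inverse-different}), this yields $\ord_p\Nr\mathfrak{b}\geq\ord_p D$, contradicting the hypothesis. So the different enters only through the normalization $\mathfrak{b}=\widehat{\Lambda}^{-1}\widehat{\mathfrak{b}}$, i.e.\ through the gap between $\widehat{\Lambda_v}$ and $\Lambda_v$, and the hypothesis $\ord_p\Nr\mathfrak{b}<\ord_p D$ is used exactly to rule out $\beta_v\in\Lambda_v$.
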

\begin{proof}
Assume in contradiction that both $h_v$ and $-1.h_v$ belong to $g_v B_v^{-1} x_v B_v g_v^{-1}s_v^{-1}$. We write $(\pm 1.h_v) s_v$ in coordinates as in \eqref{eq:h-good-representative} in the proof of Proposition \ref{prop:intersection-to-invariants} above (notice that $\upsilon_v=1$ in the split case)
\begin{equation*}
(\pm 1.h_v)s_v=
\mathbf{Z}(\mathbb{Q}_v)
\begin{pmatrix}
\alpha_v & \pm\beta_v \tau_v  \\
\pm\tensor[^\sigma]{\beta}{_v}/\tau_v  & \tensor[^\sigma]{\alpha}{_v}
\end{pmatrix}
\end{equation*}
We have used above the fact from Lemmata \ref{lem:units-cohomology-maximal} and \ref{lem:units-cohomology-non-maximal} that for odd residue characteristic the unique non-trivial cohomology class is represented by $-1\in \Lambda_v^{(1)}$.  The explicit form of the action of $H^1(\mathfrak{G},\Lambda_v^\times)$ is evident from the proof of Proposition \ref{prop:fiber-of-invariants}.

Because the matrix on the right hand side above belongs to $g_v\mathbb{O}_v g_v^{-1}$ for both $(\pm 1.h_v)s_v$ Proposition \ref{prop:local-order-split} implies that
\begin{equation*}
\pm \beta_v -\tensor[^\sigma]{\alpha}{_v}\in\Lambda_v \Longrightarrow 2\beta_v\in\Lambda_v \Longrightarrow \beta_v\in\Lambda_v
\end{equation*}
In the last implication we have used once more that the residue characteristic is odd.

Following the definition of $\mathfrak{b}$ in the proof of Proposition \ref{prop:intersection-to-invariants} above we see that the completion of $\mathfrak{b}$ at $v$ is $\mathscr{D}_v \beta_v \Lambda_v\subseteq \mathscr{D}_v \Lambda_v$. Hence $\ord_p \Nr\mathfrak{b}\geq \ord_p D$ in contradiction to the assumption.
\end{proof}

\subsection{Proof of Theorem \ref{thm:cross-correlation-shifted-convolution}}
\label{sec:proof-of-geometric-expansion}
We need one last lemma before we can proceed to the proof the theorem.

\begin{lem}\label{lem:p1-Bowen-measure}
For any $n\in\mathbb{N}$ and any $a\in A_{p_1}$
\begin{equation*}
\frac{\meas_{\mathbf{G}(\mathbb{Q}_{p_1})}\left(K_{p_1}\cap aK_{p_1}a^{-1}\right)}
{\meas_{\mathbf{G}(\mathbb{Q}_{p_1})}\left(K_{p_1}^{(-n,n)}\cap aK_{p_1}^{(-n,n)}a^{-1}\right)}
=p_1^{2n}
\end{equation*}
\end{lem}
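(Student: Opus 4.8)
The plan is to compute both volumes explicitly using the Bruhat--Tits tree of $\mathbf{G}(\mathbb{Q}_{p_1})\simeq\mathbf{PGL}_2(\mathbb{Q}_{p_1})$ and to observe that the index being measured is independent of the Bowen truncation. First I would reduce to the case where $a\in A_{p_1}$ is a generator of $A_{p_1}/(A_{p_1}\cap K_{p_1})$, since if $a\in K_{p_1}$ then both numerator and denominator equal $\meas(K_{p_1})$ respectively $\meas(K_{p_1}^{(-n,n)})$ and the identity reduces to $\meas(K_{p_1})/\meas(K_{p_1}^{(-n,n)})=p_1^{2n}$, which I treat as the main computation; and in general $aK_{p_1}a^{-1}$ and $aK_{p_1}^{(-n,n)}a^{-1}$ depend only on the image of $a$ in $A_{p_1}/(A_{p_1}\cap K_{p_1})$.

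For the main computation, recall that $K_{p_1}^{(-n,n)}=\bigcap_{k=-n}^{n}a_0^kK_{p_1}a_0^{-k}$ where $a_0=\lambda(p_1)$ for a cocharacter $\lambda$ spanning $X_\bullet(A_{p_1})$; this is the pointwise stabilizer in $K_{p_1}$ of the ball of radius $n$ around $x_0$ in the apartment $\mathscr{A}$ of $A_{p_1}$ (equivalently, it fixes the segment of that apartment between $a_0^{-n}.x_0$ and $a_0^{n}.x_0$). The index $[K_{p_1}:K_{p_1}^{(-n,n)}]$ is the size of the $K_{p_1}$-orbit of that segment, equivalently the number of geodesic segments of length $2n$ through $x_0$; counting in a $(p_1+1)$-regular tree gives $(p_1+1)p_1^{2n-1}$ for $n\geq 1$ and hence
\begin{equation*}
\meas_{\mathbf{G}(\mathbb{Q}_{p_1})}\bigl(K_{p_1}^{(-n,n)}\bigr)=\frac{\meas_{\mathbf{G}(\mathbb{Q}_{p_1})}(K_{p_1})}{(p_1+1)p_1^{2n-1}}.
\end{equation*}
Next I would intersect with the conjugate: $aK_{p_1}a^{-1}$ is the stabilizer of $a.x_0$, so $K_{p_1}\cap aK_{p_1}a^{-1}$ fixes the segment from $x_0$ to $a.x_0$, which lies in $\mathscr{A}$; its orbit of the segment from $a_0^{-n}.x_0$ through $a.x_0$ to $a_0^{n}.x_0$ (or the appropriate extended segment, since $a$ and $a_0^{\pm n}$ all lie on the same apartment) is obtained by fixing a diameter through $x_0$ of combinatorial length one more and counting the extra branchings — precisely a factor of $p_1^{2n}$ more constraints.

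The cleanest way to package this, which is the route I would actually write, is to use the tautological inclusions $K_{p_1}^{(-n,n)}\cap aK_{p_1}^{(-n,n)}a^{-1}\subseteq K_{p_1}\cap aK_{p_1}a^{-1}$ together with the identity $K_{p_1}^{(-n,n)}\cap aK_{p_1}^{(-n,n)}a^{-1}=\bigcap_{k=-n}^{n}a_0^k\bigl(K_{p_1}\cap aK_{p_1}a^{-1}\bigr)a_0^{-k}$, valid because $a$ centralizes $a_0$ so the two intersections commute; thus the left-hand subgroup is the $(-n,n)$-Bowen ball of $K_{p_1}\cap aK_{p_1}a^{-1}$ for the $a_0$-action, and the ratio of measures is the index $[K_{p_1}\cap aK_{p_1}a^{-1}:(K_{p_1}\cap aK_{p_1}a^{-1})^{(-n,n)}]$. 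This index counts the orbit under $K_{p_1}\cap aK_{p_1}a^{-1}$ of the radius-$n$ segment in $\mathscr{A}$; since $K_{p_1}\cap aK_{p_1}a^{-1}$ already fixes the geodesic ray from $x_0$ towards $a.x_0$ pointwise, the orbit consists of the $p_1^{2n}$ segments of length $2n$ agreeing with $\mathscr{A}$ on the fixed side and branching freely on the opposite side, giving exactly $p_1^{2n}$ (one factor $p_1$ at each of the $2n$ free vertices... but the first branch has $p_1$ not $p_1+1$ choices since one direction is pinned by the fixed ray, so indeed $p_1^{2n}$ total, and the edge-case $n=0$ gives $1=p_1^0$).

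The main obstacle is getting this last count exactly right rather than off by a bounded factor: one must be careful that $K_{p_1}$ is the \emph{full} vertex stabilizer (type-preserving) as arranged in \S\ref{sec:maximal-order-B}, and that strong transitivity on (apartment, alcove) pairs — already invoked in the proof of Lemma \ref{lem:volume-of-hecke} — yields that the stabilizer of the pinned half-ray acts transitively on the set of length-$2n$ completions on the free side, with point stabilizer trivial on that combinatorial data. Once strong transitivity is set up, the count is the same $q$-regular-tree count used in Lemma \ref{lem:volume-of-hecke}, so I would cite that argument for the transitivity and simply tabulate the branch counts; the ramified case does not arise since $p_1$ splits $\mathbf{B}$ by hypothesis.
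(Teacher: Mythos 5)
Your route is essentially the paper's: identify $K_{p_1}\cap aK_{p_1}a^{-1}$ with the pointwise stabilizer of the geodesic segment from $x_0$ to $a.x_0$ in the apartment $\mathscr{A}$, identify the Bowen-truncated intersection with the stabilizer of that segment extended by $n$ steps on each side, and compute the index as an orbit count via strong transitivity. Your repackaging of $K_{p_1}^{(-n,n)}\cap aK_{p_1}^{(-n,n)}a^{-1}$ as the $(-n,n)$-Bowen ball of $K_{p_1}\cap aK_{p_1}a^{-1}$ is correct (the two intersections commute because $A_{p_1}$ is abelian) but is only a cosmetic variation on the paper, which works directly with the extended path $[x_{-n},\ldots,x_0,\ldots,x_k,\ldots,x_{k+n}]$ where $a.x_0=x_k$.

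There is, however, a genuine inconsistency in your opening reduction. You assert that for $a\in K_{p_1}$ the identity reduces to $[K_{p_1}:K_{p_1}^{(-n,n)}]=p_1^{2n}$ and promise to prove this as ``the main computation'' --- but your own (correct) count two sentences later gives $[K_{p_1}:K_{p_1}^{(-n,n)}]=(p_1+1)p_1^{2n-1}$, which is not $p_1^{2n}$. The discrepancy is real, not a slip in your tree count: the product count $p_1^{n}\cdot p_1^{n}$ for the two free tails is valid only when the pinned middle segment $[x_0,a.x_0]$ has positive length, so that the first step of each tail is forced away from the other tail by a distinct pinned edge. When $a.x_0=x_0$ the two tails emanate from the same vertex, and the orbit of the endpoint pair $(x_{-n},x_n)$ under $\Stab(x_0)=K_{p_1}$ consists of all pairs at distance $n$ from $x_0$ and at mutual distance $2n$, of which there are $(p_1+1)p_1^{n-1}\cdot p_1^{n}$. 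So the stated equality in fact fails for $a\in A_{p_1}\cap K_{p_1}$ and $n\geq 1$, by the bounded factor $1+1/p_1$; the paper's own proof has the same blind spot, since its independent choices of $y_1$ avoiding $x_1$ and $y_2$ avoiding $x_{k-1}$ silently presume $k\geq 1$. For the only place the lemma is used --- the volume estimate in the proof of Theorem \ref{thm:cross-correlation-shifted-convolution}, where a bound up to an absolute constant suffices --- this is harmless, but a correct write-up must either treat the case $a\in K_{p_1}$ separately (recording $(p_1+1)p_1^{2n-1}$ there, or weakening the conclusion to $\asymp$) or restrict to $a\notin K_{p_1}$. Separately, your description of the free part as a single segment of length $2n$ ``branching freely on the opposite side'' misdescribes the geometry --- it is two tails of length $n$ at opposite ends of the pinned segment --- though the arithmetic happens to agree when $k\geq 1$.
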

\begin{proof}
Let $\mathscr{A}$ be the apartment in $\mathscr{B}_{p_1}$ stabilized by $A_{p_1}$. We fix an orientation on $\mathscr{A}$ and enumerate all vertices in $\mathcal{A}$ consecutively according to the adjacency: $\ldots,x_{-1},x_0,x_1,\ldots$ in such a way that $x_0$ is the vertex stabilized by $K_{p_1}$ and $a.x_0=x_k$ for some $k\geq 0$.

Because $\mathbf{G}(\mathbb{Q}_p)$ acts by simplicial automorphism we have that $K_{p_1}\cap aK_{p_1}a^{-1}$ is the stabilizer in $\mathbf{G}(\mathbb{Q}_p)$ of the finite path $[x_0,\ldots,x_k]$ in $\mathscr{A}$ and $K_{p_1}^{(-n,n)}\cap aK_{p_1}^{(-n,n)}a^{-1}$ is the stabilizer in $\mathbf{G}(\mathbb{Q}_p)$ of the finite path $[x_{-n},\ldots,x_0,\ldots x_k,\ldots,x_{n+k}]$. In particular,
\begin{align*}
&\frac{\meas_{\mathbf{G}(\mathbb{Q}_{p_1})}\left(K_{p_1}\cap aK_{p_1}a^{-1}\right)}
{\meas_{\mathbf{G}(\mathbb{Q}_{p_1})}\left(K_{p_1}^{(-n,n)}\cap aK_{p_1}^{(-n,n)}a^{-1}\right)}\\
&=
\left[
\Stab_{\mathbf{G}(\mathbb{Q}_p)}\left([x_0,\ldots,x_k]\right)
\colon
\Stab_{\mathbf{G}(\mathbb{Q}_p)}\left([x_{-n},\ldots,x_0,\ldots x_k,\ldots,x_{n+k}]\right)
\right]\\
&=
\#\left(\Stab_{\mathbf{G}(\mathbb{Q}_p)}\left([x_0,\ldots,x_k]\right)
.\left([x_{-n},\ldots,x_0]\cup[x_k,\ldots,x_{k+n}]\right)\right)
\end{align*}

Because the action is by simplicial automorphism and $\mathcal{B}_{v_1}$ is a tree for any $h\in \Stab_{\mathbf{G}(\mathbb{Q}_p)}\left([x_0,\ldots,x_k]\right)$ the position of $h.\left([x_{-n},\ldots,x_0]\cup[x_k,\ldots,x_{k+n}]\right)$ is completely determined by the position of $h.x_{-n}$ and $h.x_{k+n}$. 

Let $y_1$ be a vertex so that $d(x_0,y_1)=d(x_0,x_{-n})$ and the geodesic connecting $x_0$ and $y_1$ does not pass through $x_1$. Similarly, let $y_2$ be a vertex so that $d(x_k,y_2)=d(x_k,x_{k+n})$ and  the geodesic connecting $x_k$ and $y_2$ does not pass through $x_{k-1}$. We can use 
the strong transitivity of the action to show the existence of an element $h\in\mathbf{G}(\mathbb{Q}_p)$ so that $h.[x_0,\ldots,x_k]=[x_0,\ldots,x_k]$ and $h.x_{-n}=y_1$ and $h.x_{k+n}=y_2$. Counting pairs of vertices $y_1,y_2$ as above we deduce
\begin{equation*}
\#\left(\Stab_{\mathbf{G}(\mathbb{Q}_p)}\left([x_0,\ldots,x_k]\right)
.\left([x_{-n},\ldots,x_0]\cup[x_k,\ldots,x_{k+n}]\right)\right)= p_1^{2n}
\end{equation*}
\end{proof}

\begin{proof}[Proof of Theorem \ref{thm:cross-correlation-shifted-convolution}]
We begin with some necessary measure computations. For the torus $\mathbf{T}$ we have $g_\infty^{-1}\mathbf{T}(\mathbb{R})g_\infty=K_\infty$ hence $\mathbf{T}(\mathbb{R})\subset g_\infty \Omega_\infty g_\infty^{-1}$. Denote $a=\lambda(p_1)$ as in Definition \ref{def:Bowen-ball} then for every $m\in\mathbb{Z}$
\begin{equation*}
g_{p_1}^{-1}\mathbf{T}(\mathbb{Q}_{p_1})g_{p_1}\cap a^m K_{p_1} a^{-m}=
A_{p_1}\cap a^m K_{p_1} a^{-m} =A_{p_1}\cap K_{p_1} 
\end{equation*}
Hence $\mathbf{T}(\mathbb{Q}_{p_1})\cap g_{p_1}K_{p_1}^{(-n,n)}g_{p_1}^{-1}=\mathbf{T}(\mathbb{Q}_{p_1})\cap g_{p_1}K_{p_1}g_{p_1}^{-1}$
\end{proof}
We conclude that
\begin{equation*}
\vol\left(\left[\mathbf{T}(\mathbb{A})g\right]\right)^{-1}=
\meas_{\mathbf{T}(\mathbb{R})}\left(\mathbf{T}(\mathbb{R})\right)
\meas_{\mathbf{T}(\mathbb{Q}_{p_1})}\left(g_{p_1}K_{p_1}^{(-n,n)}g_{p_1}^{-1}\right)
\prod_{v\neq\infty,p_1}\meas_{\mathbf{T}(\mathbb{Q}_v)}\left(g_v K_v g_v^{-1}\right)
\end{equation*}

For $\mathbf{G}^\Delta$ we use Lemma \ref{lem:p1-Bowen-measure} above and the condition $\ctr(\xi)_{p_1}\in A_{p_1}$  to deduce that
\begin{align*}
\vol&\left(\left[\mathbf{G}^\Delta(\mathbb{A})^+\xi\right]\right)^{-1} p_1^{-2n}\geq\\
&\meas_{\mathbf{G}(\mathbb{R})^+}\left(\xi_{1,\infty}\Omega_\infty^2\xi_{1,\infty}^{-1} \cap \xi_{2,\infty}\Omega_\infty^2\xi_{2,\infty}^{-1}\right)
\meas_{\mathbf{G}(\mathbb{Q}_{p_1})}\left(K_{p_1}^{(-n,n)}\cap  \ctr(\xi)_{p_1} K_{p_1}^{(-n,n)} \ctr(\xi)_{p_1}^{-1}\right)\\
&\cdot
\prod_{v\neq\infty,p_1}\meas_{\mathbf{G}(\mathbb{Q}_v)^+}\left(g_v K_v g_v^{-1}\right)
\end{align*}
The inequality above can be replaced by $\asymp_{\Omega_\infty}$ because of \eqref{eq:vol-Omega-Omega'} from \S\ref{sec:vol}.

Using Corollary \ref{cor:geometric-expansion-final-form}, Lemma \ref{lem:archimedean-RO}, Proposition \ref{prop:nonarchimidean-RO-Ngamma} and the volume computations above we can write
\begin{align*}
\Cor[\mu,\nu](B)
&\ll \vol \left(\left[\mathbf{T}(\mathbb{A})g\right]\right)^{-1}
\vol\left(\left[\mathbf{G}^\Delta(\mathbb{A})^+\xi\right]\right)^{-1} p_1^{-2n}\\
&\sum_{\substack{[\gamma]\in W_{\mathbb{Q}} \\ \psi\det^{-1}(\gamma)\neq 0 }} N_{[\gamma]} \cdot 
\mathbb{1}_{g_\infty \Omega_\infty \ctr(\xi)_\infty \Omega_\infty g_\infty^{-1}}(\ctr(\gamma)_\infty)
\end{align*}

We now need to bound the bottom sum over $[\gamma]\in W_{\mathbb{Q}}$. Using Lemma \ref{lem:contraction-of-Ngamma} we know that the last sum is equal to the number of $\Ad \mathbf{T}(\mathbb{R})\Ad K_{\mathbf{T},f}$-orbits intersecting $\ctr(B')$ in 
the set $\mathbf{G}(\mathbb{A})_\mathrm{accessible}$. To each such intersection we can associate a pair of integral ideals satisfying the conclusions of Proposition \ref{prop:intersection-to-invariants}. 

Due to Propositions \ref{prop:fiber-of-invariants} and \ref{prop:fiber-over-integral-invariants} we know that the map $\textit{intersection}\mapsto (\mathfrak{a}, \mathfrak{b})$ where $\mathfrak{a},\mathfrak{b}$ are the integral ideals of Proposition \ref{prop:intersection-to-invariants} is at most $8 r(\Nr(\mathfrak{b}))$ to $1$.

For each pertinent pair of ideal $(\mathfrak{a},\mathfrak{b})$ set $\mathfrak{b}=p_1^n\mathfrak{b}'$ where $0\neq\mathfrak{b}'\subseteq \Lambda$ is an integral invertible $\Lambda$-ideal satisfying $[\mathfrak{b}']\in [p_1^n\mathfrak{s}\mathfrak{e}]^{-1}\Pic(\Lambda)^2$. The claim follows when we notice that $g_{[\mathfrak{s}]}(x)
f_{[p_1^n\mathfrak{se}]^{-1}}\left(\frac{x-\omega D}{\upsilon p_1^{2n}}\right)$ with $x\leq \kappa |D|$ is exactly the number of pairs of ideals $(\mathfrak{a},p_1^n\mathfrak{b}')$ satisfying the conclusions of Proposition $\ref{prop:intersection-to-invariants}$ and $\Nr(\mathfrak{a})=x$.

\section{Sums of Multiplicative Functions along Two Variables Polynomials}\label{sec:sieving}
In this section we generalize the results of Shiu and Nair \cite{Shiu, Nair} to sums of the form
\begin{equation*}
\sum_{(x,y)\in \mathscr{E}\cap\mathbb{Z}^2} f(Q(x,y))
\end{equation*}
where $f$ is  a slowly growing non-negative multiplicative function, $Q\in\mathbb{Z}[x,y]$ and $\mathscr{E}\subsetneq\mathbb{R}^2$ is a closed smooth convex domain. Similar sums for homogeneous two-variable polynomials over axis-aligned boxes have been studied in \cite{BretecheBrowning, BretecheTenenbaum}.

Most of the proof in \cite{Shiu, Nair} follows through in higher dimensions even for the case of more general domains as long as good estimates are available for the lattice counting problem.

Nevertheless, the following presentation contains two ideas which seem to be novel even in the $1$-variable case. They are essential when we need to apply the sieved upper bound to a family of polynomials $Q$ in a uniform manner. Both of them have to do with the behavior of $Q$ at primes of bad reduction.

The first one is a simple yet crucial observation that the function counting $\cyclic{p^k}$-points on $X_Q$ -- the plane curve cutout by $Q$ -- can be 
replaced by a function counting only the points that do not have the maximal possible amount of lifts to $X_Q\left(\cyclic{p^{k+1}}\right)$. 

The second one is related directly to the dependence of the upper bound on the singularities of the reduction of $X_Q$ modulo $p$. The structure of singularities for a $1$-variable polynomial modulo $p$, i.e.\ $0$-dimensional affine scheme of finite type, is simple and can be summarized by the discriminant of the polynomial. The possible singularities of a reduction of a curve, although they are rather well-understood through resolution of singularities, they are significantly more diverse. The most general expression replacing the dependence on the discriminant in $2$-variables seems to be a product of values of local Igusa zeta-functions. We chose not to pursue this path here as it does not lends itself easily to applications. Instead, we observe that as long as there is an \emph{a priori} bound $\#X_Q\left(\cyclic{p^k}\right)\leq Cp^{k(2-r)}$ with $C>0$ and $1 \geq r >0$ independent of $p^k$, our upper bound can be shown to depend only on $C$ and $r$. Such bounds seem to be easy to establish explicitly, at least for the application at hand. Moreover, this approach generalizes verbatim to polynomials with arbitrary many variables.
\begin{defi}\label{def:polynomial-rho}
\hfill
\begin{enumerate}
\item 
For any polynomial in two-variables $Q\in\mathbb{Z}[x,y]$ and $a\in\mathbb{N}$ denote by $\rho_Q(a)$ the number of solution in $\faktor{\mathbb{Z}^2}{a\mathbb{Z}^2}$ to the equation
\begin{equation*}
Q(x,y)\equiv 0 \mod a
\end{equation*}

\item
Let $X_Q$ be the affine plane curve cutout by $Q$, i.e.\
\begin{equation*}
X_Q\coloneqq \Spec \mathbb{Z}[x,y]/\left<Q(x,y)\right>
\end{equation*}
By definition $\rho_Q(a)=\left|X_Q\left(\cyclic{a}\right)\right|$. 

\item
Fix a prime power $p^k$. A lift of a point $x\in X_Q\left(\cyclic{p^k}\right)$ is a
 $\cyclic{p^{k+1}}$-point of $X_Q$ which reduces to $x$ mod $p^k$.

We split $X_Q\left(\cyclic{p^k}\right)$ into three types of points
\begin{itemize}
\item smooth points, by Hensel's lemma each such point has exactly $p$ lifts;
\item singular points with a lift, by the Taylor polynomial formula each such point has exactly $p^2$ lifts;
\item singular points without a lift.
\end{itemize}

\item
We denote by $\widetilde{\rho}_Q(p^k)$ the number of $\cyclic{p^k}$-points on $X_Q$ which are either smooth or have no lift. We extend $\widetilde{\rho}_Q$ to a multiplicative function on $\mathbb{N}$ in the regular fashion. Obviously $\widetilde{\rho}_Q(a)\leq \rho_Q(a)$ for all $a$.

\item  Denote by $\rho^\mathrm{sing}_Q(p^k)$ the number of $\cyclic{p^k}$-points on $X_Q$ which are not smooth, i.e.\ either having $0$ or $p^2$ lifts.
\end{enumerate}
\end{defi}

The following lemmata are elementary properties of points on curves over congruence classes of integers.

\begin{lem}[DeMillo-Lipton-Schwartz-Zippel Lemma] \label{lem:Schwartz-Zippel}
Let $0\neq Q\in\mathbb{Z}[x,y]$ then the inequality $\rho_Q(p)\leq \deg(Q) p$ holds for any prime $p\nmid Q$.
\end{lem}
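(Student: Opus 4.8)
The plan is to pass to the reduction modulo $p$ and then run the standard Schwartz--Zippel counting argument in two variables. Write $\bar Q\in\mathbb{F}_p[x,y]$ for the image of $Q$. The hypothesis $p\nmid Q$ says precisely that not every coefficient of $Q$ is divisible by $p$, i.e.\ $\bar Q\neq 0$; and by definition $\rho_Q(p)=\lvert X_Q(\mathbb{F}_p)\rvert$ is the number of pairs $(x_0,y_0)\in\mathbb{F}_p^2$ with $\bar Q(x_0,y_0)=0$. So it suffices to bound this quantity by $(\deg\bar Q)\,p\le(\deg Q)\,p$.

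To do this I would regard $\bar Q$ as a polynomial in $y$ over the ring $\mathbb{F}_p[x]$, writing $\bar Q=\sum_{i=0}^{k}c_i(x)\,y^i$ with $c_k\neq 0$, where $k\ge 0$ is the $y$-degree of $\bar Q$; one has the elementary inequality $\deg c_k\le\deg\bar Q-k$. I then split the set of zeros according to whether the leading coefficient vanishes at the relevant $x$-coordinate. If $x_0\in\mathbb{F}_p$ satisfies $c_k(x_0)\neq 0$, then $\bar Q(x_0,y)\in\mathbb{F}_p[y]$ is a nonzero polynomial of degree exactly $k$, hence has at most $k$ roots in $\mathbb{F}_p$; since there are at most $p$ such $x_0$, these contribute at most $pk$ zeros. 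If instead $c_k(x_0)=0$, there are at most $\deg c_k\le\deg\bar Q-k$ such $x_0$ (a nonzero one-variable polynomial over a field has at most $\deg$ roots), and for each we use only the trivial bound of $p$ choices for $y_0$; these contribute at most $(\deg\bar Q-k)\,p$ zeros. Adding the two contributions gives
\begin{equation*}
\rho_Q(p)\ \le\ pk+(\deg\bar Q-k)\,p\ =\ (\deg\bar Q)\,p\ \le\ (\deg Q)\,p,
\end{equation*}
which is the claim. The degenerate case $k=0$, where $\bar Q=c_0(x)$ depends on $x$ alone, is covered by exactly the same bookkeeping.

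There is no serious obstacle here: the argument uses the hypothesis only to guarantee $\bar Q\neq 0$ (so that $c_k$ is nonzero), together with the classical bound on the number of roots of a nonzero one-variable polynomial over $\mathbb{F}_p$, applied twice. If one wished, the same split gives the general Schwartz--Zippel bound by induction on the number of variables, but for the two-variable statement needed in \S\ref{sec:sieving} the direct argument above is the cleanest route.
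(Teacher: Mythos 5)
Your argument is correct: it is the standard two-variable Schwartz--Zippel count (split on whether the leading $y$-coefficient vanishes at $x_0$, then apply the one-variable root bound twice), and the edge case $k=0$ and the degree inequality $\deg c_k\le\deg\bar Q-k$ are handled properly. The paper does not actually write out a proof here --- it simply cites \cite{Schwartz,DeMilloLipton,Zippel} and \cite[Lemma 1.2]{TaoPolynomial} --- and your argument is precisely the one those references contain, so you have supplied a self-contained version of the intended proof rather than a different route.
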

\begin{proof}
This has been proven in \cite{Schwartz} and a slightly weaker bound has been shown in \cite{DeMilloLipton, Zippel}. See \cite[Lemma 1.2]{TaoPolynomial} for a streamlined proof.
\end{proof}

\begin{lem}\label{lem:rho-tilderho-formula}
Let $0\neq Q\in\mathbb{Z}[x,y]$ then \begin{equation*}
\widetilde{\rho}_Q(p^k)=\rho_Q(p^k)-\frac{\rho^\mathrm{sing}(p^{k+1})}{p^2}
\end{equation*}
\end{lem}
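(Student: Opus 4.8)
The statement is the elementary bookkeeping identity
\[
\widetilde{\rho}_Q(p^k)=\rho_Q(p^k)-\frac{\rho^{\mathrm{sing}}_Q(p^{k+1})}{p^2},
\]
and the plan is to prove it by partitioning $X_Q(\cyclic{p^k})$ according to the trichotomy in Definition~\ref{def:polynomial-rho}(3): every $\cyclic{p^k}$-point is either smooth, a singular point admitting a lift, or a singular point admitting no lift. Writing $N_{\mathrm{sm}}$, $N_{\mathrm{sing},+}$, $N_{\mathrm{sing},0}$ for the respective counts, we have $\rho_Q(p^k)=N_{\mathrm{sm}}+N_{\mathrm{sing},+}+N_{\mathrm{sing},0}$, while by definition $\widetilde{\rho}_Q(p^k)=N_{\mathrm{sm}}+N_{\mathrm{sing},0}$ and $\rho^{\mathrm{sing}}_Q(p^k)=N_{\mathrm{sing},+}+N_{\mathrm{sing},0}$. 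Thus the claim reduces to showing $N_{\mathrm{sing},+}=\rho^{\mathrm{sing}}_Q(p^{k+1})/p^2$, i.e.\ that the singular $\cyclic{p^{k+1}}$-points of $X_Q$ are exactly the $p^2$ lifts of each of the $N_{\mathrm{sing},+}$ liftable singular $\cyclic{p^k}$-points, and nothing else.

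The two ingredients needed are: (i) a point of $X_Q(\cyclic{p^{k+1}})$ is singular (in the sense that its reduction gives $0$ to all partials, equivalently it reduces to a singular point of $X_Q(\cyclic{p^k})$ — here I would spell out that "singular" for a point over $\cyclic{p^m}$ means all first partial derivatives of $Q$ vanish mod $p$ at that point, a condition depending only on the residue mod $p$, hence only on the image mod $p^k$); and (ii) the Taylor-expansion count already invoked in Definition~\ref{def:polynomial-rho}(3): if $P\in X_Q(\cyclic{p^k})$ is singular and liftable, then its full preimage in $X_Q(\cyclic{p^{k+1}})$ has size exactly $p^2$, because $Q(x_0+p^k u,\,y_0+p^k v)\equiv Q(x_0,y_0)+p^k\bigl(u\,\partial_x Q(x_0,y_0)+v\,\partial_y Q(x_0,y_0)\bigr)\pmod{p^{k+1}}$ and the linear term vanishes mod $p$ by singularity, so the congruence $Q\equiv 0\pmod{p^{k+1}}$ either fails for all $(u,v)\bmod p$ (the non-liftable case) or holds for all $p^2$ choices of $(u,v)\bmod p$ (the liftable case). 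Conversely every singular $\cyclic{p^{k+1}}$-point reduces to a singular $\cyclic{p^k}$-point which is then necessarily liftable, so the preimage map is surjective onto the liftable singular locus. Assembling: $\rho^{\mathrm{sing}}_Q(p^{k+1}) = p^2 \cdot N_{\mathrm{sing},+}$, which gives the identity after substitution.

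There is essentially no obstacle here — the only point requiring a line of care is making sure the notion of "singular point over $\cyclic{p^m}$" is consistent across the two levels $m=k$ and $m=k+1$ and is exactly the complement of the Hensel-smooth locus, so that the trichotomy is genuinely exhaustive and the partial-derivative computation above is legitimate; this is immediate from the fact that singularity is a condition mod $p$ only. I would also note in passing that the case $k=1$ is included with no change (the "reduction mod $p^k$" is then reduction mod $p$, the identity map), and that the formula is stated for prime powers and extends to all of $\mathbb{N}$ only through the separately-declared multiplicativity of $\widetilde{\rho}_Q$ and $\rho_Q$ — so the lemma as phrased is purely local at $p$ and needs no Chinese Remainder Theorem input.
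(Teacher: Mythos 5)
Your proof is correct and is exactly the argument the paper intends: the paper's one-line proof observes that the smooth points mod $p^{k+1}$ are precisely the lifts of smooth points mod $p^k$, so the singular points mod $p^{k+1}$ are the $p^2$ lifts of each liftable singular point mod $p^k$, which is your identity $\rho^{\mathrm{sing}}_Q(p^{k+1})=p^2 N_{\mathrm{sing},+}$. Your write-up just makes the trichotomy and the Taylor-expansion count explicit; no difference in substance.
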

\begin{proof}
This follows immediately from the observation that the points in $X_Q\left(\cyclic{p^{k+1}}\right)$ that reduce to smooth points modulo $p^n$ are exactly the smooth points modulo $p^{n+1}$.
\end{proof}

In the following two definitions we describe the objects appearing in our main sieving theorem.

\begin{defi}\hfill
\begin{itemize}
\item We say that a convex domain $\mathscr{E}\subset \mathbb{R}^2$ is $C^2$ if its boundary is a twice continuously differentiable curve. We then denote by $R_{\max}(\mathscr{E})$ the maximum of the radius of curvature of the boundary of $\mathscr{E}$ and by $A(\mathscr{E})$ the area of $\mathscr{E}$. If no confusion arises we shall use the shorthand $R_{\max}$ for $R_{\max}(\mathscr{E})$. 

\item 
Let $C_l,\theta_l>0$.
We denote by $\mathcal{L}(C_l,\theta_l)$ the collection of $C^2$ convex planar domains $\mathscr{E}$ such that for any $a\in\mathbb{N}$ and $(x_0,y_0)\in\mathbb{Z}^2$ if $A(\mathscr{E})\geq a^2$ then
\begin{equation*}
\left|\#\left(a^{-1}(\mathscr{E}-(x_0,y_0))\cap \mathbb{Z}^2\right)
-a^{-2}A(\mathscr{E})\right|\leq 
C_l\left(R_{\max}(\mathscr{E})/a\right)^{\theta_l}
\end{equation*}
\end{itemize}
\end{defi}
\begin{remark}
The Van der Corput bound \cite{vanDerCorput} implies that for any $\varepsilon>0$ there is $C_l>0$ depending on $\varepsilon$ such that any $C^2$ convex domain belongs to $\mathcal{L}(C_l,2/3+\varepsilon)$.

The bound of Huxley \cite[Proposition 5 and Theorem 5]{Huxley} for lattice points in $C^3$ planar domains implies that for any $\varepsilon>0$ there is $C_l>0$ depending on $\varepsilon$ such that all ellipses belong to 
$\mathcal{L}(C_l,131/208+\varepsilon)$. A suitable generalization of the Gauss circle problem conjecture should imply that the constant $131/208$ can be replaced by $1/2$, at least for ellipses defined by integral binary quadratic forms.
\end{remark}

\begin{defi}
Let $A\geq 1$ and $B,\varepsilon>0$.
We say that a multiplicative function $f\colon\mathbb{N}\to\mathbb{R}$ is of class $\mathcal{M}(A,B,\varepsilon)$ if it is non-negative and for any integer $n>0$
\begin{equation*}
f(n)\leq \min\left(A^{\Omega(n)},Bn^{\varepsilon}\right)
\end{equation*} 
\end{defi}

\begin{thm}\label{thm:2d-sieve}
Let $\mathscr{E}\subset \mathbb{R}^2$ be planar domain of class $\mathcal{L}(C_l,\theta_l)$ for $C_l,\theta_l>0$. Denote by $A(\mathscr{E})$ the area of $\mathscr{E}$, let $R_{\max}$ be the maximal radius of curvature of the boundary and assume  $R_{\max}^{\theta_l}\leq A(\mathscr{E})^{1-3\eta}$ for some $1/2>\eta>0$.

Let $Q\in\mathbb{Z}[x,y]$ such that there are $C>0, 1\geq r> 0$ satisfying $\widetilde{\rho}_Q(p^k)\leq C p^{k(2-r)}$ for all prime powers $p^k$. Let $X\geq 1$ be a constant satisfying 
\begin{equation*}
\max \left\{|Q(x,y)| \mid (x,y)\in \mathscr{E} \cap \mathbb{Z}^2 \right\} \leq X \leq  A(\mathscr{E})^\delta
\end{equation*}
for some $\delta>0$.

Let $f$ be a non-negative multiplicative function of class $\mathcal{M}(A,B,\varepsilon)$ for some $A\geq 1$, $B>0$ and $0<\varepsilon< \min\left\{r,\eta r/(4\delta) \right\}$. 
Then
\begin{equation}\label{eq:sieve-thm-main}
\sum_{(x,y)\in \mathscr{E}\cap\mathbb{Z}^2} f\left(Q(x,y)\right)\ll 
A(\mathscr{E})
\prod_{\substack{\deg(Q)<p\leq X \\ p\nmid Q}} \left(1-\frac{\rho_Q(p)}{p^2}\right)
\sum_{a\leq X} \frac{f(a)\widetilde{\rho}_Q(a)}{a^2}
\end{equation}
where the implicit constant depends only on $C_l,A,B,\varepsilon,\deg(Q),C,r,\eta,\delta$.
\end{thm}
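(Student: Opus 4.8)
The plan is to follow the Shiu--Nair paradigm: reduce the sum of $f(Q(x,y))$ to a short-range problem by splitting each integer $m=Q(x,y)$ as $m=bc$ where $b$ collects the ``small'' prime factors (those $\le z$ for a suitable threshold $z=A(\mathscr{E})^{\eta r/(4\delta)}$ or similar) and $c$ is $z$-rough, then control the rough part crudely and the smooth part by a lattice-point count weighted by $\widetilde\rho_Q$. First I would record the submultiplicativity $f(bc)\le f(b)f(c)$ and the two bounds $f(c)\le A^{\Omega(c)}$ and $f(c)\le Bc^\varepsilon$ from the class $\mathcal{M}(A,B,\varepsilon)$; the $Bc^\varepsilon$ bound handles the (very few) $(x,y)$ for which the rough part $c$ is large, using $\varepsilon<\eta r/(4\delta)$ and $X\le A(\mathscr E)^\delta$ to absorb the loss against the main term, while $A^{\Omega(c)}$ plus a Rankin-type argument handles the bulk. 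This is where the hypothesis $\widetilde\rho_Q(p^k)\le Cp^{k(2-r)}$ with $r>0$ uniform in $p^k$ enters: it ensures the relevant Euler products and Dirichlet series converge and are bounded in terms of $C,r$ only, which is exactly what makes the final constant depend only on the listed data.

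\textbf{Key steps.} (1) Dyadic/pointwise decomposition: for $(x,y)\in\mathscr E\cap\mathbb Z^2$ write $Q(x,y)=bc$ with $b$ being the $z$-smooth part; then $f(Q(x,y))\le f(b)f(c)$. (2) Bound the contribution of pairs with $c>y_0$ (a second threshold) by $B c^\varepsilon$ summed over all lattice points, using $\#(\mathscr E\cap\mathbb Z^2)\asymp A(\mathscr E)$ from membership in $\mathcal L(C_l,\theta_l)$ and $R_{\max}^{\theta_l}\le A(\mathscr E)^{1-3\eta}$; choosing the thresholds so $\varepsilon$ small relative to $\eta,r,\delta$ makes this an admissible error. (3) For the remaining pairs, $b\le z$ and $c\le y_0$; fix $b$ (a $z$-smooth integer) and count lattice points $(x,y)\in\mathscr E$ with $b\mid Q(x,y)$. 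The number of residue classes mod $b$ is $\rho_Q(b)$, but we only want to count points that genuinely force $b$ into the smooth part, so replace $\rho_Q$ by $\widetilde\rho_Q$ via Lemma~\ref{lem:rho-tilderho-formula}; within each residue class mod $b$ the lattice-point count is $b^{-2}A(\mathscr E)+O\big(C_l(R_{\max}/b)^{\theta_l}\big)$ by the $\mathcal L(C_l,\theta_l)$ property, valid since $b\le z$ and $A(\mathscr E)\ge b^2$. (4) Sum over $b$: the main term gives $A(\mathscr E)\sum_{b\le z}f(b)\widetilde\rho_Q(b)/b^2$, and the error term gives $C_l R_{\max}^{\theta_l}\sum_{b\le z}f(b)\widetilde\rho_Q(b)b^{-\theta_l}$, which is acceptable because $R_{\max}^{\theta_l}\le A(\mathscr E)^{1-3\eta}$ and the $b$-sum is subpolynomial by the $A^{\Omega}$ bound plus $\widetilde\rho_Q(p^k)\le Cp^{k(2-r)}$. (5) Extend the sum over $b\le z$ to $a\le X$ and insert the Euler factor $\prod_{\deg Q<p\le X,\,p\nmid Q}(1-\rho_Q(p)/p^2)$: here one uses Mertens-type estimates together with Lemma~\ref{lem:Schwartz-Zippel} ($\rho_Q(p)\le\deg(Q)\,p$ for $p\nmid Q$) to show that completing the smooth sum and dividing by the removed primes costs only a bounded factor, and that the truncation at $X$ rather than $z$ is harmless since $f(a)a^{-2+\varepsilon}$ is summable for $z<a\le X$.

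\textbf{Main obstacle.} The delicate point is the bookkeeping that makes \emph{all} implied constants depend only on $C_l,A,B,\varepsilon,\deg Q,C,r,\eta,\delta$ and not on $Q$ itself, $\mathscr E$, or $X$. Concretely: one must control $\sum_{a}f(a)\widetilde\rho_Q(a)a^{-s}$ and the partial Euler products uniformly, and the only structural input about $Q$ available is the single inequality $\widetilde\rho_Q(p^k)\le Cp^{k(2-r)}$ together with $\rho_Q(p)\le\deg(Q)p$ for good primes --- there is no discriminant, no resolution of singularities, nothing about the geometry of $X_Q$. The crux is therefore to show this crude uniform bound suffices: split primes into $p\le\deg Q$ or $p\mid Q$ (only $O_{\deg Q}(1)$ ``sporadic'' bad primes, each contributing a bounded local factor since $\widetilde\rho_Q(p^k)\le Cp^{k(2-r)}$ forces the local Euler factor to be $\le$ a constant depending on $C,r$), versus the generic primes where $\widetilde\rho_Q(p)\le\rho_Q(p)\le\deg(Q)p$ so the local factor is $1+O_{\deg Q,A}(1/p)$, and the tail $p^k$ contributions are dominated geometrically using $r>0$. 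The substitution of $\widetilde\rho_Q$ for $\rho_Q$ (step 3) is what prevents over-counting at bad primes and is essential for the uniformity; verifying that it propagates correctly through the smoothing decomposition --- i.e.\ that a point contributing a factor $p^j\mid b$ to the smooth part is indeed counted by $\widetilde\rho_Q(p^j)$ and not merely $\rho_Q(p^j)$ --- is the technical heart of the argument and the step I would expect to write out most carefully.
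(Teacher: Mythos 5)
There is a genuine gap, and it sits at the combinatorial heart of the Shiu--Nair method. Your decomposition $Q(x,y)=bc$ with $b$ the $z$-smooth part and $c$ the $z$-rough part is fine as far as it goes: since every prime factor of $c$ exceeds $z$ (a fixed power of $A(\mathscr{E})$) and $c\leq X\leq A(\mathscr{E})^{\delta}$, one has $\Omega(c)\ll \delta/\eta$ and hence $f(c)\leq A^{\Omega(c)}\ll 1$, so the rough part never causes trouble. But your step (2) is based on a false premise: lattice points for which the rough part $c$ is large are not ``very few'' --- they are generic (a typical value of $Q(x,y)$ has a large prime factor), and their contribution is the \emph{main term}, not an error term. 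Discarding them via $f(c)\leq Bc^{\varepsilon}$ times the total lattice count gives $\gg A(\mathscr{E})^{1+\varepsilon\delta}$, which exceeds the claimed bound. Consequently your assertion in step (3) that the remaining pairs satisfy $b\leq z$ is unjustified: the $z$-smooth part of $Q(x,y)$ can be as large as $X$ itself, far beyond the level of distribution $A(\mathscr{E})^{\eta}$ up to which the congruence-counting in residue classes mod $b$ (your step (3), the paper's Lemma \ref{lem:large-sieve-a}) is valid.

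The missing range --- smooth part larger than the sieve level --- is exactly what forces the four-range decomposition in the paper's proof (and in Shiu's original argument). There one truncates the smooth part greedily at $Z=A(\mathscr{E})^{\eta}$, so the sieved modulus $a$ is always admissible, and then must control $f(b)$ for the cofactor $b$, whose smallest prime factor $q$ may be small. When $Z^{1/(s+1)}\leq q\leq Z^{1/s}$ one only has $f(b)\ll A_0^{s}$, growing geometrically in $s$; this is beaten by Lemma \ref{lem:sieved-sum-exp-save}, which shows that the sum of $f(a)\widetilde{\rho}_Q(a)/a^2$ over $a\geq Z^{1/2}$ with $P^+(a)\leq Z^{1/s}$ saves a factor $\exp(-s\alpha\kappa)$ (a Rankin-shift argument made uniform via the hypothesis $\widetilde{\rho}_Q(p^k)\leq Cp^{k(2-r)}$), while the extremely smooth range $P^+(a)\leq\log Z\log\log Z$ is killed by the de Bruijn-type bound of Lemma \ref{lem:extremely-smooth-sums}. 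None of this appears in your outline, and without it the argument does not close. Your remaining points --- the role of $\widetilde{\rho}_Q$ versus $\rho_Q$ (enforced in the paper by the condition $\gcd(a,Q(x,y)/a)=1$ in the sieved set, which excludes residue classes with $p^2$ lifts), the uniformity of all constants in terms of $C,r,\deg Q$ only, and the Mertens/Schwartz--Zippel manipulations of the Euler product --- are correctly identified and do match the paper, but they are the secondary difficulties, not the main one.
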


\subsection{Notations}
We introduce several notations to be used in the section.

For an integer $n>1$ denote by $\omega(n)$ the number of distinct prime factors of $n$ and let $\Omega(n)$ be the number of prime factors counted with multiplicity. Denote also by
$P^+(n), P^-(n)$ the largest and the smallest prime divisor of $n$ respectively. It shall also be useful to define $P^+(1)=1, P^-(1)=\infty$.

For any two integers $a,b$ we write $a\mid b^\infty$ if the prime support of $a$ is contained in the prime support of $b$. Lastly, we denote by $\gcd(a,b^\infty)$ the product of all primes powers dividing $a$ for primes appearing in the support of $b$.

\subsection{Sieving}
The following lemma is a straightforward generalization of the lower bound in \cite[Lemma 2.1]{GKM}. 
\begin{lem}\label{lem:log-mean-mertens}
Let $g\colon \mathbb{N} \to \mathbb{R}$ be a multiplicative function such that there is some $d>0$ so that $0\leq g(p)\leq d$ for all primes $p$. Then for any $z>1$
\begin{equation}\label{eq:log-mean-mertens-ineq}
\sum_{n\leq z} \mu(n)^2 \frac{g(n)}{n} \gg_d \prod_{d<p\leq z} \left(1-\frac{g(p)}{p}\right)^{-1}
\end{equation}
\end{lem}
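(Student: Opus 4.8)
The inequality \eqref{eq:log-mean-mertens-ineq} is the classical comparison between a truncated sum $\sum_{n\le z}\mu(n)^2 g(n)/n$ and the partial Euler product $\prod_{d<p\le z}(1-g(p)/p)^{-1}$, restricted to squarefree $n$ with all prime factors exceeding $d$. The plan is to follow the Rankin / Mertens-type argument used for the lower bound half of \cite[Lemma 2.1]{GKM}. First I would reduce to prime factors $>d$: since $g(p)\le d$ and we only need a lower bound, we may drop from the Euler product any contribution from primes $p\le d$, and the sum $\sum_{n\le z}\mu(n)^2 g(n)/n$ is bounded below by its subsum over squarefree $n$ with $P^-(n)>d$ (all terms being non-negative). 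So it suffices to prove
\begin{equation*}
\sum_{\substack{n\le z\\ P^-(n)>d}}\mu(n)^2\frac{g(n)}{n}\;\gg_d\;\prod_{d<p\le z}\Bigl(1-\frac{g(p)}{p}\Bigr)^{-1}.
\end{equation*}

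Next I would expand the right-hand product. For $p>d$ we have $0\le g(p)/p\le d/p<1$, so $(1-g(p)/p)^{-1}=\sum_{k\ge 0}(g(p)/p)^k$, and multiplying over $d<p\le z$ gives $\prod_{d<p\le z}(1-g(p)/p)^{-1}=\sum_{m}h(m)/m$ where $m$ ranges over integers with all prime factors in $(d,z]$ and $h$ is the multiplicative function $h(p^k)=g(p)^k$. The key step is then to compare this sum over all such $m$ (including non-squarefree and arbitrarily large $m$) to the truncated squarefree sum $\sum_{n\le z,\,P^-(n)>d}\mu(n)^2 g(n)/n$. For the non-squarefree part: if $m=n\ell$ with $n$ squarefree, $\ell$ "powerful-ish", then $h(m)/m = (g(n)/n)\cdot(h(\ell)/\ell)\cdot(\text{correction})$, and since $g(p)\le d$, each prime $p\mid\ell$ contributes a factor $\le g(p)/p\le d/p$, so $\sum_{\ell}h(\ell)/\ell$ over $\ell$ supported on primes $>d$ that are "extra factors" is bounded by a convergent product $\prod_{p>d}(1+O(d/p^2))=O_d(1)$. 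This shows $\sum_m h(m)/m \ll_d \sum_{n\text{ sqfree},\,P^-(n)>d} g(n)/n$ with no truncation.

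The remaining task is to remove the truncation at $z$, i.e.\ to show $\sum_{\substack{n>z\\ P^-(n)>d,\ n\text{ sqfree}}} g(n)/n$ is dominated by (a constant times) the truncated sum. Here I would use a Rankin-type trick: for any $n$ appearing in the tail with $n>z$, and a parameter $\alpha\in(0,1)$ to be chosen, $1\le (n/z)^\alpha$, hence the tail is $\le z^{-\alpha}\sum_{n\text{ sqfree},\,P^-(n)>d} g(n)n^{\alpha-1}\le z^{-\alpha}\prod_{d<p}(1+g(p)p^{\alpha-1})$. Taking $\alpha$ a fixed small constant (depending only on $d$, e.g.\ $\alpha=1/2$ suffices since $g(p)\le d$ keeps the product $\prod_p(1+d p^{-1/2})$... — actually one must be slightly careful: $\prod_p(1+dp^{-1/2})$ diverges, so instead choose $\alpha$ close to $1$, say $\alpha=1-1/\log z$, the standard Rankin choice, making $z^{-\alpha}\asymp z^{-1}\cdot e$ and $\prod_{d<p\le z}(1+g(p)p^{\alpha-1})\ll \prod_{d<p\le z}(1+g(p)/p)\cdot e^{O(1)}$). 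With this choice the tail is $\ll_d z^{-1}\prod_{d<p\le z}(1+g(p)/p)$, which is smaller than the main truncated sum by a factor $\asymp 1/z$ times a log-power, hence negligible. Combining the three reductions yields \eqref{eq:log-mean-mertens-ineq}.

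\textbf{Main obstacle.} The delicate point is the Rankin truncation step: one must pick the shift parameter $\alpha$ as a function of $z$ (not a fixed constant) so that $z^{-\alpha}$ still beats the divergent-looking product $\prod_{d<p\le z}(1+g(p)p^{\alpha-1})$, and then verify that with $\alpha=1-c/\log z$ the product is genuinely comparable to $\prod_{d<p\le z}(1+g(p)/p)$ — this uses $p^{\alpha-1}=p^{-c/\log z}=\exp(-c\log p/\log z)$, which for $p\le z$ lies in a bounded range $[e^{-c},1]$, so $g(p)p^{\alpha-1}\asymp_d g(p)/p$ up to the constant $e^{-c}$. Keeping all implied constants dependent only on $d$ (and the absolute Rankin constant $c$) throughout, and checking that $\prod_{d<p\le z}(1+g(p)/p)\asymp\prod_{d<p\le z}(1-g(p)/p)^{-1}$ (valid since $g(p)/p\le d/p$ is bounded away from $1$ for $p>d$, so $(1+x)(1-x)=1-x^2$ and $\prod(1-x^2)=\prod(1-g(p)^2/p^2)\gg_d 1$), completes the argument.
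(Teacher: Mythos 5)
There is a genuine gap, and it sits exactly at the step you flag as the "main obstacle": removing the truncation at $z$. Your Rankin parameter is chosen backwards. With $\alpha=1-1/\log z$ one has $p^{\alpha-1}=p^{-1/\log z}\in[e^{-1},1]$ for $p\le z$, so $g(p)p^{\alpha-1}\asymp g(p)$, \emph{not} $\asymp g(p)/p$; the product $\prod_{d<p\le z}\bigl(1+g(p)p^{\alpha-1}\bigr)$ is then of size $e^{\Theta(\pi(z))}$ when $g(p)\asymp d$, which obliterates the factor $z^{-\alpha}\asymp e/z$. The choice that keeps the product comparable to $\prod(1+g(p)/p)$ is the opposite one, $\alpha=c/\log z$, but then $z^{-\alpha}=e^{-c}$ is only a constant, while the product inflates by roughly $\exp\bigl(d\,e^{c}/c\cdot c\bigr)=\exp(de^{c})$; for $d\ge 1$ no choice of $c$ makes $e^{-c}\exp(de^{c})<1$, so Rankin cannot show the tail is even at most half the full product. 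Indeed no such argument can succeed: for $g(p)\equiv d$ both the truncated sum $\sum_{n\le z}\mu(n)^2 d^{\omega(n)}/n$ and the tail $\sum_{n>z,\,P^+(n)\le z}\mu(n)^2 d^{\omega(n)}/n$ are of order $(\log z)^d$, so the tail is \emph{comparable} to the main term, not smaller by any power of $z$. Your steps 1--3 (restricting to $P^-(n)>d$, expanding the Euler product, and controlling the non-squarefree corrections by $\prod(1-g(p)^2/p^2)^{-1}\ll_d 1$) are fine, but they reduce the lemma to precisely the hard statement that the truncated squarefree sum captures a positive proportion of the full Euler product, and that is where the argument breaks.

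The paper sidesteps truncation entirely with a duality trick: assume $g$ completely multiplicative, introduce $h$ with $h(p)=d-g(p)\ge 0$, so that $(g*h)(n)=d^{\omega(n)}$ on squarefree $n$. Then
\begin{equation*}
\sum_{n\le z}\mu(n)^2\frac{g(n)}{n}\cdot\sum_{n\le z}\mu(n)^2\frac{h(n)}{n}\;\ge\;\sum_{n\le z}\mu(n)^2\frac{d^{\omega(n)}}{n}\;\gg_d\;(\log z)^d,
\end{equation*}
while $\sum_{n\le z}\mu(n)^2 h(n)/n\le\prod_{p\le z}(1+h(p)/p)$, and the elementary identity $\bigl(1-\tfrac{d-g(p)}{p}\bigr)\bigl(1-\tfrac{g(p)}{p}\bigr)\ge 1-\tfrac{d}{p}$ together with Mertens turns the quotient of these two bounds into the claimed $\prod_{d<p\le z}(1-g(p)/p)^{-1}$. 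If you want to salvage your route, you would need to replace the Rankin step by an argument of this type (or a Halberstam--Richert style iteration on the largest prime factor); the truncation cannot be dismissed as a negligible tail.
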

\begin{proof}
As the left hand side of \eqref{eq:log-mean-mertens-ineq} is supported on the square-free numbers we can assume without loss of generality that $g$ is completely multiplicative. Define a new completely multiplicative function $h$ by $h(p)=d-g(p)$. The Dirichlet convolution $g*h$ is a multiplicative function which satisfies $g*h(p)=g(1)h(p)+g(p)h(1)=d$ for any prime $p$. Hence for any square-free integer $n$ we have $g*h(n)=d^{\omega(n)}$. This implies
\begin{equation}\label{eq:f-h-log-mean-product}
\sum_{n\leq z} \mu(n)^2 \frac{g(n)}{n} \cdot \sum_{n\leq z} \mu(n)^2 \frac{h(n)}{n}
\geq \sum_{n\leq z} \mu(n)^2 \frac{d^{\omega(n)}}{n} \gg_d (\log z)^d
\end{equation}

On the other hand
\begin{equation*}
 \sum_{n\leq z} \mu(n)^2 \frac{h(n)}{n} \leq \prod_{p\leq z} \left(1+\frac{h(p)}{p}\right) \ll_d \prod_{d<p\leq z} \left(1-\frac{h(p)}{p}\right)^{-1}
\end{equation*}
Hence
\begin{align}
\left(\sum_{n\leq z} \mu(n)^2 \frac{h(n)}{n}\right)^{-1} &\cdot \prod_{d<p\leq z}\left(1-\frac{g(p)}{p}\right)
\gg_d \prod_{d<p\leq z} \left(1-\frac{d-g(p)}{p}\right)\left(1-\frac{g(p)}{p}\right)
\nonumber\\
&= \prod_{d<p\leq z} \left(1-\frac{d}{p}+\frac{dg(p)-g(p)^2}{p^2}\right)
\geq \prod_{d<p\leq z} \left(1-\frac{d}{p}\right)
\gg_d (\log z)^{-d}
\label{eq:inv-log-mean-h}
\end{align}
The claim now follows by multiplying inequality \eqref{eq:f-h-log-mean-product} by \eqref{eq:inv-log-mean-h}.
\end{proof}

The following result is where we apply a sieve. As we require only upper bounds we use the large sieve due to its great generality.
\begin{lem}\label{lem:large-sieve-applied}
Let $Q\in\mathbb{Z}[x,y]$ be a polynomial. Let $\mathscr{E}\subset \mathbb{R}^2$ be a domain of class $\mathcal{L}(C_l, \theta_l)$. If 
\begin{equation*}
1\leq z\leq \min \left\{\left(\frac{A(\mathscr{E})}{R_{\max}^{\theta_l}}\right)^{1/5},A(\mathscr{E})^{1/2}\right\}
\end{equation*}
then
\begin{equation*}
S\coloneqq\left|\left\{(x,y)\in \mathscr{E}\cap\mathbb{Z}^2 \mid P^-(Q(x,y))\geq z \right\}\right|\ll_{\deg(Q),C_l} A(\mathscr{E})\prod_{\deg(Q)<p\leq z} \left(1-\frac{\rho_Q(p)}{p^2}\right)
\end{equation*}
\end{lem}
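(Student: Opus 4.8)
\textbf{Proof proposal for Lemma \ref{lem:large-sieve-applied}.}
The plan is to apply the large sieve inequality to the set of lattice points of $\mathscr{E}$, sieving out, for each prime $p$ in the range $\deg(Q)<p\leq z$, the residue classes $(x,y)\bmod p$ for which $Q(x,y)\equiv 0\bmod p$. First I would set up the ambient sequence: let $a_{(x,y)}=1$ if $(x,y)\in\mathscr{E}\cap\mathbb{Z}^2$ and $0$ otherwise, so $\sum a_{(x,y)}=\#(\mathscr{E}\cap\mathbb{Z}^2)=A(\mathscr{E})+O_{C_l}(R_{\max}^{\theta_l})$ by the hypothesis $\mathscr{E}\in\mathcal{L}(C_l,\theta_l)$ (here we use $R_{\max}^{\theta_l}\le A(\mathscr{E})$, which follows from $z\ge 1$ and the stated constraint on $z$). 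For each prime $p$ with $\deg(Q)<p\leq z$ and $p\nmid Q$, the forbidden set modulo $p$ consists of $\rho_Q(p)$ residue classes in $(\cyclic{p})^2$, and by Lemma \ref{lem:Schwartz-Zippel} we have $\rho_Q(p)\leq\deg(Q)\,p<p^2$, so there is a genuine saving. The two-dimensional large sieve (applied to the torus $(\mathbb{R}/\mathbb{Z})^2$, or equivalently via the Montgomery/Gallagher form of the large sieve for each coordinate, or by a direct Selberg-sieve majorant argument) then yields
\begin{equation*}
S\ll \frac{\#(\mathscr{E}\cap\mathbb{Z}^2)+Q^2}{L}
\end{equation*}
where $L=\sum_{q\leq Q_0}\mu(q)^2\prod_{p\mid q}\frac{\rho_Q(p)}{p^2-\rho_Q(p)}$, $Q_0\leq z$ is the sieve level, and $Q^2$ is the square of the sieve level (arising from the off-diagonal terms in the large-sieve inequality for the box of lattice points). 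The constraint $z\leq\min\{(A(\mathscr{E})/R_{\max}^{\theta_l})^{1/5},A(\mathscr{E})^{1/2}\}$ is exactly what is needed to guarantee that the main term $A(\mathscr{E})$ dominates both the error $R_{\max}^{\theta_l}$ in the lattice count and the off-diagonal contribution $z^4$ (or similar power), so that $\#(\mathscr{E}\cap\mathbb{Z}^2)+z^{O(1)}\ll A(\mathscr{E})$.

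Next I would convert the lower bound for the sieve weight $L$ into the stated Euler product. Apply Lemma \ref{lem:log-mean-mertens} with the multiplicative function $g$ defined by $g(p)=\frac{\rho_Q(p)p^2}{p^2-\rho_Q(p)}\cdot\frac1p$ — more precisely, one arranges matters so that the relevant multiplicative function satisfies $0\le g(p)\le d$ for $d$ depending only on $\deg(Q)$, using $\rho_Q(p)\le\deg(Q)\,p$; then $\sum_{q\le z}\mu(q)^2 g(q)/q\gg_d\prod_{d<p\le z}(1-g(p)/p)^{-1}$. Since $g(p)/p=\rho_Q(p)/(p^2-\rho_Q(p))$ and $1-g(p)/p=1-\frac{\rho_Q(p)}{p^2-\rho_Q(p)}=\frac{p^2-2\rho_Q(p)}{p^2-\rho_Q(p)}$, while the target product has factors $1-\rho_Q(p)/p^2=\frac{p^2-\rho_Q(p)}{p^2}$, one checks that
\begin{equation*}
\prod_{d<p\le z}\Bigl(1-\frac{g(p)}{p}\Bigr)^{-1}\gg_{\deg(Q)}\prod_{d<p\le z}\Bigl(1-\frac{\rho_Q(p)}{p^2}\Bigr)^{-1},
\end{equation*}
the discrepancy being a convergent Euler product (the extra factors differ from $1$ by $O(\rho_Q(p)^2/p^4)=O(1/p^2)$), bounded in terms of $\deg(Q)$ alone. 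Finitely many primes $p\le d=O(\deg(Q))$ and the primes $p\mid Q$ (of which there are $O_{\deg(Q),\mathrm{ht}(Q)}(1)$, but in fact we only need the ones $\le z$, and removing them costs a bounded factor since each such $p$ contributes a factor $\ge 1-\rho_Q(p)/p^2\ge$ a positive constant bounded below — actually here we should be slightly careful, see below) are absorbed into the implied constant. This gives $L^{-1}\ll_{\deg(Q)}\prod_{\deg(Q)<p\le z}(1-\rho_Q(p)/p^2)$, and combining with the large-sieve bound and $\#(\mathscr{E}\cap\mathbb{Z}^2)\ll A(\mathscr{E})$ completes the proof.

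The main obstacle I anticipate is the bookkeeping for the primes of bad reduction, i.e. primes $p\mid Q$ with $p\le z$: for such primes $\rho_Q(p)$ can be as large as $p^2$ (if $Q\equiv 0\bmod p$ identically, though that is excluded since $Q\ne 0$ as a polynomial — but $\rho_Q(p)$ can still be close to $p^2$), so the factor $1-\rho_Q(p)/p^2$ could be as small as $1/p^2$ and we cannot simply absorb it into a constant depending only on $\deg(Q)$. The resolution is that the product in the conclusion is restricted to $p\nmid Q$, so on the left we should only sieve by primes $p\nmid Q$ (sieving by more primes only decreases $S$, so this is legitimate for an upper bound), and then every prime used satisfies the Schwartz–Zippel bound $\rho_Q(p)\le\deg(Q)\,p$ uniformly, making $g(p)$ bounded and Lemma \ref{lem:log-mean-mertens} directly applicable. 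A secondary technical point is choosing the precise form of the large sieve in two variables and verifying that the off-diagonal term is genuinely $O(z^4)$ or smaller so that the constraint on $z$ suffices; I would handle this either by the standard product large sieve over $(\mathbb{Z}/q\mathbb{Z})^2$ or by reducing to a one-dimensional large sieve after fixing one coordinate, at the cost of a harmless $\log$ which is dominated anyway. All implied constants are then seen to depend only on $\deg(Q)$ and $C_l$, as claimed.
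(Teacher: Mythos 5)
Your proposal follows essentially the same route as the paper: an upper-bound large sieve over the primes in $(\deg(Q),z]$ with the $\rho_Q(p)$ forbidden classes, the Schwartz--Zippel bound to keep the local densities under control, Lemma \ref{lem:log-mean-mertens} to convert the sieve denominator into the Euler product, and the hypothesis on $z$ to make the lattice-count and off-diagonal errors subordinate to $A(\mathscr{E})$ (the paper bounds the large-sieve constant via the discrepancy estimate from $\mathscr{E}\in\mathcal{L}(C_l,\theta_l)$, giving an error $C_lR_{\max}^{\theta_l}z^5$, which is where the exponent $1/5$ comes from). The only caveats are bookkeeping ones you already flag: the paper simply dismisses the case of a prime $p\mid Q$ with $p\le z$ as trivial rather than restricting the sieving set, and it applies Lemma \ref{lem:log-mean-mertens} directly to $g(n)=\rho_Q(n)/n$ (which is bounded by $\deg(Q)$ at primes), avoiding your unbounded choice of $g$ near $p\approx\deg(Q)$.
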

\begin{remark}
The exponent $1/5$ in the level of distribution is certainly far from optimal yet for our application any positive exponent suffices. 
\end{remark}

\begin{proof}
The inequality is trivially true if there is $p\mid Q$ such that $p\leq z$; hence we assume this is not the case.

We use the large sieve in the setup of Kowalski \cite{Kowalski}. Our sieve setting is 
$$\left(\mathbb{Z}^2,\mathrm{primes},\mathbb{Z}^2\to \faktor{\mathbb{Z}^2}{p \mathbb{Z}^2}\right)$$
and the siftable set is $\mathscr{E}_{\mathbb{Z}}\coloneqq \mathscr{E}\cap \mathbb{Z}^2 \subset \mathbb{Z}^2$ with the counting measure.

We choose our sieve support to be the set of square-free positive integers $\leq z$.
The large sieve inequality as presented in \cite[Proposition 2.3]{Kowalski} implies that
\begin{align}
S&\leq \Delta H^{-1} \label{eq:large-sieve}
\\
H&\coloneqq \sum_{n\leq z } \mu(n)^2 \prod_{p|n}\frac{\rho_Q(p)}{p^2-\rho_Q(p)} \notag
\end{align}
Where $\Delta$ is the large sieve constant which we bound from above using the equidistribution method as in \cite[\S2.13]{Kowalski}. Define for any integer $n\leq A(\mathscr{E})^{1/2}$ and any $y\in\faktor{\mathbb{Z}^2}{n\mathbb{Z}^2}$ the discrepancy 
\begin{equation*}
r_n(y)\coloneqq \left|\mathscr{E}\cap \left(n\mathbb{Z}^2+y\right)\right|
-n^{-2}|\mathscr{E}_{\mathbb{Z}}|
\end{equation*}
The assumption $\mathscr{E}\in\mathcal{L}(C_l,\theta_l)$ implies for $\mathscr{E}$ and $n^{-1}\left(\mathscr{E}-y\right)$, whose areas are $\geq 1$, that
\begin{equation*}
|r_n(y)| \leq C_l\left(\frac{R_{\max}}{n}\right)^{\theta_l}
\end{equation*}

We use the orthonormal base of characters for finite abelian groups and bound $\Delta$ using \cite[Corollary 2.13]{Kowalski}
\begin{align*}
\Delta-|\mathscr{E}_{\mathbb{Z}}|&\leq \max_{m\leq z}\sum_{n\leq z} \sum_{y\in \faktor{\mathbb{Z}^2}{[m,n]\mathbb{Z}^2}} n\left|r_{[m,n]}(y)\right|
\leq C_l \max_{m\leq z}\sum_{n\leq z} n [m,n]^2 
 \left(\frac{R_{\max}}{[m,n]}\right)^{\theta_l}\\
&\leq  C_l R_{\max}^{\theta_l}z^2\sum_{n\leq z} n^{2} 
\ll C_l R_{\max}^{\theta_l}z^5\leq C_l A(\mathscr{E})\\
\end{align*}
where in the last inequality we have used the upper bound assumption on $z$.
Applying the lattice count bound to $|\mathscr{E}_{\mathbb{Z}}|$ we deduce that $\Delta\ll_{C_l} A(\mathscr{E})$.

We bound $H$ below by
\begin{equation*}
H\geq
\sum_{n\leq z } \mu(n)^2 \prod_{p|n}\frac{\rho_Q(p)}{p^2}
\end{equation*}
Next we apply Lemma \ref{lem:log-mean-mertens} to the multiplicative function $\rho_Q(n)/n$ which is bounded by Lemma \ref{lem:Schwartz-Zippel} to deduce
\begin{equation*}
H \gg_{\deg(Q)} \prod_{\deg(Q)<p\leq z} \left(1-\frac{\rho_Q(p)}{p^2}\right)^{-1}
\end{equation*}

The claim follows by combining the bounds on $H$ and $\Delta$ with \eqref{eq:large-sieve}.
\end{proof}

\subsection{Extending the Level of Distribution}
The range of $z$ where the lemma above is applicable is very restricted. The following results show that we can actually take this range to be any power of $A(\mathscr{E})$ if we are willing to pay a price in the constant depending only on the exponent.

\begin{lem}\label{lem:extending-p-product}
Let $Q\in\mathbb{Z}[x,y]$ be a polynomial then for any $z\geq 1$, $s>0$
\begin{equation*}
\prod_{\deg(Q) <p\leq z^{1/s}} \left(1-\frac{\rho_Q(p)}{p^2}\right) \ll_{\deg(Q)} s^{\deg(Q)} \prod_{\substack{\deg(Q) <p\leq z \\ p\nmid Q}} \left(1-\frac{\rho_Q(p)}{p^2}\right)
\end{equation*}
\end{lem}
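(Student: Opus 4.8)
The plan is to compare the two Euler products by controlling the ratio factor by factor over the primes $p$ in the range $z^{1/s} < p \le z$, and separately extracting the finitely many primes $p \mid Q$. For the primes dividing $Q$ one uses that $0 \le \rho_Q(p) \le p^2$ trivially and that there are boundedly many such primes (namely at most $\ll_{\deg Q} 1$ of them can be $\le z$ with relevant contribution, and in any case removing them costs only a bounded multiplicative factor), so we may assume from now on that both products run over $p \nmid Q$. The core estimate is then
\begin{equation*}
\prod_{z^{1/s} < p \le z} \left(1 - \frac{\rho_Q(p)}{p^2}\right)^{-1} \ll_{\deg Q} s^{\deg Q},
\end{equation*}
after which multiplying both sides by $\prod_{\deg(Q) < p \le z^{1/s}} (1 - \rho_Q(p)/p^2)$ gives the claim.

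To prove this core estimate, first note that by Lemma~\ref{lem:Schwartz-Zippel} we have $\rho_Q(p) \le \deg(Q)\, p$ for $p \nmid Q$, hence $\rho_Q(p)/p^2 \le \deg(Q)/p$, and in particular $\rho_Q(p)/p^2 \le 1/2$ once $p > 2\deg(Q)$. For such $p$ the elementary inequality $-\log(1-t) \le t + t^2 \le 2t$ for $0 \le t \le 1/2$ gives
\begin{equation*}
\sum_{z^{1/s} < p \le z} -\log\!\left(1 - \frac{\rho_Q(p)}{p^2}\right) \le 2\deg(Q) \sum_{z^{1/s} < p \le z} \frac{1}{p}.
\end{equation*}
By Mertens' theorem $\sum_{p \le x} 1/p = \log\log x + M + O(1/\log x)$, so the inner sum is $\log\log z - \log\log(z^{1/s}) + O(1) = \log s + O(1)$ (using $\log\log(z^{1/s}) = \log(\log z) - \log s$; when $z^{1/s} \le 2\deg(Q)$ the range is empty or bounded and the bound is trivial). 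Exponentiating yields $\prod (1 - \rho_Q(p)/p^2)^{-1} \ll_{\deg Q} e^{2\deg(Q)(\log s + O(1))} \ll_{\deg Q} s^{2\deg Q}$, which is even a bit stronger than needed; the stated exponent $\deg(Q)$ follows identically if one instead uses $-\log(1-t) \le t/(1-t) \le 2t$ more carefully together with the sharper Mertens asymptotic, but in any case one absorbs the discrepancy into the implicit constant since only a power of $s$ is asserted. (If the exponent $\deg(Q)$ rather than $2\deg(Q)$ is wanted literally, use $\rho_Q(p)/p^2 \le \deg(Q)/p$ and the bound $-\log(1-t)\le t(1+t)$ to get $\sum -\log(1-\rho_Q(p)/p^2) \le \deg(Q)\sum 1/p + \deg(Q)^2\sum 1/p^2 = \deg(Q)(\log s + O_{\deg Q}(1))$, giving exactly $s^{\deg Q}$.)

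The only genuine subtlety, and the main point to be careful about, is the treatment of the small primes $p \le 2\deg(Q)$ (where the bound $\rho_Q(p)/p^2 \le 1/2$ may fail) and of primes $p \mid Q$: on the left-hand product these appear with the range starting at $\deg(Q)$, so $p = \deg(Q)+1, \ldots, 2\deg(Q)$ could in principle be included, but there are only $\ll_{\deg Q} 1$ of them, each contributing a factor bounded below by $(1 - \rho_Q(p)/p^2) \ge (1-1)_+ $ — which can be zero. To avoid a vanishing factor one notes that for $p$ in this finite initial range either $\rho_Q(p) < p^2$ (so the factor is a positive constant depending only on $\deg Q$ and $Q$, but we want independence of $Q$ — here one simply drops these factors from the left product since $1 - \rho_Q(p)/p^2 \le 1$, making the left side only smaller) or the factor is already absorbed. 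Since the inequality is an upper bound on the left product, dropping finitely many factors in $[\deg(Q), 2\deg(Q)]$ that are $\le 1$ only helps, and the remaining comparison is exactly the core estimate above. This completes the argument modulo bookkeeping of the $O_{\deg Q}(1)$ constants.
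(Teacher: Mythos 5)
Your approach is the right one and is exactly what the paper's one-line proof (``the proof of Nair's Lemma 2(i) applies when combined with Lemma \ref{lem:Schwartz-Zippel}'') delegates to the citation: reduce to the tail product over $z^{1/s}<p\leq z$, bound each factor by $1-\deg(Q)/p$ via the DeMillo--Lipton--Schwartz--Zippel bound, and apply Mertens to get $\sum_{z^{1/s}<p\leq z}1/p=\log s+O(1)$, yielding $s^{O(\deg Q)}$ (and your refinement via $-\log(1-t)\leq t+t^{2}$ does recover the exponent $\deg Q$, though any fixed power of $s$ would suffice for the applications in \S\ref{sec:sieving} and \S\ref{sec:proof}).

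Two points in your write-up need repair, though neither breaks the argument. First, the number of primes dividing $Q$ (i.e.\ dividing all its coefficients) is \emph{not} $\ll_{\deg Q}1$ --- it depends on the content of $Q$, not its degree. The correct way to dispose of these primes is to observe that if some $p\mid Q$ lies in the range $\deg(Q)<p\leq z^{1/s}$, then $\rho_Q(p)=p^{2}$, the corresponding factor on the left is $0$, and the inequality is trivial since the right-hand side is strictly positive (every factor there has $p\nmid Q$ and $p>\deg Q$, so $\rho_Q(p)\leq\deg(Q)p<p^{2}$); otherwise every prime in the left range automatically satisfies $p\nmid Q$ and the two products genuinely differ only on $(z^{1/s},z]$. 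Relatedly, ``dropping factors $\leq 1$ makes the left side only smaller'' is backwards --- it makes the product \emph{larger} --- but the move is still legitimate because you are proving an upper bound. Second, your decomposition into the range $(z^{1/s},z]$ tacitly assumes $s\geq1$; for $s<1$ the left product runs over the \emph{longer} range and the statement as written can actually fail (e.g.\ $Q=x^{2}+1$ has $\rho_Q(p)=0$ for $p\equiv3\bmod4$, giving a left side $\asymp s/\log z$ against a right side $\asymp s^{2}/\log z$). Since the paper only ever invokes the lemma with $s\geq1$, it suffices to note this restriction explicitly rather than claim the result for all $s>0$.
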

\begin{proof}
The proof of \cite[Lemma 2(i)]{Nair} applies when combined with Lemma \ref{lem:Schwartz-Zippel}.
\end{proof}

\begin{lem}\label{lem:large-sieve-extended}
Let $5\geq\eta>0$ and $\varsigma_0>0$.
In the setting of Lemma \ref{lem:large-sieve-applied} above if $R_{\max}^{\theta_l}\leq A(\mathscr{E})^{1-\eta}$  and $1\leq z \leq A(\mathscr{E})^{\varsigma_0}$ then 
\begin{equation*}
S\ll_{\deg(Q),C_l,\eta,\varsigma_0} A(\mathscr{E})\prod_{\deg(Q)<p\leq z} \left(1-\frac{\rho_Q(p)}{p^2}\right)
\end{equation*}
\end{lem}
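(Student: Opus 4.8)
The idea is to reduce the case of a large sifting parameter $z$ to the restricted-range Lemma~\ref{lem:large-sieve-applied} by splitting $\mathscr{E}$ into smaller pieces on which the latter applies. Concretely, set $z_0\coloneqq \min\left\{\left(A(\mathscr{E})^\eta\right)^{1/5},A(\mathscr{E})^{1/2}\right\}$, which under the hypothesis $R_{\max}^{\theta_l}\leq A(\mathscr{E})^{1-\eta}$ satisfies $z_0\leq\left(A(\mathscr{E})/R_{\max}^{\theta_l}\right)^{1/5}$, so Lemma~\ref{lem:large-sieve-applied} is applicable with sifting parameter $z_0$. If $z\leq z_0$ we are already done, because the product over primes $\deg(Q)<p\leq z$ only gets larger (factors $<1$ are omitted) and so $S\leq S_{z_0}\ll A(\mathscr{E})\prod_{\deg(Q)<p\leq z_0}(1-\rho_Q(p)/p^2)$, which after enlarging the product range back to $z$ using Lemma~\ref{lem:extending-p-product} with $s=\log z/\log z_0\ll_{\eta,\varsigma_0}1$ gives the claim.

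\textbf{The main case $z_0<z\leq A(\mathscr{E})^{\varsigma_0}$.} Here the plan is a dyadic/grid decomposition of the convex domain $\mathscr{E}$. Tile the plane by axis-aligned squares of side length $Z\coloneqq \lceil A(\mathscr{E})^{1/2}/z^{10}\rceil$ (or any convenient length comparable to a small power of $A(\mathscr{E})^{1/2}$ chosen so that the rescaled pieces are large enough for Lemma~\ref{lem:large-sieve-applied} yet small enough that the sifting range $z$ is below their level of distribution). The number of tiles meeting $\mathscr{E}$ is $O\left(A(\mathscr{E})/Z^2 + \partial\text{-terms}\right)=O(A(\mathscr{E})/Z^2)$ provided $Z\ll A(\mathscr{E})^{1/2}$, which holds. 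Each tile $T$ is itself a convex $C^2$ domain — in fact a square, hence trivially in $\mathcal{L}(C_l',\theta_l')$ for absolute constants — with area $Z^2$ and $R_{\max}(T)$ bounded (a square has no curvature on its edges, so one uses the rounded-square or direct lattice-point count; alternatively intersect $\mathscr{E}$ with the tile and note the intersection is convex with $R_{\max}\leq R_{\max}(\mathscr{E})$, handling the boundary tiles by the same bound). For each tile Lemma~\ref{lem:large-sieve-applied} with parameter $z$ applies as long as $z\leq \min\{(Z^2/R_{\max}^{\theta_l})^{1/5}, Z\}$, so one picks the exponents in $Z$ to make this hold; summing the per-tile bound $O(Z^2\prod_{\deg(Q)<p\leq z}(1-\rho_Q(p)/p^2))$ over the $O(A(\mathscr{E})/Z^2)$ tiles reproduces the desired global bound with an implicit constant depending only on $\deg(Q),C_l,\eta,\varsigma_0$.

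\textbf{Where the difficulty lies.} The delicate point is calibrating the tile size $Z$: it must simultaneously be (i) large enough that $Z^2\geq$ the squared sifting parameter and that $R_{\max}^{\theta_l}\leq Z^{2(1-\eta')}$ for the tiles so that Lemma~\ref{lem:large-sieve-applied}'s hypothesis $z\leq(Z^2/R_{\max}^{\theta_l})^{1/5}$ is met even when $z$ is as large as $A(\mathscr{E})^{\varsigma_0}$, and (ii) small enough that the number of tiles $A(\mathscr{E})/Z^2$ times the per-tile area $Z^2$ telescopes to $A(\mathscr{E})$ without loss. Because $z$ can be a fixed but arbitrary power $A(\mathscr{E})^{\varsigma_0}$, one cannot take $Z$ a fixed power of $A(\mathscr{E})^{1/2}$ uniformly; instead $Z$ should depend on $\varsigma_0$ (and $\eta$), e.g. $Z=A(\mathscr{E})^{\beta}$ with $\beta$ chosen so that $2\beta(1)-2\beta(1-\eta)\cdot 1 \geq 5\varsigma_0$ roughly — more precisely one solves the inequalities $\varsigma_0\leq \beta$ and $5\varsigma_0\leq 2\beta-\theta_l\beta\cdot\frac{1-\eta}{\,?\,}$, which is always possible since $R_{\max}^{\theta_l}\leq A(\mathscr{E})^{1-\eta}$ forces $R_{\max}^{\theta_l}\leq Z^{(1-\eta)/\beta}$. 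Handling the boundary tiles (those only partially inside $\mathscr{E}$) requires noting that $\mathscr{E}\cap T$ is convex with curvature radius at most $R_{\max}(\mathscr{E})$, so it lies in $\mathcal{L}$ with the same constants up to harmless factors, and there are $O(A(\mathscr{E})/Z^2)$ of them as well since the perimeter of $\mathscr{E}$ is $O(A(\mathscr{E})^{1/2})$ by the isoperimetric-type bound for convex sets with bounded curvature. Finally, one applies Lemma~\ref{lem:extending-p-product} once more to pass between the prime product over $(\deg Q,z]$ appearing naturally and any slightly different range that the tiling forces, absorbing the resulting $s^{\deg Q}$ into the implicit constant.
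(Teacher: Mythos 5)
Your treatment of the main case $z_0<z\leq A(\mathscr{E})^{\varsigma_0}$ does not work, and the difficulty is not one of ``calibrating the tile size'': it is structural. Lemma \ref{lem:large-sieve-applied} applied to a tile $T$ of area $Z^2$ requires $z\leq \min\{(Z^2/R_{\max}(T)^{\theta_l})^{1/5},Z\}$, and in particular $z\leq Z^{2/5}$. Shrinking the domain therefore makes the level-of-distribution constraint \emph{harder} to satisfy, never easier; no subdivision of $\mathscr{E}$ can let you sieve up to a level $z$ exceeding $A(\mathscr{E})^{2/5}$ when the whole domain already fails the hypothesis $z\leq (A(\mathscr{E})/R_{\max}^{\theta_l})^{1/5}$. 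Your choice $Z=\lceil A(\mathscr{E})^{1/2}/z^{10}\rceil$ points in exactly the wrong direction (larger $z$ gives smaller tiles), and since $\varsigma_0$ is arbitrary one may have $z=A(\mathscr{E})^{\varsigma_0}$ with $\varsigma_0>2/5$, in which case no admissible $Z$ exists at all. (There are further problems -- a square is not a $C^2$ domain, so it is not ``trivially'' in $\mathcal{L}$ -- but they are moot next to this one.)

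The correct idea, which you actually have in hand in your first paragraph but then abandon, is that one should \emph{not} attempt to sieve up to level $z$. For $z>z_0\coloneqq A(\mathscr{E})^{\eta/5}$ the sifted set at level $z$ is \emph{contained} in the sifted set at level $z_0$ (note your inequality $S\leq S_{z_0}$ holds precisely when $z\geq z_0$, not when $z\leq z_0$ as you wrote), so $S\leq S_{z_0}\ll A(\mathscr{E})\prod_{\deg(Q)<p\leq z_0}(1-\rho_Q(p)/p^2)$ by Lemma \ref{lem:large-sieve-applied}, whose hypotheses are met at level $z_0$ since $R_{\max}^{\theta_l}\leq A(\mathscr{E})^{1-\eta}$. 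One then converts the truncated Euler product over $p\leq z_0=z^{1/s}$ into the product over $p\leq z$ via Lemma \ref{lem:extending-p-product} with $s=5\varsigma_0/\eta$, losing only the factor $s^{\deg(Q)}\ll_{\eta,\varsigma_0,\deg(Q)}1$. This is the standard upper-bound-sieve observation that truncating the sifting range costs only a bounded factor, and it replaces your entire tiling argument. In the easy case $z\leq z_0$ one simply applies Lemma \ref{lem:large-sieve-applied} directly with parameter $z$; no containment or product extension is needed there.
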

\begin{proof}
The statement is trivial $\exists p \leq z$ such that $p\mid Q$, hence assume the contrary.

Assume $z>A(\mathscr{E})^{\eta/5}$ as otherwise Lemma \ref{lem:large-sieve-applied} applies directly.
Applying Lemma \ref{lem:large-sieve-applied} for $z_0=A(\mathscr{E})^{\eta/5}\leq\min\left\{\left(\frac{A(\mathscr{E})}{R_{\max}^{\theta_l}}\right)^{1/5},A(\mathscr{E})^{1/2}\right\}$ we deduce that
\begin{equation*}
S \ll_{\deg(Q),C_l} A(\mathscr{E})\prod_{\deg(Q)<p\leq A(\mathscr{E})^{\eta/5}} \left(1-\frac{\rho_Q(p)}{p^2}\right)
\end{equation*}
and the claim follows from Lemma \ref{lem:extending-p-product} with $s=5\varsigma_0/\eta$.
\end{proof}

\subsection{The Sieve Bound for Values in a Homogeneous Arithmetic Progressions}
We now generalize these results to subsets of points where the polynomial value is divisible by a fixed integer. 
\begin{lem}\label{lem:large-sieve-a}
Let $Q\in\mathbb{Z}[x,y]$ be a polynomial. Let $\mathscr{E}\subset \mathbb{R}^2$ be a domain of class $\mathcal{L}(C_l, \theta_l)$. Fix $1/2>\eta>0$, $\varsigma>0$ and assume $R_{\max}^{\theta_l}\leq A(\mathscr{E})^{1-3\eta}$. Then for any $a,z\in\mathbb{N}$ such that $a\leq A(\mathscr{E})^\eta$ and $1\leq z\leq A(\mathscr{E})^{\varsigma}$
\begin{align*}
S\coloneqq&
\left|\left\{(x,y)\in \mathscr{E}\cap\mathbb{Z}^2 \mid a|Q(x,y),\; \gcd(a, Q(x,y)/a)=1 \; \mathrm{and}\; P^-(Q(x,y)/a)\geq z  \right\}\right|\\
&\ll_{\deg(Q),C_l,\eta,\varsigma} \frac{A(\mathscr{E})\widetilde{\rho}_Q(a)}{a^2}\prod_{\substack{\deg(Q)<p\leq z \\ p \nmid a}} \left(1-\frac{\rho_Q(p)}{p^2}\right)
\end{align*}
\end{lem}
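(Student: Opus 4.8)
The strategy is to reduce the sum over values divisible by $a$ with $\gcd(a,Q(x,y)/a)=1$ to the unramified-prime sieve estimate of Lemma \ref{lem:large-sieve-extended}, applied on a sublattice. First I would split the congruence condition $a\mid Q(x,y)$, $\gcd(a,Q(x,y)/a)=1$ according to the residue class of $(x,y)$ modulo $a^2$ (or modulo a suitable power of the radical of $a$): the points counted by $\widetilde{\rho}_Q(a)$ are precisely those residues $(x_0,y_0) \bmod a$ such that $a\mid Q(x_0,y_0)$ and $(x_0,y_0)$ is either a smooth point or has no lift, which is exactly the combinatorial condition guaranteeing $\gcd(a,Q(x,y)/a)=1$ can occur; for each such residue class the set of $(x,y)$ with that class and with $Q(x,y)/a$ coprime to $a$ is a union of at most $\widetilde{\rho}_Q(a)$ arithmetic progressions to modulus dividing $a^2$. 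This is where the passage from $\rho_Q$ to $\widetilde{\rho}_Q$ in Definition \ref{def:polynomial-rho} earns its keep — the points with $p^2$ lifts (singular with a lift) would force $p^2\mid Q(x,y)$ and so violate coprimality, hence need not be counted.

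\textbf{Main steps.} After pinning down a residue class $(x_0,y_0) \bmod a$ contributing to $\widetilde{\rho}_Q(a)$, I would perform the affine substitution $(x,y) = (x_0,y_0) + a\,(u,v)$, so that $(u,v)$ ranges over the dilated/translated domain $\mathscr{E}' \coloneqq a^{-1}(\mathscr{E} - (x_0,y_0))$, which again lies in $\mathcal{L}(C_l,\theta_l)$ with area $A(\mathscr{E}') = a^{-2}A(\mathscr{E})$ and maximal radius of curvature $R_{\max}(\mathscr{E}') = a^{-1}R_{\max}(\mathscr{E})$. Writing $Q(x_0 + au, y_0 + av) = a\,\widetilde{Q}(u,v)$ for a new integer polynomial $\widetilde Q$ of the same degree, the condition becomes $P^-(\widetilde Q(u,v)) \geq z$ together with $\gcd(a,\widetilde Q(u,v))=1$; the latter, by the coprimality built into the choice of residue class, holds automatically or restricts $(u,v)$ to finitely many further residue classes, each handled by one more application of the same construction or absorbed into the sieve support. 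I would then verify the hypotheses of Lemma \ref{lem:large-sieve-extended} for $\mathscr{E}'$ and $\widetilde Q$: the curvature bound $R_{\max}(\mathscr{E}')^{\theta_l} = a^{-\theta_l}R_{\max}(\mathscr{E})^{\theta_l} \leq a^{-\theta_l}A(\mathscr{E})^{1-3\eta} \leq A(\mathscr{E}')^{1-\eta}$ — here one uses $a \leq A(\mathscr{E})^\eta$, so $A(\mathscr{E}')=a^{-2}A(\mathscr{E}) \geq A(\mathscr{E})^{1-2\eta}$, and a short computation with the exponents confirms the required $1-\eta$ bound with room to spare — and the range $1 \leq z \leq A(\mathscr{E})^\varsigma = (A(\mathscr{E}')a^2)^\varsigma \leq A(\mathscr{E}')^{\varsigma'}$ for a suitably enlarged $\varsigma'$ depending on $\varsigma$ and $\eta$. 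Applying Lemma \ref{lem:large-sieve-extended} gives, for each residue class,
\begin{equation*}
\ll_{\deg(Q),C_l,\eta,\varsigma} A(\mathscr{E}') \prod_{\substack{\deg(Q)<p\leq z\\ p\nmid a}} \left(1-\frac{\rho_{\widetilde Q}(p)}{p^2}\right),
\end{equation*}
and since $\rho_{\widetilde Q}(p) = \rho_Q(p)$ for $p\nmid a$ (an affine change of coordinates over $\cyclic{p}$ is a bijection on $X_Q(\cyclic{p})$), and $A(\mathscr{E}') = a^{-2}A(\mathscr{E})$, summing over the at most $\widetilde{\rho}_Q(a)$ contributing residue classes yields the claimed bound.

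\textbf{Anticipated obstacle.} The delicate point is the bookkeeping around the coprimality condition $\gcd(a,Q(x,y)/a)=1$ — making precise that the number of residue classes modulo $a^2$ (or a higher fixed power of $\mathrm{rad}(a)$) that need to be summed over is genuinely $\ll \widetilde{\rho}_Q(a)$ and not $\rho_Q(a)$, which requires a careful local analysis at each prime $p\mid a$ distinguishing smooth points (exactly $p$ lifts, and $p^2\nmid Q$ possible) from singular-with-lift points (forced $p^2\mid Q$, excluded by coprimality) from singular-without-lift points (which simply do not occur at the next level). One must also check that the factor $\widetilde{\rho}_Q$ is genuinely multiplicative in $a$ and that no dependence on the primes dividing $a$ other than through $\widetilde{\rho}_Q(a)/a^2$ leaks into the constant; this is where Lemma \ref{lem:rho-tilderho-formula} and the a priori bound $\widetilde{\rho}_Q(p^k) \leq Cp^{k(2-r)}$ (which would be imposed in the final theorem, though strictly only $\widetilde\rho_Q(a)\le\rho_Q(a)\le \deg(Q)^{\omega(a)}a$ from Lemma \ref{lem:Schwartz-Zippel} is needed here) keep everything uniform. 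The rest is a routine but somewhat lengthy change-of-variables and exponent-chasing argument, following the template of \cite{Shiu,Nair} transposed to two variables and to a sublattice.
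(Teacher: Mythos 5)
Your proposal is correct and follows essentially the same route as the paper's proof: reduce modulo $a$ to the $\widetilde{\rho}_Q(a)$ residue classes that are smooth or liftless at every prime power $p^k\parallel a$ (the singular-with-lift classes being excluded by the coprimality condition $\gcd(a,Q(x,y)/a)=1$), substitute $(x,y)=(x_0,y_0)+a(u,v)$ to get $Q_0$ with $\rho_{Q_0}(p)=\rho_Q(p)$ for $p\nmid a$, verify the curvature and level-of-distribution hypotheses of Lemma \ref{lem:large-sieve-extended} on the dilated domain using $a\leq A(\mathscr{E})^\eta$, and sum over the classes. The exponent computations and the choice $\varsigma_0=\varsigma/(1-2\eta)$ match the paper's.
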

\begin{proof}
The statement is trivial if there is $p\leq z$ such that $p\nmid a$ and $p\mid Q$ so we assume the contrary.

Let $(x_0,y_0)\in\faktor{\mathbb{Z}^2}{a\mathbb{Z}^2}$ be one of the $\rho_Q(a)$ classes where $Q$ vanishes modulo $a$. Define $Q_0\in\mathbb{Z}[x,y]$ by 
\begin{equation*}
Q(ax+x_0,ay+y_0)=a\cdot Q_0(x,y)
\end{equation*}
Then for any $p\nmid a$ we have $\rho_{Q_0}(p)=\rho_{Q}(p)$. Notice that if $p^k\parallel a$ and the point $(x_0,y_0)\in X_Q\left(\cyclic{p^k}\right)$ has $p^2$ lifts then $p\mid Q_0$. By the assumption $\gcd(a, Q(x,y)/a)=1$ no point in the sieved set reduces to such $(x_0,y_0)$. Hence it is sufficient to consider only the $\widetilde{\rho}_Q(a)$ classes of points which at all primes are either smooth or have not lift.

We apply Lemma \ref{lem:large-sieve-extended} to $Q_0$ and the convex $C^2$ domain $a^{-1}(\mathscr{E}-(x_0,y_0))$ with area $A(\mathscr{E})/a^2$ and maximal curvature radius $R_{\max}/a$. We take $\varsigma_0=\frac{\varsigma}{1-2\eta}>0$.

 The first condition of Lemma \ref{lem:large-sieve-extended} reads
\begin{equation}\label{eq:a-sieve-condition1}
(R_{\max}/a)^{\theta_l}\leq A(\mathscr{E})^{1-\eta}/a^{2-2\eta}\Leftrightarrow
R_{\max}^{\theta_l}\leq A(\mathscr{E})^{1-\eta}/a^{2-2\eta-\theta_l}
\end{equation}
Using the assumption $a\leq A(\mathscr{E})^\eta$ we deduce that
\begin{equation*}
A(\mathscr{E})^{1-\eta}/a^{2-2\eta-\theta_l}\geq 
A(\mathscr{E})^{1-\eta-\eta(2-2\eta-\theta_l)}=
A(\mathscr{E})^{1-\eta(3-2\eta-\theta_l)}\geq A(\mathscr{E})^{1-3\eta}
\end{equation*}
Thus \eqref{eq:a-sieve-condition1} is satisfied because of the assumption $R_{\max}^{\theta_l}\leq A(\mathscr{E})^{1-3\eta}$.
The second condition of Lemma \ref{lem:large-sieve-extended} reads
\begin{equation}\label{eq:a-sieve-condition2}
1\leq z\leq \left(\frac{A(\mathscr{E})}{a^2}\right)^{\varsigma_0}
\end{equation}
Using the assumption $a\leq A(\mathscr{E})^\eta$ we see that
\begin{equation*}
\left(\frac{A(\mathscr{E})}{a^2}\right)^{\varsigma_0}\geq A(\mathscr{E})^{(1-2\eta)\varsigma_0}=A(\mathscr{E})^\varsigma
\end{equation*}
Hence condition \eqref{eq:a-sieve-condition2} is satisfied as well.

Summing the bounds we obtain from applying Lemma \ref{lem:large-sieve-extended} to each of the relevant $\widetilde{\rho}_Q(a)$ residue class and using the fact $\rho_{Q_0}(p)=\rho_{Q}(p)$ for each $p \nmid a$ we obtain the claimed inequality.
\end{proof}

\begin{corprf}\label{cor:large-sieve-a-nogcd}
In the setting of Lemma \ref{lem:large-sieve-a} above the following holds
\begin{align*}
\big|\big\{(x,y)\in \mathscr{E}\cap\mathbb{Z}^2 &\mid a|Q(x,y) \; \mathrm{and}\; P^-(Q(x,y)/a)\geq z  \big\}\big|\\
&\ll_{\deg(Q),C_l,\eta,\varsigma} 
\frac{A(\mathscr{E})\rho_Q(a)}{a^2}\prod_{\substack{\deg(Q)<p\leq z \\ p \nmid a}} \left(1-\frac{\rho_Q(p)}{p^2}\right)
\end{align*}
\end{corprf}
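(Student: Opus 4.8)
\textbf{Proof proposal for Corollary \ref{cor:large-sieve-a-nogcd}.}

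The plan is to reduce the statement without the coprimality condition $\gcd(a,Q(x,y)/a)=1$ to the statement of Lemma \ref{lem:large-sieve-a}, which already has that condition, by a dyadic-type decomposition over the $\gcd$. First I would observe that every point $(x,y)\in\mathscr{E}\cap\mathbb{Z}^2$ with $a\mid Q(x,y)$ and $P^-(Q(x,y)/a)\geq z$ can be classified by the integer $d\coloneqq\gcd(a^\infty,Q(x,y)/a)$, i.e.\ the largest divisor of $Q(x,y)/a$ supported on the primes of $a$ (in the notation introduced in \S9.1). Since $P^-(Q(x,y)/a)\geq z$, every prime factor of $Q(x,y)/a$ other than those dividing $a$ is $\geq z$; so writing $Q(x,y)=ad\cdot m$ we have $\gcd(ad,m)=1$ and $P^-(m)\geq z$. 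Thus the set in question is the disjoint union over $d\mid a^\infty$ (necessarily $d$ also satisfies $ad\mid Q(x,y)$, so in particular $ad$ divides some value of $Q$ on $\mathscr{E}\cap\mathbb{Z}^2$, forcing $ad\leq X\leq A(\mathscr{E})^\delta$, but more usefully $ad\ll A(\mathscr{E})^{\eta}$ only for $d$ in a bounded-exponent range — see the obstacle below) of the sets
\begin{equation*}
\left\{(x,y)\in\mathscr{E}\cap\mathbb{Z}^2 \;\middle|\; ad\mid Q(x,y),\ \gcd(ad,Q(x,y)/(ad))=1,\ P^-(Q(x,y)/(ad))\geq z\right\}.
\end{equation*}

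Next I would apply Lemma \ref{lem:large-sieve-a} with $a$ replaced by $ad$ to each summand, which is legitimate precisely when $ad\leq A(\mathscr{E})^\eta$; this gives a bound $\ll A(\mathscr{E})\widetilde{\rho}_Q(ad)(ad)^{-2}\prod_{\deg Q<p\leq z,\,p\nmid ad}(1-\rho_Q(p)/p^2)$. Since $d\mid a^\infty$ the set of primes $p\leq z$ with $p\nmid ad$ equals the set with $p\nmid a$, so the Euler product is the same for every $d$ and factors out. It then remains to sum $\widetilde{\rho}_Q(ad)(ad)^{-2}$ over $d\mid a^\infty$ and show this is $\ll_{\deg Q,C,r}\rho_Q(a)a^{-2}$. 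Here I would use multiplicativity: it suffices to treat each prime power $p^k\parallel a$ and bound $\sum_{j\geq 0}\widetilde{\rho}_Q(p^{k+j})p^{-2(k+j)}$ against $\rho_Q(p^k)p^{-2k}$. Using the a priori hypothesis $\widetilde{\rho}_Q(p^{k+j})\leq C\,p^{(k+j)(2-r)}$ the tail $\sum_{j\geq 1}$ is $\leq C\,p^{-2k}\sum_{j\geq1}p^{k(2-r)}p^{-jr}\cdot p^{k(? )}$ — more carefully, $\sum_{j\geq 1}\widetilde{\rho}_Q(p^{k+j})p^{-2(k+j)}\leq C\sum_{j\geq1}p^{-(k+j)r}\leq C\,p^{-kr}\,p^{-r}/(1-p^{-r})\ll_{r,C} p^{-kr}\leq p^{-r}$, which is bounded, while the $j=0$ term is $\widetilde{\rho}_Q(p^k)p^{-2k}\leq\rho_Q(p^k)p^{-2k}$. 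One has to be slightly careful to compare the whole sum to $\rho_Q(p^k)p^{-2k}$ rather than to $1$: when $\rho_Q(p^k)$ is large ($\rho_Q(p^k)\geq 1$ always for $p^k\mid$ some value), the $j=0$ term already dominates, and when combined over all $p^k\parallel a$ the product of the "bounded" factors contributes only $\prod_{p\mid a}O_{r,C}(1)=O_{r,C}(1)^{\omega(a)}$, which is $\ll_\epsilon a^\epsilon$ — this is harmless after absorbing it, exactly as slowly-growing multiplicative factors are absorbed elsewhere in \S9, or can be folded into $\rho_Q(a)$ since $\rho_Q(a)\geq 1$ and $\rho_Q$ is multiplicative with $\rho_Q(p^k)\geq$ a positive power of $p$ at ramified primes. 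Assembling, $\sum_{d\mid a^\infty}\widetilde{\rho}_Q(ad)(ad)^{-2}\ll_{\deg Q,C,r}\rho_Q(a)a^{-2}$, and multiplying by the common Euler product and by $A(\mathscr{E})$ yields the claim.

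The main obstacle is the range restriction $ad\leq A(\mathscr{E})^\eta$ needed to invoke Lemma \ref{lem:large-sieve-a}: the sum over $d\mid a^\infty$ is formally infinite, and for large $d$ the divisor $ad$ exceeds $A(\mathscr{E})^\eta$ so the lemma does not apply directly. The fix is that we only need $d$ in the range where $X_Q$ actually has an $ad$-point reducing appropriately, and since $|Q|\leq X\leq A(\mathscr{E})^\delta$ on $\mathscr{E}\cap\mathbb{Z}^2$ we must have $ad\leq A(\mathscr{E})^\delta$; this is weaker than $A(\mathscr{E})^\eta$, so one first truncates: for $d$ with $ad>A(\mathscr{E})^\eta$ (equivalently $d>A(\mathscr{E})^\eta/a\geq A(\mathscr{E})^\eta/A(\mathscr{E})^\eta\cdot a^{?}$, hmm) one bounds the corresponding set trivially by the number of lattice points of $\mathscr{E}$ on which $ad\mid Q$, and uses that $ad>A(\mathscr{E})^\eta$ together with $\widetilde\rho_Q$-type savings to show the contribution is $\ll A(\mathscr{E})^{1-c}$ for some $c>0$, hence negligible against the main term (which is $\geq A(\mathscr{E})\cdot(\log A(\mathscr{E}))^{-O(1)}$ by Mertens, using the Euler-product lower bound implicit in Lemma \ref{lem:log-mean-mertens}). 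Concretely, using the hypothesis $\varepsilon<\eta r/(4\delta)$ — which is exactly the place that constraint on $\varepsilon$ in Theorem \ref{thm:2d-sieve} is designed to be used — one checks that the tail sum $\sum_{d\mid a^\infty,\,ad>A(\mathscr{E})^\eta}\rho_Q(ad)(ad)^{-2}A(\mathscr{E})$ is smaller than the main term by a power of $A(\mathscr{E})$. This truncation argument, routine but bookkeeping-heavy, is where I expect the real work to lie; everything else is multiplicativity and reindexing.
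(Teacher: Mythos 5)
Your route — deducing the corollary from the \emph{statement} of Lemma \ref{lem:large-sieve-a} by decomposing over $d=\gcd(a^\infty,Q(x,y)/a)$ — is not the paper's, and it has a genuine gap at its central step. You need $\sum_{d\mid a^\infty}\widetilde{\rho}_Q(ad)(ad)^{-2}\ll\rho_Q(a)a^{-2}$ with a constant independent of $a$, and your argument does not give this; indeed the inequality is false as you state it. Locally at $p^k\parallel a$ your tail bound via $\widetilde{\rho}_Q(p^{k+j})\leq Cp^{(k+j)(2-r)}$ yields a contribution $\asymp p^{-(k+1)r}$, which must be compared against the target $\rho_Q(p^k)p^{-2k}$; at a prime of good reduction the latter is $\asymp p^{-k}$, so the ratio is $\asymp p^{k(1-r)-r}$, which is unbounded for $k\geq 1$ since $r<1$ (with $r=1/2$, $k=2$ the tail is $\asymp p^{-3/2}$ against a target of $\asymp p^{-2}$). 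Your remark that the $j=0$ term "already dominates" is exactly backwards, and the surplus cannot be "folded into $\rho_Q(a)$". Even using exact values rather than the a priori bound, the local sum at a good prime equals $(1-1/p)^{-1}$ times the target, so the full sum loses $\prod_{p\mid a}(1-1/p)^{-1}$, which grows like $\log\log a$ and is not absorbable into a constant depending only on $\deg(Q),C_l,\eta,\varsigma$. Rescuing this route would at minimum require the observation, absent from your write-up, that $P^-(Q(x,y)/a)\geq z$ forces $d$ to be supported on primes $\geq z$, plus the truncation at $ad>A(\mathscr{E})^\eta$ that you explicitly defer; both are real work.

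The statement is labelled a corollary \emph{of proof}, and the intended argument is one sentence long: in the proof of Lemma \ref{lem:large-sieve-a}, the hypothesis $\gcd(a,Q(x,y)/a)=1$ is used in exactly one place, namely to discard the residue classes $(x_0,y_0)\bmod a$ at which the rescaled polynomial $Q_0^r$ is divisible by a prime of $a$ (the singular classes with $p^2$ lifts), thereby reducing the count of classes from $\rho_Q(a)$ to $\widetilde{\rho}_Q(a)$. Dropping the hypothesis, one runs the identical argument over all $\rho_Q(a)$ classes — the per-class bound from Lemma \ref{lem:large-sieve-extended} applied to $Q_0^r$ on $a^{-1}(\mathscr{E}-(x_0,y_0))$ is unchanged (and is trivial for the discarded classes, since there $p\mid Q_0^r$ for some $p\leq z$) — and sums, replacing $\widetilde{\rho}_Q(a)$ by $\rho_Q(a)$. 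No gcd decomposition, no sum over $d$, no truncation.
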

\begin{proof}
The only place where the condition $\gcd(a,Q(x,y)/a)=1$ was used is to dispose of the residue classes which do not lift. Hence the proof follows in the same manner except that $\widetilde{\rho}_Q(a)$ is replaced by $\rho_Q(a)$.
\end{proof}

\begin{defi}
For any $Q\in\mathbb{Z}[x,y]$ define the multiplicative function $\theta_Q$ by 
\begin{equation*}
\theta_Q(p^k)=\begin{cases}
1+2\frac{\rho_Q(p)}{p^2} & p\nmid Q\\
1 &p\mid Q
\end{cases}
\end{equation*}
for all primes $p$ and all integers $k\geq 1$.

Write $\theta_Q=1*\lambda_Q$, then by M\"{o}bius inversion 
\begin{equation*}
\lambda_Q(n)=\begin{cases}
\mu(n)^2\frac{2^{\omega(n)}\rho_Q(n)}{n^2} & \gcd(n,Q)=1\\
0 & \gcd(n,Q)\neq 1
\end{cases}
\end{equation*}
\end{defi}
\begin{remark}\label{rem:fthetaq-order}
If $f\in\mathcal{M}(A,B,\varepsilon)$ then an easy computation shows that for any $\varepsilon'>0$
 $$f\theta_Q\in\mathcal{M}\left(A',B',\varepsilon+\varepsilon' \right)$$ with $A'\ll_{\deg(Q)} A$ and $B'\ll_{\varepsilon',\deg(Q)} B$. 
\end{remark}

\begin{cor}\label{cor:large-sieve-a-theta}
The following inequality holds in the setting of Lemma \ref{lem:large-sieve-a} above
\begin{equation*}
S\ll_{\deg(Q),C_l,\eta,\varsigma} \frac{A(\mathscr{E}) \widetilde{\rho}_Q(a)}{a^2} \theta_Q(a)
\prod_{\substack{\deg(Q)<p\leq z\\ p\nmid \gcd(Q,a)}} \left(1-\frac{\rho_Q(p)}{p^2}\right)
\end{equation*}
\begin{proof}
Follows immediately from Lemma \ref{lem:large-sieve-a} and the fact that $0\leq \frac{\rho_Q(p)}{p^2}\leq \frac{1}{2}$ for any $p\geq 2\deg(Q)$, $p\nmid Q$.
\end{proof}
\end{cor}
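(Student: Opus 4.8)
\textbf{Proof of Corollary \ref{cor:large-sieve-a-theta}.}

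The plan is to start from the bound supplied by Lemma \ref{lem:large-sieve-a}, namely
\begin{equation*}
S\ll_{\deg(Q),C_l,\eta,\varsigma} \frac{A(\mathscr{E})\widetilde{\rho}_Q(a)}{a^2}\prod_{\substack{\deg(Q)<p\leq z \\ p \nmid a}} \left(1-\frac{\rho_Q(p)}{p^2}\right),
\end{equation*}
and simply reorganize the Euler product on the right so that it runs over all primes $p\leq z$ with $p\nmid\gcd(Q,a)$, absorbing the extra factors into $\theta_Q(a)$. Concretely, the only primes appearing in the larger product but not in the smaller one are the primes $p\mid a$ with $\deg(Q)<p\leq z$ and $p\nmid Q$; for each such prime the extra factor introduced is $\left(1-\tfrac{\rho_Q(p)}{p^2}\right)$, which I would estimate from above by $1$ and from below by a positive quantity using that $0\le \rho_Q(p)/p^2\le 1/2$ for $p\ge 2\deg(Q)$, $p\nmid Q$ (this bound is precisely Lemma \ref{lem:Schwartz-Zippel}, which gives $\rho_Q(p)\le \deg(Q)\,p$, hence $\rho_Q(p)/p^2\le \deg(Q)/p\le 1/2$ once $p\ge 2\deg(Q)$).

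The key computation is then the inequality
\begin{equation*}
\prod_{\substack{\deg(Q)<p\leq z \\ p \nmid a}} \left(1-\frac{\rho_Q(p)}{p^2}\right)
\;\leq\;
\left(\prod_{\substack{\deg(Q)<p\leq z \\ p\mid a,\ p\nmid Q}} \left(1-\frac{\rho_Q(p)}{p^2}\right)^{-1}\right)
\prod_{\substack{\deg(Q)<p\leq z\\ p\nmid \gcd(Q,a)}} \left(1-\frac{\rho_Q(p)}{p^2}\right),
\end{equation*}
which is in fact an equality after separating the product over $p\le z$, $p\nmid\gcd(Q,a)$ into the sub-product over $p\nmid a$ and the sub-product over $p\mid a$, $p\nmid Q$. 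For a prime $p\mid a$ with $p\nmid Q$ and $p>\deg(Q)\ge 1$ (so in particular $p\ge 2$; when $p<2\deg(Q)$ there are only $O_{\deg(Q)}(1)$ such primes, each contributing a bounded factor that I fold into the implicit constant), we have $\left(1-\rho_Q(p)/p^2\right)^{-1}\le 2$ since $\rho_Q(p)/p^2\le 1/2$, and more precisely $\left(1-\rho_Q(p)/p^2\right)^{-1}\le 1+2\rho_Q(p)/p^2=\theta_Q(p)$ because $(1-t)^{-1}\le 1+2t$ for $0\le t\le 1/2$. Multiplying over the prime powers $p^k\parallel a$ with $p\mid a$, $p\nmid Q$ gives a factor bounded by $\prod_{p\mid a,\ p\nmid Q}\theta_Q(p)\le \prod_{p^k\parallel a}\theta_Q(p^k)=\theta_Q(a)$, where the middle inequality uses $\theta_Q(p^k)=\theta_Q(p)$ for $p\nmid Q$ and $\theta_Q(p^k)=1\ge$ the trivial contribution for $p\mid Q$. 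Substituting this back into the bound from Lemma \ref{lem:large-sieve-a} yields exactly the asserted inequality, with the implicit constant still depending only on $\deg(Q),C_l,\eta,\varsigma$.

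There is essentially no obstacle here: the corollary is a routine bookkeeping manipulation of the Euler product, and the only genuine input beyond Lemma \ref{lem:large-sieve-a} is the elementary bound $\rho_Q(p)/p^2\le 1/2$ for large primes $p\nmid Q$, which is an immediate consequence of Lemma \ref{lem:Schwartz-Zippel}. The one point requiring minor care is the handling of the finitely many "small" primes $p\le 2\deg(Q)$ (and the prime powers with $p\mid Q$), for which the naive inequality $(1-t)^{-1}\le 1+2t$ may fail or be vacuous; these contribute only an $O_{\deg(Q)}(1)$ multiplicative factor and are absorbed into the implicit constant, so the final form of the estimate is unaffected.
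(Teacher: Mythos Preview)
Your proof is correct and follows exactly the approach the paper has in mind: the paper's proof is the one-line remark that the corollary ``follows immediately from Lemma \ref{lem:large-sieve-a} and the fact that $0\leq \frac{\rho_Q(p)}{p^2}\leq \frac{1}{2}$ for any $p\geq 2\deg(Q)$, $p\nmid Q$,'' and you have simply spelled out the factorization of the Euler product and the inequality $(1-t)^{-1}\le 1+2t$ that this sentence is encoding. Your handling of the finitely many primes $\deg(Q)<p<2\deg(Q)$ by absorbing them into the $O_{\deg(Q)}(1)$ constant is the right way to make the argument precise.
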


\subsection{Decoupling Multiplicative Functions}
The following lemma is standard, if not in form then in function. Although it is not singled as such, this is a main technical tool in \cite{NairTenenbaum}.
\begin{lem}\label{lem:multiplicative-decoupling}
Let $g$ and $\psi$ be non-negative multiplicative functions and denote $h=1*\psi$. Then for any $z\geq1$
\begin{align*}
\sum_{a\leq z} g(a) h(a)
&\leq 
\mathfrak{M}_z(g,\psi) \cdot \sum_{a\leq z} g(a)\\
\mathfrak{M}_z(g,\psi)&\coloneqq \prod_{p\leq z}\left[1+ \sum_{v=1}^{\log z /\log p} \psi(p^v)\sum_{j=v}^{\infty} g(p^j)\right]
\end{align*}
\end{lem}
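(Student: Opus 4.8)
The plan is to interchange the order of summation by stratifying each $a \le z$ according to its ``$\psi$-part.'' Concretely, I would write $h = 1 * \psi$ and expand
\begin{equation*}
\sum_{a\le z} g(a)h(a) = \sum_{a\le z} g(a)\sum_{d\mid a}\psi(d) = \sum_{d\le z}\psi(d)\sum_{\substack{a\le z\\ d\mid a}} g(a).
\end{equation*}
The difficulty with this naive reorganization is that $g$ is only multiplicative, not completely multiplicative, so $g(a)$ for $d\mid a$ does not factor cleanly as $g(d)g(a/d)$. To get around this I would decompose $a$ prime-by-prime relative to $d$: for each prime $p$ appearing in $d$ with $p^v \parallel d$, the corresponding prime-power part of $a$ is $p^j$ for some $j \ge v$, contributing a factor $g(p^j)$; the primes not dividing $d$ contribute an arbitrary product of prime powers which one bounds by extending to all $a' \le z$ with $\gcd(a',d)=1$.

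Thus the key steps, in order, are: (i) reduce to $a = d\cdot b$-type decompositions via the Dirichlet convolution identity above; (ii) for the fixed $d$, bound $\sum_{a \le z,\ d\mid a} g(a)$ by writing $a$ multiplicatively and observing that, since $g \ge 0$ and the constraint $d \mid a$ only forces the exponent of each $p\mid d$ to be at least $v_p(d)$, one has
\begin{equation*}
\sum_{\substack{a\le z\\ d\mid a}} g(a) \le \Bigl(\prod_{p^v\| d} \sum_{j\ge v} g(p^j)\Bigr)\sum_{\substack{a'\le z\\ \gcd(a',d)=1}} g(a') \le \Bigl(\prod_{p^v\| d} \sum_{j\ge v} g(p^j)\Bigr)\sum_{a'\le z} g(a');
\end{equation*}
(iii) substitute this back, factor the resulting bound over primes, and note that each $d\le z$ has $v_p(d) \le \log z/\log p$, so the sum over $d$ is dominated by the Euler product
\begin{equation*}
\sum_{d\le z}\psi(d)\prod_{p^v\| d}\sum_{j\ge v} g(p^j) \le \prod_{p\le z}\Bigl[1 + \sum_{v=1}^{\log z/\log p}\psi(p^v)\sum_{j=v}^\infty g(p^j)\Bigr] = \mathfrak{M}_z(g,\psi);
\end{equation*}
(iv) assemble (i)–(iii) to conclude.

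The main obstacle — really the only subtlety — is step (ii): making precise the claim that enlarging the exponents of the primes dividing $d$ and allowing arbitrary coprime ``tails'' genuinely over-counts $\sum_{d\mid a\le z} g(a)$ with nonnegative weight, and that the tail sum over coprime $a'$ can be harmlessly replaced by the full sum $\sum_{a'\le z} g(a')$. Both are immediate from $g \ge 0$ and unique factorization, but one must be careful that the truncation $a \le z$ is respected: the bound $v \le \log z/\log p$ is exactly what keeps the inner sum over $v$ finite, while the inner sum over $j$ is extended to $\infty$ only as an upper bound (legitimate since $g\ge 0$ and the terms with $p^j > z$ simply won't occur in the truncated sum but are harmless to add). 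Once this bookkeeping is done the Euler product factorization is routine and the lemma follows.
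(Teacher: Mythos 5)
Your proposal is correct and follows essentially the same route as the paper: the paper also expands $h=1*\psi$, writes each $a$ with $k\mid a$ as the product of its $k$-smooth part $kl$ (with $l\mid k^\infty$) and a coprime tail $n$, uses non-negativity to decouple the two sums, and dominates $\sum_{k\le z}\psi(k)\sum_{l\mid k^\infty}g(kl)=\sum_{k\le z}\psi(k)\prod_{p^v\|k}\sum_{j\ge v}g(p^j)$ by the Euler product $\mathfrak{M}_z(g,\psi)$. Your step (ii) is just this same smooth-part/coprime-part decomposition written prime by prime, so the two arguments coincide.
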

\begin{remark}\label{rem:multiplicative-decoupling}
We can write an upper bound for $\mathfrak{M}_z(g,\psi)$ using $h$ 
\begin{equation*}
\mathfrak{M}_z(g,\psi)\leq \prod_{p\leq z} \sum_{j=0}^{\infty} g(p^j) \sum_{v=0}^j\psi(p^v) =
\prod_{p\leq z} \sum_{j=0}^{\infty} g(p^j) h(p^j)
\end{equation*}
\end{remark}
\begin{proof}
First we expend $h$ in terms of $\psi$
\begin{equation*}
\sum_{a\leq z} g(a) h(a)
=\sum_{a\leq z} \sum_{k\mid a} \psi(k) g(a)
=\sum_{k\leq z} \sum_{t\leq z/k} \psi(k) g(kt)
\end{equation*}
We decompose each $t$ in the sum above as as $t=ln$ where $l=\gcd(t,k^\infty)$ then the expression above is equal to
\begin{equation*}
\sum_{k\leq z} \sum_{t\leq z/k} \psi(k) g(kl)g(n)
\leq \left(\sum_{k\leq z}\psi(k) \sum_{l\mid k^\infty} g(kl)\right) \sum_{n\leq z} g(n)
\end{equation*}

To complete the proof we bound the double sum in the scopes.
\begin{equation*}
\sum_{k\leq z}\psi(k) \sum_{l\mid k^\infty}  g(kl)
\leq
\prod_{p\leq z}\left[1+ \sum_{v=1}^{\log z/\log p} \psi(p^v)\sum_{j=v}^{\infty} g(p^j)\right]
\end{equation*}
\end{proof}

We use the decoupling lemma above to prove the two key results to be used in the proof of Theorem \ref{thm:2d-sieve}. 
The first one shows that on average the product over primes in Lemma \ref{lem:large-sieve-a} can be extended to include primes dividing $a$.
\begin{lem}\label{lem:large-sieve-a-averaged}
Let $Q\in\mathbb{Z}[x,y]$ such that there are $C>0, 1\geq r> 0$ satisfying $\widetilde{\rho}_Q(p^k)\leq C p^{k(2-r)}$ for all prime powers $p^k$.
Let $f$ be a non-negative multiplicative function such that $f(n)<B n^\varepsilon$ for some $B>0$, $r>\varepsilon>0$ and all $n$.  
Then for any $z \geq 1$ 
\begin{equation*}
\sum_{a\leq z} \frac{\widetilde{\rho}_Q(a)f(a)}{a^2} \theta_Q(a)
\ll_{\deg(Q),B,C,r,\varepsilon}
\sum_{a\leq z} \frac{\widetilde{\rho}_Q(a)f(a)}{a^2}
\end{equation*}
\end{lem}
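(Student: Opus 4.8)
The plan is to apply the decoupling lemma, Lemma \ref{lem:multiplicative-decoupling}, with the multiplicative function $g(a) = \widetilde{\rho}_Q(a) f(a)/a^2$ and with $\psi = \lambda_Q$, so that $h = 1 * \lambda_Q = \theta_Q$; then the left-hand side is bounded by $\mathfrak{M}_z(g,\lambda_Q) \sum_{a\le z} g(a)$, and it suffices to show the multiplicative factor $\mathfrak{M}_z(g,\lambda_Q)$ is $O(1)$ with the claimed dependence. By Remark \ref{rem:multiplicative-decoupling} we may bound $\mathfrak{M}_z(g,\lambda_Q) \le \prod_{p\le z} \sum_{j\ge 0} g(p^j)\theta_Q(p^j)$, so the task reduces to proving that this Euler product converges with an explicit bound.

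First I would estimate the local factor at a prime $p$. Recall $\theta_Q(p^j) = 1 + 2\rho_Q(p)/p^2$ for $p\nmid Q$ (independent of $j\ge1$) and $\theta_Q(p^j)=1$ for $p\mid Q$, and by the DeMillo--Lipton--Schwartz--Zippel bound (Lemma \ref{lem:Schwartz-Zippel}) $\rho_Q(p)\le \deg(Q)\, p$ for $p\nmid Q$, so $\theta_Q(p^j) \le 1 + 2\deg(Q)/p$. Using the hypotheses $\widetilde{\rho}_Q(p^j)\le C p^{j(2-r)}$ and $f(p^j) < B p^{j\varepsilon}$, we get $g(p^j) \le BC\, p^{j(\varepsilon - r)}$. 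Hence, writing $\beta = r - \varepsilon > 0$,
\begin{equation*}
\sum_{j\ge 0} g(p^j)\theta_Q(p^j) \le 1 + \Bigl(1 + \tfrac{2\deg(Q)}{p}\Bigr) BC \sum_{j\ge 1} p^{-j\beta} = 1 + \Bigl(1 + \tfrac{2\deg(Q)}{p}\Bigr)\frac{BC}{p^{\beta}-1}.
\end{equation*}
For $p$ large this is $1 + O_{\deg(Q),B,C,\beta}(p^{-\beta})$ (keeping in mind $\beta \le r \le 1$, so the dominant tail is $p^{-\beta}$, not $p^{-1-\beta}$), and for the finitely many small primes the factor is bounded by an absolute constant depending on the same data. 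Therefore $\prod_p \sum_{j\ge0} g(p^j)\theta_Q(p^j)$ converges, with a bound depending only on $\deg(Q), B, C, r, \varepsilon$ (through $\beta$). Note the product is taken over all primes, not just $p\le z$, which only enlarges the upper bound.

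The one point requiring a little care — and the main (modest) obstacle — is handling the small primes, in particular $p \le \deg(Q)$ and $p \mid Q$: there Lemma \ref{lem:Schwartz-Zippel} does not apply directly, but there are only finitely many such primes (bounded in terms of $\deg(Q)$ and the content/discriminant data, all absorbed into the implied constant), and at each one the local factor $\sum_{j\ge0} g(p^j)\theta_Q(p^j)$ is still a convergent geometric-type series bounded by $1 + O_{B,C,\beta}(1)$ because $g(p^j)$ decays like $p^{-j\beta}$ regardless, and $\theta_Q(p^j)\le 1 + 2\deg(Q)$ is trivially bounded. One should also check the edge case $\varepsilon$ close to $r$: the dependence of the constant on $\beta = r-\varepsilon$ blows up as $\varepsilon \to r$, but since $\varepsilon < r$ is assumed strictly, this is fine and is exactly why $\varepsilon$ appears among the parameters the implied constant depends on. Assembling these estimates with Lemma \ref{lem:multiplicative-decoupling} and Remark \ref{rem:multiplicative-decoupling} yields the stated inequality.
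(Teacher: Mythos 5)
Your overall strategy is the paper's: apply Lemma \ref{lem:multiplicative-decoupling} with $g(a)=f(a)\widetilde{\rho}_Q(a)/a^2$ and $\psi=\lambda_Q$, then show $\mathfrak{M}_z(g,\lambda_Q)\ll 1$. But the step where you pass to Remark \ref{rem:multiplicative-decoupling} and bound the local factor by $\sum_{j\ge0}g(p^j)\theta_Q(p^j)$ breaks the proof. As you yourself compute, that local factor is $1+O_{\deg(Q),B,C}(p^{-\beta})$ with $\beta=r-\varepsilon$, and since $0<\beta< r\le 1$ the series $\sum_p p^{-\beta}$ diverges; hence $\prod_{p\le z}\bigl(1+c\,p^{-\beta}\bigr)$ is \emph{not} bounded as $z\to\infty$ (it grows faster than any power of $\log z$). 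You explicitly note that the dominant tail is $p^{-\beta}$ rather than $p^{-1-\beta}$ and then assert convergence of the product anyway; that assertion is false. Concretely, the $v=0$ term hidden in the Remark's bound contributes $g(p)=f(p)\widetilde{\rho}_Q(p)/p^2$, which for a typical prime is of size $\asymp f(p)/p$, and $\prod_{p\le z}(1+f(p)/p)$ is comparable to the logarithmic mean you are trying to bound — so the Remark's estimate is exactly too lossy for this lemma (it is adequate in Lemma \ref{lem:sieved-sum-exp-save} only because there $\beta\asymp s/\log z$ is tiny).

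The repair is to use the definition of $\mathfrak{M}_z(g,\psi)$ directly rather than the Remark. Since $\lambda_Q$ is supported on square-free integers and $\lambda_Q(p)=2\rho_Q(p)/p^2\le 2\deg(Q)/p$ for $p\nmid Q$ (DeMillo--Lipton--Schwartz--Zippel), with $\lambda_Q(p^v)=0$ for $v\ge 2$ and $\lambda_Q(p)=0$ for $p\mid Q$, the local factor is
\begin{equation*}
1+\lambda_Q(p)\sum_{j\ge 1}g(p^j)\;\le\;1+\frac{2\deg(Q)}{p}\cdot\frac{BC}{p^{\,r-\varepsilon}-1}\;=\;1+O_{\deg(Q),B,C,r-\varepsilon}\!\left(p^{-1-(r-\varepsilon)}\right),
\end{equation*}
and now the extra factor $1/p$ coming from $\lambda_Q(p)$ makes $\prod_p$ of these converge. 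This is precisely the paper's argument; your write-up is missing that $1/p$ saving because the Remark replaces the leading ``$1$'' in the local factor by $\sum_{j\ge0}g(p^j)$.
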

\begin{proof}
Apply Lemma \ref{lem:multiplicative-decoupling} to  $g(a)\coloneqq f(a)\widetilde{\rho}_Q(a)/a^2\geq0$ and $\psi=\lambda_Q$.
To complete the proof we need bound $\mathfrak{M}_z(g,\lambda_Q)$. 

Using the assumptions we bound
$f(p^k)\widetilde{\rho}_Q(p^k)\leq BCp^{k(2-r+\varepsilon)}$ and $f(p^2)\rho_Q(p^2)\leq f(p^2)\rho_Q(p)p^2\leq A^2\deg(Q)p^3$.
Because $\lambda_Q$ is supported on the square-free integers we see that
\begin{align*}
\mathfrak{M}(g,\lambda_Q)
&\leq
\prod_{p\leq z} \left[1+\sum_{v\geq 1} \lambda_Q(p^v)\sum_{j=v}^{\infty} g(p^j) \right]
\leq
\prod_{\substack{p\leq z \\ p\nmid Q}} \left(1+2\frac{\rho_Q(p)}{p^2}\sum_{j=1}^{\infty}\frac{BC}{p^{j(r-\varepsilon)}} \right)\\
&\leq \prod_{p< \infty} \left(1+2\deg(Q)BC\frac{1}{p^{1+r-\varepsilon}}\frac{1}{1-p^{-(r-\varepsilon)}}\right)
\ll_{\deg(Q),B,C,r-\varepsilon}1
\end{align*}
\end{proof}

The second result to use the Lemma \ref{lem:multiplicative-decoupling} provides a saving for the pertinent sums over smooth integers satisfying $P^+(a)\leq z^{1/s}$ for a fixed $s>0$. The crux of the following lemma is that it saves an exponent in $s$, and it will be applied to control a term that grows geometrically in the parameter $s$.
\begin{lem}\label{lem:sieved-sum-exp-save}
Let $Q\in\mathbb{Z}[x,y]$ such that there are $C>0, 1\geq r> 0$ satisfying $\widetilde{\rho}_Q(p^k)\leq C p^{k(2-r)}$ for all prime powers $p^k$.

Let $f$ be a non-negative multiplicative function of class $\mathcal{M}(A,B,\varepsilon)$ with $0<\varepsilon<r$. Then for any $\alpha,s>0$, $z>1$ and $(r-\varepsilon)\log(z)/(2s) \geq \kappa >0$
\begin{equation}\label{eq:P+-exponential-save}
\sum_{\substack{z^\alpha \leq a \leq z \\ P^+(a)\leq z^{1/s}}} \frac{\widetilde{\rho}_Q(a)f(a)}{a^2} 
\ll_{\kappa, A, B,\varepsilon,C,r,\deg(Q)}
\exp(-s\alpha\kappa) \sum_{a \leq z } \frac{\widetilde{\rho}_Q(a)f(a)}{a^2}
\end{equation}
\end{lem}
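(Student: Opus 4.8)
\textbf{Proof proposal for Lemma \ref{lem:sieved-sum-exp-save}.}

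The plan is to exploit a Rankin-type trick: inserting a smoothing weight that is harmless on the full sum but produces a genuine power saving on the restricted (smooth-number) sum, and then control the resulting Euler product using the decoupling Lemma \ref{lem:multiplicative-decoupling}. Concretely, for $a$ in the range $z^\alpha \le a \le z$ with $P^+(a) \le z^{1/s}$, I would write each such $a$ as a product of prime powers all of size at most $z^{1/s}$; since $a \ge z^\alpha$, at least $s\alpha$ ``blocks'' of size $\le z^{1/s}$ are needed, which is the source of the exponential gain. Formally, fix a parameter $\beta = 2s\kappa/\big((r-\varepsilon)\log z\big) \le 1$ (so that $\kappa = \beta(r-\varepsilon)\log z/(2s)$) and bound
\begin{equation*}
\sum_{\substack{z^\alpha \le a \le z \\ P^+(a)\le z^{1/s}}} \frac{\widetilde{\rho}_Q(a)f(a)}{a^2}
\;\le\; z^{-\alpha\beta(r-\varepsilon)/(2s)}\cdot\log z \cdot \sum_{\substack{a\le z \\ P^+(a)\le z^{1/s}}} \frac{\widetilde{\rho}_Q(a)f(a)}{a^2}\, a^{\beta(r-\varepsilon)/(2s)}.
\end{equation*}
The prefactor is $\exp\!\big(-s\alpha\cdot \beta(r-\varepsilon)\log z/(2s^2)\big)$; choosing $\beta$ as above makes the exponent equal to $-s\alpha\kappa/s = -\alpha\kappa$ — wait, this must be reconciled with the target $\exp(-s\alpha\kappa)$, so I would instead extract the full smooth-number constraint more carefully: the number of prime-power factors of a $z^{1/s}$-smooth integer $a \ge z^\alpha$ is $\ge s\alpha$, so multiplying the summand by $2^{-\Omega(a)}$ on the restricted sum (and noting $2^{-\Omega(a)} \le 2^{-s\alpha}$ there) while on the right-hand side of Lemma \ref{lem:multiplicative-decoupling} the extra factor $2^{\Omega(a)}$ is absorbed since $r-\varepsilon>0$ gives convergence with room to spare; here $2^{-s\alpha} = \exp(-s\alpha\log 2)$ and adjusting constants covers any $\kappa$ up to $\log 2$, with the stated hypothesis $(r-\varepsilon)\log z/(2s) \ge \kappa$ providing the remaining flexibility via the Rankin shift.

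The cleanest route combines both ideas: use the Rankin shift $a^{\sigma}$ with $\sigma = 1/(s)$ suitably scaled by $(r-\varepsilon)$, i.e. $\sigma$ chosen so that $z^{-\alpha\sigma} = \exp(-s\alpha\kappa)$, which forces $\sigma = s\kappa/\log z$; the hypothesis $(r-\varepsilon)\log z/(2s) \ge \kappa$ guarantees $\sigma \le (r-\varepsilon)/2$, which is exactly the amount of room needed for the Euler product to converge. Then
\begin{equation*}
\sum_{\substack{z^\alpha \le a \le z \\ P^+(a)\le z^{1/s}}} \frac{\widetilde{\rho}_Q(a)f(a)}{a^2}
\;\le\; z^{-\alpha\sigma}\sum_{a\le z} \frac{\widetilde{\rho}_Q(a)f(a)}{a^{2-\sigma}}
\;=\;\exp(-s\alpha\kappa)\sum_{a\le z} \frac{\widetilde{\rho}_Q(a)f(a)\,a^\sigma}{a^{2}}.
\end{equation*}
It then remains to show $\sum_{a\le z} \widetilde{\rho}_Q(a)f(a)a^\sigma/a^2 \ll \sum_{a\le z}\widetilde{\rho}_Q(a)f(a)/a^2$ with an implied constant depending only on the stated parameters. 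Apply Lemma \ref{lem:multiplicative-decoupling} with $g(a) = \widetilde{\rho}_Q(a)f(a)/a^2$ and $\psi$ the multiplicative function with $\psi(p^v) = (p^{v\sigma}-p^{(v-1)\sigma})$ so that $1*\psi$ evaluated at $p^k$ is $p^{k\sigma}$; by Remark \ref{rem:multiplicative-decoupling} the decoupling constant is bounded by $\prod_{p} \sum_{j\ge 0} g(p^j) p^{j\sigma}$, and using $f(p^j) \le A^j$ together with $\widetilde{\rho}_Q(p^j) \le Cp^{j(2-r)}$ each local factor is $1 + O\big(C A p^{\sigma - r}\big) = 1 + O(CAp^{-(r-\varepsilon)/2 - (r-\varepsilon)/2}) $, wait — with $\sigma \le (r-\varepsilon)/2 < r/2$ one gets $p^{\sigma - r} \le p^{-r/2}$, and also incorporating the constraint $f(p^j)\le Bp^{j\varepsilon}$ for large $j$ gives a geometric tail; the product over all $p$ converges to a constant depending only on $A, B, C, r, \varepsilon, \deg Q$. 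This yields \eqref{eq:P+-exponential-save}.

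\textbf{Main obstacle.} The delicate point is the simultaneous juggling of three constraints on the Rankin shift parameter $\sigma$: it must be large enough that $z^{-\alpha\sigma} \le \exp(-s\alpha\kappa)$ (forcing $\sigma \ge s\kappa/\log z$), small enough that the Euler product $\prod_p\sum_j g(p^j)p^{j\sigma}$ converges uniformly (needing $\sigma < r - \varepsilon$, comfortably met), and the bookkeeping must confirm the implied constant genuinely depends only on the advertised list — in particular \emph{not} on $z$, $s$, or $\alpha$. The hypothesis $(r-\varepsilon)\log z/(2s) \ge \kappa$ is precisely calibrated so that $\sigma := s\kappa/\log z \le (r-\varepsilon)/2$, giving a fixed gap $r - \varepsilon - \sigma \ge (r-\varepsilon)/2$ for the geometric series; once this is noticed the rest is routine. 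A secondary subtlety is ensuring the ``$P^+(a)\le z^{1/s}$'' restriction is not actually needed in the final bound — indeed the argument above discards it, which is fine since \eqref{eq:P+-exponential-save} only asserts an upper bound; but one should double-check that discarding it does not lose the $s$-dependence in the exponent, which it does not because the $s$ enters solely through $\sigma = s\kappa/\log z$ in the Rankin shift.
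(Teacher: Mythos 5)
Your ``cleanest route'' is exactly the paper's strategy: Rankin's trick with the shift $\sigma=s\kappa/\log z$ (so that $z^{-\alpha\sigma}=\exp(-s\alpha\kappa)$, and the hypothesis $(r-\varepsilon)\log z/(2s)\geq\kappa$ is precisely $\sigma\leq(r-\varepsilon)/2$), followed by Lemma \ref{lem:multiplicative-decoupling} with $\psi$ the M\"obius inversion of $a\mapsto a^{\sigma}$. However, your final step contains a genuine gap. You claim the decoupling constant is controlled by $\prod_p\sum_{j\geq0}g(p^j)p^{j\sigma}$ with local factors $1+O(CAp^{\sigma-r})$, and that this product converges. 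It does not: since $r\leq1$ and $\sigma>0$, the exponent satisfies $\sigma-r>-1$, so $\sum_p p^{\sigma-r}$ diverges. Even the sharper single-prime bound $\widetilde{\rho}_Q(p)\leq\deg(Q)\,p$ from Lemma \ref{lem:Schwartz-Zippel} only gives local factors $1+O(p^{\sigma-1})$, and $\sum_{p\leq Y}p^{\sigma-1}\geq\sum_{p\leq Y}p^{-1}\sim\log\log Y$; for a typical $Q$ (e.g.\ a smooth conic with $\rho_Q(p)=p\pm1$) the product you invoke is genuinely of size $(\log z)^{c}$ for some $c>0$. Your implied constant therefore depends on $z$, which is exactly what the lemma forbids and what the application in the proof of Theorem \ref{thm:2d-sieve} cannot absorb.

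The repair is to use the exact decoupling constant rather than the crude majorant of Remark \ref{rem:multiplicative-decoupling}. Telescoping $\sum_{v\leq j}\psi(p^v)=p^{j\sigma}-1$ gives $\mathfrak{M}_z(g,\psi)=\prod_p\left[1+\sum_{j\geq1}g(p^j)\left(p^{j\sigma}-1\right)\right]$, and the elementary bound $p^{j\sigma}-1\leq j\sigma p^{j\sigma}\log p$ supplies an extra factor $\sigma\log p$: the dangerous low-$j$ contribution becomes $\ll\sigma\sum_{p\leq z^{1/s}}p^{-1}\log p\ll\sigma\log(z^{1/s})=\kappa$, which is bounded, while the tail $j$ large is handled with $f(p^j)\leq Bp^{j\varepsilon}$ and $\sigma\leq(r-\varepsilon)/2$ as you indicate. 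For this to be uniform in $s$ you must also keep $g$ supported on $z^{1/s}$-smooth integers (as the paper does), so the product runs only over $p\leq z^{1/s}$, where $p^{j\sigma}\leq e^{j\kappa}$; discarding the restriction $P^+(a)\leq z^{1/s}$ before decoupling, as you propose, lets the product run up to $p\leq z$, where $p^{\sigma}$ can be as large as $e^{s\kappa}$, and the constant then blows up with $s$ --- fatal, since the lemma is applied with $s$ as large as $\log Z/\log(\log Z\log\log Z)$. (For what it is worth, the paper's own write-up also passes to the crude bound $\prod_p(1+\sum_j g(p^j)p^{j\beta})$ and asserts $\sum_{p\leq z^{1/s}}p^{K_0\beta-1}\ll_{K_0}1$, which is not literally correct for the same reason; it is the telescoped form of $\mathfrak{M}_z$ that actually closes the argument.)
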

\begin{remark}
If the curve cutout by $Q$ is smooth over $\mathbb{Q}$ then by Hensel's lemma we have $\rho_Q(p^k)\leq \rho_{Q}(p)p^{k-1}\leq \deg(Q)p^{k(2-1)}$ for all primes $p$ of good reduction. In this case the constants $C,r$ only depend on the number of points on the curve modulo powers of primes of bad reduction.
\end{remark}

\begin{proof}
Let $(r-\varepsilon)/2\geq\beta\coloneqq\frac{\kappa s}{\log z}>0$ then the left hand side of \eqref{eq:P+-exponential-save} is bounded above by
\begin{equation}\label{eq:rho-f-beta}
z^{-\alpha\beta}\sum_{\substack{z^\alpha \leq  a \leq z \\ P^+(a)\leq z^{1/s}}} \frac{\widetilde{\rho}_Q(a)f(a)}{a^2}a^\beta
\leq 
z^{-\alpha\beta}\sum_{\substack{a \leq z \\ P^+(a)\leq z^{1/s}}} \frac{\widetilde{\rho}_Q(a)f(a)}{a^2}a^\beta
\end{equation}
Define the non-negative multiplicative function $g$ by $g(a)=f(a)\widetilde{\rho}_Q(a)/a^2$ if $P^+(a)\leq z^{1/s}$ and $g(a)=0$ otherwise. Let $\psi$ be the M\"obius inversion of the multiplicative function $a\mapsto a^\beta$. An explicit formula for $\psi$ is 
\begin{equation*}
\psi(p^k)=p^{\beta k}-p^{\beta (k-1)}
\end{equation*}
for all primes $p$ and all $k\geq 0$.

Applying Lemma \ref{lem:multiplicative-decoupling} with $g$ and $\psi$ as above we can bound the right hand side of \eqref{eq:rho-f-beta} above by
\begin{equation*}
z^{-\alpha \beta} \cdot \mathfrak{M}_{z}(g,\psi) \sum_{\substack{ a \leq z \\ P^+(a)\leq z^{1/s}}} \frac{\widetilde{\rho}_Q(a)f(a)}{a^2}
\end{equation*}

We now wish to estimate $\mathfrak{M}_{z}(g,\psi)$.
Using the fact that $g$ is supported on integers without prime factors bigger then $z^{1/s}$ and Remark \ref{rem:multiplicative-decoupling} we deduce
\begin{align}
\mathfrak{M}_{z}(g,\psi)
&\leq
\prod_{p\leq z^{1/s}}\left(1+ \sum_{j=1}^{\infty}g(p^j) p^{j\beta}  \right)
\label{eq:M-exp-prod}
\end{align}

Let $K_0\coloneqq\lceil 4/(r-\varepsilon) \rceil>0$.
For any $k\leq K_0$ we estimate $\widetilde{\rho}_Q(p^k)\leq \rho_Q(p)p^{2k-2}\leq \deg(Q)p^{2k-1}$. For $k> K_0$ we use the assumption $\widetilde{\rho}_Q(p^k)\leq Cp^{k(2-r)}$. Combined with inequality \eqref{eq:M-exp-prod} this implies
\begin{equation}\label{eq:M-exp-prod-2}
\log \mathfrak{M}_{z}(g,\psi) \leq K_0 A^{K_0}\deg(Q)\sum_{p\leq z^{1/s}}\frac{p^{K_0\beta}}{p}
+BC\sum_{p\leq z^{1/s}} \sum_{j=K_0+1}^\infty p^{-j(r-\beta-\varepsilon)}
\end{equation}
We bound the second summand above using the inequality $r-\beta-\varepsilon\geq (r-\varepsilon)/2>0$
\begin{align*}
\sum_{p\leq z^{1/s}}  \sum_{j=K_0+1}^\infty p^{-j(r-\beta-\varepsilon)}
&\leq
\sum_{p\leq z^{1/s}}  p^{-(K_0+1)(r-\varepsilon)/2}\frac{1}{1-p^{-(r-\varepsilon)/2}}\\
&\ll_{K_0} 
\sum_{p\leq z^{1/s}}  p^{-K_0(r-\varepsilon)/2}
\leq
\sum_{p\leq \infty}  p^{-2}\ll 1
\\
\end{align*}

We bound the main term in \eqref{eq:M-exp-prod-2} above using the PNT
\begin{equation*}
\sum_{p\leq z^{1/s}}\frac{p^{K_0\beta}}{p}=\Li(z^{K_0\beta/s})+z^{K_0\beta/s}\cdot o_{\log(z)/s}(1) \ll_{K_0} 1
\end{equation*} 

We can now conclude that
\begin{equation*}
\mathfrak{M}_z(g,\psi) \ll_{\kappa, A, B,\varepsilon,C,r,\deg(Q)} 1
\end{equation*}
\end{proof}

Finally, the following lemma shows that the sums over extremely smooth integers are completely negligible.
\begin{lem}\label{lem:extremely-smooth-sums}
Let $Q\in\mathbb{Z}[x,y]$ such that there are $C>0$, $1\geq r> 0$ satisfying $\widetilde{\rho}_Q(p^k)\leq C p^{k(2-r)}$ for all prime powers $p^k$.
Then for any $\beta>0$ and $1\geq\alpha\geq 0$
\begin{equation}\label{eq:case-3-lemma}
\sum_{\substack{z^{\alpha}\leq a \leq z \\ P^+(a)\leq \log z \log \log z}} \frac{\widetilde{\rho}_Q(a)}{a^2}
\ll_{C,\alpha,\beta}
 z^{-r\alpha+\beta}
\end{equation}
\end{lem}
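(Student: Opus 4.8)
The plan is to estimate the sum by splitting according to the largest prime power exactly dividing $a$ and exploiting the extreme smoothness condition $P^+(a)\le \log z\,\log\log z$, which forces $a$ to have many prime factors once $a\ge z^\alpha$. First I would write $\omega(a)\le \Omega(a)$ and note that, under $P^+(a)\le y$ with $y=\log z\,\log\log z$, the bound $a\ge z^\alpha$ gives $\Omega(a)\ge \alpha\log z/\log y$; since $\log y = \log\log z + \log\log\log z \ll \log\log z$, we get $\Omega(a)\gg \alpha\log z/\log\log z$, i.e.\ $a$ is forced to have at least that many prime factors. Then the multiplicative bound $\widetilde{\rho}_Q(p^k)\le C p^{k(2-r)}$ gives $\widetilde{\rho}_Q(a)/a^2 \le \prod_{p^k\|a} C p^{-kr} = C^{\omega(a)} a^{-r}$, so that
\begin{equation*}
\sum_{\substack{z^{\alpha}\le a\le z \\ P^+(a)\le y}} \frac{\widetilde{\rho}_Q(a)}{a^2}
\le z^{-r\alpha}\sum_{\substack{a\le z\\ P^+(a)\le y}} C^{\omega(a)}.
\end{equation*}
The task is then to show $\sum_{a\le z,\, P^+(a)\le y} C^{\omega(a)} \ll_{C,\beta} z^{\beta}$, which, since $\beta>0$ is arbitrary, only requires a bound of the shape $z^{o(1)}$.

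For the count of $y$-smooth integers weighted by $C^{\omega(a)}$, I would use a Rankin-type argument: for any $\delta>0$,
\begin{equation*}
\sum_{\substack{a\le z\\ P^+(a)\le y}} C^{\omega(a)}
\le z^{\delta}\sum_{P^+(a)\le y} \frac{C^{\omega(a)}}{a^{\delta}}
= z^{\delta}\prod_{p\le y}\left(1+\frac{C}{p^{\delta}-1}\right)
\le z^{\delta}\exp\!\left(\sum_{p\le y}\frac{2C}{p^{\delta}}\right)
\end{equation*}
once $p^{\delta}\ge 2$, i.e.\ for $p$ large; the finitely many small primes contribute a bounded factor. Choosing $\delta = 1/\log y$, one has $p^{\delta}\ge 1$ and more precisely $\sum_{p\le y} p^{-\delta}\ll \pi(y)/\min_{p\le y}p^{\delta} \ll y$ is too weak, so instead I would take $\delta$ a fixed small constant depending on $\beta$ (say $\delta = \beta/2$): then $\sum_{p\le y} p^{-\delta} \ll y^{1-\delta}/\log y = o(\log z)$ is still not obviously $O(1)$, so the correct move is $\delta$ fixed and bounding $\sum_{p\le y}p^{-\delta}\ll y^{1-\delta}$, giving the factor $\exp(O(y^{1-\delta}))=\exp(O((\log z\log\log z)^{1-\delta}))$, which is $z^{o(1)}$ and in particular $\ll_{C,\beta} z^{\beta/2}$ for $z$ large. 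Combined with the prefactor $z^{\delta}=z^{\beta/2}$ this yields $\sum_{a\le z,\,P^+(a)\le y}C^{\omega(a)}\ll_{C,\beta} z^{\beta}$, and multiplying by $z^{-r\alpha}$ finishes the proof.

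The only subtlety — and the place I would be most careful — is making the dependence on $\alpha$ explicit and ensuring the implied constant genuinely depends only on $C,\alpha,\beta$ (and not on $Q$ beyond $C,r$, nor on $z$); this is automatic from the Rankin bound above since $r\le 1$ makes $z^{-r\alpha}$ harmless and the smooth-number estimate is uniform. One should also handle the degenerate ranges ($\alpha=0$, or $z$ bounded) trivially, and absorb the bounded contribution of primes $p\le 2^{1/\delta}$ into the constant. I do not expect any real obstacle here: the statement is a soft upper bound and the exponential-in-$y^{1-\delta}$ loss is swamped by the arbitrary polynomial saving $z^{\beta}$ we are allowed.
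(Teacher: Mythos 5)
Your proof is correct, but it takes a genuinely different route from the paper's. Both arguments start identically, with the multiplicativity bound $\widetilde{\rho}_Q(a)/a^2\le C^{\omega(a)}a^{-r}\le C^{\omega(a)}z^{-r\alpha}$, reducing everything to showing $\sum_{a\le z,\,P^+(a)\le y}C^{\omega(a)}\ll_{C,\beta}z^{\beta}$ with $y=\log z\log\log z$. At that point the paper uses the Hardy--Ramanujan-type bound $\omega(a)\le K\log a/\log\log a$ to get $C^{\omega(a)}\le z^{\beta/2}$ uniformly once $\log\log z$ exceeds a threshold depending on $C,\alpha,\beta$, and then quotes the smooth-number estimate $\Psi(z,\log z\log\log z)\ll_\beta z^{\beta/2}$ from Shiu's Lemma 1; the complementary range ($z$ bounded in terms of $C,\alpha,\beta$) is handled trivially. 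You instead run Rankin's trick with a fixed $\delta$ depending on $\beta$, turning the weighted smooth count into the Euler product $z^{\delta}\prod_{p\le y}\left(1+C/(p^{\delta}-1)\right)\le z^{\delta}\exp\left(O_{C,\delta}(y^{1-\delta})\right)=z^{\delta+o(1)}$, which handles the weight $C^{\omega(a)}$ and the smoothness condition in a single stroke. Your version is self-contained (no appeal to the $\Psi$ estimate), avoids the case split, and yields an implied constant independent of $\alpha$ (which also covers $\alpha=0$ cleanly, where the paper's intermediate bound $z^{K\log C/(\log\log z+\log\alpha)}$ degenerates); the only points to tidy are the ones you already flag — absorbing the primes $p<2^{1/\delta}$ and the range of bounded $z$ into the constant, and replacing $\delta=\beta/2$ by $\min(\beta/2,1/2)$, say, so that the bound $\sum_{p\le y}p^{-\delta}\ll y^{1-\delta}$ is used only with $\delta<1$.
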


\begin{proof}
The assumed upper bound on $\widetilde{\rho}_Q$ implies $\widetilde{\rho_{Q}}(a)/a^2\leq C^{\omega(a)}/a^r$ for all $a\in\mathbb{N}$.
This can be used to bound the left hand side of \eqref{eq:case-3-lemma} by
\begin{equation}\label{eq:case-3-lemma-r}
\sum_{\substack{z^{\alpha}\leq a \leq z \\ P^+(a)\leq \log z \log \log z}} \frac{C^{\omega(a)}}{a^r}
\end{equation}

We apply the standard bound $\omega(a)\leq K \log a/\log \log a$ for some fixed $K>0$ to deduce for any $z^{\alpha}\leq a \leq z$
\begin{equation*}
C^{\omega(a)}\leq z^{K\log C/(\log\log z+\log \alpha)}
\end{equation*}
We split our calculation into two cases. 
\begin{enumerate}
\item 
If $\log\log z\geq K\log C\cdot2/\beta-\log \alpha$ then $C^{\omega(a)}\leq z^{\beta/2}$ 
and we bound \eqref{eq:case-3-lemma-r} by
\begin{equation*}
z^{-r\alpha+\beta/2} \sum_{\substack{a \leq z \\ P^+(a)\leq \log z \log \log z}} 1=
 z^{-r\alpha+\beta/2} \Psi(z,\log z\log \log z) 
\end{equation*}
This case is settled because of the inequality $\Psi(z,\log z\log \log z)\ll_\beta z^{\beta/2}$ which follows from \cite[Lemma 1]{Shiu}.

\item 
If on the other hand $\log\log z< K\log C \cdot 2/\beta-\log \alpha$ then using the trivial bound $\widetilde{\rho}_Q(a)\leq a^2\leq z^2\ll_{C,\alpha,\beta} 1$  we estimate \eqref{eq:case-3-lemma} by
\begin{equation*}
\sum_{\substack{z^{\alpha}\leq a \leq z \\ P^+(a)\leq \log z \log \log z}} \frac{\widetilde{\rho}_Q(a)}{a^2} \ll_{C,\alpha,\beta}
 \sum_{z^\alpha \leq a \leq z} \frac{1}{a^2}\ll z^{-\alpha}
\end{equation*}
\end{enumerate}

\end{proof}

\subsection{Proof of Theorem \ref{thm:2d-sieve}}
In this section only all the implicit constants in the $\ll$ notation are allowed to depend on $\eta,A,B,\varepsilon,C,r,\deg(Q),\delta,C_l$ without further notation.
Denote $\mathscr{E}_{\mathbb{Z}}\coloneqq \mathscr{E}\cap \mathbb{Z}^2$.
We introduce notation similar to the one in \cite{Nair}. Let $Z\coloneqq A(\mathscr{E})^\eta$. For any fixed $(x,y)\in \mathscr{E}_{\mathbb{Z}}$  we write a decomposition into prime powers
\begin{equation*}
Q(x,y)=p_1^{e_1}\cdot p_2^{e_2}\cdot\ldots \cdot p_l^{e_l}
\end{equation*}
where $p_1<p_2<\cdots<p_l$. Define $a\coloneqq p_1^{e_1}\cdot \ldots p_j^{e_j}$ so that $a\leq Z$ but $a\cdot p_{j+1}^{e_j+1}>Z$, in particular $a=1$ if all $p_1^{e_1}>Z$. Let $b\coloneqq Q(x,y)/a$ and set $q\coloneqq p_{j+1}$, $e\coloneqq e_{j+1}$. Because $f$ is multiplicative we always have $f\left(Q(x,y)\right)=f(a)f(b)$. 

Following \cite{Shiu} we split the sum on the left hand side of \eqref{eq:sieve-thm-main} into four ranges
\begin{enumerate}
\item $R_1$ is the set of all $(x,y)\in \mathscr{E}_{\mathbb{Z}}$ such that $q\geq Z^{1/2}$;
\item $R_2$ is the set of all $(x,y)\in \mathscr{E}_{\mathbb{Z}}$ such that $q<Z^{1/2}, a\leq Z^{1/2}$;
\item $R_3$ is the set of all $(x,y)\in \mathscr{E}_{\mathbb{Z}}$ such that $q<\log Z \log\log Z, a>Z^{1/2}$;
\item $R_4$ is the set of all $(x,y)\in \mathscr{E}_{\mathbb{Z}}$ such that $\log Z \log\log Z \leq q < Z^{1/2}, a>Z^{1/2}$.
\end{enumerate}
Moreover, for any fixed integers $a,z$ we denote 
\begin{equation*}
S(a,z)\coloneqq \left\{(x,y)\in \mathscr{E}_{\mathbb{Z}} \mid a|Q(x,y),\; \gcd(a, Q(x,y)/a)=1 \; \mathrm{and}\; P^-(Q(x,y)/a)\geq z  \right\}
\end{equation*}

Recall that $\Omega(b)$ is the number of prime factors of $b$ counted with multiplicity. For any $(x,y)\in R_1$ we have
\begin{equation*}
Z^{1/2 \cdot \Omega(b)}\leq b\leq X\leq A(\mathscr{E})^\delta
\end{equation*}
hence $\Omega(b) \ll 1$ and $f(b)\ll 1$. This implies that we have a bound
\begin{equation*}
\sum_{(x,y)\in R_1}f\left(Q(x,y)\right) \ll \sum_{a\leq Z} f(a) |S(a,Z^{1/2})|
\end{equation*}
We can apply Lemma \ref{lem:large-sieve-a} with $\varsigma=\delta$ to bound $|S(a,Z^{1/2})|$ from above. Combining this with Corollary \ref{cor:large-sieve-a-theta} and Lemma \ref{lem:large-sieve-a-averaged} we deduce
\begin{equation*}
\sum_{(x,y)\in R_1}f\left(Q(x,y)\right) \ll A(\mathscr{E}) \prod_{\deg(Q)<p\leq Z^{1/2}} \left(1-\frac{\rho_Q(p)}{p^2}\right) \sum_{a \leq Z} \frac{f(a)\widetilde{\rho}_{Q}(a)}{a^2}
\end{equation*}
which is consistent with the claimed bound due to Lemma \ref{lem:extending-p-product}.

Next we make an observation necessary to treat the sums over $R_2$ and $R_3$ and which also indicates the natural limit of the theorem. The following lower bound for the right hand side of \eqref{eq:sieve-thm-main} holds
\begin{align*}
A(\mathscr{E})\prod_{\substack{\deg(Q)<p\leq X \\ p\nmid Q}} \left(1-\frac{\rho_Q(p)}{p^2}\right) &\sum_{a \leq X } \frac{f(a)\widetilde{\rho}_{Q}(a)}{a^2}
\geq
A(\mathscr{E})\prod_{\substack{\deg(Q)<p\leq X \\ p\nmid Q}} \left(1-\frac{\deg(Q)}{p}\right)\\ 
&\gg
A(\mathscr{E})/(\log X)^{\deg(Q)} \gg A(\mathscr{E})/(\log A(\mathscr{E}))^{\deg(Q)}
\end{align*}
Thus any bound of the form $\ll A(\mathscr{E})^{1-\varepsilon_0}$ for $\varepsilon_0>0$ is consistent with the claim.

For any $(x,y)\in R_2$ 
\begin{equation*}
Z<aq^e\leq Z^{1/2} q^e \Longrightarrow Z^{1/2}<q^e
\end{equation*}
but $q<Z^{1/2}$ hence $e\geq 2$. For each prime $p\leq Z^{1/2}$ let $e_p\geq 2$ be the minimal integer satisfying $p^{e_p} > Z^{1/2}$.  Notice that $p^{e_p}=p^{e_p-1}p\leq Z^{1/2} p\leq Z= A(\mathscr{E})^\eta$ so we can apply Corollary \ref{cor:large-sieve-a-nogcd} with $a=p^{e_p}$. This implies the following bound
\begin{align}
\sum_{(x,y)\in R_2}f\left(Q(x,y)\right)
&\ll X^\varepsilon \sum_{p\leq Z^{1/2}} \left|\left\{(x,y)\in \mathscr{E}_{\mathbb{Z}} \mid p^{e_p}|Q(x,y) \right\}\right| 
\nonumber\\
&\ll X^\varepsilon A(\mathscr{E}) \sum_{p\leq Z^{1/2}} \frac{\rho_Q(p^{e_p})}{p^{2e_p}}
\ll X^\varepsilon A(\mathscr{E}) \sum_{p\leq Z^{1/2}} \frac{1}{p^{e_p}}
\label{eq:R2-sum}
\end{align}
The latter sum is bounded by
\begin{equation*}
\sum_{p\leq Z^{1/2}} \frac{1}{p^{e_p}} \leq \sum_{p\leq Z^{1/4}} \frac{1}{Z^{1/2}} +\sum_{Z^{1/4}<p\leq Z^{1/2}} \frac{1}{p^2} 
\ll Z^{1/4-1/2} +Z^{-1/4}\ll Z^{-1/4}
\end{equation*}
We conclude from \eqref{eq:R2-sum}
\begin{equation*}
\sum_{(x,y)\in R_2}f\left(Q(x,y)\right) \ll X^\varepsilon A(\mathscr{E}) Z^{-1/4} \ll A(\mathscr{E})^{1+\varepsilon\delta-\eta/4}
\end{equation*}
Because $\varepsilon\delta-\eta/4<0$ this bound saves a power of $A(\mathscr{E})$ and is compatible with the claim.

We proceed to estimate the sums over $R_3$ using Lemma \ref{lem:large-sieve-a} 
\begin{align*}
\sum_{(x,y)\in R_3}f\left(Q(x,y)\right)
&\ll X^\varepsilon \sum_{\substack{Z^{1/2}\leq a\leq Z \\ P^+(a)\leq \log Z \log \log Z}}  |S(a,1)|\\
&\ll X^\varepsilon A(\mathscr{E}) \sum_{\substack{Z^{1/2}\leq a\leq Z \\ P^+(a)\leq \log Z \log \log Z}} \frac{\widetilde{\rho}_Q(a)}{a^2}
\end{align*}
Apply next Lemma \ref{lem:extremely-smooth-sums} with $\beta=r/4$ to deduce
\begin{equation*}
\sum_{(x,y)\in R_3}f\left(Q(x,y)\right) \ll X^\varepsilon A(\mathscr{E}) Z^{-r/4}\ll A(\mathscr{E})^{1+\delta\varepsilon-\eta r/4}
\end{equation*}
which is consistent with the claim because we have assumed $\delta\varepsilon-\eta r/4<0$.

We split $R_4$ further according to the value of $q$. For any integer 
\begin{equation*}
s_0\coloneqq 2 \leq s\leq s_1\coloneqq\frac{\log Z}{\log\left(\log Z \log \log Z \right)}
\end{equation*}
let $R_4^s$ be the set of $(x,y)\in R_4$ such that $Z^{1/(s+1)}\leq q\leq Z^{1/s}$. Recalling that $q$ is the smallest prime divisor of $b$ we see that for $(x,y)\in R_4^s$ 
\begin{equation*}
Z^{\Omega(b)/(s+1)}\leq b\leq X\leq A(\mathscr{E})^\delta
\end{equation*}
hence $\Omega(b)\leq (s+1)\delta/\eta$ and $f(b)\ll A_0^s$ where $A_0\coloneqq A^{\delta/\eta}$. 
We can now write
\begin{equation}\label{eq:R4-sum}
\sum_{(x,y)\in R_4}f\left(Q(x,y)\right) \leq 
\sum_{s_0\leq s\leq s_1} A_0^s \sum_{\substack{Z^{1/2}\leq a \leq Z\\ P^+(a)\leq Z^{1/s}}}  f(a)|S(a,Z^{1/(s+1)})|
\end{equation}
Similarly to the case of $R_1$ we apply Lemma \ref{lem:large-sieve-a} with $\varsigma=\delta$ and Corollary \ref{cor:large-sieve-a-theta} to bound the right hand side of \eqref{eq:R4-sum} from above by
\begin{align}
A(\mathscr{E}) &\sum_{s_0\leq s\leq s_1} A_0^s \sum_{\substack{Z^{1/2}\leq a \leq Z\\ P^+(a)\leq Z^{1/s}}}   \prod_{\deg(Q)<p\leq Z^{1/(s+1)}} \left(1-\frac{\rho_Q(p)}{p^2}\right) \frac{f(a)\theta_Q(a)\widetilde{\rho}_{Q}(a)}{a^2}
\nonumber\\
&\ll
A(\mathscr{E}) \prod_{\substack{\deg(Q)<p\leq X \\ p\nmid Q}} \left(1-\frac{\rho_Q(p)}{p^2}\right)
\sum_{s_0\leq s\leq s_1} A_0^s (s+1)^{\deg(Q)} \sum_{\substack{Z^{1/2}\leq a \leq Z\\ P^+(a)\leq Z^{1/s}}} \frac{f(a)\theta_Q(a)\widetilde{\rho}_{Q}(a)}{a^2}
\label{eq:R4-sum-2}
\end{align}
where in the second line we have applied Lemma \ref{lem:extending-p-product}. Let $\kappa\coloneqq 4\ln(A_0)$. If $\kappa> \frac{3r+\varepsilon}{4}\log\left(\log Z \log \log Z\right)$ then $Z\ll 1$, hence $s_1\ll 1$ and $$\sum_{s_0\leq s\leq s_1} A_0^s (s+1)^{\deg(Q)}\ll 1$$

Otherwise $\kappa\leq \frac{3r+\varepsilon}{4}\log\left(\log Z \log \log Z\right)$
and we can estimate each of the innermost sums in \eqref{eq:R4-sum-2}  using Lemma \ref{lem:sieved-sum-exp-save} with $\kappa$ as above and $f$ replaced by $f\theta_Q$. The conditions of the lemma are satisfied due to Remark \ref{rem:fthetaq-order} with $\varepsilon'=(r-\varepsilon)/2$. Then
\begin{align*}
\sum_{s_0\leq s\leq s_1} A_0^s (s+1)^{\deg(Q)} &\sum_{\substack{Z^{1/2}\leq a \leq Z\\ P^+(a)\leq Z^{1/s}}} \frac{f(a)\theta_Q(a)\widetilde{\rho}_{Q}(a)}{a^2}\\
&\ll
\sum_{a \leq Z} \frac{\widetilde{\rho}_Q(a)f(a)\theta_Q(a)}{a^2}
\cdot \sum_{s_0\leq s\leq s_1} A_0^s (s+1)^{\deg(Q)} \exp(-s\kappa/2)\\
&\ll \sum_{a \leq Z} \frac{\widetilde{\rho}_Q(a)f(a)\theta_Q(a)}{a^2} 
\end{align*}

In both cases we deduce
\begin{align*}
\sum_{(x,y)\in R_4}&\ll 
A(\mathscr{E}) \prod_{\substack{\deg(Q)<p\leq X \\ p\nmid Q}} \left(1-\frac{\rho_Q(p)}{p^2}\right) \sum_{a \leq Z} \frac{\widetilde{\rho}_Q(a)f(a)\theta_Q(a)}{a^2}\\
&\ll
A(\mathscr{E}) \prod_{\substack{\deg(Q)<p\leq X \\ p\nmid Q}} \left(1-\frac{\rho_Q(p)}{p^2}\right) \sum_{a \leq Z} \frac{\widetilde{\rho}_Q(a)f(a)}{a^2}
\end{align*}
where in the last line we have applied Lemma \ref{lem:large-sieve-a-averaged}. This is again consistent with the claim.

\subsection{Sums Restricted by Congruence Conditions}
In this section we extend Theorem \ref{thm:2d-sieve} to sums with congruence restrictions.

\begin{defi}
Let $Q\in\mathbb{Z}[x,y]$. For any $k\in\mathbb{N}$ and $0\leq l<k$ define $\rho_Q(l;k)$ to be the number of solutions modulo $k$ to the equation $Q(x,y)\equiv l$.
In particular, $\rho_Q(0;k)=\rho_Q(k)$ and $\rho_Q(l;k)=\rho_{Q-l}(k)$;
\end{defi}

\begin{prop}\label{prop:2d-sieve-q0}
Fix $k_0\in\mathbb{N}$.
Consider the setting of Theorem \ref{thm:2d-sieve} but assume the stronger assumptions
$(R_{\max}/k_0)^{\theta_l}\leq A(\mathscr{E})/k_0^2$ and $X\leq A(\mathscr{E})^{\delta} k_0^{1-2\delta}$.
Then
\begin{equation*}
\sum_{\substack{(x,y)\in \mathscr{E}\cap\mathbb{Z}^2 \\ k_0 \mid Q(x,y)}} f\left(\frac{Q(x,y)}{k_0}\right)\ll 
A(\mathscr{E})
\prod_{\substack{\deg(Q) < p\leq X/k_0 \\ p\nmid k_0,\; p\nmid Q}} \left(1-\frac{\rho_Q(p)}{p^2}\right)
\sum_{a\leq X/k_0} \frac{f(a)\widetilde{\rho}_Q(k_0a)}{(k_0a)^2}
\end{equation*}
The implicit constant is the same as in Theorem \ref{thm:2d-sieve}.
\end{prop}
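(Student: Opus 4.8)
The statement is a localization of Theorem~\ref{thm:2d-sieve} along the fixed modulus $k_0$: instead of summing $f(Q(x,y))$ over all of $\mathscr{E}\cap\mathbb{Z}^2$, we sum $f(Q(x,y)/k_0)$ over the sublattice where $k_0\mid Q(x,y)$. The natural approach is to reduce directly to Theorem~\ref{thm:2d-sieve} by partitioning the relevant residue classes modulo $k_0$ and rescaling. Concretely, for each class $(x_0,y_0)\in(\mathbb{Z}/k_0\mathbb{Z})^2$ with $Q(x_0,y_0)\equiv 0\pmod{k_0}$ — there are $\rho_Q(k_0)$ of them — define $Q_0(x,y)\in\mathbb{Z}[x,y]$ by $Q(k_0 x+x_0,\,k_0 y+y_0)=k_0\,Q_0(x,y)$, and set $\mathscr{E}_0\coloneqq k_0^{-1}(\mathscr{E}-(x_0,y_0))$. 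Then $\sum_{(x,y)\in\mathscr{E}\cap\mathbb{Z}^2,\;k_0\mid Q(x,y)} f(Q(x,y)/k_0)=\sum_{(x_0,y_0)}\sum_{(x,y)\in\mathscr{E}_0\cap\mathbb{Z}^2} f(Q_0(x,y))$, and each inner sum is of exactly the form treated by Theorem~\ref{thm:2d-sieve}, applied to $Q_0$ and $\mathscr{E}_0$.

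\textbf{Verification of hypotheses.} The substance of the proof is checking that $Q_0$ and $\mathscr{E}_0$ satisfy the hypotheses of Theorem~\ref{thm:2d-sieve} with constants depending only on the original data and on $k_0$ in the stated way. First, $\mathscr{E}_0$ is again $C^2$ and convex; it has area $A(\mathscr{E}_0)=A(\mathscr{E})/k_0^2$, maximal radius of curvature $R_{\max}(\mathscr{E}_0)=R_{\max}(\mathscr{E})/k_0$, and belongs to $\mathcal{L}(C_l,\theta_l)$ (the class is scale-invariant in the right way — the defining inequality is stated for all $a\in\mathbb{N}$ and all translates, so it passes to $\mathscr{E}_0$ unchanged). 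The condition $R_{\max}(\mathscr{E}_0)^{\theta_l}\le A(\mathscr{E}_0)^{1-3\eta}$ is then equivalent to $(R_{\max}/k_0)^{\theta_l}\le (A(\mathscr{E})/k_0^2)^{1-3\eta}$, which follows from the stronger hypothesis $(R_{\max}/k_0)^{\theta_l}\le A(\mathscr{E})/k_0^2$ once $A(\mathscr{E})/k_0^2\ge 1$ (shrink $\eta$ harmlessly, or note the hypothesis already gives the $1-3\eta$ version a fortiori since the right side only decreases). Next, for primes $p\nmid k_0$ one has $\rho_{Q_0}(p^k)=\rho_Q(p^k)$ and more generally $\widetilde{\rho}_{Q_0}(p^k)=\widetilde{\rho}_Q(p^k)$, while for $p\mid k_0$ the count $\widetilde{\rho}_{Q_0}(p^k)$ is controlled by $\widetilde{\rho}_Q(p^{k+\ord_p k_0})$ up to a bounded factor (the rescaling by $k_0$ only shifts the relevant level, and points that acquire $p^2$ lifts correspond to the $\gcd$-one filtration already built into $\widetilde{\rho}$); in all cases the bound $\widetilde{\rho}_{Q_0}(p^k)\le C'p^{k(2-r)}$ holds with $C'$ depending only on $C,r,k_0$. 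Finally the size hypothesis: $\max\{|Q_0(x,y)|:(x,y)\in\mathscr{E}_0\cap\mathbb{Z}^2\}\le X/k_0$, and one needs $X/k_0\le A(\mathscr{E}_0)^{\delta}=(A(\mathscr{E})/k_0^2)^{\delta}$, i.e.\ $X\le A(\mathscr{E})^{\delta}k_0^{1-2\delta}$, which is precisely the hypothesis imposed. The multiplicative function $f$ is unchanged and stays in $\mathcal{M}(A,B,\varepsilon)$ with the same $\varepsilon$.

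\textbf{Assembling the bound.} Applying Theorem~\ref{thm:2d-sieve} to each of the $\rho_Q(k_0)$ pieces and summing, the main term becomes
\[
\rho_Q(k_0)\cdot \frac{A(\mathscr{E})}{k_0^2}\prod_{\substack{\deg(Q)<p\le X/k_0\\ p\nmid k_0,\; p\nmid Q}}\Bigl(1-\frac{\rho_Q(p)}{p^2}\Bigr)\sum_{a\le X/k_0}\frac{f(a)\widetilde{\rho}_{Q_0}(a)}{a^2},
\]
and the final step is to rewrite $\rho_Q(k_0)\,k_0^{-2}\sum_{a\le X/k_0}f(a)\widetilde{\rho}_{Q_0}(a)a^{-2}$ as $\sum_{a\le X/k_0}f(a)\widetilde{\rho}_Q(k_0 a)(k_0 a)^{-2}$. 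Here one uses multiplicativity of $\widetilde{\rho}_Q$ together with the local identity relating $\widetilde{\rho}_Q(k_0 a)$, for $a$ coprime to $k_0$, to $\widetilde{\rho}_Q(k_0)\widetilde{\rho}_Q(a)$, and for the $p\mid k_0$ part of $a$ an elementary comparison that is absorbed into the implicit constant — this bookkeeping at primes dividing $k_0$ is the only genuinely fiddly point, and it is exactly analogous to the passage from $\rho_Q$ to $\widetilde{\rho}_Q$ and the handling of bad primes already carried out in the proof of Theorem~\ref{thm:2d-sieve} and in Lemma~\ref{lem:large-sieve-a}. I expect this combinatorial matching of the local factors at $p\mid k_0$ to be the main obstacle; everything else is a direct substitution into the theorem just proved.
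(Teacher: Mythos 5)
Your overall route is the paper's: partition the solutions of $Q\equiv 0\pmod{k_0}$ into their $\rho_Q(k_0)$ residue classes $r$ modulo $k_0$, rescale to $Q_0^r(x,y)=Q(k_0x+r_1,k_0y+r_2)/k_0$ on the domain $k_0^{-1}(\mathscr{E}-r)$, check the hypotheses of Theorem \ref{thm:2d-sieve} (this is exactly where the strengthened assumptions on $R_{\max}$ and $X$ are consumed, as you verify), apply the theorem to each class, and recombine. Up to that point you match the paper.

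The gap is in the recombination, which you defer to "an elementary comparison absorbed into the implicit constant," and the version you sketch does not close. You pull the factor $\rho_Q(k_0)$ out front and treat the inner sums as a single sum in $\widetilde{\rho}_{Q_0}$, but the functions $\widetilde{\rho}_{Q_0^r}(p^j)$ for $p\mid k_0$ genuinely depend on $r$, and for $a$ coprime to $k_0$ your matching would require $\rho_Q(k_0)\ll\widetilde{\rho}_Q(k_0)$; that ratio is not bounded, since $\widetilde{\rho}_Q$ discards precisely the singular points that admit $p^2$ lifts while $\rho_Q$ counts them. The paper instead keeps the sum over $r$ inside: for each class it factors $\widetilde{\rho}_{Q_0^r}(a)=\widetilde{\rho}_{Q}\bigl(a/\gcd(a,k_0^\infty)\bigr)\,\widetilde{\rho}_{Q_0^r}\bigl(\gcd(a,k_0^\infty)\bigr)$, using $\widetilde{\rho}_{Q_0^r}=\widetilde{\rho}_Q$ away from $k_0$, and only then sums over $r$ via the counting identity $\sum_r\widetilde{\rho}_{Q_0^r}(b)=\widetilde{\rho}_Q(k_0b)$ for $b\mid k_0^\infty$, which holds because the points of $X_{Q_0^r}$ modulo $p^j$, taken over all classes $r$, are in lift-number-preserving bijection with the points of $X_Q$ modulo $p^{j+\ord_p k_0}$. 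That exact identity, applied inside the $a$-sum rather than as a global prefactor, is the missing ingredient; without it the right-hand side you produce is not the one stated in the proposition.
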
 
\begin{proof}
Let $r=(r_1,r_2)$ be a representative of one of the congruence classes modulo $k_0$ solving the equation $Q(x,y)\equiv 0 \mod k_0$. Define $Q_1^r,Q_0^r\in\mathbb{Z}[x,y]$ by
\begin{equation*}
Q_1^r(x,y)\coloneqq Q(k_0x+r_1,k_0y+r_2)=k_0Q_0^r(x,y)
\end{equation*}
Notice that $\deg(Q_i^r)=\deg(Q)$ for $i=0,1$. Moreover $\rho_Q(p^k)=\rho_{Q_0^r}(p^k)$ for any $p\nmid k_0$ and $k\geq 0$ and the same holds for $\widetilde{\rho}$. We can apply now Theorem \ref{thm:2d-sieve} to the sum over a single congruence class as follows.
\begin{align*}
\sum_{\substack{(x,y)\in \mathscr{E} \cap\mathbb{Z}^2\\ (x,y)\equiv r \bmod k_0}} &f\left(\frac{Q(x,y)}{k_0}\right)
=\sum_{(x,y)\in k_0^{-1}(\mathscr{E}-r)\cap\mathbb{Z}^2}  f\left(Q_0^r(x,y)\right)
\\
&\ll 
\frac{A(\mathscr{E})}{k_0^2}
\prod_{\substack{\deg(Q)<p\leq X/k_0 \\ p\nmid k_0,\; p\nmid Q}} \left(1-\frac{\rho_Q(p)}{p^2}\right)
\sum_{a\leq X/k_0} \frac{f(a)\widetilde{\rho}_{Q_0^r}(a)}{a^2}
\\
&=
A(\mathscr{E})
\prod_{\substack{\deg(Q)<p\leq X/k_0 \\ p\nmid k_0,\; p\nmid Q}} \left(1-\frac{\rho_Q(p)}{p^2}\right)
\sum_{a\leq X/k_0} \frac{f(a)\widetilde{\rho}_{Q}\left(\frac{a}{\gcd(a,k_0^\infty)}\right) \widetilde{\rho}_{Q_0^r}\left(\gcd(a,k_0^\infty)\right)}{(k_0a)^2}
\end{align*}
A direct calculation shows that the conditions of Theorem \ref{thm:2d-sieve} are satisfied when applied to the sum above. Summing over all the pertinent conjugacy classes we deduce
\begin{align}
\sum_{\substack{(x,y)\in \mathscr{E}\cap\mathbb{Z}^2 \\ k_0 \mid Q(x,y)}} f\left(\frac{Q(x,y)}{k_0}\right)\ll
A(\mathscr{E})
&\prod_{\substack{\deg(Q)<p\leq X \\ p\nmid k_0,\; p\nmid Q}} \left(1-\frac{\rho_Q(p)}{p^2}\right)
\nonumber\\
&\cdot
\sum_{a\leq X} \frac{f(a)\widetilde{\rho}_{Q}\left(\frac{a}{\gcd(a,k_0^\infty)}\right) }{(k_0a)^2}
\left(\sum_r \widetilde{\rho}_{Q_0^r}\left(\gcd(a,k_0^\infty)\right) \right)
\label{eq:sieve-sum-over-congruences}
\end{align}
For any $b \mid k_0^\infty$ we can see from the definitions in \ref{def:polynomial-rho} that
\begin{equation*}
\sum_r \widetilde{\rho}_{Q_0^r}(b) =\widetilde{\rho}_Q(k_0b)
\end{equation*}
The claim follows from combining this observation with \eqref{eq:sieve-sum-over-congruences}.
\end{proof}

\begin{prop}\label{prop:2d-sieve-q0q1q2}
Consider the setting of Theorem \ref{thm:2d-sieve} but assume the stronger assumptions $R_{\max}^{\theta_l}\leq A(\mathscr{E})^{1-4\eta}$ and $X\leq A(\mathscr{E})^{\delta/2}$. Fix $k_0,k_1,k_2\in\mathbb{N}$ such that all primes dividing $k_1$ also divide $k_2$, $\gcd(k_0,k_2)=1$ and $k\coloneqq k_0 k_1k_2\leq A(\mathscr{E})^{\eta/2}$. Let  $l\in\left(\cyclic{k_2}\right)^\times$ then
\begin{align*}
\sum_{\substack{(x,y)\in \mathscr{E}\cap\mathbb{Z}^2 
\\ Q(x,y) \equiv k_0k_1 l \bmod k}} 
&f\left(\frac{Q(x,y)}{k_0}\right)
\ll 
A(\mathscr{E}) \frac{f(k_1)\rho_Q\left(k_1l;k_1k_2\right)}{(k_1k_2)^2}\\
&\cdot\prod_{\substack{\deg(Q) < p\leq X/(k_0k_1) \\ p\nmid k_0k_2,\; p\nmid Q}} \left(1-\frac{\rho_Q(p)}{p^2}\right)
\sum_{\substack{a\leq X/(k_0k_1)\\\gcd(a,k_2)=1}} \frac{f(a)\widetilde{\rho}_Q(k_0a)}{(k_0a)^2}
\end{align*}
The implicit constant is the same as in Theorem \ref{thm:2d-sieve}.
\end{prop}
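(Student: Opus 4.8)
The plan is to reduce Proposition \ref{prop:2d-sieve-q0q1q2} to the previously established Proposition \ref{prop:2d-sieve-q0} by splitting the congruence condition into a ``bad'' part carried by $k_1k_2$ and a ``sieveable'' part carried by $k_0$, and by peeling off the factor $f(k_1)$ multiplicatively. First I would fix a representative $r=(r_1,r_2) \in (\cyclic{k})^2$ of one of the residue classes modulo $k=k_0k_1k_2$ satisfying $Q(x,y)\equiv k_0k_1l \bmod k$, and substitute $x = kx'+r_1$, $y = ky'+r_2$ to obtain a polynomial $Q^r$ of the same degree with $Q(kx'+r_1,ky'+r_2)=k_0k_1\,\widetilde Q^r(x',y')$ where $\gcd(\widetilde Q^r(x',y'),\dots)$ is understood modulo the relevant primes. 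Since all prime divisors of $k_1$ divide $k_2$ and $\gcd(k_0,k_2)=1$, the value $Q(x,y)/k_0 = k_1 \cdot (\widetilde Q^r(x',y'))$ factors, for the purpose of evaluating the multiplicative $f$, as $f(k_1)f(\widetilde Q^r(x',y'))$ up to the coprimality of $k_1$ with the cofactor --- this coprimality is exactly what $l\in(\cyclic{k_2})^\times$ guarantees together with the divisibility hypotheses, so $f\!\left(\frac{Q(x,y)}{k_0}\right)=f(k_1)f\!\left(\frac{Q(x,y)}{k_0k_1}\right)$ on the relevant set after the substitution by a fixed residue.

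Next I would apply Proposition \ref{prop:2d-sieve-q0} with modulus $k_0$ to the shifted polynomial over the domain $k_1k_2$-dilated appropriately: more precisely, after handling the $k_1k_2$-part by passing to residue classes modulo $k_1k_2$ (there are $\rho_Q(k_1l;k_1k_2)$ of them by the definition preceding the proposition), each such class contributes a sum over a convex $C^2$ domain of area $A(\mathscr{E})/(k_1k_2)^2$ and maximal radius of curvature $R_{\max}/(k_1k_2)$, to which Proposition \ref{prop:2d-sieve-q0} applies with the parameter $k_0$ playing its former role. The strengthened hypotheses $R_{\max}^{\theta_l}\le A(\mathscr{E})^{1-4\eta}$, $X\le A(\mathscr{E})^{\delta/2}$, and $k = k_0k_1k_2\le A(\mathscr{E})^{\eta/2}$ are chosen precisely so that, after dilation by $k_1k_2$, the inequalities $(R_{\max}/(k_1k_2))^{\theta_l}\le (A(\mathscr{E})/(k_1k_2)^2)^{1-3\eta'}$ and $X/(k_0) \le (A(\mathscr{E})/(k_1k_2)^2)^{\delta'} k_0^{1-2\delta'}$ required by Proposition \ref{prop:2d-sieve-q0} hold for a suitable $\eta'\ge\eta$, $\delta'$; I would carry out this bookkeeping with the same trick as in the proof of Lemma \ref{lem:large-sieve-a} (absorbing powers of $k_1k_2$ into the exponents using $k_1k_2\le A(\mathscr{E})^{\eta/2}$). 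Summing the resulting bound over the $\rho_Q(k_1l;k_1k_2)$ residue classes modulo $k_1k_2$ produces the factor $\rho_Q(k_1l;k_1k_2)$, while the restriction $\gcd(a,k_2)=1$ in the final $a$-sum appears because points reducing to classes where $k_2$ would further divide the cofactor are excluded by $l$ being a unit mod $k_2$; the product over primes loses the primes dividing $k_0k_2$ for the same reason.

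The step I expect to be the main obstacle is verifying cleanly that $f(Q(x,y)/k_0)$ genuinely decouples as $f(k_1)\cdot f(Q(x,y)/(k_0k_1))$ uniformly over the sifted set, and that the ``$\widetilde\rho$'' bookkeeping survives the dilation: one must check that $\widetilde\rho_{Q_0^r}(\gcd(a,k^\infty))$ relates correctly to $\widetilde\rho_Q(k_0 a)$ after the change of variables, exactly as in the identity $\sum_r \widetilde\rho_{Q_0^r}(b)=\widetilde\rho_Q(k_0b)$ used at the end of Proposition \ref{prop:2d-sieve-q0}, but now with the extra layer of $k_1k_2$. The coprimality hypotheses ($\gcd(k_0,k_2)=1$, $\mathrm{rad}(k_1)\mid k_2$, $l$ a unit mod $k_2$) are all there to make these multiplicative identities go through, so the proof is really a matter of organizing the case $p\mid k_0$ versus $p\mid k_1k_2$ versus $p\nmid k$ and invoking Proposition \ref{prop:2d-sieve-q0} as a black box on the coprime-to-$k_1k_2$ part. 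I would therefore structure the write-up as: (i) change of variables and reduction to a single residue class mod $k_1k_2$; (ii) extraction of $f(k_1)$; (iii) application of Proposition \ref{prop:2d-sieve-q0} with modulus $k_0$ and the dilated domain, checking the numerical hypotheses; (iv) resummation over residue classes and reassembly of the Euler product and the $\widetilde\rho$ factor.
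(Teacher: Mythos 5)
Your proposal is correct and follows essentially the same route as the paper: fix one of the $\rho_Q(k_1l;k_1k_2)$ residue classes mod $k_1k_2$, substitute to write $Q$ as $k_1(l+k_2Q_0)$, extract $f(k_1)$ using that $l\in\left(\cyclic{k_2}\right)^\times$ and $\mathrm{rad}(k_1)\mid k_2$ force the cofactor to be coprime to $k_1$ (and to $k_2$, which is where the restriction $\gcd(a,k_2)=1$ and the omission of primes $p\mid k_0k_2$ come from), then apply Proposition \ref{prop:2d-sieve-q0} with modulus $k_0$ on the dilated domain and resum over classes. The hypothesis bookkeeping you outline ($k\leq A(\mathscr{E})^{\eta/2}$ absorbing the dilation) is exactly how the paper verifies the conditions of Proposition \ref{prop:2d-sieve-q0}.
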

\begin{proof}
Let $r=(r_1,r_2)$ be a representative of one of the $\rho_Q(k_1l; k_1k_2)$ congruence classes modulo $k_1k_2$ solving the equation $Q(x,y)\equiv k_1 l \mod k_1k_2$. Define $Q_2,Q_1,Q_0\in\mathbb{Z}[x,y]$ by
\begin{equation*}
Q_2(x,y)\coloneqq Q(kx+r_1,ky+r_2)=k_1 Q_1(x,y)=k_1(l+ k_2Q_0(x,y))
\end{equation*}
Notice that $\deg(Q_i)=\deg(Q)$ for $i=0,1,2$. 
Moreover $\rho_Q(p^m)=\rho_{Q_2}(p^m)=\rho_{Q_1}(p^m)$ for any $p\nmid k_1 k_2$ and $m\geq 0$. The same holds for $\widetilde{\rho}$. Because $l$ is a unit modulo $k_2$ we conclude that  $\rho_{Q_1}(p^m)=0$ for $p\mid k_2$, $m\geq 1$. Finally, notice that because the prime support of $k_1$ is contained in that of $k_2$ the condition $p\mid k_1 k_2$ is equivalent to $p\mid k_2$.

Because $\gcd(k_0,k_2)=1$ we now that $k_0\mid Q_2(x,y)$ if and only if $k_0\mid Q_1(x,y)$.
Write the pertinent sum over the fixed congruence class represented by $r$
\begin{equation*}
\sum_{\substack{(x,y)\in (k_1k_2)^{-1}(\mathscr{E}-r) \cap\mathbb{Z}^2 \\ k_0 \mid Q_2(x,y)}} 
f\left(\frac{Q_2(x,y)}{k_0}\right)=
f(k_1)\sum_{\substack{(x,y)\in (k_1k_2)^{-1}(\mathscr{E}-r) \cap\mathbb{Z}^2 \\ k_0 \mid Q_1(x,y)}}  f\left(\frac{Q_1(x,y)}{k_0}\right)
\end{equation*}
A direct calculation shows that the conditions of Proposition \ref{prop:2d-sieve-q0} are satisfied when applied to the sum on the right hand side (the restriction on $X$ holds because $k\leq A(\mathscr{E})^{1/4}$ as we have assumed $\eta<1/2$).
The claim follows by summing over all the relevant conjugacy classes modulo $k_1k_2$.
\end{proof} 

   \section{Proof of Main Theorem}\label{sec:proof}
In this section we use the following notation for all integers $n\geq 0$
\begin{equation*}
B^{(-n,n)}\coloneqq \prod_{v\neq p_1} \Omega_v \times K_{p_1}^{(-n,n)}\subset \mathbf{G}(\mathbb{A})
\end{equation*}
For the sake of brevity, we shall denote $B\coloneqq B^{(-0,0)}$.

Moreover, for any $\xi\in\left(\mathbf{G}\times\mathbf{G}\right)(\mathbb{A})$ we denote by $\nu_\xi$ be the algebraic measure supported on $\left[\mathbf{G}^\Delta(\mathbb{A})^+
\xi\right]$.

\subsection{Reduction to a Bound on Cross-Correlation}
We begin by showing that Theorem \ref{thm:joinings-adelic} would follow from an appropriate bound on the cross-correlation. 

\begin{lem}\label{lem:cross-correlation-to-thm}
Let $\mathcal{H}_i=\left[\mathbf{T}^\Delta(\mathbb{A})(g,sg)\right]$ be a sequence of homogeneous toral sets where $\mathbf{T}$, $g$ and $s$ depend on the index $i\in\mathbb{N}$. Assume that the splitting conditions \eqref{eq:g_adel_restrictions} are satisfied for all $i$. Denote by $\mu_i$ the algebraic measure supported on $\mathcal{H}_i$ and assume $\mu_i\to_{i\to\infty} \mu$.

Assume that there is some $F\colon \mathbf{G}(\mathbb{A}) \to \mathbb{R}_{>0}$ continuous such that for all $n\in\mathbb{N}$, for all $\xi\in\left(\mathbf{G}\times\mathbf{G}\right)(\mathbb{A})$ and for all $i\gg_{n,\xi} 1$ 
\begin{equation*}
\Cor[\mu_i, \nu_\xi]\left(B^{(-n,n)}\right) \ll F(\ctr(\xi)) p_1^{-2(1+\rho)n}
\end{equation*}
for some $\rho>0$ fixed. Then $\mu$ is a $\left(\mathbf{G}\times\mathbf{G}\right)(\mathbb{A})^+$-invariant probability measure.
\end{lem}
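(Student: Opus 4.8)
\textbf{Proof plan for Lemma \ref{lem:cross-correlation-to-thm}.}

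The strategy is to argue by contradiction through the measure-rigidity decomposition and the general ``cross-correlation versus entropy'' principle sketched in \S\ref{intro:cross-correlation}. Suppose $\mu$ is \emph{not} $\left(\mathbf{G}\times\mathbf{G}\right)(\mathbb{A})^+$-invariant. By Theorem \ref{thm:regiditiy-adelic} we may write $\mu=\int \lambda \dif\mathcal{P}(\lambda)$ where $\mathcal{P}$-almost every $\lambda$ is an $(A_{p_1}^+\times A_{p_2}^+)^\Delta$-invariant algebraic measure supported on $[\mathbf{L}(\mathbb{A})^+\xi]$ with $\mathbf{L}=\mathbf{G}\times\mathbf{G}$ or $\mathbf{L}=\mathbf{G}^\Delta$, and moreover by Corollary \ref{cor:hecke-limit-split-restrictions} in the latter case $\ctr(\xi)_{p_i}\in A_{p_i}$. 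The hypothesis on $\mu$ forces $\mathcal{P}$ to assign positive mass to the set of $\lambda$ of the form $\nu_\xi$ (Hecke type), and we fix a compact subset $\mathcal{K}$ of the parameter space of such $\xi$ on which $\mathcal{P}$ has mass $\geq c_0>0$; since $\ctr\colon(\mathbf{G}\times\mathbf{G})(\mathbb{A})\to\mathbf{G}(\mathbb{A})$ is continuous and $F$ is continuous, $F\circ\ctr$ is bounded by some $M$ on this compact set. Writing $\mu\geq c_0 \int_{\mathcal{K}} \nu_\xi \dif\mathcal{P}(\lambda)$ and disintegrating, it suffices to produce a conflict between the upper bound $\Cor[\mu_i,\nu_\xi](B^{(-n,n)})\ll M\, p_1^{-2(1+\rho)n}$ and a lower bound on the $\nu$-mass of small Bowen balls.

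The second ingredient is the self-correlation/entropy lower bound: for each $\xi$ appearing, $\nu_\xi$ is invariant under $(A_{p_1}^+)^\Delta$ and the support $[\mathbf{G}^\Delta(\mathbb{A})^+\xi]$ is a homogeneous space of dimension matching $\mathbf{G}^\Delta$; by the relation between metric entropy and leafwise measures (Margulis--Tomanov, cf. the discussion preceding Lemma \ref{lem:G+-mixing}), the entropy of the $a$-action on $\nu_\xi$ for $a=\lambda(p_1)^\Delta$ equals $2\log p_1$, hence $\nu_\xi\big((B^{(-n,n)}\times B^{(-n,n)}).x\big)\asymp \meas_{\mathbf{G}^\Delta}(B^{(-n,n)})\asymp p_1^{-2n}$ for $\nu_\xi$-a.e. $x$ in a fixed compact set. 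Integrating over $\mathcal{K}$ against $\mathcal{P}$, the Fatou/Fubini argument gives, for a suitable compact $C\subset[(\mathbf{G}\times\mathbf{G})(\mathbb{A})]$ and $n$ large,
\begin{equation*}
\int_{\mathcal{K}} \Cor_C[\nu_\xi,\nu_\xi]\big(B^{(-n,n)}\big)\dif\mathcal{P}(\lambda)\gg c_0\, p_1^{-2n}.
\end{equation*}
On the other hand, the key point of the whole scheme (as in \S\ref{intro:cross-correlation}) is that the cross-correlation is a \emph{bilinear} quantity bounded by an automorphic kernel, so $\Cor_C[\mu_i,\nu_\xi](B)\to \Cor_C[\mu,\nu_\xi](B)$ along the convergent sequence (using tightness from Duke's theorem and compact support of $\mathbb{1}_{B\times B}$ mollified suitably), and $\Cor_C[\mu,\nu_\xi]\geq c_0\int_{\mathcal{K}}\Cor_C[\nu_{\xi'},\nu_\xi]\,\dif\mathcal{P}$. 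Combining: for $n$ fixed and $i\gg_{n,\xi}1$, $\Cor_C[\mu_i,\nu_\xi](B^{(-n,n)})\ll M p_1^{-2(1+\rho)n}$, while passing $i\to\infty$ first (for fixed $n$) and then integrating in $\xi$ over $\mathcal{K}$ produces a quantity $\gg c_0 p_1^{-2n}$; letting $n\to\infty$ the ratio $p_1^{-2(1+\rho)n}/p_1^{-2n}=p_1^{-2\rho n}\to 0$, a contradiction. Hence no Hecke-type component can occur, so $\mathcal{P}$ is supported on the $\mathbf{L}=\mathbf{G}\times\mathbf{G}$ case and $\mu$ is $(\mathbf{G}\times\mathbf{G})(\mathbb{A})^+$-invariant.

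\textbf{Main obstacle.} The delicate point is the interchange of limits: the upper bound hypothesis holds only for $i\gg_{n,\xi}1$, with the threshold depending on $n$ \emph{and} on $\xi$, whereas the lower bound requires integrating over an uncountable family $\mathcal{K}$ of $\xi$'s and over large $n$. This is precisely why the argument must use a \emph{non-archimedean} test neighborhood at $p_1$: the entropy lower bound $\nu_\xi(B^{(-n,n)}\text{-ball})\gg p_1^{-2n}$ must hold \emph{uniformly} over the uncountable family, which is guaranteed by the uniform invariance under the fixed split torus $A_{p_1}^\Delta$ (and the restriction $\ctr(\xi)_{p_1}\in A_{p_1}$ from Corollary \ref{cor:hecke-limit-split-restrictions}), so that the leafwise measures along $A_{p_1}^\Delta$ are genuinely the Haar measures on the contracting/expanding horospherical directions and entropy is maximal. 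One handles the quantifier issue by fixing $n$, taking $i\to\infty$ to get the bound for the limit measure $\mu$ (here the threshold depends only on $n$ and the single $\xi$, but we then only need the resulting inequality $\Cor_C[\mu,\nu_\xi](B^{(-n,n)})\ll M p_1^{-2(1+\rho)n}$ which is $i$-free), and only afterwards integrating over $\mathcal{K}$ and sending $n\to\infty$; dominated convergence over $\mathcal{K}$ is legitimate because $F\circ\ctr\le M$ there. A secondary technical nuisance is that $\mathbb{1}_{B\times B}$ is not continuous, so passing $\mu_i\to\mu$ inside $\Cor$ requires sandwiching between continuous bump functions supported on slightly larger/smaller Bowen balls $B^{(-n\mp1,n\pm1)}$, which changes the bound only by the bounded factor $p_1^{\pm 2}$ and hence does not affect the final contradiction.
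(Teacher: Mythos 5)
Your proposal follows the paper's overall contradiction scheme — decompose the limit via Theorem \ref{thm:regiditiy-adelic} and Corollary \ref{cor:hecke-limit-split-restrictions}, isolate a compact positive-mass family $\Xi_0$ of Hecke components using regularity of the decomposition measure, pass the hypothesis through the weak-$*$ limit for fixed $n$, and integrate against $F\circ\ctr$ which is bounded on the compact family — and you correctly identify the order of limits and the role of the $p_1$-adic test neighborhood. However, the lower bound at the heart of your contradiction has a genuine gap. You establish $\int_{\Xi_0}\Cor_C[\nu_\xi,\nu_\xi](B^{(-n,n)})\,\dif\mathcal{P}\gg p_1^{-2n}$, but what you actually need to contradict the upper bound is a lower bound on $\int_{\Xi_0}\Cor_C[\mu,\nu_\xi]\,\dif\mathcal{P}(\xi)\geq c_0\iint_{\Xi_0\times\Xi_0}\Cor_C[\nu_{\xi'},\nu_\xi]\,\dif\mathcal{P}\,\dif\mathcal{P}$, i.e.\ on the \emph{double} integral; the diagonal $\{\xi'=\xi\}$ has $\mathcal{P}\times\mathcal{P}$-measure zero when $\mathcal{P}$ is non-atomic, so the single diagonal integral does not control it. This is precisely the uncountable-convex-combination pitfall flagged in the paper's introduction (\S on archimedean versus $p$-adic cross-correlation): an uncountable average of Hecke measures need not satisfy $\bar\nu\big((B\times B).x\big)\gg\meas(B)$, and the off-diagonal terms $\Cor[\nu_{\xi'},\nu_\xi]$ can vanish or degenerate for distinct, disjoint Hecke sets. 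Making your route work would require a uniform lower bound on the off-diagonal correlations over a positive-measure set of pairs, which you do not supply.

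The paper sidesteps this entirely by running the entropy argument in the opposite direction. It forms $\bar\nu=\lambda_0(\Xi_0)^{-1}\int_{\Xi_0}\nu_\xi\,\dif\lambda_0$, uses \emph{affinity of entropy} over the decomposition to compute $\ent_{a^\Delta}(\bar\nu)=\log p_1$ exactly (each component is measure-isomorphic to the Haar system on a single copy of $[\mathbf{G}^\sc(\mathbb{A})]$, whose entropy is computed by leafwise measures on the horospherical subgroup), and then applies Fubini to the \emph{upper} bound $\Cor[\mu,\nu_\xi](B^{(-n,n)})\ll F(\ctr(\xi))\,p_1^{-2(1+\rho)n}$ to get $\Cor[\bar\nu,\bar\nu](B_0^{(-n,n)})\ll p_1^{-2(1+\rho)n}$. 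The implication actually invoked is ``upper bound on self-correlation over Bowen balls $\Rightarrow$ lower bound on entropy'' (\cite[Proposition 3.2]{ELMVPeriodic}), giving $\ent_{a^\Delta}(\bar\nu)\geq(1+\rho)\log p_1$ and the contradiction. That implication is robust under arbitrary (uncountable) averaging, whereas your implication ``each component has large self-correlation $\Rightarrow$ the average does'' is not. You should replace your lower-bound step by the exact entropy computation for $\bar\nu$; the rest of your outline (including the treatment of $\mathbb{1}_{B\times B}$ via open/closed approximations and the boundedness of $F\circ\ctr$ on $\Xi_0$) then matches the paper's proof.
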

\begin{proof}
From Duke's theorem we know that $\mu$ is a probability measure. Theorem \ref{thm:regiditiy-adelic} and Corollary \ref{cor:hecke-limit-split-restrictions} imply that $\mu$ is a convex combination of a $\left(\mathbf{G}\times\mathbf{G}\right)(\mathbb{A})^+$-invariant probability measure and algebraic measures supported on homogeneous Hecke sets of the form $\left[\mathbf{G}^\Delta(\mathbb{A})^+\xi \right]$ such that 
\begin{equation*}
\ctr(\xi)_{p_j}\in A_{p_j}
\end{equation*}
for $j\in \{1,2\}$.

Assume in contradiction that $\mu$ is not $\left(\mathbf{G}\times\mathbf{G}\right)(\mathbb{A})^+$-invariant
then there is a finite non-vanishing measure $\lambda_0$ on $\lfaktor{\mathbf{G}^\Delta(\mathbb{A})^+}{\left(\mathbf{G}\times\mathbf{G}\right)(\mathbb{A})}$ so that
\begin{equation*}
\mu \geq
\int_{\mathbf{G}^\Delta(\mathbb{A})^+ \backslash
\left(\mathbf{G}\times\mathbf{G}\right)(\mathbb{A})} \nu_\xi \dif\lambda_0(\xi)
\end{equation*}
and the following set has full $\lambda_0$-measure
\begin{equation*}
\Xi_1\coloneqq \left\{\xi\in\lfaktor{\mathbf{G}^\Delta(\mathbb{A})^+}{\left(\mathbf{G}\times\mathbf{G}\right)(\mathbb{A})} \relmiddle{|} \ctr(\xi)_{p_1}\in A_{p_1}, \ctr(\xi)_{p_2}\in A_{p_2}  \right\}
\end{equation*}

Moreover, because $\lambda_0$ is a finite measure it is regular 
so there is a compact subset $\Xi_0\subset \Xi_1$ of positive measure. We now have
\begin{align*}
\mu &\geq  \lambda_0(\Xi_0) \cdot  \bar{\nu} \\
\bar{\nu} &\coloneqq \frac{1}{\lambda_0(\Xi_0)}
\int_{\Xi_0} \nu_\xi \dif\lambda_0(\xi)
\end{align*}
and $\bar{\nu}$ is a probability measure on $\left[\left(\mathbf{G}\times\mathbf{G}\right)(\mathbb{A})\right]$.

Let $a=\lambda(p_1)\in A_{p_1}$ where $\lambda\in X_\bullet(A_{p_1})$ generates the cocharacter group. The element $a^\Delta\in A_{p_1}^\Delta$ acts on $\left[\left(\mathbf{G}\times\mathbf{G}\right)(\mathbb{A})\right]$  on the right.
For all $\xi\in\Xi_0$
the action of $a^\Delta$ on the space $\left[\left(\mathbf{G}\times\mathbf{G}\right)(\mathbb{A})\right]$ keeps $\nu_\xi$ invariant because $\ctr(\xi)\in A_{p_1}$. Additivity of entropy implies
\begin{equation*}
\ent_{a^\Delta}(\bar{\nu})=\frac{1}{\lambda_0(\Xi_0)}
\int_{\Xi_0} h_{a^\Delta}(\nu_\xi) \dif\lambda_0(\xi)
\end{equation*}
The measurable dynamical system $\left(\left[\left(\mathbf{G}\times\mathbf{G}\right)(\mathbb{A})\right], \nu_\xi, a^\Delta\right)$ is measure theoretically isomorphic to $a$ acting on the space 
\begin{equation*}
\lfaktor{\Cent_{\mathbf{G}^\sc}(\mathbb{A})
\mathbf{G}^\sc(\mathbb{Q})}{\mathbf{G}^\sc(\mathbb{A})}
\end{equation*}
equipped with the probability Haar measure. This entropy can be computed using the leaf-wise measure \cite{Pisa,MargulisTomanov} on the horospherical subgroup of $a$. As the Haar measure is invariant under the full group action the leaf-wise measure will be the Haar measure on the horospherical subgroup and
\begin{equation*}
\ent_{a^\Delta}(\bar{\nu})=\log p_1
\end{equation*}

We will show next that the assumed cross-correlation estimate implies that the entropy of $\bar{\nu}$ must be at least $(1+\rho)\log p_1$ which contradicts the equality above.

Weak-$*$ convergence of measures implies that for any bounded open subset $C^\circ\subset \left[\mathbf{G}(\mathbb{A})\right]$
\begin{align*}
\Cor_{C^\circ}[\mu,\nu_\xi]\left({B^{(-n,n)}}^\circ\right)&\leq \liminf_{i\to\infty}
\Cor_{C^\circ}[\mu_i,\nu_\xi]\left({B^{(-n,n)}}^\circ\right)\\
&\leq \liminf_{i\to\infty}
\Cor[\mu_i,\nu_\xi]\left({B^{(-n,n)}}\right)
\end{align*}
Fix a closed identity neighborhood $\Omega_{\infty,0}\subset \Omega_{\infty}^\circ$ and set $B_0^{(-n,n)}=\Omega_{\infty,0}\times \prod_{v\neq p_1,\infty} \Omega_v\times K_{p_1}^{(-n,n)}$.
Taking a monotone sequence of bounded open subsets which exhausts $\left[\mathbf{G}(\mathbb{A})\right]$ we deduce that
\begin{equation*}
\Cor[\mu,\nu_\xi]\left({B_0^{(-n,n)}}\right)
\leq \liminf_{i\to\infty}
\Cor[\mu_i,\nu_\xi]\left({B^{(-n,n)}}\right)
\ll F(\ctr(\xi)) p_1^{-2(1+\rho)n}
\end{equation*}

Monotonicity of integration and Fubini imply that
\begin{align*}
\Cor[\bar{\nu},\bar{\nu}]\left(B_0^{(-n,n)}\right)&\leq \frac{1}{\lambda_0(\Xi_0)} \Cor[\mu,\bar{\nu}]\left(B_0^{(-n,n)}\right)\\
&= \frac{1}{\lambda_0(\Xi_0)^2}
\int_{\Xi_0} \Cor[\mu,\nu_\xi]\left(B_0^{(-n,n)}\right) \dif\lambda_0(\xi)\\
&\ll \frac{p_1^{-2(1+\rho)n}}{\lambda_0(\Xi_0)^2}
\int_{\Xi_0} F(\ctr(\xi)) \dif\lambda_0(\xi)
\end{align*}
Notice that $\int_{\Xi_0} F(\ctr(\xi)) \dif\lambda_0(\xi)$ is finite because $\Xi_0$ is compact and $F$ is continuous.

An upper bound on the self-correlation of a measure for Bowen balls implies a lower bound for the metric entropy. The self-correlation bound for the adelic quotient implies an identical bound for any $S$-arithmetic quotient, as long as we take the set of places $S$ to include $\infty, p_1$. On the other hand, a lower bound for the entropy for $S$-arithmetic quotients for arbitrary large $S$ implies the same bound for the adelic quotient.

Using \cite[Proposition 3.2]{ELMVPeriodic}, which generalizes, \textit{mutatis mutandis}, to the $S$-arithmetic setting, we deduce from 
the last inequality that $\ent_{a^\Delta}(\bar{\nu})\geq (1+\rho)\log p_1$ as required.
\end{proof}

\subsection{From a Shifted Convolution to Sums over a Polynomial}
The first step in producing an upper bound on the cross-correlation as required in Lemma \ref{lem:cross-correlation-to-thm} is translation of the shifted-convolution sum in Theorem \ref{thm:cross-correlation-shifted-convolution} to sums of a \emph{multiplicative} function over values of a $2$-variable polynomial. 

In this section we work in the setting of Theorem \ref{thm:cross-correlation-shifted-convolution} which we now review.
Fix a joint homogeneous toral set $\left[\mathbf{T}^\Delta(\mathbb{A})(g,sg)\right]$ satisfying \eqref{eq:g_adel_restrictions} with a splitting field $E/\mathbb{Q}$ and quadratic order $\Lambda\leq\mathcal{O}_E$ of discriminant $D$. 

Fix also a simply connected homogeneous Hecke set $\left[\mathbf{G}^\Delta(\mathbb{A})^+\xi\right]$ with $\ctr(\xi)_{p_1}\in A_{p_1}$ and assume 
\begin{equation*}
g^{-1}\mathbf{T}(\mathbb{Q})s g \cap B\ctr(\xi)B=\emptyset
\end{equation*}
Notice that this condition implies the same for $B^{(-n,n)}$ for all $n$.

Let $\mu$ be the algebraic probability measure supported on $\left[\mathbf{T}^\Delta(\mathbb{A})(g,sg)\right]$ and let $\nu_\xi$ be the algebraic probability measure supported on $\left[\mathbf{G}^\Delta(\mathbb{A})^+\xi\right]$. Denote $\kappa=2^8 \Denom_\infty(\ctr(\xi)_\infty)\Denom_f(\ctr(\xi)_f)$ and $\omega=\sign(\Nrd(\ctr(\xi)_\infty))\Denom_f(\ctr(\xi)_f)$.

Initially, we transform the shifted-convolution sum to a sum of a non-multiplicative function over polynomial values. Afterwards we shall use principal genus theory to split the sum in the following lemma into sums that can be effectively bounded by multiplicative functions.
\begin{lem}\label{lem:shifted-to-polynomial}
Fix an arbitrary $\mathbb{Z}$-basis $A,B\in E^\times$ for the fractional $\Lambda$-ideal $\mathfrak{s}^{-1}$ and let $q(x,y)\in\mathbb{Z}[x,y]$ be the associated norm form
\begin{equation*}
q(x,y)\coloneqq \frac{\Nr(Ax+By)}{\Nr(\mathfrak{s}^{-1})}
\end{equation*}
This is a primitive integral binary quadratic form of discriminant $D$.

The shifted convolution sum of Theorem \ref{thm:cross-correlation-shifted-convolution} satisfies
\begin{align*}
\sum_{\substack{0\leq x\leq \kappa |D| \\ x\equiv \omega|D| \mod \upsilon p_1^{2n}}} &g_{[\mathfrak{s}]}(x)
f_{[p_1^n\mathfrak{se}]^{-1}}\left(\frac{x-\omega D}{\upsilon p_1^{2n}}\right) r\left(\frac{x-\omega D}{\upsilon p_1^{2n}}\right)\\
&= \frac{1}{\# \Lambda^\times}\sum_{\substack{(x,y)\in\mathbb{Z}^2 \colon q(x,y)\leq \kappa|D| \\ \upsilon p_1^{2n}\mid Q(x,y)}} \left(f_{[p_1^n\mathfrak{se}]^{-1}}\cdot r\right)\left(\frac{Q(x,y)}{\upsilon p_1^{2n}}\right) 
\end{align*}
where
\begin{equation*}
Q(x,y)\coloneqq q(x,y)-\omega D
\end{equation*}
\end{lem}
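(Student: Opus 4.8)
The plan is to express both sides as a weighted count of the same set of integral ideals $\mathfrak{b}\subseteq\Lambda$ and to identify the weight carefully. First I would spell out what the left-hand side counts. By the definitions of $g_{[\mathfrak{s}]}$ and $f_{[p_1^n\mathfrak{se}]^{-1}}$, a triple $\bigl(x, \mathfrak{a}, \mathfrak{b}'\bigr)$ contributing to the sum consists of an integer $x\in[0,\kappa|D|]$ with $x\equiv\omega D\bmod\upsilon p_1^{2n}$, an integral ideal $\mathfrak{a}\subseteq\Lambda$ (or $\mathfrak a=0$, contributing only when $x=0$) with $\Nr\mathfrak{a}=x$ and $[\mathfrak{a}]=[\mathfrak{s}]$, and an integral invertible ideal $\mathfrak{b}'\subseteq\Lambda$ with $\Nr\mathfrak{b}'=(x-\omega D)/(\upsilon p_1^{2n})$ and $[\mathfrak{b}']\in[p_1^n\mathfrak{se}]^{-1}\Pic(\Lambda)^2$, each such triple weighted by $r\bigl(\Nr\mathfrak b'\bigr)$. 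Since $[\mathfrak a]=[\mathfrak s]$, the ideal $\mathfrak a$ lies in the class $[\mathfrak s]$, so $\mathfrak a\mathfrak s^{-1}$ is a \emph{principal} integral-ish ideal; writing the fractional ideal $\mathfrak s^{-1}$ with the chosen $\mathbb{Z}$-basis $A,B$, the integral ideals $\mathfrak a$ with $[\mathfrak a]=[\mathfrak s]$ and $\Nr\mathfrak a=x$ are in bijection with $\Lambda^\times$-orbits of elements $Ax_0+By_0$, $(x_0,y_0)\in\mathbb{Z}^2$, of norm $x\,\Nr(\mathfrak s^{-1})$, i.e. with $\Lambda^\times$-orbits of solutions $(x_0,y_0)$ to $q(x_0,y_0)=x$. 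This is the standard correspondence between ideal classes and representations by the norm form, and is where the factor $1/\#\Lambda^\times$ and the assertion that $q$ is a primitive form of discriminant $D$ come in (the latter because $A,B$ is a $\mathbb{Z}$-basis of a proper, i.e. invertible, $\Lambda$-ideal).

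Next I would substitute. Under the bijection above, summing over $x$ and $\mathfrak a$ becomes, up to the factor $\frac{1}{\#\Lambda^\times}$, summing over $(x_0,y_0)\in\mathbb{Z}^2$ with $q(x_0,y_0)\le\kappa|D|$; the congruence condition $x\equiv\omega D\bmod\upsilon p_1^{2n}$ becomes $q(x_0,y_0)\equiv\omega D\bmod\upsilon p_1^{2n}$, equivalently $\upsilon p_1^{2n}\mid q(x_0,y_0)-\omega D=Q(x_0,y_0)$. For each such $(x_0,y_0)$ the remaining sum over $\mathfrak b'$ with $\Nr\mathfrak b'=(q(x_0,y_0)-\omega D)/(\upsilon p_1^{2n})$, $[\mathfrak b']\in[p_1^n\mathfrak se]^{-1}\Pic(\Lambda)^2$, weighted by $r$, is by definition exactly $f_{[p_1^n\mathfrak se]^{-1}}\bigl(Q(x_0,y_0)/(\upsilon p_1^{2n})\bigr)$ once we note $r$ depends only on the norm, so $f_{[p_1^n\mathfrak se]^{-1}}\cdot r$ evaluated at that argument is precisely the $r$-weighted count. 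This gives the claimed identity after renaming $(x_0,y_0)\mapsto(x,y)$. I would also dispatch the degenerate term $\mathfrak a=0$ (only $x=0$): it corresponds to $(x,y)=(0,0)$ on the right, and one checks the contributions match (in fact $q(0,0)=0\le\kappa|D|$ and $\upsilon p_1^{2n}\mid Q(0,0)=-\omega D$ iff $\upsilon p_1^{2n}\mid D$, which holds since $\omega=\pm\Denom_f$ and the local conditions force divisibility — or simply absorb this boundary case into the bijection, which already accounts for it).

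The main obstacle I anticipate is bookkeeping the $\Lambda^\times$-action and the primitivity claim cleanly: one must verify that distinct $\Lambda^\times$-orbits of $(x_0,y_0)$ give distinct integral ideals $\mathfrak a$ (not merely distinct ideal classes) and that every integral $\mathfrak a$ in the class $[\mathfrak s]$ of the right norm arises, including the interaction with the zero ideal and with the fact that $q$ represents each relevant value with multiplicity a multiple of $\#\Lambda^\times$. For $D<0$ this is harmless since $\Lambda^\times$ is finite, but the identity is stated with an exact constant $1/\#\Lambda^\times$, so the bijection between $\Lambda^\times\backslash\{(x_0,y_0): q(x_0,y_0)=x\}$ and $\{\mathfrak a\subseteq\Lambda \text{ invertible}: \Nr\mathfrak a=x,\ [\mathfrak a]=[\mathfrak s]\}$ must be established with care; this is classical (e.g. via $\mathfrak a=(Ax_0+By_0)\mathfrak s$, using invertibility of $\mathfrak s^{-1}$), and I would cite or reprove it in the form needed. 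The rest — matching congruence conditions, bounding ranges, and recognizing the inner sum as $f_{[p_1^n\mathfrak se]^{-1}}\cdot r$ — is a direct unwinding of definitions.
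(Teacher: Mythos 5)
Your proposal is correct and follows exactly the route of the paper's own (two-sentence) proof: the correspondence $\mathfrak a = a\mathfrak s$ with $a\in\mathfrak s^{-1}$ unique up to $\Lambda^\times$, which turns the sum over $x$ and the $g_{[\mathfrak s]}(x)$-count into a sum over $\Lambda^\times$-orbits of $(x_0,y_0)$ with $q(x_0,y_0)=x$, after which the congruence condition and the inner $f\cdot r$ factor transfer verbatim. Your extra care with the $\mathfrak a=0$ term and with primitivity of $q$ goes beyond what the paper records, and is harmless since the identity is only ever used as an upper bound.
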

\begin{proof}
This follows immediately from the correspondence between invertible integral ideals in the class $[\mathfrak{s}]\in\Pic(\Lambda)$ and points in $\mathfrak{s}^{-1}$. Explicitly, if $\mathfrak{a}\in[\mathfrak{s}]$ then there is some $a\in E^\times$ so that
$\mathfrak{a}=a\mathfrak{s}$ and $a\mathfrak{s}\subseteq \mathfrak{\Lambda}$, i.e.\ $a\in\mathfrak{s}^{-1}$. Moreover, two different values of $a$ corresponding to $\mathfrak{a}$ must differ by a unit of $\Lambda$.
\end{proof}

\begin{defi}
We now fix $q(x,y)\in\mathbb{Z}$ to be the unique \emph{reduced}\footnote{Reduced with respect to the usual fundamental domain for the $\mathbf{SL}_2(\mathbb{Z})$-action on the upper half plain.} norm form for $\mathfrak{s}^{-1}$ and denote
\begin{equation*}
\mathscr{E}\coloneqq \left\{(x,y)\in\mathbb{R}^2 \mid q(x,y)\leq \kappa |D| \right\}
\end{equation*}

In the current section we shall always denote by $R_{\max}$ and $A(\mathscr{E})$ the maximal radius of curvature and area of $\mathscr{E}$.
\end{defi}

\begin{lem}\label{lem:ellipse-area-rmax}
The set $\mathscr{E}$ is an ellipse centered at the origin. Its area is $A(\mathscr{E})=2\pi \kappa \sqrt{|D|}$ and the maximal radius of curvature satisfies
\begin{equation*}
R_{\max}\leq \sqrt{A(\mathscr{E})} \left(\frac{\sqrt{|D|}}{\mathfrak{N}}\right)^{3/2}
\end{equation*}
where $\mathfrak{N}\coloneqq \min_{\substack{\mathfrak{a} \subseteq \Lambda \\ [\mathfrak{a}]=[\mathfrak{s}]}} \Nr \mathfrak{a}$
\end{lem}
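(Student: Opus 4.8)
\textbf{Proof plan for Lemma \ref{lem:ellipse-area-rmax}.}

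The plan is to work entirely with the reduced binary quadratic form $q(x,y) = \mathfrak{a}_0 x^2 + \mathfrak{b}_0 xy + \mathfrak{c}_0 y^2$ of discriminant $D = \mathfrak{b}_0^2 - 4\mathfrak{a}_0\mathfrak{c}_0 < 0$, and to extract from its reducedness precisely the control on the eigenvalues of the associated symmetric matrix that yields the curvature bound. First I would record the elementary facts: since $D<0$ and $q$ is positive-definite (being a norm form of an invertible ideal in an imaginary quadratic order), the level set $\{q \le \kappa|D|\}$ is a genuine ellipse centered at the origin. Writing $q(x,y) = (x,y) M (x,y)^{\mathrm{t}}$ with $M = \begin{pmatrix} \mathfrak{a}_0 & \mathfrak{b}_0/2 \\ \mathfrak{b}_0/2 & \mathfrak{c}_0 \end{pmatrix}$, one has $\det M = |D|/4$, so the area of $\mathscr{E} = \{q \le \kappa|D|\}$ is $\pi \kappa|D| / \sqrt{\det M} = \pi\kappa|D|\cdot 2/\sqrt{|D|} = 2\pi\kappa\sqrt{|D|}$, which gives $A(\mathscr{E})$ immediately.

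Next I would bound $R_{\max}$. For an ellipse which is the $\kappa|D|$-sublevel set of a positive-definite quadratic form with eigenvalues $0 < \lambda_{\min} \le \lambda_{\max}$, the semi-axes are $\sqrt{\kappa|D|/\lambda_{\min}}$ and $\sqrt{\kappa|D|/\lambda_{\max}}$, and the maximal radius of curvature occurs at the end of the \emph{major} axis, where it equals $(\text{major semi-axis})^2/(\text{minor semi-axis}) = (\kappa|D|/\lambda_{\min})/\sqrt{\kappa|D|/\lambda_{\max}} = \sqrt{\kappa|D|}\,\lambda_{\max}^{1/2}\lambda_{\min}^{-1}$. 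Since $\lambda_{\min}\lambda_{\max} = \det M = |D|/4$, this rearranges to $R_{\max} = \sqrt{\kappa|D|}\cdot (\lambda_{\max}/\lambda_{\min})^{1/2}\cdot (\lambda_{\min}\lambda_{\max})^{-1/2}\cdot\lambda_{\max} = \ldots$; more cleanly, $R_{\max} = \sqrt{\kappa|D|}\cdot \lambda_{\max}^{3/2}/(\lambda_{\min}\lambda_{\max})^{1/2}\cdot (\lambda_{\min}\lambda_{\max})^{-1/2} \cdot\ldots$ — the bookkeeping here is routine and I would just carry the exponents carefully to land on $R_{\max} = \sqrt{\kappa|D|}\,\cdot\,\lambda_{\max}^{3/2} / \sqrt{\det M} \cdot (\det M)^{-1/2}$, equivalently $R_{\max} \asymp \sqrt{\kappa}\,|D|^{1/4}\cdot(\lambda_{\max}/\lambda_{\min})^{1/2}/\lambda_{\min}^{1/2}\cdot\ldots$. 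The point is that $R_{\max}$ is controlled by $\sqrt{A(\mathscr{E})}$ times a power of the eccentricity $\lambda_{\max}/\lambda_{\min}$, so it remains to bound that ratio.

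The crux — and the one genuinely arithmetic step — is to show $\lambda_{\max}/\lambda_{\min} \ll \sqrt{|D|}/\mathfrak{N}$, or rather the exact exponent needed to produce $(\sqrt{|D|}/\mathfrak{N})^{3/2}$ after combining with the area factor. The key input is that $q$ is \emph{reduced}: this forces $\mathfrak{a}_0 = q(1,0)$ to be the minimal nonzero value of $q$ on $\mathbb{Z}^2$, and $\mathfrak{a}_0 = \Nr(\mathfrak{s}^{-1}\cdot(\text{the ideal generated by the basis vector}))/\Nr(\mathfrak{s}^{-1})$; unwinding the correspondence between values of $q$ and norms of integral ideals in the class $[\mathfrak{s}]$ (exactly the correspondence used in Lemma \ref{lem:shifted-to-polynomial}), one gets $\mathfrak{a}_0 = \mathfrak{N} = \min_{\mathfrak{a}\subseteq\Lambda,\,[\mathfrak{a}]=[\mathfrak{s}]}\Nr\mathfrak{a}$. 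On the other hand $\lambda_{\min} \asymp \mathfrak{a}_0$ and $\lambda_{\max} \asymp \mathfrak{c}_0$ for a reduced form (since $|\mathfrak{b}_0| \le \mathfrak{a}_0 \le \mathfrak{c}_0$), and $\mathfrak{a}_0\mathfrak{c}_0 \asymp |D|$, so $\lambda_{\max}/\lambda_{\min} \asymp \mathfrak{c}_0/\mathfrak{a}_0 \asymp |D|/\mathfrak{a}_0^2 = |D|/\mathfrak{N}^2$. Then $R_{\max} \ll \sqrt{\kappa|D|}\cdot(\lambda_{\max}/\lambda_{\min})^{1/2}/\lambda_{\min}^{1/2} \ll \sqrt{\kappa|D|}\cdot(|D|/\mathfrak{N}^2)^{1/2}/\mathfrak{N}^{1/2}$, and substituting $A(\mathscr{E}) = 2\pi\kappa\sqrt{|D|}$ so that $\sqrt{\kappa|D|} = \sqrt{A(\mathscr{E})}\cdot|D|^{1/4}/\sqrt{2\pi}$, one checks the powers of $|D|$ and $\mathfrak{N}$ assemble into $R_{\max} \le \sqrt{A(\mathscr{E})}\,(\sqrt{|D|}/\mathfrak{N})^{3/2}$. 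The main obstacle I anticipate is not conceptual but the careful matching of all the exponents of $|D|$, $\kappa$, and $\mathfrak{N}$ — in particular making sure the identification $\mathfrak{a}_0 = \mathfrak{N}$ is stated with the correct normalization $\Nr(\mathfrak{s}^{-1})$ in the denominator of $q$, and that the curvature-at-the-major-axis formula is applied on the correct axis — but no deep tool beyond the reduction theory of binary quadratic forms is required.
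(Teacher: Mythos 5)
Your proposal is correct and follows the same overall strategy as the paper: compute the area from $\det M=|D|/4$, reduce the curvature bound to controlling the eccentricity of $\mathscr{E}$, and bound the eccentricity using reducedness of $q$ together with the identification $\mathfrak{a}_0=\mathfrak{N}$ (the paper justifies $\min_{[\mathfrak{a}]=[\mathfrak{s}]}\Nr\mathfrak{a}=\min_{[\mathfrak{a}]=[\mathfrak{s}^{-1}]}\Nr\mathfrak{a}$ by noting an ideal class and its inverse are Galois conjugate, which you should also record). The mechanism for the eccentricity bound differs: the paper normalizes $\mathscr{E}$ to area $\pi$, identifies it with the point $x_0=\frac{-b+i\sqrt{|D|}}{2\mathfrak{N}}$ in the upper half-plane via the $\mathbf{SL}_2(\mathbb{R})$-action, and uses the Iwasawa decomposition to get the exact relation $a^2+a^{-2}=\Im(x_0)+\Im(x_0)^{-1}(1+t^2)$, hence $a^2\le 2\Im(x_0)=\sqrt{|D|}/\mathfrak{N}$ and $R_{\max}=\sqrt{A(\mathscr{E})}\,a^3$; you instead bound the eigenvalues of the Gram matrix directly from $|\mathfrak{b}_0|\le\mathfrak{a}_0\le\mathfrak{c}_0$, which is more elementary and works equally well. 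One caution: the lemma asserts a clean inequality $R_{\max}\le\sqrt{A(\mathscr{E})}(\sqrt{|D|}/\mathfrak{N})^{3/2}$ with no implied constant, whereas your $\asymp$-bookkeeping only yields $\ll$; you must carry explicit constants (e.g.\ $3\mathfrak{a}_0\mathfrak{c}_0\le|D|\le 4\mathfrak{a}_0\mathfrak{c}_0$, $\lambda_{\max}\le\mathfrak{a}_0+\mathfrak{c}_0\le 2\mathfrak{c}_0$, $\lambda_{\min}\ge 3\mathfrak{a}_0/8$), which do assemble to a constant below $\sqrt{2\pi}$ coming from $\sqrt{A(\mathscr{E})}=\sqrt{2\pi\kappa}\,|D|^{1/4}$, so the stated inequality survives — but this needs to be checked, not waved at.
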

\begin{proof}
The domain $\mathscr{E}$ is an ellipse because $q$ is positive-definite.
The formula for the area follows from the fact that $\disc(q)=D$. To estimate $R_{\max}$ consider the ellipse $\mathscr{E}_0$ of area $\pi$ homothetic to $\mathscr{E}$ and let $a\geq a^{-1}>0$ be the lengths of its semi-major axes. The maximal radius of curvature satisfies
\begin{equation}\label{eq:Rmax-axis}
R_{\max}=\sqrt{A(\mathscr{E})}R_{\max}(\mathscr{E}_0)=\sqrt{A(\mathscr{E})} \frac{a^2}{a^{-1}}=\sqrt{A(\mathscr{E})} a^3
\end{equation}

The group $\mathbf{SL}_2(\mathbb{R})$ acts transitively on the space of ellipses of area $\pi$ and centered at the origin. The stabilizer of the unit circle $S^1$ is $\mathbf{SO}_2(\mathbb{R})$. We identify this space of ellipses with the upper half-plane $\mathbb{H}$ by sending $S^1$ to $i\in\mathbb{H}$. The point in $\mathbb{H}$ corresponding to $\mathscr{E}_0$ coincides with the point corresponding to $q$ in the fundamental domain. Denote this point by $x_0\in\mathbb{H}$. This point can be written down explicitly as\footnote{Notice that because an ideal class and its inverse are Galois conjugate
$\min_{\substack{\mathfrak{a} \subseteq \Lambda \\ [\mathfrak{a}]=[\mathfrak{s}]}} \Nr \mathfrak{a}=\min_{\substack{\mathfrak{a} \subseteq \Lambda \\ [\mathfrak{a}]=[\mathfrak{s}^{-1}]}} \Nr \mathfrak{a}$} 
\begin{equation*}
x_0=\frac{-b+i\sqrt{|D|}}{2\mathfrak{N}}
\end{equation*}
where $\left\langle\mathfrak{N}, \frac{-b+i\sqrt{|D|}}{2} \right\rangle \subset E$ is the primitive integral ideal in the class $[\mathfrak{s}^{-1}]$. In particular, 
\begin{equation*}
\Im(x_0)=\frac{\sqrt{|D|}}{2\mathfrak{N}}
\end{equation*}

If $\mathscr{E}_0=g.S^1$ then the lengths of the semi-major axes are exactly the element of the diagonal matrix in the Cartan decomposition of $g$, i.e. $g\in \mathbf{SO}_2(\mathbb{R}) \begin{pmatrix}
a & 0 \\ 0 & a^{-1}
\end{pmatrix} \mathbf{SO}_2(\mathbb{R})$. In particular, $a^2+a^{-2}=\Tr(g^tg)$. 

We would like to find the relation of between $a$ and $\Im(x_0)$. Using the Iwasawa decomposition of $\mathbf{SL}_2(\mathbb{R})$ we can write 
\begin{equation*}
g\in \begin{pmatrix}
1 & t \\ 0 & 1
\end{pmatrix}
\begin{pmatrix}
\sqrt{\Im(x_0)} & 0 \\ 0 & \sqrt{\Im(x_0)}^{-1}
\end{pmatrix}
\mathbf{SO}_2(\mathbb{R})
\end{equation*}
for some $-1/2\leq t \leq 1/2$. We deduce that
\begin{equation*}
a^2+a^{-2}=\Tr(g^t g)= \Im(x_0)+{\Im(x_0)}^{-1}(1+t^2)
\end{equation*}
Solving the above quadratic equation for $a^2$ and using standard calculus with the inequalities $t^2\leq 1/4$ and $\Im(x_0)\geq \sqrt{3}/2$ we deduce that
\begin{equation*}
a^2\leq 2 \Im(x_0)=\frac{\sqrt{|D|}}{\mathfrak{N}}
\end{equation*}
The claim follows by combining this inequality with \eqref{eq:Rmax-axis}.
\end{proof}

The next step is to split the sum from Lemma \ref{lem:shifted-to-polynomial} according to further congruence conditions to take into account the restrictions modulo $\Pic(\Lambda)^2$. We shall do that only for small odd primes dividing $D_E=\disc(E)$. Our sieve method will not be able to take into account large prime divisor. Fortunately, we will see later that not taking into account the genus congruence conditions for larger primes only changes the final upper bound by an absolute constant.

Let $C_\theta\geq 1$ be a constant such that for all $X\in\mathbb{N}$
\begin{equation}\label{eq:chebyshev-bounds}
C_\theta^{-1} X \leq \sum_{p\leq X} \log p \leq C_\theta X
\end{equation}
Such a $C_\theta$ exists due to the Chebyshev bounds on the prime counting function. We fix $1/2>\eta>0$ to be determined later. Write $D=D_\mathrm{small} D_\mathrm{large} $ where
\begin{equation*}
D_\mathrm{small}\coloneqq \prod_{\substack{p\parallel D,\; p\nmid \omega \\ 2<p\leq \eta/(4C_\theta)\log |D|}} p
\end{equation*}
Because of \eqref{eq:chebyshev-bounds} we know that $D_\mathrm{small}\leq |D|^{\eta/4}$. We are going to split the sum in Lemma \ref{lem:shifted-to-polynomial} according to congruence classes modulo $\upsilon p_1^{2n}$ and $p^2$ for any $p\mid D_\mathrm{small}$. It is exactly these congruence conditions that our sieve bound can take into account. Because we only seek upper bounds we can simply ignore any restrictions that the condition modulo $\Pic(\Lambda^2)$ implies for primes $p\mid D_\mathrm{large}$. Fortunately, ignoring the congruence conditions modulo large primes only changes the upper-bound by a fixed constant independent of $D$.

Thus our goal is to replace in each congruence class the functions $f_{[p_1^n\mathfrak{se}]^{-1}}$  and $r$ by the simpler functions $f$ and $r_0$ from the following definition.
\begin{defi}
Let $f\colon\mathbb{N}\to\mathbb{N}$ be the multiplicative function counting integral invertible $\Lambda$-ideals, i.e.\
\begin{equation*}
f(n)\coloneqq \# \left\{\mathfrak{a}\in\Ideals(\Lambda) \relmiddle{|} \mathfrak{a}\subseteq \Lambda ,\; \Nr\mathfrak{a}=n\right\}
\end{equation*}
Define also the multiplicative function $r_0\colon\mathbb{N}\to\mathbb{Z}$ by requiring that $r_0(p^k)=2$ if $p\mid D_\mathrm{large}$ and $k\geq \ord_p D$; and $r_0(p^k)=1$ otherwise.
\end{defi}
To take into account the condition modulo $\Pic(\Lambda^2)$ we need to add weights to the sums over different congruence classes for $p\mid D_\mathrm{small}$. We now define the correct weights as follows from principal genus theory.  
Define $k\coloneqq \upsilon p_1^{2n} D_\mathrm{small}^2$ and write
\begin{equation*}
\cyclic{k}=\cyclic{\upsilon}\times\cyclic{p_1^{2n}}\times\prod_{p\mid D_\mathrm{small}} \cyclic{p^2}
\end{equation*} 

For each prime $p\mid D_\mathrm{small}$ we partition $\cyclic{p^2}$ in the following way
\begin{align*}
\cyclic{p^2}&=C^{p^2}_{+0}\sqcup C^{p^2}_{-0}\sqcup C^{p^2}_{+1}\sqcup C^{p^2}_{-1}\sqcup C^{p^2}_2\\
C^{p^2}_{\pm 0}&\coloneqq \left\{u\in \left(\cyclic{p^2}\right)^\times \relmiddle{|} \left(\frac{u}{p}\right)=\pm 1\right\}\\
C^{p^2}_{\pm 1}&\coloneqq \left\{pu \relmiddle{|}u\in \left(\cyclic{p}\right)^\times,\; \left(\frac{u}{p}\right)=\pm 1\right\}\\
C^{p^2}_2&\coloneqq \{0\}
\end{align*}
We define a measure $w_p$ on $\cyclic{p^2}$. The measure $w_p$ is uniform on each atom of the partition above and assigns the following weights for each atom.
\begin{align*}
w_p(C^{p^2}_{\pm 0})&=\# C^{p^2}_{\pm 0}=\frac{p^2}{2}\left(1-\frac{1}{p}\right)\\
w_p(C^{p^2}_2)&=2\cdot \# C^{p^2}_2 = 2\\
w_p(C^{p^2}_{\epsilon})&=\# C^{p^2}_{\epsilon} \cdot \begin{cases}
2 & \ch_p\left(\Nr(p_1^n\mathfrak{se})\right)=-\epsilon\\
0 & \ch_p\left(\Nr(p_1^n\mathfrak{se})\right)\neq -\epsilon
\end{cases}\\
&=p\left(1-\frac{1}{p}\right) \delta_{\ch_p\left(\Nr(p_1^n\mathfrak{se})\right)=-\epsilon}
\end{align*}
where $\epsilon\in\{\pm 1\}$ and we denote by $\ch_p$ both the unique primitive real Dirichlet character of conductor $p>2$ and its adelic lift. See Definition \ref{def:Kronecker-characters} in the appendix for details. 

Each weight takes into account both the difference between $r_0$ and $r$ and the information from principal genus theory about the condition modulo $\Pic(\Lambda^2)$, cf. Proposition \ref{prop:principal-genus-thm} in Appendix \ref{appndx:principal-genus}. In particular, the factor of $2$ in the weights of all congruence classes modulo $p$ outside of $C_{\pm 0}^{p^2}$ is due to the contribution of $r$. The fact that one of the two sets $C_{\pm 1}^{p^2}$ has weight $0$ is due to the genus restriction.

These measures for $p\mid D_\mathrm{small}$ define a product measure $w_k$ on $\cyclic{k}$ by
\begin{equation*}
w_k\coloneqq \delta_{0 \bmod \upsilon}\times \delta_{0 \bmod p_1^{2n}} \times 
\prod_{p \mid D_\mathrm{small}} w_p
\end{equation*}

\begin{lem}\label{lem:final-sums-over-polynomial}
The following holds
\begin{align*}
\frac{1}{\# \Lambda^\times}
\sum_{\substack{(x,y)\in \mathscr{E} \cap \mathbb{Z}^2 \\ \upsilon p_1^{2n}\mid Q(x,y)}} &\left(f_{[p_1^n\mathfrak{se}]^{-1}}\cdot r\right)\left(\frac{Q(x,y)}{\upsilon p_1^{2n}}\right)\\
&\ll_{\mathbf{G}} \int
\sum_{\substack{(x,y)\in \mathscr{E} \cap \mathbb{Z}^2  
\\ Q(x,y) \equiv m \bmod k}}(f\cdot r_0)\left(\frac{Q(x,y)}{\upsilon p_1^{2n}}\right)
\dif w_k(m)
\end{align*}
\end{lem}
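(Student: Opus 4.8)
\textbf{Proof plan for Lemma \ref{lem:final-sums-over-polynomial}.}
The plan is to pass from the sum over the single residue class $\{Q(x,y)\equiv 0 \bmod \upsilon p_1^{2n}\}$ weighted by $f_{[p_1^n\mathfrak{se}]^{-1}}\cdot r$ to an integral over residue classes modulo $k=\upsilon p_1^{2n}D_\mathrm{small}^2$ weighted by the simpler multiplicative pair $f\cdot r_0$, by analyzing the local behaviour at each prime $p\mid D_\mathrm{small}$ separately. First I would fix a point $(x,y)\in\mathscr{E}\cap\mathbb{Z}^2$ with $\upsilon p_1^{2n}\mid Q(x,y)$ and set $N\coloneqq Q(x,y)/(\upsilon p_1^{2n})$. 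The quantity $\left(f_{[p_1^n\mathfrak{se}]^{-1}}\cdot r\right)(N)$ counts invertible integral $\Lambda$-ideals of norm $N$ lying in the fixed genus coset $[p_1^n\mathfrak{se}]^{-1}\Pic(\Lambda)^2$, with the extra multiplicative factor $r$ accounting for ramified and wild contributions. The function $(f\cdot r_0)(N)$ counts all invertible integral $\Lambda$-ideals of norm $N$, with only the $D_\mathrm{large}$-part of the ramified factor retained. The discrepancy between the two is concentrated at the primes dividing $D_\mathrm{small}$ (odd, exactly dividing $D$, not dividing $\omega$, below the cutoff), where $\mathbf{B}$ splits and the genus character $\ch_p$ is defined.

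The key step is the principal genus computation: by Proposition \ref{prop:principal-genus-thm} and Proposition \ref{prop:principal-genus-thm} (more precisely the genus-character description recalled in Appendix \ref{appndx:principal-genus}), membership of an ideal of norm $N$ in a fixed $\Pic(\Lambda)^2$-coset is detected, at each $p\mid D_\mathrm{small}$, by the value of the genus character, which in turn is a function of $\ord_p N$ and the prime-to-$p$ part of $N$ modulo $p$ — equivalently, a function of $Q(x,y)\bmod p^2$. Thus I would show, prime by prime, that summing $f_{[p_1^n\mathfrak{se}]^{-1}}\cdot r$ over the class $Q\equiv 0\bmod\upsilon p_1^{2n}$ equals (up to the global constant $1/\#\Lambda^\times$, which is absorbed into $\ll_\mathbf{G}$ since $\#\Lambda^\times\leq 6$) the $w_k$-average of $f\cdot r_0$ over all residues $m$ modulo $k$. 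Concretely, the measure $w_p$ on $\cyclic{p^2}$ has been rigged so that: on $C^{p^2}_{\pm 0}$ (the $p$-adic units) the weight is just the counting measure, reflecting that $\ord_p N=0$ forces the $p$-part of the ideal to be $\mathcal{O}_E$, which automatically lies in the principal genus and contributes nothing to $r$ beyond $r_0$; on $C^{p^2}_2=\{0\}$ the weight $2$ absorbs exactly the factor $r(p^{\ord_p N})=2$ that appears once $\ord_p N\geq\ord_p D=1$, relative to $r_0(p^{\ord_p N})=1$; and on $C^{p^2}_{\pm 1}$ (valuation exactly one) the weight is $2$ or $0$ according to whether $\ch_p(u)=-\epsilon$ matches the genus constraint $\ch_p(\Nr(p_1^n\mathfrak{se}))$, encoding both the $r$-factor of $2$ and the genus restriction that kills half the classes. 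Multiplying the local identities over $p\mid D_\mathrm{small}$ and tensoring with the trivial (Dirac) factors at $\upsilon$ and $p_1^{2n}$ gives the claimed inequality, with the loss only an inequality rather than equality because we have discarded the genus conditions at primes $p\mid D_\mathrm{large}$ (which can only enlarge the right-hand side) and because $r\leq r_0\cdot(\text{local weights})$ pointwise by construction.

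I expect the main obstacle to be bookkeeping rather than conceptual: one must verify the local identity at each $p\mid D_\mathrm{small}$ in every valuation case ($\ord_p N\in\{0,1,\geq 2\}$ — noting $p\parallel D$ so $\ord_p D=1$), checking that the Dirichlet-character value $\ch_p(N/p^{\ord_p N}\bmod p)$ is correctly read off from $Q(x,y)\bmod p^2$, and that the normalizations of $w_p$ (the factors $\tfrac12 p^2(1-p^{-1})$, $p(1-p^{-1})$, and $2$) are precisely those which turn the weighted count of \emph{all} ideals into the unweighted count of ideals in the correct genus coset times the $r/r_0$ correction. One should also confirm that $p\nmid\omega$ guarantees $Q(x,y)=q(x,y)-\omega D\equiv q(x,y)\bmod p$ has no spurious extra divisibility forced by $p\mid D$, so that the valuation stratification of $\cyclic{p^2}$ is the correct one; and that $p_1\neq p$ and $\gcd(\upsilon,p_1)=1$ (noted in the remark after Theorem \ref{thm:cross-correlation-shifted-convolution}) make the product decomposition of $\cyclic{k}$ legitimate. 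All of this is routine once the principal genus dictionary from Appendix \ref{appndx:principal-genus} is invoked, so the proof reduces to assembling these local statements.
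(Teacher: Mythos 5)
Your proposal follows essentially the same route as the paper: bound $1/\#\Lambda^\times$ trivially, apply the principal genus criterion (Proposition \ref{prop:principal-genus-thm}) prime by prime over $p\mid D_\mathrm{small}$ to convert the coset restriction on $[\mathfrak{b}]$ into congruence conditions on $Q(x,y)\bmod p^2$, and check that the weights $w_p$ are rigged to absorb both the $r/r_0$ discrepancy and the genus restriction, discarding the conditions at $p\mid D_\mathrm{large}$ and at $\ord_p Q\geq 2$ at the cost of bounded factors. One small correction to your heuristics: the reason no restriction survives on the unit classes $C^{p^2}_{\pm 0}$ is not that "the $p$-part of the ideal is $\mathcal{O}_E$," but that $Q\equiv q\bmod p$ (since $p\mid D$) and $q(x,y)$ is a norm from the class $[\mathfrak{s}]$, so the genus character of $Q(x,y)/(\upsilon p_1^{2n})$ automatically takes the required value — the condition is vacuously satisfied rather than absent.
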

\begin{remark}
Unlike $r$ the mean value of the multiplicative function $r_0$ is bounded above only in terms of $\eta$ independently of $D$. This is why its contribution is of no significant effect. The contribution of $r$ which is not covered by $r_0$ is negated by the restriction to a fixed genus class whenever $p\mid D_\mathrm{small}$ and $p\parallel Q(x,y)$.
\end{remark}
\begin{proof}
Notice that $\#\Lambda^\times \geq 1$ hence the factor $\frac{1}{\# \Lambda^\times}$ is uniformly bounded. Moreover, using Proposition \ref{prop:principal-genus-thm} we deduce that
\begin{align*}
\frac{1}{\# \Lambda^\times}
&\sum_{\substack{(x,y)\in \mathscr{E} \cap \mathbb{Z}^2 \\ \upsilon p_1^{2n}\mid Q(x,y)}} \left(f_{[p_1^n\mathfrak{se}]^{-1}}\cdot r\right)\left(\frac{Q(x,y)}{\upsilon p_1^{2n}}\right)\\
\leq &\sum_{\substack{m \in \cyclic{k} \\ m\equiv 0 \mod \upsilon p_1^{2n} \\ \forall p\mid D_\mathrm{small}\colon m \bmod p^2 \not\in C^{p^2}_{\ch_p\left(\Nr(p_1^n\mathfrak{se})\right)}}} 
\sum_{\substack{(x,y)\in \mathscr{E}\cap \mathbb{Z}^2
\\ Q(x,y) \equiv m \bmod k}}(f\cdot r)\left(\frac{Q(x,y)}{\upsilon p_1^{2n}}\right)
\end{align*}
Notice that Proposition \ref{prop:principal-genus-thm} has only been applied to primes $p\mid D_\mathrm{small}$ and only in the case that $p\parallel Q(x,y)$. 
If $Q(x,y)$ is a unit modulo $p$ then $\frac{Q(x,y)}{\upsilon p_1^{2n}}\equiv \frac{q(x,y)}{\upsilon p_1^{2n}} \mod p$, where $q(x,y)$ is a norm of an ideal in the class $[\mathfrak{s}]$. Unwinding the definitions of $\upsilon$ and $\mathfrak{e}$, we see that the genus congruence class of $\frac{Q(x,y)}{\upsilon p_1^{2n}}$ modulo $p$ is equal to the genus congruence class modulo $p$ of $[\mathfrak{s}\mathfrak{e}^{-1}\mathfrak{p}_1^{-n}]\equiv [\mathfrak{p}_1^n\mathfrak{s}\mathfrak{e}]^{-1} \mod \Pic(\Lambda)^2$. This implies that principal genus theory in the form of Proposition \ref{prop:principal-genus-thm} provides no extra information in this case. We neglect also any information from principal genus theory if $\ord_p Q(x,y)\geq 2$ but this will only affects our final bound by multiplying it by a constant independent of all parameters.

Finally notice that if $Q(x,y)\equiv m \mod k$ then 
\begin{equation*}
r\left(\frac{Q(x,y)}{\upsilon p_1^{2n}}\right)=w_k(m) r_0\left(\frac{Q(x,y)}{\upsilon p_1^{2n}}\right)2^{\mu_{\mathrm{wild}}\delta_{2\mid D} } \prod_{\substack{p \parallel Q(x,y) \\ \mathbf{G} \textrm{ ramifies at } p}} 2
\ll_{\mathbf{G}} w_k(m) r_0\left(\frac{Q(x,y)}{\upsilon p_1^{2n}}\right)
\end{equation*}
\end{proof}

\subsection{The Sieved Upper Bound}
We are finally ready to apply Theorem \ref{thm:2d-sieve} in the form of Proposition \ref{prop:2d-sieve-q0q1q2} to bound the cross-correlation.

\begin{defi}
We say that an exponent $\theta_l>0$ is admissible if there is some $C_l>0$ depending on $\theta_l$ such that all ellipses defined by definite integral binary quadratic forms belong to $\mathcal{L}(C_l,\theta_l)$.

Van Der Corput's \cite{vanDerCorput} bound implies that any $\theta_l>2/3$ is admissible, while the bound of Huxley \cite{Huxley} implies that any $\theta_l>131/208>0.6298$ is admissible.
\end{defi}
\begin{defi}
For any $m\in\cyclic{k}$ define for $i\in\{0,1,2\}$
\begin{equation*}
D_i(m)=\prod_{\substack{p \mid D_\mathrm{small} \\ \ord_p m=i}} p
\end{equation*}
Then $D_\mathrm{small}=D_0(m)D_1(m)D_2(m)$.
\end{defi}
\begin{prop}\label{prop:applying-the-sieve}
Let $m\in\cyclic{k}$ with $w_k(m)>0$. Let $\theta_l>0$ be admissible, fix $0<\eta<1/2$ and assume $R_{\max}^{\theta_l}\leq A(\mathscr{E})^{1-4\eta}$. If $\upsilon p_1^{2n}\leq |D|^{\eta/2}$ then
\begin{align*}
\sum_{\substack{(x,y)\in \mathscr{E} \cap \mathbb{Z}^2  
\\ Q(x,y) \equiv m \bmod k}}&(f\cdot r_0)\left(\frac{Q(x,y)}{\upsilon p_1^{2n}}\right) \ll_{\mathcal{f}, \eta}
A(\mathscr{E}) \frac{\rho_Q\left(m;(D_0(m)D_1(m))^2\right)}{(D_0(m)D_1(m))^4}\\
&\cdot\prod_{\substack{2 < p\leq 2\kappa |D|^{1-\eta} \\ p\nmid \upsilon p_1D_\mathrm{small}}} \left(1-\frac{\rho_Q(p)}{p^2}\right)
\sum_{\substack{a\leq 2\kappa |D|\\\gcd\left(a,D_0(m)D_1(m)\right)=1}} \frac{f(a)r_0(a)\widetilde{\rho}_Q(\upsilon p_1^{2n} D_2(m)^2 a)}{(\upsilon p_1^{2n} D_2(m)^2 a)^2}
\end{align*}
\end{prop}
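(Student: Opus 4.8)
\textbf{Proof plan for Proposition \ref{prop:applying-the-sieve}.}

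The plan is to reduce the sum over the congruence class $\{Q(x,y)\equiv m \bmod k\}$ to an application of Proposition \ref{prop:2d-sieve-q0q1q2}, with the parameters $(k_0,k_1,k_2)$ chosen according to the factorization $D_\mathrm{small}=D_0(m)D_1(m)D_2(m)$ and the fixed powers $\upsilon$, $p_1^{2n}$. Concretely I would set $k_0\coloneqq \upsilon p_1^{2n} D_2(m)^2$, so that $k_0 \mid Q(x,y)$ exactly captures the divisibility built into the condition $Q(x,y)\equiv m \bmod \upsilon p_1^{2n}$ together with $\ord_p Q(x,y)\geq 2$ at the primes $p\mid D_2(m)$; then set $k_1\coloneqq D_1(m)$ (the primes dividing $Q(x,y)/k_0$ exactly once) and $k_2\coloneqq (D_0(m)D_1(m))^2$ (the modulus controlling the unit-residue behaviour at $p\mid D_0(m)D_1(m)$). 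The hypotheses of Proposition \ref{prop:2d-sieve-q0q1q2} must be checked: every prime of $k_1$ divides $k_2$ since $D_1(m)\mid D_0(m)D_1(m)$; $\gcd(k_0,k_2)=1$ because $D_2(m)$ is coprime to $D_0(m)D_1(m)$ and, crucially, because the weight condition $w_k(m)>0$ forces $m$ to avoid $C^{p^2}_{\pm 1}$ whenever the genus character excludes it, so $p_1$ and $\upsilon$ are coprime to $D_\mathrm{small}$ (recall $p_1$ splits while $\upsilon$ is supported on inert primes, and $D_\mathrm{small}\mid D_E$); and $k=k_0k_1k_2=\upsilon p_1^{2n} D_\mathrm{small}^2\leq |D|^{\eta/2}\cdot|D|^{\eta/2}=|D|^{\eta}\leq A(\mathscr{E})^{\eta/2}$ using $A(\mathscr{E})=2\pi\kappa\sqrt{|D|}\gg |D|^{1/2}$ and $\upsilon p_1^{2n}\leq |D|^{\eta/2}$, $D_\mathrm{small}\leq |D|^{\eta/4}$. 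The value $l\in(\cyclic{k_2})^\times$ is read off from $m$: since $w_k(m)>0$ the residue $m/k_0$ is a unit modulo $k_2/\gcd$, i.e.\ modulo $D_0(m)D_1(m)$ (after dividing out the exact power $D_1(m)$), which is precisely the shape $Q\equiv k_0 k_1 l \bmod k$ required by the proposition.

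Next I would verify the two quantitative hypotheses of Proposition \ref{prop:2d-sieve-q0q1q2} for $Q(x,y)=q(x,y)-\omega D$ on the ellipse $\mathscr{E}$. The condition $R_{\max}^{\theta_l}\leq A(\mathscr{E})^{1-4\eta}$ is assumed outright. For the size bound one takes $X\coloneqq 2\kappa|D|$; then for $(x,y)\in\mathscr{E}$ one has $0\leq q(x,y)\leq\kappa|D|$ and $|\omega D|=\Denom_f(\ctr(\xi)_f)|D|\leq \kappa|D|$, so $|Q(x,y)|\leq 2\kappa|D|=X$, and $X\leq A(\mathscr{E})^{\delta/2}$ holds for a suitable fixed $\delta$ since $X\asymp |D|$ while $A(\mathscr{E})\asymp \kappa |D|^{1/2}$ (here $\kappa\ll 1$ in the regime where the sieve is applied, a point already flagged after \eqref{eq:shifted-conv-intro}; I would record the precise admissible $\delta$). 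One also needs the a priori bound $\widetilde\rho_Q(p^k)\leq C p^{k(2-r)}$ with $C,r$ independent of $p^k$: this is exactly the kind of estimate supplied by Appendix \ref{appndx:conics}, where the point counts on the relevant singular conics $Q=0$ over $\cyclic{N}$ are computed; since $q$ is a primitive binary quadratic form of discriminant $D=\disc(\Lambda)$ and $\mathcal{f}\ll 1$, one gets such a bound with $r=1$ away from $p\mid D_E$ (Hensel) and with a fixed $r>0$ at the finitely many ramified primes, the constants depending only on $\mathcal{f}$. Finally $f\cdot r_0$ is checked to lie in a class $\mathcal{M}(A,B,\varepsilon)$ for any $\varepsilon>0$ with $A$ absolute and $B$ depending on $\varepsilon$: $f$ counts invertible $\Lambda$-ideals so $f(n)\leq d(n)\ll_\varepsilon n^\varepsilon$ and $f(p^k)\leq k+1\leq 2^k$, while $r_0$ is bounded by $2^{\omega(n)}$, giving $f r_0\in\mathcal{M}(4,B_\varepsilon,\varepsilon)$; one chooses $\varepsilon$ below the threshold $\min\{r,\eta r/(4\delta)\}$.

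Applying Proposition \ref{prop:2d-sieve-q0q1q2} with these data yields, up to the stated implicit constant,
\begin{align*}
\sum_{\substack{(x,y)\in \mathscr{E} \cap \mathbb{Z}^2 \\ Q(x,y) \equiv m \bmod k}}(f\cdot r_0)\!\left(\frac{Q(x,y)}{\upsilon p_1^{2n}}\right)
&\ll A(\mathscr{E})\,\frac{f(D_1(m))\,\rho_Q\!\left(D_1(m)l; D_1(m)(D_0(m)D_1(m))^2\right)}{\left(D_1(m)(D_0(m)D_1(m))^2\right)^2}\\
&\quad\times\prod_{\substack{2<p\leq X/(k_0 D_1(m))\\ p\nmid k_0 (D_0(m)D_1(m))^2,\ p\nmid Q}}\left(1-\frac{\rho_Q(p)}{p^2}\right)
\sum_{\substack{a\leq X/(k_0 D_1(m))\\ \gcd(a,D_0(m)D_1(m))=1}}\frac{(f r_0)(a)\,\widetilde\rho_Q(k_0 a)}{(k_0 a)^2}.
\end{align*}
It remains to simplify this to the asserted form. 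I would (i) absorb the factor $f(D_1(m))$ and the gap between $\rho_Q(D_1(m)l;\cdot)$ over the modulus $D_1(m)(D_0(m)D_1(m))^2$ and $\rho_Q(m;(D_0(m)D_1(m))^2)$ into the implicit constant by using multiplicativity of $\rho_Q$ in coprime moduli and the a priori bound (the primes of $D_1(m)$ divide the order-one part, contributing a bounded factor depending only on $\mathcal{f}$); (ii) enlarge the range $X/(k_0 D_1(m))\leq 2\kappa|D|$ to $a\leq 2\kappa|D|$ and the product to $p\leq 2\kappa|D|^{1-\eta}$ harmlessly, truncating the Euler product at $2\kappa|D|^{1-\eta}$ since the tail $2\kappa|D|^{1-\eta}<p\leq X$ contributes $1+O(|D|^{-\eta})$ by Lemma \ref{lem:Schwartz-Zippel} and partial summation; (iii) rewrite $k_0 = \upsilon p_1^{2n} D_2(m)^2$ and note $(f r_0)(a)=f(a)r_0(a)$ to land exactly on the right-hand side of the proposition. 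The main obstacle is bookkeeping: matching the somewhat coarse output of Proposition \ref{prop:2d-sieve-q0q1q2} (which carries the modulus $D_1(m)(D_0(m)D_1(m))^2$ and an extra $f(D_1(m))$) to the cleaner target expression, and confirming that all the discarded factors are bounded by constants depending only on $\mathcal{f}$ and $\eta$ — this requires a careful, prime-by-prime comparison using the structure of $\rho_Q$ and $\widetilde\rho_Q$ at ramified primes from Appendix \ref{appndx:conics}, but involves no new ideas beyond the sieve already established.
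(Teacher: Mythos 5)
Your overall strategy is the same as the paper's: feed the congruence class into Proposition \ref{prop:2d-sieve-q0q1q2} with $k_0=\upsilon p_1^{2n}D_2(m)^2$ and $k_1=D_1(m)$, checking the hypotheses via Lemma \ref{lem:ellipse-area-rmax}, Appendix \ref{appndx:conics} and the multiplicative-class bound for $f\cdot r_0$. However, your choice of $k_2$ is wrong and it is precisely what creates the ``main obstacle'' you flag at the end. Proposition \ref{prop:2d-sieve-q0q1q2} requires $k_0k_1k_2=k=\upsilon p_1^{2n}D_\mathrm{small}^2$, and with your $k_2=(D_0(m)D_1(m))^2$ you get $k_0k_1k_2=\upsilon p_1^{2n}D_0(m)^2D_1(m)^3D_2(m)^2=k\cdot D_1(m)$, so the proposition does not apply to the congruence $Q\equiv m\bmod k$ as stated. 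The correct choice is $k_2=D_0(m)^2D_1(m)$: then $k_0k_1k_2=k$, and moreover $k_1k_2=(D_0(m)D_1(m))^2$, so the output factor $f(k_1)\rho_Q(k_1l;k_1k_2)/(k_1k_2)^2$ is exactly $\rho_Q\left(m;(D_0(m)D_1(m))^2\right)/(D_0(m)D_1(m))^4$ once you note $f(D_1(m))=1$. Your step (i) then becomes unnecessary, and the remaining cleanup is only the harmless enlargement of the summation range and truncation of the Euler product, as you describe in (ii)--(iii).

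Two further points. First, your fallback of absorbing $f(D_1(m))$ and the modulus discrepancy into ``a bounded factor depending only on $\mathcal{f}$'' is not justified as written: $D_1(m)$ can have $\gg \log|D|/\log\log|D|$ prime factors, so a priori $f(D_1(m))$ is only $|D|^{o(1)}$. What saves you is the exact identity $f(p^j)=1$ for every ramified prime $p$ coprime to $\mathcal{f}$ (there is a unique prime ideal of each norm $p^j$), whence $f(D_1(m))=1$; this identity is also needed, and you do not state it, to justify replacing the argument $Q(x,y)/(\upsilon p_1^{2n})$ of $f\cdot r_0$ in the target sum by $Q(x,y)/k_0$ before invoking the sieve with $k_0=\upsilon p_1^{2n}D_2(m)^2$. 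Second, your remark that $\kappa\ll1$ in the relevant regime is not needed and not quite right ($\kappa$ depends on the fixed $\xi$ but is not small); the verification $X=2\kappa|D|\leq A(\mathscr{E})^{\delta/2}$ is done simply by taking $\delta$ large (the paper takes $\delta=2$), at the cost of shrinking the admissible $\varepsilon$ for $f\cdot r_0$, which is harmless.
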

\begin{proof}
Write 
\begin{equation*}
m\equiv D_1(m) l \mod D_0(m)^2 D_1(m)^2
\end{equation*}
where $l\in\left(\cyclic{D_0(m)^2D_1(m)}\right)^\times$.

Notice that if $p \parallel D$ then $f(pn)=f(n)$ for all $n\in\mathbb{N}$. Using this we write
\begin{equation*}
\sum_{\substack{(x,y)\in \mathscr{E} \cap \mathbb{Z}^2  
\\ Q(x,y) \equiv m \bmod k}}(f\cdot r_0)\left(\frac{Q(x,y)}{\upsilon p_1^{2n}}\right)
=
\sum_{\substack{(x,y)\in \mathscr{E} \cap \mathbb{Z}^2  
\\ Q(x,y) \equiv m \bmod k}}(f\cdot r_0)\left(\frac{Q(x,y)}{\upsilon p_1^{2n}D_2(m)^2}\right)
\end{equation*}
We wish to apply Proposition \ref{prop:2d-sieve-q0q1q2}. We now define $k_0$, $k_1$, $k_2$, $X$ and $\delta$ and verify that the conditions of the Proposition hold.

Set $k_0=\upsilon p_1^{2n} D_2(m)^2$,
$k_1=D_1(m)$ and $k_2=D_0(m)^2 D_1(m)$. Notice that $k_0k_1k_2=\upsilon p_1^{2n} D_2(m)^2D_0(m)^2D_1(m)^2=\upsilon p_1^{2n} D_\mathrm{small}=k$. Because $D_\mathrm{small}\leq |D|^{\eta/2}$ and using Lemma \ref{lem:ellipse-area-rmax} we deduce that
$k \leq |D|^{\eta}\leq A(\mathscr{E})^{\eta/2}$. 

For any $(x,y)\in\mathscr{E}\cap\mathbb{Z}^2$ we know from the definition of $Q(x,y)$ that $Q(x,y)=\upsilon\Nr(\mathfrak{b})$ where $(\mathfrak{a},\mathfrak{b})$ is a pair of integral ideals satisfying the conclusions of Proposition \ref{prop:intersection-to-invariants} with $x=\ctr(\xi)$. We deduce the following using the explicit formulae for $\kappa$ and $\omega$ from Theorem \ref{thm:cross-correlation-shifted-convolution}
\begin{align*}
\max\left\{\left|Q(x,y)\right|\relmiddle{|} (x,y)\in\mathscr{E}\cap\mathbb{Z}^2\right\}&= \max_{\substack{\Ideals_0(\Lambda)\ni\mathfrak{a}\subseteq{\Lambda} \\ \Nr\mathfrak{a}\leq \kappa |D|}} |\Nr\mathfrak{a}-\omega D|\\
&\leq (\kappa+|\omega|) |D| \leq 2\kappa |D|\leq A(\mathscr{E})
\end{align*}
Hence we can take $X= 2\kappa |D|$ and $\delta=2$ in the conditions of Proposition \ref{prop:2d-sieve-q0q1q2}.

Moreover, using the standard Euler product for the Dedekind $\zeta$-function of $E$ with the necessary modifications at primes dividing the conductor we see that for every $\varepsilon>0$ there is some $1\leq A\ll_{\mathcal{f}} 1$ and $0<B\ll_{\varepsilon,\mathcal{f}} 1$ so that
$f\in\mathcal{M}(A,B,\varepsilon)$. Finally, to apply Proposition \ref{prop:2d-sieve-q0q1q2} we need a bound of the form $\widetilde{\rho}_Q(p^k)\leq C p^{k(2-r)}$ for some $C\geq 1$, $0<r<1$. Such a bound holds with $C=16$ and $r=1/2$ due to Corollary \ref{cor:rho_Q-C-r}.

Notice that $f(k_1)=1$ because $k_1$ is supported on ramified primes and it is coprime to $\mathcal{f}$.
After applying Proposition \ref{prop:2d-sieve-q0q1q2} we arrive at the necessary sum with product and summation up to $X/(k_0 k_1)$. The final result follows because $2\kappa|D|=X\geq X/(k_0 k_1)\geq X/k\geq 2\kappa |D|^{1-\eta}$.
\end{proof}

\begin{lem}\label{lem:rho-Q-final}
Let $m\in\cyclic{k}$ with $w_k(m)>0$.
For any $a\in \mathbb{N}$ such that $\gcd(a,D_0(m)D_1(m))=1$ the following inequality holds
\begin{equation*}
\frac{\widetilde{\rho}_Q(\upsilon p_1^{2n} D_2(m)^2 a)}{(\upsilon p_1^{2n} D_2(m)^2 a)^2} \ll_{\mathcal{f},\mathbf{G}} \frac{|\omega|^2}{p_1^{2n} D_2(m)^2 a} 
\left[\prod_{p \mid a} \left(1+\frac{1}{p}\right)\right] \left[\prod_{p \mid D_2(m)} 2\left(1-\frac{1}{p}\right)\right] r_1(a) 
\end{equation*}
where $r_1$ is a multiplicative function defined by $r_1(p^k)=2$ for any $p\mid D_\mathrm{large}$ and $k>\ord_p D_\mathrm{large}$ and $r_1(p^k)=1$ otherwise.
\end{lem}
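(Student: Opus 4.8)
The statement is an estimate for $\widetilde{\rho}_Q$ at integers of the special shape $\upsilon p_1^{2n} D_2(m)^2 a$ with $a$ coprime to $D_0(m)D_1(m)$. Since $\widetilde{\rho}_Q$ is multiplicative and the four factors $\upsilon$, $p_1^{2n}$, $D_2(m)^2$, $a$ are pairwise coprime (recall $\gcd(\upsilon,p_1)=1$ as $\upsilon$ is supported on inert primes, $p_1$ splits; $D_2(m)\mid D_\mathrm{small}$ is odd and coprime to $\upsilon p_1$; and $\gcd(a,D_0(m)D_1(m))=1$, while the hypothesis $w_k(m)>0$ forces $\ord_p m=2$ exactly at $p\mid D_2(m)$, so no overlap with $a$ either), the first move is to factor
\begin{equation*}
\frac{\widetilde{\rho}_Q(\upsilon p_1^{2n} D_2(m)^2 a)}{(\upsilon p_1^{2n} D_2(m)^2 a)^2}
=\frac{\widetilde{\rho}_Q(\upsilon)}{\upsilon^2}\cdot\frac{\widetilde{\rho}_Q(p_1^{2n})}{p_1^{4n}}\cdot\frac{\widetilde{\rho}_Q(D_2(m)^2)}{D_2(m)^4}\cdot\frac{\widetilde{\rho}_Q(a)}{a^2}
\end{equation*}
and bound each factor separately using the local analysis of $\rho_Q$, $\widetilde{\rho}_Q$, $\rho^\mathrm{sing}_Q$ at the relevant primes. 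The polynomial is $Q(x,y)=q(x,y)-\omega D$ with $q$ a primitive integral binary quadratic form of discriminant $D$; the plane curve $X_Q$ is a conic, and the relevant point counts over $\cyclic{p^k}$ should be taken from Appendix \ref{appndx:conics} (which, per the introduction, carries out exactly these conic point counts over $\cyclic{N}$), together with Lemma \ref{lem:rho-tilderho-formula} relating $\widetilde{\rho}_Q$ to $\rho_Q$ and $\rho^\mathrm{sing}_Q$.

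\textbf{The four local estimates.} For a prime $p\nmid D$ at which $Q$ is nonsingular mod $p$, Hensel gives $\rho_Q(p^k)=\rho_Q(p)\,p^{k-1}$ and $\widetilde{\rho}_Q(p^k)=\rho_Q(p)\,p^{k-1}$, while for the conic $q=\mathrm{const}$ one has $\rho_Q(p)\le 2p$ with the precise value depending on whether $D$ (equivalently the relevant local symbol) makes the conic a split or nonsplit pair of lines / irreducible; hence $\widetilde{\rho}_Q(p^k)/p^{2k}\le (1+1/p)/p^k$ up to controlled factors, which is the source of the $\prod_{p\mid a}(1+1/p)$ term, and the factor $|\omega|^2$ absorbs the finitely many primes dividing $\omega$ (which divide $\Denom_f(\ctr(\xi)_f)$) where this clean Hensel behavior can fail. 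For $p=p_1$: since $p_1$ splits in $E$, the conductor is prime to $p_1$, and one checks $\ord_{p_1}(\omega D)$ is bounded (indeed $p_1\nmid D$ because $p_1$ splits, and $p_1\mid\omega$ to bounded order), so $\widetilde{\rho}_Q(p_1^{2n})/p_1^{4n}\ll 1/p_1^{2n}$ uniformly in $n$ --- this gives the $p_1^{2n}$ in the denominator. For $p\mid \upsilon$ (a prime where $\mathbf{B}$ is ramified and $E$ is inert): here $p\mid D$, $\upsilon$ is squarefree, and a direct count of points on the conic mod $p$ and $p^2$ --- the conic degenerates because $p\mid\disc(q)=D$, and $\omega D\equiv 0$ forces one to look at the reduction carefully --- yields $\widetilde{\rho}_Q(\upsilon)/\upsilon^2\ll_{\mathbf{G}} 1$, a constant depending only on the finitely many such $p$, i.e. on $\mathbf{G}$. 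Finally for $p\mid D_2(m)$: these are odd primes $p\parallel D$, so $\ord_p D=1$ while $\ord_p m=2$; the point is that $X_Q\bmod p$ is a \emph{double line} (since $p\mid\disc q$ to the right order and the shift $\omega D\equiv 0\bmod p$), so $\rho_Q(p^2)=p^3$ roughly, but among those $p^3$ points the ones of the prescribed residue class that are \emph{smooth mod $p$} are $\asymp p^2\cdot(1-1/p)$, and using Lemma \ref{lem:rho-tilderho-formula} the singular-with-lift points are subtracted, producing precisely the weight $2(1-1/p)$ per prime dividing $D_2(m)$ together with the $1/D_2(m)^2$ saving. The multiplicative function $r_1$ collects the factor $2$ coming from primes $p\mid D_\mathrm{large}$ where $p^2\mid Q(x,y)$ (i.e. the higher-ramification discrepancy in counting ideals of norm divisible by $p^{\ge\ord_p D}$), exactly as $r$ and $r_0$ did.

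\textbf{Assembling.} Multiplying the four bounds and relabeling, the product becomes
\begin{equation*}
\ll_{\mathcal{f},\mathbf{G}}\ \frac{|\omega|^2}{p_1^{2n}D_2(m)^2 a}\Bigl[\prod_{p\mid a}\bigl(1+\tfrac1p\bigr)\Bigr]\Bigl[\prod_{p\mid D_2(m)}2\bigl(1-\tfrac1p\bigr)\Bigr]r_1(a),
\end{equation*}
which is the claim; the dependence on the conductor $\mathcal{f}$ enters only through the finitely many bad primes $p\mid\mathcal{f}$ where $f$ and $\widetilde{\rho}_Q$ pick up bounded local factors (as in $f\in\mathcal{M}(A,B,\varepsilon)$ with $A\ll_{\mathcal f}1$ used earlier), and the $\mathbf{G}$-dependence only through the primes dividing $\upsilon$ and the ramification set of $\mathbf{B}$. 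I expect the main obstacle to be the honest book-keeping at the primes $p\mid D_2(m)$: one must verify that "$\ord_p m=2$, $p\parallel D$, $p$ odd, $w_k(m)>0$" really does pin down the local reduction type of the conic $Q\bmod p$ and $\bmod p^2$ so that the smooth-point count comes out to exactly $\asymp p^2(1-1/p)$ and not, say, $\asymp p^2$ or $0$; this is where Appendix \ref{appndx:conics} must be invoked with care, and where the interaction between the genus-class weight $w_k$ and the geometry of $X_Q$ is genuinely used. The other steps are routine given the conic point-count lemmas.
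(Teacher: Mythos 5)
Your overall strategy — exploit multiplicativity of $\widetilde{\rho}_Q$ and bound the contribution prime by prime using the conic counts of Appendix \ref{appndx:conics} — is the same as the paper's. But your opening factorization contains a genuine error: you assert that $\upsilon$, $p_1^{2n}$, $D_2(m)^2$ and $a$ are pairwise coprime, justifying the coprimality of $a$ with $D_2(m)$ by "$w_k(m)>0$ forces $\ord_p m=2$ exactly at $p\mid D_2(m)$". That condition constrains $m$, not $a$; the only hypothesis on $a$ is $\gcd(a,D_0(m)D_1(m))=1$, so $a$ may perfectly well be divisible by $p_1$, by primes of $\upsilon$, and by primes of $D_2(m)$. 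Consequently the clean four-factor splitting $\widetilde{\rho}_Q(\upsilon)\widetilde{\rho}_Q(p_1^{2n})\widetilde{\rho}_Q(D_2(m)^2)\widetilde{\rho}_Q(a)$ is invalid. The paper instead evaluates $\widetilde{\rho}_Q$ at the merged prime powers $p^{\ord_p a+1}$ for $p\mid\upsilon$, $p_1^{\ord_{p_1}a+2n}$, and $p^{\ord_p a+2}$ for $p\mid D_2(m)$, and only then treats the remaining primes of $a$; the factors $\gcd(a,\upsilon^\infty)$, $\gcd(a,p_1^\infty)$, $\gcd(a,D_2(m)^\infty)$ that come out of these merged evaluations are exactly what reassembles into the single $a$ in the denominator of the claimed bound. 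This is fixable, but it is not a cosmetic relabeling — without it the right-hand side you produce does not match the statement when $a$ overlaps those moduli.

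Two smaller points. At primes $p\mid D_2(m)$ you describe the surviving count as the "smooth mod $p$" points; in fact for $p\mid D$ the reduction $X_Q\bmod p$ has \emph{no} smooth points (the paper notes this when deriving Corollary \ref{cor:rhoQtilde-final}), and $\widetilde{\rho}_Q$ there counts singular points without a lift, giving $\widetilde{\rho}_Q(p^{k+2})=\frac{\rho_Q^0(p)}{p}p^{k+2}(1-\tfrac1p)$ with $\rho_Q^0(p)\le 2p$ — same numerics, wrong mechanism. And your treatment of primes dividing $\omega$ is too vague: $\omega$ is not bounded, so these are not "finitely many bad primes" absorbable into a constant, and the claim that $p_1\mid\omega$ "to bounded order" is unjustified. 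The paper gets the $|\omega|^2$ explicitly from Proposition \ref{prop:Q-rho-sing} via $\widetilde{\rho}_Q(p^{k})\le 2\ord_p\omega\, p^{\ord_p\mathcal f}p^{k}r_1(p^k)$ and the elementary bounds $\prod_{p\mid\omega}2\le|\omega|$, $\prod_{p\mid\omega}\ord_p\omega\le|\omega|$; some such uniform-in-$\omega$ argument is needed, since the lemma is later applied with $\omega$ depending on the Hecke translate $\xi$.
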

\begin{remark}
Notice that by definition $r_1\leq r_0$.
\end{remark}
\begin{proof}
Recall that $\upsilon$ and $D_2(m)$ are square-free.
To prove the lemma we use the multiplicativity of $\widetilde{\rho}_Q$ to write 
\begin{align*}
\widetilde{\rho}_Q(\upsilon p_1^{2n} D_2(m)^2 a)&=\left[\prod_{p \mid \upsilon} \widetilde{\rho}_Q\left(p^{\ord_p a+1}\right)\right] \widetilde{\rho}_Q\left(p_1^{\ord_{p_1} a+2n}\right)
\left[\prod_{p \mid D_2(m)} \widetilde{\rho}_Q\left(p^{\ord_p a+2}\right)\right] \\
&\cdot\prod_{\substack{p \mid a \\ p\nmid \upsilon p_1 D_2(m)}} \widetilde{\rho}_Q\left(p^{\ord_p a}\right)
\end{align*}
We treat each term above separately. 

Any prime dividing $\upsilon$ is necessary coprime to the conductor, c.f.\ \S\ref{sec:ramified-local-order}, and is inert in $E/\mathbb{Q}$; thus according to Proposition \ref{prop:rho_Q-regular}
\begin{equation*}
\prod_{p \mid \upsilon} \widetilde{\rho}_Q\left(p^{\ord_p a+1}\right)=\prod_{p \mid \upsilon} (p+1)p^{\ord_p a}=\upsilon \gcd(a,\upsilon^\infty) \prod_{p \mid \upsilon} \left(1+\frac{1}{p}\right)\ll_{\mathbf{G}} \gcd(a,\upsilon^\infty)
\end{equation*}
The last inequality holds because $\upsilon$ is bounded above by the product of all primes where $\mathbf{B}$ ramifies.

The prime $p_1$ is split in $E/\mathbb{Q}$ and coprime to $\mathcal{f}$, hence by  Proposition \ref{prop:rho_Q-regular}
\begin{equation*}
\widetilde{\rho}_Q\left(p_1^{\ord_{p_1} a+2n}\right)=(p_1-1)p_1^{\ord_{p_1} a+2n -1}<p_1^{2n} \gcd(a,p_1^\infty)
\end{equation*}

Next we consider all primes $p$ dividing $D_2(m)$. These are ramified in $E/\mathbb{Q}$ and coprime to $4\mathcal{f}\omega$. Hence due to Corollary \ref{cor:rhoQtilde-final} we know that
\begin{align*}
\prod_{p \mid D_2(m)}\widetilde{\rho}_Q(p^{\ord_p a +2})&=\prod_{p \mid D_2(m)}\frac{\rho_Q^0(p)}{p}p^{\ord_p a+2}\left(1-\frac{1}{p}\right)\\
&\leq D_2(m)^2 \gcd(a, D_2(m)^\infty)\prod_{p \mid D_2(m)} 2\left(1-\frac{1}{p}\right)
\end{align*}

We are left dealing with primes $p\mid a$ which are coprime to $\upsilon p_1 D_2(m)$. Because we have assumed $\gcd(a, D_0(m)D_1(m))=1$ we know that $p\nmid D_\mathrm{small}$. If $2<p\mid \omega$ then because of Proposition \ref{prop:Q-rho-sing} 
\begin{equation*}
\widetilde{\rho}_Q(p^{\ord_p a})\leq 2 \ord_p \omega p^{\ord_p \mathcal{f}} p^{\ord_p a} r_1(p^{\ord_p a})
\end{equation*}
Applying Proposition \ref{prop:Q-rho-sing}  for $p=2$ we deduce
\begin{align*}
\prod_{p\mid \gcd(4\omega,a)} \widetilde{\rho}_Q(p^{\ord_p a}) &\ll_{\mathcal{f}} 
\gcd(a,\omega^\infty) r_1(\gcd(a,\omega^\infty))
\prod_{p\mid \omega} 2\ord_p \omega\\ 
&\leq \gcd(a,\omega^\infty) r_1(\gcd(a,\omega^\infty)) |\omega|^2
\end{align*}
in the last inequality we have used the facts $\prod_{p\mid \omega} 2 \leq |\omega|$ and $\prod_{p\mid \omega} \ord_p \omega \leq |\omega|\leq d(|\omega|)\leq |\omega|$, where $d(|\omega|)$ is the number of divisors of $\omega$.

For any prime $p\mid a$ coprime to $4D_\mathrm{small}\omega$ we can apply Proposition \ref{prop:rho_Q-regular} and Corollary \ref{cor:rhoQtilde-final} to deduce
\begin{equation*}
\prod_{\substack{p \mid a \\ p \nmid 4D_\mathrm{small}\omega}}  
\widetilde{\rho}_Q(p^{\ord_p a}) \ll_{\mathcal{f}} 
\prod_{\substack{p \mid a \\ p \nmid 4D_\mathrm{small}\omega}} 
p^{\ord_p a}r_1(p^{\ord_p a})\left(1+\frac{1}{p}\right)
\end{equation*}

The claim follows by combining all the inequalities above for the different cases of $p$.
\end{proof}

\begin{prop}\label{prop:log-sums-with-rhoQ}
Let $m\in\cyclic{k}$ with $w_k(m)>0$. Let $\theta_l>0$ be admissible, fix $0<\eta<1/2$ and assume $R_{\max}^{\theta_l}\leq A(\mathscr{E})^{1-4\eta}$. If $\upsilon p_1^{2n}\leq |D|^{\eta/2}$ then
\begin{align*}
\sum_{\substack{(x,y)\in \mathscr{E} \cap \mathbb{Z}^2  
\\ Q(x,y) \equiv m \bmod k}}&(f\cdot r_0)\left(\frac{Q(x,y)}{\upsilon p_1^{2n}}\right) \ll_{\mathcal{f}, \eta, \mathbf{G}}
A(\mathscr{E}) p_1^{-2n} \frac{\rho_Q\left(m;(D_0(m)D_1(m))^2\right)}{(D_0(m)D_1(m))^4} \frac{2^{\omega(D_2(m))}}{D_2(m)^2}\\
&\cdot  |\omega|^2 (\log\log (2|\omega|))^8 \prod_{\substack{2 < p\leq 2\kappa |D|^{1-\eta} \\ p\nmid D_0(m)D_1(m)}} \left(1-\frac{\rho_Q(p)}{p^2}\right)
\sum_{\substack{a\leq 2\kappa |D|\\\gcd\left(a,D_0(m)D_1(m)\right)=1}} \frac{f(a)}{a}
\end{align*}
\end{prop}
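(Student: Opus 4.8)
The starting point is Proposition \ref{prop:applying-the-sieve}, which already provides an upper bound for the left-hand side as a product of $A(\mathscr{E})$, the combinatorial factor $\rho_Q\left(m;(D_0(m)D_1(m))^2\right)/(D_0(m)D_1(m))^4$, a truncated Euler product over $2<p\leq 2\kappa|D|^{1-\eta}$, and the weighted sum
\begin{equation*}
\sum_{\substack{a\leq 2\kappa|D|\\\gcd(a,D_0(m)D_1(m))=1}} \frac{f(a)r_0(a)\widetilde{\rho}_Q(\upsilon p_1^{2n}D_2(m)^2 a)}{(\upsilon p_1^{2n}D_2(m)^2 a)^2}.
\end{equation*}
The task is thus to massage this last sum into the stated shape $p_1^{-2n}\,2^{\omega(D_2(m))}D_2(m)^{-2}|\omega|^2(\log\log(2|\omega|))^8\sum_{a\leq 2\kappa|D|} f(a)/a$. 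First I would substitute the bound of Lemma \ref{lem:rho-Q-final} for $\widetilde{\rho}_Q(\upsilon p_1^{2n}D_2(m)^2 a)/(\upsilon p_1^{2n}D_2(m)^2 a)^2$. This immediately extracts the factors $p_1^{-2n}$, $D_2(m)^{-2}\prod_{p\mid D_2(m)}2(1-1/p)\leq 2^{\omega(D_2(m))}D_2(m)^{-2}$, the factor $|\omega|^2$, and leaves an $a$-sum of the shape
\begin{equation*}
\sum_{\substack{a\leq 2\kappa|D|\\\gcd(a,D_0(m)D_1(m))=1}} \frac{f(a)r_0(a)r_1(a)}{a}\prod_{p\mid a}\left(1+\frac1p\right).
\end{equation*}

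\textbf{Handling the multiplicative overhead on the $a$-sum.} The remaining job is to show this sum is $\ll_{\eta}(\log\log(2|\omega|))^8\sum_{a\leq 2\kappa|D|}f(a)/a$; note $r_0\leq r_0$ but we need $r_0 r_1$, and since $r_1\leq r_0$ by the remark after Lemma \ref{lem:rho-Q-final}, we may bound $r_0(a)r_1(a)\leq r_0(a)^2$. The key point is that $r_0$ is supported (beyond trivial values $1$) only on primes dividing $D_{\mathrm{large}}$, of which there are at most $\omega(D_{\mathrm{large}})\leq\omega(D)\ll\log|D|/\log\log|D|$; more usefully, since $D_{\mathrm{small}}$ collects all prime factors of $D$ below $\tfrac{\eta}{4C_\theta}\log|D|$ that are coprime to $\omega$, every prime $p\mid D_{\mathrm{large}}$ either exceeds $\tfrac{\eta}{4C_\theta}\log|D|$ or divides $\omega$. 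The contribution of the "$p\mid\omega$" part of $D_{\mathrm{large}}$ is controlled by a divisor-type bound giving the $(\log\log(2|\omega|))^8$ factor — here one uses $\sum_{d\mid\omega^\infty,\,d\leq x}r_0(d)^2 2^{\omega(d)}/d\ll\prod_{p\mid\omega}(1-1/p)^{-C}\ll(\log\log(2|\omega|))^{C}$ for a suitable absolute $C$ (the exponent $8$ is the cumulative cost of the three factors $r_0,r_1$ and $\prod_{p\mid a}(1+1/p)$ together with the $2^{\omega(D_2(m))}$-type overhead, matching the bookkeeping in Lemma \ref{lem:rho-Q-final} and the genus weights). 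For the part of $D_{\mathrm{large}}$ consisting of primes $>\tfrac{\eta}{4C_\theta}\log|D|$, any such prime contributes a factor $\ll(1+1/p)\leq 1+O(1/\log|D|)$ per divisor; since $a\leq 2\kappa|D|$ has at most $O(\log|D|/\log\log|D|)$ such prime factors, their total multiplicative overhead is $O(1)$. Finally, the coprimality restriction $\gcd(a,D_0(m)D_1(m))=1$ and the truncated Euler product over primes $\leq 2\kappa|D|^{1-\eta}$ only decrease the relevant sum relative to $\sum_{a\leq 2\kappa|D|}f(a)/a$, so they may be dropped in an upper bound (or absorbed using Lemma \ref{lem:multiplicative-decoupling} to decouple $f$ from the genus-supported overhead, exactly as in the proof of Theorem \ref{thm:2d-sieve}).

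\textbf{Main obstacle.} The delicate point is the uniform control of the multiplicative junk $r_0 r_1\prod_{p\mid a}(1+1/p)$ against $f(a)/a$: one must verify that the Euler factor comparison
\begin{equation*}
\prod_{p}\frac{\sum_{j\geq0}f(p^j)r_0(p^j)^2\binom{}{}\,(1+1/p)\,p^{-j}}{\sum_{j\geq0}f(p^j)p^{-j}}
\end{equation*}
converges with a value $\ll(\log\log(2|\omega|))^8$, which requires knowing that $f(p)\leq 2$ on average (true since $f$ counts invertible $\Lambda$-ideals and is of class $\mathcal{M}(A,B,\varepsilon)$) and that the extra factors are genuinely concentrated on the thin set of primes dividing $D_{\mathrm{large}}\cdot\omega$. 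The cleanest route is to apply Lemma \ref{lem:multiplicative-decoupling} with $g(a)=f(a)/a$ and $\psi$ the Möbius inverse of $a\mapsto r_0(a)^2(1+\#\{p:p\mid a\})$-type weight, reducing everything to the bound $\mathfrak{M}_z(g,\psi)\ll(\log\log(2|\omega|))^8$, whose verification is a finite Euler-product estimate of the same flavour as Lemma \ref{lem:large-sieve-a-averaged}. Once this is in place, collecting the extracted factors $A(\mathscr{E})\,p_1^{-2n}\,\rho_Q(m;(D_0D_1)^2)/(D_0D_1)^4\,2^{\omega(D_2(m))}/D_2(m)^2\,|\omega|^2(\log\log(2|\omega|))^8$ times the residual Euler product and $\sum_{a\leq2\kappa|D|}f(a)/a$ yields exactly the claimed inequality.
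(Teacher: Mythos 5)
Your proposal is correct and follows essentially the same route as the paper: substitute Lemma \ref{lem:rho-Q-final} into Proposition \ref{prop:applying-the-sieve}, bound $r_0r_1\leq r_0^2$, and then use Lemma \ref{lem:multiplicative-decoupling} to strip the multiplicative overhead $r_0(a)^2\prod_{p\mid a}(1+1/p)$ from $\sum f(a)/a$, with the $(\log\log(2|\omega|))^8$ coming from the primes of $D_{\mathrm{large}}$ dividing $\omega$ and the primes $>\tfrac{\eta}{4C_\theta}\log|D|$ contributing $O_\eta(1)$ by Mertens. The only cosmetic difference is that the paper performs the decoupling in two separate passes (one for $\prod_{p\mid a}(1+1/p)$, one for $r_0^2$) and keeps the coprimality restriction throughout by setting the relevant Euler factors to $1$, rather than combining the weights into a single $\psi$.
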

\begin{proof}
We begin by substituting the result of Lemma \ref{lem:rho-Q-final} into Proposition \ref{prop:applying-the-sieve} to see that
\begin{align*}
\sum_{\substack{(x,y)\in \mathscr{E} \cap \mathbb{Z}^2  
\\ Q(x,y) \equiv m \bmod k}}&(f\cdot r_0)\left(\frac{Q(x,y)}{\upsilon p_1^{2n}}\right) \ll_{\mathcal{f}, \eta}
A(\mathscr{E}) p_1^{-2n} \frac{\rho_Q\left(m;(D_0(m)D_1(m))^2\right)}{(D_0(m)D_1(m))^4} \frac{2^{\omega(D_2(m))}}{D_2(m)^2}\\
&\cdot\omega\log\omega \prod_{\substack{2 < p\leq 2\kappa |D|^{1-\eta} \\ p\nmid \upsilon p_1D_0(m)D_1(m)}} \left(1-\frac{\rho_Q(p)}{p^2}\right)\\
&\cdot\sum_{\substack{a\leq 2\kappa |D|\\\gcd\left(a,D_0(m)D_1(m)\right)=1}} \frac{f(a)r_0(a)r_1(a)}{a}
\left[\prod_{p \mid a} \left(1+\frac{1}{p}\right)\right] 
\end{align*}

Using Lemma \ref{lem:Schwartz-Zippel} we deduce
\begin{equation*}
\prod_{\substack{2 < p\leq 2\kappa |D|^{1-\eta} \\ p\nmid \upsilon p_1D_0(m)D_1(m)}} \left(1-\frac{\rho_Q(p)}{p^2}\right)\ll_{\mathbf{G}}
\prod_{\substack{2 < p\leq 2\kappa |D|^{1-\eta} \\ p\nmid D_0(m)D_1(m)}} \left(1-\frac{\rho_Q(p)}{p^2}\right)
\end{equation*} 

Because $r_1(a)\leq r_0(a)$ to prove the claim we need only to show that
\begin{equation*}
\sum_{\substack{a\leq 2\kappa |D|\\\gcd\left(a,D_0(m)D_1(m)\right)=1}} \frac{f(a)r_0(a)^2}{a}
\left[\prod_{p \mid a} \left(1+\frac{1}{p}\right)\right] 
\ll_{\mathcal{f}, \eta} (\log\log (2|\omega|))^8 \sum_{\substack{a\leq 2\kappa |D|\\\gcd\left(a,D_0(m)D_1(m)\right)=1}} \frac{f(a)}{a}
\end{equation*}

We prove this by applying the decoupling lemma \ref{lem:multiplicative-decoupling} twice.
In the first application let 
with $g(p^l)=f(p^l)r_0(p^l)^2/p^l$ if $p\nmid D_0(m)D_1(m)$ and $g(p^l)=1$ otherwise; and $h(p^l)=\left(1+\frac{1}{p}\right)$ for $l\geq 1$. We can write $h=1*\psi$ where $\psi(p^l)=0$ for $l\geq 2$ and $\psi(p)=1/p$.

We need to estimate $\mathfrak{M}_z(g,\psi)$ as follows.
\begin{align*}
\mathfrak{M}_z(g,\psi) &\leq \prod_{p \leq \infty} \left[1 +\psi(p)\sum_{j=1}^{\infty} \frac{f(p^l)r_0(p^l)^2}{p^l}\right]
\ll_{\mathcal{f}} \prod_{p \leq \infty} \left[1 +\frac{1}{p}\sum_{j=1}^{\infty} \frac{2(k+1)}{p^l}\right]\\
&= \prod_{p \leq \infty} \left[1 +\frac{2}{p}\frac{2p-1}{(p-1)^2}\right]
\leq  \prod_{p \leq \infty} \left[1 +\frac{4}{(p-1)^2}\right] \ll 1
\end{align*}
where we have used the trivial bound $f(p^l)\leq (k+1)/2$ for every $p\nmid \mathcal{f}$.
We have thus proved that 
\begin{equation*}
\sum_{\substack{a\leq 2\kappa |D|\\\gcd\left(a,D_0(m)D_1(m)\right)=1}} \frac{f(a)r_0(a)^2}{a}
\left[\prod_{p \mid a} \left(1+\frac{1}{p}\right)\right] 
\ll_{\mathcal{f}} \sum_{\substack{a\leq 2\kappa |D|\\\gcd\left(a,D_0(m)D_1(m)\right)=1}} \frac{f(a)r_0(a)^2}{a}
\end{equation*}

We continue by applying Lemma \ref{lem:multiplicative-decoupling} again. This time with $g(p^l)=f(p^l)/p^l$ whenever $p\nmid D_0(m)D_1(m)$ and $g(p^l)=1$ otherwise; and $h(p^l)=r_0(p^l)^2$. We have $h=1*\psi$ where $\psi(p^{\ord_p D})=4$ for $p\mid D_\mathrm{high}$ and $\psi(p^l)=0$ for all other prime powers with $l\geq 1$. We estimate $\mathfrak{M}_z(g,\psi)$ in the following way.
\begin{align*}
\mathfrak{M}_z(g,\psi)&\leq \prod_{p \mid D_\mathrm{high}} \left(1 +4 \sum_{j=\ord_p D}^{\infty} \frac{f(p^l)}{p^l}\right) 
\ll_{\mathcal{f}} \prod_{p \mid D_\mathrm{high}} \left(1 +4 \sum_{j=1}^{\infty} \frac{1}{p^l}\right) \\
&= \prod_{p \mid D_\mathrm{high}} \left(1 +\frac{4}{p-1}\right)\leq \prod_{p \mid D_\mathrm{high}} \left(1 +\frac{8}{p}\right)\leq \prod_{p \mid D_\mathrm{high}} \left(1 +\frac{1}{p}\right)^8\\
&\leq \prod_{p \mid \omega} \left(1 +\frac{1}{p}\right)^8 \prod_{\substack{p \mid D \\ p>\eta/(4 C_\theta)\log |D|}} \left(1 +\frac{1}{p}\right)^8 
\end{align*}
We bound the two factors above separately. The first one can be bounded because
\begin{equation*}
\prod_{p \mid \omega} \left(1 +\frac{1}{p}\right) \ll \log\log (2|\omega|)
\end{equation*}
For the second factor we have the following upper bound due to \eqref{eq:chebyshev-bounds}
\begin{equation*}
\prod_{\substack{p \mid D \\ p>\eta/(4 C_\theta)\log |D|}} \left(1 +\frac{1}{p}\right)
\leq
\prod_{\eta/(4 C_\theta)\log |D| <p\leq C_\theta \log |D|}  \left(1 +\frac{1}{p}\right)\ll_{\eta} 1
\end{equation*}
\end{proof}
The second inequality holds due to Mertens' theorem.

\begin{prop}\label{prop:ANT-bound}
Let $m\in\cyclic{k}$ with $w_k(m)>0$.
Fix $1/2>\eta>0$.
If $C>0$ satisfies
\begin{equation*}
\frac{L'(1,\ch_E)}{L(1,\ch_E)} \leq C \log |D_E|
\end{equation*}
then
\begin{align*}
\prod_{\substack{2 < p\leq 2\kappa |D|^{1-\eta} \\ p\nmid D_0(m)D_1(m)}} \left(1-\frac{\rho_Q(p)}{p^2}\right)
&\sum_{\substack{a\leq 2\kappa |D|\\\gcd\left(a,D_0(m)D_1(m)\right)=1}} \frac{f(a)}{a}\\
\\ &\ll_{C,\eta}
L(1,\ch_E)  \prod_{p\mid \mathcal{f}} \left(1-\left(\frac{D_E}{p}\right)\frac{1}{p}\right)
+|D|^{-2/3+o(1)}
\end{align*}
\end{prop}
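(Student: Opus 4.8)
\textbf{Proof proposal for Proposition \ref{prop:ANT-bound}.}

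The plan is to separate the two factors in the displayed product and estimate each by standard analytic number theory, keeping track of the dependence on the conductor $\mathcal{f}$ and on $D_0(m)D_1(m)$. First I would recall that $f$ is the multiplicative function counting invertible integral $\Lambda$-ideals, so its Dirichlet series factors as $\sum_{a\geq 1} f(a)a^{-s} = \zeta_{\Lambda}(s)$, and away from the primes dividing the conductor $\mathcal{f}$ this is the Euler product of $\zeta_E(s) = \zeta(s)L(s,\ch_E)$; at primes $p\mid\mathcal{f}$ the local factor is adjusted in an explicit way bounded in terms of $\mathcal{f}$. Thus the partial sum $\sum_{a\leq X} f(a)/a$ with $X = 2\kappa|D|$ is, by a Mertens-type / Wirsing-type estimate (e.g. partial summation against the Dirichlet series, or the Landau--Selberg--Delange method), of size $\asymp (\log X)\cdot \big(\text{residue-type constant}\big)$, and the natural normalization is that $\sum_{a\leq X} f(a)/a \asymp L(1,\ch_E)\log X \prod_{p\mid\mathcal{f}}(\text{local correction})$. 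I would likewise note $\rho_Q(p) = p + p\left(\frac{D_E}{p}\right) + O_{\mathcal f}(1)$ for $p\nmid 2\mathcal{f}\,\mathrm{disc}(q)$ (this comes from the conic-point counts of Appendix \ref{appndx:conics} together with the relation $\rho_Q(p) = \rho_{q-\omega D}(p)$), so $1 - \rho_Q(p)/p^2 = 1 - \tfrac1p\big(1 + \left(\frac{D_E}{p}\right)\big) + O_{\mathcal f}(p^{-2})$, and the product over $2<p\leq 2\kappa|D|^{1-\eta}$ is $\asymp_{\mathcal f,\eta} (\log |D|)^{-1} L(1,\ch_E)^{-1}$ by Mertens' theorem for arithmetic progressions (with the $\left(\frac{D_E}{p}\right)$ twist handled via $L(s,\ch_E)$).

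Multiplying the two estimates, the $\log|D|$ from $\sum_{a\leq X} f(a)/a$ and the $(\log|D|)^{-1}$ from the Euler product cancel, and one is left with $L(1,\ch_E)\cdot L(1,\ch_E)^{-1}\cdot L(1,\ch_E) \cdot \prod_{p\mid\mathcal f}(\cdots)$ — that is, a single factor of $L(1,\ch_E)$ times the claimed correction $\prod_{p\mid\mathcal f}\big(1 - \left(\frac{D_E}{p}\right)p^{-1}\big)$; the precise bookkeeping of which $\mathcal{f}$-local factor survives is routine but is where the explicit shape of the answer is pinned down, and I would do it by writing both $\sum_{a\leq X}f(a)/a$ and $\prod_p(1-\rho_Q(p)/p^2)$ as (truncations of) Euler products and cancelling factor by factor. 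Two technical points require the hypothesis $\dfrac{L'(1,\ch_E)}{L(1,\ch_E)} \leq C\log|D_E|$: (i) to pass from $\sum_{a\leq X} f(a)/a$ to its main term with an effective error, and (ii) to truncate the Euler product at height $2\kappa|D|^{1-\eta}$ rather than at $\infty$, I need an effective zero-free region / effective Mertens estimate for $L(s,\ch_E)$, which the bound on $L'/L(1,\ch_E)$ supplies in lieu of assuming no Siegel zero outright at this stage (the no-Siegel-zero hypothesis of the main theorems is precisely what guarantees such a $C$). The coprimality restriction $\gcd(a, D_0(m)D_1(m))=1$ and the matching omission of those primes from the Euler product only removes a bounded (by $\eta$, since $D_0(m)D_1(m)\mid D_{\mathrm{small}}\leq |D|^{\eta/4}$ and its prime factors are $\leq (\eta/4C_\theta)\log|D|$) set of local factors, each within a bounded factor of $1$, so it does not affect the order of magnitude — I would absorb it into the $\ll_{C,\eta}$.

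The secondary term $|D|^{-2/3+o(1)}$ is the accumulated error from: the prime-power ($a$ not squarefree) contributions to $\sum_{a\leq X}f(a)/a$, the truncation error in the Dirichlet-series/partial-summation step, the $O_{\mathcal f}(p^{-2})$ discrepancies in $\rho_Q(p)$, and the error in replacing the finite Euler product by the relevant $L$-value; each of these is $\ll |D|^{-\theta}$ for some explicit $\theta>0$ once the effective zero-free region is in hand, and $2/3$ is a safe uniform choice (the true saving is larger, but $2/3$ suffices and matches the companion error term in Theorem \ref{thm:2d-sieve}). The main obstacle I anticipate is not any single estimate but the uniformity: every implied constant must depend only on $C$ and $\eta$ (and not on $\mathcal f$ beyond the explicit $\prod_{p\mid\mathcal f}$ factor, nor on $m$, nor on $D$), which forces me to carry the $\mathcal{f}$-local corrections symbolically through the Euler-product cancellation and to use only $\mathcal{f}$-uniform forms of Mertens' theorem and the Landau--Selberg--Delange asymptotics — concretely, I would invoke an effective Perron/partial-summation argument for $\zeta_\Lambda(s)$ with the contour pushed into the zero-free region furnished by the hypothesis, which is the one place the argument is genuinely delicate rather than bookkeeping.
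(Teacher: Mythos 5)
There are genuine gaps. The most serious is your local count at good primes: you take $\rho_Q(p)=p+p\left(\frac{D_E}{p}\right)+O(1)$, but $Q=q-\omega D$ represents a \emph{nonzero} residue mod $p$ when $p\nmid\omega D$, so by Proposition \ref{prop:rho_Q-regular} the correct count is $\rho_Q(p)=p-\left(\frac{D}{p}\right)$ --- the character enters only at the $p^{-2}$ level of the Euler factor. Hence the product over primes is $\ll (\log|D|)^{-1}\prod_{p\mid D_0(m)D_1(m)}(1-1/p)^{-1}$ by plain Mertens (the paper in fact only uses the one-sided bound $\rho_Q(p)\geq p-1$), with \emph{no} factor of $L(1,\ch_E)^{-1}$. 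Your bookkeeping then collapses: multiplying your two claimed estimates gives $O(1)$, not $O(L(1,\ch_E))$ (the ``$L\cdot L^{-1}\cdot L$'' has an unsourced third factor), and $O(1)$ is strictly weaker than the stated bound since $L(1,\ch_E)$ can be as small as $\asymp 1/\log|D|$; the endgame of \S\ref{sec:proof} divides by $\vol([\mathbf{T}(\mathbb{A})g])\asymp\sqrt{|D|}L(1,\ch_E)$, so the single surviving $L(1,\ch_E)$ must come from the sum $\sum f(a)/a\approx\widetilde{L_g}(1)\log|D|$ and nothing may be lost. Relatedly, the primes of $D_0(m)D_1(m)$ cannot be ``absorbed into the constant'': they can number up to $\log|D|/\log\log|D|$ and $\prod_{p\mid D_0(m)D_1(m)}(1-1/p)^{-1}$ can be as large as $\log\log|D|$; it must cancel exactly against the matching factor $\prod_{p\mid D_0(m)D_1(m)}(1-1/p)$ sitting inside $\widetilde{L_g}(1)$.

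The second gap is the error term. You claim the $|D|^{-2/3+o(1)}$ follows from ``an effective zero-free region'' supplied by the hypothesis on $L'/L(1,\ch_E)$. This cannot work: the length of summation $X\asymp|D|$ equals the conductor of $\ch_E$, and shifting a Perron contour into a zero-free region of width $c/\log|D|$ yields a saving of $X^{-c/\log X}=O(1)$, i.e.\ none. The paper shifts all the way to $\Re s=1/2$ with a smooth cutoff and invokes the conductor-aspect subconvexity bound $|L(1/2+it,\ch_E)|\ll|D|^{1/6+o(1)}|1/2+it|^{A}$ of Conrey--Iwaniec (Burgess, or even convexity, would give a weaker but still sufficient power saving); some such bound on the critical line is an indispensable ingredient missing from your argument. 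Finally, the hypothesis $L'(1,\ch_E)/L(1,\ch_E)\leq C\log|D_E|$ is used for a different purpose than you assign it: it bounds the secondary term $\widetilde{L_g}'(1)/\widetilde{L_g}(1)$ appearing in the Perron main term, so that $(\log|D|)^{-1}\bigl(\log|D|+\widetilde{L_g}'(1)/\widetilde{L_g}(1)+O(1)\bigr)\ll_C 1$; it plays no role in producing a power-saving error.
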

\begin{proof}
We first estimate the product over primes appearing above. From Propositions \ref{prop:rho_Q-regular} and \ref{prop:Q-rho-sing} we deduce that $\rho_Q(p)\geq p-1$ for all primes $p$. Thus 
\begin{align}
\nonumber
\prod_{\substack{2 < p\leq 2\kappa |D|^{1-\eta} \\ p\nmid D_0(m)D_1(m)}} &\left(1-\frac{\rho_Q(p)}{p^2}\right) \leq
\prod_{\substack{2 < p\leq 2\kappa |D|^{1-\eta} \\ p\nmid D_0(m)D_1(m)}} \left(1-\frac{1}{p}+\frac{1}{p^2}\right) \\
\nonumber
&\leq
\prod_{\substack{2 < p\leq 2\kappa |D|^{1-\eta} \\ p\nmid D_0(m)D_1(m)}} \left(1-\frac{1}{p}\right) \prod_{p< \infty} \left(1+\frac{1}{p^2-p}\right)\\
\nonumber
&\leq
\prod_{\substack{2 < p\leq 2\kappa |D|^{1-\eta} \\ p\nmid D_0(m)D_1(m)}} \left(1-\frac{1}{p}\right) \prod_{p< \infty} \left(1+\frac{2}{p^2}\right)
\ll
\prod_{\substack{2 < p\leq 2\kappa |D|^{1-\eta} \\ p\nmid D_0(m)D_1(m)}} \left(1-\frac{1}{p}\right)\\
\label{eq:rhoQ-prod-bound}
&\ll
{\log(2\kappa |D|^{1-\eta})}^{-1} \prod_{p\mid D_0(m)D_1(m)} \left(1-\frac{1}{p}\right)^{-1} 
\end{align}
The last inequality above follows from Mertens' theorem.

The logarithmic mean 
\begin{equation*}
\sum_{\substack{a\leq 2\kappa |D|\\\gcd\left(a,D_0(m)D_1(m)\right)=1}} \frac{f(a)}{a}
\end{equation*}
can be estimated using standard tools from multiplicative number theory.
Consider the multiplicative function $g$ defined by $g(p^l)=f(p^l)$ if $p \nmid D_0(m)D_1(m)$ and $g(p^l)=1$ if $g\mid  D_0(m)D_1(m)$. Then because of the decomposition $\zeta_E(s)=\zeta(s) L(s,\ch_E)$ with $L(s,\ch_E)$ holomorphic the Dirichlet series of $g$ can be written as $L_g(s)=\zeta(s) \widetilde{L_g}(s)$  with $\widetilde{L_g}(s)$ holomorphic. 

Let $\varphi\colon [0,\infty)\to[0,\infty)$ be a compactly-supported smooth non-increasing function satisfying $\mathbb{1}_{[0,1]}\leq \varphi \leq \mathbb{1}_{[0,2]}$, i.e.\ $\varphi$ is a smooth approximation of the characteristic function of $[0,1]$. Notice that the Mellin transform satisfies $s\mathcal{M}(\varphi)(s)=\mathcal{M}(\Theta \varphi)(s)$, where $\Theta(\varphi)(x)=-x\varphi'(x)\geq 0$
is a smooth compactly-supported function vanishing outside of $[1,2]$. Hence $s\mathcal{M}(\varphi)(s)$ decays faster then any polynomial in the vertical direction. The decay is uniform in any strip of the form $\sigma_0\leq \Re(s)\leq \sigma_1$. The same property holds for $\mathcal{M}(\varphi)(s)$ outside a small neighborhood of $s=0$. Moreover, the the Laurent expansion of $\mathcal{M}(\varphi)(s)$ around $s=0$ is $\frac{1}{s}+\int_{1}^\infty\frac{\varphi(x)}{x}\dif x +O(|s|)$.
Using contour integration, the Perron formula and the decay of Dirichlet $L$-functions in the vertical direction we see that
\begin{align}
\label{eq:logarithmic-mean-bound}
\sum_{\substack{a\leq 2\kappa |D|\\\gcd\left(a,D_0(m)D_1(m)\right)=1}} \frac{f(a)}{a}
&\leq
 \widetilde{L_g}(1) \left(\log(2\kappa |D|) + \gamma + \frac{\widetilde{L_g}'(1)}{\widetilde{L_g}(1)}+\int_{1}^\infty\frac{\varphi(x)}{x}\dif x \right)\\
&+\frac{1}{2\pi}\int_{-\infty}^\infty \frac{|L_g(1/2+it)|}{(2\kappa |D|)^{1/2}} |\mathcal{M}\varphi(-1/2+it)|  \dif t
\nonumber
\end{align}
where $\gamma$ is the Euler-Mascheroni constant.

Because all the primes $p\mid D_0(m)D_1(m)$ are ramified in $E/\mathbb{Q}$ and coprime to $\mathcal{f}$ the following properties of $\widetilde{L_g}(s)$ are an immediate consequence of comparing the Euler product of $L_g(s)$ with that of $\zeta_E(s)$.
\begin{align*}
&\widetilde{L_g}(1)=L(1,\ch_E) \prod_{p\mid \mathcal{f}} \left(1-\left(\frac{D_E}{p}\right)\frac{1}{p}\right)
\prod_{p \mid D_0(m)D_1(m)} \left(1-\frac{1}{p}\right)\\
&\left|\frac{\widetilde{L_g}'(1)}{\widetilde{L_g}(1)}- C\log |D_E|\right|\ll 1\\
&|\widetilde{L_g}(1/2+it)|\ll_{\mathcal{f}} |L(1/2+it,\ch_E)|\ll_{\mathcal{f}}|D|^{1/6+o(1)} |1/2+it|^A
\end{align*}
The constant $A>0$ is absolute.
The last inequality for $|L(1/2,\ch_E)|$ is due to Conrey and Iwaniec \cite[Corollary 1.5]{ConreyIwaniec} strengthening the convexity breaking result of Brugess \cite{Burgess} for real characters.
Substituting these and \eqref{eq:rhoQ-prod-bound} into \eqref{eq:logarithmic-mean-bound} and using the super-polynomial decay of $|\mathcal{M}(\varphi)(1/2+it)|$ we deduce 
\begin{align*}
\prod_{\substack{2 < p\leq 2\kappa |D|^{1-\eta} \\ p\nmid D_0(m)D_1(m)}} &\left(1-\frac{\rho_Q(p)}{p^2}\right) \leq
{\log(2\kappa |D|^{1-\eta})}^{-1}  L(1,\ch_E) \prod_{p\mid \mathcal{f}} \left(1-\left(\frac{D_E}{p}\right)\frac{1}{p}\right)\\
&\cdot \left(\log(2\kappa |D|)+ C\log|D| + O(1)\right)+ O(1)\cdot\frac{|D|^{1/6+o(1)}}{(2\kappa |D|)^{1/2}}\\
&\ll_{\eta,C} L(1,\ch_E) \prod_{p\mid \mathcal{f}} \left(1-\left(\frac{D_E}{p}\right)\frac{1}{p}\right) +|D|^{-2/3+o(1)}
\end{align*}
\end{proof}

We can now combine all the results of this section to deduce a final bound on the shifted convolution sum.
\begin{prop}\label{prop:shifted-conv-final-bound} Let $\theta_l>0$ be admissible, fix $0<\eta<1/2$ and assume $R_{\max}^{\theta_l}\leq A(\mathscr{E})^{1-4\eta}$. Suppose $\upsilon p_1^{2n}\leq |D|^{\eta/2}$. If $C>0$ satisfies
\begin{equation*}
\frac{L'(1,\ch_E)}{L(1,\ch_E)} \leq C \log |D_E|
\end{equation*}
then
\begin{align*}
\sum_{\substack{0\leq x\leq \kappa |D| \\ x\equiv w|D| \mod \upsilon p_1^{2n}}} &g_{[\mathfrak{s}]}(x)
f_{[p_1^n\mathfrak{se}]^{-1}}\left(\frac{x-\omega D}{\upsilon p_1^{2n}}\right) r\left(\frac{x-\omega D}{\upsilon p_1^{2n}}\right)\\
&\ll_{\mathbf{G},\mathcal{f},\eta,C} 
\kappa |\omega| \log (2|\omega|) (\log\log (2|\omega|))^8
\frac{\sqrt{|D|}\left(L(1,\ch_E)+|D|^{-2/3+o(1)}\right)}{p_1^{2n}} 
\end{align*}
\end{prop}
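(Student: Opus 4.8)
\textbf{Proof proposal for Proposition \ref{prop:shifted-conv-final-bound}.}

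The plan is to assemble the final bound by chaining together the results established earlier in this section in the natural order, starting from the identity that rewrites the shifted convolution sum as a sum over lattice points in the ellipse $\mathscr{E}$. First I would apply Lemma \ref{lem:shifted-to-polynomial} to express the left-hand side as $\frac{1}{\#\Lambda^\times}\sum_{(x,y)\in\mathscr{E}\cap\mathbb{Z}^2,\ \upsilon p_1^{2n}\mid Q(x,y)} (f_{[p_1^n\mathfrak{se}]^{-1}}\cdot r)\bigl(Q(x,y)/(\upsilon p_1^{2n})\bigr)$, where $q$ is the reduced norm form of $\mathfrak{s}^{-1}$ and $Q(x,y)=q(x,y)-\omega D$. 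Then Lemma \ref{lem:final-sums-over-polynomial} replaces the complicated arithmetic functions $f_{[\cdots]^{-1}}$ and $r$ by the genuinely multiplicative $f$ and $r_0$, at the cost of integrating over the weight measure $w_k$ on $\cyclic{k}$ with $k=\upsilon p_1^{2n}D_{\mathrm{small}}^2$. The next step is to bound, uniformly in $m$ with $w_k(m)>0$, the inner sum $\sum_{(x,y)\in\mathscr{E}\cap\mathbb{Z}^2,\ Q(x,y)\equiv m\bmod k}(f\cdot r_0)\bigl(Q(x,y)/(\upsilon p_1^{2n})\bigr)$; this is exactly what Proposition \ref{prop:log-sums-with-rhoQ} does, producing a bound in terms of $A(\mathscr{E})p_1^{-2n}$, a product over primes, a logarithmic sum $\sum_{a\le 2\kappa|D|,\ \gcd(a,D_0(m)D_1(m))=1} f(a)/a$, and elementary factors involving $D_0(m),D_1(m),D_2(m),\omega$. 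Here I would need to check that the hypotheses of Proposition \ref{prop:log-sums-with-rhoQ} (admissibility of $\theta_l$, $R_{\max}^{\theta_l}\le A(\mathscr{E})^{1-4\eta}$, $\upsilon p_1^{2n}\le|D|^{\eta/2}$) are precisely those assumed in the present proposition, which they are.

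Having reduced to a fixed $m$, I would invoke Proposition \ref{prop:ANT-bound} to control the product-over-primes times the logarithmic sum by $L(1,\ch_E)\prod_{p\mid\mathcal{f}}(1-(\tfrac{D_E}{p})\tfrac1p) + |D|^{-2/3+o(1)}$; the factor $\prod_{p\mid\mathcal{f}}(1-(\tfrac{D_E}{p})\tfrac1p)$ is $\asymp_{\mathcal{f}} 1$ since $\mathcal{f}\ll 1$, so it can be absorbed into the implied constant. It remains to carry out the sum over $m$ against $w_k$. The total mass $\int 1\,\dif w_k(m) = w_\upsilon(\cyclic{\upsilon})\cdot w_{p_1^{2n}}(\cyclic{p_1^{2n}})\cdot\prod_{p\mid D_{\mathrm{small}}} w_p(\cyclic{p^2})$; by construction $w_p(\cyclic{p^2}) = p^2/2(1-1/p) + p^2/2(1-1/p) + p(1-1/p) + 0 + 2 = p^2(1-1/p) + p(1-1/p)+2 \le p^2$, and the $\upsilon$ and $p_1^{2n}$ factors contribute Dirac masses at a single point each. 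Collecting the $m$-dependent factors from Proposition \ref{prop:log-sums-with-rhoQ}, namely $\rho_Q(m;(D_0D_1)^2)/(D_0D_1)^4 \cdot 2^{\omega(D_2(m))}/D_2(m)^2$, and integrating against $w_k$, I expect the sum over residue classes mod each $p\mid D_{\mathrm{small}}$ to telescope to an absolute constant per prime: the point count $\sum_{l\in(\cyclic{p^{2i}})^\times,\ \ord_p=i}\rho_Q(p^i l; p^{2})$ type factors are $O(1)$ uniformly (using the $\widetilde\rho_Q$ estimates of Corollary \ref{cor:rho_Q-C-r} and Proposition \ref{prop:rho_Q-regular}), multiplied by the appropriate power of $p$ and divided by the $D_i(m)$-powers, so that $\int (\cdots)\,\dif w_k(m) \ll_{\mathbf{G}} 1$. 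This is the step where I would need to be careful: matching the weights $w_p(C^{p^2}_\bullet)$ against the growth of $\rho_Q$ on each stratum of $\cyclic{p^2}$ so that the product over $p\mid D_{\mathrm{small}}$ does not accumulate a divergent factor.

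\textbf{Main obstacle.} The delicate point is the bookkeeping in the $m$-summation: one must verify that, for each $p\mid D_{\mathrm{small}}$, the contribution $\sum_{\text{strata of }\cyclic{p^2}} w_p(\text{stratum})\cdot(\text{value of the }m\text{-dependent factor on that stratum})$ is bounded by an absolute constant, uniformly in $p$, so that the product over the (potentially many) primes dividing $D_{\mathrm{small}}$ stays $O(1)$. This requires simultaneously using the explicit ramified-prime point counts for $\rho_Q$ and $\widetilde\rho_Q$ from Appendix \ref{appndx:conics} (Corollary \ref{cor:rhoQtilde-final}), the fact that on the stratum $C^{p^2}_{\pm0}$ of units one has $\rho_Q(m;(D_0D_1)^2)$ with $D_0D_1$ not divisible by $p$ so no extra factor appears, and the matching weight $p^2(1-1/p)$; on the strata $C^{p^2}_{\epsilon}$ one picks up $D_1(m)=p$ contributing $p^{-4}$ against a weight $p(1-1/p)$ and $\rho_Q(m; p^2) \ll p$; and on $C^{p^2}_2$ one has $D_2(m)=p$ contributing $2^{\omega(D_2)}/D_2^2 = 2/p^2$ against weight $2$. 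In every case the net factor is $\le p^{-1}$ or $O(1)$, and since $D_{\mathrm{small}}$ is squarefree with $\prod_{p\mid D_{\mathrm{small}}}(1+c/p)\ll_\eta 1$ by Mertens over the range $p\le \tfrac{\eta}{4C_\theta}\log|D|$, the product converges. Once this uniform per-prime bound is in hand, one multiplies out: the $\kappa$ comes from $A(\mathscr{E}) = 2\pi\kappa\sqrt{|D|}$ via Lemma \ref{lem:ellipse-area-rmax}, the $p_1^{-2n}$ survives from Proposition \ref{prop:log-sums-with-rhoQ}, the $|\omega|\log(2|\omega|)(\log\log(2|\omega|))^8$ collects the $\omega$-factors, and the $\sqrt{|D|}(L(1,\ch_E)+|D|^{-2/3+o(1)})$ comes from $\sqrt{|D|}$ times the output of Proposition \ref{prop:ANT-bound}, yielding exactly the claimed bound.
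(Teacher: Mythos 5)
Your architecture is exactly the paper's: Lemma \ref{lem:shifted-to-polynomial}, then Lemma \ref{lem:final-sums-over-polynomial}, then Proposition \ref{prop:log-sums-with-rhoQ} uniformly in $m$, then Proposition \ref{prop:ANT-bound}, then the formula $A(\mathscr{E})=2\pi\kappa\sqrt{|D|}$, leaving as the crux the bound
\begin{equation*}
\int \frac{\rho_Q\left(m;(D_0(m)D_1(m))^2\right)}{(D_0(m)D_1(m))^4}\,\frac{2^{\omega(D_2(m))}}{D_2(m)^2}\dif w_k(m)\ll 1 .
\end{equation*}
You correctly identify this integral as the delicate point. The gap is in how you close it. Your per-prime accounting gives, for each $p\mid D_\mathrm{small}$, a factor of the shape $1+O(1/p)$ (unit stratum contributes $\approx 1-1/p$, the $p\parallel m$ stratum contributes $\approx 1/p$, the zero stratum $O(1/p^2)$), and you then assert $\prod_{p\mid D_\mathrm{small}}(1+c/p)\ll_\eta 1$ ``by Mertens.'' That last claim is false: since $D_\mathrm{small}$ may be divisible by every prime up to $\tfrac{\eta}{4C_\theta}\log|D|$, Mertens gives $\prod_{p\leq \log|D|}(1+c/p)\asymp(\log\log|D|)^{c}$, which tends to infinity with $|D|$. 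An extra $(\log\log|D|)^{O(1)}$ is not absorbable here: the final bound must be $\ll\sqrt{|D|}\,L(1,\ch_E)/p_1^{2n}$ on the nose so that, after dividing by $\vol([\mathbf{T}(\mathbb{A})g])$ via the class number formula, one gets $\ll p_1^{-4n}$ uniformly in $i$ for fixed $n$; a factor growing in $|D_i|$ would defeat Lemma \ref{lem:cross-correlation-to-thm}.

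What actually saves the argument is an exact cancellation between the unit stratum and the $p\parallel m$ stratum, and for this the order-of-magnitude estimates you cite (Corollary \ref{cor:rhoQtilde-final}, Corollary \ref{cor:rho_Q-C-r}) do not suffice: one needs the precise stratum sums of Proposition \ref{prop:genus-mod-p^2}. With those, Fubini over the product measure $w_k=\delta_{0\bmod\upsilon}\times\delta_{0\bmod p_1^{2n}}\times\prod_p w_p$ gives for each $p\mid D_\mathrm{small}$ the exact value
\begin{equation*}
\frac{2}{p^2}w_p(0)+\sum_{0\neq a\in\cyclic{p^2}}\frac{\rho_Q(a;p^2)}{p^4}w_p(a)
=\frac{4}{p^2}+\left(1-\frac{1}{p}\right)+\frac{1}{p}\left(1-\frac{f}{p}\right)
=1+\frac{4-f}{p^2}\leq 1+\frac{5}{p^2},
\end{equation*}
with $f\in\{-1,1,3\}$: the $1/p$ terms cancel identically, the per-prime factor is $1+O(1/p^2)$, and the product over all $p$ converges absolutely. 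Your weights $w_p$ are precisely tuned to produce this cancellation; without invoking Proposition \ref{prop:genus-mod-p^2} to verify it, the final $O(1)$ bound is not established.
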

\begin{proof}
In this proof only we allow all implicit constants to depend on $C,\eta,\mathcal{f},\mathbf{G}$ without specifying that further.

From Lemmata \ref{lem:shifted-to-polynomial}, \ref{lem:final-sums-over-polynomial} and Propositions \ref{prop:log-sums-with-rhoQ} and \ref{prop:ANT-bound} we deduce that
the shifted convolution sums is bounded above by
\begin{align*}
A(\mathscr{E}) p_1^{-2n}
&\left(L(1,\ch_E)+|D|^{-2/3+o(1)}\right)
|\omega|^2 (\log\log(2|\omega|))^8\\
&\cdot\int_{m\equiv 0 \bmod \upsilon p_1^{2n}} 
\frac{\rho_Q\left(m;(D_0(m)D_1(m))^2\right)}{(D_0(m)D_1(m))^4} \frac{2^{\omega(D_2(m))}}{D_2(m)^2}
\dif w_k(m)
\end{align*}
The claim would follow immediately from the formula for $A(\mathscr{E})$ in Lemma \ref{lem:ellipse-area-rmax} if we prove that
\begin{equation*}
\int_{m\equiv 0 \bmod \upsilon p_1^{2n}} 
\frac{\rho_Q\left(m;(D_0(m)D_1(m))^2\right)}{(D_0(m)D_1(m))^4} \frac{2^{\omega(D_2(m))}}{D_2(m)^2}
\dif w_k(m) 
\ll 1
\end{equation*}
The integrand decomposes as a product of functions on $\cyclic{p^2}$ for $p\mid D_\mathrm{small}$ and the measure is a product measure, thus we can use Fubini to write
\begin{align*}
&\int_{m\equiv 0 \bmod \upsilon p_1^{2n}} 
\frac{\rho_Q\left(m;(D_0(m)D_1(m))^2\right)}{(D_0(m)D_1(m))^4} \frac{2^{\omega(D_2(m))}}{D_2(m)^2}
\dif w_k(m)\\ 
&=
\prod_{p \mid D_\mathrm{small}}\left[
\frac{2}{p^2}w_p(0)+
\sum_{0\neq a \in \cyclic{p^2}} \frac{\rho_Q(a;p^2)}{p^4}w_p(a)
\right]
\end{align*}

We bound the term for each $p\mid D_\mathrm{small}$ using the definition of $w_p$ and Proposition \ref{prop:genus-mod-p^2}.
\begin{align*}
\frac{2}{p^2}w_p(0)+
\sum_{0\neq a \in \cyclic{p^2}} \frac{\rho_Q(a;p^2)}{p^4}w_p(a)
&=\frac{4}{p^2}+\left(1-\frac{1}{p}\right)+\frac{1}{p}\left(1-\frac{f}{p}\right)\\
&=1+\frac{4-f}{p^2}\leq 1+\frac{5}{p^2}
\end{align*}
where 
\begin{equation*}
f= 1+\epsilon\left(\frac{-D/p}{p}\right)\ch_p(\Nr\mathfrak{s}^{-1})+\left(\frac{\omega}{p}\right)\ch_p(\Nr\mathfrak{s}^{-1})\in\{-1,1,3\}
\end{equation*}

We conclude that the integral in question is bounded above by
\begin{equation*}
\prod_{p<\infty}\left(1+\frac{5}{p^2}\right)\ll 1
\end{equation*}
\end{proof}

\subsection{Conclusion of the Proof}
Let $\theta_l>0$ be an admissible exponent for lattice counting in ellipses.
If there is some $\eta_0>0$ such that for all $i\gg 1$
\begin{equation}\label{eq:N-big-vs-D}
\mathfrak{N}_i\geq |D_i|^{(2-\theta_l^{-1})/3+\eta_0}
\end{equation}
Then using Lemma \ref{lem:ellipse-area-rmax} we can deduce that the condition $R_{\max}^{\theta_l}\leq A(\mathscr{E})^{1-4\eta}$ holds for all $\mathcal{H}_i$ in the sequence where $1/2>\eta>0$ depends only on $\eta_0$ and $\theta_l$. Assume first that such $\eta_0>0$ exists. The condition that all fields $E_i/\mathbb{Q}$ have no exceptional zero implies that there is $C>0$ independent of $i$ such that
\begin{equation*}
\frac{L'(1,\ch_{E_i})}{L(1,\ch_{E_i})} \leq C \log |D_{E_i}|
\end{equation*}
This result has been attributed to Hecke by Landau \cite{Landau1918}.

Let $\xi\in\left(\mathbf{G}\times\mathbf{G}\right)(\mathbb{A})$.
Fix $n\in\mathbb{N}$ then for any $i\gg_{p_1,n,\varepsilon,\mathbf{G}} 1$ we have $\upsilon p_1^{2n}\leq |D_i|^{\eta/2}$. Moreover, the assumptions of Theorem \ref{thm:joinings-adelic} imply that
\begin{equation*}
g_i^{-1}\mathbf{T}_i(\mathbb{Q})s_i g_i \cap B^{(-n,n)}\ctr(\xi)B^{(-n,n)}=\emptyset
\end{equation*}
for all $i\gg_{\xi} 1$.

Thus for $i$ large enough we can use Proposition \ref{prop:shifted-conv-final-bound} and Theorem \ref{thm:cross-correlation-shifted-convolution} to deduce for any $n\in\mathbb{N}$ 
\begin{align*}
\Cor[\mu_i,\nu_{\xi}](B^{(-n,n)})&\ll_{\mathbf{G},\varepsilon,\mathcal{f}}
\vol\left(\left[\mathbf{T}(\mathbb{A})g\right]\right)^{-1} 
\vol\left(\left[\mathbf{G}^\Delta(\mathbb{A})^+\xi\right]\right)^{-1} p_1^{-2n}\\
&\kappa |\omega|^2 (\log\log (2|\omega|))^8
\frac{\sqrt{|D|}\left(L(1,\ch_E)+|D|^{-2/3+o(1)}\right)}{p_1^{2n}} \\
&\ll_{\mathcal{f}}
\vol\left(\left[\mathbf{G}^\Delta(\mathbb{A})^+\xi\right]\right)^{-1} \kappa|\omega|^2 (\log\log (2|\omega|))^8
p_1^{-4n}
\end{align*}
The last inequality follows from the computation of the volume of a homogeneous toral set using the analytic class number formula, c.f.\ \cite{ELMVCubic}.
The expression $\kappa |\omega|^2 (\log\log (2|\omega|))^8$ is a continuous function of $\ctr(\xi)$ as can be seen from the definition of $\kappa$ and $\omega$ in Theorem \ref{thm:cross-correlation-shifted-convolution}. Moreover, the definition of the volume implies immediately that $\vol\left(\left[\mathbf{G}^\Delta(\mathbb{A})^+\xi\right]\right)$ is a non-vanishing continuous function of $\xi\in\lfaktor{\mathbf{G}^\Delta(\mathbb{A})^+}{\left(\mathbf{G}\times\mathbf{G}\right)(\mathbb{A})}$. Because the fiber of the continuous map $\ctr\colon \lfaktor{\mathbf{G}^\Delta(\mathbb{A})^+} {\left(\mathbf{G}\times\mathbf{G}\right)(\mathbb{A})} \to \mathbf{G}(\mathbb{A})$ is compact\footnote{The fiber is isomorphic to $\faktor{\mathbf{G}(\mathbb{A})}{\mathbf{G}(\mathbb{A})^+}$} the function $\xi\mapsto \vol\left(\left[\mathbf{G}^\Delta(\mathbb{A})^+\xi\right]\right)$ is bounded below by a non-vanishing continuous function of $\ctr(\xi)$.

We deduce that if condition \eqref{eq:N-big-vs-D} holds then the proof is concluded by Lemma \ref{lem:cross-correlation-to-thm}.
If condition \eqref{eq:N-big-vs-D} fails then the theorem follows from the methods of Ellenberg, Michel and Venkatesh \cite[\S 3]{EMV} which is based on Linnik's method for equidistribution of CM points. Although the argument of \cite[\S 3]{EMV} applies verbatim only to the case of  $\mathbf{G}$ ramified at $\infty$ and functions invariant under an Iwahori at the place $p_1$, these restrictions are relaxed using the technical improvements presented in \cite[\S 5]{Kh17}.

The following discussion is a recap of \cite{EMV} with an emphasis on the required adaptation when removing the restriction $\gcd(\mathfrak{N}_i,p_1)=1$. Let $m\in\mathbb{N}$ to be determined later. Because we have assumed a splitting condition for two primes we can use the flow either at $p_1$ or at $p_2$.
The input required by \cite{EMV} and \cite{Kh17} is a norm gap for the Hecke operator $$T_{p_j^m}\colon \dfaktor{\mathbf{G}^\sc(\mathbb{Q})}{\mathbf{G}^\sc(\mathbb{A})}{U}\to \dfaktor{\mathbf{G}^\sc(\mathbb{Q})}{\mathbf{G}^\sc(\mathbb{A})}{U}$$ where $j\in\{1,2\}$ and $U=\prod_{p} U_p<\mathbf{G}^\sc(\mathbb{A}_f)$ is a compact-open subgroup such that $U_{p_j}$ is the intersection of two Iwahori in the apartment corresponding to $A_{p_j}$. Fix $K_{p_j}<\mathbf{G}^\sc(\mathbb{Q}_{p_j})$ a maximal compact subgroup containing $U_{p_j}$. The following norm gap for $T_{p_j^m}$ follows for any $\varepsilon>0$ from the decay of matrix coefficients of automorphic representations of $\mathbf{SL}_2$ \cite{Sarnak,BurgerSarnak,ClozelUllmo,ClozelOhUllmo} and the Jacquet-Langlands correspondence \cite{JacquetLangalnds}
\begin{equation}\label{eq:HeckeOp-Ramanujan}
\| T_{p_j^m}\|_{0} \ll_{U^{p_j},p_j,\epsilon} [K_{p_j} \colon U_{p_j}]p_j^{-m(1-\theta+\epsilon)/2}  
\end{equation}
where $\| T_{p_j^m}\|_{0}$ is the norm of $T_{p_j^m}$ restricted to the subspace of $L^2\left(\left[\mathbf{G}^\sc(\mathbb{A})\right]\right)^U$ orthogonal to the constant function, $U^{p_j}=\prod_{p\neq p_j} U_p$ and $\theta$ is the best bound towards the Generalized Ramanujan Conjecture for $\mathbf{SL}_2$ in the sense of \cite{ClozelUllmo}. The dependence of the constant on the  parameters $U^{p^j}$,$p_j$ and $\epsilon$ is effective and can be made explicit.

In the Ellenberg-Michel-Venkatesh argument we restrict a joint homogeneous toral set to an ambient Hecke correspondence of volume $\mathfrak{N}_i\prod_{p\mid \mathfrak{N}_i}\left(1+\frac{1}{p}\right)$ and apply an effective equidistribution argument in conjunction with Linnik's Basic Lemma to show that the joint period toral measure is close to the Haar measure on the ambient Hecke correspondence. 

Let $a_j\in A_{p_j}$ be as in Definition \ref{def:Bowen-ball}. We can use the effective equidistribution theorem from \cite{Kh17} which builds upon the work of \cite{LinnikBook,EMV} to deduce the necessary equidistribution result for the Ellenberg-Michel-Venkatesh argument as long as for each $i$ there is some $m$ such that $\|T_{p_j^m}\|_{0}<1/2$ and that Linnik's Basic Lemma is valid for $p_1^{m(1+\epsilon')}$ for some $\epsilon'>0$.

If $p_j\mid \mathfrak{N}_i$ we know that
$[K_{p_j}\colon U_{p_j}]=p_j^{\ord_{p_j}\mathfrak{N}_i} \left(1+\frac{1}{p_j}\right)$, cf.\ \S\ref{sec:Hecke}.
Because of the freedom to use either $p_1$ or $p_2$ we can assume without loss of generality that
\begin{equation*}
p_1^{\ord_{p_1}\mathfrak{N}_i}\leq \sqrt{\mathfrak{N}_i}
\end{equation*} 
For a fixed $U^{p_1}$ the bound $\|T_{p_1^m}\|_{0}<1/2$ would follow from \eqref{eq:HeckeOp-Ramanujan} for any $m\in\mathbb{N}$ satisfying
\begin{equation}\label{eq:EMV-m-from-above}
p_1^m\gg_{U^{p_1},p_1,\epsilon} \mathfrak{N}_i^{1/(1-\theta+\epsilon)}
\end{equation}
On the other hand Linnik's Basic Lemma  for $\emph{one-sided}$ Bowen balls in this setting, cf. \cite{EMV}, applies only for $m\in\mathbb{N}$ in the range
\begin{equation}\label{eq:EMV-m-from-below}
\mathfrak{N}_i p_1^{m}\leq|D|^{1/2+o(1)}
\end{equation}
where the $o(1)$ is ineffective as it is derived from Siegel's bound. There exists an $m\in\mathbb{N}$ satisfying both \eqref{eq:EMV-m-from-above} and \eqref{eq:EMV-m-from-below} if
\begin{equation*}
\mathfrak{N}_i^{1+1/(1-\theta+\epsilon)}\ll_{U^{p_1},p_1,\epsilon} |D|^{1/2+o(1)}
\end{equation*}
This condition is satisfied if we know that there is $\epsilon_1>0$ such that for all $i\gg 1$
\begin{equation}\label{eq:emv-range}
\mathfrak{N}_i\leq |D|^{\frac{1}{2+2/(1-\theta)}-\epsilon_1}
\end{equation}

In the range \eqref{eq:emv-range} the conclusion of the Ellenberg-Michel-Venkatesh argument is that for any limit measure $\mu$ one has $\int f\dif \mu =0$ for  any smooth compactly supported $f\in L^2_{00}\left(\left[\left(\mathbf{G}\times\mathbf{G}\right)(\mathbb{Q})\right], \meas_{\mathbf{G}\times\mathbf{G}}\right)$ which is invariant in the place $p_1$ under an intersection of two Iwahori subgroups stabilizing  edges in the aparatment of $A_{p_1}$.

We can now bootstrap this to deduce the conclusion of the theorem. Let $A_{p_1}^0<A_{p_1}$ be the maximal compact subgroup of the torus.
Using a decreasing sequence of intersections of two Iwahori we conclude that the push forward of the limit measure $\mu$ to $$\dfaktor{\left(\mathbf{G}\times\mathbf{G}\right)(\mathbb{Q})}{\left(\mathbf{G}\times\mathbf{G}\right)(\mathbb{A})}{A_{p_1}^0\times A_{p_1}^0}$$ is a measure of maximal entropy for the action of $a^\Delta$ for any element of $a\in A_{p_1}^+$ which is not contained in a compact subgroup. 
As this factor and the space $\left[\left(\mathbf{G}\times\mathbf{G}\right)(\mathbb{A})\right]$ have the same maximal entropy for $a^\Delta$ we deduce that any limit measure $\mu$ has maximal entropy for $a$ on $\left[\left(\mathbf{G}\times\mathbf{G}\right)(\mathbb{A})\right]$ which implies it is $\left(\mathbf{G}\times\mathbf{G}\right)(\mathbb{A})^+$-invariant.

To conclude the proof we need to verify that the range of \eqref{eq:N-big-vs-D} overlaps with the range of \eqref{eq:emv-range}. Taking $\theta_l>2/3$ from Van der Corput's bound \cite{vanDerCorput} and $\theta=1/2$ from Gelbart-Jacquet we see that any improvement to either bound would imply the necessary overlap in ranges. This can be achieved either by taking a smaller admissible value of $\theta_l$ such as provided by \cite{Huxley} or using any improvement towards Ramanujan beyond $\theta\leq 1/2$ as in \cite{Shahidi,LRS}. 

\appendix
\section{Principal Genus Theory}\label{appndx:principal-genus}
In this appendix we collect results related to the principal genus theory of quadratic orders. The results we discuss are classical when presented in an elementary form, going back to Gauss in the case of maximal orders \footnote{The reader interested in the history of the development of principal genus theory can consult the review \cite{Lemmermeyer} by Lemmermeyer.}. Unfortunately, the author is unfamiliar with a modern concise presentation treating the case of non-maximal orders. This appendix contains all the statements that are of use in this manuscript with complete proofs.

As is usually the case with topics in algebraic number theory 
the treatment is significantly streamlined by the use of ad\'eles. Noticeable features of the presentation below is that is uses class field theory only for quadratic extension and does not resort to the properties of ring class fields and genus fields. The main tools are  Hilbert's Satz 90 for quadratic global and local fields, the Hasse norm theorem, which for quadratic fields was proven by Hilbert and elementary Galois cohomology.
Except for treatment of the wild prime $2$ I have tried to circumvent explicit computations wherever possible.

\paragraph{Notations}
Let $\Lambda$ be an order in an imaginary quadratic extension $E<\mathbb{Q}$. As usual we denote by $D$ the discriminant of $\Lambda$ and define $\Lambda_v$ to be the closure of $\Lambda$ in $E_v\coloneqq \prod_{w|v} E_w$ for any rational place $v\neq\infty$.

\subsection{Adelic Form of \texorpdfstring{$\Pic(\Lambda)/\Pic(\Lambda)^2$}{Pic/Pic^2}}
The adelic interpretation below is used both for computing the structure of the group $\faktor{\Pic(\Lambda)}{\Pic(\Lambda)^2}$ and in describing the characters in the dual group $\widehat{\Pic(\Lambda)}$ vanishing on $\Pic(\Lambda)^2$ using Kronecker symbols of ideal norms.

\begin{prop}\label{prop:Pic/Pic2-adelic-short-exact}
The adelic norm map $\Nr\colon\mathbb{A}_E^\times\to\mathbb{A}^\times$ and the real adelic character $\ch_E\colon \lfaktor{\mathbb{Q}^\times}{\mathbb{A}^\times}\to\{\pm1\}$ attached to the quadratic extension $E/\mathbb{Q}$ by global class field theory descend to a short exact sequence
\begin{equation*}
1\to \faktor{\Pic(\Lambda)}{\Pic(\Lambda)^2} \xrightarrow{\Nr}
\dfaktor{\mathbb{Q}^\times}{\mathbb{A}^\times}{\mathbb{R}_{>0}\prod_{v\neq\infty}{\Nr\Lambda^\times}}\xrightarrow{\ch_E} \{\pm 1\}\to 1
\end{equation*}
\end{prop}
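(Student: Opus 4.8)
The plan is to realize $\faktor{\Pic(\Lambda)}{\Pic(\Lambda)^2}$ and the quotient of the idele class group appearing in the middle term in purely adelic terms, and then to exhibit the stated maps and prove exactness place by place. First I would record the adelic description of $\Pic(\Lambda)$ already used in the excerpt: the isomorphism $\widetilde{\idl}$ of Definition \ref{def:ideles-to-ideals} gives
\begin{equation*}
\Pic(\Lambda)\simeq \dfaktor{E^\times}{\mathbb{A}_{E,f}^\times}{\prod_{v\neq\infty}\Lambda_v^\times},
\end{equation*}
and since $E$ is imaginary quadratic, $E_\infty^\times=\mathbb{C}^\times$ is connected and divisible, so we may harmlessly adjoin the archimedean factor and write $\Pic(\Lambda)\simeq \lfaktor{E^\times}{\mathbb{A}_E^\times}\slash\left(\mathbb{C}^\times\prod_{v\neq\infty}\Lambda_v^\times\right)$. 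The point of the archimedean adjustment is that it makes the norm map land exactly in the subgroup $\mathbb{R}_{>0}\prod_{v\neq\infty}\Nr\Lambda_v^\times$ of the middle term after dividing by $\mathbb{Q}^\times$ — here $\Nr\Lambda^\times$ in the statement is shorthand for $\prod_{v\neq\infty}\Nr\Lambda_v^\times$, and I would note that $\Nr\mathbb{C}^\times=\mathbb{R}_{>0}$.

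Next I would treat the map $\Nr$ on the quotients. Given $[\mathfrak{a}]\in\Pic(\Lambda)$ with idelic representative $(\alpha_v)_v$, its image is $(\Nr\alpha_v)_v\in\mathbb{A}^\times$, well-defined modulo $\mathbb{Q}^\times$ and modulo $\mathbb{R}_{>0}\prod_v\Nr\Lambda_v^\times$. The kernel of $\Pic(\Lambda)\to\dfaktor{\mathbb{Q}^\times}{\mathbb{A}^\times}{\mathbb{R}_{>0}\prod\Nr\Lambda_v^\times}$ consists of classes $(\alpha_v)_v$ with $\Nr(\alpha_v)_v\in\mathbb{Q}^\times\cdot\mathbb{R}_{>0}\prod_v\Nr\Lambda_v^\times$. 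After multiplying the idele by a rational number and by units of $\Lambda_v$, and using that $\mathbb{Q}$ has trivial class group, one reduces to the case where $\Nr(\alpha_v)\in\Nr\Lambda_v^\times$ for every $v$ and $\Nr\alpha_\infty\in\mathbb{R}_{>0}$, hence (again adjusting by a unit) $\Nr(\alpha_v)=1$ for all $v$ including $\infty$. By Hilbert's Satz 90 for each completion $E_v/\mathbb{Q}_v$ — this is exactly the coboundary surjectivity $\cbd\colon E_v^\times\to E_v^{(1)}$ recorded in the Notations — we can write $\alpha_v=\cbd(\beta_v)=\beta_v/\tensor[^\sigma]{\beta}{_v}$; then $(\alpha_v)_v$ is the idele of the ideal $\mathfrak{b}\cdot\tensor[^\sigma]{\mathfrak{b}}{}^{-1}$ where $\mathfrak{b}=\widetilde{\idl}((\beta_v)_v)$, and since $\tensor[^\sigma]{\mathfrak{b}}{}^{-1}$ is in the same Picard class as $\mathfrak{b}$ (the nontrivial Weyl/Galois element acts by inversion on $\mathbf{T}$, hence on $\Pic(\Lambda)$ which is the relevant quotient of $\mathbf{T}(\mathbb{A})$), this class equals $[\mathfrak{b}]^2$. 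So the kernel is exactly $\Pic(\Lambda)^2$, giving injectivity of the induced map on $\faktor{\Pic(\Lambda)}{\Pic(\Lambda)^2}$. For the global Satz 90 step connecting a collection of local $\beta_v$ to a single coherent ideal class, I would emphasize that one does not need a global $\beta\in E^\times$: the local $\beta_v$ directly produce an idele whose associated ideal lies in $\Pic(\Lambda)^2$, which is all that is needed.

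Then I would identify the image of $\Nr$ with $\ker\ch_E$. The real adelic character $\ch_E\colon\lfaktor{\mathbb{Q}^\times}{\mathbb{A}^\times}\to\{\pm1\}$ attached to $E/\mathbb{Q}$ by class field theory has kernel equal to $\mathbb{Q}^\times\Nr\mathbb{A}_E^\times$ (Hasse norm theorem for the quadratic extension, due to Hilbert). Thus an element of $\dfaktor{\mathbb{Q}^\times}{\mathbb{A}^\times}{\mathbb{R}_{>0}\prod\Nr\Lambda_v^\times}$ lies in $\ker\ch_E$ iff it has a representative in $\Nr\mathbb{A}_E^\times$; modulo $\mathbb{R}_{>0}\prod_v\Nr\Lambda_v^\times$ one can arrange the representative to be $\Nr$ of an idele whose archimedean part is in $\mathbb{C}^\times$ and whose nonarchimedean part lies in $\prod_v E_v^\times$ modulo $\prod_v\Lambda_v^\times$, i.e. in the image of $\Pic(\Lambda)$; this shows $\Img\Nr=\ker\ch_E$. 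Finally surjectivity of $\ch_E$ onto $\{\pm1\}$ is immediate since $E/\mathbb{Q}$ is a nontrivial extension, so $\ch_E$ is nontrivial, and it factors through the stated quotient because $\ch_E$ is trivial on $\mathbb{R}_{>0}$ (it is a real character, trivial on the connected component $\mathbb{R}_{>0}$ of $\mathbb{R}^\times$ — here I would check that $\ch_E$ restricted to $\mathbb{R}^\times$ is the sign character since $E$ is imaginary, hence indeed trivial on $\mathbb{R}_{>0}$) and on each $\Nr\Lambda_v^\times\subseteq\Nr E_v^\times\subseteq\ker(\ch_E|_{\mathbb{Q}_v^\times})$.

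The main obstacle I anticipate is bookkeeping at the wild prime $v=2$ and, more generally, ensuring that the local norm groups $\Nr\Lambda_v^\times$ (for the possibly non-maximal order $\Lambda_v=\mathbb{Z}_v+\mathcal{f}_v\mathcal{O}_{E_v}$) interact correctly with the local norm residue subgroups $\Nr E_v^\times$ — one must verify $\Nr\Lambda_v^\times\subseteq\Nr E_v^\times\subseteq\ker(\ch_E\restriction_{\mathbb{Q}_v^\times})$ and that no extra collapsing occurs. This is where the non-maximal case differs from Gauss's classical setup, and I would isolate it as a short lemma about $\Nr\Lambda_v^\times$ inside $\mathbb{Z}_v^\times$, treating $2$ by hand using the explicit description $\Lambda_v=\mathbb{Z}_v+\mathcal{f}_v\mathcal{O}_{E_v}$ from Lemma \ref{lem:Lambda-Galois}; elsewhere the argument is formal.
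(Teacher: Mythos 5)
Your proposal is correct and follows essentially the same route as the paper: the adelic description of $\Pic(\Lambda)$, the identification of $\Pic(\Lambda)^2=\cbd\left(\Pic(\Lambda)\right)$ with the image of the norm-one id\`eles via local Hilbert Satz 90 (using that $\sigma$ acts by inversion), and the Hasse norm theorem together with class field theory to identify the image of $\Nr$ with $\ker\ch_E$. One small imprecision: in your kernel computation, absorbing the rational factor $q\in\mathbb{Q}^\times$ of $\Nr(\alpha)$ requires multiplying $\alpha$ by an element of $E^\times$ whose norm is $q$ — which exists by the Hasse norm theorem, since $q$ is visibly a local norm at every place — not merely by ``a rational number,'' whose norm would only contribute a square.
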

\begin{proof}
Recall that $\Pic(\Lambda)\simeq \dfaktor{E^\times}{\mathbb{A}_E^\times}{\mathbb{C}^\times \prod_{v\neq\infty}\Lambda_v^\times}$.
Because $\sigma$ acts by inversion on $\Pic(\Lambda)$ the group $\Pic(\Lambda)^2$ is equal to the group of coboundaries $\cbd\left(\Pic(\Lambda)\right)$. Hence $\Pic(\Lambda)^2$ is the image of $\prod_v\cbd(E_v)<\mathbb{A}_E^\times$ in $\dfaktor{E^\times}{\mathbb{A}_E^\times}{\mathbb{C}^\times \prod_{v\neq\infty}\Lambda_v^\times}$. Hilbert's Satz 90 implies $\cbd(E_v)=E_v^{(1)}$ for each $v$, hence $\prod_v\cbd(E_v)$ is the kernel of the norm map $\mathbb{A}_E^\times\to\mathbb{A}^\times$.

The norm map descends to a map 
\begin{equation*}
\Nr\colon\lfaktor{E^\times}{\mathbb{A}_E^\times}\to 
\lfaktor{\mathbb{Q}^\times}{\mathbb{A}^\times}
\end{equation*}
The Hasse norm theorem  implies that the kernel of this map is the projection of $\prod_v\cbd(E_v)$. Global class field theory states the the image is the kernel of $\ch_E$. It follows that there is a norm map
\begin{equation*}
\Nr\colon\Pic(\Lambda)\to 
\dfaktor{\mathbb{Q}^\times}{\mathbb{A}^\times}{\mathbb{R}_{>0}\prod_{v\neq\infty}\Nr\Lambda_v^\times}
\end{equation*}
with kernel $\Pic(\Lambda)^2$. Moreover, the conductor of the quadratic character $\ch_E$ contains $\Nr E_\infty^\times \prod_{v\neq\infty} \Nr \mathcal{O}_{E_v}^\times$ thus $\ch_E$ factors through the right hand side above and its kernel is the image of $\Nr$.
\end{proof}

\begin{cor}\label{cor:Pic-Pic2-index-local}
The index $\left[\Pic(\Lambda)\colon\Pic(\Lambda)^2\right]$ can be computed by
\begin{equation*}
2\left[\Pic(\Lambda)\colon\Pic(\Lambda)^2\right]=\prod_{v\neq\infty}\left[\mathbb{Z}_v^\times \colon \Nr\Lambda_v^\times\right]
\end{equation*}
\end{cor}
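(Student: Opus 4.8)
The plan is to derive Corollary \ref{cor:Pic-Pic2-index-local} directly from the short exact sequence of Proposition \ref{prop:Pic/Pic2-adelic-short-exact} by computing the cardinality of its middle term. Taking orders in the exact sequence
\begin{equation*}
1\to \faktor{\Pic(\Lambda)}{\Pic(\Lambda)^2} \xrightarrow{\Nr}
\dfaktor{\mathbb{Q}^\times}{\mathbb{A}^\times}{\mathbb{R}_{>0}\prod_{v\neq\infty}{\Nr\Lambda^\times_v}}\xrightarrow{\ch_E} \{\pm 1\}\to 1
\end{equation*}
gives $2\left[\Pic(\Lambda)\colon\Pic(\Lambda)^2\right]=\#\left(\dfaktor{\mathbb{Q}^\times}{\mathbb{A}^\times}{\mathbb{R}_{>0}\prod_{v\neq\infty}\Nr\Lambda_v^\times}\right)$, so everything reduces to identifying this last quotient with $\prod_{v\neq\infty}\left[\mathbb{Z}_v^\times\colon\Nr\Lambda_v^\times\right]$.

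The main step is a clean adelic bookkeeping argument. First I would use the fact that $\mathbb{Q}$ has class number one, equivalently the strong approximation statement $\mathbb{A}^\times=\mathbb{Q}^\times\cdot\left(\mathbb{R}_{>0}\times\prod_{v\neq\infty}\mathbb{Z}_v^\times\right)$ with $\mathbb{Q}^\times\cap\left(\mathbb{R}_{>0}\times\prod_{v\neq\infty}\mathbb{Z}_v^\times\right)=\{1\}$, to rewrite
\begin{equation*}
\dfaktor{\mathbb{Q}^\times}{\mathbb{A}^\times}{\mathbb{R}_{>0}\prod_{v\neq\infty}\Nr\Lambda_v^\times}\simeq \faktor{\prod_{v\neq\infty}\mathbb{Z}_v^\times}{\prod_{v\neq\infty}\Nr\Lambda_v^\times}.
\end{equation*}
Here one must check that $\Nr\Lambda_v^\times\subseteq\mathbb{Z}_v^\times$ for every finite $v$ — which holds because $\Lambda_v^\times$ consists of units in the order, whose norms are units in $\mathbb{Z}_v$ — and that no rational number other than $1$ lies in the compact subgroup, which is exactly the trivial class group statement. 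The displayed quotient is then the restricted/unrestricted product $\prod_{v\neq\infty}\faktor{\mathbb{Z}_v^\times}{\Nr\Lambda_v^\times}$, and since $\Lambda_v=\mathcal{O}_{E_v}$ for almost all $v$ (Proposition \ref{prop:Lambda-ae-maximal}) and the norm map on units of the maximal order of an unramified or split étale algebra is surjective onto $\mathbb{Z}_v^\times$, all but finitely many factors are trivial, so the product is finite and equals $\prod_{v\neq\infty}\left[\mathbb{Z}_v^\times\colon\Nr\Lambda_v^\times\right]$. Combining with the factor of $2$ from the exact sequence yields the claim.

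The point requiring the most care — though it is not deep — is the passage from the adelic double quotient to the product of local indices, namely verifying that the natural map $\prod_{v\neq\infty}\mathbb{Z}_v^\times\to\dfaktor{\mathbb{Q}^\times}{\mathbb{A}^\times}{\mathbb{R}_{>0}\prod_{v\neq\infty}\Nr\Lambda_v^\times}$ is surjective with kernel exactly $\prod_{v\neq\infty}\Nr\Lambda_v^\times$. Surjectivity is immediate from strong approximation for $\mathbb{A}^\times$, and injectivity modulo $\prod_{v\neq\infty}\Nr\Lambda_v^\times$ comes down to the observation that if $q\in\mathbb{Q}^\times$ and $u\in\mathbb{R}_{>0}$ satisfy $q u\in\prod_{v\neq\infty}\mathbb{Z}_v^\times$, then $q$ has no prime in its factorization, i.e.\ $q=\pm1$, and the sign is absorbed by $\mathbb{R}_{>0}$. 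I expect this to be the only spot where something genuinely must be said; the rest is formal manipulation of the exact sequence already established in Proposition \ref{prop:Pic/Pic2-adelic-short-exact}.
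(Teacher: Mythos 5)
Your proposal is correct and follows essentially the same route as the paper: both take cardinalities in the short exact sequence of Proposition \ref{prop:Pic/Pic2-adelic-short-exact} and then use the class number one property of $\mathbb{Q}$ (surjectivity of $\mathbb{R}^\times\prod_{v\neq\infty}\mathbb{Z}_v^\times$ onto the id\`ele class group, with the kernel computation you describe) to identify the middle term with $\prod_{v\neq\infty}\faktor{\mathbb{Z}_v^\times}{\Nr\Lambda_v^\times}$. The only cosmetic difference is that the paper phrases the reduction via the exact sequence $1\to\mathbb{Z}^\times\cdot\mathbb{R}_{>0}\prod_{v\neq\infty}\Nr\Lambda_v^\times\to\mathbb{R}^\times\prod_{v\neq\infty}\mathbb{Z}_v^\times\to\dfaktor{\mathbb{Q}^\times}{\mathbb{A}^\times}{\mathbb{R}_{>0}\prod_{v\neq\infty}\Nr\Lambda_v^\times}\to 1$ rather than through the trivial intersection $\mathbb{Q}^\times\cap\left(\mathbb{R}_{>0}\times\prod_{v\neq\infty}\mathbb{Z}_v^\times\right)=\{1\}$, which amounts to the same thing.
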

\begin{proof}
By Proposition \ref{prop:Pic/Pic2-adelic-short-exact} above the group $\dfaktor{\mathbb{Q}^\times}{\mathbb{A}^\times}{\mathbb{R}_{>0}\prod_{v\neq\infty}{\Nr\Lambda^\times}}$ is a $2$-cover of $\faktor{\Pic(\Lambda)}{\Pic(\Lambda)^2}$. Thus we need only to compute the size of this adelic quotient. 

We use the fact that $\mathbb{Q}$ has class number $1$ to conclude that the following sequence is exact
\begin{equation*}
1\to
\mathbb{Z}^\times \cdot \mathbb{R}_{>0}\prod_{v\neq\infty} \Nr\Lambda_v^\times\to
\mathbb{R}^\times\prod_{v\neq\infty} \mathbb{Z}_v^\times\to 
\dfaktor{\mathbb{Q}^\times}{\mathbb{A}^\times}{\mathbb{R}_{>0}\prod_{v\neq\infty} \Nr\Lambda_v^\times} 
\to 1
\end{equation*}
Moreover, the inclusion map descends to an isomorphism
\begin{equation*}
\prod_{v\neq\infty} \faktor{\mathbb{Z}_v^\times}{\Nr\Lambda_v^\times}\to \dfaktor{\mathbb{Z}^\times}{\mathbb{R}^\times\prod_{v\neq\infty} \mathbb{Z}_v^\times}{\mathbb{R}_{>0}\prod_{v\neq\infty} \Nr\Lambda_v^\times}
\end{equation*}
\end{proof}

The following two lemmata are necessary in order to understand non-maximal orders in terms of their reduction modulo the conductor.
\begin{lem}\label{lem:1-mod-conductor}
Assume $\Lambda_v<\mathcal{O}_{E_v}$ is a \emph{non}-maximal order.
Let $\mathcal{f}_v\mathcal{O}_{E_v}$ be the conductor of $\Lambda_v$ then $1+\mathcal{f}_v\mathcal{O}_{E_v}\subseteq \Lambda_v^\times$.
\end{lem}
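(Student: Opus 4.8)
The statement is local: $\Lambda_v < \mathcal{O}_{E_v}$ is an order in the \'etale $\mathbb{Q}_v$-algebra $E_v = \prod_{w\mid v} E_w$, written $\Lambda_v = \mathbb{Z}_v + \mathcal{f}_v\mathcal{O}_{E_v}$ by Lemma \ref{lem:Lambda-Galois}, with conductor ideal $\mathcal{f}_v\mathcal{O}_{E_v}$, and we must show every element of $1 + \mathcal{f}_v\mathcal{O}_{E_v}$ is a unit of $\Lambda_v$. The first thing I would observe is that $1 + \mathcal{f}_v\mathcal{O}_{E_v}$ is visibly contained in $\Lambda_v$ itself (since $\mathcal{f}_v\mathcal{O}_{E_v}\subseteq\Lambda_v$), so it suffices to prove that each such element is invertible \emph{and that its inverse again lies in $\Lambda_v$}. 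The natural strategy is to produce the inverse inside $\mathcal{O}_{E_v}$ first and then check it falls back into $\Lambda_v$.

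\textbf{Key steps.} First I would argue $1 + \mathcal{f}_v\mathcal{O}_{E_v} \subseteq \mathcal{O}_{E_v}^\times$: since $\mathcal{O}_{E_v} = \prod_{w\mid v}\mathcal{O}_{E_w}$ is a finite product of complete DVRs (or $\mathbb{Z}_v$ itself in the split case), an element is a unit iff it is a unit in each factor, and $1 + \mathcal{f}_v\mathcal{O}_{E_w}$ consists of elements congruent to $1$ modulo the maximal ideal $\mathfrak{m}_w$ (as $\mathcal{f}_v \in \mathfrak{m}_w$ because $\Lambda_v$ is non-maximal, hence $v$ divides the conductor), so each component is a $1$-unit. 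Next, given $x = 1 + \mathcal{f}_v\alpha$ with $\alpha\in\mathcal{O}_{E_v}$, write its inverse in $\mathcal{O}_{E_v}$ as a geometric series $x^{-1} = \sum_{k\geq 0} (-\mathcal{f}_v\alpha)^k = 1 + \sum_{k\geq 1}(-\mathcal{f}_v\alpha)^k$, which converges since $\mathcal{f}_v\alpha$ lies in the Jacobson radical of $\mathcal{O}_{E_v}$. The tail $\sum_{k\geq 1}(-\mathcal{f}_v\alpha)^k$ is a (convergent) $\mathcal{O}_{E_v}$-multiple of $\mathcal{f}_v$, hence lies in $\mathcal{f}_v\mathcal{O}_{E_v}\subseteq\Lambda_v$; therefore $x^{-1} \in 1 + \mathcal{f}_v\mathcal{O}_{E_v} \subseteq \Lambda_v$. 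This shows $x\in\Lambda_v^\times$, and in fact that $1 + \mathcal{f}_v\mathcal{O}_{E_v}$ is a subgroup of $\Lambda_v^\times$ closed under inversion.

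\textbf{Expected obstacle.} There is no serious obstacle here --- the statement is essentially the assertion that the conductor ideal is contained in the Jacobson radical of $\mathcal{O}_{E_v}$ together with the elementary fact that $1 + (\text{radical})$ consists of units in any ring. The only point requiring a little care is the bookkeeping in the split (respectively ramified/inert) cases so that "$\mathcal{f}_v$ lies in every maximal ideal of $\mathcal{O}_{E_v}$" is justified --- this follows immediately from non-maximality of $\Lambda_v$, which forces $\ord_w(\mathcal{f}_v)\geq 1$ for each $w\mid v$. I would simply remark that the same argument as for orders in number fields applies, or spell out the one-line geometric-series computation above.
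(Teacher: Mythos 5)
Your proof is correct and is essentially the paper's argument: the paper also inverts $1+x$ for $x\in\mathcal{f}_v\mathcal{O}_{E_v}$ via the (geometric) Taylor series for $\frac{1}{1+x}$, using $\ord_v\mathcal{f}_v\geq 1$ for convergence and the closedness of $\Lambda_v$ to land the inverse back in $\Lambda_v$. Your version is marginally more explicit in noting that the tail of the series lies in $\mathcal{f}_v\mathcal{O}_{E_v}$, so the inverse in fact lies in $1+\mathcal{f}_v\mathcal{O}_{E_v}$, but this is the same proof.
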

\begin{proof}
Because $\Lambda_v$ is non-maximal $\ord_v\mathcal{f}_v\geq 1$ for $\mathcal{f}_v\in\mathbb{Z}_v$ as above. This implies that the Taylor series for $\frac{1}{1+x}$ converges for any $x\in\mathcal{f}_v\mathcal{O}_{E_v}$. As $\Lambda_v$ is a closed subset we deduce $(1+x)^{-1}\in\Lambda_v$.
\end{proof}

\begin{lem}\label{lem:Lambda-units-reduction}
Assume $\Lambda_v<\mathcal{O}_{E_v}$ is a non-maximal order. Consider the reduction map $\red_{\mathcal{f}_v}\colon \mathcal{O}_{E_v}\to \faktor{\mathcal{O}_{E_v}}{\mathcal{f}_v\mathcal{O}_{E_v}}$ then 
\begin{equation*}
\Lambda_v^\times=\red_{\mathcal{f}_v}^{-1}(\red_{\mathcal{f}_v}(\mathbb{Z})^\times)=\mathbb{Z}_v^\times+\mathcal{f}_v\mathcal{O}_{E_v}
\end{equation*}
\end{lem}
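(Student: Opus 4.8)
The plan is to recall that $\Lambda_v$ sits inside the maximal order $\mathcal{O}_{E_v}$ by Lemma \ref{lem:Lambda-Galois}, which gives $\Lambda_v = \mathbb{Z}_v + \mathcal{f}_v\mathcal{O}_{E_v}$, and to exploit this additive description together with the reduction map $\red_{\mathcal{f}_v}$ modulo the conductor ideal $\mathcal{f}_v\mathcal{O}_{E_v}$. The containment $1+\mathcal{f}_v\mathcal{O}_{E_v}\subseteq\Lambda_v^\times$ from Lemma \ref{lem:1-mod-conductor} is the key input: it says the conductor ideal is contained in the Jacobson radical of $\Lambda_v$ and also in that of $\mathcal{O}_{E_v}$ (the latter is standard since $\mathcal{O}_{E_v}$ is a product of DVRs and $\ord_v\mathcal{f}_v\geq 1$), so units can be detected after reduction modulo $\mathcal{f}_v$.

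First I would prove the middle equality $\Lambda_v^\times = \red_{\mathcal{f}_v}^{-1}(\red_{\mathcal{f}_v}(\mathbb{Z})^\times)$, where $\red_{\mathcal{f}_v}(\mathbb{Z})^\times$ means the unit group of the subring $\red_{\mathcal{f}_v}(\mathbb{Z}_v)\subseteq \mathcal{O}_{E_v}/\mathcal{f}_v\mathcal{O}_{E_v}$. For the inclusion $\subseteq$: if $u\in\Lambda_v^\times$, write $u = a + \mathcal{f}_v\theta$ with $a\in\mathbb{Z}_v$, $\theta\in\mathcal{O}_{E_v}$, so $\red_{\mathcal{f}_v}(u)=\red_{\mathcal{f}_v}(a)\in\red_{\mathcal{f}_v}(\mathbb{Z}_v)$; the image $\red_{\mathcal{f}_v}(u)$ is a unit in $\mathcal{O}_{E_v}/\mathcal{f}_v\mathcal{O}_{E_v}$ (since $u$ is a unit in $\Lambda_v$ hence $\red_{\mathcal{f}_v}(u)$ is a unit in $\Lambda_v/\mathcal{f}_v\mathcal{O}_{E_v}=\red_{\mathcal{f}_v}(\mathbb{Z}_v)$), and therefore it is a unit in the subring $\red_{\mathcal{f}_v}(\mathbb{Z}_v)$. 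For $\supseteq$: suppose $\red_{\mathcal{f}_v}(u)\in\red_{\mathcal{f}_v}(\mathbb{Z}_v)^\times$; then there is $b\in\mathbb{Z}_v\subseteq\Lambda_v$ with $ub \equiv 1 \pmod{\mathcal{f}_v\mathcal{O}_{E_v}}$, i.e. $ub \in 1 + \mathcal{f}_v\mathcal{O}_{E_v} \subseteq \Lambda_v^\times$ by Lemma \ref{lem:1-mod-conductor}, hence $u$ is invertible in $\Lambda_v$.

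Next I would prove the right-hand equality $\red_{\mathcal{f}_v}^{-1}(\red_{\mathcal{f}_v}(\mathbb{Z}_v)^\times) = \mathbb{Z}_v^\times + \mathcal{f}_v\mathcal{O}_{E_v}$. An element of $\Lambda_v = \mathbb{Z}_v + \mathcal{f}_v\mathcal{O}_{E_v}$ lies in this preimage iff its $\mathbb{Z}_v$-component $a$ satisfies $\red_{\mathcal{f}_v}(a)\in\red_{\mathcal{f}_v}(\mathbb{Z}_v)^\times$; since $\red_{\mathcal{f}_v}(\mathbb{Z}_v) = \mathbb{Z}_v/\mathcal{f}_v\mathbb{Z}_v \cdot \mathbb{Z}_v$ — more precisely $\mathbb{Z}_v$ maps onto $\mathbb{Z}_v/(\mathcal{f}_v\mathcal{O}_{E_v}\cap\mathbb{Z}_v) = \mathbb{Z}_v/\mathcal{f}_v\mathbb{Z}_v$ (using $\ord_v$ of $\mathcal{f}_v\mathcal{O}_{E_v}\cap\mathbb{Z}_v = \mathcal{f}_v\mathbb{Z}_v$) — and $a\in\mathbb{Z}_v$ is a unit mod $\mathcal{f}_v\mathbb{Z}_v$ iff $a\in\mathbb{Z}_v^\times$ (as $\mathbb{Z}_v$ is local with maximal ideal dividing $\mathcal{f}_v\mathbb{Z}_v$), this preimage is exactly $\mathbb{Z}_v^\times + \mathcal{f}_v\mathcal{O}_{E_v}$. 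I expect the only mild subtlety — and the main thing to get right — is the bookkeeping around the two distinct subrings of $\mathcal{O}_{E_v}/\mathcal{f}_v\mathcal{O}_{E_v}$ involved (the full quotient ring versus the image $\red_{\mathcal{f}_v}(\mathbb{Z}_v)$) and verifying that "unit in $\Lambda_v$" translates correctly to "unit in the image of $\mathbb{Z}_v$" rather than to the weaker "unit in the full quotient"; Lemma \ref{lem:1-mod-conductor} is precisely what bridges these two notions, so once it is invoked the argument is short.
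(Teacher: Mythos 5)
Your proof is correct and follows essentially the same route as the paper's (which is just a terser version of the same argument): identify $\Lambda_v=\red_{\mathcal{f}_v}^{-1}(\red_{\mathcal{f}_v}(\mathbb{Z}_v))$ via Lemma \ref{lem:Lambda-Galois}, get the easy inclusion $\Lambda_v^\times\subseteq\red_{\mathcal{f}_v}^{-1}(\red_{\mathcal{f}_v}(\mathbb{Z}_v)^\times)$ by reducing a unit equation, and get the reverse inclusion from $1+\mathcal{f}_v\mathcal{O}_{E_v}\subseteq\Lambda_v^\times$ (Lemma \ref{lem:1-mod-conductor}). Your care in distinguishing units of the subring $\red_{\mathcal{f}_v}(\mathbb{Z}_v)$ from units of the full quotient, and in identifying $\red_{\mathcal{f}_v}(\mathbb{Z}_v)\simeq\mathbb{Z}_v/\mathcal{f}_v\mathbb{Z}_v$ for the right-hand equality, fills in exactly the details the paper leaves implicit.
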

\begin{proof}
Notice that $\Lambda_v=\red_{\mathcal{f}_v}^{-1}(\red_{\mathcal{f}_v}(\mathbb{Z}_v))=\red_{\mathcal{f}_v}^{-1}(\red_{\mathcal{f}_v}(\mathbb{Z}))$. The first equality follows from Lemma \ref{lem:Lambda-Galois} and the second equality holds because $\mathbb{Z}$ is dense in $\mathbb{Z}_v$. It follows immediately that $\Lambda_v^\times\subseteq \red_{\mathcal{f}_v}^{-1}(\red_{\mathcal{f}_v}(\mathbb{Z})^\times)$. The reverse inclusion is a consequence of Lemma \ref{lem:1-mod-conductor} above.
\end{proof}

We now compute the groups $\Nr\Lambda_v^\times$ appearing in Proposition \ref{prop:Pic/Pic2-adelic-short-exact}.
\begin{lem}\label{lem:OEv-norms}
Fix $v\neq\infty$ and denote by $p$ the residue characteristic of $\mathbb{Q}_v$. Then
\begin{equation*}
\Nr\mathcal{O}_{E_v}^\times=\begin{cases}
\mathbb{Z}_v^\times & E_v/\mathbb{Q}_v \textrm{ is unramified}\\
{\mathbb{Z}_v^\times}^2 & E_v/\mathbb{Q}_v \textrm{ is ramified and } p>2\\
\end{cases}
\end{equation*}
If $p=2$ ramifies in $E/\mathbb{Q}$ then $\Nr\mathcal{O}_{E_v}^\times$ is one of the $3$ possible index $2$ subgroups of $\mathbb{Z}_2^\times$ containing the index $4$ subgroup ${\mathbb{Z}_2^\times}^2$, i.e.\ one of the index $2$ subgroups of 
\begin{equation*}
\faktor{\mathbb{Z}_v^\times}{{\mathbb{Z}_v^\times}^2}\simeq \left(\cyclic{8}\right)^\times\simeq \cyclic{2}\times\cyclic{2}
\end{equation*}
\end{lem}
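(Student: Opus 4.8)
The plan is to reduce the computation of $\Nr \mathcal{O}_{E_v}^\times$ in each case to a standard local-field fact about norm maps of quadratic extensions of $\mathbb{Q}_v$, handling the tame cases directly and the wild case $p=2$ by a counting argument. First I would treat the \emph{unramified} case. Here $E_v/\mathbb{Q}_v$ is the unramified quadratic extension, and the norm map $\Nr\colon \mathcal{O}_{E_v}^\times \to \mathbb{Z}_v^\times$ on units is well known to be surjective: this follows because the residue field extension $\mathbb{F}_{q^2}/\mathbb{F}_q$ has surjective norm (Hilbert 90 / finite-field norm surjectivity) and then one lifts using successive approximation through the filtration $1+\mathfrak{p}^k$ on which $\Nr$ acts as $x\mapsto x\cdot {}^\sigma x$, a surjection onto $1+p^k\mathbb{Z}_v$ for each $k$. (Alternatively one can cite local class field theory: $[\mathbb{Z}_v^\times : \Nr \mathcal{O}_{E_v}^\times] = e(E_v/\mathbb{Q}_v) = 1$.) Note also that if $v$ splits in $E$ then $E_v \simeq \mathbb{Q}_v \times \mathbb{Q}_v$, $\mathcal{O}_{E_v}^\times \simeq \mathbb{Z}_v^\times \times \mathbb{Z}_v^\times$, and $\Nr(a,b) = ab$ is visibly onto $\mathbb{Z}_v^\times$, so the ``unramified'' line of the statement covers both inert and split $v$.

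Next I would do the \emph{tamely ramified} case: $E_v/\mathbb{Q}_v$ ramified with $p > 2$. By local class field theory the index $[\mathbb{Q}_v^\times : \Nr E_v^\times]$ equals $[E_v:\mathbb{Q}_v] = 2$, and restricting to units, $[\mathbb{Z}_v^\times : \Nr \mathcal{O}_{E_v}^\times] = e(E_v/\mathbb{Q}_v) = 2$. So $\Nr\mathcal{O}_{E_v}^\times$ is an index-$2$ subgroup of $\mathbb{Z}_v^\times$. Since $p$ is odd, $\mathbb{Z}_v^\times / {\mathbb{Z}_v^\times}^2 \simeq \cyclic{2}$ has a \emph{unique} index-$2$ subgroup, namely ${\mathbb{Z}_v^\times}^2$; hence $\Nr\mathcal{O}_{E_v}^\times = {\mathbb{Z}_v^\times}^2$. (One can also argue concretely: a ramified quadratic extension is $\mathbb{Q}_v(\sqrt{\pi u})$ for a uniformizer $\pi$ and a unit $u$, and the norm of a unit $a + b\sqrt{\pi u}$ is $a^2 - \pi u b^2 \equiv a^2 \bmod \pi$, which runs exactly over squares of units mod $\pi$, and then Hensel lifts since $p$ is odd.)

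Finally, the main obstacle is the \emph{wildly ramified} case $p = 2$, where $\mathbb{Z}_2^\times/{\mathbb{Z}_2^\times}^2 \simeq (\cyclic{8})^\times \simeq \cyclic{2}\times\cyclic{2}$ has order $4$ and three distinct index-$2$ subgroups, so the abstract index count alone is not decisive about \emph{which} subgroup occurs — but the statement to be proved only asserts that $\Nr\mathcal{O}_{E_v}^\times$ is \emph{one of} these three, i.e.\ an index-$2$ subgroup containing ${\mathbb{Z}_2^\times}^2$. For this I would again invoke local class field theory to get $[\mathbb{Z}_2^\times : \Nr \mathcal{O}_{E_2}^\times] = e(E_2/\mathbb{Q}_2) = 2$, which gives the index-$2$ claim. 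The containment ${\mathbb{Z}_2^\times}^2 \subseteq \Nr \mathcal{O}_{E_2}^\times$ is automatic: every square of a unit $u = \Nr(u)$ where on the right we view $u \in \mathbb{Q}_2 \subseteq E_2$, since $\Nr|_{\mathbb{Q}_2}$ is the squaring map; more precisely $u^2 = u \cdot {}^\sigma u = \Nr(\iota(u))$ for the diagonal-style embedding, so ${\mathbb{Z}_2^\times}^2 \subseteq \Nr\mathcal{O}_{E_2}^\times$. Combining the index-$2$ property with this containment and the isomorphism $\mathbb{Z}_2^\times/{\mathbb{Z}_2^\times}^2 \simeq \cyclic{2}\times\cyclic{2}$ puts $\Nr\mathcal{O}_{E_2}^\times$ among the three order-$2$ subgroups of $\cyclic{2}\times\cyclic{2}$, which is exactly the assertion. (If one wanted the sharper statement identifying the subgroup in each of the seven ramified quadratic extensions of $\mathbb{Q}_2$, that would require the explicit computation alluded to in the appendix's preamble; but as the claim is stated, no such case analysis is needed — the local class field theory index plus the trivial containment suffices.)
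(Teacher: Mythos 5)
Your proposal is correct and takes essentially the same route as the paper: the paper's proof consists precisely of the observation that $\mathbb{Z}_v^\times\subset\mathcal{O}_{E_v}^\times$ forces ${\mathbb{Z}_v^\times}^2\subseteq\Nr\mathcal{O}_{E_v}^\times$, followed by an appeal to the local class field correspondence between quadratic extensions of $\mathbb{Q}_v$ and index-$2$ subgroups of $\mathbb{Q}_v^\times$. You simply spell out the case analysis (unramified/split, tame, wild) and the index formula $[\mathbb{Z}_v^\times:\Nr\mathcal{O}_{E_v}^\times]=e$ in more detail than the paper does.
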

\begin{proof}
Notice that always  $\mathbb{Z}_v^\times<\mathcal{O}_{E_v}^\times$ hence ${\mathbb{Z}_v^\times}^2<\Nr\mathcal{O}_{E_v}^\times$ for all $v\neq\infty$.

The claim now follows immediately from the local class field correspondence between degree $2$ extensions of $\mathbb{Q}_v$ and index $2$ subgroups of $\mathbb{Q}_v^\times$. 
\end{proof}

\begin{lem}\label{lem:NrLambda_v}
Assume $\Lambda_v\lneq \mathcal{O}_{E_v}$ is a \emph{non}-maximal order. If $p>2$ then
\begin{equation*}
\Nr\Lambda_v^\times={\mathbb{Z}_v^\times}^2
\end{equation*}
If $p=2$ then
\begin{equation*}
\Nr\Lambda_v^\times=\begin{cases}
\Nr\mathcal{O}_{E_v}^\times & 2\parallel \mathcal{f}_v\\
1+4\mathbb{Z}_2 & 4\parallel \mathcal{f}_v \textrm{ and } E_v/\mathbb{Q}_v \textrm{ is unramified or } \Nr\mathcal{O}_{E_v}^\times=1+4\mathbb{Z}_2\\
1+8\mathbb{Z}_2={\mathbb{Z}_v^\times}^2 & \textrm{otherwise}\\
\end{cases}
\end{equation*}
\end{lem}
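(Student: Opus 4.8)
The plan is to reduce everything to the computation of $\Nr\Lambda_v^\times$ as a subgroup of $\mathbb{Z}_v^\times$, exploiting the description $\Lambda_v^\times = \mathbb{Z}_v^\times + \mathcal{f}_v\mathcal{O}_{E_v}$ from Lemma~\ref{lem:Lambda-units-reduction}. First I would handle the odd case. When $p>2$ and $\Lambda_v$ is non-maximal, write $\mathcal{O}_{E_v} = \mathbb{Z}_v + \mathcal{f}_v^{-1}\mathcal{f}_v\mathcal{O}_{E_v}$ and observe that a typical unit of $\Lambda_v$ has the form $u = a(1 + \mathcal{f}_v\beta)$ with $a\in\mathbb{Z}_v^\times$ and $\beta\in\mathcal{O}_{E_v}$. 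Then $\Nr(u) = a^2 \Nr(1+\mathcal{f}_v\beta)$. Since $p$ is odd and $\ord_v(\mathcal{f}_v)\geq 1$, the element $1+\mathcal{f}_v\beta$ lies in $1+\mathcal{f}_v\mathcal{O}_{E_v}$; its norm is $1 + \mathcal{f}_v(\beta + \tensor[^\sigma]{\beta}{}) + \mathcal{f}_v^2\Nr(\beta) \in 1+\mathcal{f}_v\mathbb{Z}_v$. Conversely, using Lemma~\ref{lem:1-mod-conductor}, $1+\mathcal{f}_v\mathbb{Z}_v \subseteq \Nr(1+\mathcal{f}_v\mathcal{O}_{E_v})$ (take $\beta\in\mathbb{Z}_v$, where $\Nr$ is squaring, or run Hensel/successive approximation). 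Because $p$ is odd, $1+\mathcal{f}_v\mathbb{Z}_v \subseteq {\mathbb{Z}_v^\times}^2$ when $\ord_v(\mathcal{f}_v)\geq 1$ — indeed $1+p\mathbb{Z}_v$ is a pro-$p$ group with $2$ invertible, hence $2$-divisible — so $\Nr\Lambda_v^\times = \mathbb{Z}_v^{\times 2}\cdot(1+\mathcal{f}_v\mathbb{Z}_v) = {\mathbb{Z}_v^\times}^2$. This settles $p>2$.

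For $p=2$, the analogous identity $\Nr\Lambda_v^\times = {\mathbb{Z}_2^\times}^2 \cdot \Nr(1+\mathcal{f}_v\mathcal{O}_{E_v})$ still holds, but now $1+2^k\mathbb{Z}_2$ is not $2$-divisible, so the answer depends on $\ord_2(\mathcal{f}_v)$. I would separate three subcases by the valuation $k=\ord_2(\mathcal{f}_v)$. When $k=1$, i.e. $2\parallel\mathcal{f}_v$, the key point is that $\mathcal{O}_{E_v} = \Lambda_v + 2\mathcal{O}_{E_v}$ forces $\mathcal{O}_{E_v}^\times = \Lambda_v^\times \cdot (1+2\mathcal{O}_{E_v})$, and $\Nr(1+2\mathcal{O}_{E_v})\subseteq 1+4\mathbb{Z}_2 \subseteq \Nr\mathcal{O}_{E_v}^\times$ in all cases (here $\Nr\mathcal{O}_{E_v}^\times$ is described in Lemma~\ref{lem:OEv-norms} — it always contains $1+8\mathbb{Z}_2$ and, being index $\leq 2$ in $\mathbb{Z}_2^\times$, contains $1+4\mathbb{Z}_2$ whenever $E_v/\mathbb{Q}_2$ is ramified; in the unramified case $\Nr\mathcal{O}_{E_v}^\times = \mathbb{Z}_2^\times \supseteq 1+4\mathbb{Z}_2$), so in fact $\Nr\Lambda_v^\times = \Nr\mathcal{O}_{E_v}^\times$. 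When $k=2$, i.e. $4\parallel\mathcal{f}_v$, I would compute $\Nr(1+4\mathcal{O}_{E_v})$ explicitly: for $\beta = b_0 + b_1\theta$ in a $\mathbb{Z}_2$-basis of $\mathcal{O}_{E_v}$ one gets $\Nr(1+4\beta) \equiv 1 + 4\Tr(\beta) \pmod{16}$, and analyze whether $\Tr$ surjects onto $\mathbb{Z}_2$ or only onto $2\mathbb{Z}_2$ (the latter exactly when $E_v/\mathbb{Q}_2$ is ramified with $\Nr\mathcal{O}_{E_v}^\times \neq 1+4\mathbb{Z}_2$, i.e. when $2$ is "wildly" ramified in the worst way); this is what produces the dichotomy $1+4\mathbb{Z}_2$ versus $1+8\mathbb{Z}_2 = {\mathbb{Z}_2^\times}^2$ in the statement. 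When $k\geq 3$, $\Nr(1+\mathcal{f}_v\mathcal{O}_{E_v}) \subseteq 1+8\mathbb{Z}_2 = {\mathbb{Z}_2^\times}^2$ and equality holds by squaring $\mathbb{Z}_2$-units, giving the "otherwise" case.

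The main obstacle is the $p=2$, $k=2$ analysis: one must pin down, via the local class field theory dictionary of Lemma~\ref{lem:OEv-norms}, precisely which quadratic ramified extensions $E_v/\mathbb{Q}_2$ have $\Nr\mathcal{O}_{E_v}^\times = 1+4\mathbb{Z}_2$ versus one of the other two index-$2$ subgroups, and then match the explicit trace computation in a chosen $\mathbb{Z}_2$-basis of $\mathcal{O}_{E_v}$ against this. The cleanest route is probably to list the ramified quadratic extensions of $\mathbb{Q}_2$ (namely $\mathbb{Q}_2(\sqrt{d})$ for $d\in\{-1,2,-2,3,-3,6,-6\}$ modulo squares, equivalently the $7$ nontrivial classes in $\mathbb{Q}_2^\times/{\mathbb{Q}_2^\times}^2$), compute $\Nr\mathcal{O}_{E_v}^\times$ for each using that the norm form is $x^2 - dy^2$ on the ring of integers, and read off when a uniformizer's trace hits $2\mathbb{Z}_2\setminus 4\mathbb{Z}_2$; bookkeeping is routine but must be done carefully because the three-way split at $2$ is exactly where the factor $\mu_{\mathrm{wild}}$ in Corollary~\ref{cor:Pic(Lambda)[2]-size} comes from. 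Everything else — the odd case, the reductions via Lemmata~\ref{lem:1-mod-conductor} and~\ref{lem:Lambda-units-reduction}, and the $k=1$ and $k\geq 3$ subcases — is a short successive-approximation argument in the relevant pro-$p$ groups.
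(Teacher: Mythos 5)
Your opening reduction is sound and is genuinely different from the paper's route: you write $\Lambda_v^\times=\mathbb{Z}_v^\times\cdot(1+\mathcal{f}_v\mathcal{O}_{E_v})$ and hence $\Nr\Lambda_v^\times={\mathbb{Z}_v^\times}^2\cdot\Nr(1+\mathcal{f}_v\mathcal{O}_{E_v})$, then compute the second factor via traces; the paper instead computes the index $\left[\Nr\mathcal{O}_{E_v}^\times:\Nr\Lambda_v^\times\right]$ by identifying $\faktor{\Nr\mathcal{O}_{E_v}^\times}{\Nr\Lambda_v^\times}$ with a quotient of unit groups of the reductions modulo $\mathcal{f}_v$ and then with $\left(\cyclic{\mathcal{f}_v}\right)^\times$ modulo squares. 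Your odd-$p$ argument is complete and arguably cleaner (it handles the ramified and unramified cases uniformly, where the paper treats odd ramified primes separately via Lemma \ref{lem:OEv-norms}).

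The $p=2$ analysis, however, has concrete errors. For $2\parallel\mathcal{f}_v$: the identity $\mathcal{O}_{E_v}=\Lambda_v+2\mathcal{O}_{E_v}$ is false --- since $\Lambda_v=\mathbb{Z}_2+2\mathcal{O}_{E_v}$ one has $\Lambda_v+2\mathcal{O}_{E_v}=\Lambda_v\subsetneq\mathcal{O}_{E_v}$ --- so the deduction $\mathcal{O}_{E_v}^\times=\Lambda_v^\times\cdot(1+2\mathcal{O}_{E_v})$ does not follow (in the unramified case the quotient is $\mathbb{F}_4^\times/\mathbb{F}_2^\times$ of order $3$). Moreover $\Nr(1+2\mathcal{O}_{E_v})\not\subseteq 1+4\mathbb{Z}_2$ when $E_v/\mathbb{Q}_2$ is unramified: taking $\beta$ with $\Tr\beta=-1$, $\Nr\beta=1$ gives $\Nr(1+2\beta)=1-2+4=3$. (The unramified conclusion $\Nr\Lambda_v^\times=\mathbb{Z}_2^\times$ is still reachable from your decomposition, but precisely because the trace is surjective you generate $3$ and $5$ modulo $8$, not because of the claimed containment.) For $4\parallel\mathcal{f}_v$ the deeper problem is that your dichotomy is keyed to whether $\Tr$ maps $\mathcal{O}_{E_v}$ onto $\mathbb{Z}_2$ or only onto $2\mathbb{Z}_2$, and you assert the latter happens only for the "worst" ramified extensions. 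In fact $\Tr\mathcal{O}_{E_v}=2\mathbb{Z}_2$ for \emph{every} ramified quadratic extension of $\mathbb{Q}_2$ (one has $\mathcal{O}_{E_v}=\mathbb{Z}_2[\sqrt{d}]$ with $d\equiv 2,3 \bmod 4$, so $\Tr(a+b\sqrt{d})=2a$), independently of which index-$2$ subgroup $\Nr\mathcal{O}_{E_v}^\times$ is. Consequently your computation gives ${\mathbb{Z}_2^\times}^2\cdot\Nr(1+4\mathcal{O}_{E_v})=1+8\mathbb{Z}_2$ for all ramified $E_v$ with $4\parallel\mathcal{f}_v$, including $E_v=\mathbb{Q}_2(i)$ where $\Nr\mathcal{O}_{E_v}^\times=1+4\mathbb{Z}_2$; this does not reproduce the three-way split asserted in the statement (indeed for $\Lambda_v=\mathbb{Z}_2+4\mathbb{Z}_2[i]$ one checks directly that $\Nr(a+4bi)=a^2+16b^2\in 1+8\mathbb{Z}_2$, so your method is exposing a tension with the stated case division rather than verifying it). As written, the wild cases are not proved, and you must either locate the extra norms needed to match the statement or flag the discrepancy explicitly.
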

\begin{proof}
For any $v\neq\infty$ we have $\mathbb{Z}_v^\times<\Lambda_v^\times$ and ${\mathbb{Z}_v^\times}^2<\Nr\Lambda_v^\times$. Also for all $v$ we know $\Nr\Lambda_v^\times<\Nr\mathcal{O}_{E_v}^\times$.
Hence if $E_v/\mathbb{Q}_v$ is ramified and $p>2$ Lemma \ref{lem:OEv-norms} above implies that $\Nr\Lambda_v^\times={\mathbb{Z}_v^\times}^2$.

Assume now $E_v/\mathbb{Q}_v$ is unramified or $p=2$.
We compute $\faktor{\Nr\mathcal{O}_{E_v}^\times}{\Nr\Lambda_v^\times}$ in the following way
\begin{align*}
\faktor{\Nr\mathcal{O}_{E_v}^\times}{\Nr\Lambda_v^\times}
&\simeq
\dfaktor{\mathcal{O}_{E_v}^\times}{\Lambda_v^\times}{\mathcal{O}_{E_v}^{(1)}}
\simeq 
\dfaktor{\red_{\mathcal{f}_v}\left(\mathcal{O}_{E_v}\right)^\times}{\red_{\mathcal{f}_v}\left(\Lambda_v\right)^\times}{\mathcal{O}_{E_v}^{(1)}}\\
&\simeq
\faktor{\Nr\red_{\mathcal{f}_v}\left(\mathcal{O}_{E_v}\right)^\times}{\Nr\red_{\mathcal{f}_v}\left(\Lambda_v\right)^\times}
=
\faktor{\red_{\mathcal{f}_v}\Nr\left(\mathcal{O}_{E_v}\right)^\times}{\red_{\mathcal{f}_v}\Nr\left(\Lambda_v\right)^\times}
\end{align*}
The first and the third equalities above hold because the kernels of the norm maps are the corresponding norm $1$ elements; the
second equality follows from Lemma \ref{lem:Lambda-units-reduction} and the fourth equality holds because the reduction map is equivariant for the Galois action. As all the isomorphisms above are canonical their composite is exactly the reduction map $\red_{\mathcal{f}_v}$.
We use Lemma \ref{lem:Lambda-units-reduction} once more to deduce $\red_{\mathcal{f}_v}\Nr\left(\Lambda_v\right)^\times\simeq {\left(\cyclic{\mathcal{f}_v}\right)^\times}^2$. We continue case by case.

If $E_v/\mathbb{Q}_v$ is unramified we use Lemma \ref{lem:OEv-norms} to deduce 
\begin{align*}
\faktor{\mathbb{Z}_v^\times}{\Nr\Lambda_v^\times} 
&=
\faktor{\Nr\mathcal{O}_{E_v}^\times}{\Nr\Lambda_v^\times} 
\simeq 
\left(\cyclic{\mathcal{f}_v}\right)^\times 
\Big\slash
{\left(\cyclic{\mathcal{f}_v}\right)^\times}^2\\
&\simeq
\begin{cases}
\cyclic{2} & p>2 \\
1 & p=2,2\parallel \mathcal{f}_v\\
\cyclic{2} & p=2, 4\parallel \mathcal{f}_v\\
\cyclic{2}\times \cyclic{2} & p=2, 8\mid \mathcal{f}_v\\
\end{cases}
\end{align*} 
This settles all the cases when $p>2$ and the unramified case when $p=2$.

Assume $p=2$ and $E_v/\mathbb{Q}_v$ is ramified.
If $2\parallel\mathcal{f}_v$ then $\Nr\mathcal{O}_{E_v}^\times=\Nr\Lambda_v^\times$ because $\mathbb{F}_2^\times=1$. If $4\parallel \mathcal{f}_v$ then $\red_{\mathcal{f}_v} \Nr\Lambda_v^\times\equiv \left\{1\right\}$  hence $\Nr\Lambda_v^\times= \Nr\mathcal{O}_{E_v}^\times\cap 1+4\mathbb{Z}_2$. If $\Nr\mathcal{O}_{E_v}^\times=1+4\mathbb{Z}_2=\{1,-3\}+8\mathbb{Z}_2$ then we deduce $\Nr\Lambda_v^\times= \Nr\mathcal{O}_{E_v}^\times$ otherwise $\Nr\Lambda_v^\times=1+8\mathbb{Z}_2$.

If $8\mid\mathcal{f}_v$ then $\red_{\mathcal{f}_v}\Nr\Lambda_v^\times=\{1\}$ and ${\mathbb{Z}_v^\times}^2<\Nr\Lambda_v^\times$ hence $\Nr\Lambda_v^\times={\mathbb{Z}_v^\times}^2=1+8\mathbb{Z}_2$.
\end{proof}

\begin{cor}\label{cor:Pic(Lambda)[2]-size}
Let $\mu_\mathrm{tame}$ be the number of \emph{odd} primes dividing $D$. Set
\begin{equation*}
\mu_\mathrm{wild}\coloneqq \ord_2 \left[\mathbb{Z}_2^\times\colon\Nr\Lambda_2^\times\right]
\in \{0,1,2\}
\end{equation*}
Notice that $\mu_\mathrm{wild}$ depends only on $\mathcal{f}_2$ and the ramification of $2$ in $E/\mathbb{Q}$.

The following equalities holds for any imaginary quadratic order $\Lambda$
\begin{equation*}
\#\Pic(\Lambda)[2]=\left[\Pic(\Lambda)\colon\Pic(\Lambda)^2\right]
=2^{\mu_\mathrm{tame}+\mu_\mathrm{wild}-1}
\asymp 2^{\omega(D)}
\end{equation*}
\end{cor}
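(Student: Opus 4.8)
The plan is to combine Corollary \ref{cor:Pic-Pic2-index-local} with the local computations of Lemmata \ref{lem:OEv-norms} and \ref{lem:NrLambda_v} to evaluate the product $\prod_{v\neq\infty}[\mathbb{Z}_v^\times\colon\Nr\Lambda_v^\times]$ place by place, and then invoke the elementary structure theory of finite abelian groups for the equality $\#\Pic(\Lambda)[2]=[\Pic(\Lambda)\colon\Pic(\Lambda)^2]$. First I would recall that for any finite abelian group $A$ one has $\#A[2]=\#(A/A^2)$, since both equal $2$ raised to the $2$-rank of $A$; applying this to $A=\Pic(\Lambda)$ disposes of the first equality for free. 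For the second equality, Corollary \ref{cor:Pic-Pic2-index-local} gives
\begin{equation*}
2\left[\Pic(\Lambda)\colon\Pic(\Lambda)^2\right]=\prod_{v\neq\infty}\left[\mathbb{Z}_v^\times \colon \Nr\Lambda_v^\times\right],
\end{equation*}
so it remains to show the right-hand product equals $2^{\mu_\mathrm{tame}+\mu_\mathrm{wild}}$.

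The main work is a finite case analysis of the local indices. For a place $v$ with odd residue characteristic $p$: if $p\nmid D$ then $E_v/\mathbb{Q}_v$ is unramified and $\Lambda_v$ is maximal, so by Lemma \ref{lem:OEv-norms} $\Nr\Lambda_v^\times=\mathbb{Z}_v^\times$ and the index is $1$; if $p\mid D$ then either $E_v/\mathbb{Q}_v$ is ramified (and $\Lambda_v$ maximal, with $\Nr\mathcal{O}_{E_v}^\times={\mathbb{Z}_v^\times}^2$ of index $2$ by Lemma \ref{lem:OEv-norms}) or $E_v/\mathbb{Q}_v$ is unramified but $p\mid \mathcal{f}_v$ (so $\Lambda_v$ is non-maximal and $\Nr\Lambda_v^\times={\mathbb{Z}_v^\times}^2$ of index $2$ by Lemma \ref{lem:NrLambda_v}); in either subcase the index is exactly $2$. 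Since $p\mid D$ for an odd prime precisely when $p$ ramifies in $E$ or $p\mid\mathcal{f}$ — and these are exactly the odd primes counted by $\mu_\mathrm{tame}$ — the odd places contribute a total factor $2^{\mu_\mathrm{tame}}$. The place $v=2$ contributes $[\mathbb{Z}_2^\times\colon\Nr\Lambda_2^\times]=2^{\mu_\mathrm{wild}}$ by definition of $\mu_\mathrm{wild}$; one should note that $\mu_\mathrm{wild}$ is at most $2$ because $[\mathbb{Z}_2^\times\colon{\mathbb{Z}_2^\times}^2]=4$ and ${\mathbb{Z}_2^\times}^2\subseteq\Nr\Lambda_2^\times$ always (Lemma \ref{lem:NrLambda_v} together with the observation that ${\mathbb{Z}_v^\times}^2\subseteq\Nr\Lambda_v^\times$ for all $v$), and that it depends only on $\mathcal{f}_2$ and the ramification of $2$, as Lemma \ref{lem:NrLambda_v} makes explicit. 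Multiplying, $\prod_{v\neq\infty}[\mathbb{Z}_v^\times\colon\Nr\Lambda_v^\times]=2^{\mu_\mathrm{tame}+\mu_\mathrm{wild}}$, and dividing by $2$ yields $[\Pic(\Lambda)\colon\Pic(\Lambda)^2]=2^{\mu_\mathrm{tame}+\mu_\mathrm{wild}-1}$.

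For the final asymptotic $2^{\mu_\mathrm{tame}+\mu_\mathrm{wild}-1}\asymp 2^{\omega(D)}$ I would argue that $\mu_\mathrm{tame}$ and $\omega(D)$ differ by at most $1$ (the only possible discrepancy is whether $2\mid D$), and $\mu_\mathrm{wild}-1\in\{-1,0,1\}$ is bounded; hence $|(\mu_\mathrm{tame}+\mu_\mathrm{wild}-1)-\omega(D)|$ is bounded by an absolute constant, giving the claimed two-sided bound with absolute implied constants. The only mild subtlety — and the step most likely to need care — is keeping the bookkeeping straight between the three overlapping conditions that force an odd prime into the support of $D$ (ramification in $E_E=\disc(E)$, division of the conductor $\mathcal{f}$, and division of $D=\mathcal{f}^2 D_E$), and confirming in each case that the local index is genuinely $2$ and not $1$ or $4$; the even place is handled entirely by folding all its complexity into the single symbol $\mu_\mathrm{wild}$, so no explicit computation with $(\mathbb{Z}/8\mathbb{Z})^\times$ is needed here beyond what Lemma \ref{lem:NrLambda_v} already supplies.
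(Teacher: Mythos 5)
Your proposal is correct and follows essentially the same route as the paper: the first equality via the elementary identity $\#A[2]=\#(A/A^2)$ (the paper phrases this through the exact sequence $1\to\Pic(\Lambda)[2]\to\Pic(\Lambda)\to\Pic(\Lambda)^2\to1$, which is the same fact), and the second equality by combining Corollary \ref{cor:Pic-Pic2-index-local} with the local index computations of Lemmata \ref{lem:OEv-norms} and \ref{lem:NrLambda_v}. Your place-by-place case analysis at the odd primes and the bookkeeping for the $\asymp$ bound are exactly the details the paper leaves implicit, and they check out.
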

\begin{cor}
The first equality holds because the squaring homomorphism fits in a short exact sequence
\begin{equation*}
1\to\Pic(\Lambda)[2]\to\Pic(\Lambda)\xrightarrow{x\mapsto x^2} \Pic(\Lambda)^2\to 1
\end{equation*}

The second equality follows from Corollary \ref{cor:Pic-Pic2-index-local} and Lemma \ref{lem:NrLambda_v} above.
\end{cor}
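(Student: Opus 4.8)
The plan is to deduce both equalities from the local computations of $\Nr\Lambda_v^\times$ carried out in Lemmata \ref{lem:OEv-norms} and \ref{lem:NrLambda_v}, together with the adelic index formula of Corollary \ref{cor:Pic-Pic2-index-local}; no new ideas are needed beyond bookkeeping.

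First I would dispatch the equality $\#\Pic(\Lambda)[2]=\left[\Pic(\Lambda)\colon\Pic(\Lambda)^2\right]$. This is a purely group-theoretic fact about the finite abelian group $A\coloneqq\Pic(\Lambda)$: the squaring endomorphism $x\mapsto x^2$ has kernel $A[2]$ and image $A^2$, giving a short exact sequence
\begin{equation*}
1\to A[2]\to A\xrightarrow{x\mapsto x^2}A^2\to 1
\end{equation*}
whence $\#A=\#A[2]\cdot\#A^2$ and therefore $\#A[2]=\#A/\#A^2=[A:A^2]$. No arithmetic input is needed here.

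Next I would compute $[\Pic(\Lambda)\colon\Pic(\Lambda)^2]$ itself. By Corollary \ref{cor:Pic-Pic2-index-local} it suffices to evaluate the local index $[\mathbb{Z}_v^\times\colon\Nr\Lambda_v^\times]$ at every finite place $v$ and multiply. For an odd prime $p$ I would argue as follows: if $p\nmid D$ then $p$ is unramified in $E$ and $\Lambda_p=\mathcal{O}_{E_p}$ is maximal, so Lemma \ref{lem:OEv-norms} gives $\Nr\Lambda_p^\times=\mathbb{Z}_p^\times$ and the index is $1$; if $p\mid D$ then $p$ ramifies in $E$ or $\Lambda_p$ is non-maximal (or both), and in every case Lemmata \ref{lem:OEv-norms} and \ref{lem:NrLambda_v} give $\Nr\Lambda_p^\times={\mathbb{Z}_p^\times}^2$, which has index $2$ in $\mathbb{Z}_p^\times$ since $p$ is odd. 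Hence the odd part of the product is $2^{\mu_\mathrm{tame}}$. For $p=2$ the index $[\mathbb{Z}_2^\times\colon\Nr\Lambda_2^\times]$ is $2^{\mu_\mathrm{wild}}$ by definition, and the containments ${\mathbb{Z}_2^\times}^2\subseteq\Nr\Lambda_2^\times\subseteq\mathbb{Z}_2^\times$ with $[\mathbb{Z}_2^\times\colon{\mathbb{Z}_2^\times}^2]=4$ force $\mu_\mathrm{wild}\in\{0,1,2\}$. Assembling, $2[\Pic(\Lambda)\colon\Pic(\Lambda)^2]=2^{\mu_\mathrm{tame}+\mu_\mathrm{wild}}$, i.e. $[\Pic(\Lambda)\colon\Pic(\Lambda)^2]=2^{\mu_\mathrm{tame}+\mu_\mathrm{wild}-1}$.

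Finally, for the asymptotic $2^{\mu_\mathrm{tame}+\mu_\mathrm{wild}-1}\asymp 2^{\omega(D)}$ I would simply note that $\omega(D)$ equals $\mu_\mathrm{tame}$ when $2\nmid D$ and $\mu_\mathrm{tame}+1$ when $2\mid D$, while $\mu_\mathrm{wild}\in\{0,1,2\}$, so the exponent $\mu_\mathrm{tame}+\mu_\mathrm{wild}-1$ differs from $\omega(D)$ by an absolutely bounded amount. I do not expect any genuine obstacle: the only mildly delicate point is the wild prime $2$, but Lemma \ref{lem:NrLambda_v} has already pinned down $\Nr\Lambda_2^\times$ in all cases and its contribution has been packaged into the single integer $\mu_\mathrm{wild}$, so nothing further is required.
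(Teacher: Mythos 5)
Your proof is correct and follows essentially the same route as the paper: the first equality via the short exact sequence for the squaring map, and the second via Corollary \ref{cor:Pic-Pic2-index-local} combined with the local norm computations. You merely spell out the case analysis the paper leaves implicit (in particular invoking Lemma \ref{lem:OEv-norms} for the maximal ramified case alongside Lemma \ref{lem:NrLambda_v}), which is a faithful completion rather than a different argument.
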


\subsection{Characters Orthogonal to \texorpdfstring{$\Pic(\Lambda)^2$}{Pic^2}}
\begin{defi}\label{def:Kronecker-characters}
For any prime $p>2$ define $p^*\coloneqq (-1)^{\frac{p-1}{2}}p\equiv 1 \mod 4$ and set for $n\in\mathbb{N}$ 
\begin{equation*}
\ch_p(n)=\left(\frac{p^*}{n}\right)
\end{equation*}
This is the unique non-trivial primitive real Dirichlet character of modulus $p$.

Define also
\begin{equation*}
\ch_4(n)=\left(\frac{-4}{n}\right),\;
\ch_8(n)=\left(\frac{8}{n}\right)
\end{equation*}
The unique non-trivial primitive real Dirichlet character of modulus $4$ is $\ch_4$ and the non-trivial primitive real Dirichlet characters of 
modulus $8$ are $\ch_8$ and $\ch_4\ch_8$.

We extend multiplicatively every Dirichlet character of modulus $q$ to the multiplicative group of rationals which are coprime to all prime divisors of $q$.

Moreover, we abuse the notation and denote by $\ch_q\colon \lfaktor{\mathbb{Q}^\times}{\mathbb{A}^\times}\to\{\pm 1\}$ the adelic lift of the corresponding Dirichlet character.  
\end{defi} 

\begin{prop}\label{prop:principal-genus-thm}
Let $\mathfrak{a}\in \Ideals(\Lambda)$ then $[\mathfrak{a}]\in\Pic(\Lambda)^2$ if and only if
\begin{equation*}
\ch\left(\frac{\Nr(\mathfrak{a})}{\gcd(\Nr(\mathfrak{a}),\operatorname{modulus}(\chi)^\infty)}\right)=1 
\end{equation*}
for all the following real Dirichlet characters $\ch$
\begin{enumerate}
\item $\ch_p$ for all \emph{odd} primes $p\mid D$,
\item one of\footnote{The specific character depends on the subgroup $\Nr \Lambda_2^\times <\mathbb{Z}_2^\times$.} $\ch_4,\ch_8,\ch_4\ch_8$ if $\left[\mathbb{Z}_2^\times\colon\Nr\Lambda_2^\times\right]=2$, 
\item $\ch_4$  and $\ch_8$ if $\Nr\Lambda_2^\times=1+8\mathbb{Z}_2$.
\end{enumerate}
\end{prop}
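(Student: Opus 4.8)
The plan is to deduce this from the adelic short exact sequence of Proposition \ref{prop:Pic/Pic2-adelic-short-exact} together with the explicit computation of the local norm groups $\Nr\Lambda_v^\times$ in Lemmata \ref{lem:OEv-norms} and \ref{lem:NrLambda_v}. By that proposition, $[\mathfrak{a}]\in\Pic(\Lambda)^2$ precisely when the class of $\Nr(\mathfrak{a})$ in $\dfaktor{\mathbb{Q}^\times}{\mathbb{A}^\times}{\mathbb{R}_{>0}\prod_{v\neq\infty}\Nr\Lambda_v^\times}$ is trivial, which is equivalent to asking that $\Nr(\mathfrak{a})$ (viewed as a principal id\`ele) lie in $\mathbb{R}_{>0}\prod_{v\neq\infty}\Nr\Lambda_v^\times$. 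Since $\Nr(\mathfrak{a})$ is a positive integer, the archimedean condition is automatic, and the condition becomes: for every finite place $v$, the image of $\Nr(\mathfrak{a})$ in $\mathbb{Q}_v^\times/\Nr\Lambda_v^\times$ is trivial. At primes $v$ where $\Lambda_v$ is maximal and $E_v/\mathbb{Q}_v$ is unramified we have $\Nr\Lambda_v^\times=\mathbb{Z}_v^\times$, so no condition arises there; at a prime $v\mid D$ we have a nontrivial quotient, and the point is to recognize the resulting local conditions as the vanishing of suitable quadratic residue symbols.

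First I would reduce to a purely local statement place by place. Fix $v$ a finite place. If $v\nmid D$, then either $E_v/\mathbb{Q}_v$ is unramified with $\Lambda_v$ maximal, giving $\Nr\Lambda_v^\times=\mathbb{Z}_v^\times$ and hence no constraint; so the only contributing places are the (finitely many) primes dividing $D$. For an odd prime $p\mid D$: if $p$ divides the conductor but $E_p/\mathbb{Q}_p$ is unramified, or if $E_p/\mathbb{Q}_p$ is ramified (whether or not $p\mid\mathcal f$), Lemmata \ref{lem:OEv-norms} and \ref{lem:NrLambda_v} give $\Nr\Lambda_p^\times={\mathbb{Z}_p^\times}^2$ in all these cases except when $\Lambda_p$ is maximal and $E_p/\mathbb{Q}_p$ is unramified — but the latter does not occur for $p\mid D$. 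Writing $\Nr(\mathfrak{a})=p^k u$ with $u\in\mathbb{Z}_p^\times$, the image in $\mathbb{Q}_p^\times/{\mathbb{Z}_p^\times}^2\simeq \mathbb{Z}/2\times\mathbb{Z}/2$ must be trivial, i.e. $k$ is even and $u\in{\mathbb{Z}_p^\times}^2$; however, because $p\mid D$ exactly one of the two nontrivial ramified/conductor extensions of $\mathbb{Q}_p$ is $E_p$, and comparing with $\chi_p$ one checks that the constraint coming from $\Nr\Lambda_p^\times$ (an index-$2$ subgroup relevant to the class) is exactly $\chi_p(u)=1$, i.e. $\chi_p\bigl(\Nr(\mathfrak{a})/\gcd(\Nr(\mathfrak{a}),p^\infty)\bigr)=1$. (The parity of $k$ need not be imposed separately: it follows from $\mathfrak{a}$ being an integral \emph{invertible} $\Lambda$-ideal at ramified primes, cf. Lemma \ref{lem:inverse-different}.) I would then run the analogous, but more delicate, analysis at $p=2$, splitting according to the value of $\mu_\mathrm{wild}=\ord_2[\mathbb{Z}_2^\times:\Nr\Lambda_2^\times]\in\{0,1,2\}$ from Corollary \ref{cor:Pic(Lambda)[2]-size}: if $\mu_\mathrm{wild}=0$ there is no $2$-adic condition; if $\mu_\mathrm{wild}=1$ then $\Nr\Lambda_2^\times$ is one of the three index-$2$ subgroups of $\mathbb{Z}_2^\times$, and the corresponding condition on the prime-to-$2$ part of $\Nr(\mathfrak{a})$ is exactly one of $\chi_4,\chi_8,\chi_4\chi_8=1$ according to which subgroup it is (matching cases against $\mathbb{Z}_2^\times/{\mathbb{Z}_2^\times}^2\simeq(\mathbb{Z}/8)^\times$); if $\mu_\mathrm{wild}=2$ then $\Nr\Lambda_2^\times={\mathbb{Z}_2^\times}^2=1+8\mathbb{Z}_2$ and the condition is $\chi_4=1$ and $\chi_8=1$ simultaneously.

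The main obstacle is the wild prime $p=2$: keeping precise track of which index-$2$ subgroup of $\mathbb{Z}_2^\times$ equals $\Nr\Lambda_2^\times$ in terms of $\mathcal f_2$ and the ramification type of $2$ in $E/\mathbb{Q}$, and matching each of the three subgroups $1+4\mathbb{Z}_2$, $\langle 3\rangle$, $\langle 5\rangle$ against the kernels of $\chi_4$, $\chi_4\chi_8$, $\chi_8$ on the prime-to-$2$ units. This is the one place where explicit $2$-adic computation (or a careful invocation of the local norm residue symbol for ramified quadratic extensions of $\mathbb{Q}_2$) seems unavoidable; everything at odd primes reduces cleanly to the identity that for $p$ odd the unique ramified quadratic character of $\mathbb{Q}_p^\times$ restricted to units is $u\mapsto\left(\tfrac{u}{p}\right)$ composed with reduction mod $p$, which is $\chi_p$ on prime-to-$p$ integers. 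I would also need the elementary remark that the ``$\gcd$ with the modulus$^\infty$'' in the statement simply strips off the $p$-part of $\Nr(\mathfrak{a})$ before evaluating $\chi_p$, which is legitimate because $\chi_p$ is only defined on integers coprime to $p$ and, as noted, the $p$-adic valuation of $\Nr(\mathfrak{a})$ is forced to be even at ramified primes anyway.
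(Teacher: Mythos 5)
Your reduction breaks at the very first step. Proposition \ref{prop:Pic/Pic2-adelic-short-exact} tells you that $[\mathfrak{a}]\in\Pic(\Lambda)^2$ iff the image under the \emph{adelic} norm of the id\`ele class representing $\mathfrak{a}$ is trivial in the double quotient $\dfaktor{\mathbb{Q}^\times}{\mathbb{A}^\times}{\mathbb{R}_{>0}\prod_{v\neq\infty}\Nr\Lambda_v^\times}$. That id\`ele is $(\Nr\alpha_v)_v$ for a representative $(\alpha_v)_v$ of $\mathfrak{a}$ as in Definition \ref{def:ideles-to-ideals}; it is \emph{not} the principal (diagonal) id\`ele attached to the integer $\Nr(\mathfrak{a})$ --- the two differ at each place by a local unit $\Nr(\alpha_v)/\Nr(\mathfrak{a})\in\mathbb{Z}_v^\times$ whose class modulo $\Nr\Lambda_v^\times$ is exactly the information being tested. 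Worse, your substitute criterion cannot be read off place by place: the diagonal id\`ele of any rational number is tautologically trivial in the double quotient (it lies in $\mathbb{Q}^\times$), while literally requiring $\Nr(\mathfrak{a})\in\Nr\Lambda_v^\times\subset\mathbb{Z}_v^\times$ for every $v$ fails at every prime dividing $\Nr(\mathfrak{a})$, including split ones where you assert ``no condition arises.'' Triviality in $\lfaktor{\mathbb{Q}^\times}{\mathbb{A}^\times}$ modulo an open subgroup is a global condition, and converting it into statements about the integer $\Nr(\mathfrak{a})$ is precisely where quadratic reciprocity (equivalently, the product formula) must enter; your argument never confronts this. Your patch at ramified primes is also false: the parity of $\ord_p\Nr(\mathfrak{a})$ is \emph{not} forced to be even --- the ramified prime ideal $\mathfrak{p}$ above $p$ is integral and invertible with $\Nr(\mathfrak{p})=p$ --- and Lemma \ref{lem:inverse-different} says nothing of the sort. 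With $E=\mathbb{Q}(\sqrt{-5})$ and $\mathfrak{a}=(\sqrt{-5})$ (principal, hence in $\Pic(\Lambda)^2$) your local test at $p=5$ fails outright.

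The clean way to make the adelic short exact sequence yield the stated criterion is to dualize it, which is what the paper does: every character of $\Pic(\Lambda)$ killing $\Pic(\Lambda)^2$ is the pullback under $\Nr$ of a real Hecke character of $\lfaktor{\mathbb{Q}^\times}{\mathbb{A}^\times}$ trivial on $\mathbb{R}_{>0}\prod_{v\neq\infty}\Nr\Lambda_v^\times$; Lemma \ref{lem:NrLambda_v} plus the classification of primitive real Dirichlet characters as the Kronecker symbols of Definition \ref{def:Kronecker-characters} identifies these as the adelic lifts of the listed $\ch$; and \emph{unwinding the adelic lift} evaluated on the id\`ele $(\Nr\alpha_v)_v$ is exactly what produces $\ch$ applied to the prime-to-modulus part of the integer $\Nr(\mathfrak{a})$ --- the step your place-by-place argument replaces with an unjustified (and, at ramified primes, incorrect) shortcut. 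Your local computations of $\Nr\Lambda_v^\times$ and the $2$-adic case analysis are the right ingredients, but they must be fed into the duality argument rather than into a direct local test of the diagonal id\`ele.
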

\begin{proof}
Our goal is to compute all the characters orthogonal to $\Pic(\Lambda)^2$.
Because $\left(\Pic(\Lambda)^2\right)^\perp=\widehat{\Pic(\Lambda)}[2]$ all these characters are real.

Consider the short exact sequence of character groups dual to the short exact sequence of Proposition \ref{prop:Pic/Pic2-adelic-short-exact}
\begin{equation*}
1\leftarrow 
\left(\Pic(\Lambda)^2\right)^\perp
\xleftarrow{\widehat{\Nr}}
\widehat{\left(\dfaktor{\mathbb{Q}^\times}{\mathbb{A}^\times}{\mathbb{R}_{>0}\prod_{v\neq\infty}{\Nr\Lambda^\times}}\right)}
\xleftarrow{\widehat{\ch_E}} \{\pm 1\}
\leftarrow 1
\end{equation*}
Exactness implies that every character in $\left(\Pic(\Lambda)^2\right)^\perp$ can be expressed as a composition with the norm map of a real rational Hecke grossencharacter $\lfaktor{\mathbb{Q}^\times}{\mathbb{A}^\times}\to\{\pm 1\}$ which vanishes on $\mathbb{R}_{>0}\prod_{v\neq\infty}\Nr\Lambda_v^\times$. Moreover, the only non-trivial relation is that $\chi_E$ is trivial in $\left(\Pic(\Lambda)^2\right)^\perp$. 

The translation between finite rational Hecke grossencharacters and Dirichlet characters implies that the relevant characters are adelic lifts of real Dirichlet characters with conductor containing $\mathbb{R}_{>0}\prod_{v\neq\infty}\Nr\Lambda_v^\times$. Using Lemma \ref{lem:NrLambda_v} and the fact that all primitive real Dirichlet characters are the Kronecker symbols described in Definition \ref{def:Kronecker-characters} we deduce that $\widehat{\left(\dfaktor{\mathbb{Q}^\times}{\mathbb{A}^\times}{\mathbb{R}_{>0}\prod_{v\neq\infty}{\Nr\Lambda^\times}}\right)}$ is generated by the adelic lifts of the characters listed in the claim.
The explicit expressions for the evaluation of a character at the norm of an ideal follows by unwinding the adelic lifting procedure.
\end{proof}

\subsection{2-Torsion in the Picard Group}
The cohomological interpretation of the $2$-torsion in the Picard group is used in the description of the fiber of the invariant map attaching pairs of fractional ideals to intersections.

\begin{prop}\label{prop:Pic(Lambda)[2]-H1(Lambda_v)}
The diagonal restriction map
\begin{equation*}
H^1(\mathfrak{G},\Lambda^\times)\to \prod_{v\neq\infty} H^1(\mathfrak{G},\Lambda_v^\times)
\end{equation*}
is injective and there is a canonical isomorphism
\begin{equation*}
\lfaktor{H^1(\mathfrak{G},\Lambda^\times)}
{\prod_{v\neq\infty} H^1(\mathfrak{G},\Lambda_v^\times)}
\simeq \Pic(\Lambda)[2]
\end{equation*}
\end{prop}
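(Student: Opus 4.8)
\textbf{Proof plan for Proposition \ref{prop:Pic(Lambda)[2]-H1(Lambda_v)}.}

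The plan is to compute both sides via Galois cohomology of the short exact sequences relating units, idele-type groups and the Picard group, using that $\mathfrak{G}\simeq\cyclic 2$ acts by inversion and that for a cyclic group $H^1$ has the Herbrand-quotient/Tate-cohomology description. First I would record the basic input: since $\mathfrak{G}$ is cyclic of order $2$, for any $\mathfrak{G}$-module $M$ on which $\sigma$ acts by inversion we have $H^1(\mathfrak{G},M)=\ker(\Nr)/\operatorname{im}(\cbd)=M[2]/\cbd(M)$ where $\cbd(x)=x/\tensor[^\sigma]{x}{}=x^2$ (using that $\sigma$ inverts), so in fact $H^1(\mathfrak{G},M)=M[2]/M^2$; one should be slightly careful when $M$ does not have $\sigma$ acting exactly by inversion, but for $\Lambda^\times$, $\Lambda_v^\times$, $E^\times$, $\mathbb{A}_E^\times$ and the ideal group $\Ideals(\Lambda)$ the action is the natural Galois action and the cohomology is computed via Satz 90 exactly as in the proof of Proposition \ref{prop:Pic/Pic2-adelic-short-exact}.

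The main step is to feed the short exact sequence of $\mathfrak{G}$-modules
\begin{equation*}
1\to \Lambda^\times \to \mathbb{A}_E^{\times,\mathrm{fin}}/\widehat{\Lambda}^{\times} \to \Ideals(\Lambda)\to 1
\end{equation*}
(or equivalently work with the idele class description $\Pic(\Lambda)=\dfaktor{E^\times}{\mathbb{A}_E^\times}{\mathbb{C}^\times\prod_{v\neq\infty}\Lambda_v^\times}$ as in \S\ref{sec:toral-periods}) into the long exact cohomology sequence for $\mathfrak{G}$. Since $H^1(\mathfrak{G},\mathbb{A}_E^{\times,\mathrm{fin}}/\widehat\Lambda^\times)=\prod_{v\neq\infty}H^1(\mathfrak{G},E_v^\times/\Lambda_v^\times)$ and $H^1(\mathfrak{G},E_v^\times)=1$ by Satz 90, the long exact sequence gives
\begin{equation*}
\prod_{v\neq\infty}H^1(\mathfrak{G},\Lambda_v^\times)\xrightarrow{\ \cong\ }\prod_{v\neq\infty}H^1\!\big(\mathfrak{G},E_v^\times/\Lambda_v^\times\big)\to H^2(\mathfrak{G},\Lambda^\times)\to\cdots,
\end{equation*}
and a connecting map $H^1(\mathfrak{G},\Ideals(\Lambda))\to H^2(\mathfrak{G},\Lambda^\times)$; then I would identify the cokernel of the diagonal $H^1(\mathfrak{G},\Lambda^\times)\to\prod_v H^1(\mathfrak{G},\Lambda_v^\times)$ with a piece of $H^1(\mathfrak{G},\Ideals(\Lambda))$ modulo the image of $H^1(\mathfrak{G},\Pic(\Lambda))$, and note $H^1(\mathfrak{G},\Ideals(\Lambda))=\Ideals(\Lambda)[2]/\Ideals(\Lambda)^2$, which is trivial since $\Ideals(\Lambda)$ is free abelian, and $H^1(\mathfrak{G},\Pic(\Lambda))=\Pic(\Lambda)[2]/\Pic(\Lambda)^2=\Pic(\Lambda)[2]$ (the last equality because $\Pic(\Lambda)^2=\cbd(\Pic(\Lambda))$ lands in the norm-$1$ part which is all of $\Pic(\Lambda)$, as $\sigma$ inverts, so $H^1$ is $\Pic(\Lambda)[2]$ and the quotient $\ker/\mathrm{im}$ degenerates appropriately). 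Chasing the maps, injectivity of the diagonal restriction comes from $H^1(\mathfrak{G},\mathbb{A}_E^\times/\text{units})$ being exactly the local contributions and $H^0$ of the quotient surjecting appropriately (equivalently: an element of $H^1(\mathfrak{G},\Lambda^\times)$ dying everywhere locally corresponds to a global unit which is a square locally everywhere, hence by the structure of $\Lambda^\times$ — which is finite, equal to $\mu_2$, $\mu_4$ or $\mu_6$ — must already be a square globally).

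I expect the main obstacle to be bookkeeping rather than a deep point: carefully matching the connecting homomorphisms so that the cokernel is identified canonically with $\Pic(\Lambda)[2]$ and not merely abstractly isomorphic to it, and in particular handling the difference between $H^1$ and Tate cohomology $\widehat H^0$/$\widehat H^1$ when $\sigma$ acts by inversion (so that $H^1(\mathfrak{G},M)\cong \widehat H^{-1}(\mathfrak{G},M)=M[\Nr]/I_{\mathfrak{G}}M$ collapses to $M[2]/M^2$). A clean way to organize this is to note that because $\mathfrak{G}$ inverts $\Pic(\Lambda)$, for the four-term exact sequence the norm maps all vanish on the relevant quotients, so the long exact sequence breaks into the short exact sequence
\begin{equation*}
1\to H^1(\mathfrak{G},\Lambda^\times)\to \prod_{v\neq\infty}H^1(\mathfrak{G},\Lambda_v^\times)\to \Pic(\Lambda)[2]\to 1,
\end{equation*}
which is exactly the assertion; the surjectivity onto $\Pic(\Lambda)[2]$ is the image of the connecting map, and its kernel being the global $H^1$ uses $H^1(\mathfrak{G},\Ideals(\Lambda))=1$. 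I would then remark (or defer to Corollary \ref{cor:Pic(Lambda)[2]-H1(Lambda_v)-size}) that counting orders of the local $H^1$'s via Lemmata \ref{lem:units-cohomology-maximal} and \ref{lem:units-cohomology-non-maximal} is consistent with Corollary \ref{cor:Pic(Lambda)[2]-size}, which serves as an arithmetic sanity check on the exact sequence.
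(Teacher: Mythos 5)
Your overall strategy (Galois cohomology of the unit/ideal/Picard sequences, Satz 90, the inversion action on $\Pic(\Lambda)$) is in the same family as the paper's argument, and you correctly anticipate the final short exact sequence $1\to H^1(\mathfrak{G},\Lambda^\times)\to\prod_{v\neq\infty}H^1(\mathfrak{G},\Lambda_v^\times)\to\Pic(\Lambda)[2]\to1$. But several concrete steps are wrong and would not survive being written out. The displayed sequence $1\to\Lambda^\times\to\mathbb{A}_E^{\times,\mathrm{fin}}/\prod_{v\neq\infty}\Lambda_v^\times\to\Ideals(\Lambda)\to1$ is not exact: the middle term is already canonically $\Ideals(\Lambda)$, and $\Lambda^\times$ maps to the identity in it. The asserted isomorphism $\prod_v H^1(\mathfrak{G},\Lambda_v^\times)\cong\prod_v H^1\bigl(\mathfrak{G},E_v^\times/\Lambda_v^\times\bigr)$ is false: at an odd ramified prime with $\Lambda_v$ maximal, $E_v^\times/\mathcal{O}_{E_v}^\times\cong\mathbb{Z}$ with trivial action, so the right-hand factor vanishes while the left-hand factor is $\cyclic{2}$ by Lemma \ref{lem:units-cohomology-maximal}. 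What Satz 90 actually gives is $H^1(\mathfrak{G},\Lambda_v^\times)\cong\mathrm{coker}\bigl[\mathbb{Q}_v^\times\to(E_v^\times/\Lambda_v^\times)^{\mathfrak{G}}\bigr]$, an $H^0$-cokernel, and this degree shift propagates through your whole chase. Likewise, identifying the cokernel of the diagonal map with a subquotient of $H^1(\mathfrak{G},\Ideals(\Lambda))$ is inconsistent: that group vanishes ($\Ideals(\Lambda)$ is a permutation module, not an inversion module, so the formula ``$M[2]/M^2$'' does not even apply to it), whereas the cokernel is $\Pic(\Lambda)[2]$, generally nontrivial. The group that genuinely appears is $H^0(\mathfrak{G},\Pic(\Lambda))=\Pic(\Lambda)^{\mathfrak{G}}=\Pic(\Lambda)[2]$, not $H^1(\mathfrak{G},\Pic(\Lambda))=\faktor{\Pic(\Lambda)}{\Pic(\Lambda)^2}$; these are only abstractly isomorphic, and the proposition demands a canonical isomorphism. (Your preliminary formula is also off: when $\sigma$ inverts $M$ one has $\ker\Nr=M$, so $H^1(\mathfrak{G},M)=M/M^2$, not $M[2]/M^2$; the latter would already give the wrong $H^1(\mathfrak{G},\Lambda^\times)$ for $E=\mathbb{Q}(i)$.)

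Two substantive inputs are also missing. Surjectivity onto $\Pic(\Lambda)[2]$ needs $H^1(\mathfrak{G},\Princp(\Lambda))=1$, which is exactly where the hypothesis that $E$ is imaginary enters (via $\mathbb{Z}^\times\cap\Nr E^\times=1$); you never address this. And injectivity of the diagonal restriction is not the statement that a locally-everywhere square unit is a global square (again the wrong model for $H^1$): the correct mechanism is that a class in $H^1(\mathfrak{G},\Lambda^\times)$ is the image of a Galois-fixed principal fractional ideal, and if it dies locally everywhere then that ideal lies in $\Ideals(\mathbb{Z})$, hence in $\Princp(\mathbb{Z})$ because $\Pic(\mathbb{Z})=1$. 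The clean way to organize all of this is the paper's: compare the two short exact sequences $1\to\Lambda^\times\to E^\times\to\Princp(\Lambda)\to1$ and $1\to\prod_{v\neq\infty}\Lambda_v^\times\to{\prod}'_{v\neq\infty}E_v^\times\to\Ideals(\Lambda)\to1$ via the four lemma, then use $1\to\Princp(\Lambda)\to\Ideals(\Lambda)\to\Pic(\Lambda)\to1$ together with the vanishing of $H^1(\mathfrak{G},\Princp(\Lambda))$. I recommend redoing your computation along those lines.
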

\begin{proof}
We construct the necessary isomorphism in several steps. On the way we also prove the claimed injectivity.
For any order $\mathcal{O}<\mathcal{O}_F$ in a global field $F$ denote by $\Princp(\mathcal{O})\coloneqq \lfaktor{\mathcal{O}^\times}{F^\times}$ --- the group of invertible principle fractional $\mathcal{O}$-ideals.

We begin by examining the following commuting diagram with exact rows
\begin{equation*}
\begin{tikzcd}
1 \arrow[r] & \Lambda^\times \arrow[r]\arrow[d] & E^\times \arrow[r]\arrow[d] & \Princp(\Lambda) \arrow[r]\arrow[d] & 1\\
1 \arrow[r] & \prod_{v\neq\infty} \Lambda_v^\times \arrow[r] & \prod_{v\neq\infty}' E_v^\times \arrow[r] & \Ideals(\Lambda) \arrow[r] & 1
\end{tikzcd}
\end{equation*}
This diagram induces a commuting diagram of $\mathfrak{G}$-cohomology with exact rows.
\begin{equation}\label{diag:local-global-Lambda-ideals}
\begin{tikzcd}
1 \arrow[r] & \mathbb{Z}^\times \arrow[r]\arrow[d] & \mathbb{Q}^\times \arrow[r]\arrow[d] & \Princp(\Lambda)^\mathfrak{G} \arrow[r]\arrow[d] & H^1(\mathfrak{G},\Lambda^\times)\arrow[r]\arrow[d] & 1\\
1 \arrow[r] & \prod_{v\neq\infty} \mathbb{Z}_v^\times \arrow[r] & \prod_{v\neq\infty}' \mathbb{Q}_v^\times \arrow[r] & \Ideals(\Lambda)^\mathfrak{G} \arrow[r] & \prod_{v\neq\infty} H^1(\mathbb{Q}_v,\Lambda_v^\times) \arrow[r] & 1
\end{tikzcd}
\end{equation}
The last terms are trivial due to Hilbert's Satz 90.
We can truncate the diagram above to the following commuting diagram with exact rows
\begin{equation}\label{diag:local-global-Lambda-ideals-truncated}
\begin{tikzcd}
1 \arrow[r] & \Princp(\mathbb{Z}) \arrow[r]\arrow[d] & \Princp(\Lambda)^\mathfrak{G} \arrow[r]\arrow[d] & H^1(\mathfrak{G},\Lambda^\times)\arrow[r]\arrow[d] & 1\\
1 \arrow[r] & \Ideals(\mathbb{Z}) \arrow[r] &  \Ideals(\Lambda)^\mathfrak{G} \arrow[r] & \prod_{v\neq\infty} H^1(\mathbb{Q}_v,\Lambda_v^\times) \arrow[r] & 1
\end{tikzcd}
\end{equation}
Because $\Princp(\Lambda)^\mathfrak{G}\to\Ideals(\Lambda)^\mathfrak{G}$ is injective the four-lemma implies that
\begin{equation*}
\ker\left[H^1(\mathfrak{G},\Lambda^\times)\to {\prod_{v\neq\infty} H^1(\mathbb{Q}_v,\Lambda_v^\times)}\right]\simeq 
\lfaktor{\Princp(\mathbb{Z})}{\Ideals(\mathbb{Z})}=\Pic(\mathbb{Z})=1
\end{equation*}
This proves the first claim.
Next we deduce from \eqref{diag:local-global-Lambda-ideals-truncated}
\begin{equation}\label{eq:P(Lambda)J(Lambda)-H1}
\lfaktor{\Princp(\Lambda)^\mathfrak{G}}{\Ideals(\Lambda)^\mathfrak{G}}\simeq \lfaktor{H^1(\mathfrak{G},\Lambda^\times)}{\prod_{v\neq\infty} H^1(\mathbb{Q}_v,\Lambda_v^\times)}
\end{equation}

We can also continue the long exact sequence in the first row of \eqref{diag:local-global-Lambda-ideals} 
\begin{equation}\label{eq:H1(Princ(Lambda))}
H^1(\mathfrak{G},\Lambda^\times)\to 1 \to H^1(\mathfrak{G},\Princp(\Lambda))
\to\faktor{\mathbb{Z}^\times}{\Nr\Lambda^\times}\to \faktor{\mathbb{Q}^\times}{\Nr E^\times}
\end{equation}
where we have computed the second cohomology groups using the formula $H^2(C,M)\simeq \faktor{M^C}{\Nr M}$ valid for any finite cyclic group $C$ acting on an abelian group $M$.
Because $E$ is an imaginary quadratic field $\mathbb{Z}^\times\cap\Nr E^\times=1$ thus the last map in \eqref{eq:H1(Princ(Lambda))} is injective. We deduce by exactness $H^1(\mathfrak{G},\Princp(\Lambda))=1$.

We finally consider the long exact sequence associated to the short exact sequence
\begin{equation*}
1\to \Princp(\Lambda) \to \Ideals(\Lambda)\to\Pic(\Lambda)\to 1
\end{equation*}
The equality $H^1(\mathfrak{G},\Princp(\Lambda))=1$ implies
\begin{equation}\label{eq:PicLambda-JLambda}
\Pic(\Lambda)[2]=\Pic(\Lambda)^{\mathfrak{G}}\simeq \lfaktor{\Princp(\Lambda)^\mathfrak{G}}{\Ideals(\Lambda)^{\mathfrak{G}}}
\end{equation}
The claimed  isomorphism is a composition of \eqref{eq:PicLambda-JLambda} and \eqref{eq:P(Lambda)J(Lambda)-H1}.
\end{proof}

\begin{cor}\label{cor:Pic(Lambda)[2]-H1(Lambda_v)-size}
Recall the definition $\cbd(x)=x/\tensor[^\sigma]{x}{}$ for $x\in E_v^\times$ for any $v$. The Proposition above implies
\begin{equation*}
\prod_{v\neq\infty} \left[\Lambda_v^{(1)} \colon \cbd(\Lambda_v^\times)\right]= 
\prod_{v\neq\infty} \# H^1(\mathfrak{G},\Lambda_v^\times)
=
2 \#\Pic(\Lambda)[2]
\end{equation*}
\end{cor}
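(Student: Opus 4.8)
The plan is to derive both equalities as formal consequences of Proposition \ref{prop:Pic(Lambda)[2]-H1(Lambda_v)} together with the elementary structure of cohomology of the cyclic group $\mathfrak{G}\simeq\cyclic{2}$. First I would record the left-hand equality place by place. Since $\mathfrak{G}$ is cyclic of order two, a $1$-cocycle $\mathfrak{G}\to\Lambda_v^\times$ is determined by the image $x$ of the nontrivial element $\sigma$, subject to the cocycle relation $x\cdot\tensor[^\sigma]{x}{}=1$, i.e.\ $\Nr(x)=1$; thus $Z^1(\mathfrak{G},\Lambda_v^\times)=\Lambda_v^{(1)}$. The $1$-coboundaries are $\{\tensor[^\sigma]{m}{}m^{-1}\mid m\in\Lambda_v^\times\}=\cbd(\Lambda_v^\times)$ (the set is closed under inversion). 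Hence $H^1(\mathfrak{G},\Lambda_v^\times)\simeq \Lambda_v^{(1)}/\cbd(\Lambda_v^\times)$ for every $v\neq\infty$, and taking the product over $v$ gives the first equality; no input beyond the definition of group cohomology for a cyclic group is used here.

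For the second equality I would invoke Proposition \ref{prop:Pic(Lambda)[2]-H1(Lambda_v)}, which states that the diagonal restriction map $H^1(\mathfrak{G},\Lambda^\times)\to\prod_{v\neq\infty}H^1(\mathfrak{G},\Lambda_v^\times)$ is injective with cokernel canonically isomorphic to $\Pic(\Lambda)[2]$. In particular all but finitely many local factors are trivial, the product is finite, and counting in the exact sequence
\begin{equation*}
1\to H^1(\mathfrak{G},\Lambda^\times)\to\prod_{v\neq\infty}H^1(\mathfrak{G},\Lambda_v^\times)\to\Pic(\Lambda)[2]\to 1
\end{equation*}
yields $\prod_{v\neq\infty}\#H^1(\mathfrak{G},\Lambda_v^\times)=\#H^1(\mathfrak{G},\Lambda^\times)\cdot\#\Pic(\Lambda)[2]$. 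It therefore remains only to check that $\#H^1(\mathfrak{G},\Lambda^\times)=2$. Using the same cyclic-group description as above, $Z^1(\mathfrak{G},\Lambda^\times)=\{x\in\Lambda^\times\mid x\tensor[^\sigma]{x}{}=1\}$; but $E$ is imaginary quadratic, so $\sigma$ is complex conjugation and acts on the finite group of units $\Lambda^\times$ (a group of roots of unity) by inversion, whence $x\tensor[^\sigma]{x}{}=xx^{-1}=1$ for all $x$ and $Z^1(\mathfrak{G},\Lambda^\times)=\Lambda^\times$. Likewise $B^1(\mathfrak{G},\Lambda^\times)=\{\tensor[^\sigma]{x}{}x^{-1}\mid x\in\Lambda^\times\}=(\Lambda^\times)^2$, so $H^1(\mathfrak{G},\Lambda^\times)\simeq\Lambda^\times/(\Lambda^\times)^2$. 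Since $\Lambda^\times$ is a finite cyclic group (a finite subgroup of the multiplicative group of a field) of even order (it always contains $-1$), this quotient has order exactly $2$. Combining the three steps gives
\begin{equation*}
\prod_{v\neq\infty}\left[\Lambda_v^{(1)}\colon\cbd(\Lambda_v^\times)\right]=\prod_{v\neq\infty}\#H^1(\mathfrak{G},\Lambda_v^\times)=2\,\#\Pic(\Lambda)[2].
\end{equation*}

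I do not anticipate a serious obstacle: the corollary is a pure counting consequence of Proposition \ref{prop:Pic(Lambda)[2]-H1(Lambda_v)}. The only point requiring minor care is fixing conventions so that the explicit $Z^1/B^1$ description matches the definitions of $\cbd$ and of $\Lambda_v^{(1)}$ (cocycles $\leftrightarrow$ norm-one elements, coboundaries $\leftrightarrow$ image of $\cbd$), and noting that the finiteness of the product need not be argued separately because it is already encoded in the proposition.
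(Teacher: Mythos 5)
Your proposal is correct and follows essentially the same route as the paper: identify $H^1(\mathfrak{G},L)$ with $L^{(1)}/\cbd(L^\times)$ for $L=\Lambda$ and $L=\Lambda_v$, feed this into the exact sequence of Proposition \ref{prop:Pic(Lambda)[2]-H1(Lambda_v)}, and compute the global factor $[\Lambda^{(1)}:\cbd(\Lambda^\times)]=2$. The only difference is that the paper merely asserts this last index equals $2$, whereas you justify it via the observation that $\sigma$ inverts the finite cyclic group $\Lambda^\times$ of even order, so $H^1(\mathfrak{G},\Lambda^\times)\simeq\Lambda^\times/(\Lambda^\times)^2$ has order $2$ --- a correct and worthwhile fill-in.
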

\begin{proof}
The defintion of $H^1$ using cocycles and coboundaries implies $H^1(\mathfrak{G},L)\simeq\faktor{L^{(1)}}{\cbd(L^\times)}$ for $L=\Lambda$ and $L=\Lambda_v$ for all $v\neq\infty$. Hence by Proposition \ref{prop:Pic(Lambda)[2]-H1(Lambda_v)} the factor of proportionality between $\prod_{v\neq\infty} \left[\Lambda_v^{(1)} \colon \cbd(\Lambda_v^\times)\right]$ and $\#\Pic(\Lambda)[2]$ is $\left[\Lambda^{(1)} \colon \cbd(\Lambda^\times)\right]=2$.
\end{proof}

\begin{lem}\label{lem:units-cohomology-maximal}
Let $v\neq\infty$.
The first Galois cohomology group of the unit group of a maximal order is
\begin{equation*}
\begin{cases}
H^1(\mathbb{Q}_v, \mathcal{O}_{E_v}^\times)=1 & \text{if }E_v/\mathbb{Q}_v\text{ is unramified}\\
H^1(\mathbb{Q}_v, \mathcal{O}_{E_v}^\times)=\cyclic{2} & \text{if }E_v/\mathbb{Q}_v\text{ is ramified}
\end{cases}
\end{equation*}
If $E_v/\mathbb{Q}_v$ is tamely totally ramified, i.e.\ the residue characteristic is odd, then the non-trivial class of $H^1(\mathbb{Q}_v, \mathcal{O}_{E_v}^\times)$ is represented by the cocycle corresponding to $-1\in\mathcal{O}_{E_v}^{(1)}$.
\end{lem}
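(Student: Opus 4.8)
Proof proposal. The plan is to compute $H^1(\mathfrak{G},\mathcal{O}_{E_v}^\times)$ from the tautological short exact sequence of $\mathfrak{G}$-modules
\begin{equation*}
1\longrightarrow \mathcal{O}_{E_v}^\times\longrightarrow E_v^\times \xrightarrow{\ \ord\ } \Gamma_v\longrightarrow 0,
\end{equation*}
where $\ord=(\ord_w)_{w\mid v}$ is the normalized valuation on the \'etale algebra $E_v$ and $\Gamma_v$ is its value group: $\Gamma_v=\mathbb{Z}$ with trivial $\mathfrak{G}$-action when $v$ is inert or ramified in $E$, and $\Gamma_v=\mathbb{Z}^2$ with $\mathfrak{G}$ permuting the two coordinates when $v$ splits. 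Taking $\mathfrak{G}$-cohomology and invoking $H^1(\mathbb{Q}_v,E_v^\times)=0$ — classical Hilbert's Satz 90 in the field case, and Shapiro's lemma (since $E_v^\times$ is then an induced module) in the split case — produces the exact sequence
\begin{equation*}
\mathbb{Q}_v^\times=H^0(\mathfrak{G},E_v^\times)\xrightarrow{\ \ord\ } H^0(\mathfrak{G},\Gamma_v)\xrightarrow{\ \delta\ } H^1(\mathfrak{G},\mathcal{O}_{E_v}^\times)\longrightarrow 0,
\end{equation*}
so that $H^1(\mathfrak{G},\mathcal{O}_{E_v}^\times)\simeq H^0(\mathfrak{G},\Gamma_v)/\ord(\mathbb{Q}_v^\times)$.

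Next I would identify the two sides of this quotient case by case. If $v$ splits then $H^0(\mathfrak{G},\Gamma_v)=\mathbb{Z}$ sits diagonally and $\ord(\mathbb{Q}_v^\times)$ is the whole diagonal, so $H^1=1$. If $v$ is inert then $\Gamma_v=\mathbb{Z}$ and $\ord(\mathbb{Q}_v^\times)=\mathbb{Z}$ because a uniformizer of $\mathbb{Q}_v$ is a uniformizer of $E_w$ (ramification index $1$), so again $H^1=1$; this settles the unramified assertion. If $v$ ramifies in $E$ then $E_v=E_w$ is a field, $\Gamma_v=\mathbb{Z}$, and $\ord(\mathbb{Q}_v^\times)=2\mathbb{Z}$ since the ramification index equals $2$, whence $H^1(\mathfrak{G},\mathcal{O}_{E_v}^\times)\simeq\cyclic{2}$.

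For the final statement I would make the connecting map $\delta$ explicit. The generator $1\in\mathbb{Z}=H^0(\mathfrak{G},\Gamma_v)$ lifts under $\ord$ to any uniformizer $\Pi$ of the discrete valuation ring $\mathcal{O}_{E_v}=\mathcal{O}_{E_w}$, and $\delta(1)$ is the class of the $1$-cocycle $\sigma\mapsto \tensor[^\sigma]{\Pi}{}/\Pi$; under the identification $H^1(\mathfrak{G},\mathcal{O}_{E_v}^\times)\simeq \mathcal{O}_{E_v}^{(1)}/\cbd(\mathcal{O}_{E_v}^\times)$ recorded in the conventions, this class is the element $\tensor[^\sigma]{\Pi}{}/\Pi$. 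In the tamely totally ramified case the residue characteristic is odd, so $E_v=\mathbb{Q}_v(\sqrt{\pi\varepsilon})$ for a uniformizer $\pi$ of $\mathbb{Z}_v$ and a unit $\varepsilon\in\mathbb{Z}_v^\times$, and $\Pi=\sqrt{\pi\varepsilon}$ is a uniformizer of $\mathcal{O}_{E_v}$ with $\tensor[^\sigma]{\Pi}{}=-\Pi$; hence $\delta(1)=[-1]$, where indeed $-1\in\mathcal{O}_{E_v}^{(1)}$ because $\Nr(-1)=1$. Since $\delta$ is surjective and $1$ generates $\mathbb{Z}$, the class $[-1]$ generates $H^1(\mathfrak{G},\mathcal{O}_{E_v}^\times)\simeq\cyclic{2}$, i.e.\ it is the nontrivial class, as asserted.

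The only substantive input beyond formal homological algebra is Hilbert's Satz 90, which the excerpt already uses freely; the routine points to double-check are the value-group computations (that $\ord(\mathbb{Q}_v^\times)$ equals the ramification index times $\mathbb{Z}$) and the sign bookkeeping matching the paper's convention $\cbd(x)=x/\tensor[^\sigma]{x}{}$ for representing classes in $H^1$ of the cyclic group $\mathfrak{G}$. I do not expect a genuine obstacle; the one place requiring care is the explicit description of the connecting homomorphism $\delta$, which is exactly where the element $-1$ enters.
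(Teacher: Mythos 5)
Your proposal is correct and follows essentially the same route as the paper: the valuation short exact sequence $1\to\mathcal{O}_{E_v}^\times\to E_v^\times\to\Gamma_v\to 0$, Hilbert's Satz 90 to kill $H^1(\mathfrak{G},E_v^\times)$, identification of $H^1(\mathfrak{G},\mathcal{O}_{E_v}^\times)$ with $\mathbb{Z}/e\mathbb{Z}$ via the cokernel of $\ord$ on $\mathbb{Q}_v^\times$, and the explicit connecting map sending the generator to the class of $\cbd(\Pi)=-1$ for a tame uniformizer with $\tensor[^\sigma]{\Pi}{}=-\Pi$. The only cosmetic difference is that you spell out the split case via Shapiro's lemma where the paper folds it into its appeal to Satz 90; the content is identical.
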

\begin{proof}
Denote by $g$ the number of places above $v$ in $E/\mathbb{Q}$ and let $e$ be the ramification index of $v$ in $E/\mathbb{Q}$.
Consider the short exact sequence
\begin{equation*}
1\to \mathcal{O}_{E_v}^\times\to E_v^\times \to \mathbb{Z}^{g}\to 1
\end{equation*}
The third map is the valuation map and if $g=2$ the Galois group acts on the value group $\mathbb{Z}^{g}$ by switching the coordinates.
The associated long exact cohomology sequence is
\begin{equation*}
1\to \mathbb{Z}_v^\times\to \mathbb{Q}_v^\times \to \mathbb{Z} \to H^1(\mathfrak{G},\mathcal{O}_{E_v}^\times)\to 1
\end{equation*}
where the last group is trivial by Hilbert's Satz 90 and the third map is the valuation map of $E_v^\times$ restricted to $\mathbb{Q}_v^\times$. The image of $\mathbb{Q}_v^\times \to \mathbb{Z}$ is $e\mathbb{Z}$ and we deduce that
\begin{equation*}
H^1(\mathfrak{G},\mathcal{O}_{E_v}^\times)\simeq \cyclic{e}
\end{equation*} 

Assume $E_v/\mathbb{Q}_v$ is ramified.
If $\Pi$ is a uniformizer of $\mathcal{O}_{E_v}$ then the map $\mathbb{Z} \to H^1(\mathfrak{G},\mathcal{O}_{E_v}^\times)$ can be written explicitly as $n\mapsto \cbd(\Pi^n)$.
If the ramification is tame we can choose a uniformizer so that $\tensor[^\sigma]{\Pi}{}=-\Pi$ and $H^1(\mathfrak{G},\mathcal{O}_{E_v}^\times)$ is generated by $-1$.
\end{proof}

\begin{lem}\label{lem:units-cohomology-non-maximal}
Assume $\Lambda_v<\mathcal{O}_{E_v}$ is a non-maximal order and denote the residue characteristic by $p$. Then 
\begin{equation*}
\# H^1(\mathfrak{G},\Lambda_v^\times)=\begin{cases}
2 & p>2 \\
2^{\mu_{\mathrm{wild}}} & p=2
\end{cases}
\end{equation*}

Moreover, when $p>2$ the non-trivial cocycle of $H^1(\mathfrak{G},\Lambda_v^\times)$ is represented by $-1\in\Lambda_v^{(1)}$.
\end{lem}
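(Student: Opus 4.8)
The plan is to reduce the computation to invariants already recorded in the excerpt: $\Nr\Lambda_v^\times$ from Lemma~\ref{lem:NrLambda_v}, $\Nr\mathcal{O}_{E_v}^\times$ from Lemma~\ref{lem:OEv-norms}, and $H^1(\mathfrak{G},\mathcal{O}_{E_v}^\times)$ from Lemma~\ref{lem:units-cohomology-maximal}, by a Herbrand-quotient argument. As in the proof of Corollary~\ref{cor:Pic(Lambda)[2]-H1(Lambda_v)-size} one has $H^1(\mathfrak{G},\Lambda_v^\times)\simeq \Lambda_v^{(1)}/\cbd(\Lambda_v^\times)$, and since $\Lambda_v^{\mathfrak{G}}=\Lambda_v\cap\mathbb{Q}_v=\mathbb{Z}_v$ (Lemma~\ref{lem:Lambda-Galois}) the degree-zero Tate group of $\Lambda_v^\times$ is $\mathbb{Z}_v^\times/\Nr\Lambda_v^\times$; the same applies to $\mathcal{O}_{E_v}^\times$. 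For a $\mathfrak{G}$-module $M$ with $\mathbb{Z}_v^\times/\Nr M$ and $H^1(\mathfrak{G},M)$ finite, write $q(M)\coloneqq\#\bigl(\mathbb{Z}_v^\times/\Nr M\bigr)\big/\#H^1(\mathfrak{G},M)$ for the Herbrand quotient; recall it is trivial on finite modules and multiplicative in short exact sequences, provided the relevant quotients are defined.

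The core step is to apply $q$ to the short exact sequence $1\to\Lambda_v^\times\to\mathcal{O}_{E_v}^\times\to Q\to 1$ with $Q\coloneqq\mathcal{O}_{E_v}^\times/\Lambda_v^\times$. By Lemma~\ref{lem:Lambda-units-reduction} the group $Q$ is a quotient of $(\mathcal{O}_{E_v}/\mathcal{f}_v\mathcal{O}_{E_v})^\times$, hence finite, so $q(Q)=1$; and $q(\mathcal{O}_{E_v}^\times)$ is defined since both $\mathbb{Z}_v^\times/\Nr\mathcal{O}_{E_v}^\times$ and $H^1(\mathfrak{G},\mathcal{O}_{E_v}^\times)$ are finite by Lemmata~\ref{lem:OEv-norms} and \ref{lem:units-cohomology-maximal}. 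It follows that $H^1(\mathfrak{G},\Lambda_v^\times)$ is finite and $q(\Lambda_v^\times)=q(\mathcal{O}_{E_v}^\times)$. Evaluating $q(\mathcal{O}_{E_v}^\times)$ case by case: when $E_v/\mathbb{Q}_v$ is unramified (in particular when $v$ splits) both $H^1(\mathfrak{G},\mathcal{O}_{E_v}^\times)$ and $\mathbb{Z}_v^\times/\Nr\mathcal{O}_{E_v}^\times$ are trivial, and when $E_v/\mathbb{Q}_v$ is ramified both have order $2$; so $q(\mathcal{O}_{E_v}^\times)=1$ always. Hence $\#H^1(\mathfrak{G},\Lambda_v^\times)=[\mathbb{Z}_v^\times:\Nr\Lambda_v^\times]$, which by Lemma~\ref{lem:NrLambda_v} equals $2$ for $p>2$, and equals $2^{\mu_{\mathrm{wild}}}$ for $p=2$ by the very definition of $\mu_{\mathrm{wild}}$. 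For $p>2$ there is also a shortcut worth recording: $1+\mathcal{f}_v\mathcal{O}_{E_v}$ is pro-$p$, hence uniquely $2$-divisible and cohomologically trivial for $\mathfrak{G}$, so $H^1(\mathfrak{G},\Lambda_v^\times)\simeq H^1\bigl(\mathfrak{G},(\mathbb{Z}_v/\mathcal{f}_v)^\times\bigr)=(\mathbb{Z}_v/\mathcal{f}_v)^\times[2]$, of order $2$ since $(\mathbb{Z}_v/\mathcal{f}_v)^\times$ is cyclic of even order.

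For the remaining assertion, $-1\in\mathbb{Z}_v^\times\subseteq\Lambda_v^\times$ has norm $1$, so it lies in $\Lambda_v^{(1)}$ and defines a class in $H^1(\mathfrak{G},\Lambda_v^\times)=\Lambda_v^{(1)}/\cbd(\Lambda_v^\times)$; the plan is to show this class is non-trivial. If $-1=\cbd(x)$ with $x\in\Lambda_v^\times$, then $\tensor[^\sigma]{x}{}=-x$; writing $x=u+\mathcal{f}_v\eta$ with $u\in\mathbb{Z}_v^\times$, $\eta\in\mathcal{O}_{E_v}$ (Lemma~\ref{lem:Lambda-units-reduction}), the relation $x+\tensor[^\sigma]{x}{}=0$ reads $2u+\mathcal{f}_v\Tr_{E_v/\mathbb{Q}_v}(\eta)=0$, so $2u\in\mathcal{f}_v\mathbb{Z}_v$. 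For $p$ odd this is impossible: then $2u$ is a unit of $\mathbb{Z}_v$ while $\mathcal{f}_v\mathbb{Z}_v\subsetneq\mathbb{Z}_v$, the order being non-maximal. Hence $-1\notin\cbd(\Lambda_v^\times)$, and since $\#H^1(\mathfrak{G},\Lambda_v^\times)=2$ the element $-1$ represents the non-trivial class.

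The main obstacle I anticipate is the bookkeeping around the Herbrand quotient for the profinite module $\Lambda_v^\times$: one must ensure $H^1(\mathfrak{G},\Lambda_v^\times)$ is \emph{a priori} finite — which is precisely what the exact sequence with $\mathcal{O}_{E_v}^\times$ and the finite group $Q$ supplies — before multiplicativity of $q$ may be invoked; and one must carefully match the split case against the ``unramified'' clauses of Lemmata~\ref{lem:OEv-norms} and \ref{lem:units-cohomology-maximal}. Everything else is routine once those two points are handled.
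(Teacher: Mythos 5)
Your proof is correct and follows essentially the same route as the paper: both rest on the short exact sequence $1\to\Lambda_v^\times\to\mathcal{O}_{E_v}^\times\to\mathcal{O}_{E_v}^\times/\Lambda_v^\times\to 1$, with your Herbrand-quotient bookkeeping being a repackaging of the paper's explicit six-term exact sequence (the equality $\#Q[2]=\#(Q/Q^2)$ for the finite quotient $Q$ there plays exactly the role of your $q(Q)=1$), and your identity $\#H^1(\mathfrak{G},\Lambda_v^\times)=[\mathbb{Z}_v^\times:\Nr\Lambda_v^\times]$ agrees with the paper's $\#H^1(\mathfrak{G},\mathcal{O}_{E_v}^\times)\cdot[\Nr\mathcal{O}_{E_v}^\times:\Nr\Lambda_v^\times]$ precisely because $q(\mathcal{O}_{E_v}^\times)=1$. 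The non-triviality of the class of $-1$ for odd $p$ is argued the same way in the paper, by reducing modulo the conductor and using that $2$ is a unit while $\mathcal{f}_v$ is not.
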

\begin{proof}
The short exact sequence of abelian $\mathfrak{G}$-modules
\begin{equation*}
1\to\Lambda_v^\times\to \mathcal{O}_{E_v}^\times\to \faktor{\mathcal{O}_{E_v}^\times}{\Lambda_v^\times}
\to 1
\end{equation*}
induces a long exact sequence of cohomology
\begin{align}
\label{eq:Lambdav-OEv-long-chomology}
1&\to \mathbb{Z}_v^\times 
\to\mathbb{Z}_v^\times
\to \left(\faktor{\mathcal{O}_{E_v}^\times}{\Lambda_v^\times}\right)^\mathfrak{G}\\
&\to H^1(\mathbb{Q}_v,\Lambda_v^\times)\to H^1(\mathbb{Q}_v,\mathcal{O}_{E_v}^\times)
\to H^1\left(\mathbb{Q}_v,\faktor{\mathcal{O}_{E_v}^\times}{\Lambda_v^\times}\right) 
\nonumber\\
&\to \faktor{\mathbb{Z}_v^\times}{\Nr\Lambda_v^\times}\to \faktor{\mathbb{Z}_v^\times}{\Nr\mathcal{O}_{E_v}^\times}
\nonumber
\end{align}
\end{proof}
Because $\mathfrak{G}$ acts on $\faktor{\mathcal{O}_{E_v}^\times}{\Lambda_v^\times}$ by inversion we see that
\begin{align*}
\left(\faktor{\mathcal{O}_{E_v}^\times}{\Lambda_v^\times}\right)^\mathfrak{G}&\simeq \left(\faktor{\mathcal{O}_{E_v}^\times}{\Lambda_v^\times}\right)[2]\\
H^1\left(\mathbb{Q}_v,\faktor{\mathcal{O}_{E_v}^\times}{\Lambda_v^\times}\right)&\simeq
\left(\faktor{\mathcal{O}_{E_v}^\times}{\Lambda_v^\times}\right) \Big\slash
\left(\faktor{\mathcal{O}_{E_v}^\times}{\Lambda_v^\times}\right)^2
\end{align*}

The second map in \eqref{eq:Lambdav-OEv-long-chomology} is simply the identity and we can truncate the sequence \eqref{eq:Lambdav-OEv-long-chomology} to an exact sequence
\begin{align*}
1&\to\left(\faktor{\mathcal{O}_{E_v}^\times}{\Lambda_v^\times}\right)[2]\\
&\to H^1(\mathbb{Q}_v,\Lambda_v^\times)\to H^1(\mathbb{Q}_v,\mathcal{O}_{E_v}^\times)
\to\left(\faktor{\mathcal{O}_{E_v}^\times}{\Lambda_v^\times}\right) \Big\slash
\left(\faktor{\mathcal{O}_{E_v}^\times}{\Lambda_v^\times}\right)^2\\
&\to \faktor{\Nr\mathcal{O}_{E_v}^\times}{\Nr \Lambda_v^\times}\to 1
\end{align*}

The second and the fifth groups above are non-canonically isomorphic and exactness implies 
\begin{equation*}
\# H^1(\mathbb{Q}_v,\Lambda_v^\times)=\#H^1(\mathbb{Q}_v,\mathcal{O}_{E_v}^\times)
\cdot \#\faktor{\Nr\mathcal{O}_{E_v}^\times}{\Nr \Lambda_v^\times}
=\begin{cases}
2 & p>2\\
2^{\mu_{\mathrm{wild}}} & p=2
\end{cases}
\end{equation*}
The second equality above follows from Lemmata \ref{lem:units-cohomology-maximal}, \ref{lem:OEv-norms} and \ref{lem:NrLambda_v}.

We need only show that the cocycle of $-1\in\Lambda_v^{(1)}$ is not a coboundary if $p>2$. Assume in the contrary the $-1=x/\tensor[^\sigma]{x}{}$ for some $x\in\Lambda_v^\times$. Then by Lemma \ref{lem:Lambda-units-reduction} we know that
\begin{equation*}
-1=\frac{x}{\tensor[^\sigma]{x}{}}\equiv \frac{x}{x} \mod \mathcal{f}_v\mathcal{O}_{E_v}\equiv 1 \mod \mathcal{f}_v\mathcal{O}_{E_v}
\end{equation*}
which is a contradiction because the residue characteristic is odd.

\section{Points on Conics}\label{appndx:conics}
\subsection{Notations}
In this section we denote by $q(x,y)$ a primitive binary integral quadratic form of discriminant $D<0$ and define $Q(x,y)=q(x,y)-\omega D$ for some $\omega\in\mathbb{Z}$. We denote by $X_Q\coloneqq \Spec \mathbb{Z}[x,y]/\left<Q(x,y)\right>$ the affine plane curve cutout by $Q$.

We shall also need the homogenized polynomial $\overline{Q}(x,y,z)=q(x,y)-\omega Dz^2$. Denote by $\overline{X_Q}\coloneqq \Proj \mathbb{Z}[x,y,z]/\left<\overline{Q}(x,y,z)\right>$ the projective completion of the curve $X_Q$.
The plane curve $X_Q$ is an affine conic and $\overline{X_Q}$ is a projective conic.

\subsection{Local Diagonlization of Binary Quadratic Forms}
\begin{lem}\label{lem:q-diagonal-Zp}
For any prime $p$ the form $q(x,y)$ is equivalent over $\mathbb{Z}_p$ to a form $q'(x,y)$ with  $\mathbb{Z}_p$ coefficients and satisfying the following.

If $p>2$ or $p=2$ and $D\equiv 0 \mod 4$  then $q'(x,y)=ux^2 +A y^2$ is diagonal. Moreover, we can assume $u\in \mathbb{Z}_p^{\times}$. If $p^l\parallel A$ then we write $A=u_A p^l$ for $u_A\in\mathbb{Z}_p^\times$.

For $p=2$ and  $D\equiv 1\mod 4$ 
\begin{equation*}
q'(x,y)=\begin{cases}
xy & D\equiv +1 \mod 8 \Longleftrightarrow \left(\frac{D}{2}\right)=+1\\
x^2+xy+y^2 & D \equiv -3 \mod 8 \Longleftrightarrow \left(\frac{D}{2}\right)=-1
\end{cases}
\end{equation*}
\end{lem}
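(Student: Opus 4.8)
\textbf{Proof proposal for Lemma \ref{lem:q-diagonal-Zp}.}

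The plan is to reduce to the classification of binary quadratic forms over $\mathbb{Z}_p$ via the orthogonalization procedure. First I would treat the case $p>2$, or $p=2$ with $4\mid D$. Since $q$ is primitive, at least one of its coefficients is a $p$-adic unit. If the coefficient of $x^2$ is a unit we are essentially done with the first step; otherwise, after possibly swapping $x$ and $y$ or making the substitution $x\mapsto x+y$, we may assume that $q(x,y)=ux^2+bxy+cy^2$ with $u\in\mathbb{Z}_p^\times$. (When $p=2$ the substitution $x\mapsto x+y$ is the one that may be needed; here the hypothesis $4\mid D$ guarantees $b$ is even, so $b/(2u)\in\mathbb{Z}_2$, which is what makes the completion of the square below legitimate over $\mathbb{Z}_2$.) Then I would complete the square: setting $x'=x+\tfrac{b}{2u}y$, which is an invertible integral change of variables over $\mathbb{Z}_p$ precisely because $2u\in\mathbb{Z}_p^\times$ in this range, we get $q\sim ux'^2+A y^2$ where $A=c-\tfrac{b^2}{4u}$. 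A direct computation gives $\disc(ux^2+Ay^2)=-4uA$, and since the discriminant is a $\mathbb{Z}_p^\times$-square invariant of the equivalence class and $q$ has discriminant $D$, we have $4uA\in -D\cdot(\mathbb{Z}_p^\times)^2$; in particular $A\in\mathbb{Z}_p$ and $\ord_p A=\ord_p D$ (for $p>2$) or $\ord_p A=\ord_p D-2$ (for $p=2$ with $4\mid D$), which records the stated factorization $A=u_Ap^l$ with $u_A\in\mathbb{Z}_p^\times$. The normalization $u\in\mathbb{Z}_p^\times$ is automatic from the construction.

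For the remaining case $p=2$ with $D\equiv 1\bmod 4$, the form cannot be diagonalized over $\mathbb{Z}_2$ (an odd discriminant forces the cross term to survive), so I would instead invoke the standard classification of unimodular binary quadratic forms over $\mathbb{Z}_2$: every such form is equivalent to $xy$ or to $x^2+xy+y^2$, these being the two "hyperbolic" and "anisotropic" unimodular pieces. Since $q$ is primitive with odd discriminant it is itself unimodular over $\mathbb{Z}_2$, so it is $\mathbb{Z}_2$-equivalent to one of these two. To decide which, compute discriminants: $\disc(xy)=1$ and $\disc(x^2+xy+y^2)=-3$, and distinguish the classes by whether $D\equiv 1$ or $D\equiv -3\pmod 8$, equivalently by the value of the Kronecker symbol $\left(\frac{D}{2}\right)=\ch_8(D)$. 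This matches the statement.

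The main obstacle — really the only delicate point — is the $p=2$ bookkeeping: one must be careful that the division by $2u$ in the completion of the square is valid, which is exactly why the case split is made along $D\bmod 4$ rather than treating all primes uniformly, and one must correctly reconcile the two normalizations of $\disc$ (the "$b^2-4ac$" convention versus the Gram-matrix-determinant convention) when reading off $l=\ord_p A$. I would handle this by fixing the convention $\disc(ax^2+bxy+cy^2)=b^2-4ac$ used throughout the paper from the outset and tracking the extra factors of $2$ explicitly only in the $p=2$, $4\mid D$ subcase. Everything else is a routine application of orthogonalization and the known $\mathbb{Z}_2$-classification, so I would not belabor it.
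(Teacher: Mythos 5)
Your proof is correct and is the standard classical argument; the paper itself offers no proof of this lemma, simply citing Cassels, Chapter 8, and your completion of the square (valid because $2u\in\mathbb{Z}_p^\times$ for $p>2$, resp.\ $b$ even for $p=2$, $4\mid D$) together with the classification of odd-discriminant binary forms over $\mathbb{Z}_2$ into the classes of $xy$ and $x^2+xy+y^2$, distinguished by $D \bmod 8$, is exactly what that reference supplies. One cosmetic point: to produce a unit leading coefficient in the subcase $p\mid a$, $p\mid c$, $p\nmid b$ (which only occurs for $p$ odd), the substitution you want is the one whose new leading coefficient is $q(1,1)=a+b+c$, i.e.\ $y\mapsto x+y$ rather than $x\mapsto x+y$ as written; this does not affect the argument.
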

\begin{proof}
This is classical, cf.\ \cite[Chapter 8]{CasselsQuadraticForms}.
\end{proof}
\begin{remark}
Assume $q$ corresponds to the ideal class $[\mathfrak{s}]\in\Pic(\Lambda)$ where $\Lambda$ is a quadratic order of discriminant $D$.
If $p>2$ and $q'(x,y)=ux^2+Ay^2$ as above then $\left(\frac{u}{p}\right)=\ch_p(\Nr [\mathfrak{s}])$ where $\ch_p$ is the genus class group  character from Proposition \ref{prop:principal-genus-thm}. In particular, the class of $u$ in $\faktor{\mathbb{Z}_p^\times}{{\mathbb{Z}_p^\times}^2}$ depends only on $[\mathfrak{s}] \mod \Pic(\Lambda)^2$. By abuse of notation we shall denote for odd $p\mid D$
\begin{equation*}
\ch_p(q)\coloneqq \ch_p(\Nr[s])=\left(\frac{u}{p}\right)
\end{equation*}

Moreover, because $D=-4u A$ for $p>2$ we have $\left(\frac{u_A}{p}\right)=\left(\frac{-D/p^l}{p}\right)\ch_p(q)$ where $p^l \parallel D$.
\end{remark}

\subsection{Regular Primes}
\begin{prop}\label{prop:rho_Q-regular}
If $p\nmid \omega D$ then $\widetilde{\rho}_Q(p^k)=\rho_Q(p^k)=\rho_Q(p)p^{k-1}$ and
\begin{equation*}
\rho_Q(p)=p-\left(\frac{D}{p}\right)
\end{equation*} 
\end{prop}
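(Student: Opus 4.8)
The plan is to prove the proposition in three steps: show that the reduction of the affine conic $X_Q$ modulo $p$ is smooth whenever $p\nmid\omega D$; deduce the relation $\widetilde{\rho}_Q(p^k)=\rho_Q(p^k)=\rho_Q(p)p^{k-1}$ from Hensel's lemma; and finally evaluate $\rho_Q(p)$ by an explicit point count after putting $q$ in diagonal form $p$-adically via Lemma \ref{lem:q-diagonal-Zp}.

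For the smoothness step, write $q(x,y)=ax^2+bxy+cy^2$, so that the partial derivatives of $Q$ are the linear forms $2ax+by$ and $bx+2cy$; their common zero locus over $\mathbb{F}_p$ is governed by the singularity of the matrix $\left(\begin{smallmatrix}2a&b\\ b&2c\end{smallmatrix}\right)$, i.e. by $b^2-4ac\equiv -D\equiv 0\bmod p$. Since $p\nmid D$, for $p>2$ the unique common zero of the two partials is the origin, which is not a point of $X_Q$ because $Q(0,0)=-\omega D$ is a $p$-adic unit; hence $X_Q\otimes\mathbb{F}_p$ is smooth. For $p=2$ the hypothesis $2\nmid D$ forces $D\equiv 1\bmod 4$, so by Lemma \ref{lem:q-diagonal-Zp} we may assume $q$ is $xy$ or $x^2+xy+y^2$ over $\mathbb{Z}_2$, and in either case the same computation shows the only common zero of the partials modulo $2$ is the origin, again not on $X_Q$ since $\omega D$ is a $2$-adic unit.

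Because the $\mathbb{F}_p$-fibre is smooth, every point of $X_Q(\cyclic{p^k})$ is a smooth point (smoothness of a $\cyclic{p^k}$-point depends only on its reduction modulo $p$, namely on the non-vanishing of $\nabla Q$ there), so by Hensel's lemma each such point has exactly $p$ lifts to $X_Q(\cyclic{p^{k+1}})$; an immediate induction gives $\rho_Q(p^k)=\rho_Q(p)p^{k-1}$, and the absence of singular points forces $\widetilde{\rho}_Q(p^k)=\rho_Q(p^k)$ directly from the definition in \ref{def:polynomial-rho} (equivalently by Lemma \ref{lem:rho-tilderho-formula} with $\rho^{\mathrm{sing}}\equiv 0$).

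It remains to compute $\rho_Q(p)=\#\{(x,y)\in\mathbb{F}_p^2: q(x,y)=\omega D\}$. For $p>2$, Lemma \ref{lem:q-diagonal-Zp} gives $q\sim ux^2+Ay^2$ over $\mathbb{Z}_p$ with $u\in\mathbb{Z}_p^\times$, and since $D=-4uA$ is a $p$-adic unit also $A\in\mathbb{Z}_p^\times$; as $\omega D\not\equiv 0\bmod p$, the classical count of $\mathbb{F}_p$-points on a nondegenerate diagonal affine conic $\alpha x^2+\beta y^2=\gamma$ with $\gamma\neq 0$, namely $p-\left(\frac{-\alpha\beta}{p}\right)$, yields $\rho_Q(p)=p-\left(\frac{-uA}{p}\right)=p-\left(\frac{D/4}{p}\right)=p-\left(\frac{D}{p}\right)$. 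For $p=2$ one checks the two normal forms directly: over $\mathbb{F}_2$ the equation $xy=\omega D$ has the single solution $(1,1)$, matching $2-\left(\frac{D}{2}\right)=1$ since then $D\equiv 1\bmod 8$; and $x^2+xy+y^2=\omega D$ has the three solutions $(1,0),(0,1),(1,1)$, matching $2-\left(\frac{D}{2}\right)=3$ since then $D\equiv 5\bmod 8$. Combining the three steps gives the proposition. The only places needing any care are the $p=2$ analysis — confirming $\omega D$ reduces to a unit, invoking the correct $2$-adic normal form, and aligning $\left(\frac{D}{2}\right)$ with $D\bmod 8$ — together with the invocation of the classical diagonal-conic point count with its precise quadratic-symbol dependence; this is where I expect the bookkeeping, though not the mathematics, to be most delicate.
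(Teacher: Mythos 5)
Your proof is correct and follows essentially the same route as the paper: smoothness of the reduction of $X_Q$ modulo $p$ (which you verify directly from the partial derivatives, using $p\nmid D$ for nondegeneracy of the Hessian and $p\nmid\omega$ to exclude the origin from the curve), Hensel's lemma to get $\widetilde{\rho}_Q(p^k)=\rho_Q(p^k)=\rho_Q(p)p^{k-1}$, and an explicit count modulo $p$ after diagonalizing via Lemma \ref{lem:q-diagonal-Zp}. The only cosmetic difference is the final count: the paper uses $\left|\overline{X_Q}(\cyclic{p})\right|=p+1$ for the smooth projective conic and subtracts the $1+\left(\frac{D}{p}\right)$ points at infinity, whereas you invoke the classical affine formula $p-\left(\frac{-\alpha\beta}{p}\right)$ for a nondegenerate diagonal conic (handling $p=2$ by hand), which yields the same answer.
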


\begin{proof}
If $p\nmid D$ then $\overline{X_Q}$ and $X_Q$ have a smooth reduction modulo $p$. The first claim is an application of Hensel's lemma.

The reduction of $\overline{X_Q}$ is a smooth conic over a finite field and it is isomorphic to the projective line. In particular $\left|\overline{X_Q}\left(\cyclic{p}\right)\right|=p+1$.

To calculate $\rho_Q(p)=\left|X_Q\left(\cyclic{p}\right)\right|$ we need to subtract the points of $\overline{X_Q}$ on the line at infinity $z=0$. These are exactly the points on the projective variety cutout by $q(x,y)$, equivalently $q'(x,y)$ from Lemma \ref{lem:q-diagonal-Zp}. There are either $2$ or $0$ such points depending on the Kronecker symbol $\left(\frac{D}{p}\right)$.
\end{proof}

\subsection{Singular Primes}

\begin{lem}\label{lem:singular-conic}
Let $q_0(x,y)\in\mathbb{Z}_p[x,y]$ be a homogeneous binary quadratic form such that either $q_0(x,y)=u_1x^2+u_2y^2$ with $u_1,u_2\in\mathbb{Z}_p^\times$ or $p=2$ and $q_0(x,y)=xy$ or $q_0(x,y)=x^2+xy+y^2$. 

Fix $u_3\in\mathbb{Z}_p^\times$. For any integer $m\geq 0$ define
\begin{equation*}
Q_m(x,y)\coloneqq q_0(x,y)-u_3p^m
\end{equation*}
If $p>2$ or $p=2$ and $q_0$ is not diagonal then
\begin{equation*}
\rho_{Q_m}(p^n)=\begin{cases}
\left\lceil\frac{n}{2} \right\rceil p^{n-1}\left(1-\left(\frac{\disc(q_0)}{p}\right)\right)
+ p^n\delta_{n\equiv 0 \bmod 2}+p^{n-1}\delta_{n\equiv 1\bmod 2}
& n\leq m \\
\left(1+\left\lfloor\frac{m}{2} \right\rfloor \right)
p^{n-1}\left(1-\left(\frac{\disc(q_0)}{p}\right)\right)
+p^n\left(1-\frac{1}{p}\right)\delta_{m\equiv 0 \bmod 2} 
& n>m
\end{cases}
\end{equation*}
Otherwise if $p=2$ and $q_0$ is diagonal then
\begin{equation*}
\rho_{Q_m}(2^n)
\leq \min\left(\left\lceil\frac{n}{2} \right\rceil, 1+ \left\lfloor\frac{m}{2} \right\rfloor \right)2^{n+3} +2^n
\end{equation*}
\end{lem}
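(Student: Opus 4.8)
The plan is to compute $\rho_{Q_m}(p^n)=\#\{(x,y)\bmod p^n : q_0(x,y)\equiv u_3 p^m \bmod p^n\}$ by stratifying solutions according to the common $p$-adic scale of the coordinates. Setting $j\coloneqq\min(v_p(x),v_p(y))$, there is for each $j$ with $2j<n$ a bijection between the set of $(x,y)\bmod p^n$ with $v_p(\gcd(x,y))=j$ and the set of primitive pairs $(x',y')\bmod p^{\,n-j}$, given by $(x,y)=p^j(x',y')$; together with one ``deep'' stratum consisting of the $p^{2\lfloor n/2\rfloor}$ pairs with both coordinates divisible by $p^{\lceil n/2\rceil}$, which automatically satisfy $q_0(x,y)\equiv 0\bmod p^n$. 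Since $q_0(p^j(x',y'))=p^{2j}q_0(x',y')$, the scale-$j$ stratum contributes to $\rho_{Q_m}(p^n)$ exactly the number of primitive $(x',y')\bmod p^{\,n-j}$ with $q_0(x',y')\equiv u_3 p^{\,m-2j}\bmod p^{\,n-2j}$; this vanishes unless $2j\le m$, and the deep stratum contributes its full $p^{2\lfloor n/2\rfloor}$ only when $m\ge n$. The hypothesis that $q_0$ is one of the normal forms of Lemma~\ref{lem:q-diagonal-Zp} means $q_0$ is unimodular over $\mathbb{Z}_p$ (its discriminant is a unit up to a power of $2$), which is what makes these scale reductions clean.

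For $p>2$, or $p=2$ with $q_0$ non-diagonal, $q_0$ is either split, so $q_0\sim xy$ over $\mathbb{Z}_p$ precisely when $\left(\frac{\disc(q_0)}{p}\right)=1$, or anisotropic, in which case $q_0$ is (up to a unit scalar) the norm form of the unramified quadratic extension of $\mathbb{Q}_p$, which happens precisely when $\left(\frac{\disc(q_0)}{p}\right)=-1$. In the split case the primitive solutions of $x'y'\equiv u_3 p^{\,m-2j}\bmod p^{\,n-2j}$ are exactly those with one coordinate a unit and the other fixed by it modulo $p^{\,n-2j}$; counting these modulo $p^{\,n-j}$ gives a count independent of $j$, so summing over the admissible range of $j$ produces the multiplier $\lceil n/2\rceil$ (when $n\le m$) or $1+\lfloor m/2\rfloor$ (when $n>m$). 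In the anisotropic case $v_p(q_0(x',y'))=0$ for every primitive pair, so the scale-$j$ term is $0$ unless $m-2j=0$, and then it is counted by the surjectivity of the norm map on units with fibres of predictable size, producing the same multipliers with the complementary parity behaviour. Combining the stratum contributions with the deep-stratum term $p^{2\lfloor n/2\rfloor}=p^n\delta_{n\equiv 0 \bmod 2}+p^{n-1}\delta_{n\equiv 1\bmod 2}$ (and its analogue in the case $n>m$) yields the two closed forms. This last assembly step is pure bookkeeping over the regimes $n\lessgtr m$, the parities of $m$ and $n$, and whether $2j$ meets the bound $m$ or the bound $n$ first.

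For $p=2$ with $q_0=u_1x^2+u_2y^2$ diagonal the form may be ramified, the norm-form description fails, and Hensel's lemma no longer lifts square roots uniquely, so one settles for the stated inequality. Here I would bound each scale-$j$ contribution directly: by primitivity one coordinate, say $y'$, is a unit (a factor $2$ handles the other choice), and for each admissible $y'\bmod 2^{\,n-j}$ the equation $u_1x'^2\equiv u_3 2^{\,m-2j}-u_2 y'^2\bmod 2^{\,n-2j}$ in $x'$ has an absolutely bounded number of roots once its right-hand side is a unit, and is treated by one further scale reduction when it is not; this yields at most $2^{\,n+3}$ pairs per stratum. The number of strata contributing a nonzero term is at most $\min(\lceil n/2\rceil,\ 1+\lfloor m/2\rfloor)$ (from $2j<n$ and $2j\le m$), and the deep stratum contributes at most $2^n$, which gives the claimed bound.

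The hard part is exactly this wild case: the erratic behaviour of squaring modulo $2^k$ and the failure of Hensel preclude an exact count by the clean stratification argument, so solution counts must be tracked through repeated $2$-adic rescalings, with the resulting multiplicities absorbed into the $2^{+3}$ slack. Arranging this so that each stratum contributes at most $2^{\,n+3}$ — rather than a quantity growing with the depth $j$ — is the delicate point, and is what forces an inequality rather than an equality in the diagonal $p=2$ case; in the remaining cases the only real work is the regime-by-regime accounting described above.
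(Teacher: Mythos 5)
Your stratification by $j=\min(v_p(x),v_p(y))$ is exactly the unrolled form of the paper's argument, which at each level separates the primitive solutions (your scale-$j$ strata, the paper's $\rho^1$) from those divisible by $p$ and applies the recursion $\rho^0(p^n\colon p^m)=p^2\,\rho(p^{n-2}\colon p^{m-2})$; your deep stratum and your treatment of the diagonal $p=2$ case also match the paper. The architecture is therefore fine. The gaps are precisely in the two places you declare routine. First, the anisotropic case: if $\left(\frac{\disc(q_0)}{p}\right)=-1$ then $q_0(x',y')$ is a unit for every primitive pair, so the scale-$j$ stratum is empty unless $m-2j=0$. At most \emph{one} stratum contributes (none when $m$ is odd, and none at all when $n\le m$), so no sum over $j$ accumulates a multiplier $\lceil n/2\rceil$ or $1+\lfloor m/2\rfloor$; the assertion that this case ``produces the same multipliers with the complementary parity behaviour'' does not describe what happens and cannot be repaired by parity bookkeeping.

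Second, the assembly you call ``pure bookkeeping'' is where the proof has to happen, and it does not close. In the split case your per-stratum count is $2(p-1)p^{n-1}$ for each $j$ with $2j<m$ (one coordinate a unit, two symmetric choices, times the lifting factor $p^{2j}$) and $(p-1)p^{n-1}$ when $2j=m$; summing over the admissible $j$ therefore yields multiples of $(p-1)p^{n-1}$, not of $p^{n-1}\left(1-\left(\frac{\disc(q_0)}{p}\right)\right)$ --- a coefficient that \emph{vanishes} in the split case. Concretely, for $p=3$, $q_0=x^2-y^2$, $u_3=1$, $m=2$, $n=3$, a direct count (which agrees with your strata: $36+18$) gives $54$ solutions, while the displayed closed form gives $18$. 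So carrying out the step you skipped would show that your (essentially correct) intermediate counts do not assemble into the stated formulas; the number of nonzero solutions of $q_0\equiv 0\bmod p$ is $(p-1)\bigl(1+\left(\frac{\disc(q_0)}{p}\right)\bigr)$, not $1-\left(\frac{\disc(q_0)}{p}\right)$ as the derivation of the closed forms would require. You must either correct the per-stratum counts or flag the discrepancy with the target statement; as written, the proof does not reach its announced conclusion.
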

\begin{proof}
Denote by $\rho(p^n\colon p^m)$ the number of solutions to 
\begin{equation}\label{eq:Qm-p^n}
Q_m(x,y)\equiv 0 \mod p^n
\end{equation}
Similarly, denote by $\rho^0(p^n\colon p^m)$ the number of solutions modulo $p^n$ reducing to $(0,0)$ modulo $p$ and let $\rho^1(p^n\colon p^m)$ be the number of solutions not reducing to zero modulo $p$.

\paragraph{Case I: $m=0$}\hfill\\
If $p>2$ or $p=2$ and $q_0(x,y)$ is \emph{not} diagonal then $Q_0(x,y)$ defines a smooth affine conic modulo $p$. Subtracting the points on the line at infinity from the projective conic we deduce
\begin{equation*}
\rho(p \colon p^0)=\rho^1(p \colon p^0)=p-\left(\frac{\disc(q_0)}{p}\right)
\end{equation*}
Moreover, all these solutions are smooth and in this case 
\begin{align*}
\rho(p^n \colon p^0)=\rho^1(p^n \colon p^0)&=p^{n-1}\left(p-\left(\frac{\disc(q_0)}{p}\right)\right)\\
&= p^{n-1}\left(1-\left(\frac{\disc(q_0)}{p}\right)\right) +p^n\left(1-\frac{1}{p}\right)
\end{align*}

If $p=2$ and $q_0$ is diagonal then $\rho^0(2^n \colon 2^0 )=0$ and for $n\leq 3$ we use the trivial bound $\rho(2^n \colon 2^0)\leq 2^{2n}\leq 2^{n+3}$.
Hensel's lemma in the strong form implies for $n\geq 3$ that $\rho(2^n \colon 2^0)=2^{n-3}\rho(2^3 \colon p^0)\leq 2^{n+3}$. We deduce that for all $n\geq 1$
\begin{equation*}
\rho(2^n \colon 2^0)=\rho^1(2^n \colon 2^0) \leq 2^{n+3}
\end{equation*}

\paragraph{Case II: $n=1$, $m\geq 1$ and $\rho^1$ for $n\geq 1$}\hfill\\
If $p>2$ or $p=2$ and $q_0$ is not diagonal then $Q\equiv q_0 \mod p$ and $q_0$ is a non-degenerate quadratic form modulo $p$. We see that 
\begin{align*}
\rho^0(p \colon p^m)&=1\\
\rho^1(p \colon p^m)&=1-\left(\frac{\disc(q_0)}{p}\right)
\end{align*}
Moreover, in this case all the solution except $(0,0)$ are smooth, thus if $m\geq 1$ and $p>2$ or $p=2$ and $q_0$ is not diagonal then
\begin{equation*}
\rho^1(p^n \colon p^m)=p^{n-1}\left(1-\left(\frac{\disc(q_0)}{p}\right)\right)
\end{equation*}

If $m\geq 1$, $p=2$ and $q_0$ is diagonal then the same arguments as in Case I imply that 
\begin{align*}
\rho^0(2 \colon 2^m)&=1\\
\rho^1(2^n \colon 2^m)&\leq 2^{n+3}
\end{align*}

\paragraph{Case III: $\rho^0$ for $n\geq 2$, $m\geq 1$}\hfill\\
We proceed to compute $\rho^0(p^n\colon p^m)$ for $n\geq 2$, $m\geq 1$. We need to count solutions to \eqref{eq:Qm-p^n} of the form $(x,y)=(px_0,py_0)$ for $(x_0,y_0)\in\cyclic{p^{n-1}}\times \cyclic{p^{n-1}}$. The pertinent $(px_0,py_0$) solve \eqref{eq:Qm-p^n} if and only if
\begin{equation}\label{eq:Qm-p^n-0}
p^2q_0(x_0,y_0)-u_3p^m\equiv 0 \mod p^n
\end{equation}
The first case to consider is $n=2$, $m\geq 1$, then obviously 
\begin{equation*}
\rho^0(p^2\colon p^m)=\begin{cases}
0 & m=1 \\
p^2 & m>1
\end{cases}
\end{equation*}

If $n\geq 3$ and $m=1$ then \eqref{eq:Qm-p^n-0} implies that
\begin{equation*}
\rho^0(p^n \colon p^1)=0
\end{equation*}
If $n\geq 3$ and $m\geq 2$ then \eqref{eq:Qm-p^n-0} is equivalent to
\begin{equation*}
q_0(x_0,y_0)-u_3p^{m-2}\equiv 0 \mod p^{n-2}
\end{equation*}
and we have a recursion formula
\begin{equation*}
\rho^0(p^n\colon p^m)=p^2\rho(p^{n-2} \colon p^{m-2})
\end{equation*}

Finally we can use the recursion formula and the all the cases computed above to deduce the claim. 
\end{proof}

\begin{prop}\label{prop:Q-rho-sing} Fix a prime $p\mid \omega D$ and set 
$l\coloneqq\ord_p D$, $m\coloneqq \ord_p(4\omega)$. Define for $p>2$
\begin{equation*}
\rho_Q^0(p)\coloneqq \begin{cases}
p-\left(\frac{D/p^l}{p}\right) & l\equiv 0 \mod 2 \\
2p & l\equiv 1 \mod 2 \textrm{ and } \left(\frac{\omega}{p}\right)=+\ch_p(q)\\
0 & l\equiv 1 \mod 2 \textrm{ and } \left(\frac{\omega}{p}\right)=-\ch_p(q)\\
\end{cases}
\end{equation*}

If $n\leq l$ then for all primes $p$
\begin{align*}
\rho_Q(p^n)&=p^{n+\lfloor n/2 \rfloor}\\
\end{align*}

If $p>2$, $n>l$ and $p\nmid \omega$ then
\begin{equation*}
\rho_Q(p^l)=p^{n+\lfloor l/2 \rfloor}\frac{\rho_Q^0(p)}{p}
\end{equation*}

If  $p>2$, $n>l$ and $p\mid \omega$ then set $m=\ord_p (4\omega)$ 
\begin{align*}
\rho_Q(p^n)&=p^{n+\lceil l/2 \rceil}\\ 
&\cdot \begin{cases}
\left(\left\lceil \frac{n-l}{2}\right\rceil-(l\bmod 2) \right)
\frac{1}{p}\left(1-\left(\frac{D/p^l}{p}\right)\right) +\delta_{n\equiv l \mod 2} +\frac{1}{p}\delta_{n\not\equiv l \mod p}
& n-l\leq m\\
\left(1+\left\lfloor \frac{m}{2}\right\rfloor -(l\bmod 2) \right) 
\frac{1}{p}\left(1-\left(\frac{D/p^l}{p}\right)\right)
+\left(1-\frac{1}{p}\right)\delta_{m\equiv 0 \bmod 2} 
& n-l> m 
\end{cases}
\end{align*}

If $p=2$ and $n>l$ then
\begin{equation*}
\rho_Q(2^n)\leq 2^{n+\lceil l/2 \rceil} \left[
\min\left(\left\lceil\frac{n-l}{2} \right\rceil, 1+ \left\lfloor\frac{m}{2} \right\rfloor \right)2^3 +1\right]
\end{equation*}
\end{prop}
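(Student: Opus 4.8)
The plan is to reduce the statement to a local count modulo a prime power and then apply Lemma~\ref{lem:singular-conic} after a suitable change of variables and rescaling. First I would fix a prime $p \mid \omega D$ and diagonalize (or bring to the three standard binary-quadratic forms of Lemma~\ref{lem:q-diagonal-Zp}) the form $q$ over $\mathbb{Z}_p$: after an $\mathbf{SL}_2(\mathbb{Z}_p)$ change of coordinates one may assume $q(x,y)=ux^2 + A y^2$ with $u \in \mathbb{Z}_p^\times$ and $p^l \parallel A$ (where $l=\ord_p D$, using $D = -4uA$ when $p>2$), or one of the nondegenerate mod-$2$ forms when $p=2$ and $D\equiv 1 \bmod 4$. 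Then $Q = q - \omega D$ becomes $ux^2 + A y^2 - \omega D$, and since $p^l \parallel A$ and $p^{l+\text{(something}\geq 0)}$ divides $\omega D$, I factor out the largest power of $p$ dividing all of $u$, $A$, $\omega D$ so as to land in the normalized situation $Q_m(x,y) = q_0(x,y) - u_3 p^m$ of Lemma~\ref{lem:singular-conic} with $q_0$ one of the admissible forms, $u_3\in\mathbb{Z}_p^\times$, and $m = \ord_p(4\omega)$ the $p$-adic valuation of the shift relative to the leading part.

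Second, for the range $n \leq l$ I would argue directly: in this regime $Q(x,y) \equiv q(x,y) \bmod p^n$ after accounting for the shift (since $p^l \mid \omega D$ and $p^l \mid q$'s ``$y^2$'' coefficient up to units), so $\rho_Q(p^n) = \rho_q(p^n)$, and counting solutions of $ux^2 + u_A p^l y^2 \equiv 0 \bmod p^n$ with $n \leq l$ reduces to $x^2 \equiv 0 \bmod p^n$, i.e. $x \equiv 0 \bmod p^{\lceil n/2\rceil}$, giving $p^{n-\lceil n/2\rceil} = p^{\lfloor n/2 \rfloor}$ choices of $x$ and $p^n$ choices of $y$, for a total of $p^{n+\lfloor n/2\rfloor}$; this is uniform in $p$. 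For the range $n > l$ I would peel off $\lceil l/2 \rceil$ (resp. $\lfloor l/2\rfloor$ when $p \nmid \omega$ and $l$ is even) powers of $p$ from the solutions — writing $x = p^{\lceil l/2\rceil} x_0$ where forced — and reduce the count to $\rho_{Q_m}(p^{n-l})$ for the normalized $Q_m$, to which Lemma~\ref{lem:singular-conic} applies verbatim. Matching up the $\left(\tfrac{\disc(q_0)}{p}\right)$ appearing in that lemma with $\left(\tfrac{D/p^l}{p}\right)$ — using $\disc(q_0) = \pm u u_A$ and the remark after Lemma~\ref{lem:q-diagonal-Zp} that $\left(\tfrac{u_A}{p}\right) = \left(\tfrac{-D/p^l}{p}\right)\ch_p(q)$ — yields the stated formulas, including the definition of $\rho_Q^0(p)$: the cases $l$ even versus $l$ odd correspond to whether the leftover form $q_0$ is again nondegenerate (giving $p - \left(\tfrac{D/p^l}{p}\right)$) or becomes $u x_0^2 - (\text{unit}) y^2$-type after one more descent, where the $2p$ versus $0$ dichotomy is governed by whether $u_3$ is a square times $u$, i.e. by $\left(\tfrac{\omega}{p}\right) = \pm\ch_p(q)$.

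The $p=2$ case I would handle separately but in parallel. When $D \equiv 0 \bmod 4$ the diagonalization still holds and the descent argument goes through with the crude bound $\rho_{Q_m}(2^n) \leq \min(\lceil n/2\rceil, 1+\lfloor m/2\rfloor) 2^{n+3} + 2^n$ from the last clause of Lemma~\ref{lem:singular-conic} (the loss of a factor $2^3$ accounting for the wild ramification, i.e. the failure of Hensel at small levels). When $D \equiv 1 \bmod 4$ one has $2 \nmid D$, so $l = 0$ and the prime is singular only through $\omega$; here $q_0$ is one of the nondegenerate forms $xy$ or $x^2+xy+y^2$ and the first (exact) clause of Lemma~\ref{lem:singular-conic} applies, but since the statement only asserts the inequality $\rho_Q(2^n) \leq 2^{n+\lceil l/2\rceil}[\cdots]$ this is subsumed. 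Packaging the descent with the correct power of $2$ extracted and the shift-valuation $m = \ord_2(4\omega)$ (the factor $4$ absorbing the $2$-adic subtlety of $D = -4uA$) gives the claimed bound.

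The main obstacle I anticipate is bookkeeping the interaction between the valuation $l = \ord_p D$ of the form and the valuation $\ord_p(\omega D)$ of the shift — in particular getting the floors and ceilings and the parity conditions $\delta_{n \equiv l \bmod 2}$ exactly right, since the descent alternates between ``$x$ is forced to be divisible by a higher power'' steps and ``the equation genuinely constrains mod $p$'' steps, and the count depends delicately on where in this alternation the modulus $p^n$ and the shift $p^m$ fall relative to each other. This is precisely the content encoded in Lemma~\ref{lem:singular-conic}, so the real work is the translation: identifying $m$ correctly (the role of the $4$ in $4\omega$ for $p=2$, and checking $p \nmid \omega \Leftrightarrow m = 0$ gives the cleaner formula $\rho_Q(p^n) = p^{n+\lfloor l/2\rfloor}\rho_Q^0(p)/p$), and verifying the Kronecker-symbol identifications so that $\rho_Q^0(p)$ as defined matches $\rho_{q_0'}(p)$ for the appropriate residual form. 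I expect no genuinely new idea is needed beyond Lemmas~\ref{lem:q-diagonal-Zp} and~\ref{lem:singular-conic}; it is a careful but routine descent.
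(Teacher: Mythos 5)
Your proposal is correct and follows essentially the same route as the paper: diagonalize $q$ over $\mathbb{Z}_p$ via Lemma \ref{lem:q-diagonal-Zp}, count directly for $n\leq l$ (where the equation degenerates to $x\equiv 0 \bmod p^{\lceil n/2\rceil}$), and for $n>l$ substitute $x=p^{\lceil l/2\rceil}x_0$ to reduce to the normalized count of Lemma \ref{lem:singular-conic} with $m=\ord_p(4\omega)$, matching $\left(\frac{\disc(q_0)}{p}\right)$ with $\left(\frac{D/p^l}{p}\right)$ via the remark after Lemma \ref{lem:q-diagonal-Zp}. The extra descent step you flag for $l$ odd with $p\mid\omega$ (forcing $y\equiv 0\bmod p$ before Lemma \ref{lem:singular-conic} applies) and the crude treatment of $p=2$ are exactly what the paper does.
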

\begin{proof} 
Let $q'(x,y)\coloneqq ux^2+Ay^2$ as in Lemma \ref{lem:q-diagonal-Zp}.
We solve the equivalent equation $Q'(x,y)\coloneqq q'(x,y)-\omega D\equiv 0 \mod p^n$. 

\paragraph{Case I: $n\leq \ord_p D$}
Because $n\leq \ord_p D$ the equation $Q'(x,y)\equiv 0 \mod p^n$ is equivalent to 
\begin{equation*}
ux^2\equiv 0 \mod p^n \Longleftrightarrow x\equiv 0 \mod p^{\lceil n/2 \rceil}
\end{equation*}
Equivalently, $(x,y)\in \left(\cyclic{p^n}\right)$ is a solution to $Q(x,y)=0$ if and only if $x\equiv 0 \mod p^{\lceil n/2 \rceil}$.
The formula for $\rho_Q(p^n)$, $n\leq l$, follows immediately.

\paragraph{Case II: $n> \ord_p D$}
Any solution modulo $p^n$ must reduce to a solution modulo $p^l$, i.e.\ it must satisfy $x\equiv 0 \mod p^{\lceil l/2 \rceil}$. Write $x=p^{\lceil l/2 \rceil} x_0$ where $x_0\in \cyclic{p^{n-\lceil l/2 \rceil}}$ and denote $\varpi\coloneqq l \bmod 2 \in\{0,1\}$. 
Then the equation $Q'(x,y)\equiv 0 \mod p^n$ is equivalent to
\begin{align}
\nonumber
up^{l+2\varpi}x_0^2+Ay^2-\omega D&\equiv 0 \mod p^n\\
&\Longleftrightarrow
up^{2\varpi}x_0^2+u_Ay^2-4\omega u u_A\equiv 0 \mod p^{n-l}
\label{eq:q0-p^{n-l}}
\end{align}
where $A=u_A p^l$ and $D=-4uA$. 

Denote 
\begin{equation*}
q_0(x_0,y)\coloneqq up^{2\varpi}x_0^2+u_Ay^2-4\omega u u_A\in \mathbb{Z}_p[x,y]
\end{equation*}
We have shown that the solutions of $Q'(x,y)\equiv 0 \mod p^n$ are exactly the residue classes of the form $(p^{\lceil l/2 \rceil}x_0,y)$ where $(x_0,y)\in\cyclic{p^{n-\lceil l/2 \rceil}}\times \cyclic{p^n}$ reduces to a root of $q_0(x_0,y)$ modulo $p^{n-l}$. In particular,
\begin{equation*}
\rho_Q(p^n)=p^{l-\lceil l/2 \rceil}p^l \rho_{q_0}(p^{n-l})=p^{l+\lfloor l/2 \rfloor} \rho_{q_0}(p^{n-l})
\end{equation*}

If $\varpi\equiv 0 \mod 2$ then we can apply Lemma \ref{lem:singular-conic} directly to $q_0$ with $m=\ord_p (4\omega)$.

If $\varpi\equiv 1 \mod 2$ then we have two options. If $p\nmid 4\omega$ then $q_0$ defines a smooth affine conic modulo $p$. Computing explicitly and using Hensel's lemma we deduce then
\begin{align*}
\rho_{q_0}(p)&=\begin{cases}
2p & \left(\frac{\omega}{p}\right)=+\ch_p(q)\\
0 & \left(\frac{\omega}{p}\right)=-\ch_p(q) 
\end{cases}\\
\rho_{q_0}(p^{n-l})&=p^{n-l}\frac{\rho_{q_0}(p)}{p}
\end{align*}

Otherwise if $p\mid 4\omega$ then reducing \eqref{eq:q0-p^{n-l}} modulo $p$ we conclude that necessarily $y\equiv 0 \mod p$. Moreover, if $n-l=1$ then $\rho_{q_0}(p)=p$. Otherwise if $n\geq l+2$ we  write $y=py_0$ for $y_0\in \cyclic{p^{n-l-1}}$. 

Equation \eqref{eq:q0-p^{n-l}} is then equivalent to
\begin{equation*}
up^2x_0^2+u_Ap^2y_0^2-4\omega u u_A\equiv 0 \mod p^{n-l}
\end{equation*}
If $m=\ord_p (4\omega)=1$ then this equation has no solutions, i.e.\ $\rho_{q_0}(p)=0$. If $m\geq 2$ then define $q_1\coloneqq ux_0^2+u_Ay_0^2-(4\omega/p^2) u u_A$. Then $\rho_{q_0}(p^{n-l})=p^3\rho_{q_1}(p^{n-l-2})$ and apply Lemma \ref{lem:singular-conic} to $q_1$.
\end{proof}

\begin{cor}\label{cor:rho_Q-C-r}
The following bound holds for any prime power $p^n$
\begin{equation*}
\rho_Q(p^n)\leq 16 p^{n(2-1/2)}
\end{equation*}
\end{cor}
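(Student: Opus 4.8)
The plan is to bound $\rho_Q(p^n)$ by examining the various cases appearing in Propositions \ref{prop:rho_Q-regular} and \ref{prop:Q-rho-sing}, verifying in each that the stated expression does not exceed $16 p^{3n/2}$. Since $\rho_Q$ is multiplicative, it suffices to treat a single prime power $p^n$. The division into cases follows whether $p \nmid \omega D$ (regular prime) or $p \mid \omega D$ (singular prime), and within the latter whether $n \leq l \coloneqq \ord_p D$ or $n > l$, and whether $p$ is odd or $p = 2$.

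First I would dispose of the regular case: by Proposition \ref{prop:rho_Q-regular}, $\rho_Q(p^n) = \rho_Q(p) p^{n-1} \leq (p+1) p^{n-1} \leq 2 p^n \leq 16 p^{3n/2}$, which is comfortable. For a singular prime with $n \leq l$, Proposition \ref{prop:Q-rho-sing} gives $\rho_Q(p^n) = p^{n + \lfloor n/2 \rfloor} \leq p^{n + n/2} = p^{3n/2}$, again within the bound with room to spare. The substantive cases are the singular primes with $n > l$. For odd $p$ with $p \nmid \omega$, the formula reads $\rho_Q(p^n) = p^{n + \lfloor l/2 \rfloor} \rho_Q^0(p)/p$; since $\rho_Q^0(p) \leq 2p$ and $\lfloor l/2 \rfloor \leq l/2 \leq n/2$ (using $l < n$), this is at most $2 p^{n + l/2} \leq 2 p^{3n/2}$. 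For odd $p$ with $p \mid \omega$, I would bound the bracketed factor in Proposition \ref{prop:Q-rho-sing}: each of the terms $\bigl(\lceil (n-l)/2 \rceil - (l \bmod 2)\bigr)\frac{1}{p}\bigl(1 - \bigl(\tfrac{D/p^l}{p}\bigr)\bigr)$, $\delta$-terms, etc., is crudely $\ll \lceil (n-l)/2 \rceil \leq n$, so the bracket is $O(n/p \cdot 2) + O(1) \leq$ a small multiple of $n$; combined with the prefactor $p^{n + \lceil l/2 \rceil} \leq p^{n + (l+1)/2} \leq p^{(3n+1)/2}$, and absorbing the polynomial-in-$n$ loss using $n \leq p^{n/2}$-type estimates (valid since $p \geq 2$ — here one checks $n \leq 8 \cdot 2^{n/2}$ for all $n \geq 1$, hence $n \leq 8 p^{n/2}$), one lands at $\leq 16 p^{3n/2}$. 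The $p = 2$ case is handled identically using the displayed upper bound $\rho_Q(2^n) \leq 2^{n + \lceil l/2 \rceil}\bigl[\min(\cdots) 2^3 + 1\bigr]$, where $\min(\lceil (n-l)/2 \rceil, 1 + \lfloor m/2 \rfloor) \leq \lceil (n-l)/2 \rceil \leq n$, so the bracket is $\leq 8n + 1$, and $2^{n + \lceil l/2 \rceil}(8n+1) \leq 2^{(3n+1)/2}(8n+1) \leq 16 \cdot 2^{3n/2}$ after checking the elementary inequality $(8n+1)\sqrt{2} \leq 16 \cdot 2^{n/2}$ for all $n \geq 1$.

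The main obstacle — such as it is — is purely bookkeeping: ensuring that the polynomial-in-$n$ factors arising in the singular ramified cases ($n > l$) are genuinely absorbed by the gap between the exponents $n + \lceil l/2 \rceil$ (or $n + \lfloor l/2\rfloor$) and $3n/2$. That gap is $\geq (n - l - 1)/2 \geq 0$, which can be zero when $l = n - 1$, so the absorption must use the slack in the constant $16$ together with the elementary fact that for every prime $p$ and every $n \geq 1$ one has $n \ll p^{n/2}$ with an explicit absolute constant. I would verify these two or three elementary numerical inequalities ($n \leq 8 \cdot 2^{n/2}$, $(8n+1)\sqrt 2 \leq 16 \cdot 2^{n/2}$) directly for small $n$ and by an easy monotonicity argument for large $n$, and then the case analysis closes. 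Since $r = 1/2$ was already pinned down and only $C = 16$ needs checking, the entire argument is a short finite verification against the formulas proved above.
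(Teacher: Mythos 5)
Your overall strategy — a case check against Propositions \ref{prop:rho_Q-regular} and \ref{prop:Q-rho-sing} — is exactly what the corollary calls for (the paper gives no separate argument), and your treatment of the regular primes, of the range $n\leq l$, and of the singular odd primes with $p\nmid\omega$ is correct. The problem is in the final numerical chain for the cases $p\mid\omega$, $n>l$. You bound the bracketed factor by something linear in $n$ (e.g.\ $8n+1$ for $p=2$, via $\lceil(n-l)/2\rceil\leq n$) and the prefactor by $p^{(3n+1)/2}$ \emph{independently}, and then claim $2^{(3n+1)/2}(8n+1)\leq 16\cdot 2^{3n/2}$ follows from $(8n+1)\sqrt2\leq 16\cdot 2^{n/2}$. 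It does not: that inequality only yields $2^{(3n+1)/2}(8n+1)\leq 16\cdot 2^{2n}$, and the assertion $2^{(3n+1)/2}(8n+1)\leq 16\cdot 2^{3n/2}$ is simply false for $n\geq 2$, since it requires $(8n+1)\sqrt2\leq 16$. The same decoupling error appears in your odd-$p$, $p\mid\omega$ paragraph, where absorbing an $O(n)$ bracket into a prefactor already of size $p^{(3n+1)/2}$ via $n\ll p^{n/2}$ lands you at $p^{2n+O(1)}$, not $p^{3n/2}$.

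The repair is to couple the two factors through $k\coloneqq n-l$ rather than through $n$. The bracket in Proposition \ref{prop:Q-rho-sing} is linear in $k$ (for $p=2$ it is at most $8\lceil k/2\rceil+1\leq 4k+5$; for odd $p$ at most $\tfrac{2}{p}\lceil k/2\rceil+1+\tfrac1p$), while the exponent deficit is $\tfrac{3n}{2}-\bigl(n+\lceil l/2\rceil\bigr)\geq \tfrac{k-1}{2}$, so it suffices to verify the single elementary inequality $4k+5\leq 16\cdot 2^{(k-1)/2}$ for all $k\geq1$ (true: $9\leq16$ at $k=1$ and the ratio of the two sides is increasing), and its even easier odd-$p$ analogue. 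The point your write-up misses is that the two extremes cannot co-occur: the bracket is large only when $l$ is small, in which case the prefactor is far below $p^{3n/2}$; the prefactor approaches $p^{3n/2}$ only when $l$ is close to $n$, in which case the bracket is $O(1)$. With that coupling made explicit the corollary closes; as written, the last step of your verification fails.
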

\begin{cor}\label{cor:rhoQtilde-final}
Let $p\mid D$ and set $l=\ord_p D$ then if $n\leq l$
\begin{equation*}
\widetilde{\rho}_Q(p^n)=\begin{cases}
p^{n+\lfloor n/2 \rfloor}\left(1-\frac{1}{p}\right) & n \equiv 0 \mod 2\\
0 & n\equiv 1 \mod 2
\end{cases}
\end{equation*}

Assume next that $p\nmid 4\omega$ then
\begin{equation*}
\widetilde{\rho}_Q(p^l)=p^{l+\lfloor l/2 \rfloor}\left(1-\frac{\rho_Q^0(p)}{p^2}\right)
\end{equation*}
and for $n>l$
\begin{equation*}
\widetilde{\rho}_Q(p^n)=p^{n+\lfloor l/2 \rfloor}\frac{\rho_Q^0(p)}{p}\left(1-\frac{1}{p}\right)
\end{equation*}
\end{cor}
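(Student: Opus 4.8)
The plan is to derive this from Lemma~\ref{lem:rho-tilderho-formula} together with the formulas for $\rho_Q(p^n)$ recorded in Proposition~\ref{prop:Q-rho-sing}. The first step is the observation that when $p\mid D$ \emph{every} $\cyclic{p^m}$-point of $X_Q$ is singular, so that $\rho_Q^{\mathrm{sing}}(p^m)=\rho_Q(p^m)$ and Lemma~\ref{lem:rho-tilderho-formula} collapses to
\begin{equation*}
\widetilde{\rho}_Q(p^n)=\rho_Q(p^n)-\frac{\rho_Q(p^{n+1})}{p^2}.
\end{equation*}
Granting this identity, the corollary becomes a mechanical substitution of the values of $\rho_Q(p^n)$ and $\rho_Q(p^{n+1})$ from Proposition~\ref{prop:Q-rho-sing}, followed by elementary manipulation of floor functions.

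To justify that all points are singular I would use the local diagonalisation of Lemma~\ref{lem:q-diagonal-Zp}. Since $p\mid D$ we have $D\equiv 0\bmod 4$ whenever $p=2$ (discriminants are $\equiv 0,1\bmod 4$), so in every case $q$ is $\mathbb{Z}_p$-equivalent to $q'(x,y)=ux^2+Ay^2$ with $u\in\mathbb{Z}_p^\times$ and $\ord_p A=\ord_p D=l\geq 1$. Because $\ord_p(\omega D)\geq l\geq 1$, the reduction of $Q'=q'-\omega D$ modulo $p$ is $ux^2$, a unit times a perfect square, so $X_Q\bmod p$ is the non-reduced double line $\{x\equiv 0\}$; equivalently $\nabla Q'=(2ux,2Ay)$ is $\equiv 0\bmod p$ precisely on $\{x\equiv 0\}$ when $p$ is odd and identically $\equiv 0\bmod p$ when $p=2$. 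In either case every solution of $Q'\equiv 0\bmod p^m$ reduces to this locus, hence has vanishing Jacobian modulo $p$ and is a singular point. (This is the mechanism already visible in the proof of Proposition~\ref{prop:Q-rho-sing}: after the blow-down $x=p^{\lceil l/2\rceil}x_0$, the points that are smooth on the auxiliary conic $q_0$ become singular on $X_Q$.)

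It then remains to read off the three cases. For $n<l$, Proposition~\ref{prop:Q-rho-sing} gives $\rho_Q(p^n)=p^{\,n+\lfloor n/2\rfloor}$ and $\rho_Q(p^{n+1})=p^{\,(n+1)+\lfloor(n+1)/2\rfloor}$; subtracting $\rho_Q(p^{n+1})/p^2$ and separating the parity of $n$ (using $n+\lfloor n/2\rfloor=(n-1)+\lfloor(n+1)/2\rfloor$ when $n$ is odd) yields $p^{\,n+\lfloor n/2\rfloor}(1-1/p)$ for $n$ even and $0$ for $n$ odd. For $n=l$ under the hypothesis $p\nmid 4\omega$ (so $p$ odd and $p\nmid\omega$), one has $\rho_Q(p^l)=p^{\,l+\lfloor l/2\rfloor}$ while $\rho_Q(p^{l+1})$ must be taken from the ``$n>l$, $p\nmid\omega$'' clause, namely $\rho_Q(p^{l+1})=p^{\,l+\lfloor l/2\rfloor}\rho_Q^0(p)$, giving $\widetilde{\rho}_Q(p^l)=p^{\,l+\lfloor l/2\rfloor}\bigl(1-\rho_Q^0(p)/p^2\bigr)$. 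For $n>l$ under $p\nmid 4\omega$, both $\rho_Q(p^n)=p^{\,n+\lfloor l/2\rfloor}\rho_Q^0(p)/p$ and $\rho_Q(p^{n+1})=p^{\,(n+1)+\lfloor l/2\rfloor}\rho_Q^0(p)/p$, and subtracting produces $p^{\,n+\lfloor l/2\rfloor}\tfrac{\rho_Q^0(p)}{p}(1-1/p)$, as claimed.

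The step I expect to be the main obstacle is the bookkeeping at the boundary $n=l$: the first displayed formula in the statement is literally correct only for $n<l$, and at $n=l$ one must feed in $\rho_Q(p^{l+1})$ from the ``$n>l$'' regime of Proposition~\ref{prop:Q-rho-sing} rather than from the ``$n\leq l$'' regime — this is exactly why the second part of the corollary, under the extra hypothesis $p\nmid 4\omega$, re-records the case $n=l$ with the different value $p^{\,l+\lfloor l/2\rfloor}(1-\rho_Q^0(p)/p^2)$. A secondary point requiring care is the case $p=2$, where one needs $2\mid D\Rightarrow 4\mid D$ so that Lemma~\ref{lem:q-diagonal-Zp} supplies a diagonal $q'$ and the gradient argument applies; since Proposition~\ref{prop:Q-rho-sing} only bounds (does not compute) $\rho_Q(2^n)$ for $n>l$, and the corollary asserts the clean formula for $p=2$ only in the range $n\leq l$, this causes no difficulty.
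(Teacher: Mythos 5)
Your proposal is correct and follows exactly the paper's (very terse) proof: observe that $p\mid D$ forces every point of $X_Q(\cyclic{p})$ to be singular, so Lemma~\ref{lem:rho-tilderho-formula} reduces to $\widetilde{\rho}_Q(p^n)=\rho_Q(p^n)-\rho_Q(p^{n+1})/p^2$, and then substitute the values from Proposition~\ref{prop:Q-rho-sing}. Your remark about the boundary case is also well taken: the first displayed formula is only valid for $n<l$, since at $n=l$ the lift count $\rho_Q(p^{l+1})$ must come from the $n>l$ regime, which is precisely why the corollary restates $n=l$ separately.
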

\begin{proof}
Notice that if $p\mid D$ then $X_Q\left(\cyclic{p}\right)$ has no smooth points. The claim follows from Proposition \ref{prop:Q-rho-sing} and Lemma \ref{lem:rho-tilderho-formula}.
\end{proof}

\begin{prop}\label{prop:genus-mod-p^2}
Fix a prime $p\parallel D$ and $k\in\{0,1\}$. Assume $p\nmid 4\omega$. If $\epsilon\in\{\pm 1\}$ then
\begin{equation}\label{eq:p-genus-class}
\sum_{a\in\left(\cyclic{p^{2-k}}\right)^\times;\; \left(\frac{a}{p}\right)=\epsilon}
\rho_Q(p^k a;p^2)=\begin{cases}
p^4\left(1-\frac{1}{p}\right) & k=0,\; \ch_p(q)=+\epsilon\\
0 & k=0,\; \ch_p(q)=-\epsilon\\
\frac{p^3}{2}\left(1-\frac{f}{p}\right) & k=1
\end{cases}
\end{equation}
where
\begin{equation*}
f\coloneqq 1+\epsilon\left(\frac{-D/p}{p}\right)\ch_p(q)+\left(\frac{\omega}{p}\right)\ch_p(q)
\end{equation*}
\end{prop}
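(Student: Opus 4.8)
The plan is to reduce to a diagonal model of $q$ over $\mathbb{Z}_p$, count solutions modulo $p^2$ by hand for each fixed value of $Q$ using Hensel's lemma, and then average over the residue class of the ``excess'' using elementary Gauss and Jacobsthal sums. First note that a discriminant satisfies $D\equiv 0,1\bmod 4$, so the hypothesis $p\parallel D$ forces $p$ to be odd; we are always in the tame case. By Lemma~\ref{lem:q-diagonal-Zp} the form $q$ is $\mathrm{SL}_2(\mathbb{Z}_p)$-equivalent to $q'(x,y)=ux^2+Ay^2$ with $u\in\mathbb{Z}_p^\times$, and since $p\parallel D=-4uA$ we have $A=u_A p$ with $u_A\in\mathbb{Z}_p^\times$; the remark following that lemma gives $\left(\tfrac up\right)=\ch_p(q)$ and $\left(\tfrac{u_A}{p}\right)=\left(\tfrac{-D/p}{p}\right)\ch_p(q)$. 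An $\mathrm{SL}_2(\mathbb{Z}_p)$ change of variables induces a bijection of solution sets modulo $p^2$ and does not touch the shift, so $\rho_Q(p^k a;p^2)=\rho_{Q'}(p^k a;p^2)$ with $Q'(x,y)=ux^2+u_A p y^2-\omega D$ and $-\omega D=4uu_A\omega p$.

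For $k=0$ I would reduce $Q'(x,y)\equiv a\pmod{p^2}$ modulo $p$ to $ux^2\equiv a$, which is solvable iff $\left(\tfrac ap\right)=\left(\tfrac up\right)=\ch_p(q)$ --- this is precisely the case split $\epsilon=\pm\ch_p(q)$ in the statement. When $\epsilon=\ch_p(q)$ there are $2p$ solutions modulo $p$, all smooth since $\partial_x Q'=2ux\not\equiv 0$, hence each lifts to exactly $p$ solutions modulo $p^2$, giving $\rho_{Q'}(a;p^2)=2p^2$; summing over the $\tfrac12\varphi(p^2)=\tfrac{p(p-1)}2$ units $a$ with $\left(\tfrac ap\right)=\epsilon$ yields $p^4(1-\tfrac1p)$, and $0$ in the opposite case.

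For $k=1$, reducing $Q'(x,y)\equiv pa\pmod{p^2}$ modulo $p$ forces $p\mid x$; writing $x=px_1$ annihilates $ux^2$ modulo $p^2$, and dividing the remaining relation by $p$ gives $y^2\equiv c(a):=u_A^{-1}(a-b)\pmod p$ with $b=4uu_A\omega\in\mathbb{Z}_p^\times$. So $\rho_{Q'}(pa;p^2)=p^2\bigl(1+\left(\tfrac{c(a)}{p}\right)\bigr)$ (with the convention $\left(\tfrac0p\right)=0$): $p$ free choices of $x$, then $1+\left(\tfrac{c(a)}p\right)$ choices of $y\bmod p$, each with $p$ lifts. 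Summing over the $\tfrac{p-1}2$ units $a\bmod p$ with $\left(\tfrac ap\right)=\epsilon$, the leading term contributes $p^2\cdot\tfrac{p-1}2$ and the remainder contributes $p^2\left(\tfrac{u_A}p\right)\sum_{a:\,(\frac ap)=\epsilon}\left(\tfrac{a-b}p\right)$; expanding the indicator as $\tfrac12\bigl(1+\epsilon\left(\tfrac ap\right)\bigr)$ and using $\sum_{a\bmod p}\left(\tfrac{a-b}p\right)=0$ together with the Jacobsthal evaluation $\sum_{a\bmod p}\left(\tfrac{a(a-b)}p\right)=-1$ (the quadratic $a(a-b)$ has nonzero discriminant $b^2$) gives $\sum_{a:\,(\frac ap)=\epsilon}\left(\tfrac{a-b}p\right)=-\tfrac12\left(\tfrac{-b}p\right)-\tfrac\epsilon2$. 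Assembling the pieces produces $\tfrac{p^2}2\bigl[(p-1)-\left(\tfrac{u_A}p\right)\left(\tfrac{-b}p\right)-\epsilon\left(\tfrac{u_A}p\right)\bigr]$, which equals $\tfrac{p^3}2(1-f/p)$ once $-b=(D/p)\omega$, $\left(\tfrac{u_A}p\right)=\left(\tfrac{-D/p}p\right)\ch_p(q)$ and $\ch_p(q)=\left(\tfrac up\right)$ are used to rewrite the Legendre symbols into $f=1+\epsilon\left(\tfrac{-D/p}p\right)\ch_p(q)+\left(\tfrac\omega p\right)\ch_p(q)$.

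I expect the genuine difficulty to lie only in this last step --- tracking the handful of Legendre symbols so that the character-sum output collapses to exactly the claimed expression for $f$ (and confirming the three resulting values $f\in\{-1,1,3\}$) --- since the lattice-point counts are just Hensel lifting over an odd prime. A mild care point throughout is to work with $\mathrm{SL}_2(\mathbb{Z}_p)$-equivalence rather than merely $\mathrm{GL}_2(\mathbb{Z}_p)$, so that $D=-4uA$ holds on the nose and the shift $-\omega D$ is genuinely $4uu_A\omega p$ in the diagonal coordinates.
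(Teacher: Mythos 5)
Your strategy is sound and is a genuine variant of the paper's argument. Both proofs start from the same $\mathbb{Z}_p$-diagonalization of Lemma~\ref{lem:q-diagonal-Zp}, and for $k=0$ the two computations are essentially identical. For $k=1$ the paper encodes the condition $\left(\frac{a}{p}\right)=\epsilon$ by substituting $a=u_\epsilon w^2$ and counting points with $w\not\equiv 0$ on a ternary conic modulo $p$ (then correcting for $w\equiv 0$), which avoids character sums entirely; you instead compute $\rho_{Q'}(pa;p^2)=p^2\bigl(1+\left(\frac{u_A}{p}\right)\left(\frac{a-b}{p}\right)\bigr)$ fiber by fiber and evaluate the resulting incomplete Jacobsthal sum. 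The two routes are of comparable length; yours is the more elementary and makes the dependence on $a$ explicit.

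However, the one step you defer --- ``tracking the handful of Legendre symbols so that the character-sum output collapses to exactly the claimed expression for $f$'' --- is precisely where the argument does not close, so as written the proposal has a gap. Carrying out your own substitutions: with $-b=(D/p)\omega$ and $\left(\frac{u_A}{p}\right)=\left(\frac{-D/p}{p}\right)\ch_p(q)$ one gets $\left(\frac{u_A}{p}\right)\left(\frac{-b}{p}\right)=\left(\frac{-1}{p}\right)\left(\frac{\omega}{p}\right)\ch_p(q)$, not $\left(\frac{\omega}{p}\right)\ch_p(q)$, so your expression simplifies to $\frac{p^3}{2}(1-f'/p)$ with $f'=1+\epsilon\left(\frac{-D/p}{p}\right)\ch_p(q)+\left(\frac{-1}{p}\right)\left(\frac{\omega}{p}\right)\ch_p(q)$, which disagrees with the stated $f$ whenever $p\equiv 3\bmod 4$. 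A direct check with $p=3$, $q=x^2+xy+y^2$ (so $D=-3$, $u=1$, $u_A\equiv 1$), $\omega=1$, $\epsilon=+1$ gives $\rho_Q(3;9)=9$, matching your $f'=1$ but not the proposition's $f=3$ (which predicts $0$). Your intermediate count is therefore correct and the discrepancy sits in the paper: its proof writes the ternary form with $-u_Apy^2$, silently flipping the sign of $u_A$ relative to Lemma~\ref{lem:q-diagonal-Zp}, and the same slip appears in the case $l$ odd of Proposition~\ref{prop:Q-rho-sing} (the solvability condition there should read $\left(\frac{\omega}{p}\right)=\left(\frac{-1}{p}\right)\ch_p(q)$). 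The error is harmless downstream, since Proposition~\ref{prop:shifted-conv-final-bound} only uses $f\in\{-1,1,3\}$. Still, you must actually carry the symbol bookkeeping to the end and record the corrected $f'$ (or flag the $\left(\frac{-1}{p}\right)$ discrepancy); asserting the collapse without performing it conceals the only nontrivial point of the proof.
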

\begin{proof}
Let $q'(x,y)\coloneqq ux^2+Ay^2$ as in Lemma \ref{lem:q-diagonal-Zp} and write $A=u_Ap$ where $u_A\in\mathbb{Z}_p^\times$.
Let $u_\epsilon\in\mathbb{Z}_p^\times$ such that $\left(\frac{u_\epsilon}{p}\right)=\epsilon$. Define
\begin{equation*}
V(x,y,w)\coloneqq ux^2-u_Apy^2- 4 \omega u u_A p
-p^ku_\epsilon w^2 \in \mathbb{Z}_p[x,y,w]
\end{equation*}

Consider the equation
\begin{equation}\label{eq:p-genus-variety}
V(x,y,w)\equiv 0 \mod p^2 
\end{equation}
The left hand side of \eqref{eq:p-genus-class} is proportional to the number
of solutions to \eqref{eq:p-genus-variety} satisfying $w\neq 0 \mod p$. The proportionality constant is exactly the number of solutions to $p^k u_\epsilon w^2\equiv  p^k u_\varepsilon w_0^2 \mod p^2$ for any fixed unit $w_0$. The latter equation has $2p^k$ solutions. In conclusion 
\begin{align*}
\sum_{a\in\left(\cyclic{p^2}\right)^\times;\; \left(\frac{a}{p}\right)=\epsilon}
&\rho_Q(a;p^2)=  \\
&\frac{1}{2p^k}\# \left\{(x,y,w)\in\cyclic{p^2}\times
 \cyclic{p^2}
 \times\left(\cyclic{p^2}\right)^\times
\mid V(x,y,w)=0
 \right\}
\end{align*}

Equation \eqref{eq:p-genus-variety} reduces modulo $p$ to
\begin{equation}\label{eq:p-genus-variety-modp}
ux^2-p^k u_\varepsilon w^2\equiv 0 \mod p
\end{equation}

\paragraph{Case I: $k=0$}
All the solutions to equation \eqref{eq:p-genus-variety-modp} with $w\neq 0 \mod p$ are smooth. There $0$ such solutions if $\left(\frac{u u_\epsilon}{p}\right)=-1$ and $2p(p-1)$ solutions otherwise. Using Hensel's lemma we conclude that the number of solutions to \eqref{eq:p-genus-variety} with $w\neq 0 \mod p$ is $2p^4\left(1-\frac{1}{p}\right)$ if $\ch_p(q)=+\epsilon$ and $0$ otherwise.

\paragraph{Case II: $k=1$}
Equation \eqref{eq:p-genus-variety-modp} implies that necessarily $x\equiv 0 \mod p$. Hence equation \eqref{eq:p-genus-variety} is equivalent to
\begin{equation}\label{eq:p-k1-genus-variety-modp}
u_Ay^2-u_\epsilon w^2-4 \omega u u_A \equiv 0 \mod p
\end{equation}

This is an equation of a smooth conic with $p+1$ projective solutions. The are either $2$ or $0$ solutions on the line at infinity depending on the sign of $\epsilon\left(\frac{u_A}{p}\right)=\left(\frac{u_\epsilon u_A^{-1}}{p}\right)$. Hence the number of solutions to \eqref{eq:p-k1-genus-variety-modp} is $p-\epsilon\left(\frac{u_A}{p}\right)$. 

We also need to subtract from the solutions of \eqref{eq:p-k1-genus-variety-modp} the cases where $w\equiv 0 \mod p$. Substituting $0$ for $w$ in \eqref{eq:p-k1-genus-variety-modp} we see that there are either $2$ or $0$ such solution depending on the sign of $\left(\frac{\omega u}{p}\right)$.

We conclude that the number of relevant solutions to \eqref{eq:p-genus-variety} is
\begin{equation*}
p^4\left(1-\frac{f}{p}\right)
\end{equation*}
where $f\coloneqq 1+\epsilon\left(\frac{u_A}{p}\right)+\left(\frac{\omega u}{p}\right)$.
\end{proof}

\bibliographystyle{alpha}
\bibliography{joinings_duke}
\end{document}